\newcommand{\RNum}[1]{\uppercase\expandafter{\romannumeral #1\relax}}
\theoremstyle{plain}
\newtheorem{conjecture}{Conjecture}[section]
\newtheorem{corollary}{Corollary}[section]
\newtheorem{definition}{Definition}[section]
\newtheorem{lemma}{Lemma}[section]
\newtheorem{proposition}{Proposition}[section]
\newtheorem{remark}{Remark}[section]
\newtheorem{theorem}{Theorem}[section]
\numberwithin{equation}{section}
\newcommand{\RR}{\mathbb R}
\newcommand{\R}{\mathbb R}
\newcommand{\ZZ}{\mathbb Z}
\newcommand{\Z}{\mathbb Z}
\newcommand{\CC}{\mathbb C}
\numberwithin{equation}{section}
\numberwithin{equation}{section}
\begin{document}

\title[Classification of the minimal-mass blowup solutions to NLSS]{Classification of the minimal-mass blowup solutions to the two dimensional focusing cubic nonlinear Schr\"odinger system}

\author{Xing Cheng, Zuyu Ma, and Jiqiang Zheng}

\address[X. Cheng]{School of mathematics, Hohai University, Nanjing 210098, Jiangsu, China}
\email{{\tt chengx@hhu.edu.cn}}

\address[Z. Ma]{The Graduate School of China Academy of Engineering Physics, Beijing 100088, China}
\email{{\tt  mazuyu23@gscaep.ac.cn
}}

\address[J. Zheng]{Institute of Applied Physics and Computational Mathematics, Beijing, 100088 and National Key Laboratory of Computational Physics, Beijing 100088, China}
\email{{\tt  zheng\_jiqiang@iapcm.ac.cn}}

\begin{abstract}
In this article, we study the two dimensional focusing finitely and infinitely coupled cubic nonlinear Schr\"odinger system when the mass is equal to the scattering threshold. 
For the focusing finitely coupled cubic nonlinear Schr\"odinger system, we present a complete classification of minimal-mass blowup solutions. Specifically, we demonstrate that all such solutions must be either solitons or their pseudo-conformal transformations. To prove this result, we develop a modulation analysis that accounts for multi-component interactions to overcome the multiply phase transformations caused by the multi-component. A long time Strichartz estimate for vector-valued solutions is established to solve the difficulty posed by the Galilean transformation  and spatial translation, where a new vector-valued bilinear estimate is proven to address the challenges caused by the coupled nonlinear interaction. 
For the infinitely coupled focusing nonlinear Schr\"odinger system when the mass is equal or slightly above the scattering threshold in \cite{CGHY}, we show that scattering is the only dynamical behavior of the solutions to the infinitely coupled system. 

\bigskip
\noindent \textbf{Keywords}: 
Nonlinear Schr\"odinger system, minimal-mass blowup solutions, mass-critical, long time Strichartz estimate, soliton.
\bigskip

\noindent \textbf{Mathematics Subject Classification (2020)} Primary: 35Q55; Secondary: 35B40, 35Q41
\end{abstract}

\maketitle

\section{Introduction}

\subsection{Setting of the problem.}
In this article, we consider the cubic focusing nonlinear Schr\"odinger system (NLSS) on $\mathbb{R}^2$:
	\begin{equation}\label{1.1}
		\begin{cases}
			i\partial_t \mathbf{u}   + \Delta_{\mathbb{R}^2 } \mathbf{u}  = - \mathbf{F} (\mathbf{u}) ,\\
			\mathbf{u}(0) = \mathbf{u}_{0},
		\end{cases}
	\end{equation}
where $\mathbf{u} = \{ u_j:\RR\times\RR^2\to\CC\}_{j\in \mathbb{Z}_N }$, $\mathbf{u}_0 = \{u_{0,j}:\RR^2\to\CC \}_{j\in \mathbb{Z}_N }$, and the nonlinear term $\mathbf{F} (\mathbf{u}) = \left\{F_j(\mathbf{u}) \right\}_{j \in \mathbb{Z}_N }$ is given by 	\begin{equation*}
		F_j(\mathbf{u})
		: =\sum\limits_{( j_1,j_2,j_3) \in \mathcal{R}_N(j) } u_{j_1} \bar{u}_{j_2} u_{j_3}
		= 2 \sum\limits_{k \in \mathbb{Z}_N } |u_k|^2   u_j - |u_j|^2 u_j,
	\end{equation*}
with
	\[
		\mathcal{R}_N (j) =
 		\left\{ (j_1,j_2,j_3) \in \mathbb{Z}_N^3 : j_1-j_2+j_3= j, \  j_1^2 - j_2^2 + j_3^2 = j^2 \right\}.
	\]
Here, $\mathbb{Z}_N := \{1, \cdots, N \}$ for any positive integer $N$ with the convention that $\mathbb{Z}_\infty = \mathbb{Z}$. In particular, when $N=\infty$, we let 
	\[
		\mathcal{R}(j): = \mathcal{R}_\infty (j) =
 		\left\{ (j_1,j_2,j_3) \in \mathbb{Z}^3 : j_1-j_2+j_3= j, \  j_1^2 - j_2^2 + j_3^2 = j^2 \right\}.
	\]
The system \eqref{1.1} admits the following invariant groups:
\begin{enumerate}
\item scaling symmetry
\begin{align}\label{sym1}
\lambda \mathbf{u} \left( \lambda^2 t, \lambda x \right), \ \forall\, \lambda > 0,
\end{align}
\item translation in space and time
\begin{align}\label{sym3}
\mathbf{u} (t- t_0, x- x_0), \  \forall\, t_0 \in \mathbb{R}, \  x_0 \in \mathbb{R}^2,
\end{align}
\item phase transformation
\begin{align}\label{sym4}
e^{i \gamma_j } {u}_j (t,x), \  \forall\, \gamma_j  \in [0,2\pi], j \in \mathbb{Z}_N,
\end{align}
\item Galilean transformation
\begin{align}\label{sym5}
e^{i{\xi_0 } \cdot ( x-  { \xi_0} t) }  \mathbf{u} (t, x- 2\xi_0 t), \ \forall\, \xi_0 \in \mathbb{R}^2, 
\end{align}
\item pseudo-conformal transformation
\begin{align}\label{pseudotransform}
\frac1{t} \overline{  \mathbf{u} \left( \frac1t, \frac{x}t \right) } e^{i \frac{|x|^2}{4t}}.
\end{align}

\end{enumerate}
For any positive integer $N$, the system \eqref{1.1} can be regarded as a special class of coupled Gross-Pitaevskii equations, and thus has  significant applications in  physical problems. For example, in nonlinear optics,  it serves as a good approximation for describing the propagation of self-trapped mutually incoherent wave packets. Here, $u_j$ ($j=1$, $\cdots$, $N$) represents the $j$-th component of the beam, we refer to  \cite{AA} and the references therein. Additionally, the system also has application in Bose-Einstein condensates (see \cite{ZLQYY,ZL} and the references therein). 
When $N=2$, \eqref{1.1} reduces to the strongly coupled nonlinear Schr\"odinger system:
	\begin{equation*}
		\begin{cases}
			i\partial_t u_1 + \Delta_{\mathbb{R}^2} u_1 = -  |u_1|^2 u_1 -   2|u_2|^2 u_1,\\
			i\partial_t u_2 + \Delta_{\mathbb{R}^2} u_2 = - 2|u_1|^2 u_2 - |u_2|^2 u_2,
		\end{cases}
	\end{equation*}
with initial data $u_j(0,x) = u_{0,j}(x)$ for $j=1,2$, which arises in the Hartree-Fock theory for a double condensate, i.e., a binary mixture of Bose-Einstein condensates in two different hyperfine states, we refer to \cite{E, Timmer}. Moreover, this system also arises as the non-relativistic limit of the complex-valued cubic focusing nonlinear Klein-Gordon equation in $\mathbb{R}^2$, see \cite{MN} for further details.  For $N=\infty$, the system \eqref{1.1} appears in the nonlinear approximate of the cubic focusing nonlinear Schr\"odinger equation on the cylinder $\mathbb{R}^2\times \mathbb{T}$ in \cite{CGYZ}, we use the transformation of the solution of this nonlinear Schr\"odinger system to approximate to large scale profile. The system in the defocusing case was studied in \cite{YZ}, while the focusing case has been studied in \cite{CGHY}. 

\subsubsection{Finitely coupled NLSS}

For any positive integer $N$,  \eqref{1.1} possesses  the following conservation laws:
	\begin{itemize}
		\item Mass: $M(\mathbf{u})(t) := \sum\limits_{j  = 1}^{N} \int_{\mathbb{R}^2}  \left|u_j(t,x) \right|^2 \,\mathrm{d}x$, 
		\item Energy:  	$\begin{aligned}[t]
			 E(\mathbf{u})(t) :=  & \frac12 \sum\limits_{j=1}^{N} \int_{\mathbb{R}^2} \left|\nabla u_j(t,x) \right|^2 \,\mathrm{d}x  - \frac14 \sum_{j=1 }^{N }    \int_{\mathbb{R}^2}  \bigg(  \left|u_j(t,x) \right|^4 + 2
 \sum_{ \substack{k= 1\\ k\ne j} }^{N}  \left|u_k(t,x) \right|^2  \left|u_j(t,x) \right|^2 \bigg)  \,\mathrm{d}x.
				\end{aligned}$
	\end{itemize}

The system \eqref{1.1} admits a soliton solution of the form  $\mathbf{u}=e^{it\Delta}\mathbf{Q}=
\left(e^{it\Delta}Q,\cdots,e^{it\Delta}Q \right)$,
where $Q$ is the unique radial  positive ground state to the following elliptic equation:
\begin{align*}
	\Delta Q - Q = - (2N-1)Q^{3}.
\end{align*}

In \cite{CGYZ}, the first author and collaborators 
proved that if  $M(\mathbf{u})<M(\mathbf{Q}) $, then the system \eqref{1.1} is globally well-posed and scatters  in the following sense: there exist  $\mathbf{u}_{\pm}\in L_x^2 l^2(\R^2\times\Z_N)$\footnote{Here $L_x^2l^2(\mathbb{R}^2 \times \mathbb{Z}_N  ) $ is defined in Subsection \ref{subse1.6v33}. For $N < \infty$, we see the space 
\begin{align*}
 L^2_x  l^2(\mathbb{R}^2 \times \mathbb{Z}_N )  
  = \overbrace{L_x^2(\mathbb{R}^2) \times L_x^2(\mathbb{R}^2) \times \cdots \times L_x^2(\mathbb{R}^2)}^{N-\text{copies}},
\end{align*}
 with the natural norm $$\|\mathbf{u}\|_{ L_x^2 l^2(\mathbb{R}^2 \times \mathbb{Z}_N  ) } := \bigg( \sum\limits_{j=1}^{N} \|u_j\|_{L_x^2(\RR^2)}^2\bigg)^\frac{1}{2}.$$
} such that
\begin{equation*} 
\lim\limits_{t\to\infty}\|\mathbf{u}(t)-e^{it\Delta}\mathbf{u}_{+}\|_{L_x^2l^2(\mathbb{R}^2 \times \mathbb{Z}_N  ) }=\lim\limits_{t\to-\infty}\|\mathbf{u}(t)-e^{it\Delta}\mathbf{u}_{-}\|_{L_x^2l^2(\mathbb{R}^2 \times \mathbb{Z}_N  ) }=0.
\end{equation*}
However, if $M(\mathbf{u})=M(\mathbf{Q})$, then there exist explicit examples of non-scattering solutions: the soliton $e^{it\Delta}\mathbf{Q}$  up to symmetries \eqref{sym1}-\eqref{sym5} and the pseudo-conformal transformation \eqref{pseudotransform}. Notably, the existence of the pseudo-conformal transformation \eqref{pseudotransform} allows us to directly construct finite time blowup solutions. In this paper, we investigate the dynamics of \eqref{1.1} when the mass $M(\mathbf{u})=M(\mathbf{Q})
$.

\subsubsection{Infinitely coupled NLSS}

When $N = \infty$, the system \eqref{1.1} enjoys the following conservation laws:
	\begin{enumerate}
		\item Mass:
        ${M}_{a,b,c}(\mathbf{u})(t) = \int_{\mathbb{R}^2  } \sum\limits_{j\in \mathbb{Z}} \left(a+ bj +  c j^2\right) \left|u_j(t,x )\right|^2\,\mathrm{d}x, \text{ where } a,b,c \in \mathbb{R}$.
        Especially, when $a=1$, and $b=c=0$, we denote $
        {M}(\mathbf{u}):=
        {M}_{1,0,0}(\mathbf{u})$.
		\item Energy:
        $\begin{aligned}[t]
		        {E}(\mathbf{u})(t)
        & = \int_{\mathbb{R}^2  } \sum_{j\in \mathbb{Z}} \left( \frac12 \left|\nabla u_j(t,x)\right|^2  -  \frac14\left(  F_j (\mathbf{u}) \bar{u}_j\right) (t,x) \right)   \,\mathrm{d}x\\
		& = \int_{\mathbb{R}^2   } \sum_{j\in \mathbb{Z}}\frac12 |\nabla u_j(t,x) |^2 \, \mathrm{d} x  - \frac14  \sum_{ \substack{ j\in \mathbb{Z},\\ n\in \mathbb{N}} } \bigg|\sum_{\substack{j_1-j_2=j,\\  j_1^2 - j_2^2   = n}} (  u_{j_1} \bar{u}_{j_2} )(t,x) \bigg|^2 \,\mathrm{d}x.
			\end{aligned}$
	\end{enumerate}

In \cite{CGYZ}, the first author and collaborators 
proved that if  $M(\mathbf{u})< \frac12 \|Q_0\|_{L_x^2}^2
$, where $Q_0$ is the unique positive radial solution of the elliptic equation:
\begin{align}\label{eq1.9v59}
\Delta Q_0 - Q_0 = - Q_0^{3},
\end{align}
then the system \eqref{1.1} is globally well-posed and scatters  in the following sense: there exist  $\mathbf{u}_{\pm}\in L_x^2h^1(\R^2\times\Z)$\footnote{Here $L_x^2 h^1(\R^2\times\Z)$ is defined in in Subsection \ref{subse1.6v33}.
} such that
\begin{equation*} 
		\left\|\mathbf{u}-e^{it\Delta_{\RR^2}}\mathbf{u}^{\,\pm} \right\|_{L^2_x h^1(\RR^2\times\ZZ)}
		:=\bigg\| \bigg( \sum\limits_{j\in \mathbb{Z}} \langle j\rangle^2 \big|  u_j(t) - e^{it\Delta_{\mathbb{R}^2}} u_j^{\pm  } \big|^2 \bigg)^\frac12  \bigg\|_{L_x^2(\mathbb{R}^2 )} \to 0, \text{ as } t\to \pm \infty.
\end{equation*}
In this paper, we investigate the dynamics of \eqref{1.1} for $N = \infty$ when the mass $M(\mathbf{u})
$ is equal or slightly above  $\frac12 \|Q_0\|^2_{L_x^2}
$.

Recall that $\mathbf{u}$ scatters in both time directions if and only if $\|\mathbf{u}\|_{L_{t,x}^4l^2(\R\times\R^2\times\Z_N)}<\infty$. This motivates the following definition of blowup solutions: 
\begin{definition}
We say that a solution $\mathbf{u} \in C_t^0L^2_x l^2(I \times \mathbb{R}^2 \times  \mathbb{Z}_N) $  to \eqref{1.1} blows up forward in time if there exists a time $t_0 \in I$ such that
\begin{align*}
\| \mathbf{u} \|_{L_{t,x}^4 l^2 ( [ t_0, \sup I ) \times \mathbb{R}^2 \times \mathbb{Z}_N)} = \infty,
\end{align*}
and that $\mathbf{u}$ blows up backward in time if there exists a time $t_0 \in I$ such that
\begin{align*}
\| \mathbf{u} \|_{L_{t,x}^4 l^2 ( ( \inf I, t_0 ]  \times \mathbb{R}^2 \times \mathbb{Z}_N)} = \infty.
\end{align*}

\end{definition}

\subsection{On the mass critical nonlinear Schr\"odinger equation (NLS)}\label{se2v53}

When $N=1$, the system $\eqref{1.1}$ reduces to the focusing mass-critical NLS:
\begin{align*}
	\begin{cases}
		i \partial_t u + \Delta u = - |u|^2 u, \\
		u(0) = u_0\in L_x^2(\R^2). 
	\end{cases}
\end{align*}
For a self-contained discussion, we consider the $d-$dimensional mass-critical focusing NLS:
\begin{align}\label{eq1.6v59}
   \begin{cases}
    i \partial_t u + \Delta u = - |u|^\frac4d u, \ (t,x) \in \mathbb{R} \times \mathbb{R}^d , \\
    u(0) = u_0\in L_x^2(\R^d)
    \end{cases}
\end{align}
for $d \ge 1$. This equation conserves mass $M_0(u) $ and energy $E_0(u)$, defined as:
\begin{align*}
 M_0(u)=\int_{\R^d} |u(t,x)|^2 \,\mathrm{d}x,\quad E_0(u)=\frac{1}{2}\int_{\R^d} |\nabla u(t,x)|^2 \,\mathrm{d}x-\frac{d}{2(d+2)}\int_{\R^d} |u(t,x)|^{\frac{2(d+2)}{d}}\,\mathrm{d}x.
\end{align*}
 Moreover, it enjoys the four  symmetries \eqref{sym1}-\eqref{sym5} and pseudo-conformal transformation \eqref{pseudotransform}.
In \cite{W}, M. I. Weinstein proved the sharp Gagliardo-Nirenberg inequality:
\begin{align*}
\|f\|_{L_x^\frac{2(d+2)}d}^\frac{2(d+2)}d \le \frac{d+2}d \left( \frac{ \|f \|_{L_x^2} }{ \|Q_0 \|_{L_x^2}} \right)^\frac4d \|\nabla f \|_{L_x^2}^2.
\end{align*}
where $Q_0$\footnote{Here we abuse the notation that this $Q_0$ is general dimensional extension of $Q_0$ defined in \eqref{eq1.9v59}, which is only used in subsection \ref{se2v53}.
} is the unique positive radial ground state of the elliptic equation:
\begin{align}
\Delta Q_0 - Q_0 = - Q_0^{1 + \frac4d}.
\end{align}
As a consequence, M. I. Weinstein proved that for any initial data $u_0\in H^{1}_{x}$ with $M_0(u_0)<M_0(Q_0)$, the solution exists globally. However, since the global solution is constructed by the local theory, no information is obtained about the long-time dynamic behavior of the solution $u$. In particular, scattering is not proved. The global well-posedness and scattering in $L_x^2$ for $M_0(u_0) < M_0(Q_0 )$ has been extensively studied and are now complete; see \cite{D7,D6,D66,KTV11,KVZ,TVZ1,TVZ2} and references therein. 
These results imply that non-scattering solutions can occur when $M_0(u_0) = M_0(Q_0)$. As we mentioned earlier, the soliton $e^{it}Q_0$ up to symmetries \eqref{sym1}-\eqref{sym4} and the pseudo-conformal transformation \eqref{pseudotransform} are two examples of minimal-mass blowup solutions among all non-scattering solutions. This naturally raises the question of  whether there are other such examples. 

The characterization of minimal-mass blowup solutions was initiated by M. I. Weinstein \cite{weinstein:charact}, who showed the following:
Let $u$ be an $H_x^1$ solution of \eqref{eq1.6v59} with the mass $M_0(Q_0)$ that blows up in finite time. Then there exist functions $\theta(t), x(t)$ and $ \lambda(t)$
so that:
\begin{align}\label{weinsteinre's result}
	\lambda(t)^{\frac d2} e^{i\theta(t)} u\bigl(t, \lambda(t)x+x(t)\bigr) \to Q_0 (x)  \quad \text{in} \quad H^{1}_{x}
\end{align} 
as $t$ approaches the blowup time.  In fact, convergence holds along any sequence of times for which the kinetic energy diverges. For an $H_x^1$ solution that blows up in finite time, any sequence converging to the blowup time satisfies this property. 

{For initial data in $L_x^2$, C. Fan \cite{F} established  \eqref{weinsteinre's result} in $L_x^2$ along some sequence of times that approaches the (not necessary finite) blowup time for radial initial data when $d \ge 1$. This result has been extended by B. Dodson \cite{D4,D3} to the non-radial case by incorporating an additional 
Galilean boost $\xi(t)$, i.e.
 \begin{align*}
	\lambda(t_n)^{\frac d2} e^{i\theta(t_n)} e^{ix\cdot\xi(t_n)}u\bigl(t_n, \lambda(t_n)x+x(t_n)\bigr) \to Q_0 (x)  \quad \text{in} \quad L^2_{x}
\end{align*} 
holds for 
some sequence of times $t_n$ that approaches the blowup time.} However, these results are only sequential convergence results. A more precise result can be formulated as the following conjecture: 
\begin{conjecture}[Classification of minimal-mass blowup solutions]\label{co1v59}
Let $u_0 \in L_x^2(\mathbb{R}^d) $ satisfy $M_0(u_0)=M_0(Q_0)$.
	Suppose $u$ is a maximal-lifespan solution of \eqref{eq1.6v59} with maximal lifespan $I$, and it blows up in the sense that $\|u \|_{L_{t,x}^\frac{2(d+2)}d (I \times \mathbb{R}^d)} = \infty$. Then only the following two cases occur:
\begin{enumerate}
	\item $I = (- \infty, \infty)$ and $u$ must be a soliton solution, i.e. there exist $\lambda>0, \gamma\in [0,2\pi]$, and $ \left(\tilde{x},\xi \right)\in \mathbb{R}^d \times\mathbb{R}^d $ such that:
	\begin{align*}
		u(t,x) =e^{i\gamma-it |\xi|^2}e^{i\lambda^2t}e^{ix\cdot\xi}\lambda^\frac{d}2    Q_0
		\left(\lambda(x-2t\xi)-\tilde{x} \right).
	\end{align*}	
\item 	$I = (- \infty, T)$ or $(T, \infty)$ for some $T\in\R$, and ${u}$ must be a pseudo-conformal transformation of the soliton solution, i.e. there exist $\lambda>0, \gamma\in [0,2\pi]$, and $ \left(\tilde{x},\xi \right)\in \mathbb{R}^d \times\mathbb{R}^d $ such that:
	\begin{align*}
		u(t,x) = \frac{\lambda^\frac{d}2 }{|T-t|^\frac{d}2  }e^{i\gamma}e^{\frac{i
				|x-\xi|^2}{4(t- T )}}e^{ i \frac{\lambda^2}{t- T }}  Q_0\left(\frac{\lambda(x-\xi)-( T -t)\tilde{x}}{ T -t} \right).
	\end{align*}
\end{enumerate}

\end{conjecture}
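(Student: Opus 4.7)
The plan is to promote the sequential convergence results of \cite{F,D4,D3} to the full rigidity statement of Conjecture \ref{co1v59} by combining (a) compactness/almost-periodicity modulo the symmetry group, (b) a modulation expansion around the ground state $Q_0$, (c) a long-time Strichartz estimate to control the Galilean boost and spatial translation parameters, and (d) a virial/pseudo-conformal rigidity argument. First I would invoke the sub-threshold $L^2_x$ scattering theory and an $L^2_x$ linear profile decomposition to upgrade the existing sequential convergence to \emph{continuous} almost-periodicity: for $t$ near an endpoint of $I$ there exist continuous parameters $\lambda(t)>0$, $\gamma(t)\in\mathbb{R}$, $\xi(t),x(t)\in\mathbb{R}^d$ and a small remainder $\varepsilon(t)$ such that
\begin{equation*}
u(t,x)=\lambda(t)^{-d/2}e^{i\gamma(t)}e^{i(x-x(t))\cdot\xi(t)}\bigl[Q_0+\varepsilon(t)\bigr]\!\left(\tfrac{x-x(t)}{\lambda(t)}\right),\qquad \|\varepsilon(t)\|_{L^2_x}\to 0.
\end{equation*}
Imposing orthogonality of $\varepsilon(t)$ to the kernel of the linearized operator around $Q_0$ then yields modulation ODEs for $(\lambda,\gamma,\xi,x)$ together with coercive $L^2_x$-control of $\varepsilon$.

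Second, I would bootstrap control of $\xi(t)$ and $x(t)/\lambda(t)$, which are \emph{not} a priori bounded at $L^2_x$-regularity. The key tool is a long-time Strichartz estimate in the spirit of Dodson: decompose $u$ into frequency shells adapted to $\xi(t)$ and prove spacetime bounds on the high-frequency tails along intervals measured by $\int\lambda(t)^{-2}\,\mathrm{d}t$. Combined with a bilinear Strichartz refinement and the modulation ODEs, this closes a bootstrap showing that $\xi(t)$ is bounded (so up to a Galilean boost one may take $\xi(t)\to 0$) and that $\int_I \lambda(t)^{-2}\,\mathrm{d}t$ is the monotonic invariant measuring deviation from a pure soliton. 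Establishing this long-time Strichartz in the vector-valued setting of \eqref{1.1} is precisely what requires the new coupled bilinear estimate announced in the abstract.

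Third, the rigidity alternative: the parameter dynamics force a dichotomy. Either $\lambda(t)$ is comparable to a positive constant throughout $I$, in which case $I=\mathbb{R}$ and a frequency-localized virial identity of the form $\tfrac{d^2}{dt^2}\int\chi(x)|x|^2|u|^2\,\mathrm{d}x$, combined with the sharp Gagliardo--Nirenberg inequality and the coercivity of step (a), forces $\varepsilon\equiv 0$ and identifies $u$ as a soliton; or $\lambda(t)$ degenerates at a finite endpoint $T$, in which case applying \eqref{pseudotransform} sends $u$ to a minimal-mass solution $v$ whose modulation scale is bounded, reducing matters to the soliton case, and inverting yields the pseudo-conformal soliton. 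I expect the main obstacle to be the bootstrap for $\xi(t)$ and $x(t)$ in the bounded-scale branch, since ordinary Strichartz estimates are blind to the Galilean parameter and a naive truncated virial produces uncontrolled boundary contributions from $\varepsilon$; controlling both simultaneously is where the frequency-localized long-time Strichartz and the new vector-valued bilinear estimate become decisive, and where the multi-component interactions in \eqref{1.1} introduce the cross terms that the paper's refined modulation analysis is designed to absorb.
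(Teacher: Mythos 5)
The statement you are attempting to prove is Conjecture~\ref{co1v59}, which the paper \emph{does not prove}: it is presented explicitly as a conjecture, with the remark that it has been resolved by Dodson in \cite{D2,D1} for non-radial $L^2_x$ data when $1\leq d\leq 15$ (and earlier under radial/$H^1_x$ hypotheses). What the paper actually proves is the analogous classification, Theorem~\ref{main1}, for the two-dimensional \emph{coupled} system \eqref{1.1}, not for the scalar equation \eqref{eq1.6v59} in general dimension. So there is no "paper's own proof" of this statement to compare against; you have sketched a proof of an open conjecture.

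That said, your outline is substantially the correct strategy — it is Dodson's, and it is the template the paper adapts to the vector-valued setting. Two points of caution. First, your dichotomy "$\lambda(t)$ comparable to a constant $\Rightarrow I=\mathbb{R}$ and soliton; $\lambda(t)$ degenerates $\Rightarrow$ pseudo-conformal" skips the most delicate step: one must first establish the \emph{almost-monotonicity} of $\lambda(s)$ (cf.\ Theorem~\ref{t9.16} in the paper), which is not a free case split but the output of a bootstrap combining the virial inequality with the modulation ODEs \eqref{f4.45}--\eqref{f4.55} and the induced $L^p_s$ integrability of $\|\boldsymbol{\epsilon}(s)\|_{L^2_x}$ for all $p>1$ (Theorem~\ref{t8.15}). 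Without that, the naive dichotomy does not close: a priori $\lambda(t)$ could oscillate badly. Second, even for the scalar equation the long-time Strichartz machinery in step (b) degrades with dimension (the bilinear refinement and the frequency-interaction counting depend on $d$), which is precisely why the known resolution stops at $d\leq 15$; your plan as written does not explain how to handle arbitrary $d$, so it would prove the conjecture only in the dimensional range where that estimate is available.
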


This conjecture implies a trichotomy dynamics of the mass-critical focusing nonlinear Schr\"odinger equation when $M_0(u_0) = M_0(Q_0)$,
which has been extensively studied over the past few decades.
The first part of Conjecture \ref{co1v59} has been studied first. F. Merle \cite{M} showed that any $H_x^1$ solution with minimal mass that blows up in finite time
must coincide with the pseudo-conformal soliton \eqref{pseudotransform} modulo symmetries of the equation. The proof, which was later simplified by T. Hmidi and S. Keraani
\cite{HHK}, relies heavily on the assumption that the blowup time is finite. Thanks to the pseudo-conformal symmetry, this result also implies that solutions belonging to $\Sigma:=
\left\{f\in H^{1}_{x} : \, |x|f\in L_x^2 \right\}$ that have $M_0(u)=M_0(Q_0)$
and blow up in infinite time must be the solitary wave $e^{it\Delta}Q_0$ modulo  symmetries of the equation.
For the second part of Conjecture \ref{co1v59}, R. Killip, D. Li, M. Visan, and X. Zhang \cite{KLVZ} established under radial symmetry $(d \ge 4)$
 that solutions must be a soliton up to scale and phase rotation parameters, i.e.
\begin{equation*}
	u(t, x) =e^{i\gamma_0}e^{it}\lambda_0^{\frac{d}{2}} Q_0 (\lambda_0 x) \quad \mbox{ for some parameters } \gamma_0\in [0,2\pi] \mbox{ and } \lambda_0\in \mathbb{R}_+.
\end{equation*}
A vital step in their proof is to prove the compactness property  energy level $\dot{H}_x^1$. To prove such nontrivial property, some technical tools are needed, such as ``incoming/outgoing" decomposition for radial functions in $L_x^2$. Their approach has been successfully applied to other dispersive models, we refer to \cite{LZ28,Murphy}.
In \cite{LZ1}, D. Li and X. Zhang further gave the classification of the minimal-mass blowup solutions to \eqref{eq1.6v59} when $d \ge 2$ for radial initial data, where they relax the radial assumption to splitting-spherically symmetric assumption on the initial data  when $d \ge 4$. The approach in \cite{KLVZ,LZ1} heavily relies on radial/splitting-spherically symmetry assumption, so it seems to be quite difficult to adapt their strategy to the non-radial case.

We remind the readers that all of the above results need $H^1_x$ initial data. To prove the conjecture for $L_x^2$ initial data, D. Li and X. Zhang \cite{LZ0} proved the two-way non-scattering solutions in fact lie in $H_x^{1+}$ in the radial case for $d \ge 4$, and thus showed the two-way non-scattering minimal-mass blowup solutions must be solitons up to symmetries, we also refer to \cite{LZ,LZ2}. For related results on other dispersive models, we refer to \cite{LZeng}. Recently, B. Dodson \cite{D2,D1} resolved the conjecture completely for non-radial initial data when $1\leq d \leq 15$. 
To resolve the conjecture in the non-radial case in $L_x^2$, he developed the long time Strichartz estimate integrated with modulation theory. Specially, a long time Strichartz estimate adapting to the solutions around the orbit of ground state was developed to address the difficulty caused by the Galilean transformations and spatial translations. A pivotal contribution in \cite{D2,D1} lies in the design of an iterative framework. By combining this framework with modulation analysis, he established the almost monotonicity of the scale parameter—a result that cannot be achieved through compactness arguments alone.

\subsection{On the mass critical coupled nonlinear Schr\"odinger system}

\subsubsection{Finitely coupled NLSS}

	%\textcolor{red}
    {Returning to the coupled nonlinear Schr\"odinger system \eqref{1.1}, it admits significantly more 
    %has much more
    soliton waves when $N<\infty$ compared to the nonlinear Schr\"odinger equation \eqref{eq1.6v59}. %Compared to the NLS \eqref{eq1.6v59}, 
    Specifically, the NLSS \eqref{1.1} allows for standings waves of the form $(e^{i\lambda_1 t}Q_1,\cdots, e^{i\lambda_Nt}Q_N)$, where the parameters $\lambda_j$ are not all equal and $Q_j$ are strictly positive. Obviously,  $(Q_1,\cdots,Q_N)$ satisfies
\begin{equation}\label{multisw}
	\Delta_{\R^2}Q_j-\lambda_j Q_j=-2 \sum\limits_{k \in \mathbb{Z}_N } Q_k^2   Q_j - Q_j^3,\quad \forall \ j\in\Z_N.
\end{equation}
The existence of such solutions $(Q_1, \cdots, Q_N)$ for certain ranges of $(\lambda_1, \cdots, \lambda_N)$ was established by 
B. Sirakov \cite{Si}. 
%have proved the existence of $(Q_1,\cdots, Q_N)$ for a range of $(\lambda_1,\cdots,\lambda_N)$. 
This richer structure of solitons solutions introduces greater complexity in the dynamical behavior of solutions to \eqref{1.1} compared to the single-component case. 
%more complex.
}

%\textcolor{red}
{By analogy with the NLS \eqref{eq1.6v59}, we now consider %discuss
the minimal mass that ensures the global well-posedness of the NLSS \eqref{1.1}.} 
N. V. Nguyen, R. Tian, B. Deconinck, and N. Sheils \cite{NTDS} studied the best constant of the corresponding vector-valued Gagliardo-Nirenberg inequality  of \eqref{1.1}. This is further studied by the first author together with Z. Guo, G. Hwang, and H. Yoon \cite{CGHY}. Now let us briefly discuss the best constant $C_{GN}$ of the Gagliardo-Nirenberg inequality related to the system \eqref{1.1}:
\begin{equation*}
	\sum_{j\in\Z_N}\int_{\R^2}  F_j(\mathbf{f})\cdot \bar{f}_j \, \mathrm{d} x\leq C_{GN}\|\mathbf{f}\|^2_{L^2_x l^2}\|\nabla\mathbf{f}\|^2_{L^2_x l^2} .
\end{equation*}
We define Weinstein functional of \eqref{1.1} as follows:
\begin{equation}\label{qzmz}
	J(\mathbf{f})=\frac{\sum\limits_{j\in\Z_N}\int_{\R^2}  F_j(\mathbf{f})\cdot \bar{f}_j \, \mathrm{d} x}{\|\mathbf{f}\|^2_{L^2_x l^2}\|\nabla\mathbf{f}\|^2_{L^2_x l^2}} ,
\end{equation}
which is invariant under homogeneity and scaling symmetry, that is, $J(\mathbf{u})=J( \mathbf{u}^{\lambda,\mu})$ where $\mathbf{u}^{\lambda,\mu}(x)=\mu \mathbf{u}(\lambda x)$ for any $\mu, \lambda>0$. Then by standard variational argument,
a maximizer $\mathbf{Q}$ of the Weinstein functional $J$ weakly solves the system of Euler-Lagrange
equations
\begin{equation}\label{EL}
	\Delta_{\R^2}\mathbf{Q}-\mathbf{Q}=-\mathbf{F}(\mathbf{Q}).
\end{equation}
 Using the fact that $\big\|\nabla |f|\big\|_{L_x^2}\leq \|\nabla f\|_{L_x^2}$ and then using Schwartz rearrangement, we can assume that each component of $\mathbf{Q}$ is non-negative, radial and decreasing function. When $N=2$, obviously, $(0, Q_0)$, $(Q_0,0)$ and $ \left(\sqrt{\frac{1}{3}}Q_0,\sqrt{\frac{1}{3}}Q_0 \right)$ are the non-negative radial decreasing solutions of \eqref{EL}. In fact, J. Wei and W. Yao \cite{WY} proved that they are the only non-negative  decreasing solutions of the system \eqref{EL} up to scaling and space translation. Since $J \left( \left(\sqrt{\frac{1}{3}}Q_0,\sqrt{\frac{1}{3}}Q_0 \right) \right)> J((0,Q_0))=J((Q_0,0))$, we see that $\mathbf{Q}= \left(\sqrt{\frac{1}{3}}Q_0,\sqrt{\frac{1}{3}}Q_0 \right)$. Since J. Wei and W. Yao \cite{WY}'s result can be  extended to the $3\leq N< \infty$ case, then one can also prove that $\mathbf{Q}= \left(\sqrt{\frac{1}{2N-1}}Q_0,\cdots,\sqrt{\frac{1}{2N-1}}Q_0 \right)$, and we have
 \begin{equation*}
 	C_{GN}=J \left( \left(\sqrt{\frac{1}{2N-1}} Q_0 ,\cdots,\sqrt{\frac{1}{2N-1}} Q_0  \right) \right)=\frac{2N-1}{N}\frac{\|Q_0\|^4_{L_x^4}}{\|Q_0\|^2_{L_x^2}\|\nabla Q_0\|^2_{L_x^2}}=\frac{2(2N-1)}{N}\|Q_0\|^{- 2}_{L_x^2}.
 \end{equation*}
Thus, the following sharp  Gagliardo-Nirenberg inequality holds:
\begin{lemma}[Sharp Gagliardo-Nirenberg inequality]\label{le1.1v19}
For any $\mathbf{u}\in H^{1}_{x} l^2(\R^2\times\Z_N)$, we have
\begin{equation}\label{SGN}
	\sum_{j\in\Z_N}\int_{\R^2}  F_j(\mathbf{u})\cdot \bar{u}_j \, \mathrm{d} x\leq \frac{2(2N-1)}{N}\|Q_0\|^{- 2}_{L_x^2}\|\mathbf{u}\|^2_{L^2_x l^2}\|\nabla\mathbf{u}\|^2_{L^2_x l^2},
\end{equation}
the equality holds if and only if $\mathbf{u}= \left(e^{i\gamma_1}\lambda \sqrt{\frac{1}{2N-1}}Q_0 (\lambda x+\tilde{x}),\cdots,e^{i\gamma_N}\lambda\sqrt{\frac{1}{2N-1}} Q_0 (\lambda x+\tilde{x}) \right)$ for some $(\gamma_1,\cdots,\gamma_N)\in [0,2\pi]^N,\lambda>0$ and $\tilde{x}\in \mathbb{R}^2$.
\end{lemma}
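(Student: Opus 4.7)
The plan is to follow the variational route sketched in the paragraph preceding the lemma. First I would observe that $J$ is bounded above: since $\sum_j F_j(\mathbf{u})\bar u_j = 2\bigl(\sum_j|u_j|^2\bigr)^2 - \sum_j|u_j|^4 \le 2\bigl(\sum_j|u_j|^2\bigr)^2$, the nonlinear functional is controlled by $\big\|\bigl(\sum_j|u_j|^2\bigr)^{1/2}\big\|_{L_x^4}^4$, which by the scalar Gagliardo-Nirenberg inequality on $\mathbb{R}^2$ applied to the density $\bigl(\sum_j|u_j|^2\bigr)^{1/2}$ is bounded by $\|\mathbf{u}\|_{L_x^2 l^2}^2 \|\nabla\mathbf{u}\|_{L_x^2 l^2}^2$ (using the pointwise bound $\bigl|\nabla\bigl(\sum_j|u_j|^2\bigr)^{1/2}\bigr| \le \bigl(\sum_j|\nabla u_j|^2\bigr)^{1/2}$). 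Hence $C_{GN}:=\sup J<\infty$.

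Next I would prove that the supremum is attained. Given a maximizing sequence $\mathbf{u}_n$, the scaling and homogeneity invariance of $J$ lets me normalize $\|\mathbf{u}_n\|_{L_x^2 l^2} = \|\nabla \mathbf{u}_n\|_{L_x^2 l^2} = 1$. A concentration-compactness argument on the scalar density $\rho_n := \sum_j |u_{n,j}|^2$ rules out vanishing (since $\int \rho_n^2\,\mathrm{d}x$ stays bounded away from zero) and dichotomy (via strict subadditivity of $C_{GN}$ in mass, or equivalently via a linear profile decomposition in $L_x^2 l^2$ adapted from the scalar case). After a spatial translation, passing to a weak limit and combining Brezis-Lieb with the weak lower semicontinuity of $\|\nabla\cdot\|_{L_x^2 l^2}$ yields strong $L_x^2 l^2$ convergence to a nontrivial maximizer $\mathbf{Q}$, which after rescaling satisfies the Euler-Lagrange system \eqref{EL}.

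To identify $\mathbf{Q}$ I apply two reductions. The diamagnetic inequality $\bigl|\nabla|u_j|\bigr| \le |\nabla u_j|$, together with the fact that the nonlinear density depends only on the moduli, allows me to replace each $Q_j$ by $|Q_j|$ without decreasing $J$. Schwartz rearrangement applied componentwise then preserves $L_x^2$ norms, does not increase $\|\nabla Q_j\|_{L_x^2}^2$, and does not decrease the cross integrals $\int Q_j^2 Q_k^2\,\mathrm{d}x$ by the Hardy-Littlewood rearrangement inequality; so I may assume each component of $\mathbf{Q}$ is non-negative, radial, and radially decreasing. The classification of J.\ Wei and W.\ Yao \cite{WY}, extended to general $3 \le N < \infty$, then forces $\mathbf{Q}$ to have the form $(c_1 Q_0(\cdot-x_0),\dots,c_N Q_0(\cdot-x_0))$ with each $c_j \in \{0, 1/\sqrt{2N-1}\}$. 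If exactly $K$ of the $c_j$ are nonzero, a direct computation gives
\begin{equation*}
J(\mathbf{Q}) = \frac{2K-1}{K}\cdot\frac{\|Q_0\|_{L_x^4}^4}{\|Q_0\|_{L_x^2}^2 \|\nabla Q_0\|_{L_x^2}^2},
\end{equation*}
which is strictly increasing in $K$, so the maximum occurs at $K=N$. Combining with the Pohozaev identities $\|Q_0\|_{L_x^4}^4 = 2\|Q_0\|_{L_x^2}^2 = 2\|\nabla Q_0\|_{L_x^2}^2$ (obtained by pairing \eqref{eq1.9v59} with $Q_0$ and with $x\cdot\nabla Q_0$ and using $d=2$) yields $C_{GN} = \frac{2(2N-1)}{N\|Q_0\|_{L_x^2}^2}$, which is precisely the asserted inequality.

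The equality characterization follows by tracking rigidity at each reduction: strict monotonicity under Schwartz rearrangement forces the $|Q_j|$ to be translates of a common radial decreasing profile; equality in the diamagnetic inequality, together with the strict positivity of $Q_0$, forces each component to have constant phase $e^{i\gamma_j}$; and the Wei-Yao classification pins down the common profile as a scaled copy of $\sqrt{1/(2N-1)}\,Q_0$, from which the stated family of extremals emerges after restoring the scaling $\lambda$ and translation $\tilde x$. The main obstacle I anticipate is the existence of the maximizer: the joint scaling-translation noncompactness requires care in the vector-valued setting, because the mixed terms $\int |u_j|^2 |u_k|^2 \,\mathrm{d}x$ prevent a purely component-by-component reduction and force the concentration-compactness analysis to be run directly on the vector density $\rho_n$, with uniform control over the off-diagonal contributions in $j,k \in \mathbb{Z}_N$.
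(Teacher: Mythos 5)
Your proposal follows essentially the same variational route as the paper: diamagnetic inequality plus Schwartz rearrangement to reduce to non-negative radial decreasing maximizers, the Wei--Yao classification of solutions to the Euler--Lagrange system, and the comparison of $J$-values over the admissible configurations, with the Pohozaev identities giving the constant. You flesh out two steps the paper leaves implicit --- the concentration-compactness existence of a maximizer and the rigidity at each reduction for the equality characterization --- but the structure is the same. One small inaccuracy: when exactly $K<N$ of the components are nonzero and equal, the Wei--Yao classification gives them the common amplitude $\sqrt{1/(2K-1)}$ (not $\sqrt{1/(2N-1)}$), since in that case they satisfy $\Delta Q - Q = -(2K-1)Q^3$; your stated normalization with $c_j\in\{0,1/\sqrt{2N-1}\}$ is therefore wrong for $K<N$. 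This does not affect your subsequent computation of $J(\mathbf{Q}) = \frac{2K-1}{K}\cdot\frac{\|Q_0\|_{L_x^4}^4}{\|Q_0\|_{L_x^2}^2\|\nabla Q_0\|_{L_x^2}^2}$ because $J$ is invariant under an overall rescaling of $\mathbf{Q}$, so the conclusion that $K=N$ maximizes, and hence the value of $C_{GN}$, is still correct.
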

From now on, we call $\mathbf{Q}= \left(\sqrt{\frac{1}{2N-1}}Q_0,\cdots,\sqrt{\frac{1}{2N-1}}Q_0 \right)$ the ground state of  system  \eqref{1.1}.  By \eqref{SGN} and conservation laws, for any initial data $\mathbf{u}_0\in H^{1}_{x} l^2$ with  $\|\mathbf{u}_0\|_{L^2_x l^2}< \sqrt{\frac{N}{2N-1}}\|Q_0 \|_{L_x^2}$, then $\|\mathbf{u}(t)\|_{H^{1}_{x} l^2}\lesssim 1$ uniformly with $t$, which yields the solution of system \eqref{1.1} is global well-posed. One may also expect that for merely $L^2_x l^2$ initial data with $\|\mathbf{u}_0\|_{L^2_x l^2}< \sqrt{\frac{N}{2N-1}}\|Q_0 \|_{L_x^2}$, then the solution of system \eqref{1.1} is global well-posed and scatters. In fact, in \cite{CGHY}, the first author together with Z. Guo, G. Hwang, and H. Yoon have proved global well-posedness and scattering of \eqref{1.1} when the mass of the initial data is strictly less than the mass of $\mathbf{Q}$. Thus, we can see that the soliton $e^{it} \mathbf{Q}$ and the pseudo-conformal transformation of the soliton $e^{it} \mathbf{Q}$ are the minimal-mass blowup solutions among all non-scattering solutions of \eqref{1.1}. It is natural to characterize the minimal-mass blowup solutions of \eqref{1.1}. We will address this problem later in this paper. 

\subsubsection{Infinitely coupled NLSS}

For the infinitely coupled nonlinear Schr\"odinger system \eqref{1.1}, 
%\textcolor{red}{more information about approximation process} 
by using the approximation of the best constant of the sharp Gagliardo-Nirenberg inequality related to the finitely coupled NLSS in Lemma \ref{le1.1v19}, the first author together with Z. Guo, G. Hwang, and H. Yoon \cite{CGHY}  proved the following sharp Gagliardo-Nirenberg inequality: 

\begin{lemma}[Sharp Gagliardo-Nirenberg inequality for the infinite vector function]
For $\mathbf{u} \in H^1_x l^2(\RR^2\times\ZZ)$, we have
\begin{align}\label{GN-infity}
\sum_{j\in\Z}\int_{\R^2}  F_j(\mathbf{u})\cdot \bar{u}_j \,\mathrm{d}x  \leq\frac4{\|Q_0\|_{L_x^2(\RR^2)}^2} \|\nabla \mathbf{u}\, \|_{L_x^2 l^2(\RR^2\times \ZZ)}^2 \|\mathbf{u}\,\|_{L_x^2 l^2(\RR^2\times \ZZ)}^2.
\end{align}

\end{lemma}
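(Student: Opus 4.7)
The plan is to derive the infinite-dimensional inequality as a limit of the finite-dimensional one from Lemma \ref{le1.1v19}: the constant $\frac{2(2N-1)}{N} = 4 - \frac{2}{N}$ converges monotonically up to $4$, and the resonance structures in the finite and infinite cases share the same algebraic form. The entire argument is a truncation-and-limit scheme, with no genuine analytic difficulty beyond justifying the convergence.

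The first key observation is that the joint resonance conditions $j_1 - j_2 + j_3 = j$ and $j_1^2 - j_2^2 + j_3^2 = j^2$ force $(j_1-j)(j_3-j) = 0$, so $\mathcal{R}(j)$ (and likewise $\mathcal{R}_N(j)$) consists exactly of the triples of the form $(j,k,k)$ or $(k,k,j)$. Consequently, for any vector $\mathbf{v} = (v_j)_{j \in S}$ indexed by a subset $S \subset \ZZ$ (finite or infinite),
\[
\sum_{j \in S} F_j(\mathbf{v}) \, \bar v_j = 2 \Bigl(\sum_{j \in S} |v_j|^2 \Bigr)^{\!2} - \sum_{j \in S} |v_j|^4,
\]
which depends only on the multiset of values $\{v_j\}_{j \in S}$ and is therefore invariant under any relabeling of $S$. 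Given $\mathbf{u} \in H^1_x l^2(\RR^2 \times \ZZ)$, I would set $u_j^{(N)} = u_j$ for $|j| \le N$ and $u_j^{(N)} = 0$ otherwise; after relabeling the $2N+1$ active indices as $\ZZ_{2N+1}$, Lemma \ref{le1.1v19} applied with $N$ replaced by $2N+1$ yields
\[
\int_{\RR^2} \sum_{j \in \ZZ} F_j(\mathbf{u}^{(N)}) \, \bar u^{(N)}_j \, \mathrm{d}x \le \frac{2(4N+1)}{2N+1} \, \|Q_0\|_{L_x^2}^{-2} \, \|\mathbf{u}^{(N)}\|_{L_x^2 l^2}^2 \, \|\nabla \mathbf{u}^{(N)}\|_{L_x^2 l^2}^2.
\]

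Finally, I would let $N \to \infty$. The coefficient tends to $4$, while monotone convergence gives $\|\mathbf{u}^{(N)}\|_{L_x^2 l^2}^2 \nearrow \|\mathbf{u}\|_{L_x^2 l^2}^2$ and $\|\nabla \mathbf{u}^{(N)}\|_{L_x^2 l^2}^2 \nearrow \|\nabla \mathbf{u}\|_{L_x^2 l^2}^2$. For the left-hand side, the elementary inequality $\sum_j a_j^2 \le \bigl(\sum_j a_j\bigr)^{\!2}$ for nonnegative $a_j$ shows pointwise nonnegativity of the integrand $2 \bigl( \sum_{|j| \le N} |u_j|^2 \bigr)^{\!2} - \sum_{|j| \le N} |u_j|^4$, and a short comparison between consecutive truncations confirms that this quantity is also pointwise monotone increasing in $N$; monotone convergence then delivers the desired limit. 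The only mildly delicate point, and the closest thing to an obstacle, is this pointwise monotonicity of the left-hand integrand, which does not follow directly from nonnegativity and must be verified by the elementary algebraic identity relating the partial sums $S_{N+1}, T_{N+1}$ to $S_N, T_N$.
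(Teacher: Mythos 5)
Your proposal is correct, and it follows the same route the paper indicates: it attributes the lemma to \cite{CGHY}, describing the proof precisely as an approximation from the finite-dimensional constant $\tfrac{2(2N-1)}{N}\|Q_0\|_{L_x^2}^{-2}$ of Lemma \ref{le1.1v19} in the limit $N\to\infty$. Your factorization of the resonance condition, the reduction to $2\bigl(\sum_j|u_j|^2\bigr)^2-\sum_j|u_j|^4$, the relabeling to apply the finite lemma with $2N+1$ components, and the monotone-convergence passage to the limit (including the pointwise monotonicity check via $2(a+b)(2S_N+a+b)\ge a^2+b^2$) are all sound and close the argument.
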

Then by the concentration-compactness/rigidity theorem method, they gave global well-posedness and scattering of \eqref{1.1} when the mass of the initial data is strictly less than $\frac12 \|Q_0 \|_{L_x^2}^2$.

In summary, in \cite{CGHY}, they proved scattering in the region $\mathcal{S}:= \{ \mathbf{u}_0:  M(\mathbf{u}_0) < M_N \}$, where $M_N = {\frac{N}{2N-1}} \|Q_0\|^2_{L_x^2}$ when $N< \infty$ and $M_\infty = {\frac12} \|Q_0 \|^2_{L_x^2}$ when $N= \infty$. 

\subsection{Main results.}
In this paper, we will  give the dynamics of the solution to \eqref{1.1} when the mass is equal to $M_N$ for $N< \infty$, and when it is equal to or even slightly greater than $M_\infty$ for $N= \infty$. 
\subsubsection{Finitely coupled NLSS}
First, we give a classification of solutions to \eqref{1.1} for $N< \infty$ when the mass of the initial data is equal to the mass of $\mathbf{Q}$, where
\begin{align}\label{1.12}
\mathbf{Q} = (Q, \cdots, Q) = \Big( \sqrt{\tfrac1{2N-1}} Q_0, \cdots, \sqrt{\tfrac1{2N-1}} Q_0   \Big) .
\end{align}
%\textcolor{red}
%{
We prove that any non-scattering solution to NLSS must coincide with
\begin{align*}
e^{it}\mathbf{Q} \text{ or }
\frac1{t}  \mathbf{Q} \left( \frac{x}t \right)  e^{i \frac{|x|^2-4}{4t}}
\end{align*}
up to the symmetries \eqref{sym1}-\eqref{sym5} in the critical space $L_x^2l^2$. In particular, %Especially,
we exclude %rule out
the existence of standing waves of the form $$(e^{i\lambda_1t}Q_1,\cdots, e^{i\lambda_N t}Q_N),\quad \text{ where } \lambda_j\mbox{ are not all equal, }$$ 
at the threshold mass $M(\mathbf{Q})$.
%}
\begin{theorem}[Classification of minimal-mass blowup solutions to NLSS for $N< \infty$] \label{main1}

Let $\mathbf{u}$ be a solution to \eqref{1.1} with mass $M(\mathbf{u})=M(\mathbf{Q})$, and let $I$ be its  maximal lifespan. 
Assume $\mathbf{u}$ blows up in the sense that $\|\mathbf{u} \|_{L_{t,x}^4 l^2 ( I \times \mathbb{R}^2 \times \mathbb{Z}_N)} = \infty$. Then only the following two scenarios occur:
\begin{enumerate}
 \item $I = \mathbb{R}$ and $\mathbf{u}$ is a soliton solution, i.e. there exist $\lambda>0, (\gamma_1,\cdots,\gamma_N)\in [0,2\pi]^N$ and $ (\tilde{x},\xi)\in \mathbb{R}^2\times\mathbb{R}^2$ such that
\begin{align}\label{soliton}
	u_j(t,x) =e^{i\gamma_j-it |\xi|^2}e^{i\lambda^2t}e^{ix\cdot\xi}\lambda  \sqrt{ \frac1{2N-1}}  Q_0
    \left(\lambda(x-2t\xi)-\tilde{x} \right),\quad \forall \, j\in\mathbb{Z}_N.
\end{align}
\item $I = (- \infty, T)$ or $(T, \infty)$, and $\mathbf{u}$ is a pseudo-conformal transformation of the soliton solution, i.e. there exist $\lambda>0, \left(\gamma_1,\cdots,\gamma_N \right)\in [0,2\pi]^N$ and $ \left(\tilde{x},\xi \right)\in \mathbb{R}^2\times\mathbb{R}^2$ such that
\begin{align}\label{pseudosoliton}
	u_j(t,x) = \frac{\lambda}{| T-t | }e^{i\gamma_j}e^{\frac{i
    |x-\xi|^2}{4(t-T )}}e^{ i \frac{\lambda^2}{t- T }} \sqrt{ \frac1{2N-1}} Q_0\left(\frac{\lambda(x-\xi)-(T-t)\tilde{x}}{ T -t} \right),\quad\forall \,  j\in\mathbb{Z}_N.
\end{align}
\end{enumerate}

\end{theorem}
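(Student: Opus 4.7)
The plan is to follow the framework developed by Dodson \cite{D2,D1} for the scalar mass-critical NLS, adapted to the multi-component setting. First, I would use the variational characterization from Lemma \ref{le1.1v19} together with a profile decomposition (in the vector-valued spaces $L_x^2 l^2$) to show that any minimal-mass blowup solution $\mathbf{u}$ is almost periodic modulo the symmetry group \eqref{sym1}--\eqref{sym5}: there exist continuous parameters $\lambda(t)>0$, $x(t), \xi(t) \in \mathbb{R}^2$, and phases $\gamma_j(t) \in [0,2\pi]$ for $j \in \mathbb{Z}_N$ such that the orbit
\begin{equation*}
\left\{ \lambda(t)^{-1} e^{-i\gamma_j(t)} e^{-i x \cdot \xi(t)} u_j\!\left(t, \lambda(t)^{-1} x + x(t)\right) \right\}_{t \in I, \, j \in \mathbb{Z}_N}
\end{equation*}
is precompact in $L_x^2 l^2$. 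At the threshold mass $M(\mathbf{Q})$ the variational inequality \eqref{SGN} forces the limiting profile of any convergent subsequence to be, after symmetries, exactly the ground state $\mathbf{Q}$ given in \eqref{1.12}; in particular the components share a common modulus profile, which is where the excluded standing waves with non-equal $\lambda_j$ in \eqref{multisw} are ruled out.

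Next I would split into two scenarios according to the lifespan $I$. If $I = \mathbb{R}$ (two-way blowup), the plan is to upgrade the qualitative compactness above into quantitative modulation equations by linearizing around $\mathbf{Q}$ componentwise; this produces evolution equations for $(\lambda, x, \xi, \gamma_1, \ldots, \gamma_N)$ together with an orthogonality-controlled remainder. The key analytic input, and the main technical novelty of the paper, is the long time Strichartz estimate for vector-valued solutions adapted to the orbit of $\mathbf{Q}$: combining this with the new vector-valued bilinear estimate one can bound the frequency-localized pieces of $\mathbf{u}$ by negative powers of the scale, and iteratively feed this back into the modulation ODEs. This iteration yields the almost monotonicity of $\lambda(t)$ and $\int \lambda(t)^2 \, dt < \infty$-type summability, which together with energy/virial identities force $\xi(t) \equiv \xi_0$, $\lambda(t) \equiv \lambda_0$, and the phases $\gamma_j(t)$ to evolve linearly in $t$ with a common rate (matched across $j$ by the mass-critical structure and the shape of $\mathbf{Q}$). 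This gives exactly the form \eqref{soliton}.

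If $I$ is a half-line $(-\infty, T)$ or $(T, \infty)$ (one-way blowup), I would apply the pseudo-conformal transformation \eqref{pseudotransform}, centered at $T$, to $\mathbf{u}$. The transformed solution $\tilde{\mathbf{u}}$ still has mass $M(\mathbf{Q})$ and is a non-scattering solution to \eqref{1.1}; moreover it is defined on a half-line whose finite endpoint becomes $t = \infty$, so $\tilde{\mathbf{u}}$ extends to a global two-way non-scattering solution. By the previous case $\tilde{\mathbf{u}}$ is a soliton of the form \eqref{soliton}, and inverting the pseudo-conformal transformation recovers precisely the explicit family \eqref{pseudosoliton}.

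The hard part will be step 2: producing the long time Strichartz estimate and the almost monotonicity of $\lambda(t)$ in the vector-valued setting. The $N$ independent phase parameters $\gamma_j(t)$ allow each component to oscillate at its own rate relative to the linear propagator, which is a genuinely new feature compared with the scalar case of \cite{D2,D1}; the modulation analysis must therefore incorporate $N$ coupled orthogonality conditions and track interactions between components through the coupled nonlinearity $F_j(\mathbf{u})$. This is exactly what the abstract flags as the new vector-valued bilinear estimate --- one must control products like $u_{j_1} \bar{u}_{j_2} u_{j_3}$ ranging over the resonant set $\mathcal{R}_N(j)$ at adjacent dyadic frequencies, uniformly in $j$, with $l^2$-summability in the component index. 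Once this bilinear estimate is established, the subsequent rigidity arguments (frequency localization, spatial/Galilean compactness, virial-type monotonicity) should proceed in close analogy with the scalar theory.
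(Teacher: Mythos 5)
Your high-level roadmap matches the Dodson-style framework the paper uses (modulation analysis, long-time Strichartz adapted to the orbit of $\mathbf{Q}$, a virial-type monotonicity, and reduction of the finite-time case to the infinite-time case via pseudo-conformal symmetry). However, there are two concrete gaps, one of which is a false step.

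First, the assertion that any minimal-mass blowup solution $\mathbf{u}$ is almost periodic modulo the symmetry group is false: the pseudo-conformal solitons \eqref{pseudosoliton} scatter in one time direction and hence are not precompact modulo symmetries. The correct statement (Theorem \ref{claim2.1}) is only \emph{sequential}: there exists a sequence $t_n \nearrow \sup I$ along which $\mathbf{u}(t_n)$ converges to $\mathbf{Q}$ after applying symmetries. Proving even this requires first extracting a limit solution $\mathbf{v}$ via linear profile decomposition, verifying that $\mathbf{v}$ blows up in \emph{both} time directions (hence is almost periodic), and only then running the Morawetz/virial analysis on $\mathbf{v}$ and translating back to $\mathbf{u}$ through stability. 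Moreover, the passage from Theorem \ref{claim2.1} (sequential convergence) to the uniform near-orbit hypothesis of Theorem \ref{t2.3} is itself a nontrivial dichotomy argument, requiring the upper semicontinuity and small-value continuity of the distance functional (Lemma \ref{l2.2}) and, in the ``Case II'' scenario, the construction of a boundary-case solution $\mathbf{w}$ that must blow up both ways yet, by Theorem \ref{t2.3}, would have to be a backward-scattering pseudo-conformal soliton --- a contradiction. Your outline omits this reduction entirely, and without it one cannot invoke the modulation/long-time Strichartz machinery, which is only set up under the global closeness hypothesis \eqref{f2.32}.

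Second, the pseudo-conformal step is misstated. Applying \eqref{pseudotransform} to a solution on a half-line maps the finite blowup time to $\pm\infty$, giving an \emph{infinite-time} blowup solution, not a ``global two-way non-scattering solution.'' One then applies the infinite-time case of Theorem \ref{t2.3} (Theorem \ref{t9.17}) to conclude it is a soliton, and inverts. A nontrivial point you gloss over is verifying that the transformed solution again satisfies the near-orbit hypothesis of Theorem \ref{t2.3}: the Gaussian phase $e^{i|x|^2/(4t)}$ must be shown to be harmless on the concentration scale, which hinges on the quantitative decay $\lambda(t)\lesssim t^{1/2} r(t)$ with $r(t)\to 0$ derived from almost periodicity and the bound $|\lambda'(t)|\lesssim\lambda(t)^3$. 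Finally, the endgame does not ``force $\lambda(t)\equiv\lambda_0$'' directly; it is a contradiction argument: one uses the almost-monotonicity of $\lambda$ and the $L_s^p$-summability of $\|\boldsymbol{\epsilon}(s)\|_{L_x^2 l^2}$ to show $\|\boldsymbol{\epsilon}(0)\|_{L_x^2 l^2}$ is bounded by arbitrarily small quantities, hence vanishes, which contradicts the standing assumption that $\mathbf{u}$ is not already a soliton.
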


\begin{remark} For the following system of equations with general coupling coefficients
\begin{equation*}
		\begin{cases}
			i\partial_t u_1 + \Delta_{\mathbb{R}^2} u_1 = -  \alpha_{11}|u_1|^2 u_1 -   \alpha_{12}|u_2|^2 u_1-\cdots-\alpha_{1N}|u_N|^2 u_1,\\
			i\partial_t u_2 + \Delta_{\mathbb{R}^2} u_2 = -  \alpha_{21}|u_1|^2 u_2 -   \alpha_{22}|u_2|^2 u_2-\cdots-\alpha_{2N}|u_N|^2 u_2,\\
            \qquad\qquad\qquad\qquad\qquad\qquad\qquad\vdots\\
            i\partial_t u_N + \Delta_{\mathbb{R}^2} u_N = -  \alpha_{N1}|u_1|^2 u_N -   \alpha_{N2}|u_2|^2 u_N-\cdots-\alpha_{NN}|u_N|^2 u_N,
		\end{cases}
	\end{equation*}
  where $\alpha_{ij}>0$ for $i,j=1,\cdots,N$, the analysis of the existence/nonexistence of ground states, as well as the investigation of their uniqueness and nondegeneracy properties when such ground states exist, constitutes a matter of profound significance in this field. For a comprehensive exploration of these topics, we refer the reader to \cite{CZou, DW, JLLLW, LW, NTDS, PWW, SWX,Si, WW,WY} and the references therein. Notably, under the assumptions that the ground state is uniquely determined and  nondegenerate, the rigidity properties of the corresponding Schr\"odinger system can be rigorously established by adapting the methodology used in our proof. 
  
\end{remark}

\subsubsection{Infinitely coupled NLSS}

Next, we turn to the infinitely coupled nonlinear Schr\"odinger system, that is \eqref{1.1} for $N = \infty$. Let \begin{equation*}
		C(M)=\begin{cases}
			2\sum\limits_{j=1}^{[\frac{M}{2}]}j^2  ,  \quad\text{if}\; M \mbox { is odd}; \\
            \\
		2\sum\limits_{j=1}^{\frac{M}{2}}j^2-\left(\frac{M}{2} \right)^2,	\quad\text{if}\; M \mbox { is even}.
		\end{cases}
	\end{equation*}
Now we present the global well-posedness and scattering of the cubic focusing infinitely coupled
nonlinear Schr\"odinger system in $L_x^2 h^1(\mathbb{R}^2 \times \mathbb{Z})$:

\begin{theorem}[Global well-posedness and scattering for the cubic focusing 
infinitely coupled NLSS]\label{th1.2}
Given any positive integer $M\geq 2$. Suppose that $\mathbf{u}_0\in L_x^2h^1(\R^2\times\Z)$ satisfies 
\begin{equation*}\|\mathbf{u}_0\|^2_{L_x^2\dot{h}^1(\R^2\times\Z)}\leq C(M)\cdot \frac{2M-1}{2M}\|\mathbf{u}_0\|^2_{L_x^2l^2(\R^2\times\Z)}\footnotemark\quad \mbox{ and }\quad \|\mathbf{u}_0\|^2_{L_x^2l^2(\R^2\times\Z)}< \frac{M}{2M-1}\|Q_0\|^2_{L_x^2(\R^2)}. 
\end{equation*}
\footnotetext{Here $L_x^2 \dot{h}^1(\R^2\times\Z)$ is defined in in Subsection \ref{subse1.6v33}.}
Then there exists a unique global solution $\mathbf{u}(t)$ to \eqref{1.1} satisfying
\begin{equation*}
\left\|\mathbf{u}(t)\right\|_{L_{t,x}^4 l^2(\mathbb{R}\times \mathbb{R}^2  \times \mathbb{Z})}  \lesssim_{\|\mathbf{u}_0\|_{L_x^2l^2}} 1.
\end{equation*}
Furthermore, the solution scatters in $L_x^2 h^1$ in the sense that there exists $\mathbf{u}^{\,\pm} \in  L^2_x h^1(\R^2\times\Z)$
such that
	\[
		\left\|\mathbf{u}-e^{it\Delta_{\RR^2}}\mathbf{u}^{\,\pm} \right\|_{L^2_x h^1(\RR^2\times\ZZ)}
		:=\bigg\| \bigg( \sum\limits_{j\in \mathbb{Z}} \langle j\rangle^2 \big|  u_j(t) - e^{it\Delta_{\mathbb{R}^2}} u_j^{\pm  } \big|^2 \bigg)^\frac12  \bigg\|_{L_x^2(\mathbb{R}^2 )} \to 0, \text{ as } t\to \pm \infty.
	\]
\end{theorem}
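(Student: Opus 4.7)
The plan is to reduce the infinitely coupled problem to the finitely coupled one with $N = M$ components, for which scattering below the threshold mass $\frac{M}{2M-1}\|Q_0\|^2_{L_x^2}$ is already known from \cite{CGHY}. Two ingredients drive the reduction: the conservation of the generalized mass $M_{a,b,c}(\mathbf{u})$ for all $(a,b,c) \in \mathbb{R}^3$, which propagates both hypotheses globally in time (since $\|\mathbf{u}(t)\|^2_{L_x^2 l^2} = M_{1,0,0}(\mathbf{u})$ and $\|\mathbf{u}(t)\|^2_{L_x^2 \dot{h}^1} = M_{0,0,1}(\mathbf{u})$); and a refined sharp Gagliardo-Nirenberg inequality that, under the $\dot{h}^1$-hypothesis, upgrades the coupling constant in \eqref{GN-infity} from the infinitely coupled value $\frac{4}{\|Q_0\|^2_{L_x^2}}$ to the finitely coupled value $\frac{2(2M-1)}{M\|Q_0\|^2_{L_x^2}}$ from Lemma \ref{le1.1v19}.

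The key step is to establish
\begin{equation*}
\sum_{j \in \mathbb{Z}} \int_{\mathbb{R}^2} F_j(\mathbf{u}) \bar{u}_j \, \mathrm{d}x \leq \frac{2(2M-1)}{M \|Q_0\|^2_{L_x^2}} \|\nabla \mathbf{u}\|^2_{L_x^2 l^2} \|\mathbf{u}\|^2_{L_x^2 l^2}
\end{equation*}
for $\mathbf{u}$ satisfying $\|\mathbf{u}\|^2_{L_x^2 \dot{h}^1} \leq C(M) \cdot \frac{2M-1}{2M} \|\mathbf{u}\|^2_{L_x^2 l^2}$. The combinatorial quantity $C(M)$ can be recognized as the minimum of $\sum_{j \in I} j^2$ over subsets $I \subset \mathbb{Z}$ of $M$ consecutive integers, and the saturation case corresponds to the finitely coupled ground state $\mathbf{Q}$ distributed uniformly on these $M$ modes. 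The proof would exploit the shift symmetry $u_j \mapsto u_{j+k}$ (which preserves the NLSS structure and its resonance set) to center the mass in $j$-space, followed by a variational or interpolation argument reducing to Lemma \ref{le1.1v19} with $N = M$.

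With the refined inequality in hand, the mass bound $\|\mathbf{u}\|^2_{L_x^2 l^2} < \frac{M}{2M-1}\|Q_0\|^2_{L_x^2}$ and energy conservation yield a uniform $H^1$, and hence $L_x^2 h^1$, a priori bound on $\mathbf{u}(t)$. Global well-posedness follows immediately. Scattering in $L_x^2 h^1$ then follows from the concentration-compactness/rigidity framework developed in \cite{CGHY}, with the refined Gagliardo-Nirenberg replacing the original in the rigidity step.

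The main obstacle is establishing the refined Gagliardo-Nirenberg inequality: since $\mathbf{u}$ is not strictly supported on $M$ consecutive modes, the second-moment bound gives only Chebyshev-type concentration, and carefully handling the tail contributions requires delicate use of the $\dot{h}^1$ weight together with the structure of the NLSS nonlinearity. One natural route is to characterize the maximizer of the Weinstein-type functional $J$ from \eqref{qzmz} constrained by $\|\mathbf{u}\|^2_{L_x^2 \dot{h}^1} \leq C(M) \cdot \frac{2M-1}{2M} \|\mathbf{u}\|^2_{L_x^2 l^2}$ and show it coincides, after a shift, with the finitely coupled ground state of Lemma \ref{le1.1v19}.
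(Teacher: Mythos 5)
Your high-level strategy matches the paper's: identify that the $\dot{h}^1$ constraint upgrades the infinitely-coupled Gagliardo--Nirenberg constant $\frac{4}{\|Q_0\|^2}$ to the $M$-component constant $\frac{2(2M-1)}{M\|Q_0\|^2}$, and then run the concentration-compactness/rigidity machine of \cite{CGHY} with this sharper inequality. Your combinatorial interpretation of $C(M)$ as $\min_{|I|=M}\sum_{j\in I}j^2$ over consecutive integer windows is correct and is exactly the mechanism behind the constraint. However, there are two concrete gaps.

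First, the middle step is wrong as stated. You write that the refined Gagliardo--Nirenberg plus mass and energy conservation yield a uniform $H^1_x l^2$ bound, ``hence'' an $L_x^2 h^1$ bound, and that ``global well-posedness follows immediately.'' This does not go through: the initial data is assumed only in $L_x^2 h^1$, which controls the discrete weight $\langle j\rangle$ but carries no information about $\nabla_x \mathbf{u}_0$, so $E(\mathbf{u}_0)$ is not even finite in general and energy conservation cannot be invoked to control $\|\nabla_x\mathbf{u}(t)\|_{L_x^2 l^2}$. Moreover, even granting an $H^1_x l^2$ bound, the problem is $L^2$-critical: the blowup criterion is divergence of $\|\mathbf{u}\|_{L_{t,x}^4 l^2}$, not of a Sobolev norm, so ``GWP follows immediately'' from a priori $H^1$ control is false. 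What actually propagates for free, via conservation of the generalized masses $M_{a,b,c}$, is the $L_x^2 l^2$ and $L_x^2 \dot{h}^1$ norms --- and hence the hypothesis of the constrained Gagliardo--Nirenberg --- but global well-posedness and scattering in the critical topology must still be extracted from the concentration-compactness/rigidity argument (excluding the almost-periodic enemy in both regimes $\int_0^\infty N(t)^3\,\mathrm{d}t < \infty$ and $=\infty$), not from energy positivity alone. Your final sentence gestures at this correctly, but it is not a follow-up to the energy-bound paragraph; it is the entire content of the proof once the refined GN is established.

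Second, you treat the hypothesis as $\|\mathbf{u}_0\|^2_{L_x^2\dot{h}^1}\leq C(M)\|\mathbf{u}_0\|^2_{L_x^2 l^2}$, whereas the theorem actually imposes the \emph{strictly smaller} threshold $C(M)\cdot\frac{2M-1}{2M}$. This margin is not cosmetic. In the rigidity step one must apply the constrained Gagliardo--Nirenberg not to $\mathbf{v}(t)$ itself but to frequency- and spatially-localized pieces $\chi(x)e^{ix\cdot\xi}P_{\le k}\mathbf{v}(t,x)$, whose $L_x^2 l^2$ norm may drop. The $\frac{2M-1}{2M}$ slack is precisely what allows a dichotomy: if the localized $L_x^2 l^2$ mass drops below $\frac{2M-1}{2M}$ of the full mass, the crude constant $4/\|Q_0\|^2$ already suffices because that truncated mass is below $\tfrac12\|Q_0\|^2$; if it does not drop, the $\dot{h}^1$ constraint is preserved for the truncation and the sharp constant $\frac{2(2M-1)}{M}\|Q_0\|^{-2}$ applies. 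Without the margin you cannot run either branch. On the refined GN itself, the paper establishes it by exploiting compactness of $H^1_{\mathrm{rad}}l^2 \cap L_x^2 h^1 \hookrightarrow L_x^4 l^2$ to extract a maximizer, then using elliptic regularity plus the Wei--Yao uniqueness argument to show the maximizer has at most $M$ identical non-zero components; your proposed shift-centering/Chebyshev route is a plausible alternative but you correctly flag it as unresolved, and in particular it is unclear how a mere second-moment concentration estimate controls the tail contribution to the quartic form $\sum_j \int F_j(\mathbf{u})\bar u_j$ without the structural input of the Euler--Lagrange equation.
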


\begin{remark}
Given any  $\mathbf{u}_0\in L_x^2h^1$ that satisfies $M(\mathbf{u}_0)=\frac{1}{2}\|Q_0\|^2_{L_x^2}$, then there must exist $M_0\geq2$ such that $\|\mathbf{u}_0\|^2_{L_x^2\dot{h}^1}\leq C(M_0)\cdot \frac{2M_0-1}{2M_0}\|\mathbf{u}_0\|^2_{L_x^2l^2}$. Therefore, Theorem \ref{th1.2} directly implies that  there does not exist minimal-mass blowup solution for \eqref{1.1} in $L_x^2h^1(\R^2\times\Z)$. 

\end{remark}

As a consequence of Theorem \ref{th1.2}, by the argument as in the proof of \cite[Theorem 3.9]{CGYZ}, we can obtain the global well-posedness and scattering of the large-scale solution of the focusing cubic NLS on $\mathbb{R}^2 \times \mathbb{T}$ in $L_x^2 H_y^1$, where $\mathbb{T} = \mathbb{R}/2\pi \mathbb{Z}$.

\begin{theorem}\label{le3.11v63old}
Suppose that $\phi \in L^2_x H_y^1(\mathbb{R}^2\times \mathbb{T})$ satisfies   $\| \phi \|^2_{L_{x}^2\dot{H}_y^1(\mathbb{R}^2\times \mathbb{T}) } \leq C(M)\cdot \frac{2M-1}{2M}  \|\phi \|^2_{L^2_{x,y}(\mathbb{R}^2\times \mathbb{T})}$ and $\| \phi \|^2_{L_{x,y}^2(\mathbb{R}^2\times \mathbb{T}) } < \frac{M}{2M-1}\|Q_0\|^2_{L_x^2}$ for some positive integer $M\geq 2$.
Then there is a sufficiently large constant $\lambda_0=  \lambda_0 (\phi)$  such that  for $\lambda \ge \lambda_0$, we have a unique global solution
$U_\lambda \in C_t^0 L_x^2 H_y^1(\mathbb{R} \times \mathbb{R}^2 \times \mathbb{T})$ of
\begin{equation*}
\begin{cases}
i \partial_t U_\lambda + \Delta_{\mathbb{R}^2 \times \mathbb{T}} U_\lambda = - \left|U_\lambda\right|^2 U_\lambda,
\\
U_\lambda(0,x,y) = \frac1\lambda \phi\left( \frac{x}\lambda, y \right).
\end{cases}
\end{equation*}
Moreover, for $\lambda \ge \lambda_0$, we have
\begin{align*}
\left\|U_\lambda \right\|_{L_t^\infty L_x^2 H_y^1 \cap L_{t,x}^4 H_y^1(\mathbb{R} \times \mathbb{R}^2 \times \mathbb{T})} \lesssim 1.
\end{align*}
As a consequence, $U_\lambda$ scatters in $L_x^2 H_y^1$ in the sense that there exist $ \left\{ U_\lambda^\pm \right\} \in L_x^2 H_y^1 $ such that
\begin{align*}
\big\| U_\lambda (t) - e^{it \Delta_{\mathbb{R}^2 \times \mathbb{T}} } U_\lambda^\pm \big\|_{L_x^2 H_y^1 } \to 0, \text{ as } t \to \pm \infty.
\end{align*}
\end{theorem}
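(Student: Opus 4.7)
The plan is to carry out a resonant-system approximation in the spirit of \cite[Theorem 3.9]{CGYZ}, using Theorem \ref{th1.2} as the black-box well-posedness and scattering statement for the limit system. First I rescale by setting $\tilde{U}_\lambda(t,x,y) := \lambda\, U_\lambda(\lambda^2 t, \lambda x, y)$, so that $\tilde{U}_\lambda(0,x,y) = \phi(x,y)$ and
\[
i \partial_t \tilde{U}_\lambda + \Delta_{\mathbb{R}^2} \tilde{U}_\lambda + \tfrac{1}{\lambda^2}\, \partial_y^2 \tilde{U}_\lambda = - |\tilde{U}_\lambda|^2 \tilde{U}_\lambda.
\]
Writing $\tilde{U}_\lambda(t,x,y) = \sum_{j \in \mathbb{Z}} e^{-ij^2 t / \lambda^2}\, w_{\lambda,j}(t,x)\, e^{ijy}$ absorbs the linear $\partial_y^2$ term, and the resulting system for $\mathbf{w}_\lambda = \{w_{\lambda,j}\}_j$ decomposes into resonant interactions (the tuples in $\mathcal{R}(j)$) and non-resonant ones weighted by oscillatory phases $e^{i(j^2 - j_1^2 + j_2^2 - j_3^2)\, t / \lambda^2}$. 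The formal $\lambda \to \infty$ limit is precisely the infinitely coupled NLSS \eqref{1.1}.

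Set $\phi_j(x) := \frac{1}{2\pi} \int_{\mathbb{T}} \phi(x,y)\, e^{-ijy}\,\mathrm{d}y$ and $\Phi := \{\phi_j\}_{j \in \mathbb{Z}}$. Parseval gives $\|\Phi\|^2_{L_x^2 \dot{h}^1} = \|\phi\|^2_{L_x^2 \dot{H}_y^1}$ and $\|\Phi\|^2_{L_x^2 l^2} = \|\phi\|^2_{L_{x,y}^2}$, so $\Phi$ satisfies the hypotheses of Theorem \ref{th1.2}. This produces a global solution $\mathbf{v} = \{v_j\}_j$ of the infinitely coupled NLSS with $\|\mathbf{v}\|_{L_{t,x}^4 l^2} \lesssim 1$ and scattering in $L_x^2 h^1$; a standard Strichartz iteration then propagates the $h^1$ regularity and yields $\|\mathbf{v}\|_{L_{t,x}^4 h^1} \lesssim 1$. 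I define the approximate solution
\[
\tilde{U}_\lambda^{\mathrm{app}}(t,x,y) := \sum_{j \in \mathbb{Z}} e^{-ij^2 t / \lambda^2}\, v_j(t,x)\, e^{ijy}.
\]
A direct computation shows that $\tilde{U}_\lambda^{\mathrm{app}}$ solves the rescaled equation modulo an error whose $j$-th Fourier coefficient is a sum over strictly non-resonant triples $(j_1,j_2,j_3)$ satisfying $j_1 - j_2 + j_3 = j$ but $j_1^2 - j_2^2 + j_3^2 \neq j^2$, each multiplied by a nontrivial phase. Integration by parts once in time inside the Duhamel formula produces a factor of $\lambda^{-2}$ at the cost of replacing one $v_j$-factor by $\partial_t v_j$, which is a nonlinear expression controlled by the $L_{t,x}^4 h^1$ bound on $\mathbf{v}$ via the equation; summing in $j$ using $h^1$ summability, the total error is $o_\lambda(1)$ in a suitable Strichartz space such as $L_{t,x}^4 H_y^1(\mathbb{R} \times \mathbb{R}^2 \times \mathbb{T})$ as $\lambda \to \infty$.

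For $\lambda \geq \lambda_0(\phi)$ this error is small enough that the stability theory for the mass-critical NLS on $\mathbb{R}^2 \times \mathbb{T}$ at the $L_x^2 H_y^1$ level, as developed in \cite{CGYZ}, promotes $\tilde{U}_\lambda^{\mathrm{app}}$ to a genuine global solution $\tilde{U}_\lambda$ with matching Strichartz bounds and scattering in $L_x^2 H_y^1$. Rescaling back yields the announced statement for $U_\lambda$.

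The principal technical obstacle is the non-resonant error estimate above: the oscillatory phases deliver the decisive $\lambda^{-2}$ gain only after integration by parts in time, and the resulting nonlinear remainder must be controlled uniformly across all Fourier modes. The $j^2$ factors that appear through the phase derivative and through $\partial_y^2$ compel one to exploit the $h^1$ (rather than merely $l^2$) summability furnished by Theorem \ref{th1.2}, which is precisely why the statement is phrased at the $L_x^2 H_y^1$ level rather than in $L_{x,y}^2$.
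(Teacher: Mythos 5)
Your high-level strategy coincides with what the paper intends: invoke Theorem \ref{th1.2} as the black-box statement for the limiting infinitely coupled resonant system, build an approximate solution to the rescaled waveguide problem from the resonant solution, control the non-resonant remainder by oscillatory integration by parts, and close with the stability theory of \cite{CGYZ}. However, the rescaling computation has a sign error that, if followed literally, breaks the argument.

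With $\tilde{U}_\lambda(t,x,y) := \lambda\, U_\lambda(\lambda^2 t,\lambda x, y)$ and $y$ kept in $\mathbb{T}$, the chain rule gives $\partial_t\tilde{U}_\lambda = \lambda^3(\partial_t U_\lambda)(\lambda^2 t,\lambda x,y)$, $\Delta_{\mathbb{R}^2}\tilde{U}_\lambda = \lambda^3(\Delta_{\mathbb{R}^2}U_\lambda)(\lambda^2 t,\lambda x,y)$, and $|\tilde{U}_\lambda|^2\tilde{U}_\lambda = \lambda^3(|U_\lambda|^2 U_\lambda)(\lambda^2 t,\lambda x,y)$, while $\partial_y^2\tilde{U}_\lambda = \lambda(\partial_y^2 U_\lambda)(\lambda^2 t,\lambda x,y)$. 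Matching scalings therefore yields
\begin{equation*}
i\partial_t\tilde{U}_\lambda + \Delta_{\mathbb{R}^2}\tilde{U}_\lambda + \lambda^2\partial_y^2\tilde{U}_\lambda = -|\tilde{U}_\lambda|^2\tilde{U}_\lambda,
\end{equation*}
not the $\tfrac1{\lambda^2}\partial_y^2$ you wrote. Equivalently, one can leave the Laplacian unweighted by rescaling the torus as well, $\tilde{U}_\lambda(t,x,y) = \lambda\, U_\lambda(\lambda^2 t,\lambda x, \lambda y)$ on $\mathbb{R}^2\times\mathbb{T}_{\lambda^{-1}}$, which is what the paper's companion Theorem \ref{le3.11v63} records. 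Consequently, after absorbing the linear $y$-flow the Fourier coefficients carry phases $e^{\pm i j^2\lambda^2 t}$ and the non-resonant interactions come weighted by $e^{i(j^2-j_1^2+j_2^2-j_3^2)\lambda^2 t}$. These are \emph{fast} oscillations, and the integer gap $|j^2-j_1^2+j_2^2-j_3^2|\ge 1$ together with one integration by parts in $t$ produces the decisive $O(\lambda^{-2})$ gain.

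With the phases $e^{i\mu t/\lambda^2}$ you wrote, the non-resonant oscillations are \emph{slow}: on the $O(1)$ rescaled time scales where the solution $\mathbf{v}$ lives (it has finite $L^4_{t,x}$ norm), $e^{i\mu t/\lambda^2}\to 1$ as $\lambda\to\infty$, so the non-resonant contribution does not vanish; and integration by parts against such a phase would cost a factor $\lambda^2/|\mu|$ rather than furnish $\lambda^{-2}$. Your announced $\lambda^{-2}$ gain is therefore inconsistent with the equation and the phases you derived, and is only correct once the coefficient of $\partial_y^2$ is fixed. Everything downstream of that correction — the identification of $\Phi$ with the $y$-Fourier coefficients of $\phi$, the Parseval isometries $\|\Phi\|^2_{L_x^2\dot h^1}=\|\phi\|^2_{L^2_x\dot H^1_y}$ and $\|\Phi\|^2_{L_x^2 l^2}=\|\phi\|^2_{L^2_{x,y}}$, the appeal to Theorem \ref{th1.2}, the propagation of $h^1$ bounds by Strichartz, and the stability step — is sound and matches the paper's intended route via \cite[Theorem 3.9]{CGYZ}.
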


Let
\begin{align*}
\tilde{U}_\lambda (t,x,y) =  \lambda  U_\lambda ( \lambda^2 t , \lambda x, y),
\end{align*}
we have
\begin{align*}
\begin{cases}
i \partial_t \tilde{U}_\lambda + \Delta_{\mathbb{R}^2 \times { \mathbb{T}_{ \lambda^{-1} } } } \tilde{U}_\lambda = - \big|\tilde{U}_\lambda \big|^2 \tilde{U}_\lambda, \\[0.5em]
\tilde{U}_\lambda(0,x,y)  = \phi(x,\lambda y).
\end{cases}
\end{align*}
Thus, we get the global well-posedness and scattering in $L_x^2 H_y^1(\mathbb{R}^2 \times \mathbb{T}_{\lambda^{-1} } )$ of the focusing cubic NLS on the small cylinder $\mathbb{R}^2 \times  \mathbb{T}_{ \lambda^{-1} } $, when $\lambda$ is sufficiently large.
\begin{theorem}\label{le3.11v63}
Suppose that $\phi \in L^2_x H_y^1(\mathbb{R}^2\times \mathbb{T})$ satisfies   $\| \phi \|^2_{L_{x}^2\dot{H}_y^1(\mathbb{R}^2\times \mathbb{T}) } \leq C(M)\cdot \frac{2M-1}{2M}  \|\phi \|^2_{L^2_{x,y}(\mathbb{R}^2\times \mathbb{T})}$ and $\| \phi \|^2_{L_{x,y}^2(\mathbb{R}^2\times \mathbb{T}) } < \frac{M}{2M-1}\|Q_0\|^2_{L_x^2}$ for some positive integer $M\geq 2$. Then there is $\lambda_0=  \lambda_0 (\phi)$ sufficiently large such that for $\lambda \ge \lambda_0$, we have a unique global solution
$U_\lambda \in C_t^0 L_x^2 H_y^1(\mathbb{R} \times \mathbb{R}^2 \times  \mathbb{T}_{ \lambda^{-1}})$ of
\begin{equation*}
i \partial_t U_\lambda + \Delta_{\mathbb{R}^2 \times  \mathbb{T}_{ \lambda^{-1} }} U_\lambda = -  \left|U_\lambda \right|^2 U_\lambda,
\end{equation*}
with $U_\lambda(0,x,y) = \phi(x,\lambda y)$.
Moreover, for $\lambda \ge \lambda_0$,
\begin{align*}
\|U_\lambda \|_{L_t^\infty L_x^2 H_y^1 \cap L_{t,x}^4 H_y^1(\mathbb{R} \times \mathbb{R}^2 \times  \mathbb{T}_{  \lambda^{-1} })} \lesssim 1.
\end{align*}
As a consequence, $U_\lambda$ scatters to the solution of the linear equation $ i \partial_t V_\lambda + \Delta_{\mathbb{R}^2 \times  \mathbb{T}_{ \lambda^{-1} } } V_\lambda = 0$ in $L_x^2 H_y^1(\mathbb{R}^2 \times  \mathbb{T}_{ \lambda^{-1}}) $ when $\lambda$ is sufficiently large.

\end{theorem}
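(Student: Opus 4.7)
The plan is to derive Theorem~\ref{le3.11v63} as a direct $L^2$-critical rescaling of Theorem~\ref{le3.11v63old}, in the spirit of the computation already displayed just before the statement. For $\lambda \ge \lambda_0(\phi)$, Theorem~\ref{le3.11v63old} produces a unique global solution, which I will denote by $V_\lambda$, of
\begin{equation*}
i\partial_t V_\lambda + \Delta_{\mathbb{R}^2\times \mathbb{T}} V_\lambda = -|V_\lambda|^2 V_\lambda, \qquad V_\lambda(0,x,y) = \tfrac{1}{\lambda}\, \phi\!\left(\tfrac{x}{\lambda},\, y\right),
\end{equation*}
together with the bound $\|V_\lambda\|_{L_t^\infty L_x^2 H_y^1 \cap L_{t,x}^4 H_y^1(\mathbb{R}\times \mathbb{R}^2\times \mathbb{T})} \lesssim 1$ and scattering states $V_\lambda^{\pm}\in L_x^2 H_y^1(\mathbb{R}^2\times \mathbb{T})$.

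I would then set $U_\lambda(t,x,y) := \lambda\, V_\lambda(\lambda^2 t, \lambda x, \lambda y)$. Since $V_\lambda$ is $2\pi$-periodic in $y$, the function $U_\lambda$ is $(2\pi/\lambda)$-periodic in $y$ and therefore lives naturally on $\mathbb{R}\times \mathbb{R}^2\times \mathbb{T}_{\lambda^{-1}}$. The chain rule yields
\begin{equation*}
i\partial_t U_\lambda + \Delta_{\mathbb{R}^2\times \mathbb{T}_{\lambda^{-1}}} U_\lambda = \lambda^3 \bigl[\, i\partial_t V_\lambda + \Delta_{\mathbb{R}^2\times \mathbb{T}} V_\lambda\,\bigr]\!\left(\lambda^2 t, \lambda x, \lambda y\right) = -\lambda^3|V_\lambda|^2 V_\lambda = -|U_\lambda|^2 U_\lambda,
\end{equation*}
and a direct check gives $U_\lambda(0,x,y) = \lambda V_\lambda(0, \lambda x, \lambda y) = \phi(x, \lambda y)$, as required; uniqueness for $U_\lambda$ follows from uniqueness for $V_\lambda$ through the same change of variables.

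The remaining task is to transport the uniform estimates and the scattering statement across the change of variables $(t,x,y)\mapsto (\lambda^2 t, \lambda x, \lambda y)$. A direct substitution shows that the norms $\|U_\lambda\|_{L_t^\infty L_x^2 H_y^1}$ and $\|U_\lambda\|_{L_{t,x}^4 H_y^1}$ on $\mathbb{R}\times \mathbb{R}^2\times \mathbb{T}_{\lambda^{-1}}$ reduce, up to universal powers of $\lambda$ coming from the Jacobians and from the definition of $H_y^1$ on the small circle, to the corresponding norms of $V_\lambda$ on $\mathbb{R}\times \mathbb{R}^2\times \mathbb{T}$; these powers remain uniformly controlled in $\lambda$ and deliver the stated bound. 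To upgrade scattering, I would define $U_\lambda^{\pm}(x,y) := \lambda V_\lambda^{\pm}(\lambda x, \lambda y)\in L_x^2 H_y^1(\mathbb{R}^2\times \mathbb{T}_{\lambda^{-1}})$; since the free Schr\"odinger propagator intertwines with the rescaling, the convergence $\|V_\lambda(s) - e^{is\Delta_{\mathbb{R}^2\times \mathbb{T}}} V_\lambda^{\pm}\|_{L_x^2 H_y^1}\to 0$ as $s\to \pm\infty$ immediately transfers, via $s = \lambda^2 t$, to the desired convergence of $U_\lambda(t)$ to $e^{it\Delta_{\mathbb{R}^2\times \mathbb{T}_{\lambda^{-1}}}} U_\lambda^{\pm}$ in $L_x^2 H_y^1(\mathbb{R}^2\times \mathbb{T}_{\lambda^{-1}})$ as $t\to \pm\infty$. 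The only subtlety, which I do not expect to be a serious obstacle, is keeping the period lengths and Jacobians consistent so that $\Delta_{\mathbb{R}^2\times \mathbb{T}_{\lambda^{-1}}}$ in the rescaled coordinate is correctly identified with the rescaled Laplacian on $\mathbb{T}$; once that bookkeeping is done, Theorem~\ref{le3.11v63} is a clean corollary of Theorem~\ref{le3.11v63old}.
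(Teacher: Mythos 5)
Your proposal is correct and matches the paper's approach exactly: both derive Theorem~\ref{le3.11v63} from Theorem~\ref{le3.11v63old} by the rescaling $U_\lambda(t,x,y) = \lambda V_\lambda(\lambda^2 t, \lambda x, \lambda y)$, together with the observation that this carries the bound and the scattering statement over. In fact your formula, with the factor $\lambda$ also multiplying $y$, is the right one --- the paper's displayed $\tilde{U}_\lambda(t,x,y) = \lambda U_\lambda(\lambda^2 t, \lambda x, y)$ appears to have dropped the $\lambda$ in front of $y$ (without it the rescaled function stays $2\pi$-periodic and the $\partial_y^2$ term picks up a mismatched power of $\lambda$), which is precisely the bookkeeping subtlety you flagged at the end.
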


\subsection{Summary of the proof. }
In this subsection, we will give a sketch of the proof of Theorem \ref{main1} and Theorem \ref{th1.2}.

\textbf{ Part I. Proof of Theorem \ref{main1}.} 
The basic idea of this part is similar to that of \cite{D2,D1}. To prove Theorem \ref{main1}, it suffices to consider the solutions that blows up forward in time. The proof of Theorem \ref{main1} reduces to the resolution of the following theorem:
\begin{theorem}\label{t2.3}
Let $0 < \eta_{\ast} \ll 1$ be a small, fixed constant to be defined later. Suppose  $\mathbf{u}$ is a  solution to $(\ref{1.1})$ on the maximal interval of existence $I \subset \mathbb{R}$, with $ M(\mathbf{u}) =  M(\mathbf{Q})$. If $\mathbf{u}$ blows up forward in time, and satisfies 
\begin{equation}\label{f2.32}
\sup_{t \in [0, \sup(I))}\inf_{\substack{ \left(\lambda,\tilde{x},\xi \right)\in \mathbb{R}_{+}\times\mathbb{R}^2\times\mathbb{R}^2,
\\(\gamma_1,\cdots, \gamma_N) \in [0, 2 \pi]^N}} \sum_{j\in\mathbb{Z}_N}
\left\|  e^{i \gamma_j}e^{ix\cdot\xi}\lambda u_{j} \left(t, \lambda  \left(x+\tilde{x} \right) \right) - Q  (x) \right\|^2_{L_x^{2}} \leq \eta_{\ast},
\end{equation}
then $\mathbf{u}$ is either a soliton solution of the form \eqref{soliton} or a pseudoconformal transformation of a soliton of the form \eqref{pseudosoliton}.

\end{theorem}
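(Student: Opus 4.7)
The plan is to adapt the modulation analysis and long time Strichartz framework of Dodson \cite{D2,D1} to the vector-valued, multiply-phased setting of \eqref{1.1}, and then execute a rigidity dichotomy on the behaviour of the scale parameter $\lambda(t)$. Under the hypothesis \eqref{f2.32}, an implicit function theorem argument based on the non-degeneracy of the linearized operator about $Q_0$ produces, for every $t\in[0,\sup I)$, unique modulation parameters $\lambda(t)>0$, $\tilde x(t),\xi(t)\in\RR^2$, and componentwise phases $\gamma_1(t),\dots,\gamma_N(t)\in[0,2\pi]$ such that the residues
\[
\varepsilon_j(t,x) := e^{i\gamma_j(t)}\,e^{ix\cdot\xi(t)}\,\lambda(t)\,u_j\bigl(t,\lambda(t)(x+\tilde x(t))\bigr) - Q(x),\qquad j\in\ZZ_N,
\]
satisfy a set of orthogonality conditions dual to the symmetry directions \eqref{sym1}--\eqref{sym5} and the smallness bound $\sum_j \|\varepsilon_j(t)\|_{L_x^2}^2 \lesssim \eta_\ast$. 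Substituting this ansatz into \eqref{1.1} produces an error equation driven by the modulation currents $\lambda'/\lambda,\ \xi',\ \tilde x',\ \gamma_j'$. The mass conservation identity $M(\mathbf{u})=M(\mathbf{Q})$ combined with the sharp Gagliardo--Nirenberg inequality in Lemma \ref{le1.1v19} forces the $L_x^2$-pairing of $\varepsilon_j$ with $Q$ to vanish to leading order, so each modulation current can be estimated by $\|\varepsilon_j\|_{L_x^2}$ times a suitable power of $\lambda(t)$.

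\textbf{Long time Strichartz and the vector-valued bilinear estimate.} The central analytic step is a long time Strichartz estimate, adapted to the orbit of $\mathbf{Q}$, controlling frequency-localized pieces of $\mathbf{u}$ on time intervals long relative to $\lambda(t)^{-2}$. The proof proceeds by induction on frequency scales as in \cite{D2}, but each frequency piece must pass through the coupled cubic nonlinearity $\mathbf{F}(\mathbf{u})$, whose component $F_j$ is a sum of products $u_{j_1}\bar u_{j_2}u_{j_3}$ over the resonant set $\mathcal{R}_N(j)$. The new ingredient is a vector-valued bilinear Strichartz estimate of the schematic form $\|(P_{\le M}\mathbf{u})(P_{> K}\mathbf{v})\|_{L_{t,x}^2 l^2}\lesssim (M/K)^{1/2} \|\mathbf{u}_0\|_{L_x^2 l^2}\|\mathbf{v}_0\|_{L_x^2 l^2}$ for $M\ll K$, with the $l^2$ sum in $j\in\ZZ_N$ taken outside the spacetime norm. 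Since each $F_j$ mixes all components, proving the gain uniformly in $j$ requires a Cauchy--Schwarz in $j$ against the constraint defining $\mathcal{R}_N(j)$, combined with a componentwise scalar bilinear bound.

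\textbf{Almost-monotonicity and rigidity dichotomy.} Feeding the long time Strichartz bound back into the modulation equations yields an almost-monotonicity formula for $\lambda(t)$: the quantity $\int \lambda(t)^3\,\mathrm{d}t$ controls the mass leaking to high frequencies, and at the threshold mass it must be integrable. One similarly obtains bounded variation of $\tilde x(t)$ and $\xi(t)$. The theorem then follows from the dichotomy
\[
\int_0^{\sup I}\lambda(t)\,\mathrm{d}t = +\infty \qquad \text{or} \qquad \int_0^{\sup I}\lambda(t)\,\mathrm{d}t < +\infty.
\]
In the first case, a virial-type identity together with the almost-periodicity forces $\lambda(t)$ to stay bounded above and below, $\tilde x(t)$ and $\xi(t)$ to stabilize, and each $\gamma_j(t)$ to evolve at the common rate $\lambda^2$; combined with uniqueness of the minimizer in \eqref{SGN} this identifies $\mathbf{u}$ with a soliton of the form \eqref{soliton}. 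In the second case, $\lambda(t)\to\infty$ as $t\to T:=\sup I$ at the rate dictated by the pseudo-conformal symmetry \eqref{pseudotransform}; applying \eqref{pseudotransform} reduces the analysis to the first case and produces \eqref{pseudosoliton}. In both cases the orthogonality conditions pin every component $u_j$ to the same profile $Q$ with the same scale $\lambda$, excluding the competing standing waves of the form $(e^{i\lambda_1 t}Q_1,\dots,e^{i\lambda_N t}Q_N)$ with distinct frequencies $\lambda_j$.

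\textbf{Main obstacle.} The hardest step, and the one absent from the scalar theory of \cite{D2,D1}, is the synchronization of the $N$ independent phase currents $\gamma_j'(t)$. A priori each component is free to rotate at its own rate, and if the rates failed to agree up to integrable errors the $l^2$ gain in the vector-valued bilinear estimate would be lost, and with it the almost-monotonicity of $\lambda$. Showing that the minimal-mass constraint $M(\mathbf{u})=M(\mathbf{Q})$ and the componentwise orthogonality conditions force $\gamma_1'(t)=\cdots=\gamma_N'(t)$ modulo an error controlled by $\sum_j\|\varepsilon_j\|_{L_x^2}$, uniformly in $t$, is the principal new difficulty caused by the multi-component interaction in $\mathbf{F}(\mathbf{u})$.
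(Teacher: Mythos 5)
Your high-level strategy (modulation decomposition with componentwise phases, long time Strichartz adapted to the orbit of $\mathbf{Q}$, almost-monotonicity of $\lambda$, and a rigidity dichotomy) is the right skeleton and matches the paper's architecture. However, two steps as you describe them would not go through, and the structural details of the dichotomy are off.

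\textbf{Bilinear estimate.} You propose proving the long time Strichartz estimate through a vector-valued bilinear bound obtained by a ``Cauchy--Schwarz in $j$ against the constraint defining $\mathcal{R}_N(j)$, combined with a componentwise scalar bilinear bound.'' This is precisely the route the paper flags as failing: because each $F_j$ mixes all components, applying Dodson's scalar bilinear Strichartz estimate component-by-component and then summing does not recover the needed $l^2$ gain. The paper instead proves the bilinear estimate \eqref{f5.19} via a vector-valued \emph{interaction Morawetz} identity (the potential $M_\omega$ in the proof of Theorem \ref{t5.9}), exploiting the one-dimensional Hardy identity \eqref{f5.27} and the structure of $\mathcal{N}^{(j)}_{i-3}$. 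Your sketch gives no reason why Cauchy--Schwarz in $j$ would recover the critical loss, and as stated this step is a genuine gap.

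\textbf{Phase synchronization.} You identify as the main new obstacle showing that $\gamma_1'(t)=\cdots=\gamma_N'(t)$ modulo integrable errors. This is not in fact required, and trying to establish it would be aiming at a false target: the soliton solutions \eqref{soliton} and their pseudo-conformal images \eqref{pseudosoliton} carry arbitrary, independent phases $(\gamma_1,\dots,\gamma_N)\in[0,2\pi]^N$, so the phases need not and do not synchronize. What the paper actually uses is that, thanks to the $N$ orthogonality conditions $\langle\boldsymbol{\epsilon},i\boldsymbol{\chi}_{0,j}\rangle_{L^2_x l^2}=0$ ($j\in\ZZ_N$), the \emph{sum} $\sum_j\bigl((\gamma_j)_s + 1 - \tfrac{x_s}{\lambda}\cdot\xi - |\xi|^2\bigr)$ is quadratically small in $\boldsymbol{\epsilon}$ (estimate \eqref{qm1}/\eqref{f4.52}), while each individual term is only linearly small (\eqref{qm2}/\eqref{f4.53}). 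It is this asymmetry — controlling the sum better than the individual phases — that feeds into the virial inequality and almost-monotonicity, not synchronization.

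\textbf{Dichotomy and pseudo-conformal case.} The paper's case split is on $\sup I=\infty$ versus $\sup I<\infty$ (Theorems \ref{t9.17} and \ref{t9.18}), with a further sub-split in the infinite-time case on whether $\tilde\lambda(s)=\inf_{\tau\le s}\lambda(\tau)$ tends to zero. Your dichotomy on $\int_0^{\sup I}\lambda\,\mathrm{d}t$ is not the natural one, and the sign in your description of the finite-time case is reversed: with the paper's convention $e^{i\gamma_j}e^{ix\cdot\xi}\lambda u_j(t,\lambda(x+\tilde x))\approx Q(x)$, finite-time blowup forces $\lambda(t)\to 0$ as $t\to T$, not $\lambda(t)\to\infty$. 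Finally, you omit the virial-type inequality (Theorem \ref{t7.13}/\ref{t7.14}), which is the quantitative engine converting the long time Strichartz bound into $\int \|\boldsymbol{\epsilon}\|_{L^2_xl^2}^2\lambda^{-2}\,\mathrm{d}t$ bounded by boundary terms, and which underpins both the almost-monotonicity of $\lambda$ and the final contradiction argument. Without this, the passage from the long time Strichartz estimate to the rigidity conclusion is missing.
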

\begin{remark}
The constant $0 < \eta_{\ast} \ll 1$ will be chosen as a small fixed quantity  determined by some other theorems in the rest of the  paper.
\end{remark}

To prove Theorem \ref{main1} from Theorem \ref{t2.3}, we show that $\mathbf{u}(t)$ converges to $\mathbf{Q}$ along a subsequence up to translation, Galilean boost, phase rotation, and scaling. The result is presented in the following theorem: 
\begin{theorem}\label{claim2.1}
	Let $\mathbf{u}$ be a blowup forward in time solution to \eqref{1.1} when $N< \infty$ satisfying $ M(\mathbf{u}) =  M(\mathbf{Q})$. Let $I$ be the maximal lifespan of the solution $\mathbf{u}$. Then there exists a sequence $t_{n} \nearrow \sup I $ as $n \to \infty$ and a family of parameters $\lambda_{n} > 0$, $ \left(\tilde{x}_n,\xi_n \right)\in \mathbb{R}^2\times\mathbb{R}^2$, and $(\gamma_{1,n},\cdots,\gamma_{N,n}) \in [0,2\pi]^N$ such that
\begin{equation}\label{f2.31}
		\lim_{n\to\infty}\sum_{j\in\mathbb{Z}_N}
\left\|   e^{i\gamma_{j,n}}e^{ix\cdot\xi_n} \lambda_{n} u_j \left(t_{n}, \lambda_{n}  \left(x+\tilde{x}_n \right) \right) -Q (x) \right\|^2_{L_x^2}=0.
	\end{equation}
\end{theorem}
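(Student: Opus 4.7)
The plan is to apply a vector-valued linear profile decomposition to the bounded sequence $\{\mathbf{u}(t_n)\}$ in $L^2_xl^2$ for a carefully chosen $t_n\nearrow \sup I$, combine it with the sub-threshold scattering theorem of \cite{CGHY} and the wave operator near $\pm\infty$ to isolate a single profile of mass $M(\mathbf{Q})$, and finally identify this profile with $\mathbf{Q}$ via the equality case in the sharp vector-valued Gagliardo--Nirenberg inequality of Lemma \ref{le1.1v19}.

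\medskip

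\textbf{Step 1 (Time sequence and profile decomposition).} The blow-up hypothesis lets me pick $t_n\nearrow \sup I$ with $\|\mathbf{u}\|_{L^4_{t,x}l^2([t_n,\sup I)\times\mathbb{R}^2\times\mathbb{Z}_N)}=\infty$. Mass conservation gives $\|\mathbf{u}(t_n)\|_{L^2_xl^2}=\|\mathbf{Q}\|_{L^2_xl^2}$, so a vector-valued profile decomposition adapted to the symmetries \eqref{sym1}--\eqref{sym5} yields, along a subsequence,
\begin{equation*}
\mathbf{u}(t_n)=\sum_{k=1}^{K}\mathcal{G}_n^{k}\bigl[e^{it_n^{k}\Delta_{\mathbb{R}^2}}\boldsymbol{\phi}^{k}\bigr]+\mathbf{w}_n^{K},
\end{equation*}
with $\mathcal{G}_n^{k}=(\lambda_n^{k},\tilde x_n^{k},\xi_n^{k},\gamma_n^{k})$ asymptotically pairwise orthogonal, Pythagorean mass decoupling, and $\limsup_{n}\|e^{it\Delta_{\mathbb{R}^2}}\mathbf{w}_n^{K}\|_{L^4_{t,x}l^2}\to 0$ as $K\to\infty$.

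\medskip

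\textbf{Step 2 (Single-profile reduction).} Suppose every $M(\boldsymbol{\phi}^{k})<M(\mathbf{Q})$. I would construct nonlinear profiles $\mathbf{v}^{k}$ using the sub-threshold scattering theorem of \cite{CGHY} when $t_n^{k}/(\lambda_n^{k})^{2}$ is bounded, and the $L^2_xl^2$ wave operator near $\pm\infty$ otherwise; each $\mathbf{v}^{k}$ has a finite $L^4_{t,x}l^2$ norm. A standard nonlinear-profile approximation plus the stability theorem would then force $\mathbf{u}$ to have finite scattering norm on $[t_n,\sup I)$, contradicting the choice of $t_n$. Hence some profile, which I relabel $\boldsymbol{\phi}$, has mass exactly $M(\mathbf{Q})$, and mass decoupling forces every other profile and the remainder to vanish, leaving
\begin{equation*}
\mathbf{u}(t_n)=\mathcal{G}_n\bigl[e^{it_n^{\ast}\Delta_{\mathbb{R}^2}}\boldsymbol{\phi}\bigr]+o_{L^2_xl^2}(1).
\end{equation*}
If $t_n^{\ast}\to+\infty$ (the case $-\infty$ is symmetric), then $\|e^{it\Delta_{\mathbb{R}^2}}\boldsymbol{\phi}\|_{L^4_{t,x}l^2([t_n^{\ast},\infty))}\to 0$, so the wave operator at $+\infty$ produces a nonlinear solution that matches the evolution of $\mathbf{u}$ from $t_n$ up to a small $L^2$ error and has small scattering norm on $[t_n^{\ast},\infty)$, again contradicting the choice of $t_n$. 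Hence $t_n^{\ast}$ is bounded; replacing $\boldsymbol{\phi}$ by its linear evolution I may assume $t_n^{\ast}=0$, so that $\mathbf{u}(t_n)=\mathcal{G}_n\boldsymbol{\phi}+o_{L^2_xl^2}(1)$ with $M(\boldsymbol{\phi})=M(\mathbf{Q})$.

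\medskip

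\textbf{Step 3 (Identifying $\boldsymbol{\phi}$ with $\mathbf{Q}$).} Since the nonlinear evolution $\mathbf{v}$ of $\boldsymbol{\phi}$ inherits the minimal-mass non-scattering property, replaying Steps 1--2 shows that the orbit $\{\mathbf{v}(t)\}$ is $L^2_xl^2$-precompact modulo the symmetry group. I would then invoke the vector-valued long-time Strichartz estimate advertised in the introduction, together with a frequency-truncation/double-Duhamel argument, to upgrade this precompactness from $L^2_xl^2$ to $H^1_xl^2$. Once $\boldsymbol{\phi}\in H^1_xl^2$ with $M(\boldsymbol{\phi})=M(\mathbf{Q})$, the sharp Gagliardo--Nirenberg inequality \eqref{SGN} forces $E(\boldsymbol{\phi})\ge 0$, while precompactness of the orbit together with the non-scattering behavior gives $E(\boldsymbol{\phi})\le 0$, hence $E(\boldsymbol{\phi})=0$. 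The equality case in Lemma \ref{le1.1v19} then identifies $\boldsymbol{\phi}$ with $\mathbf{Q}$ up to the symmetries \eqref{sym1}--\eqref{sym5}, and absorbing these residual symmetry parameters into $(\lambda_n,\tilde x_n,\xi_n,\gamma_{j,n})$ yields \eqref{f2.31}.

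\medskip

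I expect the principal obstacle to lie in Step 3: the profile decomposition only yields an $L^2$ profile, while the variational identification of $\mathbf{Q}$ demands $H^1$ information. The $L^2\to H^1$ regularity upgrade, carried out through the vector-valued long-time Strichartz estimate that forms one of the central technical contributions of the paper, is exactly the nontrivial ingredient needed to close the argument in the non-radial multi-component setting.
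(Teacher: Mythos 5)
Your Steps 1--2 match the paper's approach closely: take a sequence $t_n \nearrow \sup I$, apply the vector-valued profile decomposition (Proposition \ref{pro3.9v23}), use minimality of the mass to force a single profile, and reduce to the statement that the nonlinear evolution $\mathbf{v}$ of $\boldsymbol{\phi}$ is an almost periodic solution (via \cite{CGHY}). This much is sound.

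Your Step 3, however, misidentifies the mechanism and leaves a genuine gap. First, the long-time Strichartz estimate of Section \ref{sec:aprieps} (Theorems \ref{t5.9} and \ref{t6.3}) is \emph{not} used to prove Theorem \ref{claim2.1}; it is a tool tailored to the proof of Theorem \ref{t2.3}, where the solution is already known to stay $\eta_*$-close to the orbit of $\mathbf{Q}$. At the stage of Theorem \ref{claim2.1} you have no such proximity information, so that estimate is not available. Second, the $L^2_xl^2 \to H^1_xl^2$ precompactness upgrade you propose is the \cite{KLVZ} route and relies (via the in/out decomposition and double-Duhamel) on radial symmetry; in the non-radial, Galilean-invariant setting this upgrade is neither established in the paper nor needed. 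Third, the assertion that ``precompactness of the orbit together with the non-scattering behavior gives $E(\boldsymbol{\phi})\le 0$'' is unsubstantiated---this inequality is precisely what has to be proved, and it is where the real work lies.

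What the paper actually does, following Dodson \cite{D4,D3} as adapted in \cite{CGHY}, is the following. After reducing to the almost periodic solution $\mathbf{v}$ with scale function $\lambda(t)$, it proves a dichotomy (Theorem \ref{th6v26}): either $\sup_{[0,T_n]}\lambda(t)^{-1}\int_0^{T_n}\lambda(t)^3\,\mathrm{d}t$ stays bounded (the ``rapid cascade'' scenario, which \emph{does} admit a regularity upgrade to $\dot H^s_xl^2$ and is then excluded because it would force $\mathbf{v}$ to be the soliton, contradicting $\lambda(t)\to 0$), or this quantity diverges. In the second scenario one runs a frequency-localized interaction Morawetz estimate with the smoothing algorithm applied to $\lambda(t)$; comparing $\int_0^{T_n} M'(t)\,\mathrm{d}t$ to $\sup_{[0,T_n]}|M(t)|$ produces a sequence $t_n$ along which the localized, truncated energy of a suitable Galilean modulation $\chi(\cdot)e^{ix\cdot\xi_*}I\mathbf{v}(t_n,\cdot)$ tends to zero while its mass and kinetic energy converge to those of $\mathbf{Q}$ (equations \eqref{eq5.38v26}--\eqref{eq5.41v260}). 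Only at that point is the sharp Gagliardo--Nirenberg inequality (Lemma \ref{le1.1v19}) invoked, applied to the normalized sequence $\mathbf{g}_n$ rather than to $\boldsymbol{\phi}$ directly. So the paper never needs $H^1$ precompactness of the orbit, and the energy-vanishing statement is extracted from the Morawetz identity, not from precompactness. You would need to replace your Step 3 with this Morawetz-based argument to close the proof.
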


The proof of this theorem is done by following the argument in \cite{D4,D3} with most of the concrete computations have been rigorously established in \cite{CGHY}. For brevity, we relegate the detailed proof to the appendix. 
{Once this convergence is established, Theorem \ref{main1} follows. }

We will now give an informal description of the proof of Theorem \ref{t2.3}. 
{The proof relies on the following key  estimates}.

\subsubsection{Modulation analysis}
Using the modulation analysis,  there exist almost everywhere differentiable parameter functions
\begin{equation*}
	( \gamma_1(t), \cdots, \gamma_N(t)) : I\rightarrow [0,2\pi]^N,\quad \xi(t): I\rightarrow \mathbb{R}^2 ,\quad x(t): I\rightarrow \mathbb{R}^2 ,\quad \lambda(t): I\rightarrow \mathbb{R}_+ ,
\end{equation*}
such that for each $j \in \mathbb{Z}_N$,
\begin{equation*}
	e^{i\gamma_j (t)} e^{ix\cdot\xi(t)}\lambda(t)u_j  \big(t,\lambda(t)x+x(t) \big) = Q(x)+\epsilon_j (t,x) 
\end{equation*}
with $\boldsymbol{\epsilon}(t) = (\epsilon_1(t), \cdots, \epsilon_N(t)) $ satisfies the following orthogonality conditions: 
\begin{equation}\label{ql}
\aligned
	& \langle \boldsymbol{\epsilon}(t), \boldsymbol{\chi}_0\rangle_{L^2_x l^2 }=\langle \boldsymbol{\epsilon}(t), i\boldsymbol{\chi}_{0,j} \rangle_{L^2_x l^2 }
= \langle \boldsymbol{\epsilon}(t),{ \mathbf{Q}}_{x_1}\rangle_{L^2_x l^2 } = \langle \boldsymbol{\epsilon}(t),  \mathbf{Q}_{x_2}\rangle_{L^2_x l^2 }
\\
& =\langle \boldsymbol{\epsilon}(t), i\mathbf{Q}_{x_1}\rangle_{L^2_x l^2 }= \langle \boldsymbol{\epsilon}(t), i\mathbf{Q}_{x_2 }\rangle_{L^2_x l^2 }=0.
\endaligned
\end{equation}
Here,     $ i\mathbf{Q}_{x_j}=(iQ_{x_j},\cdots,iQ_{x_j}), \mathbf{Q}_{x_j}=(Q_{x_j},\cdots,Q_{x_j})$ for $  j\in\{1,2\}$, $\boldsymbol{\chi}_0=(\chi_0,\cdots,\chi_0)$ and $i\boldsymbol{\chi}_{0,j}=(0,\cdots, 0,\underset{j\text{-th }}{i\chi_0},0,\cdots,0)$ for $ j\in \mathbb{Z}_N$, where $\chi_0$ is the  eigenvector corresponding to the unique negative eigenvalue of the linearized operator $L_{0,+}:=-\Delta+1-3Q_0^2$.  We remind readers that the coupled terms allow for additional phase rotation symmetry, which is reflected in the different parameters $\gamma_j$. By  local theory, if $\sup I=\infty$, then Theorem \ref{t2.3} holds as long as $\epsilon(t_0)=0$ at some time $t_0\in I$.
Let $s(t)=\int_0^t %\frac{1}
{\lambda(\tau)^{-2}} \, \mathrm{d}\tau$, then we derive the following controls of parameters:
\begin{equation}\label{qm1}
	\int_{a}^{a + 1} \left|\sum_{j\in\mathbb{Z}_N}\left((\gamma_{j})_s + 1 - \frac{x_{s}}{\lambda} \cdot\xi(s) - |\xi(s)|^{2} \right) \right| \, \mathrm{d} s
\lesssim \int_{a}^{a + 1}  \left\| \boldsymbol{\epsilon}(s)  \right\|_{L^2_{x} l^2}^{2} \, \mathrm{d} s,
\end{equation}
\begin{equation}\label{qm2}
	\int_{a}^{a + 1}  \left|(\gamma_{j})_s + 1 - \frac{x_{s}}{\lambda} \cdot\xi(s) - |\xi(s)|^{2}  \right|  \, \mathrm{d} s
 \lesssim \int_{a}^{a + 1}  \left\| \boldsymbol{\epsilon}(s)  \right\|_{L^2_{x} l^2} \, \mathrm{d} s,\ \forall \ j\in \Z_{N},
\end{equation}
\begin{equation}\label{qm3}
	\int_{a}^{a + 1}  \left|\xi_{s} - \frac{\lambda_{s}}{\lambda} \xi(s) \right| \, \mathrm{d} s
\lesssim \int_{a}^{a + 1}  \left\| \boldsymbol{\epsilon}(s)  \right\|_{L^2_{x} l^2}^{2} \, \mathrm{d} s,
\end{equation}
\begin{equation}\label{qm4}
	\int_{a}^{a + 1}  \left|\frac{\lambda_{s}}{\lambda} \right| \, \mathrm{d} s
\lesssim \int_{a}^{a + 1}  \left\| \boldsymbol{\epsilon}(s)  \right\|_{L^2_{x} l^2} \,  \mathrm{d} s, 
\end{equation}
and
\begin{equation}\label{qm5}
	\sup_{s \in [a, a + 1]} \| \boldsymbol{\epsilon}(s) \|_{L_{x}^{2}l^2} \sim \inf_{s \in [a, a + 1]} \| \boldsymbol{\epsilon}(s) \|_{L_{x}^{2}l^2}.
\end{equation}
These estimates will be crucial for the bootstrap argument in the proof.

\subsubsection{A priori estimate of the reminder $\|\boldsymbol{\epsilon}(t)\|_{L_{x}^2l^2}$}

Thanks to the spectral properties of the linearized operators $L_{+}$ and $L_{-}$, the energy of $\mathbf{Q}+\boldsymbol{\epsilon}$ satisfies the following positivity inequality in the inhomogeneous Sobolev space $H_x^1l^2$: 
\begin{equation}\label{qc0}
		E \left(\mathbf{Q} + \boldsymbol{\epsilon} \right) \gtrsim  \left\| \boldsymbol{\epsilon}(x )  \right\|_{H^{1}_x  l^2 \left(\mathbb{R}^2 \times\mathbb{Z}_N \right)}^{2} .
	\end{equation}
Since we are actually working in the critical space $L_x^2l^2$, it is natural to consider the low-frequency truncation of the solution $\mathbf{u}$. We aim to recover positive definiteness estimates similar to the one above. However, the energy of the low-frequency truncation of the solution is no longer conserved, necessitating an analysis of the energy change over an interval. To achieve this, we establish estimates for the high-frequency part of the solution, specifically the long time Strichartz estimates:
    \begin{align}\label{qc}
& \left\| P_{\geq k_0} \mathbf{u}  \right\|_{U_{\Delta}^{2} \left(J,L_x^2l^2 \right)} +   \left\|  \left|P_{\geq k_0} \mathbf{u} \right|\cdot
\left|P_{\leq k_0 - 3} \mathbf{u} \right|  \right\|_{L_{t,x}^{2}l^2 \left(J \times \mathbb{R}^{2}\times\mathbb{Z}_N \right)} \notag \\
& \lesssim  \left(\frac{1}{T} \int_{J}   \left\| \boldsymbol{\epsilon}(t)  \right\|_{L^2_{x} l^2}^{2} \lambda(t)^{-2} \, \mathrm{d} t \right)^{1/2} + %\frac{1}
{T^{-10}},
\end{align}
where $T=\int_J %\frac{1}
{\lambda(t)^{-2} } \, \mathrm{d} t$ and $2^{3k_0}=T$. 
This type of estimate was introduced by B. Dodson. For the system \eqref{1.1}, the presence of coupling terms prevents the direct application of bilinear estimates, as in \cite{D2}, to each component. Instead, we employ interaction 
Morawetz estimates to derive a bilinear estimate for the system \eqref{1.1}, which then allows us to establish the long time Strichartz estimate \eqref{qc}. It is important to note that \eqref{qc} is \emph{not} an  priori  estimate for arbitrary solutions of $\eqref{1.1}$; rather, it captures additional information arising from the fact that $\mathbf{u}$ is close to the orbit of $\mathbf{Q}$. Notice that if $\mathbf{u}$ is a soliton, then \eqref{qc} can be obtained by the fact that $Q$ is smooth with rapidly decreasing derivatives. Now with the help of \eqref{qc}, we will demonstrate that the energy increment over the interval $J$ can be controlled by the integral average of the remainder $\|\boldsymbol{\epsilon}(t)\|_{L_{x}^2l^2}$ with  the rapidly decaying term, that is,
\begin{equation*}
\sup_{t \in J} E \left(P_{\leq k _0+ 9} \mathbf{u}(t) \right)
 \lesssim \frac{2^{2k_0}}{T} \int_{J}  \left\| \boldsymbol{\epsilon}(t)  \right\|_{L^2_{x} l^2}^{2} \lambda(t)^{-2}  \, \mathrm{d} t
 + \sup_{t \in J} \frac{|\xi(t)|^{2}}{\lambda(t)^{2}} + 2^{2k_0} T^{-10}.
\end{equation*}
Finally, using the above estimates, we recover the initial inequality \eqref{qc0} for the low-frequency truncation of the solution in the following sense:
\begin{align*}
& \sup_{t \in J}  \left\|\nabla P_{\leq k _0+ 9} \left(e^{-i \gamma_j(t)} e^{-ix \cdot \frac{\xi(t)}{\lambda(t)}} \frac{1}{\lambda(t)} \epsilon_j \left(t, \frac{x - x(t)}{\lambda(t)} \right) \right)  \right\|_{L_x^2}^{2}
\notag \\
& \lesssim \frac{2^{2k_0}}{T} \int_{J} \left\| \boldsymbol{\epsilon}(t)  \right\|_{L^2_{x} l^2}^{2} \lambda(t)^{-2} \, \mathrm{d} t
+ \sup_{t \in J} \frac{|\xi(t)|^{2}}{\lambda(t)^{2}} + 2^{2k_0} T^{-10}
\end{align*}
and
\begin{equation}\label{qccc2}
\sup_{t \in J} \left\| \boldsymbol{\epsilon}(t)  \right\|_{L^2_{x} l^2}^{2}
 \lesssim \frac{2^{2k_0} T^{1/50}}{\eta_{1}^{2} T} \int_{J} \left\| \boldsymbol{\epsilon}(t) \right\|_{L^2_{x} l^2}^{2} \lambda(t)^{-2} \, \mathrm{d} t
 + \frac{T^{1/50}}{\eta_{1}^{2}} \sup_{t \in J} \frac{|\xi(t)|^{2}}{\lambda(t)^{2}} + 2^{2k_0} \frac{T^{1/50}}{\eta_{1}^{2}} T^{-10}, 
\end{equation}
where $\eta_1$ is a small fixed constant.

\subsubsection{Virial  inequality}

Let $\chi(r) \in C^{\infty}([0, \infty))$ be a smooth, radial function such that $\chi(r) = 1$ for $0 \leq r \leq 1$,
and $\chi(r)$ is supported on $r \leq 2$. Define 
\begin{equation*}
\phi(r) = \int_{0}^{r} \psi \left(\frac{\eta_1 s}{2R} \right)  \, \mathrm{d} s, \quad\text{ where } \psi(x):=\chi^2(|x|).
\end{equation*}
Consider the time derivative of the Morawetz potential,
\begin{equation*}
\frac{d}{d t}M(t)= \frac{d}{d t}\sum_{j\in\mathbb{Z}_N}\int \phi(|x|) \frac{x}{|x|} \cdot \Im \left[\overline{P_{\leq k _0+ 9} u_j} \nabla P_{\leq k _0+ 9} u_j \right](t,x) \,   \mathrm{d} x. 
\end{equation*}
It includes an energy structure, then under some assumptions on the modulation parameters, we can utilize the estimates established in  previous subsections to derive a virial inequality of the following form:
\begin{align}\label{qn}
 & \int_a^b \left\| \boldsymbol{\epsilon}(t) \right\|_{L^2_{x} l^2}^{2} \lambda(t)^{-2} \, \mathrm{d} t \notag
\\
& \leq 3 \sum_{j\in\mathbb{Z}_N} \langle w_j(a), Q + x \cdot \nabla Q \rangle_{L_x^{2}} - 3\sum_{j\in\mathbb{Z}_N} \langle w_j(b), Q + x \cdot \nabla Q \rangle_{L_x^{2}}
+ \frac{T^{1/15}}{\eta_{1}^{2}} \sup_{t \in J} \frac{|\xi(t)|^{2}}{\lambda(t)^{2}} + O \left(T^{-8} \right),
\end{align}
where $w_j(t)=\Im \epsilon_j(t), \ \forall \ j\in \Z_{N}$. A direct consequence of the above inequality is that if $\left|\frac{\xi(t)}{\lambda(t)} \right|$ is bounded, then the quantity $$\int_a^b \left\| \boldsymbol{\epsilon}(t) \right\|_{L^2_{x} l^2}^{2} \lambda(t)^{-2} \, \mathrm{d} t$$
is bounded. Moreover, if the modulation parameters satisfy the required conditions on the maximal lifespan of the solution, additional regularity information about the remainder term $\boldsymbol{\epsilon}$ can be obtained. However, we have not yet established any precise pointwise control over the modulation parameters. 
It is noteworthy that the first two terms on the right-hand side of \eqref{qn} can be replaced by $O \left(\|\boldsymbol{\epsilon}(a)\|_{L_x^2l^2}+\|\boldsymbol{\epsilon}(b)\|_{L_x^2l^2} \right)$, as 
a direct consequence of H\"older's inequality, which means that the new inequality, after substitution, is invariant under $L_x^2$-invariance scaling transformation. Combining this  with the parameter control relations \eqref{qm1}-\eqref{qm5} from modulation analysis, and under the conditions of Theorem \ref{t2.3}, the inequality \eqref{qn} holds at least over intervals of length scale $\eta_{\ast}^{-1}$. Then, by employing a bootstrap framework, we rigorously prove that over the entire lifespan of $\mathbf{u}$,  the $L_{x}^2l^2$ norm of the remainder $\boldsymbol{\epsilon}(s)$, $\|\boldsymbol{\epsilon}(s)\|_{L_{x}^2l^2}$, not only 
belongs to $L_s^2$ but also to $L_s^p$ for any $p>1$. This additional regularity of $\boldsymbol{\epsilon}$, along with 
the virial inequality \eqref{qn}, 
will lead to the almost monotonic 
decreasing property of the parameter $\lambda(s)$, i.e., 
$\lambda(s)\sim\inf\limits_{s'\in[0,s]}\lambda \left(s' \right)$ for all $s\geq 0$.

After establishing the above key estimates, we can utilize the almost monotonicity of the frequency parameter property  discussed before and the virial inequality \eqref{qn} to provide a rigorous proof of Theorem \ref{t2.3}. We first consider the case where the blowup time satisfies  $\sup I=\infty$. Suppose, for contradiction, that Theorem \ref{t2.3} fails. Then, there exists a sequence $s_n\nearrow\infty$ as $n \to \infty$ such that the norm $\|\boldsymbol{\epsilon}(s_n)\|_{L_x^2l^2}$ 
decays sufficiently fast relative to the growth rate of $s_n$. Over the interval $[0,s^{-1}(s_n)]$, we can  repeat the proof of the long time Strichartz estimate \eqref{qc} to derive an improved decay estimate for the high frequency part of $\mathbf{u}$. This estimate together with estimate \eqref{qccc2}, in turn, allows us to demonstrate that the uniform bound for  $\|\boldsymbol{\epsilon}(t)\|_{L_x^2l^2}$ on the interval $[0,s^{-1}(s_n)]$  tends to zero as $n\to \infty$. This implies $\|\boldsymbol{\epsilon}(0)\|_{L_x^2l^2}=0$, which contradicts with the assumption that Theorem \ref{t2.3} fails. For the blowup in finite time case, the proof can be reduced to the aforementioned case by using the pseudo-conformal transformation \eqref{pseudotransform}. This completes the proof of Theorem \ref{t2.3}.

\textbf{ Part II. Proof of Theorem \ref{th1.2}.} {Let us briefly outline the strategy for proving Theorem \ref{th1.2}. First, observe that while the natural embedding $H_{rad}^1l^2\hookrightarrow L_x^4l^2$ fails to be compact, but the restricted 
embedding $H_{rad}^1l^2\cap L_x^2h^1\hookrightarrow L_x^4l^2$ becomes compact. This distinction directly implies the following property:} 
\begin{align}
    \sup_{ \substack{\mathbf{u}\in H_x^1l^2 \cap L_x^2h^1, \\
    \|\mathbf{u}\|^2_{L_x^2\dot{h}^1}\leq C(M) \|\mathbf{u}\|^2_{L_x^2l^2}
    }}J(\mathbf{u})=J_M= \frac{2(2M-1)}{M}\|Q_0\|^{-2}_{L_x^2}, 
\end{align}
where $J(\mathbf{u})$ is defined as in \eqref{qzmz}. Consequently, this yields 
the following inequality: 
\begin{align}\label{qqr}
	\sum_{j\in\Z_N}\int_{\R^2}  F_j(\mathbf{u})\cdot \bar{u}_j \, \mathrm{d} x\leq \frac{2(2M-1)}{M}\|Q_0\|^{- 2}_{L_x^2}\|\mathbf{u}\|^2_{L^2_x l^2}\|\nabla\mathbf{u}\|^2_{L^2_x l^2}, 
\end{align}
for any $\mathbf{u}\in H_x^1l^2\cap L_x^2h^1$ with $\|\mathbf{u}\|^2_{L_x^2\dot{h}^1} 
\leq C(M) \|\mathbf{u}\|^2_{L_x^2l^2}$. Now we use the same argument in \cite{CGHY}, the proof of Theorem \ref{th1.2} can be reduced to excluding the almost periodic solution $\mathbf{v}(t)$. We find that \eqref{qqr} is enough for us to exclude the existence of  $\mathbf{v}(t)$ in the scenario  $\int_0^{\infty}N(t)^3 \, \mathrm{d} t<\infty$\footnote{Here $N(t)$ is the frequency scale parameter of $\mathbf{v}$, see \eqref{eq2.10v20} for details.}.  In fact, in this scenario, 
we can show that $\mathbf{v}\in H_x^1l^2$ and $E\left(e^{ix\cdot\xi_{\infty}}\mathbf{v} \right)=0$ for some $\xi_{\infty}\in\R^2$. This contradicts with \eqref{qqr}, since 
\begin{align*}
0< \left\|e^{ix\cdot\xi_{\infty}}\mathbf{v} \right\|^2_{L_x^2l^2}=\|\mathbf{v}\|^2_{L_x^2l^2}<\frac{M}{2M-1}\|Q_0\|^2_{L_x^2} .
\end{align*}
For the %\textcolor{red}
{remaining case $\int_0^{\infty}N(t)^3 \, \mathrm{d}t=\infty$}, we adapt the methodology of \cite{CGHY} to exclude $\mathbf{v}$. 
Crucially, in \cite{CGHY}, the assumption that the mass is strictly less than $\frac{1}{2}\|Q_0\|^2_{L_x^2l^2}$ is only utilized to establish the following  positivity estimate
\begin{align}\label{mmn}
  E\left(\chi(x)e^{ix\cdot\xi}P_{\leq k}\mathbf{v}(t,x) \right)\gtrsim \left\|\nabla \left(\chi(x)e^{ix\cdot\xi}P_{\leq k}\mathbf{v}(t,x) \right) \right\|_{L_x^2},
\end{align}
where $ \chi\in C^{\infty}_0(\R^2)$ with $|\chi(x)|\leq 1$, and the implicit constant is independent  of $\chi$, $k\in \Z$ and $\xi\in \R^2$. This estimate follows directly from the sharp Gagliardo-Nirenberg inequality \eqref{GN-infity} and the estimate $$\left\|\chi(x)e^{ix\cdot\xi}P_{\leq k}\mathbf{v}(t,x) \right\|^2_{L_x^2l^2}\leq \|\mathbf{v}(t,x)\|^2_{L_x^2l^2}< \frac{1}{2}\|Q_0\|^2_{L_x^2}.$$
In our setting,  $$\left\|\chi(x)e^{ix\cdot\xi}P_{\leq k}\mathbf{v}(t,x) \right\|^2_{L_x^2\dot{h}^1}\leq C(M) \left\|\chi(x)e^{ix\cdot\xi}P_{\leq k}\mathbf{v}(t,x) \right\|^2_{L_x^2l^2}$$
cannot generally be deduced from 
$$\|\mathbf{v}(t,x)\|^2_{L_x^2\dot{h}^1}<\frac{2M-1}{2M} \cdot C(M)\|\mathbf{v}(t,x)\|^2_{L_x^2l^2},$$ 
thus we need to divide into the following two cases: 
\begin{enumerate}
    \item when $\left\|\chi(x)e^{ix\cdot\xi}P_{\leq k}\mathbf{v}(t,x) \right\|^2_{L_x^2l^2}< \frac{2M-1}{2M}\|\mathbf{v}(t,x)\|^2_{L_x^2l^2}<\frac{1}{2}\|Q_0\|^2_{L_x^2}$, we directly use \eqref{GN-infity} to recover estimate \eqref{mmn}.
\item  when $\left\|\chi(x)e^{ix\cdot\xi}P_{\leq k}\mathbf{v}(t,x)\right\|^2_{L_x^2l^2}\geq \frac{2M-1}{2M}\|\mathbf{v}(t,x)\|^2_{L_x^2l^2}$, we have 
 \begin{align*}
  \left\|\chi(x)e^{ix\cdot\xi}P_{\leq k}\mathbf{v}(t,x)\right\|^2_{L_x^2\dot{h}^1}\leq \frac{2M-1}{2M}\cdot C(M) \|\mathbf{v}(t,x)\|^2_{L_x^2l^2}\leq C(M)\left\|\chi(x)e^{ix\cdot\xi}P_{\leq k}\mathbf{v}(t,x)\right\|^2_{L_x^2l^2}
 \end{align*}
 and 
  \begin{align*}
  \left\|\chi(x)e^{ix\cdot\xi}P_{\leq k}\mathbf{v}(t,x)\right\|^2_{L_x^2l^2}\leq \|\mathbf{v}(t,x)\|^2_{L_x^2l^2}<\frac{M}{2M-1}\|Q_0\|^2_{L_x^2} . 
 \end{align*}
 Therefore, we can directly restore  estimate \eqref{mmn} by  using \eqref{qqr}. 

\end{enumerate}

{Now that  we have restored the positivity estimate \eqref{mmn}, we can directly transplant the proof of Theorem 5.3 in \cite{CGHY} to our setting and exclude the existence of the solution  $\mathbf{v}$, thereby completing the proof of Theorem \ref{th1.2}.}  

\textbf{Organization of the paper.} The rest of the paper is organized as follows: In Section \ref{reduction},  we reduce Theorem \ref{main1} to  Theorem \ref{t2.3}.
In Section \ref{se3v314},
 we first establish the spectral properties for the linearized operator
around the ground state $\mathbf{Q}$, and based on that, we develop the modulation analysis. We are  devoted to establishing a universal control  of the reminder $\boldsymbol{\epsilon}$'s $L^2_x l^2$ norm  
in Section \ref{sec:aprieps}. 
In  Section \ref{Sec:viria}, we derive a  virial-type inequality, and then based on that and Theorem \ref{claim2.1}, we complete the proof of Theorem \ref{t2.3} in Section \ref{Sec:Thm2.3}. And 
in Section \ref{Sec:Thm1.2}, we prove Theorem \ref{th1.2}. Finally, we provide the proof of Theorem \ref{claim2.1}  in the appendix.

\subsection{Notation and Preliminaries}\label{subse1.6v33}
We use the notation $X\lesssim Y$ to indicate that there exists some constant $C>0$ such  that $X \le C Y$. Similarly, we write $X \sim Y$ if 
$X\lesssim Y \lesssim X$.

For a vector-valued function $\mathbf{f}(t,x)  = \left\{f_j(t,x)  \right\}_{j\in \mathbb{Z}_N  }$ and $s\geq0$, we define 
\begin{align*}
\| \mathbf{f} \|_{L_t^p L_x^q h^s } := \bigg \|\bigg(\sum_{j \in \mathbb{Z}_N }  \langle j \rangle^{2s} \left| f_j(t,x) \right|^2 \bigg)^\frac12 \bigg\|_{L_t^p L_x^q},
\end{align*}
with $\langle j\rangle=\sqrt{1+j^2}.$
When $s = 0$, we write $L_t^p L_x^q h^0$ as $L_t^p L_x^q l^2$.

We also define the discrete nonisotropic Sobolev space. For a sequence of real-valued functions $\boldsymbol{\phi} =  \left\{\phi_k \right\}_{k\in \mathbb{Z}_N }$ and $s_1,s_2 \ge 0$, we define
\begin{align*}
  \|\boldsymbol{\phi} \|_{H_x^{s_1} h^{s_2}}  
 =   \bigg \| \bigg( \sum\limits_{k\in \mathbb{Z}_N } \langle k\rangle^{2s_2} \left|\langle \nabla_x\rangle^{ s_1} \phi_k (x)  \right|^2  \bigg)^\frac12 \bigg\|_{L_x^{2}}.
\end{align*}
and
\begin{align*}
  \|\boldsymbol{\phi} \|_{\dot{H}_x^{s_1} h^{s_2}}  
 =   \bigg \| \bigg( \sum\limits_{k\in \mathbb{Z}_N } \langle k\rangle^{2s_2} \left| |\nabla_x|^{ s_1} \phi_k (x)  \right|^2  \bigg)^\frac12 \bigg\|_{L_x^{2}}.
\end{align*}
In particular, when $s_1 = 0$, we denote the space $H_x^{s_1} h^{s_2}$ as $L_x^2 h^{s_2}$. Furthermore, when $s_2 = 0$, we denote $L_x^2 h^{s_2}$ as  $L_x^2 l^2$. Similarly, we define $L_x^2\dot{h}^{s_2}$ as follows:
\begin{align*}
  \|\boldsymbol{\phi} \|_{L_x^{2} \dot{h}^{s_2}}  
 =   \bigg \| \bigg( \sum\limits_{k\in \mathbb{Z}_N }  k^{2s_2} \left |\phi_k (x)  \right|^2  \bigg)^\frac12 \bigg\|_{L_x^{2}}.
\end{align*}
For $L_x^{2} l^{2}$, the related inner product $\langle \cdot \rangle_{L_x^{2} l^{2}}$ is given by 
$\langle f, g \rangle_{ L_x^{2} l^{2} } : = \Re \int \sum\limits_{j\in \mathbb{Z}_N} f_j (x) \overline{g_j (x)} \,\mathrm{d}x. $ For other Hilbert space $H$ defined in this paper, the related inner product are real inner product. 

For a function $f\in L_{loc}^1(\R^2)$, we use $\hat{f}$ or $\mathcal{F} (f)$ to denote the spatial Fourier transform of $f$:
$$\mathcal{F}(f) (\xi) = \hat{f}(\xi) =(2 \pi)^{- 1 } \int_{\R^2}e^{- i x\cdot \xi} f(x) \,\mathrm{d}x.$$
Let $\varphi\in C_0^\infty(\R)$ be a real-valued, non-negative, even, and radially-decreasing function such that
\begin{align*}
\varphi(\xi) =
\begin{cases}
1, \ |\xi| \leq 1, \\
0, \  |\xi| \ge 2,
\end{cases}
\end{align*}
and define $\chi(\xi)=\varphi(\xi)-\varphi(2\xi)$. For a  number $N\in \mathbb{N}_+$, we define the Littlewood-Paley projectors:
\begin{align*}
\widehat{P_Nf}(\xi):=\chi\big(\tfrac{\xi}{2^N} \big)\widehat{f}(\xi),  \quad \widehat{P_{\leq N}f}(\xi):=\varphi\big(\tfrac{\xi}{2^N} \big)\widehat{f}(\xi),\quad \widehat{P_{\geq N}f}(\xi):=\Big(1-\varphi\big(\tfrac{\xi}{2^N} \big)\Big)\widehat{f}(\xi).
\end{align*}
We also set that 
\begin{align*}
\widehat{P_0f}(\xi)=\widehat{P_{\leq0}f}:=\varphi\big({\xi} \big)\widehat{f}(\xi),  \quad {P_Nf}={P_{\leq N}f}=0,\quad\forall \ N\in \Z, \ N\leq -1  .
\end{align*}
In the following, we provide the Strichartz estimate for the (NLSS), which is an easy extension of the Strichartz estimate for the (NLS). 

\begin{definition}[Strichartz admissible pair]
We call a pair $(q,r)$ is Strichartz admissible if $2 < q \le \infty$, $2\le r < \infty$, and $\frac1q + \frac1r = \frac12$.
\end{definition}
\begin{lemma}[Strichartz estimate, \cite{CGYZ}]\label{th2.1v31}
For any Strichartz admissible pairs $(q,r)$ and $(\tilde{q}, \tilde{r})$, we have
	\begin{enumerate}
		\item $\big\|e^{it \Delta} \mathbf{f}\,\big\|_{L_t^q L_x^r l^2 (\RR\times\RR^2\times\ZZ_N )}   \lesssim \|\mathbf{f}\,\|_{L_x^2 l^2 (\RR^2\times\ZZ_N )}$,
 		\item $\displaystyle\left\|\int_0^t e^{i(t-s) \Delta} \mathbf{F}(s,x) \,\mathrm{d}s \right\|_{L_t^q L_x^r l^2 (\RR\times\RR^2\times\ZZ_N )} \lesssim \|\mathbf{F} \|_{L_t^{\tilde{q}'} L_x^{\tilde{r}'} l^2 (\RR\times\RR^2\times\ZZ_N )}$. 
\end{enumerate}
\end{lemma}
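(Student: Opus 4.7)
The strategy is to derive the vector-valued Strichartz estimates as a direct lift of the scalar theory, by invoking the abstract Keel--Tao framework with $l^2(\mathbb{Z}_N)$ as the target Hilbert space. The key structural observation is that the free Schr\"odinger propagator acts componentwise, $(e^{it\Delta}\mathbf{f})_j = e^{it\Delta} f_j$, so it extends naturally to a unitary operator on $L_x^2 l^2(\RR^2\times\ZZ_N)$ without coupling different indices $j$. Thus it will suffice to verify the two hypotheses of the Keel--Tao theorem --- conservation of mass and the dispersive decay --- in their $l^2$-valued form.

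First, the $l^2$-valued mass conservation will be immediate from summing the scalar conservation componentwise:
\begin{align*}
\|e^{it\Delta}\mathbf{f}\|_{L_x^2 l^2}^2 = \sum_{j \in \mathbb{Z}_N}\|e^{it\Delta}f_j\|_{L_x^2}^2 = \sum_{j \in \mathbb{Z}_N}\|f_j\|_{L_x^2}^2 = \|\mathbf{f}\|_{L_x^2 l^2}^2.
\end{align*}
For the vector-valued dispersive estimate, I will start from the explicit kernel representation $(e^{it\Delta}f)(x) = (4\pi it)^{-1}\int_{\RR^2} e^{i|x-y|^2/(4t)} f(y)\,\mathrm{d}y$ in dimension two and apply Minkowski's integral inequality to the $l^2(\ZZ_N)$ square function, obtaining
\begin{align*}
\bigg(\sum_{j \in \mathbb{Z}_N}|e^{it\Delta}f_j(x)|^2\bigg)^{1/2} \lesssim |t|^{-1}\int_{\RR^2}\bigg(\sum_{j\in \mathbb{Z}_N}|f_j(y)|^2\bigg)^{1/2}\,\mathrm{d}y,
\end{align*}
and hence $\|e^{it\Delta}\mathbf{f}\|_{L_x^\infty l^2} \lesssim |t|^{-1}\|\mathbf{f}\|_{L_x^1 l^2}$. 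This step is valid uniformly in $N$, including $N=\infty$, precisely because $l^2(\ZZ_N)$ is a Hilbert space and its norm admits Minkowski's inequality.

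With these two ingredients in place, the $TT^*$ argument of Keel--Tao will apply verbatim in the Hilbert-space-valued setting and yield the homogeneous bound (1) for every Strichartz admissible pair $(q,r)$; the inhomogeneous bound (2) will then follow from (1) together with its dual by invoking the Christ--Kiselev lemma to pass from the unrestricted to the retarded convolution in time when $(q,r)\ne(\tilde q,\tilde r)$, the diagonal case being handled directly by the $TT^*$ identity. I do not anticipate any conceptual obstacle: the Keel--Tao derivation depends only on an energy bound, a dispersive bound, and the Hilbert structure of the target, none of which sees the presence of the auxiliary index $j$. The only place requiring some care will be the Minkowski step above, since it is precisely the Hilbertness of $l^2$ that makes the vectorization of the dispersive estimate free of any loss, and this is in turn what allows the argument to run unchanged in the infinite-component limit $N=\infty$.
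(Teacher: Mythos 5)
Your proof is correct. The paper does not actually prove this lemma: it cites it from \cite{CGYZ} and remarks only that it is ``an easy extension of the Strichartz estimate for the (NLS)'' --- and your derivation is precisely that easy extension, namely the observation that $e^{it\Delta}$ acts componentwise so that the $L_x^2 l^2$ energy conservation and the $L_x^1 l^2 \to L_x^\infty l^2$ dispersive bound both lift from the scalar case via Minkowski's integral inequality, after which the vector-valued Keel--Tao machinery (interpolation in Bochner spaces with Hilbert target, plus Christ--Kiselev for the off-diagonal retarded estimate, noting that admissibility $q>2$ in $d=2$ avoids any endpoint) runs verbatim. There is nothing in the paper to compare against beyond the citation, and no gap in what you wrote.
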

The linear profile decomposition in $L_x^2(\mathbb{R}^2)$ for the mass-critical nonlinear Schr\"odinger equation 
is established by F. Merle and L. Vega \cite{MV}. After the work of R. Carles and S. Keraani \cite{CK} on the one-dimensional case,
P. B\'egout and A. Vargas \cite{BV} establish the linear profile decomposition of the mass-critical nonlinear Schr\"odinger equation 
for arbitrary dimensions by the refined Strichartz inequality \cite{Bo1} and bilinear restriction estimate \cite{T1}. This can be easily extended to give the linear profile decomposition in $L_x^2l ^2(\mathbb{R}^2 \times \mathbb{Z}_N)$ with $N< \infty$ for the mass-critical nonlinear Schr\"odinger system. 

\begin{proposition}[Linear profile decomposition in $L_x^2 l^2( \mathbb{R}^2 \times \mathbb{Z}_N)$] \label{pro3.9v23}Suppose that $N<\infty$. Let $\{\mathbf{u}_n\}_{n\ge 1}$ 
be a bounded sequence in $L_x^2 l^2(\mathbb{R}^2 \times \mathbb{Z}_N )$. Then after passing to a subsequence if necessary,
there exist $J^* \in \{0,1, \cdots\} \cup \{\infty\}$,
functions $\boldsymbol{\phi}^{j}$ in $L_x^2 l^2(\mathbb{R}^2 \times \mathbb{Z}_N )$ and mutually orthogonal frames $(\lambda_n^j, t_n^j, x_n^j,\xi_n^j )_{n\ge 1} \subseteq (0,\infty) \times \mathbb{R} \times \mathbb{R}^2 \times \mathbb{R}^2$,
which means
\begin{align*}
\frac{\lambda_n^j}{\lambda_n^k} + \frac{\lambda_n^k}{\lambda_n^j} + \lambda_n^j \lambda_n^{k} |\xi_n^j - \xi_n^{k}|^2  + \frac{|x_n^j- x_n^k |^2 }{\lambda_n^j \lambda^k_n}
+ \frac{|(\lambda_n^j)^2 t_n^j - (\lambda_n^k)^2 t_n^k |}{\lambda_n^j \lambda_n^k} \to \infty, \text{ as } n \to \infty, \text{ for } j\ne k,
\end{align*}
and for every $J \le J^*$, a sequence $\mathbf{r}_n^J \in L_x^2 l^2 (\mathbb{R}^2 \times \mathbb{Z}_N)$, such that
\begin{align*}
\mathbf{u}_n(x)  = \sum\limits_{j=1}^J \frac1{\lambda_n^j} e^{ix\cdot\xi_n^j} \left(e^{it_n^j\Delta_{\mathbb{R}^2   }}  \boldsymbol{\phi}^j\right)\left(\frac{x-x_n^j}{\lambda_n^j} \right)  + \mathbf{r}_n^J(x),
\end{align*}
furthermore, 
\begin{align*}
& \lim\limits_{n\to \infty} \left(\| \mathbf{u}_n\|_{L_x^2 l^2   }^2 - \sum\limits_{j=1}^J \left\|   \frac1{\lambda_n^j} e^{ix\cdot\xi_n^j} \left(e^{it_n^j\Delta_{\mathbb{R}^2   }}  \boldsymbol{\phi}^j\right)\left(\frac{x-x_n^j}{\lambda_n^j}\right)
\right\|_{ L_x^2 l^2  }^2 - \| \mathbf{r}_n^J\|^2_{L_x^2 l^2}  \right)  = 0, \notag \\
& \lambda_n^j   e^{-it_n^j \Delta_{\mathbb{R}^2  }}\left(e^{-i(\lambda_n^j x + x_n^j)\cdot \xi_n^j}  \mathbf{r}_n^J\left(\lambda_n^j x+ x_n^j \right)\right)  \rightharpoonup 0 \text{ in } L_x^2 l^2 ,  \text{ as  } n\to \infty,\text{ for each }  j\le J,\notag\\
&  \limsup\limits_{n\to \infty} \|e^{it \Delta_{\mathbb{R}^2    }} \mathbf{r}_n^{J} \|_{L_{t,x}^4  l^2 (\mathbb{R}\times \mathbb{R}^2 \times \mathbb{Z}_N)}   \to 0, \text{ as } J\to J^*.
\end{align*}
\end{proposition}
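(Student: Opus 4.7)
The plan is to reduce the vector-valued profile decomposition to the scalar Bégout--Vargas theorem by exploiting the fact that for $N<\infty$, the space $L_x^2 l^2(\mathbb{R}^2\times \mathbb{Z}_N)$ is nothing more than a finite orthogonal direct sum of $N$ copies of $L_x^2(\mathbb{R}^2)$, and $e^{it\Delta}$ acts componentwise. This means that the whole extraction machinery of Bahouri--Gérard / Keraani transfers almost verbatim; one only needs a vector-valued refined Strichartz inequality to extract a single bubble, and then one iterates.

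For the \emph{first bubble}, I would prove the following inverse Strichartz statement: if $\|\mathbf{u}_n\|_{L_x^2 l^2}\le A$ and $\limsup_{n}\|e^{it\Delta}\mathbf{u}_n\|_{L_{t,x}^4 l^2}\ge \varepsilon$, then, after passing to a subsequence, there exist a frame $(\lambda_n,t_n,x_n,\xi_n)$ and $\boldsymbol{\phi}\in L_x^2 l^2$ with $\|\boldsymbol{\phi}\|_{L_x^2 l^2}\gtrsim_{A}\varepsilon^{\alpha}$ such that
\begin{equation*}
\lambda_n e^{-it_n\Delta}\bigl(e^{-i(\lambda_n\cdot+x_n)\cdot\xi_n}\mathbf{u}_n(\lambda_n\cdot+x_n)\bigr)\rightharpoonup \boldsymbol{\phi}\quad\text{weakly in } L_x^2 l^2.
\end{equation*}
To get this, observe that
\begin{equation*}
\|e^{it\Delta}\mathbf{u}_n\|_{L_{t,x}^4 l^2}^{2}=\Big\|\sum_{j\in\mathbb{Z}_N}|e^{it\Delta}u_{j,n}|^{2}\Big\|_{L_{t,x}^{2}}\le \sum_{j\in\mathbb{Z}_N}\|e^{it\Delta}u_{j,n}\|_{L_{t,x}^{4}}^{2},
\end{equation*}
so some component $u_{j_\ast,n}$ must satisfy $\|e^{it\Delta}u_{j_\ast,n}\|_{L_{t,x}^4}\gtrsim_N \varepsilon^{1/2}$. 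The scalar refined Strichartz of Bourgain / Moyua--Vargas--Vega / Tao then supplies a frame along which $u_{j_\ast,n}$ has a nontrivial weak limit $\phi_{j_\ast}\ne 0$. Along that same frame, the tightness of $\mathbf{u}_n$ in $L_x^2 l^2$ (and Banach--Alaoglu componentwise) gives weak limits $\phi_j\in L_x^2$ for every $j\in\mathbb{Z}_N$, yielding a vector profile $\boldsymbol{\phi}=(\phi_1,\ldots,\phi_N)$ with mass lower bound inherited from $\phi_{j_\ast}$.

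With the first bubble in hand, I would run the standard iterative extraction: set $\mathbf{r}_n^{0}=\mathbf{u}_n$, at step $J$ define
\begin{equation*}
\mathbf{r}_n^{J}(x)=\mathbf{u}_n(x)-\sum_{j=1}^{J}\frac{1}{\lambda_n^{j}}e^{ix\cdot\xi_n^{j}}\bigl(e^{it_n^{j}\Delta}\boldsymbol{\phi}^{j}\bigr)\Bigl(\tfrac{x-x_n^{j}}{\lambda_n^{j}}\Bigr),
\end{equation*}
and apply the previous step to $\mathbf{r}_n^{J}$ with $\varepsilon_J=\tfrac{1}{2}\limsup_n\|e^{it\Delta}\mathbf{r}_n^{J}\|_{L_{t,x}^4 l^2}$. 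Pythagorean (mass) decoupling in the Hilbert space $L_x^2 l^2$, together with the weak convergence $\mathbf{r}_n^{J}\to 0$ after pulling back by each previously extracted frame, forces $\sum_J \|\boldsymbol{\phi}^{J}\|_{L_x^2 l^2}^{2}\le \limsup \|\mathbf{u}_n\|_{L_x^2 l^2}^{2}$; since the bubble mass lower bound depends only on $\varepsilon_J$ and $A$, this summability forces $\varepsilon_J\to 0$, yielding the $L_{t,x}^4 l^2$ smallness of the remainder as $J\to J^{*}$.

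The only genuinely delicate step is the \emph{orthogonality of the frames} and the exact Pythagorean decoupling
\begin{equation*}
\|\mathbf{u}_n\|_{L_x^2 l^2}^{2}=\sum_{j=1}^{J}\|\boldsymbol{\phi}^{j}\|_{L_x^2 l^2}^{2}+\|\mathbf{r}_n^{J}\|_{L_x^2 l^2}^{2}+o_n(1).
\end{equation*}
Here I would proceed exactly as Keraani: if two frames $\mathcal{F}^{j}$ and $\mathcal{F}^{k}$ failed to be orthogonal (in the sense written in the statement), a subsequence of the symmetry-group elements converges, and the weak limit of the remainder along $\mathcal{F}^{k}$ would then inherit a nonzero contribution from $\boldsymbol{\phi}^{j}$, contradicting the construction which forces $\mathbf{r}_n^{j-1}$ to have a specified weak limit along $\mathcal{F}^{j}$. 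The componentwise structure causes no new difficulty here because each symmetry in \eqref{sym1}--\eqref{sym5} (excluding the componentwise phases, which are irrelevant for weak convergence of the bare sequence) acts diagonally on $L_x^2 l^2$, and orthogonality is detected by testing against the fixed scalar profiles $\phi_{j_\ast}^{j}$. The cross-term smallness $\langle\cdot,\cdot\rangle_{L_x^2 l^2}=\sum_j \langle\cdot,\cdot\rangle_{L_x^2}$ reduces to finitely many scalar orthogonality computations, each of which is the standard one. Thus I expect the main technical effort to lie in making this orthogonality bookkeeping uniform in $J$, and the rest to be a routine vectorization of the scalar argument.
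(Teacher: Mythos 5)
Your approach is correct and is exactly the one the paper has in mind (the paper does not actually prove this proposition; it merely cites the scalar Bégout–Vargas theorem and remarks that the extension to $L_x^2 l^2(\mathbb{R}^2\times\mathbb{Z}_N)$, $N<\infty$, is easy). The reduction via finiteness of $N$ — pigeonholing a component $j_\ast$ with large scalar $L^4_{t,x}$ norm, applying scalar inverse Strichartz to get the frame, and then using weak compactness of the whole vector sequence along that frame to build $\boldsymbol{\phi}$ — is the standard and intended argument, and the Hilbert-space Pythagorean decoupling plus the Keraani-style frame-orthogonality bookkeeping go through unchanged. Two minor slips that do not affect the argument: the pigeonhole yields $\|e^{it\Delta}u_{j_\ast,n}\|_{L_{t,x}^4}\gtrsim_N \varepsilon$, not $\varepsilon^{1/2}$; and what you need in the weak-limit step is boundedness of $\mathbf{u}_n$ in $L_x^2 l^2$ (Banach–Alaoglu), not tightness.
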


\section{Theorem \ref{t2.3}  implying Theorem \ref{main1}}\label{reduction}
In this section, we show that  Theorem \ref{main1} follows by  Theorem \ref{t2.3}.
Let $\mathbf{u}(t)$ be a  blowup solution to \eqref{1.1} with maximal lifespan $I$ and
$M(\mathbf{u}) =  M(\mathbf{Q})$. For any time $t_0\in I$, we define the distance to the $(N+5)$-dimensional manifold of symmetries acting on  \eqref{1.12} by
\begin{equation*}
\inf_{\substack{ \left(\lambda,\tilde{x},\xi \right)\in \mathbb{R}_{+}\times\mathbb{R}^2\times\mathbb{R}^2,
		\\(\gamma_1,\cdots, \gamma_N) \in [0, 2 \pi]^N}} \sum_{j\in\mathbb{Z}_N}
\left\|   e^{i \gamma_j}e^{ix\cdot\xi} \lambda u_{j} \left(t_0,\lambda \left(x+\tilde{x} \right) \right) - {Q} (x) \right\|^2_{L_x^{2}}.
\end{equation*}
First, we prove that there exist $\lambda_{0} > 0$, $\left(\tilde{x}_0,\xi_0 \right)\in \mathbb{R}^2\times \mathbb{R}^2$, and $(\gamma_{0,1},\cdots,\gamma_{0, N}) \in [0,2\pi]^N$ such that  this infimum is attained.

\begin{lemma}\label{l2.1}
There exist $\lambda_{0} > 0$, $ \left(\tilde{x}_0,\xi_0 \right)\in \mathbb{R}^2\times \mathbb{R}^2$, and $(\gamma_{0,1},\cdots,\gamma_{0,N}) \in [0,2\pi]^N$ such that
\begin{align*} 
& \sum_{j\in\mathbb{Z}_N} \left\| u_{j}(t_0,x) - e^{-i \gamma_{0,j}} \lambda_{0}^{-1} e^{-ix\cdot\xi_0}  {Q}  \left(\lambda_{0}^{-1} x+\tilde{x}_0 \right)  \right\|^2_{L_x^{2}} \\
 =& \inf_{\substack{ \left(\lambda,\tilde{x},\xi \right)\in \mathbb{R}_{+}\times\mathbb{R}^2\times\mathbb{R}^2\\(\gamma_1,\cdots, \gamma_N) \in [0, 2 \pi]^N}}\sum_{j\in\mathbb{Z}_N}
\left\| u_{j}(t_0,x) - e^{-i \gamma_j}e^{-ix\cdot\xi} \lambda^{-1}  Q  \left(\lambda^{-1} x+\tilde{x} \right)  \right\|^2_{L_x^{2}}.
\end{align*}
\end{lemma}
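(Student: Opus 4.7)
The plan is to apply the direct method of the calculus of variations: show the minimization functional is bounded below by a value strictly less than the ``trivial'' constant $2M(\mathbf{Q})$, then show that any minimizing sequence remains in a precompact subset of parameter space. Expanding the squared $L^2_x$ norm and using $M(\mathbf{u}) = M(\mathbf{Q}) = N\|Q\|_{L^2_x}^2$ together with the mass invariance $\|\lambda^{-1} Q(\lambda^{-1} \cdot + \tilde{x})\|_{L^2_x} = \|Q\|_{L^2_x}$, the functional to minimize becomes
\begin{equation*}
F(\lambda, \tilde{x}, \xi, \gamma_1, \ldots, \gamma_N) \;=\; 2 M(\mathbf{Q}) \;-\; 2\, \Re \sum_{j \in \mathbb{Z}_N} \Big\langle u_j(t_0),\, e^{-i\gamma_j} e^{-ix\cdot \xi} \lambda^{-1} Q\bigl(\lambda^{-1} x + \tilde{x}\bigr) \Big\rangle_{L^2_x}.
\end{equation*}
So minimizing $F$ is equivalent to maximizing the real part of the sum of inner products above. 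Since the phases $(\gamma_1, \ldots, \gamma_N)$ live in the compact set $[0,2\pi]^N$, the real work is to control $(\lambda, \tilde{x}, \xi) \in \mathbb{R}_+ \times \mathbb{R}^2 \times \mathbb{R}^2$.

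The first step is to show $d(t_0) := \inf F < 2 M(\mathbf{Q})$ strictly. Optimizing each phase $\gamma_j$ pointwise reduces this to the existence of \emph{some} $(\lambda, \tilde{x}, \xi)$ with $\sum_j |\langle u_j(t_0), e^{-ix\cdot\xi} \lambda^{-1} Q(\lambda^{-1}x + \tilde{x})\rangle_{L^2_x}| > 0$. If this supremum were zero, each $u_j(t_0)$ would be $L^2_x$-orthogonal to every modulation-translation-scaling of $Q$; because $Q$ is a nonzero Schwartz function with exponentially decaying derivatives, the span of the family $\{e^{-ix\cdot\xi} Q(\cdot + \tilde{x})\}_{\xi, \tilde{x}}$ is dense in $L^2_x$ (a standard Fourier-analytic fact using that the zero set of $\widehat{Q}$ has Lebesgue measure zero), which would force $u_j(t_0) \equiv 0$ for every $j$, contradicting $M(\mathbf{u}) = M(\mathbf{Q}) > 0$.

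The second step, which I expect to be the main obstacle, is to show that any minimizing sequence of parameters $(\lambda_n, \tilde{x}_n, \xi_n)$ remains in a compact subset of $\mathbb{R}_+ \times \mathbb{R}^2 \times \mathbb{R}^2$. Suppose not; then, along a subsequence, at least one of $\lambda_n \to 0$, $\lambda_n \to \infty$, $|\tilde{x}_n| \to \infty$, or $|\xi_n| \to \infty$ occurs. In each regime, the test profiles
\begin{equation*}
\phi_j^{(n)}(x) \;:=\; e^{-i\gamma_{j,n}} e^{-ix\cdot\xi_n} \lambda_n^{-1} Q\bigl(\lambda_n^{-1}x + \tilde{x}_n\bigr)
\end{equation*}
tend weakly to zero in $L^2_x$: $\lambda_n \to \infty$ via the vanishing $L^\infty_x$ bound $\lambda_n^{-1}\|Q\|_{L^\infty_x}$ tested against compactly supported functions; $\lambda_n \to 0$ via the vanishing rescaled $L^1_x$ mass $\lambda_n \|Q\|_{L^1_x}$; $|\tilde{x}_n| \to \infty$ with $\lambda_n$ bounded via escape of the bulk to infinity; and $|\xi_n| \to \infty$ via the Riemann--Lebesgue lemma. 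A standard density argument (approximating $u_j(t_0)$ by Schwartz functions) promotes the pairings to all $L^2_x$ data, whence $\langle u_j(t_0), \phi_j^{(n)}\rangle_{L^2_x} \to 0$ and so $F_n \to 2M(\mathbf{Q})$, contradicting $d(t_0) < 2M(\mathbf{Q})$. Having ruled out degeneracy, extract a subsequence with $(\lambda_n, \tilde{x}_n, \xi_n, \gamma_n) \to (\lambda_0, \tilde{x}_0, \xi_0, \gamma_0)$ for some $\lambda_0 > 0$; strong continuity of scaling, translation, Galilean boost, and phase rotation on $L^2_x$ gives $\phi_j^{(n)} \to \phi_j^{(\lambda_0, \tilde{x}_0, \xi_0, \gamma_{0,j})}$ in $L^2_x$, so $F$ is continuous in the parameters at the limit point and the infimum is attained there.
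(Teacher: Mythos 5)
Your proposal is correct and follows essentially the same route as the paper: expand the squared norm to reduce to maximizing a real inner product, observe that the pairing with $u_j(t_0)$ vanishes whenever $\lambda\to 0$, $\lambda\to\infty$, $|\tilde x|\to\infty$, or $|\xi|\to\infty$, and conclude by compactness of the residual parameter region together with $[0,2\pi]^N$. The only structural difference is that the paper handles the degenerate possibility that the functional is identically constant as a separate trivial case (Case I), whereas you rule it out outright via a density argument (which could in fact be simplified, using strict positivity of $Q$ rather than the zero set of $\hat Q$); both are valid.
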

\begin{proof}
Obviously,
\begin{equation*} 
\sum_{j\in\mathbb{Z}_N} \left\| u_{j}(t_0,x) - e^{-i \gamma_j}e^{-ix\cdot\xi} \lambda^{-1}   Q  \left(\lambda^{-1} x+\tilde{x} \right) \right\|^2_{L_x^{2}}
\end{equation*}
is continuous as a function of $\lambda,\tilde{x},\xi$, and $\gamma_1,\cdots,\gamma_N$. We divide into two cases:

\textbf{Case I}. For all $\lambda > 0$ and $\left(\tilde{x},\xi \right)\in \mathbb{R}^2\times\mathbb{R}^2$,
\begin{equation*} 
\inf_{(\gamma_1,\cdots,\gamma_N) \in [0, 2\pi]^N} \sum_{j\in\mathbb{Z}_N}  \left\| u_{j}(t_0,x) - \lambda^{-1} e^{ - i\gamma_j }e^{- ix \cdot\xi }  Q  \left(\lambda^{-1} x+\tilde{x} \right) \right\|_{L_x^{2}}^{2}=2 \left\|\mathbf{Q} \right\|_{L^2_x l^2}^2.
\end{equation*}
In this case, by direct calculation, for any $j\in\mathbb{Z}_N$,
\begin{align}\label{f2.5}
	& \left\| u_{j}(t_0,x) - \lambda^{-1} e^{i\gamma_j }e^{ix \cdot \xi }Q  \left(\lambda^{-1} x+\tilde{x} \right)
\right\|_{L_x^{2}}^{2}
+ \left\| u_{j}(t_0,x) +  \lambda^{-1} e^{i\gamma_j }e^{ix \cdot \xi } Q  \left(\lambda^{-1} x+\tilde{x} \right)  \right\|_{L_x^{2}}^{2} \notag \\
 =& 2 \| Q  \|_{L_x^{2}}^{2}+2 \|u_{j}\|_{L_x^{2}}^{2},
\end{align}
Changing  variable, \eqref{f2.5} implies
\begin{equation}\label{f2.6}
	\frac{1}{2 \pi} \int_{0}^{2 \pi} \left\| u_{j}(t_0,x) - \lambda^{-1} e^{i\gamma_j }e^{ix \cdot \xi }Q
\left(\lambda^{-1} x+\tilde{x} \right)  \right\|_{L_x^{2}}^{2} \,\mathrm{d} \gamma_j
= \| Q  \|_{L_x^{2}}^{2}+ \|u_{j}\|_{L_x^{2}}^{2}. 
\end{equation}
Integrating \eqref{f2.6} over variables
$\gamma_k$ for $k\neq j$ in  $[0, 2\pi]^{N-1}$ and summing over $j$, we obtain 
\begin{align*} 
	& \frac{1}{(2 \pi)^N} \int_{0}^{2\pi}\cdots\int_{0}^{2 \pi} \sum_{j\in\mathbb{Z}_N}
\left\| u_{j}(t_0,x) - \lambda^{-1} e^{i\gamma_j }e^{ix \cdot \xi }Q  \left(\lambda^{-1} x+\tilde{x} \right)  \right\|_{L_x^{2}}^{2} \,
\mathrm{d} \gamma_1\cdots  \mathrm{d} \gamma_{N} \\
=&N \| Q  \|_{L_x^{2}}^{2}+\sum_{j\in\mathbb{Z}_N} \|u_{j}\|_{L_x^{2}}^{2}  =2 \left\|\mathbf{Q} \right\|^2_{L_x^2l^2},
\end{align*}
then \eqref{f2.6} implies for any $ \lambda > 0, \  \left(\tilde{x},\xi \right)\in \mathbb{R}^2\times\mathbb{R}^2, \  (\gamma_1,\cdots,\gamma_N) \in [0, 2 \pi]^N$
\begin{equation*} 
\sum_{j\in\mathbb{Z}_N} \left\| u_{j}(t_0,x) - \lambda^{-1} e^{i\gamma_j }e^{ix \cdot \xi  }Q  \left(\lambda^{-1} x+\tilde{x} \right)  \right\|_{L_x^{2}}^{2}
= 2 \left\| \mathbf{Q}  \right\|_{L^2_{x} l^2}^{2}.
\end{equation*}
Thus, in this case, we can simply take $\lambda_{0} = 1,  \tilde{x}_0=0, \xi_0=0$ and  $\gamma_{0,j} = 0$ for all 
$j\in\mathbb{Z}_N$. 

\textbf{Case II}.  There exist  $\lambda_0 > 0$ and $\left(\tilde{x}_0,\xi_0 \right)\in \mathbb{R}^2\times\mathbb{R}^2$ such that 
\begin{equation*} 
\inf_{(\gamma_1,\cdots,\gamma_N) \in [0, 2\pi]^N} \sum_{j\in\mathbb{Z}_N}  \left\| u_{j}(t_0,x) - \lambda_0^{-1} e^{ - i\gamma_j }e^{- ix \cdot\xi_0 }  Q  \left(\lambda_0^{-1} x+\tilde{x}_0 \right) \right\|_{L_x^{2}}^{2}=2 \left\|\mathbf{Q} \right\|_{L^2_x l^2}^2 
\end{equation*}
and
\begin{equation*} 
\inf_{\substack{ \left(\lambda,\tilde{x},\xi \right)\in \mathbb{R}_{+}\times\mathbb{R}^2\times\mathbb{R}^2, \\(\gamma_1,\cdots, \gamma_N) \in [0, 2 \pi]^N}}\sum_{j\in\mathbb{Z}_N} \left\|u_j(t_0,x)-\lambda^{-1}e^{-i\gamma_j}e^{-ix\cdot\xi} Q  \left(\lambda^{-1} x+\tilde{x} \right) \right\|_{L_x^{2}}^{2}< 2 \left\|\mathbf{Q} \right\|^2_{L_x^2l^2}.
\end{equation*}
In this case, we claim that
\begin{align}\label{f2.10}
	\lim_{\lambda \nearrow \infty}  \sum_{j\in\mathbb{Z}_N}
\left\| u_{j}(t_0,x) - e^{-i \gamma_j}e^{-ix\cdot\xi} \lambda^{-1} Q  \left(\lambda^{-1} x+\tilde{x} \right)  \right\|^2_{L_x^{2}}
= 2  \left\| \mathbf{Q} \right\|_{L_x^{2}l^2}^{2}
\end{align}
and 
\begin{align}\label{f2.11}
	\lim_{\lambda \searrow 0}  \sum_{j\in\mathbb{Z}_N} \left\| u_{j}(t_0,x) - e^{-i \gamma_j}e^{-ix\cdot\xi} \lambda^{-1} Q  \left(\lambda^{-1} x+\tilde{x} \right) \right\|^2_{L_x^{2}} = 2  \left\| \mathbf{Q}  \right\|_{L^2_{x} l^2}^{2},
\end{align}
uniformly in $ \left(\tilde{x},\xi \right)\in\mathbb{R}^2\times\mathbb{R}^2$ and $(\gamma_1,\cdots,\gamma_N)\in[0,2\pi]^N$.
Indeed,
\begin{align*}
	& \sum_{j\in\mathbb{Z}_N} \left\| u_{j}(t_0,x) - e^{-i \gamma_j}e^{-ix\cdot\xi} \lambda^{-1} Q \left(\lambda^{-1} x+\tilde{x} \right) \right\|^2_{L_x^{2}} \\
=&\left\|\mathbf{Q} \right\|^2_{L_x^2l^2}
+ \left\|\mathbf{u}_0 \right\|^2_{L_x^2l^2}+2\sum_{j\in\mathbb{Z}_N} \left\langle u_{j}(x) , e^{-i\gamma_j}e^{-ix\cdot\xi}\lambda^{-1} Q \left(\lambda^{-1} x+\tilde{x} \right) \right\rangle_{L_x^2}\\
	=&2 \left\|\mathbf{Q} \right\|^2_{L_x^2l^2}+2\sum_{j\in\mathbb{Z}_N}  \left\langle u_{j}(x) , e^{-i\gamma_j}e^{-ix\cdot\xi}\lambda^{-1} Q \left(\lambda^{-1} x+\tilde{x} \right) \right\rangle_{L_x^2}.
\end{align*}
For any $\delta>0$, we can choose $N+1$ smooth functions $\phi_{1,\delta},\cdots,\phi_{N,\delta},\psi_{\delta}$ supported in a compact region $K_{\delta}$ such that
\begin{equation}
	 \|Q-\psi_{\delta}\|_{L_x^2}<\delta,\quad \sum_{j\in\mathbb{Z}_N}\|u_{j}-\phi_{j,\delta}\|_{L_x^2}<\delta.\notag
\end{equation}
Then, we have
\begin{align}
	&\ \  \sum_{j\in\mathbb{Z}_N} \left\langle u_{j}(x) , e^{-i\gamma_j}e^{-ix\cdot\xi}\lambda^{-1} Q \left(\lambda^{-1} x+\tilde{x} \right)  \right\rangle_{L_x^2}
\notag \\
& =
\sum_{j\in\mathbb{Z}_N} \left\langle \phi_{j,\delta}, e^{-i\gamma_j}e^{-ix\cdot\xi}\lambda^{-1} \psi_\delta  \left(\lambda^{-1} x+\tilde{x} \right)  \right\rangle_{L_x^2}\notag\\
	&\  \ +\sum_{j\in\mathbb{Z}_N} \left\langle u_{j}-\phi_{j,\delta}, e^{-i\gamma_j}e^{-ix\cdot\xi}\lambda^{-1} Q \left(\lambda^{-1} x+\tilde{x} \right)  \right\rangle_{L_x^2}
+\sum_{j\in\mathbb{Z}_N} \left\langle u_{j}, e^{-i\gamma_j}e^{-ix\cdot\xi}\lambda^{-1} \left[Q-\psi_{\delta} \right] \left(\lambda^{-1} x+\tilde{x} \right)  \right\rangle_{L_x^2}\notag\\
	& =\sum_{j\in\mathbb{Z}_N} \left\langle \phi_{j,\delta}, e^{-i\gamma_j}e^{-ix\cdot\xi}\lambda^{-1} \psi_\delta  \left(\lambda^{-1} x+\tilde{x} \right)  \right\rangle_{L_x^2}
+O (\delta)\label{plm1}\\
	& =\sum_{j\in\mathbb{Z}_N} \left\langle e^{i\gamma_j }e^{i( x - \tilde{x}) \cdot\xi}\lambda\phi_{j,\delta} \left(\lambda \left(x-\tilde{x} \right) \right),  \psi_\delta (x) \right\rangle_{L_x^2}+O(\delta).\label{plm2}
\end{align}
By \eqref{plm1} and \eqref{plm2}, it is easy to see that the claims \eqref{f2.10} and \eqref{f2.11} hold.
Thus, \eqref{f2.10} and \eqref{f2.11} imply that there exist $0 < \lambda_{1} < \lambda_{2} < \infty$ such that
\begin{align*} 
& \inf_{\substack{ \left(\lambda,\tilde{x},\xi \right)\in \mathbb{R}_{+}\times\mathbb{R}^2\times\mathbb{R}^2, \\ (\gamma_1,\cdots, \gamma_N) \in [0, 2 \pi]^N}} \sum_{j\in\mathbb{Z}_N} \left\|u_j(t_0,x)-\lambda^{-1}e^{-i\gamma_j}e^{-ix\cdot\xi} Q \left(\lambda^{-1} x+\tilde{x} \right)  \right\|_{L_x^{2}}^{2} \\
& = \inf_{\substack{\lambda \in [\lambda_{1}, \lambda_{2}], \\ \left(\tilde{x},\xi \right)\in\mathbb{R}^2\times\mathbb{R}^2, \\(\gamma_1,\cdots, \gamma_N) \in [0, 2 \pi]^N}} \sum_{j\in\mathbb{Z}_N} \left\|u_j(t_0,x)-\lambda^{-1}e^{-i\gamma_j}e^{-ix\cdot\xi} Q \left(\lambda^{-1} x+\tilde{x} \right)  \right\|_{L_x^{2}}^{2}.
\end{align*}
Also, one can deduce that
\begin{align}\label{f2.15}
	\lim_{ \left|\tilde{x} \right| \to  \infty}  \sum_{j\in\mathbb{Z}_N} \left\| u_{j}(t_0,x) - e^{-i \gamma_j}e^{-ix\cdot\xi} \lambda^{-1} Q \left(\lambda^{-1} x+\tilde{x} \right) \right\|^2_{L_x^{2}} = 2 \left\| \mathbf{Q} \right\|_{L^2_{x} l^2}^{2}
\end{align}
and
\begin{align}\label{f2.16}
	\lim_{|\xi| \to  \infty}  \sum_{j\in\mathbb{Z}_N}
\left\| u_{j}(t_0,x) - e^{-i \gamma_j}e^{-ix\cdot\xi} \lambda^{-1} Q \left(\lambda^{-1} x+\tilde{x} \right) \right\|^2_{L_x^{2}}
= 2  \left\| \mathbf{Q} \right\|_{L^2_{x} l^2}^{2}
\end{align}
uniformly in  $\lambda\in[\lambda_1,\lambda_2], \xi\in\mathbb{R}^2,  (\gamma_1,\cdots,\gamma_N)\in[0,2\pi]^N$ and $\lambda\in [\lambda_1,\lambda_2],\tilde{x}\in\mathbb{R}^2, (\gamma_1,\cdots,\gamma_N)\in[0,2\pi]^N$ respectively. Thus \eqref{f2.15} and \eqref{f2.16} also imply that there exist $\left(\tilde{x}_1,\tilde{x}_2,\xi_1,\xi_2 \right)\in\mathbb{R}^2\times\mathbb{R}^2\times\mathbb{R}^2\times\mathbb{R}^2$, $ \left|\tilde{x}_1 \right|< \left|\tilde{x}_2 \right|$, $|\xi_1|<|\xi_2|$ so that
\begin{align*} 
& 	\inf_{\lambda \in [\lambda_{1}, \lambda_{2}]} \inf_{\substack{ \left(\tilde{x},\xi \right)\in\mathbb{R}^2\times\mathbb{R}^2,
\\(\gamma_1,\cdots, \gamma_N) \in [0, 2 \pi]^N}} \sum_{j\in\mathbb{Z}_N}
\left\|u_j(t_0,x)-\lambda^{-1}e^{-i\gamma_j}e^{-ix\cdot\xi} Q \left(\lambda^{-1} x+\tilde{x} \right) \right\|_{L_x^{2}}^{2}\\
 =&\inf_{\substack{\lambda\in [\lambda_1,\lambda_2], \\ \left|\tilde{x} \right|\in \left[ \left|\tilde{x}_1 \right|, \left|\tilde{x}_2 \right| \right] ,  \\|\xi|\in[|\xi_1|,|\xi_2|],
\\(\gamma_1,\cdots, \gamma_N) \in [0, 2 \pi]^N}} \sum_{j\in\mathbb{Z}_N} \left\|u_j(t_0,x)-\lambda^{-1}e^{-i\gamma_j}e^{-ix\cdot\xi} Q \left(\lambda^{-1} x+\tilde{x} \right) \right\|_{L_x^{2}}^{2}.
\end{align*}
Since $[\lambda_{1}, \lambda_{2}] \times \left[ \left|\tilde{x}_1 \right|, \left|\tilde{x}_2 \right| \right]\times [|\xi_1|,|\xi_2|] \times [0, 2 \pi]^N$ is a compact set, by continuity, there exist $\lambda_{0} > 0$, $ \left(\tilde{x}_0,\xi_0 \right)\in\mathbb{R}^2\times\mathbb{R}^2$, and $(\gamma_{0,1},\cdots,\gamma_{0,N}) \in [0, 2\pi]^N$ such that
\begin{align*}\label{f2.18}	
& \sum_{j\in\mathbb{Z}_N} \left\| u_{j}(t_0,x) - e^{-i \gamma_{j}} \lambda_{0}^{-1} e^{-ix\cdot\xi_0}Q \left(\lambda_{0}^{-1} x+\tilde{x}_0 \right)  \right\|^2_{L_x^{2}} \\
 =& \inf_{\substack{ \left(\lambda,\tilde{x},\xi \right)\in \mathbb{R}_{+}\times\mathbb{R}^2\times\mathbb{R}^2,
\\(\gamma_1,\cdots, \gamma_N) \in [0, 2 \pi]^N}}\sum_{j\in\mathbb{Z}_N} \left\| u_{j}(t_0,x) - e^{-i \gamma_j}e^{-ix\cdot\xi} \lambda^{-1}
Q \left(\lambda^{-1} x+\tilde{x} \right) \right\|^2_{L_x^{2}}.
\end{align*}
This completes the proof of the lemma. 
\end{proof}
Next, we show that the distance function is upper semicontinuous in time $t$ on the lifespan $I$ and continuous when it takes small values. 
\begin{lemma}[Upper semicontinuity of the distance to the ground state]\label{l2.2}
	The  function
	\begin{equation}\label{f2.19}
		\alpha(t)=\inf_{\substack{ \left(\lambda,\tilde{x},\xi \right)\in \mathbb{R}_{+}\times\mathbb{R}^2\times\mathbb{R}^2,
\\(\gamma_1,\cdots, \gamma_N) \in [0, 2 \pi]^N}} \sum_{j\in\mathbb{Z}_N} \left\|  e^{i \gamma_j}e^{ix\cdot\xi} \lambda u_{j} \left(t, \lambda  \left(x+\tilde{x} \right) \right) - Q (x) \right\|^2_{L_x^{2}}
	\end{equation}
	is upper semicontinuous  for any $t \in I$. Moreover, the function \eqref{f2.19} is continuous  when it is small.  
\end{lemma}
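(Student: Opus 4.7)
The plan is to deduce both claims from the strong continuity $t\mapsto \mathbf{u}(t)$ in $L_x^2 l^2$, which holds since $\mathbf{u}\in C_t^0 L_x^2 l^2(I\times\mathbb{R}^2\times\mathbb{Z}_N)$. Write
\begin{equation*}
\Psi(\mathbf{v};\lambda,\tilde{x},\xi,\vec{\gamma}):=\sum_{j\in\mathbb{Z}_N}\big\|e^{i\gamma_j}e^{ix\cdot\xi}\lambda v_j(\lambda(x+\tilde{x}))-Q(x)\big\|_{L_x^2}^2,
\end{equation*}
so that $\alpha(t)=\inf_{(\lambda,\tilde{x},\xi,\vec{\gamma})}\Psi(\mathbf{u}(t);\lambda,\tilde{x},\xi,\vec{\gamma})$. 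For the upper semicontinuity, I fix $t_0\in I$ and pick via Lemma \ref{l2.1} an attainer $(\lambda_0,\tilde{x}_0,\xi_0,\vec{\gamma}_0)$ for $\alpha(t_0)$. Since $\alpha(t)\le\Psi(\mathbf{u}(t);\lambda_0,\tilde{x}_0,\xi_0,\vec{\gamma}_0)$ for every $t\in I$, it suffices to show that $t\mapsto\Psi(\mathbf{u}(t);\lambda_0,\tilde{x}_0,\xi_0,\vec{\gamma}_0)$ is continuous at $t_0$. Expanding the square and changing variables, this function equals $\|\mathbf{u}(t)\|_{L_x^2 l^2}^2+N\|Q\|_{L_x^2}^2$ minus a term linear in $\mathbf{u}(t)$; both pieces are continuous in $t$ by $\mathbf{u}\in C_t^0 L_x^2 l^2$ and Cauchy--Schwarz, hence $\limsup_{t\to t_0}\alpha(t)\le\alpha(t_0)$.

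For the continuity claim, it remains to show the matching bound $\liminf_{t\to t_0}\alpha(t)\ge\alpha(t_0)$ at points $t_0$ where $\alpha(t_0)<\eta_0$ is small (say $2\eta_0\ll 2\|\mathbf{Q}\|_{L_x^2 l^2}^2$). Pick any sequence $t_n\to t_0$; by the upper semicontinuity already proved, $\alpha(t_n)<2\eta_0$ for $n$ large, and Lemma \ref{l2.1} furnishes minimizers $(\lambda_n,\tilde{x}_n,\xi_n,\vec{\gamma}_n)$ at $t_n$. The decisive step is the compactness claim that $(\lambda_n,\tilde{x}_n,\xi_n)$ lies in a compact subset of $(0,\infty)\times\mathbb{R}^2\times\mathbb{R}^2$ (the angles $\vec{\gamma}_n\in[0,2\pi]^N$ are automatically bounded). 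To prove it, I revisit the proof of Lemma \ref{l2.1} and upgrade the decay limits \eqref{f2.10}, \eqref{f2.11}, \eqref{f2.15}, \eqref{f2.16} to hold uniformly as $\mathbf{u}(t_n)\to\mathbf{u}(t_0)$ strongly in $L_x^2 l^2$: approximating $Q$ by a compactly supported smooth $\psi_\delta$ and each $u_j(t_0)$ by a compactly supported smooth $\phi_{j,\delta}$ as in that proof, the cross term in the expansion of $\Psi(\mathbf{u}(t_n);\lambda,\tilde{x},\xi,\vec{\gamma})$ is approximated by the corresponding expression with $\phi_{j,\delta}$ and $\psi_\delta$---which decays to $0$ as $\lambda\to 0,\infty$ or $|\tilde{x}|,|\xi|\to\infty$ by change of variables and the Riemann--Lebesgue lemma---up to an error $\lesssim \delta+\|\mathbf{u}(t_n)-\mathbf{u}(t_0)\|_{L_x^2 l^2}$ controlled by Cauchy--Schwarz. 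Since $\alpha(t_n)<2\eta_0$ forces this cross term to exceed a fixed positive constant, the claim then follows by applying the uniform decay sequentially, first to bound $\lambda_n\in[c,C]$, then $|\tilde{x}_n|\le C$, then $|\xi_n|\le C$.

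With compactness in hand, I pass to a subsequence along which $(\lambda_n,\tilde{x}_n,\xi_n,\vec{\gamma}_n)\to(\lambda_\ast,\tilde{x}_\ast,\xi_\ast,\vec{\gamma}_\ast)$. Joint continuity of $\Psi$---the strong continuity of dilation, translation, and modulation on $L_x^2 l^2$ together with $\mathbf{u}(t_n)\to\mathbf{u}(t_0)$ in $L_x^2 l^2$---then yields
\begin{equation*}
\lim_{n\to\infty}\alpha(t_n)=\Psi(\mathbf{u}(t_0);\lambda_\ast,\tilde{x}_\ast,\xi_\ast,\vec{\gamma}_\ast)\ge\alpha(t_0),
\end{equation*}
which is the desired $\liminf$ bound and completes the proof of continuity at $t_0$. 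The main obstacle is the compactness step above: one must make the uniform decay \eqref{f2.10}--\eqref{f2.16} truly uniform over the strongly convergent sequence $\{\mathbf{u}(t_n)\}_n$, which requires reproving those limits with an additional $\|\mathbf{u}(t_n)-\mathbf{u}(t_0)\|_{L_x^2 l^2}$ error handled by Cauchy--Schwarz. Apart from this quantitative bookkeeping, no new analytic ingredients beyond those already present in the proof of Lemma \ref{l2.1} are required.
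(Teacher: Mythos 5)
Your proof is correct but takes a genuinely different route from the paper in both parts. For upper semicontinuity, you rely solely on $\mathbf{u}\in C_t^0 L_x^2 l^2$ together with mass conservation and the fact that $\Psi(\cdot\,;\lambda_0,\tilde x_0,\xi_0,\vec\gamma_0)$ is a continuous functional on $L_x^2 l^2$; the paper instead sets $\boldsymbol{\epsilon}(t)=\mathbf{u}(t)-e^{i(t-t_0)}\mathbf{Q}$, derives the difference equation for $\boldsymbol{\epsilon}$, and runs a Gronwall estimate using $\mathbf{u}\in L_{t,\mathrm{loc}}^3 L_x^6 l^2$. Your route is more elementary and does not touch the equation. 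For the continuity claim at small $\alpha$, the paper uses a Strichartz stability argument on a short interval $J$ around $t_0$ to obtain \eqref{f2.24}--\eqref{f2.25} and then closes by contradiction, re-centering the same argument at a hypothetical sequence $t_n'$; you instead argue by compactness of the minimizing parameters and joint continuity of $\Psi$. Both routes are valid. Your compactness step is the only delicate point, and in fact it can be streamlined: since each $g_{\lambda,\tilde x,\xi,\vec\gamma}$ is an isometry of $L_x^2 l^2$, one has $\bigl|\Psi(\mathbf{u};\lambda,\tilde x,\xi,\vec\gamma)-\Psi(\mathbf{v};\lambda,\tilde x,\xi,\vec\gamma)\bigr|\le\|\mathbf{u}-\mathbf{v}\|_{L_x^2 l^2}\bigl(\|\mathbf{u}\|_{L_x^2 l^2}+\|\mathbf{v}\|_{L_x^2 l^2}+2\|\mathbf{Q}\|_{L_x^2 l^2}\bigr)$ uniformly over the parameter space, so you do not need to redo the $\delta$-approximation: the decays \eqref{f2.10}, \eqref{f2.11}, \eqref{f2.15}, \eqref{f2.16} for the fixed datum $\mathbf{u}(t_0)$ already transfer uniformly along $\mathbf{u}(t_n)\to\mathbf{u}(t_0)$. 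The trade-off is that the paper's Strichartz route gives a quantitative modulus of continuity (via \eqref{f2.24}), which is reused later in the deduction of Theorem \ref{main1} from Theorem \ref{t2.3}, whereas your argument is purely variational and more self-contained for this lemma alone.
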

\begin{proof}
	We only need to consider the case that $\alpha(t)\neq 0 \mbox{ for all } t\in I$. First we prove that the upper semicontinuous property. Fix  $t_{0} \in I$. By Lemma \ref{l2.1} and the symmetries \eqref{sym1}-\eqref{sym5}, we can set
	\begin{equation*} 
		\left\|  \mathbf{u}(t_{0},  x) - \mathbf{Q}(x) \right\|^2_{L^2_{x} l^2} = \inf_{\substack{ \left(\lambda,\tilde{x},\xi \right)\in \mathbb{R}_{+}\times\mathbb{R}^2\times\mathbb{R}^2, \\(\gamma_1,\cdots, \gamma_N) \in [0, 2 \pi]^N}} \sum_{j\in\mathbb{Z}_N}
\left\|  e^{i \gamma_j}e^{ix\cdot\xi} \lambda u_{j} \left(t_0, \lambda  \left(x+\tilde{x} \right) \right) - Q(x )  \right\|^2_{L_x^{2}}.
	\end{equation*}
	For $t$ close to $t_{0}$, let
	\begin{equation*} 
		\boldsymbol{\epsilon}(t, x) = \mathbf{u}(t, x) - e^{i(t - t_{0})} \mathbf{Q}(x).
	\end{equation*}
	Since $e^{i(t - t_{0})} \mathbf{Q}$ and $\mathbf{u}$ both solve $\eqref{1.1}$, we have 	\begin{equation}\label{f2.22}
		i  \partial_t  {\epsilon}_j  + \Delta  {\epsilon}_{j}  = F_j(e^{i(t-t_0)}\mathbf{Q})-{ {F}}_j ( \mathbf{u} )
  = O \left(Q^2 \left|\boldsymbol{\epsilon} \right| \right)+O \left( \left|\boldsymbol{\epsilon} \right|^3 \right)
  =O \left( \left|\mathbf{u} \right|^2 \left|\boldsymbol{\epsilon} \right| \right)+O \left( \left|\boldsymbol{\epsilon} \right|^3 \right),\  \forall \, j\in \mathbb{Z}_N.
	\end{equation}
Now we choose a  small open neighborhood $J'$ of $t_0$ such  that $\min\limits_{t\in J'} \left\|\boldsymbol{\epsilon}(t) \right\|_{L^2_x l^2}\geq \frac{1}{2} \left\|\boldsymbol{\epsilon}(t_0) \right\|_{L^2_x l^2}$.  Then \eqref{f2.22} implies
	\begin{align}\label{f2.28}
\aligned
		\frac{ \mathrm{d} }{ \mathrm{d} t}  \left( \left\| \boldsymbol{\epsilon}(t)  \right\|_{L^2_{x} l^2}^{2} \right)
&\lesssim { \|Q\|_{L_x^{\infty}}^{2}  \left\| \boldsymbol{\epsilon}(t)  \right\|_{L^2_x l^2}^{2}
+ \|Q\|_{L_x^6}^{3}  \left\| \boldsymbol{\epsilon}(t)  \right\|_{L^2_x l^2}
+ \left\| \mathbf{u}(t)  \right\|_{L^6_x l^2}^{3}  \left\| \boldsymbol{\epsilon}(t)  \right\|_{L^2_x l^2}} \\
		&\lesssim_{ \left\| \boldsymbol{\epsilon}(t_0) \right\|_{L^2_x l^2}}
		\left( \| Q \|_{L_x^{\infty}}^{2} +\| Q \|_{L_x^6}^3 + \left\| \mathbf{u}(t)  \right\|_{L_x^6 l^2}^{3} \right)
\left\| \boldsymbol{\epsilon}(t)  \right\|_{L^2_x  l^2}^{2}, \quad\forall \,  t\in J'.
\endaligned
	\end{align}
By Gronwall's inequality, \eqref{f2.28} implies
	\begin{align}\label{qac}
	\left\| \boldsymbol{\epsilon}(t)  \right\|_{L^2_{x} l^2}^{2}\leq \exp\left( {\int_{t_0}^t\left( \| Q \|_{L_x^{\infty}}^{2} +\| Q \|_{L_x^{6}}^{3} +
\left\| \mathbf{u} \right\|_{L^{6}_x l^2}^{3}\right) \,  \mathrm{d} s}\right)
\left\| \boldsymbol{\epsilon}(t_0 )  \right\|_{L^2_{x} l^2}^{2}, \quad \forall \, t\in J'.
	\end{align}
Finally, using the fact that $\mathbf{u} \in L_{t, loc}^{3} L_{x}^{6}l^2$, \eqref{qac} implies
	\begin{align*} 
& \limsup_{t \rightarrow t_{0}} \inf_{\substack{ \left(\lambda,\tilde{x},\xi \right)\in \mathbb{R}_{+}\times\mathbb{R}^2\times\mathbb{R}^2 , \\(\gamma_1,\cdots, \gamma_N) \in [0, 2 \pi]^N}} \sum_{j\in\mathbb{Z}_N}
\left\|  e^{i \gamma_j}e^{ix\cdot\xi} \lambda u_{j} \left(t, \lambda  \left(x+\tilde{x} \right) \right) - Q  (x) \right\|^2_{L_x^{2}} \\
 \leq&   \inf_{\substack{ \left(\lambda,\tilde{x},\xi \right)\in \mathbb{R}_{+}\times\mathbb{R}^2\times\mathbb{R}^2,
\\(\gamma_1,\cdots, \gamma_N) \in [0, 2 \pi]^N}} \sum_{j\in\mathbb{Z}_N}
\left\|  e^{i \gamma_j}e^{ix\cdot\xi} \lambda u_{j} \left(t_0, \lambda  \left(x+\tilde{x} \right) \right) - Q  (x) \right\|^2_{L_x^{2}}, \notag
	\end{align*}
which establishes upper semicontinuity.

Now we turn to prove the continuity of \eqref{f2.19} when $\|\boldsymbol{\epsilon}(t)\|_{L_x^2l^2}$ is sufficiently small. Let $J$ be an open neighborhood of $t_{0}$ such that  $ \left\| \mathbf{u} \right\|_{L_{t,x }^{4}  l^2 \left(J \times \mathbb{R}^2\times\mathbb{Z}_N \right)} = 1$(guaranteed by local theory),
then using  Strichartz estimate on $J$, we get
	\begin{equation*} 
	\left\| \boldsymbol{\epsilon} \right \|_{L_{t}^{\infty} L_{x}^{2}l^2 \cap L_{t,x}^{4} l^2 \left(J \times \mathbb{R}^2\times\mathbb{Z}_N  \right)}
\lesssim  \left\| \boldsymbol{\epsilon}(t_{0})  \right\|_{L_x^{2}l^2} +  \left\| \boldsymbol{\epsilon}  \right\|_{L_{t,x}^{4}l^2 \left(J \times \mathbb{R}^2\times\mathbb{Z}_N \right)}
\left\| \mathbf{u} \right\|_{L_{t,x}^{4} l^2 \left(J \times \mathbb{R}^2\times\mathbb{Z}_N \right)}^{2} +  \left\| \boldsymbol{\epsilon}  \right\|_{L_{t,x}^{4}l^2 \left(J \times \mathbb{R}^2\times\mathbb{Z}_N \right)}^3.
	\end{equation*}
	 Therefore, for $ \left\| \boldsymbol{\epsilon}(t_{0})  \right\|_{L^2_{x} l^2}$ small, we can split the interval $J$ into finitely many pieces and then use the bootstrap argument to obtain 
\begin{equation}\label{f2.24}
	\left\| \boldsymbol{\epsilon}(t)  \right\|_{L_t^{\infty}L_x^2l^2\left(J\times\mathbb{R}^2\times \mathbb{Z}_N  \right) }  
+ \left\| \boldsymbol{\epsilon}(t)  \right\|_{L_t^{p}L_x^ql^2 \left(J\times\mathbb{R}^2\times \mathbb{Z}_N  \right)} \lesssim  \left\| \boldsymbol{\epsilon}(t_{0})  \right\|_{L^2_{x} l^2}, \mbox{ for all admissible pair } (p,q)
	\end{equation}
	and
	\begin{equation}\label{f2.25}
		\lim_{t \rightarrow t_{0}} \left\| \boldsymbol{\epsilon}(t)  \right\|_{L_x^{2} l^2}
=  \left\| \boldsymbol{\epsilon}(t_{0})  \right\|_{L_x^{2}l^2}.
	\end{equation}
Now suppose that
	\begin{align*} 
		& \liminf_{t \rightarrow t_{0}} \inf_{\substack{ \left(\lambda,\tilde{x},\xi \right)\in \mathbb{R}_{+}\times\mathbb{R}^2\times\mathbb{R}^2, \\(\gamma_1,\cdots, \gamma_N) \in [0, 2 \pi]^N}}
\sum_{j\in\mathbb{Z}_N} \left\|  e^{i \gamma_j}e^{ix\cdot\xi} \lambda u_{j} \left(t, \lambda  \left(x+\tilde{x} \right) \right) - Q (x)   \right\|^2_{L_x^{2}} \\
 <&   \inf_{\substack{ \left(\lambda,\tilde{x},\xi \right)\in \mathbb{R}_{+}\times\mathbb{R}^2\times\mathbb{R}^2,
\\(\gamma_1,\cdots, \gamma_N) \in [0, 2 \pi]^N}} \sum_{j\in\mathbb{Z}_N} \left\|  e^{i \gamma_j}e^{ix\cdot\xi} \lambda u_{j} \left(t_0, \lambda  \left(x+\tilde{x} \right) \right) - Q (x)  \right\|^2_{L_x^{2}},
	\end{align*}
then by Lemma \ref{l2.1}, there exist a sequence $t_{n}' \rightarrow t_{0}$ as $n \to \infty$, $\lambda_{n}' > 0$, $ \left(\tilde{x}',\xi' \right)\in\mathbb{R}^2\times\mathbb{R}^2$, and $\left(\gamma_{1,n}',\cdots,\gamma_{N,n}' \right) \in [0,2\pi]^N$ such that
	\begin{equation*}
		\sum_{j\in\mathbb{Z}_N} \left\|  e^{i \gamma_j'}e^{ix\cdot\xi'} \lambda u_{j} \left(t_n', \lambda'  \left(x+ \tilde{x}' \right) \right) - Q (x)  \right\|^2_{L_x^{2}}
<   \inf_{\substack{ \left(\lambda,\tilde{x},\xi \right)\in \mathbb{R}_{+}\times\mathbb{R}^2\times\mathbb{R}^2,
\\(\gamma_1,\cdots, \gamma_N) \in [0, 2 \pi]^N}} \sum_{j\in\mathbb{Z}_N} \left\|  e^{i \gamma_j}e^{ix\cdot\xi} \lambda u_{j} \left(t_0, \lambda  \left(x+\tilde{x} \right) \right) - Q  (x) \right\|^2_{L_x^{2}}.
	\end{equation*}
For $t_{n}'$ sufficiently close to $t_{0}$, we can repeating the arguments leading to \eqref{f2.24} and \eqref{f2.25} with $t_{n}'$ as the initial data yields a contradiction. Therefore, the semicontinuity of \eqref{f2.19} will be upgraded to continuity when $\|\alpha(t_0)\|_{L_x^2l^2}=\|\boldsymbol{\epsilon}(t_0)\|_{L_x^2l^2}$ is small.
\end{proof}

After the above preparation, we will show how Theorem \ref{main1} follows from Theorem \ref{t2.3}.

\begin{proof}[Theorem \ref{t2.3} $\Rightarrow $ Theorem \ref{main1}]
Let $\mathbf{u}(t)$ be the solution to $(\ref{1.1})$ with initial data $\mathbf{u}_{0}$ satisfying $M(\mathbf{u}_0) =  M(\mathbf{Q})$. We divide into two cases:

\textbf{Case I}. There exists some $t_{0} > 0$ such that
\begin{equation*}
\sup_{t \in [t_{0}, \sup(I))} \inf_{\substack{ \left(\lambda,\tilde{x},\xi \right)\in \mathbb{R}_{+}\times\mathbb{R}^2\times\mathbb{R}^2,
\\(\gamma_1,\cdots, \gamma_N) \in [0, 2 \pi]^N}} \sum_{j\in\mathbb{Z}_N}
\left\|  e^{i \gamma_j}e^{ix\cdot\xi} \lambda  u_{j} \left(t, \lambda  \left(x+\tilde{x} \right) \right) - Q (x)  \right\|^2_{L_x^{2}} \leq \eta_{\ast},
\end{equation*}
where $\eta_{\ast}$ is the small parameter in Theorem \ref{t2.3}. In this case, after translating in time so that $t_{0} = 0$, \eqref{f2.32} holds, thus we can directly  use Theorem \ref{t2.3} to prove Theorem \ref{main1}.

\textbf{Case II}.  There exists a sequence $t_{n}^{-} \nearrow \sup(I)$ such that
\begin{equation}\label{f2.35}
\inf_{\substack{ \left(\lambda,\tilde{x},\xi \right)\in \mathbb{R}_{+}\times\mathbb{R}^2\times\mathbb{R}^2,
\\(\gamma_1,\cdots, \gamma_N) \in [0, 2 \pi]^N}}
\sum_{j\in\mathbb{Z}_N} \left\|  e^{i \gamma_j}e^{ix\cdot\xi} \lambda  u_{j} \left(t_n^-, \lambda  \left(x+\tilde{x} \right) \right) - Q (x)  \right\|^2_{L_x^{2}}  > \eta_{\ast},
\end{equation}
for every $n$. In this case, after passing to a subsequence, we can suppose that for every $n$, $t_{n}^{-} < t_{n} < t_{n + 1}^{-}$, where $t_{n}$ is the sequence in \eqref{f2.31} and $t_{n}^{-}$ is the sequence in \eqref{f2.35}. The fact that
\begin{equation}\label{f2.36}
\inf_{\substack{ \left(\lambda,\tilde{x},\xi \right)\in \mathbb{R}_{+}\times\mathbb{R}^2\times\mathbb{R}^2,
\\(\gamma_1,\cdots, \gamma_N) \in [0, 2 \pi]^N}}
\sum_{j\in\mathbb{Z}_N} \left\|  e^{i \gamma_j}e^{ix\cdot\xi} \lambda u_{j} \left(t, \lambda  \left(x+\tilde{x} \right) \right) - Q (x)  \right\|^2_{L_x^{2}}
\end{equation}
is upper semicontinuous as a function of $t$, and  continuous when \eqref{f2.36} is small, guarantees that there exists a small, fixed $0 < \eta_{\ast} \ll 1$ such that the sequence $t_{n}^{+}$, defined by
\begin{equation*} 
t_{n}^{+} = \inf \bigg\{ t \in I : \sup_{\tau \in [t, t_{n}]} \inf_{\substack{ \left(\lambda,\tilde{x},\xi \right)\in \mathbb{R}_{+}\times\mathbb{R}^2\times\mathbb{R}^2,
\\(\gamma_1,\cdots, \gamma_N) \in [0, 2 \pi]^N}} \sum_{j\in\mathbb{Z}_N}
\left\|  e^{i \gamma_j}e^{ix\cdot\xi}  \lambda  u_{j} \left(\tau, \lambda  \left(x+\tilde{x} \right) \right) - Q(x)   \right\|^2_{L_x^{2}}
< \eta_{\ast}  \bigg\},
\end{equation*}
satisfies $t_{n}^{+} \nearrow \sup I$ as $n\to \infty $ and
\begin{equation}\label{f2.38}
\inf_{\substack{\left(\lambda,\tilde{x},\xi \right)\in \mathbb{R}_{+}\times\mathbb{R}^2\times\mathbb{R}^2,
\\(\gamma_1,\cdots, \gamma_N) \in [0, 2 \pi]^N}}
\sum_{j\in\mathbb{Z}_N} \left\|  e^{i \gamma_j}e^{ix\cdot\xi} \lambda u_{j} \left(t_n^+, \lambda  \left(x+\tilde{x} \right) \right) - Q(x)   \right\|^2_{L_x^{2}}
= \eta_{\ast}.
\end{equation}
Indeed, the fact that \eqref{f2.36} is upper semicontinuous as a function of $t$ implies that
\begin{equation*}
\bigg\{ 0 \leq t < t_{n} : \inf_{\substack{ \left(\lambda,\tilde{x},\xi \right)\in \mathbb{R}_{+}\times\mathbb{R}^2\times\mathbb{R}^2,
\\(\gamma_1,\cdots, \gamma_N) \in [0, 2 \pi]^N}} \sum_{j\in\mathbb{Z}_N}
\left\|  e^{i \gamma_j}e^{ix\cdot\xi} \lambda u_{j} \left(t, \lambda  \left(x+\tilde{x} \right) \right) - Q( x)   \right\|^2_{L_x^{2}}  \geq \eta_{\ast}  \bigg\}
\end{equation*}
is a closed set. Since this set is also bounded, it has a maximal element $t_{n}^{+}$, and $t_{n}^{+} \geq t_{n}^{-}$. The fact that \eqref{f2.36} is upper semicontinuous in time also implies that
\begin{equation}\label{edc0}
\inf_{\substack{ \left(\lambda,\tilde{x},\xi \right)\in \mathbb{R}_{+}\times\mathbb{R}^2\times\mathbb{R}^2,
\\(\gamma_1,\cdots, \gamma_N) \in [0, 2 \pi]^N}}
\sum_{j\in\mathbb{Z}_N} \left\|  e^{i \gamma_j}e^{ix\cdot\xi} \lambda u_{j} \left(t_n^+, \lambda  \left(x+\tilde{x} \right) \right) - Q (x)  \right\|^2_{L_x^{2}}  \geq \eta_{\ast}.
\end{equation}
On the other hand, since
\begin{equation}\label{edc}
\inf_{\substack{ \left(\lambda,\tilde{x},\xi \right)\in \mathbb{R}_{+}\times\mathbb{R}^2\times\mathbb{R}^2,
\\(\gamma_1,\cdots, \gamma_N) \in [0, 2 \pi]^N}}
\sum_{j\in\mathbb{Z}_N} \left\|  e^{i \gamma_j}e^{ix\cdot\xi} \lambda u_{j} \left(t, \lambda  \left(x+\tilde{x} \right) \right) - Q (x)  \right\|^2_{L_x^{2}}
 < \eta_{\ast} \quad \text{for all} \quad t_{n}^{+} < t < t_{n},
\end{equation}
as in the proof of \eqref{f2.24} and \eqref{f2.25}, we can choose sufficiently small $\eta_{\ast}$ 
 so that \eqref{f2.36} is continuous on interval $ \left(t_n^+ -\delta,t_n^+ +\delta \right)$ for some $0 < \delta \ll 1$. Thus,  \eqref{edc} also implies
\begin{equation}\label{f2.41}
\inf_{\substack{ \left(\lambda,\tilde{x},\xi \right)\in \mathbb{R}_{+}\times\mathbb{R}^2\times\mathbb{R}^2,
\\(\gamma_1,\cdots, \gamma_N) \in [0, 2 \pi]^N}}
\sum_{j\in\mathbb{Z}_N} \left\|  e^{i \gamma_j}e^{ix\cdot\xi} \lambda u_{j} \left(t_n^+, \lambda  \left(x+\tilde{x} \right) \right) - Q (x)  \right\|^2_{L_x^{2}}  \leq \eta_{\ast}.
\end{equation}
By \eqref{edc0} and \eqref{f2.41}, we have
\begin{equation}
	\inf_{\substack{ \left(\lambda,\tilde{x},\xi \right)\in \mathbb{R}_{+}\times\mathbb{R}^2\times\mathbb{R}^2,
\\(\gamma_1,\cdots, \gamma_N) \in [0, 2 \pi]^N}} \sum_{j\in\mathbb{Z}_N}
\left\|  e^{i \gamma_j}e^{ix\cdot\xi} \lambda u_{j} \left(t_n^+, \lambda \left(x+\tilde{x} \right) \right) - Q (x)  \right\|^2_{L_x^{2}}
 = \eta_{\ast}.\notag
\end{equation}
Next using  \eqref{f2.38}, the definition of $t_n$ and $ t_n^+$, and then following the proof of \eqref{f2.24}, we derive
\begin{equation}\label{f2.42}
	\lim_{n \rightarrow \infty} \left\| \mathbf{u} \right\|_{L_{t,x}^{4}l^2( \left[t_{n}^{+}, t_{n} \right] \times \mathbb{R}^2\times\mathbb{Z}_N )} = \infty.
\end{equation}
Applying Proposition \ref{pro3.9v23} to ${\mathbf{u} \left(t_{n}^{+}, x \right)}$, the fact that $\mathbf{u}$ is a minimal-mass blowup solution that blows up forward in time and the fact that  $t_{n}^{+} \nearrow \sup I$ as $n\to \infty$ imply that there exist $\mathbf{w}_0\in L_x^2 l^2$ and parameters $\widehat{\lambda}_n > 0$, $ \left(\widehat{x}_n,\widehat{\xi}_n \right)\in\mathbb{R}^2\times\mathbb{R}^2$, 
and  $\left(\widehat{\gamma}_{n,1},\cdots,\widehat{\gamma}_{n,N} \right) \in [0,2\pi]^N$ such that
\begin{equation}\label{f2.43}
\sum_{j\in\mathbb{Z}_N}
\left\|e^{i\widehat{\gamma}_{n ,j}}e^{ix\cdot\widehat{ \xi}_n}\widehat{\lambda}_n  u_j \left(t_{n}^{+}, \widehat{\lambda}_n
\left(x+ \widehat{x}_n \right) \right)-w_{0,j} (x) \right\|^2_{L^2_x} \rightarrow 0, \text{ as } n \to \infty .
\end{equation}
Moreover, $t_{n}^{+} \nearrow \sup I$ as $n\to \infty$ implies $ \left\| \mathbf{w}_{0}  \right\|_{L_x^{2}l^2} = \left\| \mathbf{Q}  \right\|_{L_x^{2}l^2}$, and by \eqref{f2.38},
\begin{equation}\label{f2.44}
\inf_{\substack{ \left(\lambda,\tilde{x},\xi \right)\in \mathbb{R}_{+}\times\mathbb{R}^2\times\mathbb{R}^2,
\\(\gamma_1,\cdots, \gamma_N) \in [0, 2 \pi]^N}}
\sum_{j\in\mathbb{Z}_N} \left\|  e^{i \gamma_j}e^{ix\cdot\xi} \lambda w_{0,j} \left( \lambda \left(x+\tilde{x} \right) \right) - Q (x)  \right\|^2_{L_x^{2}}
 = \eta_{\ast}.
\end{equation}
Let $\mathbf{w}$ be the solution to \eqref{1.1} with initial data $\mathbf{w}_{0}$. Then $t_n^+\nearrow \sup I$ as $n \to \infty$ implies that $\mathbf{w}$ blows up both forward and backward in time. Denote the maximal interval of existence of $\mathbf{w}$ as $\tilde{I}$. We claim that
\begin{equation}\label{f2.45}
\inf_{\substack{ \left(\lambda,\tilde{x},\xi \right)\in \mathbb{R}_{+}\times\mathbb{R}^2\times\mathbb{R}^2,
\\(\gamma_1,\cdots, \gamma_N) \in [0, 2 \pi]^N}}
\sum_{j\in\mathbb{Z}_N} \left\|  e^{i \gamma_j}e^{ix\cdot\xi} \lambda w_{j} \left( t,\lambda \left(x+\tilde{x} \right) \right) - Q (x)   \right\|^2_{L_x^{2}} \leq \eta_{\ast},\quad \forall \,  t\in [0, \sup\tilde{I}).
\end{equation}
Indeed, for any compact interval $[0, M]\subseteq  \tilde{I}$, \eqref{f2.43} and the stability lemma guarantee that as $n \to \infty$, 
\begin{align}\label{ijz}
\left\|\mathbf{u}'_n(t)-\mathbf{w}(t) \right\|_{L_t^{\infty} L_x^2l^2\cap L_{t, x}^4  l^2([0,M]\times\mathbb{R}^2\times\mathbb{Z}_N)}\to 0,
\end{align}
where
$$\mathbf{u}'_n(t)= \left(e^{i\widehat{\gamma}_{n,1}}e^{ix\cdot\widehat{\xi}_n}\widehat{\lambda}_n u_{1} \left(t_n^+ +\widehat{\lambda}_n^2 t,\widehat{\lambda}_n x+ \widehat{x}_n \right),\cdots,e^{i\widehat{\gamma}_{n,N}}e^{ix\cdot\widehat{\xi}_n}\widehat{\lambda}_n u_{N} \left(t_n^++\widehat{\lambda}_n^2 t,\widehat{\lambda}_n x+\widehat{x}_n \right)\right).$$
Clearly, \eqref{f2.42}, \eqref{f2.44}, \eqref{ijz}, and the arbitrariness in the choice of $M$ 
imply that \eqref{f2.45} holds.

However, Theorem \ref{t2.3}, \eqref{f2.44}, and \eqref{f2.45} imply that $\mathbf{w}$ must be of the form \eqref{pseudosoliton}. Such a solution scatters backward in time, which contradicts the fact that $\mathbf{w}$ blows up both forward and backward in time. Thus \eqref{f2.35} cannot happen, which precludes possibility of Case II. So we finish the proof.
\end{proof}

\section{Modulation analysis}\label{se3v314}
In this section,  we first establish the spectral properties for the linearized operator
around the ground state $\mathbf{Q}$, and based on that, we develop the modulation analysis. 

Let $\mathbf{u}(t)=e^{i(t-t_0)}[\mathbf{Q}+\boldsymbol{\epsilon}(t)]$, 
where $\boldsymbol{\epsilon}(t)=\boldsymbol{\epsilon}_1(t)+i\boldsymbol{\epsilon}_2(t)$ for some $t_0\in I$. Then the $2N$-dimensional vector $\boldsymbol{\varepsilon}:=(\boldsymbol{\epsilon}_1, \boldsymbol{\epsilon}_2)$ 
satisfies the  following evolution equation:
\begin{equation*}
	\partial_{t}\boldsymbol{\varepsilon}+\mathcal{L}\boldsymbol{\varepsilon}=\mathcal{R}\boldsymbol{\varepsilon},
\end{equation*}
where $\mathcal{R}\boldsymbol{\varepsilon}$ is the nonlinear term with at least two components of $\boldsymbol{\epsilon}$, and
\begin{align*}
	\mathcal{L} &= \begin{pmatrix}O & -L_{+} \\
		L_{-} & O \\
	\end{pmatrix}.
\end{align*}
The operators $L_+$ and $L_-$ are defined as follows: 
\begin{align}
	L_{+} &=\begin{pmatrix}
1-\Delta- (2N+1)  Q^2& -4 Q^2 &\cdots &-4 Q^2		\\
		-4 Q^2
		& 1-\Delta- (2N+ 1) Q^2 &\cdots &-4 Q^2\\
		\vdots &\vdots &\ddots &\vdots\\
		-4 Q^2 &-4 Q^2 &\cdots & 1 -\Delta-(2N+1) Q^2
	\end{pmatrix},
\label{L+}\\
\intertext{ and }
	L_{-}&=\begin{pmatrix}1-\Delta- (2N-1)  Q^2& 0 &\cdots &0	\\
		0
		& 1-\Delta- (2N - 1) Q^2 &\cdots &0\\
		\vdots &\vdots &\ddots &\vdots\\
		0 &0 &\cdots & 1 -\Delta-(2N - 1)  Q^2
	\end{pmatrix}.\label{L-}
\end{align}
It is easy to check that $L_{+}$ and $ L_{-}$ are self-adjoint operators in $H_x^1l^2(\mathbb{R}^2\times \mathbb{Z}_N)$, and $\mathcal{L}$ is self-adjoint in $H_x^1l^2(\mathbb{R}^2\times \mathbb{Z}_{2N})$. 
The spectral structure of $L_{+}$ plays a key role in the paper, so we begin by studying 
the spectral properties of these linearized operators. 

First, we  invoke the following classical results:
\begin{theorem}[\cite{FHRY}]\label{L01spectral}
	For the operator $L_{0,+}:=-\Delta+1-3Q_0^2$ defined in  real-valued Sobolev space $H^1_x(\mathbb{R}^2)$.
    %with the inner product being defined to be $(f,g)_{\dot{H}^1} = \Re \int f(x) \overline{g(x)}\,\mathrm{d}x.$ 
    The following four properties hold:
	
	\begin{enumerate}
		\item ${L}_{0,+}$ is a self-adjoint operator and $\sigma_{ess}({L}_{0,+}) = [1, \infty)$.
		
		\item $Ker({L}_{0,+}) = span \left\{ (Q_0)_{x_{1}}, (Q_0)_{x_{2}} \right\}$.
		
		\item $L_{0,+}$ has a unique negative eigenvalue $\lambda_{0}$ associated with a positive, radially symmetric eigenfunction $\chi_{0}$. Moreover, there exists $\delta > 0$ such that $|\chi_{0}(x)| \lesssim e^{-\delta |x|}$ for all $x \in \mathbb{R}^{2}$.
		
		\item If $u\in H^1_x(\mathbb{R}^2) $ satisfies $\langle u,\chi_0\rangle_{L^2_x}=0$, then $\langle {L}_{0,+} u,u \rangle_{L_x^2}\geq 0$.
	\end{enumerate}
\end{theorem}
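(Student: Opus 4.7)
The plan is to verify each of the four properties in turn, treating them in the order (1), (3), (2), (4), since the spectral analysis from (1) and (3) feeds into the kernel identification (2) and the coercivity statement (4).

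For (1), I would first note that $-\Delta+1$ is self-adjoint on $D(-\Delta)=H^2_x(\mathbb R^2)$ with spectrum $[1,\infty)$. Since $Q_0$ decays exponentially (being a ground state of $\Delta Q_0-Q_0+Q_0^3=0$), the multiplication operator $-3Q_0^2$ is bounded and symmetric, and moreover relatively compact with respect to $-\Delta+1$ (mapping $H^1_x$ into $L^2_x$ compactly via the decay of $Q_0^2$ and Rellich's theorem applied to truncations). Self-adjointness follows from the Kato--Rellich theorem, and Weyl's theorem on the invariance of the essential spectrum under relatively compact perturbations gives $\sigma_{\mathrm{ess}}(L_{0,+})=[1,\infty)$.

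For (3), I would test the quadratic form on $Q_0$ itself: using the ground state equation, $\langle L_{0,+}Q_0,Q_0\rangle_{L_x^2}=-2\|Q_0\|_{L_x^4}^4<0$, so by the min-max principle there exists at least one eigenvalue below zero. To get uniqueness, I would pass to the radial sector by spherical-harmonic decomposition: on radial functions the operator is an ODE of Sturm--Liouville type, and an oscillation/nodal-domain argument (as in the classical analysis of Kwong and Weinstein) shows that the lowest radial eigenvalue is simple and any second negative eigenvalue would force an eigenfunction with a node, contradicting positivity arguments. Positivity and radiality of $\chi_0$ come from the Perron--Frobenius-type property of the bottom eigenfunction of a Schr\"odinger operator (one may apply $|\chi_0|$ instead of $\chi_0$ in the variational principle). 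Exponential decay $|\chi_0(x)|\lesssim e^{-\delta|x|}$ follows from an Agmon estimate: since $\lambda_0<1$ and $Q_0(x)\to 0$ as $|x|\to\infty$, the potential $1-3Q_0^2-\lambda_0$ is positive and bounded below by a positive constant outside a large ball, and standard Agmon weights yield the decay.

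For (2), the inclusions $(Q_0)_{x_1},(Q_0)_{x_2}\in \ker L_{0,+}$ are obtained by differentiating the elliptic equation $\Delta Q_0-Q_0+Q_0^3=0$ in $x_j$. For the reverse inclusion, I would decompose any element of $\ker L_{0,+}$ into spherical harmonics; the radial part must vanish by the ODE analysis together with uniqueness and nondegeneracy of $Q_0$ (Kwong's theorem), the $k=1$ mode is exactly the two-dimensional span of the spatial derivatives of $Q_0$, and for $k\geq 2$ the centrifugal term shifts the operator to be strictly positive on that sector.

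For (4), this is the standard Weinstein coercivity statement. Having isolated a one-dimensional negative eigenspace spanned by $\chi_0$ and a two-dimensional kernel, the spectral decomposition of the self-adjoint operator $L_{0,+}$ gives $L_{0,+}=\lambda_0 P_{\chi_0}+T$ where $T\geq 0$ on $\chi_0^\perp$. Hence for any $u\in H^1_x$ with $\langle u,\chi_0\rangle_{L_x^2}=0$ one has $\langle L_{0,+}u,u\rangle_{L_x^2}=\langle Tu,u\rangle_{L_x^2}\geq 0$. The main obstacle I expect is the simplicity of the negative eigenvalue and the precise identification of the kernel, both of which rely on the nondegeneracy of $Q_0$; once Kwong's uniqueness/nondegeneracy theorem is invoked, the rest is spectral bookkeeping.
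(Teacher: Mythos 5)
The paper does not prove this theorem. It is stated as a cited classical result from reference \cite{FHRY} at the beginning of Section~\ref{se3v314} (``First, we invoke the following classical results''), so there is no in-paper proof to compare against. Your proposal, therefore, supplies material the paper deliberately delegates to the literature; it is worth reviewing on its own merits.

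Parts (1), (2), (3)-existence, (3)-decay, and (4) are all organized correctly. Kato--Rellich plus Weyl's theorem gives (1); differentiating the profile equation and then decomposing into Fourier modes $e^{ik\theta}$ (in $2$D these are Fourier modes rather than spherical harmonics, but that is only a naming point) gives the inclusion half of (2), with the reverse inclusion controlled by the monotonicity of the angular barrier $k^2/r^2$ in $k$, by the identification of the zero mode in the $k=1$ sector, and by Kwong's nondegeneracy in the $k=0$ sector; your computation $\left\langle L_{0,+}Q_0,Q_0\right\rangle_{L_x^2}=-2\left\|Q_0\right\|_{L_x^4}^4<0$ gives existence of a negative eigenvalue, Agmon estimates give exponential decay, and (4) is indeed pure spectral bookkeeping once (1)--(3) are in place.

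The one genuine gap is in the \emph{uniqueness} of the negative eigenvalue in (3). You write that ``any second negative eigenvalue would force an eigenfunction with a node, contradicting positivity arguments,'' but this is not a contradiction: a second radial eigenfunction would of course have a node, and nothing in that observation forces its eigenvalue to be non-negative. The actual mechanism is variational, not oscillatory. Since $Q_0$ extremizes the Weinstein functional $J$, the second variation of $J$ at $Q_0$ has a definite sign, and unwinding that second variation shows $\left\langle L_{0,+}u,u\right\rangle_{L_x^2}\geq 0$ for all $u$ in a codimension-one subspace (for instance $u\perp Q_0^3$); because $\left\langle Q_0,Q_0^3\right\rangle_{L_x^2}>0$ and $\left\langle L_{0,+}Q_0,Q_0\right\rangle_{L_x^2}<0$, the negative subspace cannot exceed dimension one, and combined with your existence computation the Morse index equals $1$. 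This is the step your sketch papers over; you do gesture toward ``Kwong's uniqueness/nondegeneracy theorem'' as the missing ingredient, but nondegeneracy by itself concerns the kernel (part (2)), not the count of negative eigenvalues, so the reference to Weinstein's extremizer argument is the one that actually does the work here and should be made explicit. A minor remark in the same direction: you describe $Q_0$ as the minimizer of the Weinstein functional, whereas in the normalization used in this paper it is the maximizer of $J$ in \eqref{qzmz}; the sign does not affect the argument once stated correctly, but it should be kept straight when computing the second variation.
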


\begin{theorem}[\cite{FHRY}]\label{L02spectral}
	The operator $L_{0,-}:=-\Delta+1-Q_0^2$ is positive in real-valued Sobolev space $H^1_x(\mathbb{R}^2)$, 
    %with the inner product being defined to be $(f,g)_{\dot{H}^1} = \Re \int f(x) \overline{g(x)}\,\mathrm{d}x,$ 
    i.e., 
	\begin{align}\label{PL_-}
	\left\langle L_{0,-} u, u \right\rangle_{L^2_{x}}=\int_{\mathbb{R}^2} \left( |\nabla u|^2+u^2-Q_0^2u^2 \right) \,  \mathrm{d} x\geq 0.
	\end{align}
\end{theorem}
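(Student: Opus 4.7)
The strategy I would follow is the classical \emph{ground state substitution} (sometimes attributed to Agmon), exploiting the fact that $Q_0$ itself lies in the kernel of $L_{0,-}$ together with its strict positivity. First, I would rewrite the defining equation \eqref{eq1.9v59} for $Q_0$, namely $\Delta Q_0 - Q_0 = -Q_0^3$, in the form $L_{0,-} Q_0 = 0$, so that $Q_0$ is itself a zero-eigenfunction of $L_{0,-}$. Since $Q_0 > 0$ everywhere on $\R^2$ (and is smooth with exponential decay), for any test function $u \in C_c^\infty(\R^2)$ the ratio $v := u/Q_0$ again belongs to $C_c^\infty(\R^2)$, so the factorization $u = Q_0 v$ is legitimate at the test-function level.

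Next, I would compute $\langle L_{0,-} u, u\rangle_{L^2_x}$ in terms of $v$ by expanding $|\nabla(Q_0 v)|^2$ and integrating by parts. Explicitly,
\begin{align*}
\int_{\R^2} |\nabla(Q_0 v)|^2 \,\mathrm{d}x
&= \int_{\R^2}\big(|\nabla Q_0|^2 v^2 + 2 Q_0 v\, \nabla Q_0 \cdot \nabla v + Q_0^2 |\nabla v|^2\big)\,\mathrm{d}x \\
&= \int_{\R^2} \nabla Q_0 \cdot \nabla(Q_0 v^2)\,\mathrm{d}x + \int_{\R^2} Q_0^2 |\nabla v|^2\,\mathrm{d}x \\
&= -\int_{\R^2} (Q_0 - Q_0^3)\, Q_0 v^2 \,\mathrm{d}x + \int_{\R^2} Q_0^2|\nabla v|^2 \,\mathrm{d}x,
\end{align*}
where in the last line I used $\Delta Q_0 = Q_0 - Q_0^3$. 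Plugging this back into the definition of $\langle L_{0,-} u, u\rangle_{L^2_x}$, the $\int Q_0^2 u^2$ and $\int Q_0^4 u^2$ contributions cancel exactly against the remaining $\int u^2$ and $-\int Q_0^2 u^2$ pieces (written in terms of $v$), yielding the clean identity
\begin{align*}
\langle L_{0,-} u, u\rangle_{L^2_x} = \int_{\R^2} Q_0^2\, |\nabla v|^2 \,\mathrm{d}x \;\geq\; 0.
\end{align*}

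Finally, I would extend this inequality from $C_c^\infty(\R^2)$ to all real-valued $u\in H^1_x(\R^2)$ by density, using that the potential $Q_0^2$ is bounded so $L_{0,-}$ is continuous from $H^1_x$ to $H^{-1}_x$ and the quadratic form $u\mapsto \langle L_{0,-} u, u\rangle_{L^2_x}$ is $H^1_x$-continuous. The main (mild) obstacle is that the pointwise substitution $v = u/Q_0$ may blow up at infinity for a general $u\in H^1_x$ (since $Q_0$ decays exponentially while $u$ need not); this is bypassed cleanly by proving the identity first for compactly supported test functions, where the substitution is unambiguous, and then passing to the limit in the $H^1_x$ norm, which preserves the non-negativity of the quadratic form.
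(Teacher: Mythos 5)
Your proof is correct. The paper itself does not supply a proof of this theorem --- it simply cites \cite{FHRY} --- so there is no in-paper argument to compare against. The ground-state-substitution (Picone/Agmon-type) identity you use is the standard route for this fact: writing $u = Q_0 v$, exploiting $L_{0,-}Q_0 = 0$, and integrating by parts gives
\begin{align*}
\langle L_{0,-} u, u\rangle_{L^2_x} = \int_{\mathbb{R}^2} Q_0^2\, |\nabla v|^2 \,\mathrm{d}x \geq 0,
\end{align*}
and your handling of the density issue (compactly supported $u$ first, then pass to the limit using $H^1_x$-continuity of the quadratic form, which holds since $Q_0^2 \in L^\infty$) is exactly the right way to avoid the growth problem with $v = u/Q_0$ for general $H^1_x$ data. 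One minor remark: it is worth stating explicitly that $Q_0 > 0$ everywhere and is smooth with exponential decay (which you do implicitly invoke), since strict positivity of $Q_0$ is what makes the division $v = u/Q_0$ well defined pointwise; this is guaranteed because $Q_0$ is the positive radial ground state. As a bonus, the identity you derive also characterizes the kernel: $\langle L_{0,-}u,u\rangle = 0$ forces $\nabla v \equiv 0$, i.e. $u = c\,Q_0$, which is a slightly stronger statement than the theorem as quoted.
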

Define  $\boldsymbol{\chi}_0:=(\chi_{0},\cdots,\chi_{0})$. 
By direct calculation, $L_{+}\boldsymbol{\chi}_0=\lambda_{0}\boldsymbol{\chi}_0$, 
so $\lambda_{0}$ is also a negative eigenvalue for $L_{+}$. In fact, the following lemma shows that  $\lambda_{0}$ must be the unique 
negative eigenvalue.
\begin{lemma}\label{negative1}
	For any real-valued $\mathbf{u}\in H_x^1l^2(\mathbb{R}^2\times\mathbb{Z}_{N})$ with $\mathbf{u}\perp \mathbf{Q}$, we have
	\begin{align*}
	\left\langle L_{+} \mathbf{u}, \mathbf{u}  \right\rangle_{L_x^2l^2}\geq 0.
	\end{align*}
\end{lemma}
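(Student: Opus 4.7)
The plan is to exploit the block structure of $L_+$ by splitting $\mathbf{u}$ into its two natural $L_+$-invariant pieces: the "diagonal" part and the "traceless" part in the index variable. Concretely, set
\begin{equation*}
v := \tfrac{1}{N}\sum_{j=1}^{N} u_j, \qquad w_j := u_j - v, \qquad j=1,\dots,N,
\end{equation*}
so that $\mathbf{u} = (v,v,\dots,v) + (w_1,\dots,w_N)$ with $\sum_j w_j = 0$. The two summands are $L_x^2l^2$-orthogonal, and by the permutation symmetry and block structure of the matrix in \eqref{L+}, the diagonal subspace $\{(v,\dots,v)\}$ and the traceless subspace $\{\mathbf{w}:\sum_j w_j=0\}$ are both invariant under $L_+$.

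The second step is to compute $L_+$ on each invariant piece and reduce to the two known scalar operators $L_{0,+}$ and $L_{0,-}$. Using the relation $(2N-1)Q^2 = Q_0^2$, a direct computation gives $(L_+(v,\dots,v))_j = -\Delta v + v - [(2N+1) + 4(N-1)]\,Q^2 v = L_{0,+}v$ on the diagonal subspace, and on the traceless subspace one uses $\sum_{k\neq j} w_k = -w_j$ to obtain $(L_+\mathbf{w})_j = -\Delta w_j + w_j - (2N-3)Q^2 w_j = L_{0,-}w_j + \tfrac{2}{2N-1}Q_0^2 w_j$. Adding the two contributions yields
\begin{equation*}
\langle L_+\mathbf{u},\mathbf{u}\rangle_{L_x^2l^2} = N\langle L_{0,+}v,v\rangle_{L_x^2} + \sum_{j=1}^{N}\Bigl[\langle L_{0,-}w_j,w_j\rangle_{L_x^2} + \tfrac{2}{2N-1}\int_{\mathbb{R}^2} Q_0^2 w_j^2\,\mathrm{d}x\Bigr].
\end{equation*}
The traceless contribution is manifestly non-negative: the $L_{0,-}$-terms by Theorem \ref{L02spectral}, and the $Q_0^2w_j^2$-integrals by inspection.

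It remains to handle the diagonal piece. The hypothesis $\mathbf{u}\perp\mathbf{Q}$ in $L_x^2l^2$ reads $\sum_j\int u_j\,Q\,\mathrm{d}x = N\sqrt{1/(2N-1)}\int vQ_0\,\mathrm{d}x = 0$, i.e.\ $v\perp Q_0$ in $L_x^2(\mathbb{R}^2)$. Thus the lemma reduces to the scalar coercivity statement that $\langle L_{0,+}v,v\rangle_{L_x^2}\geq 0$ for every real $v\in H_x^1(\mathbb{R}^2)$ with $v\perp Q_0$. This is Weinstein's classical coercivity for the mass-critical scalar NLS: expanding the scalar energy $E_0(u) = \tfrac12\|\nabla u\|_{L_x^2}^2 - \tfrac14\|u\|_{L_x^4}^4$ along the mass-renormalized path $u_\varepsilon := \frac{\|Q_0\|_{L_x^2}}{\|Q_0+\varepsilon v\|_{L_x^2}}(Q_0+\varepsilon v)$ (for which $\|u_\varepsilon\|_{L_x^2}=\|Q_0\|_{L_x^2}$ so that the sharp Gagliardo--Nirenberg inequality forces $E_0(u_\varepsilon)\geq 0 = E_0(Q_0)$), one checks using $E_0'(Q_0) = -Q_0$ and the Pohozaev identities $\|\nabla Q_0\|_{L_x^2}^2 = \|Q_0\|_{L_x^2}^2$, $\|Q_0\|_{L_x^4}^4 = 2\|Q_0\|_{L_x^2}^2$ that the $\varepsilon$-coefficient vanishes (because $\langle Q_0,v\rangle=0$) while the $\varepsilon^2$-coefficient equals $\tfrac12\langle L_{0,+}v,v\rangle$, forcing the latter to be non-negative.

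The main subtlety, and the reason the lemma cannot be read off directly from Theorem \ref{L01spectral}, is precisely this last step: the hypothesis yields only $v\perp Q_0$, not $v\perp \chi_0$, so positivity on the diagonal subspace must come from the Weinstein variational argument (which exploits the mass-critical nature of the equation) rather than from the crude spectral decomposition of $L_{0,+}$. The rest of the proof is a routine bookkeeping exercise once the decomposition and the formula for $L_+$ on each subspace are in hand.
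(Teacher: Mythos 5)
Your proof is correct, but it takes a genuinely different route from the paper's. The paper runs the second-variation argument directly on the vector Weinstein functional $J$ at $\mathbf{Q}$: it differentiates $J(\mathbf{Q}+\varepsilon\mathbf{u})$ twice at $\varepsilon=0$, uses that $\mathbf{Q}$ is a maximizer to force the second derivative $\le 0$, and then shows (under $\mathbf{u}\perp\mathbf{Q}$ and the Euler--Lagrange equation) that this reduces to the claimed non-negativity of $\langle L_+\mathbf{u},\mathbf{u}\rangle$ in a single stroke. You instead first split into the two $L_+$-invariant blocks — the diagonal part $(v,\dots,v)$ and the traceless part $\mathbf{w}$ with $\sum_j w_j=0$ — compute $L_+$ explicitly on each (checking $(L_+(v,\dots,v))_j = L_{0,+}v$ using $6N-3=3(2N-1)$ and $(2N-1)Q^2=Q_0^2$, and $(L_+\mathbf{w})_j=L_{0,-}w_j+\tfrac{2}{2N-1}Q_0^2w_j$ using $\sum_{k\ne j}w_k=-w_j$), and reduce to scalar statements: $L_{0,-}\ge 0$ for the traceless piece, and the classical Weinstein coercivity $\langle L_{0,+}v,v\rangle\ge 0$ on $\{Q_0\}^{\perp}$ for the diagonal piece, the latter established by expanding the energy along a mass-renormalized curve $u_\varepsilon$. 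Both proofs are valid. What your version buys is structural clarity: it makes the block structure of $L_+$ explicit (the same decomposition the paper later uses to compute the null space in Lemma \ref{assum}), shows that the $N-1$ traceless directions are \emph{strictly} positive (being $L_{0,-}$ plus a non-negative potential), and isolates the borderline direction as the diagonal one. What it costs is brevity: you have not avoided the Weinstein variational argument, only confined it to a scalar sub-problem, so overall you run two separate positivity arguments where the paper runs one. You are also right that the reduction cannot close via Theorem \ref{L01spectral} alone, since the hypothesis gives $v\perp Q_0$ rather than $v\perp\chi_0$; the variational step is unavoidable on the diagonal block, just as it is in the paper's undecomposed calculation.
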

\begin{proof}
	Recall that $\mathbf{Q}$ is obtained by the maximization problem
    \begin{align*}
     \sup\limits_{ \substack{ \mathbf{u} \in H_x^1 l^2,\\ \mathbf{u} \ne 0} }
     J( \mathbf{u}),
    \end{align*}
    where
    \begin{align*}
        J(\mathbf{f})=\frac{\sum_{j\in\Z_N}\int_{\R^2}  F_j(\mathbf{f})\cdot \bar{f}_j \, \mathrm{d} x}{\|\mathbf{f}\|^2_{L^2_x l^2}\|\nabla\mathbf{f}\|^2_{L^2_x l^2}}. 
    \end{align*}
    Thus, 
    \begin{align*}
     \frac{ \mathrm{d} }{ \mathrm{d}  {\varepsilon}} \bigg|_{\epsilon=0}J \left(\mathbf{Q}+\varepsilon \mathbf{u} \right)=0,\quad\forall\ \mathbf{u}\in H^1_xl^2{\backslash \{0\}}
    \end{align*}
    and
	\begin{align}\label{mini}
		\frac{ \mathrm{d}^2 }{ \mathrm{d}  {\varepsilon}^2} \bigg|_{\epsilon=0}J \left(\mathbf{Q}+\varepsilon \mathbf{u} \right)\leq 0.
	\end{align}
If $\mathbf{u}\perp \mathbf{Q}$, then by direct calculation, \eqref{mini} is equivalent to
\begin{align*}
\left\langle L_{+} \mathbf{u}, \mathbf{u}  \right\rangle_{L_x^2l^2}
&\gtrsim_{ \left\|\mathbf{Q} \right\|_{L_x^2l^2}} \left(\int_{\mathbb{R}^2}\sum_{j\in\mathbb{Z}_N}\Delta Q u_j \,\mathrm{d}x \right)^2
-\left(\int_{\mathbb{R}^2}\sum_{j\in\mathbb{Z}_N} (2N-1)Q^2 u_j \,\mathrm{d}x \right)^2\notag\\
	&\geq  \left(\int_{\mathbb{R}^2}\sum_{j\in\mathbb{Z}_N} Qu_j \,\mathrm{d}x  \right)\left(\int_{\mathbb{R}^2}\sum_{j\in\mathbb{Z}_N} \left(\Delta Q+(2N-1)Q\right)u_j \,\mathrm{d}x \right)=0.
\end{align*}
This completes the proof.\end{proof}
Next, we determine the null space of $L_{+}$. In \cite{DW,WY}, the authors have proved that it is $2$-dimensional when $N=2$. Specifically, they proved the following lemma:
\begin{lemma}[\cite{DW,WY}]\label{null}
	The  null space of the operator $L_{+}$ in real-valued $H_x^1 l^2(\mathbb{R}^2\times\mathbb{Z}_{2})$ is spanned by $$\big\{({Q}_{x_1}, {Q}_{x_1}), ({Q}_{x_2}, {Q}_{x_2})\big\}$$
    when $N=2$.
\end{lemma}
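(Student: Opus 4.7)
The plan is to exploit the $\mathbb{Z}_2$ symmetry of $L_+$ for $N=2$: the swap $\sigma(\phi_1,\phi_2):=(\phi_2,\phi_1)$ commutes with $L_+$, so I decompose any element $(\phi_1,\phi_2)\in\ker L_+$ into its symmetric and antisymmetric parts, $\phi_+:=\tfrac12(\phi_1+\phi_2)$ and $\phi_-:=\tfrac12(\phi_1-\phi_2)$. Both $(\phi_+,\phi_+)$ and $(\phi_-,-\phi_-)$ are separately in the kernel, so it suffices to analyze these two reduced scalar problems. The symmetric part will recover the two translation modes, while the antisymmetric part should vanish.

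For the symmetric piece, adding the two components of $L_+(\phi_+,\phi_+)=0$ and using $Q^2=Q_0^2/3$ (so that the diagonal plus off-diagonal coefficient is $1-\Delta-9Q^2=1-\Delta-3Q_0^2$) yields $L_{0,+}\phi_+=0$. By Theorem~\ref{L01spectral}, $\phi_+\in\mathrm{span}\{(Q_0)_{x_1},(Q_0)_{x_2}\}$, which after rescaling gives exactly the two generators $(Q_{x_1},Q_{x_1})$ and $(Q_{x_2},Q_{x_2})$ claimed in the lemma.

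For the antisymmetric piece, subtracting the two components of $L_+(\phi_-,-\phi_-)=0$ gives $M\phi_-=0$, where $M:=-\Delta+1-Q_0^2/3=L_{0,-}+\tfrac{2}{3}Q_0^2$. Pairing with $\phi_-$ and applying Theorem~\ref{L02spectral} gives
\begin{equation*}
0=\langle M\phi_-,\phi_-\rangle_{L^2_x}=\langle L_{0,-}\phi_-,\phi_-\rangle_{L^2_x}+\tfrac{2}{3}\int_{\mathbb{R}^2}Q_0^2\,\phi_-^2\,\mathrm{d}x\geq \tfrac{2}{3}\int_{\mathbb{R}^2}Q_0^2\,\phi_-^2\,\mathrm{d}x,
\end{equation*}
and since $Q_0>0$ pointwise this forces $\phi_-\equiv 0$. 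Combining the two cases shows $\ker L_+=\mathrm{span}\{(Q_{x_1},Q_{x_1}),(Q_{x_2},Q_{x_2})\}$.

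The main subtlety I anticipate is in the antisymmetric step: because $L_{0,-}Q_0=0$, the scalar operator $L_{0,-}$ itself has nontrivial kernel (spanned by $Q_0$), so one cannot conclude $\phi_-=0$ from $L_{0,-}$ alone. It is precisely the strictly positive perturbation $\tfrac23 Q_0^2$, combined with the strict positivity of $Q_0$, that rules out the mode $\phi_-=cQ_0$ and gives rigidity. This is the essential input that pins down the kernel to exactly two dimensions; in particular, no additional ``phase-difference'' zero mode of the type $(i\chi,-i\chi)$ appears, which is a key structural fact enabling the modulation analysis built on \eqref{ql}.
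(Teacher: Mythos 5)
Your proof is correct and is essentially the same argument the paper itself uses for the general case in Lemma~\ref{assum} (which the paper presents as its own extension of the $N=2$ result it cites from \cite{DW,WY}). You phrase the reduction via the swap symmetry $\sigma$ and its $\pm 1$ eigenspaces; the paper equivalently sets $h=r_1-r_2$, derives the scalar equation $-\Delta h + h - \frac{2N-3}{2N-1}Q_0^2 h = 0$ (which for $N=2$ is your $M\phi_-=0$), and then uses the positivity \eqref{PL_-} of $L_{0,-}$ together with $Q_0>0$ to force $h\equiv 0$, before reducing the equal-component case to $L_{0,+}$. Your symmetric-part computation ($1-\Delta-9Q^2 = L_{0,+}$ via $Q^2=Q_0^2/3$) and your antisymmetric-part computation ($M = L_{0,-}+\frac23 Q_0^2$) are both right, and the remark about why the strictly positive perturbation is needed to rule out the $cQ_0$ mode is exactly the correct point.
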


In fact, they have proven the above conclusion for the more general coupled Schr\"odinger system. However, it seems hard to extend this conclusion to the case of more than two components via using their argument. Nevertheless, for the system \eqref{1.1}, we can straightforwardly extend this  conclusion to the multiply coupled case:
\begin{lemma}\label{assum}
The null space of the operator $L_{+}$ in real-vauled $H_x^1 l^2(\mathbb{R}^2\times\mathbb{Z}_{N})$ is spanned by
\begin{align*}
\big\{({Q}_{x_1},\cdots,{Q}_{x_1}), ({Q}_{x_2},\cdots,{Q}_{x_2})\big\}
\end{align*}
when $N\geq 2$.

\begin{proof}
Suppose that $\mathbf{r}\in H_x^1 l^2(\mathbb{R}^2\times\mathbb{Z}_{N})$ is a  vector-valued function such that $L_{+}\mathbf{r}=0$. Then by \eqref{L+}, $\mathbf{r}$ satisfies the following coupled system:
\begin{equation}\label{C1.1}
		\begin{cases}
			\Delta r_1 -r_1+(2N+1)Q^2 r_1 +\sum\limits_{\substack{j\in\mathbb{Z}_N\\j\neq 1}} 4Q^2 r_j= 0,\\
			\quad\qquad\qquad\qquad\qquad\vdots\\
             \Delta r_N-r_N +(2N+1)Q^2 r_N +\sum\limits_{\substack{j\in\mathbb{Z}_N\\j\neq N}} 4Q^2 r_j= 0.\\
		\end{cases}
	\end{equation}
   We claim that $r_j=r_k, \; \forall\ j,k\in \mathbb{Z}_N$. To prove this, let $h=r_1-r_2$, then by \eqref{C1.1}, $h$ satisfies the following linear equation
   \begin{equation*}
   \Delta h -h +(2N-3)Q^2 h = 0.
   \end{equation*}
   Multiplying by $h$ in the above equation and recalling that $Q=\sqrt{\frac{1}{2N-1}}Q_0$, we obtain 
   \begin{equation*}
       \int_{\mathbb{R}^2} \left( |\nabla h|^2+h^2-\frac{2N-3}{2N-1}Q_0^2h^2 \right) \,  \mathrm{d}x =0.
   \end{equation*}
   By \eqref{PL_-}, $\|Q_0h\|_{L_x^2}=0$, which implies $r_1-r_2=h=0$. Similar argument can be applied to the other components, so $r_1=\cdots=r_N$. Setting $r_1=\cdots=r_N=r$, the system \eqref{C1.1} reduces to the single equation
    $$\Delta r-r+ (6N-3) Q^2r=L_{0,+} r=0. $$
By Theorem \ref{L01spectral}, $h\in span\big\{({Q_0})_{x_1}, ({Q_0})_{x_2}\big\}=span\big\{{Q}_{x_1}, {Q}_{x_2}\big\} $.
\end{proof}
\end{lemma}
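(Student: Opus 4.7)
Suppose $\mathbf{r}=(r_1,\ldots,r_N)\in H^1_x l^2$ lies in $\ker L_+$. Reading off \eqref{L+}, for each $j\in\mathbb{Z}_N$ the component $r_j$ satisfies
\begin{equation*}
\Delta r_j - r_j + (2N+1)Q^2 r_j + 4Q^2\sum_{k\ne j} r_k = 0.
\end{equation*}
The overall strategy is to first show that all components $r_j$ must be equal, and then reduce the remaining scalar equation to $L_{0,+} r = 0$ to invoke Theorem \ref{L01spectral}.

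\textbf{Step 1: equality of components.} For any pair $j\ne k$, subtracting the two equations and observing that $\sum_{i\ne j} r_i - \sum_{i\ne k} r_i = r_k - r_j$, the coupling coefficient collapses nicely and the difference $h := r_j - r_k$ satisfies the scalar equation
\begin{equation*}
\Delta h - h + (2N-3)Q^2 h = 0.
\end{equation*}
Using $Q^2 = \tfrac{1}{2N-1}Q_0^2$, pairing with $h$ and integrating gives
\begin{equation*}
\int_{\mathbb{R}^2}\bigl(|\nabla h|^2 + h^2 - Q_0^2 h^2\bigr)\,\mathrm{d}x + \frac{2}{2N-1}\int_{\mathbb{R}^2} Q_0^2 h^2\,\mathrm{d}x = 0.
\end{equation*}
The first integral is $\langle L_{0,-} h, h\rangle_{L^2_x} \ge 0$ by Theorem \ref{L02spectral}, while the second is manifestly non-negative; hence both vanish. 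Since $Q_0>0$, this forces $h\equiv 0$, so $r_1=\cdots=r_N =: r$.

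\textbf{Step 2: reduction to $L_{0,+}$.} With all components equal to $r$, each equation in the system becomes
\begin{equation*}
\Delta r - r + \bigl[(2N+1) + 4(N-1)\bigr] Q^2 r = \Delta r - r + (6N-3)Q^2 r = 0,
\end{equation*}
and using $Q^2=\tfrac{1}{2N-1}Q_0^2$ this is exactly $L_{0,+} r = 0$. By part (2) of Theorem \ref{L01spectral}, $r\in\mathrm{span}\{(Q_0)_{x_1},(Q_0)_{x_2}\}=\mathrm{span}\{Q_{x_1},Q_{x_2}\}$, which yields the desired characterization of $\ker L_+$.

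\textbf{Anticipated obstacle.} There is no genuine analytic difficulty once one notices the algebraic identity $\sum_{i\ne j}r_i-\sum_{i\ne k}r_i = -(r_j-r_k)$, which is what makes the differences decouple into a single scalar equation whose linearized operator $-\Delta+1-\tfrac{2N-3}{2N-1}Q_0^2$ is a strict positive perturbation of $L_{0,-}$. The only point requiring care is confirming the precise coefficient arithmetic (both $(2N-3)/(2N-1)<1$ in Step~1 and $(6N-3)/(2N-1)=3$ in Step~2), which is what makes the previous spectral results of Theorems \ref{L01spectral}--\ref{L02spectral} directly applicable without any further work.
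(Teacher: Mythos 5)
Your proof is correct and follows essentially the same route as the paper: subtract two component equations to get a scalar equation $\Delta h-h+(2N-3)Q^2h=0$ for the difference $h$, use the positivity of $L_{0,-}$ (noting $\tfrac{2N-3}{2N-1}<1$) to force $h\equiv 0$, then plug the common value into a single equation, which collapses to $L_{0,+}r=0$ since $(6N-3)Q^2=3Q_0^2$, and invoke Theorem~\ref{L01spectral}. Your writeup is if anything slightly cleaner, since it spells out the decomposition of the coefficient into $L_{0,-}$ plus a strictly positive remainder rather than leaving it implicit as the paper does.
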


Now, Lemma \ref{negative1} and Lemma \ref{assum} yield the following positivity estimate for the operator $L_{+}$:
\begin{theorem}\label{positive}
	 There exists a positive constant $c_0$ such that
	\begin{align*}
	\left\langle L_{+} \mathbf{u}, \mathbf{u}  \right\rangle_{L_x^2l^2} \geq c_0 \left\langle \mathbf{u}, \mathbf{u} \right\rangle_{L_x^2l^2},
	\end{align*}
	for any real-valued $\mathbf{u}\in H_x^1l^2 \left(\mathbb{R}^2\times\mathbb{Z}_{N} \right)$ and $\mathbf{u}\perp  \left\{\boldsymbol{\chi}_0, \mathbf{Q}_{x_1}, \mathbf{Q}_{x_2} \right\}$.
\end{theorem}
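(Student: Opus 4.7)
The plan is to follow the standard coercivity argument for linearized Schrödinger operators, combining the three facts already established: Lemma \ref{negative1} (nonnegativity of $\langle L_+ \cdot, \cdot \rangle$ on $\mathbf{Q}^{\perp}$), Lemma \ref{assum} (identification of $\ker L_+$), and the existence of a strictly negative eigenvalue $\lambda_0$ with eigenvector $\boldsymbol{\chi}_0$. Setting $V := \mathrm{span}\{\boldsymbol{\chi}_0, \mathbf{Q}_{x_1}, \mathbf{Q}_{x_2}\}$, the strategy is to show that the spectrum of $L_+$ on $V^{\perp}$ lies in $(0, \infty)$ and is uniformly bounded away from $0$, from which the coercive lower bound will follow by the spectral theorem.

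First I would locate the essential spectrum. The diagonal part of $L_+$ is the componentwise Schrödinger operator $-\Delta + 1$, whose spectrum is $[1, \infty)$, while every entry of the multiplicative matrix appearing in $L_+$ is a constant multiple of $Q^2$. Since $Q$ decays exponentially, this multiplicative perturbation is relatively compact, so by Weyl's theorem $\sigma_{\mathrm{ess}}(L_+) = [1, \infty)$, and the discrete spectrum below $1$ consists of at most finitely many eigenvalues of finite multiplicity, accumulating only at $1$. Next I would pin down the non-positive eigenvalues via min-max: if $L_+$ admitted two linearly independent negative eigenvectors spanning a two-dimensional subspace $W$, then since $\mathbf{Q}^{\perp}$ has codimension one, the codimension arithmetic $\dim(W \cap \mathbf{Q}^{\perp}) \geq 2 - 1 = 1$ would yield a nonzero element of $W \cap \mathbf{Q}^{\perp}$ on which the quadratic form is strictly negative, contradicting Lemma \ref{negative1}. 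Hence $\lambda_0$ is the unique negative eigenvalue, simple and spanned by $\boldsymbol{\chi}_0$. Combined with Lemma \ref{assum}, the spectrum of $L_+$ below $1$ consists precisely of $\lambda_0 < 0$, the eigenvalue $0$ of multiplicity exactly two with eigenspace $\mathrm{span}\{\mathbf{Q}_{x_1}, \mathbf{Q}_{x_2}\}$, and possibly finitely many strictly positive eigenvalues in $(0, 1)$.

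With this spectral picture in place, any real-valued $\mathbf{u} \in H^1_x l^2$ with $\mathbf{u} \perp V$ lies in the spectral subspace corresponding to $\sigma(L_+) \setminus \{\lambda_0, 0\}$, which is contained in $[c_0, \infty)$, where $c_0$ is either the smallest positive discrete eigenvalue of $L_+$ in $(0,1)$ or equals $1$ itself. Both alternatives yield $c_0 > 0$, and the functional calculus then delivers $\langle L_+ \mathbf{u}, \mathbf{u} \rangle_{L^2_x l^2} \geq c_0 \|\mathbf{u}\|_{L^2_x l^2}^2$, as required. The main technical point to be written out carefully is the min-max step, specifically the fact that any hypothetical two-dimensional negative subspace must meet the codimension-one space $\mathbf{Q}^{\perp}$ in a line; everything else is a clean consequence of Weyl's theorem, Lemma \ref{negative1}, Lemma \ref{assum}, and the spectral theorem. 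No explicit formula for $c_0$ is produced, but only strict positivity is needed for the modulation analysis that follows.
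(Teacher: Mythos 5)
Your argument is correct, but it takes a genuinely different route from the paper. The paper's proof is variational: it sets $\sigma := \inf\{\langle L_+\mathbf{u},\mathbf{u}\rangle : \|\mathbf{u}\|_{L_x^2l^2}=1,\ \mathbf{u}\perp\{\boldsymbol{\chi}_0,\mathbf{Q}_{x_1},\mathbf{Q}_{x_2}\}\}$, observes $\sigma\geq 0$, then assumes $\sigma=0$ and (citing the existence-of-minimizer machinery of Martel--Merle and Weinstein) produces a minimizer $\boldsymbol{\epsilon}_*$ satisfying the Euler--Lagrange equation $L_+\boldsymbol{\epsilon}_* = \alpha\boldsymbol{\epsilon}_* + \beta\boldsymbol{\chi}_0 + \gamma\mathbf{Q}_{x_1}+\eta\mathbf{Q}_{x_2}$; testing against $\boldsymbol{\epsilon}_*$, $\boldsymbol{\chi}_0$, $\mathbf{Q}_{x_1}$, $\mathbf{Q}_{x_2}$ forces all Lagrange multipliers to vanish, so $L_+\boldsymbol{\epsilon}_*=0$, and Lemma \ref{assum} then gives $\boldsymbol{\epsilon}_*=0$, a contradiction. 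Your spectral route instead identifies $\sigma_{\mathrm{ess}}(L_+)=[1,\infty)$ via Weyl's theorem, uses the codimension-one argument against $\mathbf{Q}^\perp$ to show that the negative spectral subspace is exactly $\mathrm{span}\{\boldsymbol{\chi}_0\}$, invokes Lemma \ref{assum} to identify the kernel, and then reads off coercivity on $V^\perp$ from the spectral theorem. Your route has the advantage of making explicit why $\sigma\geq 0$ holds on $V^\perp$ rather than $\mathbf{Q}^\perp$ (namely, that the entire negative cone is carried by $\boldsymbol{\chi}_0$), a step the paper leaves implicit; the paper's route avoids establishing the essential-spectrum picture and instead leans on a cited compactness result for the existence of the minimizer. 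One minor imprecision in your write-up: the discrete spectrum in $(0,1)$ could be infinite (accumulating at $1$), not merely finite, but this does not affect the conclusion since any accumulation stays away from $0$.
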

\begin{proof}
	Let $\sigma=\inf  \left\{ \left\langle L_{+} \mathbf{u},   \mathbf{u}  \right\rangle_{L_x^2l^2}:
    \left\|\mathbf{u} \right\|_{L_x^2l^2}=1,   \mathbf{u}\perp
\left\{\boldsymbol{\chi}_0, \mathbf{Q}_{x_1}, \mathbf{Q}_{x_2} \right\} \right\}$. 
By Lemma \ref{negative1}, $\sigma\geq 0$. We only need to show that $\sigma>0$. Suppose $\sigma=0$, then arguing as in \cite{MF1,weistein}, 
there exists a vector-valued function $\boldsymbol{\epsilon}_*\in H_x^1l^2 \left(\mathbb{R}^2\times\mathbb{Z}_N \right)$ such that the minimum is attained at
	$\boldsymbol{\epsilon}_*$ and satisfies
	\begin{align}
		&L_{+}\boldsymbol{\epsilon}_*=\alpha \boldsymbol{\epsilon}_*+ \beta{\boldsymbol{\chi}_0}+\gamma \mathbf{Q}_{x_1}+ \eta \mathbf{Q}_{x_2}\label{langr},\\
		& \left\langle L_{+}\boldsymbol{\epsilon}_*, \boldsymbol{\epsilon}_* \right\rangle_{L_x^2l^2}=0, \quad  \left\|\boldsymbol{\epsilon}_* \right\|_{L_x^2l^2}=1,\quad \boldsymbol{\epsilon}_*\perp \left\{\boldsymbol{\chi}_0, \mathbf{Q}_{x_1}, \mathbf{Q}_{x_2} \right\}.\notag
	\end{align}
    Taking  the inner product of \eqref{langr} with $\boldsymbol{\epsilon}_*$, we have $ \left\langle L_{+}\boldsymbol{\epsilon}_*, \boldsymbol{\epsilon}_* \right\rangle_{L_x^2l^2}=\alpha$, so $\alpha=0$. Then by taking inner products  with $\boldsymbol{\chi}_0, \mathbf{Q}_{x_1}$, and $\mathbf{Q}_{x_2}$ respectively, we obtain
	$\beta= \gamma = \eta=0$. Thus,  $L_{+}\boldsymbol{\epsilon}_*=0$. But by Lemma \ref{null} and  $\boldsymbol{\epsilon}_*\perp  \left\{ \mathbf{Q}_{x_1}, \mathbf{Q}_{x_2} \right\}$, we conclude  $\boldsymbol{\epsilon}_*=0$, which is a contradiction. Therefore, $\sigma>0$.
\end{proof}

Our next goal is  to present a decomposition of the solution $\mathbf{u}(t)$ and its initial properties. This decomposition is pivotal in deriving the uniform control of the reminder in the next section. Recalling Lemma \ref{l2.1}, if $\mathbf{u}(t)$ is the blowup solution in Theorem \ref{t2.3}, it admits the decomposition:
\begin{align*}
	& u_j(t)=e^{i\gamma_{0, j} (t)}e^{ix\cdot\xi_0(t)}\lambda_{0}(t)\left[Q(\lambda_{0}(t)( x + x_0(t)) )+\epsilon_{0,j}(t,\lambda_{0}( x + x_0(t)) )\right], 
    j\in\mathbb{Z}_N  \\
    & \mbox{ with }  \|\boldsymbol{\epsilon}_0(t)\|_{L^2_x l^2}=\alpha(t)\leq \eta_{\ast}.
\end{align*}
Now, we will refine this decomposition via  using the implicit function theorem according
to the following theorem:
\begin{theorem}\label{modulation}
 Let $\mathbf{u}\in L_x^2l^2 \left(\mathbb{R}^2\times\mathbb{Z}_N \right)$. There exists $\alpha > 0$ sufficiently small such that if there exist $\lambda_{0} > 0$, $(\gamma_{0,1}\cdots \gamma_{0,N})\in [0,2\pi]^N$, $x_{0} \in \mathbb{R}^{2}$, and $\xi_{0} \in \mathbb{R}^{2}$ satisfying 
\begin{equation*} 
\bigg(\sum\limits_{j\in\mathbb{Z}_N}  \bigg\| e^{i \gamma_{0,j}} e^{ix \cdot \xi_{0}} \lambda_{0} u_j (\lambda_{0} x + x_{0}) -
Q(x) \bigg\|_{L_x^{2}}^2\bigg)^{1/2} \leq \alpha,
\end{equation*}
then there exist unique $\lambda > 0$, $(\gamma_1,\cdots,\gamma_N) \in [0,2\pi]^N$, $\tilde{x} \in \mathbb{R}^{2}$ and $\xi \in \mathbb{R}^{2}$ such that if $\boldsymbol{\epsilon}(x)=(\epsilon_1,\cdots, \epsilon_N)$ with
\begin{equation*} 
\epsilon_j(x) = e^{i \gamma_j} e^{ix \cdot \xi} \lambda u_j \left(\lambda x + \tilde{x} \right) - Q(x),\quad j\in\mathbb{Z}_N,
\end{equation*}
then the following orthogonality conditions hold: 
\begin{align}\label{f4.3}
\aligned
\left\langle \boldsymbol{\epsilon}, \boldsymbol{\chi}_{0}  \right\rangle_{L^2_x l^2}
&= \left\langle\boldsymbol{\epsilon}, \mathbf{Q}_{x_{1}} \right\rangle_{L^2_x l^2}
= \left\langle\boldsymbol{\epsilon}, \mathbf{Q}_{x_{2}} \right\rangle_{L^2_x l^2}
=  \left\langle\boldsymbol{\epsilon}, i\mathbf{ Q}_{x_{1}} \right\rangle_{L^2_x l^2}=  \left\langle\boldsymbol{\epsilon},i\mathbf{ Q}_{x_{2}} \right\rangle_{L^2_x l^2} = 0, \\
& \left\langle \boldsymbol{\epsilon}, i\boldsymbol{ \chi}_{0,j}  \right\rangle_{L^2_x l^2  }=\left\langle \epsilon_j, i \chi_{0}  \right\rangle_{L_x^2  }=0,\quad \forall\ j\in \mathbb{Z}_N,
\endaligned
\end{align}
and the following estimate holds: 
\begin{align}\label{f4.4}
\aligned
& \left\| \boldsymbol{\epsilon}  \right\|_{L^2_x l^2} +   \left|\frac{\lambda}{\lambda_{0}} - 1 \right|
+ \sum\limits_{j\in\mathbb{Z}_N} \left|\gamma_j - \gamma_{0,j} - \xi_{0} \cdot  \left(\tilde{x} - x_{0} \right) \right|
 +  \left|\xi - \frac{\lambda}{\lambda_{0}} \xi_{0} \right| +  \left|\frac{\tilde{x} - x_{0}}{\lambda_{0}} \right|  \\
&\lesssim \bigg(\sum\limits_{j\in\mathbb{Z}_N} \bigg\| e^{i \gamma_{0,j}} e^{ix \cdot \xi_{0}} \lambda_{0} u_j \left(\lambda_{0} x + x_{0} \right) - Q(x) \bigg\|_{L_x^{2}}^2\bigg)^{1/2},
\endaligned
\end{align}
here   $ i\mathbf{Q}_{x_j}=(iQ_{x_j},\cdots,iQ_{x_j}), \mathbf{Q}_{x_j}=(Q_{x_j},\cdots,Q_{x_j})$ for $  j\in\{1,2\}$, and $i\boldsymbol{\chi}_{0,j}=(0,\cdots, 0,\underset{\text{j-th }}{i\chi_0},0,\cdots,0)$ for $ j\in \mathbb{Z}_N$.
\end{theorem}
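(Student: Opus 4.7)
\medskip

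\noindent\textbf{Proof proposal.} The plan is to reduce this to a quantitative implicit function theorem applied to the finite-dimensional parameter map
\begin{equation*}
\Psi_{\mathbf u}(\lambda,\gamma_1,\ldots,\gamma_N,\tilde x,\xi) \;:=\; \bigl(\langle\boldsymbol\epsilon,\boldsymbol\chi_0\rangle,\, \langle\boldsymbol\epsilon,\mathbf Q_{x_1}\rangle,\,\langle\boldsymbol\epsilon,\mathbf Q_{x_2}\rangle,\,\langle\boldsymbol\epsilon,i\mathbf Q_{x_1}\rangle,\,\langle\boldsymbol\epsilon,i\mathbf Q_{x_2}\rangle,\,\langle\boldsymbol\epsilon,i\boldsymbol\chi_{0,1}\rangle,\ldots,\langle\boldsymbol\epsilon,i\boldsymbol\chi_{0,N}\rangle\bigr),
\end{equation*}
with $\epsilon_j(x)=e^{i\gamma_j}e^{ix\cdot\xi}\lambda u_j(\lambda x+\tilde x)-Q(x)$. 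This target space has dimension $N+5$, which matches the number of free parameters $(\lambda,\tilde x,\xi,\gamma_1,\ldots,\gamma_N)$. First I will exploit the symmetry group \eqref{sym1}, \eqref{sym3}, \eqref{sym4}, \eqref{sym5}: because both the orthogonality conditions and the estimate \eqref{f4.4} transform covariantly, it suffices to treat the base point $(\lambda_0,\gamma_{0,j},x_0,\xi_0)=(1,0,0,0)$, under the additional assumption $\|\mathbf u-\mathbf Q\|_{L_x^2 l^2}\le \alpha$, and then pull back the parameters by the symmetry to obtain the stated quantitative comparison in \eqref{f4.4}.

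The second step is to compute the differential $D_{(\lambda,\tilde x,\xi,\gamma)}\Psi_{\mathbf Q}$ at the base point $(1,0,0,0)$ with $\mathbf u=\mathbf Q$. Direct computation gives the basic directions of perturbation:
\begin{equation*}
\partial_\lambda G_j\big|_{0}=Q+x\cdot\nabla Q,\qquad \partial_{\tilde x_\alpha} G_j\big|_{0}=Q_{x_\alpha},\qquad \partial_{\xi_\alpha} G_j\big|_{0}=ix_\alpha Q,\qquad \partial_{\gamma_k}G_j\big|_{0}=i\delta_{jk}Q.
\end{equation*}
Pairing these against the six families of test functions and using the parities (radial vs.\ odd) of $\chi_0$, $Q$, $Q_{x_\alpha}$ together with the realness of the $L_x^2 l^2$ inner product, all off-diagonal entries vanish and the Jacobian becomes block diagonal with four blocks
\begin{equation*}
\mathrm{diag}\Bigl(\,N\langle \Lambda Q,\chi_0\rangle,\; N\|Q_{x_1}\|_{L_x^2}^2\,I_{2\times 2},\;-\tfrac{N}{2}\|Q\|_{L_x^2}^2\,I_{2\times 2},\; \langle Q,\chi_0\rangle\,I_{N\times N}\,\Bigr),
\end{equation*}
where $\Lambda Q=Q+x\cdot\nabla Q$. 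The only nontrivial entry to verify is $\langle \Lambda Q,\chi_0\rangle\neq 0$; for this I will use the standard scaling identity $L_{0,+}\Lambda Q=-2Q$ (proved by differentiating the elliptic equation for $Q$ in $\lambda$ under the mass-critical scaling) and test it against $\chi_0$: since $L_{0,+}\chi_0=\lambda_0\chi_0$ and $\langle Q,\chi_0\rangle>0$ by positivity of the ground state $Q$ and of the principal eigenfunction $\chi_0$ from Theorem \ref{L01spectral}, self-adjointness yields $\lambda_0\langle\Lambda Q,\chi_0\rangle=-2\langle Q,\chi_0\rangle\neq 0$. Hence $D\Psi_{\mathbf Q}$ is invertible with operator norm bounded below by an absolute constant.

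The third step is to apply the quantitative implicit function theorem. The map $\Psi_{\mathbf u}$ is $C^1$ jointly in $\mathbf u\in L_x^2 l^2$ and in the parameters, with a Lipschitz derivative on a small neighborhood of $(\mathbf Q;1,0,0,0)$; the $L_x^2 l^2$ structure enters only through bounded linear pairings against the fixed Schwartz-class test functions $\boldsymbol\chi_0$, $i\boldsymbol\chi_{0,j}$, $\mathbf Q_{x_\alpha}$, $i\mathbf Q_{x_\alpha}$. Since $\Psi_{\mathbf Q}(1,0,0,0)=0$, $\Psi_{\mathbf u}(1,0,0,0)=O(\|\mathbf u-\mathbf Q\|_{L_x^2 l^2})$, and the Jacobian at the base point is invertible uniformly in a small ball, the IFT yields unique parameters $(\lambda,\gamma_j,\tilde x,\xi)$ near $(1,0,0,0)$ solving $\Psi_{\mathbf u}=0$, together with the Lipschitz bound
\begin{equation*}
|\lambda-1|+\sum_{j\in\mathbb Z_N}|\gamma_j|+|\tilde x|+|\xi|\lesssim \|\mathbf u-\mathbf Q\|_{L_x^2 l^2},
\end{equation*}
and then $\|\boldsymbol\epsilon\|_{L_x^2 l^2}\lesssim \|\mathbf u-\mathbf Q\|_{L_x^2 l^2}$ follows from the triangle inequality after rescaling back. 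Undoing the symmetry reduction recovers \eqref{f4.4}, with the specific coupling $\gamma_j-\gamma_{0,j}-\xi_0\cdot(\tilde x-x_0)$ arising automatically because a simultaneous translation and Galilean boost of $\mathbf Q$ introduces precisely this phase. The main technical subtlety—and the one I expect to spend the most care on—is verifying nondegeneracy of the scalar entry $\langle\Lambda Q,\chi_0\rangle$, since this is the step that genuinely uses the spectral structure of the scalar linearized operator $L_{0,+}$ rather than merely the parity of the directions; all the other blocks are essentially diagonal by symmetry considerations.
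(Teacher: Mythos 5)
Your proposal is correct and follows essentially the same route as the paper: both set up the orthogonality map $\Psi$ against the same $(N+5)$-dimensional test family, compute the Jacobian at the ground state, observe it is block-diagonal by parity, and invoke the quantitative implicit function theorem, with the nondegenerate scalar entry $\langle \Lambda Q, \chi_0\rangle \neq 0$ established via the eigenfunction equation and self-adjointness (you use the scalar $L_{0,+}$ and the identity $L_{0,+}\Lambda Q = -2Q$, the paper phrases the same fact through the vector operator $L_+$). Your initial reduction to the base point $(1,0,0,0)$ by the symmetry group, with the coupling $\gamma_j - \gamma_{0,j} - \xi_0\cdot(\tilde x - x_0)$ arising from the group composition law, is a clean streamlining of what the paper does by differentiating directly at a general base point.
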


\begin{proof}
Let
	\begin{equation*} 
		 {\epsilon}_{0,j}(x) = e^{i \gamma_{0,j}} e^{ix \cdot \xi_0} \lambda u_j \left(\lambda_0 x + \tilde{x}_0 \right) - Q(x),\         j\in\mathbb{Z}_N, 
	\end{equation*}
and let the vector-valued function $\mathbf{f}$ be an element of the set
\begin{equation*} 
\mathbf{f} \in  \left\{ i\boldsymbol{\chi}_{0,1},\cdots, i\boldsymbol{\chi}_{0,N}, \mathbf{ \chi}_{0}, \mathbf{Q}_{x_{1}},  \mathbf{Q}_{x_{2}}, i\mathbf{ Q}_{x_{1}}, i\mathbf{Q}_{x_{2}}  \right\}.
\end{equation*}
By H{\"o}lder's inequality,
\begin{equation*}
 \left|\langle \boldsymbol{\epsilon}_0, \mathbf{ f}  \rangle_{L_x^2l^2} \right| \lesssim  \left\| \boldsymbol{\epsilon}_0  \right\|_{L^2_x l^2}.
\end{equation*}
Next we define the map $ \mathbf{F}: L^2_x l^2(\mathbb{R}^2\times\mathbb{Z}_N) \times\mathbb{R}^{N}\times\mathbb{R}^5 \to \mathbb{R}^{N}\times\mathbb{R}^5 $ by
\begin{align}\label{inverseimp}
	&  \mathbf{F}  \left(\mathbf{u},\boldsymbol{\gamma},\lambda,\xi_1,\xi_2,\tilde{x}_1,\tilde{x}_2 \right) \\\nonumber
 =&\Big( \left\langle \boldsymbol{\epsilon}, \boldsymbol{\chi}_{0,1}  \right\rangle_{L^2_x l^2}, \cdots, \left\langle \boldsymbol{\epsilon}, \boldsymbol{\chi}_{0,N}  \right\rangle_{L^2_x l^2},
	\left\langle \boldsymbol{\epsilon}, i \boldsymbol{\chi}_{0} \right\rangle_{L^2_x l^2}, \left\langle\boldsymbol{\epsilon}, \mathbf{Q}_{x_{1}} \right\rangle_{L^2_x l^2} ,
	\left\langle\boldsymbol{\epsilon}, \mathbf{Q}_{x_{2}} \right\rangle_{L^2_x l^2},
	\left\langle\boldsymbol{\epsilon}, i\mathbf{ Q}_{x_{1}} \right\rangle_{L^2_x l^2},  \left\langle\boldsymbol{\epsilon},i\mathbf{ Q}_{x_{2}} \right\rangle_{L^2_x l^2} \Big).
\end{align}
It's easy to check that the inner products in \eqref{inverseimp} are $C^{1}$ functions of $\gamma$, $\lambda$, $\tilde{x}$, and $\xi$.
 Indeed, $\forall  \, j \in \mathbb{Z}_N$,
\begin{equation}\label{f4.9}
\left|\frac{\partial}{\partial \gamma_j} \left\langle \boldsymbol{\epsilon}, \mathbf{f}  \right\rangle_{L^2_x l^2}\right|
= \bigg|\big\langle i e^{i \gamma_j} e^{ix \cdot \xi} \lambda u_j \left(\lambda x+\tilde{x} \right)  , f_j(x ) \big\rangle_{L_x^2}\bigg|
\lesssim  \left\| \mathbf{u} \right\|_{L^2_x l^2} \left\| \mathbf{f}  \right\|_{L^2_x l^2}.
\end{equation}
Next, since every component of $\mathbf{f}$ is smooth with rapidly  decreasing derivatives, $\forall  \, l  = 1, 2, $
\begin{align}
&\left|\frac{\partial}{\partial \xi_l}  \left\langle \boldsymbol{\epsilon}, \mathbf{f}   \right\rangle_{L^2_x l^2}\right|
= \bigg|\sum\limits_{j\in\mathbb{Z}_N}\big\langle i e^{i \gamma_j} e^{ix_l  \cdot \xi} \lambda u_j \left(\lambda x+\tilde{x} \right) , f_j (x ) \big\rangle_{L_x^2}\bigg|
\lesssim  \left\| \mathbf{u}  \right\|_{L^2_x l^2}  \left\| x_l\mathbf{f}  \right\|_{L^2_x l^2}.\label{f4.10}
\end{align}
Integrating by parts, $\forall  \, l = 1, 2$,
\begin{align}
&\left|\frac{\partial}{\partial \tilde{x}_l} \left\langle \boldsymbol{\epsilon}, \mathbf{f}  \right\rangle_{L^2_x l^2}\right|
=\bigg|\sum\limits_{j\in\mathbb{Z}_N}  \left\langle e^{i \gamma_j} e^{ix \cdot \xi} \lambda  \partial_{x_l } u_j \left(\lambda x + \tilde{x} \right),
 f_j(x)   \right\rangle_{L_x^{2}}\bigg| \label{f4.12} \\
 = &\bigg|-\frac{1}{\lambda} \sum\limits_{j\in\mathbb{Z}_N}  \left\langle e^{i \gamma_j} e^{ix \cdot \xi} \lambda  u_j \left(\lambda x + \tilde{x} \right), \partial_{x_l} f_j (x) \right\rangle_{L_x^{2}}
 - \frac{\xi_l}{\lambda} \sum\limits_{j\in\mathbb{Z}_N}  \left\langle i e^{i \gamma_j } e^{ix \cdot \xi} \lambda u_j \left(\lambda x + \tilde{x} \right), f_j (x) \right\rangle_{L_x^{2}}\bigg|\notag\\
\lesssim& \frac{1}{\lambda}  \left\| \mathbf{u}  \right\|_{L^2_x l^2}  \left\| \partial_{x_l} \mathbf{f}  \right\|_{L^2_x l^2}
+ \frac{|\xi_l|}{\lambda} \left\| \mathbf{u}  \right\|_{L^2_x l^2}  \left\| \mathbf{f}  \right\|_{L^2_x l^2}.\notag
\end{align}
Finally,
\begin{align}\label{f4.14}
\aligned
&\left|\frac{\partial}{\partial \lambda}  \left\langle \boldsymbol{\epsilon}, \mathbf{f}  \right\rangle_{L^2_x l^2}\right|
= \bigg|\sum\limits_{j\in\mathbb{Z}_N} \left\langle   e^{i \gamma_j} e^{ix \cdot \xi} u \left(\lambda x + \tilde{x} \right) + e^{i \gamma_j } e^{ix \cdot \xi} \lambda x \cdot \nabla u \left (\lambda x + \tilde{x} \right), f_j (x) \right\rangle_{L_x^2} \bigg|\\
\lesssim& \frac{1}{\lambda}  \left\| \mathbf{u}  \right\|_{L^2_x l^2}  \left\| \mathbf{f}  \right\|_{L^2_x l^2} + \frac{|\xi|}{\lambda}  \left\| \mathbf{u}  \right\|_{L^2_x l^2}
\left\| |x| \mathbf{f}  \right\|_{L^2_x l^2} + \frac{1}{\lambda}  \left\| \mathbf{u}  \right\|_{L^2_x l^2}  \left\| |x| \nabla \mathbf{f}  \right\|_{L^2_x l^2}.
\endaligned
\end{align}
Therefore, the inner product is a $C^{1}$ function of $\gamma$, $\xi$, $\lambda$, and $\tilde{x}$. Repeating the above calculations would also show that the inner product is $C^{2}$.

Computing \eqref{f4.9}-\eqref{f4.14} at $u_j = e^{-i\gamma_{0,j}}e^{-ix\cdot\xi_0}\frac{1}{\lambda_0}Q \left(\frac{x}{\lambda_{0}}-\tilde{x} \right)$, $\xi=\xi_0, \tilde{x}=\tilde{x}_0,  \lambda= \lambda_0, \gamma_j = \gamma_{0,j},\ \forall \,  {j\in\mathbb{Z}_N}$, we have
\begin{align*}
&\frac{\partial}{\partial \gamma_j}  \left\langle \boldsymbol{\epsilon}, \mathbf{f}  \right\rangle_{L_x^2l^2} \bigg|_{u_1 = e^{-i\gamma_{0,1}}e^{-ix\cdot\xi_0}\frac{1}{\lambda_0}Q \left(\frac{x}{\lambda_{0}}-\tilde{x} \right),\cdots, u_N= e^{-i\gamma_{0,N}}e^{-ix\cdot\xi_0}\frac{1}{\lambda_0}Q \left(\frac{x}{\lambda_{0}}-\tilde{x} \right), \lambda = \lambda_{0}, \gamma_j=\gamma_{0,j}, \tilde{x} =\tilde{x}_0, \xi = \xi_0}\\
  =& \langle i Q, f_j \rangle_{L_x^2} =
\begin{cases}
	0 ,& \text{if} \quad \mathbf{f} \in  \left\{ \boldsymbol{\chi}_{0}, \mathbf{Q}_{x_{1}}, \mathbf{Q}_{x_{2}},i \mathbf{Q}_{x_{1}}, i\mathbf{Q}_{x_{2}} \right\},\\
	0, &\text{if}  \quad \mathbf{f} \in  \left\{{i\boldsymbol{\chi}_{0,k}}, k\in\mathbb{Z}_N, k\neq j \right\},\\
	\langle Q, \chi_0\rangle_{L_x^{2}} > 0,  &\text{if} \quad \mathbf{f} = i\boldsymbol{\chi}_{0,j}.\notag
\end{cases}
\end{align*}
The fact that $\langle Q, \chi_0\rangle_{L_x^{2}} > 0$ follows from the fact that $\chi_{0} > 0$ and $Q > 0$. Next, for $l=1,2$, we have
\begin{align*}
	&\frac{\partial}{\partial \xi_l}  \left\langle \boldsymbol{\epsilon}, \mathbf{f}  \right\rangle_{L^2_x l^2} \bigg|_{u_1 = e^{-i\gamma_{0,1}}e^{-ix\cdot\xi_0}\frac{1}{\lambda_0}Q \left(\frac{x}{\lambda_{0}}-\tilde{x} \right),\cdots, u_N= e^{-i\gamma_{0,N}}e^{-ix\cdot\xi_0}\frac{1}{\lambda_0}Q \left(\frac{x}{\lambda_{0}}-\tilde{x} \right), \lambda = \lambda_{0}, \gamma_j=\gamma_{0,j}, \tilde{x} =\tilde{x}_0, \xi = \xi_0}\\
	 =& \sum\limits_{j\in\mathbb{Z}_N}\langle i x_lQ, f_j \rangle_{L_x^2} =
	\begin{cases}
		0 ,& \text{if} \quad \mathbf{f} \in  \left\{ \boldsymbol{\chi}_{0}, \mathbf{Q}_{x_{1}}, \mathbf{Q}_{x_{2}} \right\},\\
		0, &\text{if}  \quad \mathbf{f} \in  \left\{{i\boldsymbol{\chi}_{0,k}}, \  k\in\mathbb{Z}_N \right\},\\
		-\frac{\delta_{k l}N}{2}\|Q\|^2_{L_x^2 \left(\mathbb{R}^2 \right)},  &\text{if} \quad \mathbf{f} = i\mathbf{Q}_{x_k },\quad k\in\{1,2\},
	\end{cases}\notag
\end{align*}
and
\begin{align*}
	&\frac{\partial}{\partial \tilde{x}_l} \left\langle \boldsymbol{\epsilon}, \mathbf{f}  \right\rangle_{L^2_x l^2} \bigg|_{u_1 = e^{-i\gamma_{0,1}}e^{-ix\cdot\xi_0}\frac{1}{\lambda_0}Q \left(\frac{x}{\lambda_{0}}-\tilde{x} \right),\cdots, u_N= e^{-i\gamma_{0,N}}e^{-ix\cdot\xi_0}\frac{1}{\lambda_0}Q \left(\frac{x}{\lambda_{0}}-\tilde{x} \right), \lambda = \lambda_{0}, \gamma_j=\gamma_{0,j}, \tilde{x} =\tilde{x}_0, \xi = \xi_0}\\
	  = &\sum\limits_{j\in\mathbb{Z}_N}\langle Q_{x_l}, f_j \rangle_{L_x^2} =
	\begin{cases}
		0 ,& \text{if} \quad \mathbf{f} \in  \left\{ \boldsymbol{\chi}_{0}, i\mathbf{Q}_{x_{1}}, i\mathbf{Q}_{x_{2}}  \right\},\\
		0, &\text{if}  \quad \mathbf{f} \in  \left\{i\boldsymbol{\chi}_{0,k},\ k\in\mathbb{Z}_N \right\},\\
		\frac{\delta_{k l}N}{2}\|\nabla Q\|^2_{L^2_x\left(\mathbb{R}^2 \right)},  &\text{if} \quad \mathbf{f} = \mathbf{Q}_{x_k },\quad k\in\{1,2\},
	\end{cases}\notag
\end{align*}
where $\delta_{kl}$ is standard Kronecker symbol. Finally,
\begin{align*}
	&\frac{\partial}{\partial \lambda}  \left\langle \boldsymbol{\epsilon}, \mathbf{f}  \right\rangle_{L^2_x l^2} \bigg|_{u_1 = e^{-i\gamma_{0,1}}e^{-ix\cdot\xi_0}\frac{1}{\lambda_0}Q \left(\frac{x}{\lambda_{0}}-\tilde{x} \right),\cdots, u_N= e^{-i\gamma_{0,N}}e^{-ix\cdot\xi_0}\frac{1}{\lambda_0}Q \left(\frac{x}{\lambda_{0}}-\tilde{x} \right), \lambda = \lambda_{0}, \gamma_j=\gamma_{0,j}, \tilde{x} =\tilde{x}_0, \xi = \xi_0}\\
	  =& \sum\limits_{j\in\mathbb{Z}_N}\langle Q+x\cdot\nabla Q, f_j \rangle_{L_x^2} =
	\begin{cases}
		0 ,& \text{if} \quad \mathbf{f} \in  \left\{ \mathbf{Q}_{x_{1}}, \mathbf{Q}_{x_{2}},i\mathbf{Q}_{x_{1}}, i\mathbf{Q}_{x_{2}} \right\},\\
		0, &\text{if}  \quad \mathbf{f} \in  \left\{{i\boldsymbol{\chi}_{0,k}},\   k\in\mathbb{Z}_N \right\},\\
		\frac{2N}{\lambda_0}\langle Q, \chi_0\rangle_{L_x^2}>0,  &\text{if} \quad \mathbf{f} = \boldsymbol{\chi}_{0},
	\end{cases}\notag
\end{align*}
where we used the fact that $L_{+}$ is self-adjoint and $L_{+}\boldsymbol{\chi}_0=-\lambda_0 \boldsymbol{\chi}_0$.

Therefore, by the implicit function theorem associated to $F$, there exist  unique $\lambda > 0$, $\tilde{x} \in \mathbb{R}^{2}$, $\xi \in \mathbb{R}^{2}$,  and $(\gamma_{1}, \cdots , \gamma_{N})\in [0,2\pi]^N $ in a neighborhood of $\lambda_{0}$, $\xi_{0}$, $x_{0}$, and $(\gamma_{0,1}, \cdots , \gamma_{0,N})$ such that \eqref{f4.3} and \eqref{f4.4} hold.
 The proof of uniqueness  is identical to the proof of uniqueness in \cite{D2}, so we omit it.
\end{proof}

By Theorem \ref{modulation}, if $\mathbf{u}(t)$ is the solution as in Theorem \ref{t2.3}, then there exist functions $\lambda(t)$, $x(t)$, $\xi(t)$, and  $\gamma_1(t),\cdots, \gamma_{N}(t)$ on $[0,\sup I)$ such that if
\begin{equation}\label{decomp}
\epsilon_j(t,x) = e^{i \gamma_j(t)} e^{ix \cdot \xi(t)} \lambda u_j(\lambda(t) x + x(t)) - Q(x),\  j\in \mathbb{Z}_N,
\end{equation}
then the following orthogonality conditions hold: 
\begin{align}\label{orthod}
\aligned
\left\langle \boldsymbol{\epsilon}(t), \boldsymbol{\chi}_{0}  \right\rangle_{L^2_x l^2}
 &=  \left\langle\boldsymbol{\epsilon}(t), \mathbf{Q}_{x_{1}}  \right\rangle_{L^2_x l^2} = \left\langle\boldsymbol{\epsilon}(t), \mathbf{Q}_{x_{2}} \right\rangle_{L^2_x l^2}
=  \left\langle\boldsymbol{\epsilon}(t), i\mathbf{ Q}_{x_{1}} \right\rangle_{L^2_x l^2}=  \left\langle\boldsymbol{\epsilon}(t),i\mathbf{ Q}_{x_{2}} \right\rangle_{L^2_x l^2} = 0, \\
&\left\langle \boldsymbol{\epsilon}(t), i \chi_{0,j} \right\rangle_{L^2_x l^2}=\left\langle \epsilon_j(t), i \chi_{0} \right\rangle_{L_x^2}=0,\qquad \forall \,  j\in \mathbb{Z}_N.
\endaligned
\end{align}
Also as in \cite{D2} and \cite{D1}, it can be shown that $\lambda(t)$, $x(t)$, $\xi(t)$ and  $\gamma_1(t),\cdots, \gamma_{N}(t)$ are continuous functions on $[0, \sup I)$, and are differentiable almost everywhere on $[0, \sup I)$. Moreover, we have the following lemma:
\begin{lemma}\label{lemma23}
	Let $\mathbf{u}$ be the solution in Theorem \ref{t2.3}. If $\eta_{\ast}$ is sufficiently small, then
\begin{align}\label{qaze}
	\left\|\mathbf{u}(t) \right\|^4_{L_{t,x}^4l^2 \left(J\times\mathbb{R}^2\times\mathbb{Z}_N \right)} \lesssim 1+\int_{J}%\frac{1}
    {\lambda(t)^{-2}}  \, \mathrm{d} t.
	\end{align}
\end{lemma}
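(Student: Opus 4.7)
The plan is to reduce the global $L^4_{t,x}l^2$ estimate to uniform local-in-$s$ Strichartz bounds, exploiting the modulation decomposition \eqref{decomp} and the $L^2$-critical scaling invariance of \eqref{1.1}. First, applying \eqref{decomp} together with the change of spatial variable $y = (x - x(t))/\lambda(t)$, the phase and Galilean factors cancel in modulus and yield the pointwise identity $\|\mathbf{u}(t)\|_{L^4_x l^2}^4 = \lambda(t)^{-2}\|\mathbf{Q}+\boldsymbol{\epsilon}(t)\|_{L^4_y l^2}^4$. Integrating in $t$ and substituting the rescaled time $s(t) = \int_0^t \lambda(\tau)^{-2}\,d\tau$, so that $ds = \lambda(t)^{-2}\,dt$, this becomes
\[
\|\mathbf{u}\|_{L^4_{t,x}l^2(J\times\mathbb{R}^2\times\mathbb{Z}_N)}^4 = \int_{s(J)} \|\mathbf{Q}+\boldsymbol{\epsilon}(s)\|_{L^4_y l^2}^4\,ds.
\]

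Next, I would partition $s(J)$ into at most $1 + |s(J)| = 1 + \int_J \lambda(t)^{-2}\,dt$ consecutive unit-length intervals $[s_k, s_k+1]$, reducing the lemma to the uniform local bound $\int_{s_k}^{s_k+1} \|\mathbf{Q}+\boldsymbol{\epsilon}(s)\|_{L^4_y l^2}^4\,ds \lesssim 1$. To obtain this local bound, set $T_k := s^{-1}(s_k)$ and use the symmetries \eqref{sym1}--\eqref{sym5} together with the modulation parameters $(\lambda, x, \xi, \gamma_j)(T_k)$ to produce a rescaled solution $\tilde{\mathbf{u}}$ of \eqref{1.1} with initial datum $\tilde{\mathbf{u}}(0) = \mathbf{Q}+\boldsymbol{\epsilon}(T_k)$. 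By Theorem \ref{modulation} and \eqref{f4.4}, $\|\boldsymbol{\epsilon}(T_k)\|_{L^2_y l^2} \lesssim \sqrt{\eta_{\ast}}$. Since $e^{i\tau}\mathbf{Q}$ is an explicit solution of \eqref{1.1} satisfying $\|e^{i\tau}\mathbf{Q}\|_{L^4_{\tau,y}l^2([0,2]\times\mathbb{R}^2\times\mathbb{Z}_N)} \lesssim 1$, the Strichartz inequality in Lemma \ref{th2.1v31} and a short-time contraction-mapping perturbation argument (analogous to the derivation of \eqref{f2.24}) yield, for $\eta_{\ast}$ small enough,
\[
\|\tilde{\mathbf{u}}\|_{L^4_{\tau,y}l^2([0,2]\times\mathbb{R}^2\times\mathbb{Z}_N)} \lesssim 1.
\]

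Finally, I would transfer this rescaled bound back to the original time via the $L^4_{t,x}l^2$ scale invariance combined with the modulation control \eqref{qm4}: the latter gives $\int_{s_k}^{s_k+1} |\lambda_s/\lambda|\,ds \lesssim \sqrt{\eta_{\ast}}$, so $\lambda(t)\sim\lambda(T_k)$ throughout $[T_k, T_{k+1}]$ and consequently $T_{k+1}-T_k \sim \lambda(T_k)^2$; thus the $\tau$-rescaling maps the time interval $[T_k, T_{k+1}]$ into a sub-interval of $[0,2]$. Combining with the previous step gives the desired unit-$s$ estimate, and summing over $k$ produces the claimed bound $\lesssim 1 + \int_J \lambda(t)^{-2}\,dt$. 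The principal technical obstacle is precisely the $\lambda$-comparability across each unit $s$-interval that keeps the rescaled Strichartz window of bounded $\tau$-length; this is supplied by \eqref{qm4} together with the smallness of $\eta_{\ast}$, while all other ingredients follow directly from Lemma \ref{th2.1v31} and the $L^2$-critical symmetries of \eqref{1.1}.
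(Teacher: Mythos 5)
Your rescaling-and-perturbation strategy is natural and close in spirit to what the paper does, but as written the final step is circular. You invoke \eqref{qm4} (which is \eqref{f4.45} in the body of the paper) to get $\lambda(t) \sim \lambda(T_k)$ across each unit $s$-interval and hence a bounded rescaled time window. However, \eqref{f4.45} is established in the paper only \emph{after} Lemma~\ref{lemma23}, and its derivation uses Lemma~\ref{lemma23} in an essential way: the modulation equations \eqref{f4.36}--\eqref{f4.42} carry the error term $O\bigl(\|\boldsymbol{\epsilon}(s)\|_{L_x^2 l^2}\|\boldsymbol{\epsilon}(s)\|_{L_x^4 l^2}\|\tilde{\mathbf u}(s)\|_{L_x^4 l^2}\bigr)$, and to integrate this in $s$ the paper needs \eqref{wsz1}, which is exactly the statement of Lemma~\ref{lemma23} after a change of variables. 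So appealing to \eqref{qm4} presupposes the very lemma you are trying to prove.

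The repair — and the device the paper actually uses — is to choose the reference time \emph{inside} each piece rather than at its left endpoint, so that no a priori comparability of $\lambda$ is needed. Concretely, after subdividing $J$ into pieces $I_j$ with $\int_{I_j}\lambda(t)^{-2}\,dt\le\eta$, one picks $t_j\in I_j$ to be (within a factor of two of) the time where $\lambda$ is largest on $I_j$; then automatically $\lambda(t_j)^{-2}|I_j|\le \int_{I_j}\lambda^{-2}\,dt \le \eta$, so the time interval rescaled around $t_j$ is short, and the Strichartz/perturbation argument closes without ever comparing $\lambda$ across $I_j$. (The paper also opts for a low/high-frequency splitting of $Q$ in place of your contraction around the exact soliton $e^{i\tau}\mathbf Q$; that difference is immaterial.) If you replace ``$T_k := s^{-1}(s_k)$'' with ``$T_k :=$ a time in $[s^{-1}(s_k), s^{-1}(s_{k+1})]$ where $\lambda$ is within a factor of two of its supremum'' and then delete the appeal to \eqref{qm4}, your argument becomes self-contained and is essentially the paper's proof.
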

\begin{proof}
	Let $0<\eta\ll 1$ to be chosen temporarily and partition $J$ into some tiny subintervals $I_j$ so that
	\begin{align*}
		\frac{\eta}{1+\int_{J} \lambda(t)^{-2}dt}\int_J \lambda(t)^{-2} \, \mathrm{d} t \leq \int_{I_j} \lambda(t)^{-2} \,  \mathrm{d} t  \leq \eta.
	\end{align*}
	This requires at most $\eta^{-1}\left(1+\int_{J} \lambda(t)^{-2}\, \mathrm{d}t \right)$ intervals. For each $j$, we can choose $t_j\in I_j$ so that
	\begin{align*}
		\lambda(t_j)^2|I_j|\leq 2\eta.
	\end{align*}
	Recalling that $$e^{i\gamma_j(t)}e^{ix\cdot\xi(t)}\lambda(t)u_j \left(t,\lambda(t)x+x(t) \right)=Q(x)+ \epsilon_j(t), \ \forall \,  j\in\mathbb{Z}_N$$
    and $\sup\limits_{t\in[0,\infty)} \left\|\boldsymbol{\epsilon}(t) \right\|_{L^2_x l^2}\leq \eta_{\ast}$, by the Strichartz estimate, for any interval $J_{\alpha}\subset I_j$ with $t_j\in J_{\alpha}$, we have
	\begin{align*}		
& \left\|\mathbf{u}(t) \right\|_{L_{t,x}^4l^2 \left(J_{\alpha}\times\mathbb{R}^2\times\mathbb{Z}_N \right)} \\
=& \left\|e^{ix\cdot\xi(t_j)}\mathbf{u}(t,x-2(t-t_j)\xi(t_j)) \right\|_{L_{t,x}^4l^2 \left(J_{\alpha}\times\mathbb{R}^2\times\mathbb{Z}_N \right)}\notag\\
		\lesssim&   \left\|e^{i(t-t_j)\Delta} \left(e^{ix\cdot\xi(t_j)}\mathbf{u}(t_j,x) \right) \right\|_{L_{t,x}^4l^2 \left(J_{\alpha}\times\mathbb{R}^2\times\mathbb{Z}_N \right)} + \left\|e^{ix\cdot\xi(t_j)}\mathbf{u}(t,x-2(t-t_j)\xi(t_j)) \right\|^3_{L_{t,x}^4l^2 \left(J_{\alpha}\times\mathbb{R}^2\times\mathbb{Z}_N \right)}\notag\\
		\lesssim& \eta_{\ast}+ \left\|e^{i(t-t_j)\Delta}\lambda(t_j)^{-1}Q \left(\lambda(t_j)^{-1}(x-x(t_j)) \right) \right\|_{L_{t,x}^4 \left(J_{\alpha}\times\mathbb{R}^2 \right)} + \left\|\mathbf{u}(t) \right\|^3_{L_{t,x}^4l^2 \left(J_{\alpha}\times\mathbb{R}^2\times\mathbb{Z}_N \right)} .
	\end{align*}
Choosing $C_0\gg 1$ such that $\|P_{\geq C_0}Q\|_{L_x^2}\leq \eta_0$, we continue:
\begin{align}\nonumber
	&\hspace{10ex}\lesssim \eta_{\ast}+ 
\left\|e^{i(t-t_j)\Delta} P_{\leq C_0\lambda(t_j)}\lambda(t_j)^{-1}Q \left(\lambda(t_j)^{-1}(x-x(t_j)) \right) \right\|_{L_{t,x}^4 \left(J_{\alpha}\times\mathbb{R}^2 \right)} \\\nonumber
	&\hspace{13ex}+ \left\|e^{i(t-t_j)\Delta} P_{\ge C_0\lambda(t_j)}\lambda(t_j)^{-1}Q \left(\lambda(t_j)^{-1}(x-x(t_j)) \right) \right\|_{L_{t,x}^4 \left(J_{\alpha}\times\mathbb{R}^2 \right)}
+ \left\|\mathbf{u}(t) \right\|^3_{L_{t,x}^4l^2 \left(J_{\alpha}\times\mathbb{R}^2\times\mathbb{Z}_N \right)} \\\nonumber
	&\hspace{10ex}\lesssim \eta_{\ast}+\eta_0+|J_{\alpha}|^{1/2}C_0\lambda(t_j)
 + \left\|\mathbf{u}(t) \right\|^3_{L_{t,x}^4l^2 \left(J_{\alpha}\times\mathbb{R}^2\times\mathbb{Z}_N \right)} \\\label{qd1}
	&\hspace{10ex}\lesssim \eta_{\ast}+\eta_0
+C_0 \eta^{1/2}+ \left\|\mathbf{u}(t) \right\|^3_{L_{t,x}^4l^2 \left(J_{\alpha}\times\mathbb{R}^2\times\mathbb{Z}_N \right)}.
\end{align}
Choosing sufficiently small $\eta_{\ast}$, $\eta_0$ and  then choosing sufficiently small $\eta$ so that $\eta_{\ast}+\eta_0+C_0\eta^{1/2}\ll 1$, then by a bootstrap argument, \eqref{qd1} implies
\begin{align*}
	\left\|\mathbf{u}(t) \right\|_{L_{t,x}^4l^2 \left(I_j\times\mathbb{R}^2\times\mathbb{Z}_N \right)}\leq 1.
\end{align*}
Using the bound on the number of the intervals $I_j$, we can obtain \eqref{qaze}.
\end{proof}
Define the monotone function $s : [0, \sup I ) \rightarrow \mathbb{R}$,
\begin{equation}\label{f4.23}
	s(t) = \int_{0}^{t} \lambda(\tau)^{-2} \, \mathrm{ d} \tau.
\end{equation}
This function is invertible and by Lemma \ref{lemma23}, $s([0,\infty))=\infty$. Let $t(s) : [0, \infty) \rightarrow [0, \sup I)$ be its inverse function. Define 
\begin{equation}\label{f4.24}
	\aligned
	\lambda(s) = \lambda(t(s)), \qquad \gamma_j(s) = \gamma_j(t(s)), \qquad \forall \, j\in \mathbb{Z}_N, \\
	x(s) = x(t(s))=(x_1(t(s)), x_2(t(s))),  \qquad \xi(s) = \xi(t(s))=(\xi_1(t(s)), \xi_2(t(s))),\qquad
	\endaligned
\end{equation}
and 
\begin{equation}\label{f4.25}
	\epsilon_j (s, x) = e^{i \gamma_j(s)} e^{ix \cdot\xi(s)} \lambda(s) u_j (t(s), \lambda(x) x + x(s)) - Q(x)=\tilde{u}_j(s)-Q(x), \quad \forall \,  j\in\mathbb{Z}_N.
\end{equation}
In the final part of this section, we are devoted to establish the explicit control for the  geometric parameters \eqref{f4.24}.
First, assume $\mathbf{u}(t)\in H_x^1l^2$. Then for any $j\in \mathbb{Z}_N$, we can directly compute 
\begin{equation}\label{f4.26}
	\aligned
	\partial_s \epsilon_j  = & i (\gamma_j)_s (Q + \epsilon_j) + i \xi_{s}\cdot x (Q + \epsilon_j) + \frac{\lambda_{s}}{\lambda}
\left(\frac{1}{2} (Q + \epsilon_j) + x\cdot \nabla(Q + \epsilon_j) \right)
- i \frac{\lambda_{s}}{\lambda} \xi(s) \cdot x (Q + \epsilon_j) \\
	& + \frac{x_{s}}{\lambda} \cdot\nabla (Q + \epsilon_j) - i \frac{x_{s}}{\lambda} \cdot\xi(s) (Q + \epsilon_j)
 + 2 \xi(s)\cdot\nabla (Q + \epsilon_j)- i |\xi(s)|^{2} (Q + \epsilon_j) \\
	& + i \Delta(Q + \epsilon_j) + i |Q + \epsilon_j|^{2} (Q + \epsilon_j)
+2i\sum_{\substack{k\in\mathbb{Z}_N\\k\neq j}}|Q + \epsilon_k|^{2}(Q + \epsilon_j).
	\endaligned
\end{equation}
Taking $\mathbf{f} \in  \left\{ \boldsymbol{\chi}_0, i \boldsymbol{\chi}_{0,1},\cdots, i \boldsymbol{\chi}_{0,N},  \mathbf{Q}_{x_1}, \mathbf{Q}_{x_2}, i\mathbf{Q}_{x_1}, i\mathbf{Q}_{x_2}  \right\}$, by \eqref{orthod}, we have 
\begin{equation*} 
	\frac{ \mathrm{d} }{ \mathrm{d} s} \left\langle \boldsymbol{\epsilon}, \mathbf{f} \right\rangle_{L_x^{2}l^2}
 = \left\langle \boldsymbol{\epsilon}_{s}, \mathbf{f} \right\rangle_{L_x^{2}l^2} = 0.
\end{equation*}
By direct calculation, we derive that if $ \mathbf{f} \in \left\{ \boldsymbol{\chi}_0, \mathbf{Q}_{x_1}, \mathbf{Q}_{x_2}, i \mathbf{Q}_{x_1}, i\mathbf{Q}_{x_2} \right\}$, then
\begin{align}\nonumber
	&  \sum_{j\in\mathbb{Z}_N} \left\langle i (\gamma_{j})_s (Q + \epsilon_j), f_j \right\rangle_{L_x^{2}}
 =  \sum_{j\in\mathbb{Z}_N} \left\langle i (\gamma_{j})_s \epsilon_j, f_j  \right\rangle_{L_x^{2}}, \\\label{f4.28}
	&   \left\langle i (\gamma_{j})_s (Q + \epsilon_j), i\chi_0 \right\rangle_{L_x^{2}}
= (\gamma_{j})_s \left\langle Q, \chi_0  \right\rangle_{L_x^2}, \quad \forall \, j \in \mathbb{Z}_N.
\end{align}
When $\mathbf{f} \in  \left\{ \mathbf{Q}_{x_1}, \mathbf{Q}_{x_2}, \boldsymbol{\chi}_0, i\boldsymbol{\chi}_{0,1},\cdots,i\boldsymbol{\chi}_{0,N} \right\}$, we get
\begin{align}\label{f4.29}
	& \left\langle i \left(\xi_{s} - \frac{\lambda_{s}}{\lambda} \xi(s) \right)\cdot x  \left(\mathbf{Q} + \boldsymbol{\epsilon} \right), \mathbf{f} \right\rangle_{L^2_{x} l^2}
= O \left( \left|\xi_{s}(s) - \frac{\lambda_{s}}{\lambda} \xi(s) \right|  \left\| \boldsymbol{\epsilon}  \right\|_{L^2_{x} l^2} \right),\\\nonumber
	&  \left\langle i \left(\xi_{s} - \frac{\lambda_{s}}{\lambda} \xi(s) \right)\cdot x  \left(\mathbf{Q} + \boldsymbol{\epsilon} \right), i\mathbf{Q}_{x_l } \right\rangle_{L^2_{x} l^2}
= - \frac{N}{2}  \left( (\xi_{l })_s(s) - \frac{\lambda_{s}}{\lambda} \xi_l (s) \right) \| Q \|_{L_x^{2}}^{2}
 + O \left( \left|\xi_{s}(s) - \frac{\lambda_{s}}{\lambda} \xi(s) \right|  \left\| \boldsymbol{\epsilon}  \right\|_{L^2_{x} l^2} \right), 
\end{align}
with $l = 1, 2.$ If $\mathbf{f} \in  \left\{\boldsymbol{\chi}_0, \mathbf{Q}_{x_1}, \mathbf{Q}_{x_2}, i\mathbf{Q}_{x_1}, i\mathbf{Q}_{x_2}  \right\}$, then we obtain
\begin{align}\label{f4.30}
	& \left\langle - i \frac{x_{s}}{\lambda} \cdot\xi(s)
\left(\mathbf{Q} + \boldsymbol{\epsilon} \right), \mathbf{f} \right\rangle_{L^2_{x} l^2}
 = \sum_{j\in\mathbb{Z}_N} \left\langle -i \frac{x_{s}}{\lambda}\cdot \xi(s) \epsilon_j, f_j \right\rangle_{L_x^{2}},\\\nonumber
	&  \left\langle -i \frac{x_{s}}{\lambda} \cdot\xi(s)  \left(\mathbf{Q}+\boldsymbol{\epsilon} \right), i \boldsymbol{\chi}_{0,j} \right\rangle_{L^2_{x} l^2}
= -\frac{x_{s}}{\lambda} \cdot\xi(s)\langle Q, \chi_0 \rangle_{L_x^2}, \quad \forall  \, j\in \mathbb{Z}_N.
\end{align}
When $\mathbf{f} \in  \left\{\boldsymbol{\chi}_0, \mathbf{Q}_{x_1}, \mathbf{Q}_{x_2}, i\mathbf{Q}_{x_1}, i\mathbf{Q}_{x_2}  \right\}$, one has
\begin{align}\label{f4.31}
	\aligned
	& \left\langle i |\xi(s)|^{2}  \left(\mathbf{Q} + \boldsymbol{\epsilon} \right), \mathbf{f} \right\rangle_{L^2_{x} l^2}
 = \sum_{j\in\mathbb{Z}_N}  \left\langle i |\xi(s)|^{2} \epsilon_j, f_j \right\rangle_{L^2_{x}},\\
	&  \left\langle i |\xi(s)|^2 \mathbf{Q}, i \boldsymbol{\chi}_{0,j} \right\rangle_{L^2_{x} l^2} =
	|\xi(s)|^2 \left\langle Q, \chi_0  \right\rangle_{L_x^2}, \quad \forall \,  j\in \mathbb{Z}_N.
	\endaligned
\end{align}
If  $\mathbf{f}  \in
\left\{\mathbf{Q}_{x_1}, \mathbf{Q}_{x_2}, i\mathbf{Q}_{x_1}, i\mathbf{Q}_{x_2}, i\boldsymbol{\chi}_{0,1},\cdots, i\boldsymbol{\chi}_{0,N}  \right\}$, we estimate
\begin{align}\label{f4.32}
	\aligned
	& \frac{\lambda_{s}}{\lambda} \left\langle \left(\mathbf{Q} + \boldsymbol{\epsilon} \right)
 + x\cdot\nabla \left(\mathbf{Q} + \boldsymbol{\epsilon} \right), \mathbf{f} \right\rangle_{L^2_{x} l^2}
= O \left( \left|\frac{\lambda_{s}}{\lambda} \right|  \|  \boldsymbol{\epsilon}  \|_{L^2_{x} l^2} \right),\\
& 	\frac{\lambda_{s}}{\lambda}  \left\langle \left(\mathbf{Q} + \boldsymbol{\epsilon} \right) + x\cdot\nabla \left(\mathbf{Q} + \boldsymbol{\epsilon} \right), \boldsymbol{\chi}_{0} \right\rangle_{L^2_{x} l^2}
 = \frac{2N\lambda_{s}}{ \lambda_{0}\lambda} \langle Q, \chi_0\rangle_{L_x^2}
 + O \left( \left|\frac{\lambda_{s}}{\lambda} \right|  \left\|\boldsymbol{\epsilon}  \right\|_{L^2_{x} l^2} \right).
	\endaligned
\end{align}
If $\mathbf{f} \in
\left\{\boldsymbol{\chi}_0, i\boldsymbol{\chi}_{0,1},\cdots, i\boldsymbol{\chi}_{0,N}, i\mathbf{Q}_{x_1}, i\mathbf{Q}_{x_2}  \right\}$, we have
\begin{align}\label{f4.33}
 & 	\left\langle  \left(\frac{x_{s}}{\lambda} + 2 \xi(s) \right) \cdot\nabla \left(\mathbf{Q} + \boldsymbol{\epsilon} \right), \mathbf{f} \right\rangle_{L^2_{x} l^2}  =  O \left( \left|\frac{x_{s}}{\lambda} + 2 \xi(s) \right| \|  \boldsymbol{\epsilon}  \|_{L^2_{x} l^2} \right),\\\nonumber
& 	\left\langle \left(\frac{x_{s}}{\lambda} + 2 \xi(s) \right ) \cdot\nabla \left(\mathbf{Q} + \boldsymbol{\epsilon} \right), \mathbf{Q}_{x_l } \right\rangle_{L^2_{x} l^2} 
= N \left(\frac{(x_{l })_s}{\lambda} + 2 (\xi_l )_s(s) \right)  \| Q_{x_l } \|_{L^2_{x}}^{2}
 + O \left(  \left|\frac{x_{s}}{\lambda} + 2 \xi(s) \right|  \left\| \boldsymbol{\epsilon}  \right\|_{L^2_{x} l^2} \right), l = 1,2.
\end{align}
Finally, taking $\boldsymbol{\epsilon} = \boldsymbol{\epsilon}_{1}  + i \boldsymbol{\epsilon}_{2}$,
\begin{equation}\label{f4.34}
	\aligned
	i \Delta \left(\mathbf{Q} + \boldsymbol{\epsilon} \right) + \mathbf{\mathbf{F}} \left(\mathbf{Q}+\boldsymbol{\epsilon} \right)
	=  i \mathbf{Q} + i L_+ \boldsymbol{\epsilon_{1}} -  L_{-} \boldsymbol{\epsilon_{2}}
+ O\left( \left|\boldsymbol{\epsilon} \right|^{2} \left( \left|\mathbf{\tilde{u}}(s) \right| + |Q| \right)\right),
	\endaligned
\end{equation}
where $\mathbf{\tilde{u}}$ is defined in \eqref{f4.25} and $L_+$, $L_-$ are given by \eqref{L+} and \eqref{L-}. Since $L_+$ and $ L_{-}$ are self-adjoint operators,
$ \left\langle \boldsymbol{\epsilon}_{1}, \boldsymbol{\chi}_0 \right\rangle_{L^2_{x}  l^2 } =0$  and $L_+ \mathbf{Q}_{x_1}=L_+ \mathbf{Q}_{x_2} = 0$,
\begin{align}
	& \left\langle i \Delta \left(\mathbf{Q} + \boldsymbol{\epsilon} \right)   + i\mathbf{\mathbf{F}} \left(\mathbf{Q}+\boldsymbol{\epsilon} \right), i\boldsymbol{\chi}_{0,j} \right\rangle_{L^2_{x}  l^2 } \notag 
\\
&  = \left\langle Q,\chi_0 \right\rangle_{L_x^2}
+ O \left( \left\|\boldsymbol{\epsilon} \right\|_{L^2_x l^2} \right)
+ O\left( \left\langle \left|\boldsymbol{\epsilon} \right|^{2} \left( \left|\tilde{\boldsymbol{u}} \right| + |Q| \right), |\chi_0| \right\rangle_{L^2_{x}}\right),
 \quad \forall  \, j\in\mathbb{Z}_N, \notag\\
& 	\left\langle i \Delta \left(\mathbf{Q} + \boldsymbol{\epsilon} \right)   + i \mathbf{\mathbf{F}} \left(\mathbf{Q}+\boldsymbol{\epsilon} \right), i\boldsymbol{\chi}_{0} \right\rangle_{L^2_{x}  l^2 }
=  N \langle Q,\chi_0\rangle_{L_x^2}
+ O\left( \left\langle \left|\boldsymbol{\epsilon} \right|^{2}
\left( \left|\mathbf{\tilde{u}}(s) \right| + |Q| \right), |\chi_0| \right\rangle_{L^2_{x}}\right), \notag \\
& 	\left\langle i \Delta \left(\mathbf{Q} + \boldsymbol{\epsilon} \right)   + i\mathbf{\mathbf{F}} \left(\mathbf{Q}+\boldsymbol{\epsilon} \right), \boldsymbol{\chi}_{0} \right\rangle_{L^2_{x}  l^2 }
= - \left( \boldsymbol{\epsilon}_{2},  L_{-} \boldsymbol{\chi}_{0} \right)_{L^2_{x} l^2  }
+ O\left( \left\langle \left|\boldsymbol{\epsilon} \right|^{2}
\left( \left|\mathbf{\tilde{u}}(s) \right| + |Q| \right), |\chi_{0}| \right\rangle_{L^2_{x} }\right),\notag\\
& 	\left\langle i \Delta \left(\mathbf{Q} + \boldsymbol{\epsilon} \right)   + i\mathbf{\mathbf{F}} \left(\mathbf{Q}+\boldsymbol{\epsilon} \right), i\mathbf{Q}_{x_l } \right\rangle_{L^2_{x}  l^2 }
=  O\left( \left\langle  \left|\boldsymbol{\epsilon} \right|^{2} \left( \left|\mathbf{\tilde{u}}(s) \right| + |Q| \right), |Q_{x_l }| \right\rangle_{L^2_{x} }\right), l = 1, 2,\notag \\
& 	\left\langle i \Delta \left(\mathbf{Q} + \boldsymbol{\epsilon} \right)   + i\mathbf{\mathbf{F}} \left(\mathbf{Q}+\boldsymbol{\epsilon} \right), \mathbf{Q}_{x_l } \right\rangle_{L^2_{x} l^2 }
= - \left( \boldsymbol{\epsilon}_{2},  L_{-} \mathbf{Q}_{x_l } \right)_{L^2_{x} l^2}
+ O\left( \left\langle \left|\boldsymbol{\epsilon} \right|^{2} \left( \left|\mathbf{\tilde{u}}(s) \right| + |Q| \right), |Q_{x_l }| \right\rangle_{L^2_{x}}\right), l = 1,2.\label{f4.35}
\end{align}
Combining \eqref{f4.28}-\eqref{f4.35} and  the orthogonal relations \eqref{orthod}, we have proved
\begin{align}
& \sum_{j\in\mathbb{Z}_N} \left( (\gamma_{j})_s + 1 - \frac{x_{s}}{\lambda} \xi(s) - |\xi(s)|^{2} \right) 
\left\langle Q, \chi_0 \right\rangle_{L_x^2 }
+ O \left( \left|\xi_{s} - \frac{\lambda_{s}}{\lambda} \xi(s) \right|  \left\| \boldsymbol{\epsilon}  \right\|_{L^2_{x} l^2} \right)
+ O \left( \left|\frac{\lambda_{s}}{\lambda} \right|  \left\| \boldsymbol{\epsilon}  \right\|_{L^2_{x} l^2} \right)\notag \\
&  + O \left( \left|\frac{x_{s}}{\lambda} + 2 \xi(s) \right|  \left\| \boldsymbol{\epsilon} \right\|_{L^2_{x} l^2} \right)
 + O \left( \left\|\boldsymbol{\epsilon}(s)  \right\|_{L^2_{x} l^2}^{2} \|Q\|_{L_x^{\infty}}
 + \left\| \boldsymbol{\epsilon}(s) \right \|_{L^2_{x} l^2} \left\| \boldsymbol{\epsilon}(s)  \right\|_{L^{4}_x l^2} \left\| \mathbf{\tilde{u}}(s)  \right\|_{L^{4}_x l^2} \right) = 0, \label{f4.36}
\end{align}
and $\forall \ j\in \Z_N$,
\begin{align}\label{f4.37}
& \left((\gamma_{j})_s + 1 - \frac{x_{s}}{\lambda} \xi(s) - |\xi(s)|^{2} \right)
\left\langle Q, \chi_0 \right\rangle_{L_x^2}
+ O \left( \left|\xi_{s} - \frac{\lambda_{s}}{\lambda} \xi(s) \right|  \left\| \boldsymbol{\epsilon}  \right\|_{L^2_{x} l^2}  \right)
+ O \left(  \left|\frac{\lambda_{s}}{\lambda} \right|  \left\| \boldsymbol{\epsilon}  \right\|_{L^2_{x} l^2} \right) \\\nonumber
& 	+O \left(  \left\|\boldsymbol{\epsilon} \right\|_{L^2_x l^2} \right)
+ O \left( \left|\frac{x_{s}}{\lambda} + 2 \xi(s) \right|  \left\| \boldsymbol{\epsilon}  \right\|_{L^2_{x} l^2} \right)
 + O \left( \left\|\boldsymbol{\epsilon}(s)  \right\|_{L^2_{x} l^2}^{2} \|Q\|_{L_x^{\infty}}
 + \left\| \boldsymbol{\epsilon}(s)  \right\|_{L^2_{x} l^2} \left\| \boldsymbol{\epsilon}(s)  \right\|_{L^{4}_x l^2} \left\| \mathbf{\tilde{u}}(s)  \right\|_{L^{4}_x l^2} \right) = 0,
\end{align}
and
\begin{equation}\label{f4.38}
	\aligned
& 	\frac{2N\lambda_{s}}{ \lambda_{0}\lambda} \langle Q, \chi_0\rangle_{L_x^2}
-  \left\langle \boldsymbol{\epsilon}_{2},  L_{-} \boldsymbol{\chi}_0 \right\rangle_{L^2_{x}}
+ O \left( \left|\xi_{s} - \frac{\lambda_{s}}{\lambda} \xi(s) \right|  \left\| \boldsymbol{\epsilon}  \right\|_{L^2_{x} l^2} \right)
 + O \left(  \left|\frac{\lambda_{s}}{\lambda} \right|  \left\| \boldsymbol{\epsilon}   \right\|_{L^2_{x} l^2} \right) \\
& 	+\sum_{j\in\mathbb{Z}_N} \left\langle i (\gamma_{j})_s    {\epsilon}_j, {\chi}_0 \right\rangle_{L^2_{x}}
+\sum_{j\in\mathbb{Z}_N} \left\langle i {\epsilon}_j ,{\chi}_0 \right\rangle_{L_x^2}
-\sum_{j\in\mathbb{Z}_N} \left\langle i \frac{x_{s}}{\lambda}\cdot \xi(s)  {\epsilon}_j, {\chi}_0 \right\rangle_{L^2_{x}}
-\sum_{j\in\mathbb{Z}_N} \left\langle i |\xi(s)|^{2} \epsilon_j, {\chi}_0 \right\rangle_{L^2_{x}}\\
& 	+ O\left( \left|\frac{x_{s}}{\lambda} + 2 \xi(s)  \right|  \left\| \boldsymbol{\epsilon} \right\|_{L^2_{x} l^2} \right)
 + O\left( \left\| \boldsymbol{\epsilon}  \right\|_{L^2_{x} l^2}^{2}  \left(\| Q \|_{L_x^{\infty}}^{3} +  \left\| \boldsymbol{\epsilon}  \right\|_{L_x^{\infty}}^{3} \right) \right) = 0,
	\endaligned
\end{equation}
and
\begin{align}\label{f4.39}
	& - \frac{N}{2} \left((\xi_{l })_s(s) - \frac{\lambda_{s}}{\lambda} \xi_l (s) \right) \| Q \|_{L^2_{x}}^{2}
+ O \left( \left|\xi_{s} - \frac{\lambda_{s}}{\lambda} \xi(s) \right|  \left\| \boldsymbol{\epsilon}  \right\|_{L^2_{x} l^2} \right)
 + O \left( \left|\frac{\lambda_{s}}{\lambda} \right|  \left\| \boldsymbol{\epsilon}  \right\|_{L^2_{x} l^2} \right) \\\nonumber
& 	+\sum_{j\in\mathbb{Z}_N} \left\langle  (\gamma_{j})_s   \epsilon_j, Q_{x_l } \right\rangle_{L^2_{x}}
+\sum_{j\in\mathbb{Z}_N} \left\langle \epsilon_j ,Q_{x_l } \right\rangle_{L_x^2}
-\sum_{j\in\mathbb{Z}_N} \left\langle  \frac{x_{s}}{\lambda}\cdot \xi(s) \epsilon_j, Q_{x_l } \right\rangle_{L^2_{x}}
-\sum_{j\in\mathbb{Z}_N}  \left\langle  |\xi(s)|^{2} \epsilon_j, Q_{x_l } \right\rangle_{L^2_{x}}\\\nonumber
& 	+ O \left( \left|\frac{x_{s}}{\lambda} + 2 \xi(s) \right|  \| \boldsymbol{\epsilon} \|_{L^2_{x} l^2} \right)
+ O \left( \left\|\boldsymbol{\epsilon}(s)  \right\|_{L^2_{x} l^2}^{2}\|Q\|_{L_x^{\infty}}
+ \left\| \boldsymbol{\epsilon}(s)  \right\|_{L^2_{x} l^2} \left\| \boldsymbol{\epsilon}(s)  \right\|_{L^{4}_x l^2} \left\| \mathbf{\tilde{u}}(s)  \right\|_{L^{4}_x l^2} \right) = 0,  \ l = 1, 2,
\end{align}
and
\begin{equation}\label{f4.42}
	\aligned
	& N \left(\frac{(x_l )_s}{\lambda} + 2 (\xi_l )_s(s) \right) \| Q_{x_l } \|_{L^2_{x}}^{2}
-  \left\langle \boldsymbol{\epsilon}_{2},  L_{-} \mathbf{Q}_{x_l } \right\rangle_{L^2_{x} l^2}
+ O \left( \left|\xi_{s} - \frac{\lambda_{s}}{\lambda} \xi(s) \right|  \left\| \boldsymbol{\epsilon}  \right\|_{L^2_{x} l^2} \right)
 + O \left(  \left|\frac{\lambda_{s}}{\lambda} \right|  \left\| \boldsymbol{\epsilon}  \right\|_{L^2_{x} l^2} \right) \\
& 	+\sum_{j\in\mathbb{Z}_N} \left\langle i (\gamma_{j})_s   \epsilon_j, Q_{x_l } \right\rangle_{L^2_{x}}
+\sum_{j\in\mathbb{Z}_N} \left\langle i \epsilon_j ,Q_{x_l } \right\rangle_{L_x^2}
-\sum_{j\in\mathbb{Z}_N} \left\langle i \frac{x_{s}}{\lambda}\cdot \xi(s) \epsilon_j, Q_{x_l } \right\rangle_{L^2_{x}}
-\sum_{j\in\mathbb{Z}_N}  \left\langle i |\xi(s)|^{2} \epsilon_j, Q_{x_l } \right\rangle_{L^2_{x}}\\
& 	+ O \left( \left|\frac{x_{s}}{\lambda} + 2 \xi(s)  \right|  \left\| \boldsymbol{\epsilon}  \right\|_{L^2_{x} l^2} \right)
+ O \left( \left\|\boldsymbol{\epsilon}(s)  \right\|_{L^2_x l^2}^{2} \|Q\|_{L_x^{\infty}}
+ \left\| \boldsymbol{\epsilon}(s)  \right\|_{L^2_x l^2}  \left\| \boldsymbol{\epsilon}(s)  \right\|_{L^4_x l^2} \left\| \mathbf{\tilde{u}}(s)  \right\|_{L^4_x l^2} \right) = 0, \ l = 1,2 .
	\endaligned
\end{equation}
On the other hand, by changing variables and applying Lemma \ref{lemma23}, for any $a\geq 0$, we have 
\begin{align}\label{wsz1}
\int_{a}^{a + 1} \left\| \mathbf{\tilde{u}}(s)  \right\|_{ L_x^4l^2}^4  \, \mathrm{d} s
=\int_{s^{-1}(a)}^{s^{-1}(a+1)} \left\|\mathbf{u}(t) \right\|_{ L_x^4l^2}^4 \, \mathrm{ d} t\lesssim 1. 
\end{align}
From \eqref{f4.25}, we further have
\begin{align}\label{wsz2}
	\int_{a}^{a + 1}  \left\| \boldsymbol{\epsilon}(s)  \right\|_{ L_x^4l^2}^4 \,\mathrm{d}s \lesssim 1.
\end{align}
By some algebraic manipulation, 
$\eqref{wsz1}$, $\eqref{wsz2}$ and \eqref{f4.36}-\eqref{f4.42} imply that
\begin{equation}\label{f4.45}
	\int_{a}^{a + 1}  \left|\frac{\lambda_{s}}{\lambda} \right| \, \mathrm{d} s
\lesssim \int_{a}^{a + 1}  \left\| \boldsymbol{\epsilon}(s)  \right\|_{L^2_{x} l^2} \,  \mathrm{d} s.
\end{equation}
Obviously, \eqref{f4.45} implies that
\begin{align}\label{eqilam}
	\min_{s\in [a,a+1]}\lambda(s) \sim \max_{s\in [a,a+1]}\lambda(s).
\end{align}
Next we choose $t_0\in  \left[s^{-1}(a),s^{-1}(a+1) \right]$ so that
$$ \left\|\boldsymbol{\epsilon}(t_0) \right\|_{L^2_x l^2}
=\min_{t\in  \left[s^{-1}(a),s^{-1}(a+1) \right]} \left\|\boldsymbol{\epsilon}(t) \right\|_{L^2_x l^2}
=\min_{s\in [a,a+1]} \left\|\boldsymbol{\epsilon}(s) \right\|_{L^2_x l^2}$$
and define
\begin{align}\label{begin1}
\tilde{\epsilon}_j(t)=u_j(t)-e^{i(t-t_0)\Delta} \left(e^{-i\gamma_j(t_0)}e^{-ix\cdot\xi(t_0)}\frac{1}{\lambda(t_0)}Q \left(\frac{x-x(t_0)}{\lambda(t_0)} \right) \right)
=u_j(t)-\tilde{Q}(t),\ \forall \,  j\in\mathbb{Z}_N. 
\end{align}
By the Strichartz estimate, $\forall \,  J\subset  \left[s^{-1}(a),s^{-1}(a+1) \right]$,
\begin{align}\label{boot1}
\aligned
& \left\|\boldsymbol{\tilde{\epsilon}}(t) \right\|_{L_t^{\infty}L_x^2l^2\cap L_{t,x}^4l^2 \left(J\times\mathbb{R}^2\times\mathbb{Z}_N \right)}
\\
 \lesssim&  \left\|\boldsymbol{\epsilon}(t_0) \right\|_{L^2_x l^2}
+ \left\| \boldsymbol{\tilde{\epsilon}}(t)  \right\|_{L_{t,x}^{4}l^2 \left(J \times \mathbb{R}^2\times\mathbb{Z}_N \right)}
 \left\| \frac{1}{\lambda(t_0)}Q \left(\frac{x-x(t_0)}{\lambda(t_0)} \right)  \right\|_{L_{t,x}^{4}  \left(J \times \mathbb{R}^2 \right)}^{2}
+  \left\| \boldsymbol{\epsilon}  \right\|_{L_{t,x}^{4}l^2 \left(J \times \mathbb{R}^2\times\mathbb{Z}_N \right)}^3.
\endaligned
\end{align}
On the other hand, \eqref{eqilam} implies that $$ \left\| \frac{1}{\lambda(t_0)}Q \left(\frac{x-x(t_0)}{\lambda(t_0)} \right)  \right\|_{L_{t,x}^{4} \left(J \times \mathbb{R}^2 \right)} \lesssim 1. $$
Using a standard argument, if $\eta_{\ast}$ is sufficient small, then
\begin{align}\label{conc}
\left\|\boldsymbol{\tilde{\epsilon}}(t) \right\|_{L_t^{\infty}L_x^2l^2\cap L_{t,x}^4l^2 \left( \left[s^{-1}(a),s^{-1}(a+1) \right]\times\mathbb{R}^2\times\mathbb{Z}_N \right)}
\lesssim \left\|\boldsymbol{\epsilon}(t_0) \right\|_{L^2_x l^2}.
\end{align}
By the definition of $\boldsymbol{\epsilon}(t)$, \eqref{conc} implies
\begin{align*}
\max_{s\in [a,a+1]}  \left\|\boldsymbol{\epsilon}(s) \right\|_{L^2_x l^2}
= \max_{t\in  \left[s^{-1}(a),s^{-1}(a+1) \right]} \left\|\boldsymbol{\epsilon}(t) \right\|_{L^2_x l^2}
\sim\min_{t\in  \left[s^{-1}(a),s^{-1}(a+1) \right]} \left\|\boldsymbol{\epsilon}(t) \right\|_{L^2_x l^2}
=\min_{s\in [a,a+1]} \left\|\boldsymbol{\epsilon}(s) \right\|_{L^2_x l^2}.
\end{align*}
Moreover, recalling \eqref{f4.25} and  setting
$$\check{\epsilon}_j(s)=e^{i \gamma_j(s)} e^{ix \cdot\xi(s)} \lambda(s) \tilde{\epsilon}_j(t(s), \lambda(s ) x + x(s)),\  \check{Q}(s)=e^{i \gamma_j(s)} e^{ix \cdot\xi(s)} \lambda(s) \tilde{Q}(t(s), \lambda(s ) x + x(s)),\ 
\forall \, j\in\mathbb{Z}_N,$$
 we have
\begin{align}
\aligned
&  	\left\langle  \left|\boldsymbol{\epsilon} \right|^{2}  \left(  \left|\mathbf{\tilde{u}}(s) \right| + |Q| \right), |f| \right\rangle_{L^2_{x}}    \\
\lesssim& 	\left\langle \left|\boldsymbol{\epsilon} \right|^{2} \left(  \left|\boldsymbol{\check{\epsilon}}(s) \right| + \left|\boldsymbol{\check{\epsilon}}(s) \right|+ |Q| \right), |f| \right\rangle_{L^2_{x}} \\
	\lesssim& \left\|\boldsymbol{\epsilon}(s) \right\|^2_{L^2_x l^2}
+\left\|\boldsymbol{\epsilon}(s) \right\|^2_{L^2_x l^2}
\left\|\mathbf{\check{Q}}(s) \right\|_{L^{\infty}_x l^2}
\\
&   +\left\|\boldsymbol{\epsilon}(s) \right\|_{L^2_x l^2}
\left\|\boldsymbol{\epsilon}(s) \right\|_{L^4_x l^2} \left\|\boldsymbol{\check{\epsilon}}(s) \right\|_{L^{4}_x l^2},
\quad \forall\,  f\in\{\chi_0, Q, Q_{x_1}, Q_{x_2}\}.
\endaligned
\end{align}
Thus, the terms $O \left( \left\|\boldsymbol{\epsilon}(s)  \right\|_{L^2_{x} l^2}^{2} \|Q\|_{L^{\infty}_x }
+ \left\| \boldsymbol{\epsilon}(s)  \right\|_{L^2_{x} l^2} \left\| \boldsymbol{\epsilon}(s) \right\|_{L^{4}_x l^2} \left\| \mathbf{\tilde{u}}(s) \right\|_{L^{4}_x l^2} \right)$ in \eqref{f4.36}-\eqref{f4.42} can be replaced by
$$O\left( \left\|\boldsymbol{\epsilon}(s) \right\|^2_{L^2_x l^2}
+ \left\|\boldsymbol{\epsilon}(s) \right\|^2_{L^2_x l^2} \left\|\mathbf{\check{Q}}(s) \right\|_{L^{\infty}_x l^2} + \left\|\boldsymbol{\epsilon}(s) \right\|_{L^2_x l^2}
\left\|\boldsymbol{\epsilon}(s) \right\|_{L^4_x l^2} \left\|\boldsymbol{\check{\epsilon}}(s) \right\|_{L^{4}_x l^2}\right). $$
Changing variable, we have
\begin{align*}
\left\|\mathbf{\check{Q}}(s) \right\|_{L_s^2L_x^{\infty}l^2 \left( [a,a+1]\times\mathbb{R}^2\times\mathbb{Z}_N \right)}
\lesssim \int_{ \left[s^{-1}(a),s^{-1}(a+1) \right]}\frac{1}{\lambda(t)^2}  \,  \mathrm{d} t\lesssim 1,
\end{align*}
and 
\begin{align}\label{end1}
\left\|\boldsymbol{\check{\epsilon}}(s) \right\|_{L_s^4L_x^{4}l^2 \left([a,a+1]\times\mathbb{R}^2\times\mathbb{Z}_N \right)}
\lesssim \left\|\boldsymbol{\tilde{\epsilon}}(t) \right\|_{L_t^4L_x^{4}l^2 \left( \left[s^{-1}(a),s^{-1}(a+1) \right]\times\mathbb{R}^2\times\mathbb{Z}_N \right)}
\lesssim \min_{s\in[a,a+1]} \left\|\boldsymbol{\epsilon}(s) \right\|_{L^2_x l^2}.
\end{align}
Doing some algebra with \eqref{f4.36}-\eqref{f4.42} again, we can prove that for any $a\geq 0$,
\begin{equation}\label{f4.52}
	\int_{a}^{a + 1} \left|\sum_{j\in\mathbb{Z}_N}\left((\gamma_{j})_s + 1 - \frac{x_{s}}{\lambda} \cdot\xi(s) - |\xi(s)|^{2} \right) \right| \, \mathrm{d} s
\lesssim \int_{a}^{a + 1}  \left\| \boldsymbol{\epsilon}(s)  \right\|_{L^2_{x} l^2}^{2} \, \mathrm{d} s,
\end{equation}
\begin{equation}\label{f4.53}
	\int_{a}^{a + 1}  \left|(\gamma_{j})_s + 1 - \frac{x_{s}}{\lambda} \cdot\xi(s) - |\xi(s)|^{2}  \right|  \, \mathrm{d} s
 \lesssim \int_{a}^{a + 1}  \left\| \boldsymbol{\epsilon}(s)  \right\|_{L^2_{x} l^2} \, \mathrm{d} s, \quad\forall \ j\in\Z_N,
\end{equation}
\begin{equation}\label{f4.54}
	\int_{a}^{a + 1}  \left|\xi_{s} - \frac{\lambda_{s}}{\lambda} \xi(s) \right| \, \mathrm{d} s
\lesssim \int_{a}^{a + 1}  \left\| \boldsymbol{\epsilon}(s)  \right\|_{L^2_{x} l^2}^{2} \, \mathrm{d} s,
\end{equation}
and
\begin{equation}\label{f4.55}
	\int_{a}^{a + 1}  \left|\frac{x_{s}}{\lambda} + 2 \xi (s) \right| \, \mathrm{d} s
 \lesssim \int_{a}^{a + 1}  \left\| \boldsymbol{\epsilon}(s)  \right\|_{L^2_{x} l^2} \, \mathrm{d} s.
\end{equation}
If $\mathbf{u}(t)$ is not in $H^1_xl^2$, then \eqref{f4.45} and \eqref{f4.52}-\eqref{f4.55} can be proven using a standard approximation process; we refer to \cite{D2,D1} for details.
\begin{remark}
We also proved \begin{equation}\label{f4.56}
		\sup_{s\in [a,a+1]} \left\|\boldsymbol{\epsilon}(s) \right\|_{L^2_x l^2}
\sim\inf_{s\in [a,a+1]} \left\|\boldsymbol{\epsilon}(s) \right\|_{L^2_x l^2}.
	\end{equation}
The implicit constants in \eqref{f4.45} and \eqref{f4.52}-\eqref{f4.56} are independent of $\eta_{\ast}$ and $a$.
\end{remark}

\section{A priori estimate for $\|\boldsymbol{\epsilon}\|_{L^2_x l^2}$}\label{sec:aprieps}
This section is devoted to establishing a universal control  of the reminder $\boldsymbol{\epsilon}$'s $L^2_x l^2$ norm. In the first subsection, under the assumption that $\mathbf{u}_0\in H^{1}_{x} l^2$, we 
derive a pointwise estimate for $\|\boldsymbol{\epsilon}\|_{H^{1}_{x} l^2}$  via using the spectral properties derived in the previous section. Since we are actually working in the critical space $L^2_x l^2$, it is necessary to establish corresponding estimates for the low-frequency truncated reminder $P_{\leq L}\boldsymbol{\epsilon}$ for some sufficiently large $L$. However, $P_{\leq L}\boldsymbol{\epsilon}$ is not a solution to the nonlinear system $\eqref{1.1}$ and its energy $E(P_{\leq L}\mathbf{u})$ is no longer conserved, so in the second subsection we derive a non-priori estimate -- long time Strichartz estimate and then  in the third subsection, we establish an almost conservation law for $E(P_{\leq L} \mathbf{u})$. In the final subsection, we give an upper bound for the mass of the reminder term $\|\boldsymbol{\epsilon}\|_{L^2_x l^2}$ and the  truncated kinetic energy $\|\nabla P_{\leq L}\boldsymbol{\epsilon}\|_{L^2_x l^2}$ up to some $L^2_x l^2$ invariant symmetries.

\subsection{A uniform estimate for $\boldsymbol{\epsilon}$ in the inhomogeneous space $H^{1}_{x} l^2$}

Following  \cite{D2}, \cite{D1} and \cite{MR}, the spectral properties in Theorem \ref{positive} and the decomposition in Theorem \ref{modulation}  yield a priori bound in the inhomogeneous space $H^{1}_{x} l^2$:
\begin{theorem}\label{bouenergy}
	Suppose that  $\boldsymbol{\epsilon}(x)\in H^{1}_{x} l^2(\mathbb{R}^2\times\mathbb{Z}_N)$,  $\boldsymbol{\epsilon} \perp \left\{\boldsymbol{\chi}_0, i\boldsymbol{\chi}_{0,1},\cdots,i\boldsymbol{\chi}_{0,N},\mathbf{Q}_{x_1}, \mathbf{Q}_{x_2}, i\mathbf{Q}_{x_1}, i\mathbf{Q}_{x_2} \right\}$,
	$ \left\| \boldsymbol{\epsilon}(x)  \right\|_{L^2_{x} l^2} \ll 1$, and $ \left\| \mathbf{Q} + \boldsymbol{\epsilon}  \right\|_{L^2_{x} l^2} =  \left\| \mathbf{Q}  \right\|_{L^2_{x} l^2}$. Then
	\begin{equation*} 
		E \left(\mathbf{Q} + \boldsymbol{\epsilon} \right) \gtrsim  \left\| \boldsymbol{\epsilon}(x )  \right\|_{H^{1}_x  l^2 \left(\mathbb{R}^2 \times\mathbb{Z}_N \right)}^{2} .
	\end{equation*}
\end{theorem}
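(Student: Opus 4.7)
Write $\boldsymbol{\epsilon} = \boldsymbol{\epsilon}_1 + i\boldsymbol{\epsilon}_2$ with real $\boldsymbol{\epsilon}_1, \boldsymbol{\epsilon}_2$. The plan is to Taylor-expand the Lagrangian $S(\mathbf{u}) := E(\mathbf{u}) + \tfrac12 M(\mathbf{u})$ around $\mathbf{Q}$. Since the Euler-Lagrange equation $\Delta\mathbf{Q} - \mathbf{Q} = -\mathbf{F}(\mathbf{Q})$ gives $S'(\mathbf{Q}) = 0$, and a direct computation identifies the Hessian at $\mathbf{Q}$ as $\mathrm{diag}(L_+, L_-)$ in the (real, imaginary) splitting, the mass constraint $M(\mathbf{Q}+\boldsymbol{\epsilon}) = M(\mathbf{Q})$ gives
\begin{equation*}
E(\mathbf{Q}+\boldsymbol{\epsilon}) - E(\mathbf{Q}) = S(\mathbf{Q}+\boldsymbol{\epsilon}) - S(\mathbf{Q}) = \tfrac12 \langle L_+\boldsymbol{\epsilon}_1, \boldsymbol{\epsilon}_1\rangle_{L^2_x l^2} + \tfrac12 \langle L_-\boldsymbol{\epsilon}_2, \boldsymbol{\epsilon}_2\rangle_{L^2_x l^2} + R(\boldsymbol{\epsilon}),
\end{equation*}
where the cubic-and-quartic remainder is bounded by $|R(\boldsymbol{\epsilon})| \lesssim \|\boldsymbol{\epsilon}\|_{L^2_x l^2}\|\boldsymbol{\epsilon}\|_{H^1_x l^2}^2$ via the planar Gagliardo-Nirenberg inequalities $\|u\|_{L^4}^4 \lesssim \|u\|_{L^2}^2\|\nabla u\|_{L^2}^2$ and $\|u\|_{L^3}^3 \lesssim \|u\|_{L^2}^2 \|\nabla u\|_{L^2}$. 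A standard Pohozaev-type identity (multiplying the Euler-Lagrange equation by $Q$ and by $x\cdot\nabla Q$) yields $\|\nabla Q\|_{L^2}^2 = \|Q\|_{L^2}^2$ and $\|Q\|_{L^4}^4 = \tfrac{2}{2N-1}\|Q\|_{L^2}^2$, which together force $E(\mathbf{Q}) = 0$.

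The orthogonality hypotheses descend to the real and imaginary parts: $\boldsymbol{\epsilon}_1 \perp \{\boldsymbol{\chi}_0, \mathbf{Q}_{x_1}, \mathbf{Q}_{x_2}\}$, and for each $j$, $\langle (\epsilon_2)_j, \chi_0\rangle_{L_x^2} = 0$, plus $\sum_j \langle (\epsilon_2)_j, Q_{x_l}\rangle_{L_x^2} = 0$. For the $L_+$ piece, Theorem \ref{positive} directly supplies the $L^2$-coercivity $\langle L_+\boldsymbol{\epsilon}_1, \boldsymbol{\epsilon}_1\rangle \geq c_0\|\boldsymbol{\epsilon}_1\|_{L^2_x l^2}^2$. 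Coupled with the trivial estimate $\langle L_+\boldsymbol{\epsilon}_1, \boldsymbol{\epsilon}_1\rangle \geq \|\boldsymbol{\epsilon}_1\|_{H^1_x l^2}^2 - C\|Q\|_{L^\infty_x}^2 \|\boldsymbol{\epsilon}_1\|_{L^2_x l^2}^2$ obtained by expanding $L_+$, a convex combination of these two bounds upgrades coercivity to the $H^1$-level: $\langle L_+\boldsymbol{\epsilon}_1, \boldsymbol{\epsilon}_1\rangle \gtrsim \|\boldsymbol{\epsilon}_1\|_{H^1_x l^2}^2$.

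The $L_-$ part is the main obstacle, because $L_-$ acts diagonally as $L_{0,-} = 1 - \Delta - Q_0^2$ (using $(2N-1)Q^2 = Q_0^2$), whose kernel is spanned by $Q_0$, whereas the available orthogonality for $(\epsilon_2)_j$ is against $\chi_0$ (the negative eigenvector of $L_{0,+}$) and not against $Q_0$. To bridge this mismatch, for each $j$ I would decompose $(\epsilon_2)_j = \beta_j Q + w_j$ with $\langle w_j, Q\rangle_{L_x^2} = 0$. Since $L_{0,-} Q = 0$, the quadratic form sees only $w_j$, and the spectral gap of $L_{0,-}$ on $\{Q_0\}^\perp$ yields $\langle L_{0,-}(\epsilon_2)_j, (\epsilon_2)_j\rangle = \langle L_{0,-} w_j, w_j\rangle \gtrsim \|w_j\|_{H^1_x}^2$. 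The constraint $\langle (\epsilon_2)_j, \chi_0\rangle = 0$ gives $\beta_j \langle Q, \chi_0\rangle + \langle w_j, \chi_0\rangle = 0$; the non-degeneracy $\langle Q, \chi_0\rangle > 0$ (both functions are strictly positive) then yields $|\beta_j| \lesssim \|w_j\|_{L^2_x}$, so that $\|(\epsilon_2)_j\|_{H^1_x}^2 \lesssim \|w_j\|_{H^1_x}^2 \lesssim \langle L_{0,-}(\epsilon_2)_j, (\epsilon_2)_j\rangle$. Summing in $j$ produces $\langle L_-\boldsymbol{\epsilon}_2, \boldsymbol{\epsilon}_2\rangle \gtrsim \|\boldsymbol{\epsilon}_2\|_{H^1_x l^2}^2$. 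Assembling the three estimates and using $\|\boldsymbol{\epsilon}\|_{L^2_x l^2} \ll 1$ to absorb $R(\boldsymbol{\epsilon})$, we conclude $E(\mathbf{Q}+\boldsymbol{\epsilon}) \gtrsim \|\boldsymbol{\epsilon}\|_{H^1_x l^2}^2 (1 - O(\|\boldsymbol{\epsilon}\|_{L^2_x l^2})) \gtrsim \|\boldsymbol{\epsilon}\|_{H^1_x l^2}^2$.
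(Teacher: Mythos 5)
Your proof is correct, but the treatment of the imaginary part is a genuinely different route from the paper's. Both you and the paper reduce to the quadratic form $\tfrac12\langle L_+\boldsymbol{\epsilon}_1,\boldsymbol{\epsilon}_1\rangle + \tfrac12\langle L_-\boldsymbol{\epsilon}_2,\boldsymbol{\epsilon}_2\rangle$ after using $E(\mathbf{Q})=0$ and the mass constraint to kill the linear terms, and both handle the cubic/quartic remainder by Gagliardo--Nirenberg with $\|\boldsymbol{\epsilon}\|_{L^2_xl^2}\ll1$. You also treat the real part the same way as the paper: Theorem \ref{positive} for $L^2$ coercivity, interpolated against the trivial form bound to reach $H^1$ coercivity.

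The divergence is on the imaginary part. You exploit the diagonal structure $L_-=\mathrm{diag}(L_{0,-},\dots,L_{0,-})$ together with $L_{0,-}Q_0=0$: you split $(\epsilon_2)_j=\beta_j Q+w_j^\perp$ with $w_j^\perp\perp Q$, invoke the spectral gap of $L_{0,-}$ on $\{Q_0\}^\perp$ to get $\langle L_{0,-}w_j^\perp,w_j^\perp\rangle\gtrsim\|w_j^\perp\|_{H^1_x}^2$, and then control $\beta_j$ through the component-wise $\chi_0$-orthogonality using $\langle Q,\chi_0\rangle>0$. This is clean and uses the natural kernel of $L_-$. Be aware, though, that Theorem \ref{L02spectral} as stated in the paper only gives $L_{0,-}\ge0$; the fact that $0$ is a simple isolated eigenvalue with eigenvector $Q_0$, hence the $L^2$ coercivity on $\{Q_0\}^\perp$, is a standard Weinstein-type spectral fact but not among the paper's explicitly stated tools, so a reader working only from this paper would want a citation. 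The paper avoids this entirely: it never touches $\ker L_-$ and instead rewrites the $\boldsymbol{\epsilon}_2$-quadratic form in two ways --- once as $\langle L_+\boldsymbol{\epsilon}_2,\boldsymbol{\epsilon}_2\rangle$ plus correction terms (valid because summing the component-wise $\chi_0$-orthogonalities gives $\boldsymbol{\epsilon}_2\perp\{\boldsymbol{\chi}_0,\mathbf{Q}_{x_1},\mathbf{Q}_{x_2}\}$, so Theorem \ref{positive} applies), and once as $\sum_j\langle L_{0,+}(\epsilon_2)_j,(\epsilon_2)_j\rangle$ plus a manifestly positive term (valid directly by Theorem \ref{L01spectral}(4)) --- and then takes a convex combination of the two lower bounds to absorb the bad term. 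The paper's version is self-contained within its stated spectral facts about $L_+$ and $L_{0,+}$; yours is shorter and arguably more transparent but draws on additional spectral information about $L_{0,-}$.
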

\begin{proof}
	Let $\boldsymbol{\epsilon}=\mathbf{v}+i\mathbf{w}$.  Decomposing the energy and integrating by parts, 
    we obtain 
	\begin{align}\label{f4.58}
		\aligned
		E \left(\mathbf{Q} + \boldsymbol{\epsilon} \right)
		= & \frac{N}{2} \int  \left(Q_{x_1}^{2}+Q_{x_2}^2 \right)  \mathrm{d} x+ \sum_{j\in\mathbb{Z}_N}  \int \nabla v_j(x)\cdot\nabla Q(x) \,  \mathrm{d} x
		+ \frac{1}{2} \sum_{j\in\mathbb{Z}_N} \| \nabla v_{j} \|_{L^2_{x}}^{2}\\
		& + \frac{1}{2}\sum_{j\in\mathbb{Z}_N}\| \nabla w_{j} \|_{L^2_{x}}^{2}- \frac{(2N-1)N}{4} \int Q(x)^{4} \,   \mathrm{d} x
		- \sum_{j\in\mathbb{Z}_N}\int (2N-1)Q(x)^3 v_j(x) \, \mathrm{d} x
		\\
		& - \frac{N+3}{2} \sum_{j\in\mathbb{Z}_N}\int Q(x)^{2}(v_j(x) )^2  \, \mathrm{d} x - 2  \int \sum_{j\in\mathbb{Z}_N}\sum_{\substack{k\in\mathbb{Z}_N\\k\neq j}}Q(x)^2v_kv_j  \, \mathrm{d} x \\
		& - \frac{N+1}{2} \sum_{j\in\mathbb{Z}_N}\int Q(x)^{2}(w_j)^2 \,  \mathrm{d} x
		- \int O \left( \left|\boldsymbol{\epsilon}(x) \right|^{3} Q(x)  +  \left|\boldsymbol{\epsilon}(x) \right|^{4} \right) \,  \mathrm{d} x.
		\endaligned
	\end{align}
	Since $E \left(\mathbf{Q} \right) = 0$,
	\begin{equation}\label{f4.59}
		\frac{N}{2} \int |\nabla Q|^2  \, \mathrm{d} x - \frac{(2N-1)N}{4} \int Q^{4} \,   \mathrm{d} x = 0.
	\end{equation}
	Next, integrating by parts,
	\begin{align}\label{f4.60}
    \aligned 
		& \sum_{j\in\mathbb{Z}_N}\int \nabla Q(x)\cdot \nabla v_j(x) \,  \mathrm{d} x
		- (2N-1)\sum_{j\in\mathbb{Z}_N}\int Q(x)^{3}v_j(x) \,  \mathrm{d} x \\
		=& - \sum_{j\in\mathbb{Z}_N}\int  \left(\Delta Q(x) + (2N-1)Q(x)^{3} \right) v_j(x)  \, \mathrm{d} x  = \sum_{j\in\mathbb{Z}_N}\int Q(x) v_j(x)  \, \mathrm{d} x.
		\endaligned 
	\end{align}
	Using the fact that $\left\| \mathbf{Q} + \boldsymbol{\epsilon}  \right\|_{L^2_{x} l^2} =  \left\| \mathbf{Q}  \right\|_{L^2_{x} l^2}$, we have
	\begin{align}\label{f4.61}
    \aligned 
		& \frac{1}{2}  \left\| \mathbf{Q}  \right\|_{L^2_{x} l^2}^{2} - \frac{1}{2}  \left\| \mathbf{Q} + \boldsymbol{\epsilon}  \right\|_{L^2_{x} l^2}^{2}
		+ \frac{1}{2}  \left\| \boldsymbol{\epsilon}  \right\|_{L^2_{x} l^2}^{2} \\
		=& -\langle \mathbf{Q}, \boldsymbol{\epsilon}\rangle_{L^2_{x} l^2} =  -\sum_{j\in\mathbb{Z}_N} \int Q(x)v_j(x) \,  \mathrm{d} x
		= \frac{1}{2}  \left\| \boldsymbol{\epsilon}  \right\|_{L^2_{x} l^2}^{2}
		=\frac{1}{2}\sum_{j\in\mathbb{Z}_N}\|v_j\|^2_{L_x^2}+\frac{1}{2}\sum_{j\in\mathbb{Z}_N}\|w_j\|^2_{L_x^2}. 
        \endaligned 
	\end{align}
	Therefore,
	\begin{equation}\label{energybou}
		\aligned
		E \left(\mathbf{Q} + \boldsymbol{\epsilon} \right) = &   \frac{1}{2} \sum_{j\in\mathbb{Z}_N}\| \nabla v_{j} \|_{L^2_{x}}^{2}
		+ \frac{1}{2}\sum_{j\in\mathbb{Z}_N}\| \nabla w_{j} \|_{L^2_{x}}^{2} \\
        & + \frac{1}{2}\sum_{j\in\mathbb{Z}_N}\|v_j\|_{L_x^2}^2
		+\frac{1}{2}\sum_{j\in\mathbb{Z}_N}\|w_j\|_{L_x^2}^2
- \frac{N+3}{2} \sum_{j\in\mathbb{Z}_N}\int Q(x)^{2}(v_j)^2  \,  \mathrm{d} x \\
		& - 2  \int \sum_{j\in\mathbb{Z}_N}\sum_{\substack{k\in\mathbb{Z}_N\\k\neq j}}Q(x)^2v_kv_j \,  \mathrm{d} x
		- \frac{N+1}{2} \sum_{j\in\mathbb{Z}_N}\int Q(x)^{2}(w_j)^2 \, \mathrm{ d} x
		\\
        & - \int O \left(  \left|\boldsymbol{\epsilon}(x) \right|^{3} Q +  \left|\boldsymbol{\epsilon}(x) \right|^{4} \right) \,  \mathrm{d} x.
		\endaligned
	\end{equation}
	Recalling \eqref{L+},
	\begin{align*} 
		& \frac{1}{2} \sum_{j\in\mathbb{Z}_N}\| \nabla v_{j} \|_{L^2_{x}}^{2} + \frac{1}{2}\sum_{j\in\mathbb{Z}_N}\|v_j\|^2_{L_x^2}
		- \frac{N+3}{2} \sum_{j\in\mathbb{Z}_N}\int Q(x)^{2}(v_j)^2  \,  \mathrm{d} x
		- 2 \int \sum_{j\in\mathbb{Z}_N}\sum_{\substack{k\in\mathbb{Z}_N\\ k\neq j}}Q(x)^2v_kv_j \, \mathrm{d} x
		\\
         =& \left\langle L_{+}\mathbf{v},\mathbf{v} \right\rangle_{L^2_x l^2}.
	\end{align*}
	On one hand, since $\mathbf{v}\perp \left\{\boldsymbol{\chi}_0, \mathbf{Q}_{x_1}, \mathbf{Q}_{x_2} \right\} $, by Theorem \ref{positive},
	\begin{align}\label{bouqw1}
    \aligned 
		& \frac{1}{2} \sum_{j\in\mathbb{Z}_N}\| \nabla v_{j} \|_{L^2_{x}}^{2}
		+ \frac{1}{2}\sum_{j\in\mathbb{Z}_N}\|v_j\|^2_{L_x^2}- \frac{N+3}{2} \sum_{j\in\mathbb{Z}_N}\int Q(x)^{2} (v_j)^2 \,   \mathrm{ d} x
		- 2 \int \sum_{j\in\mathbb{Z}_N}\sum_{\substack{k\in\mathbb{Z}_N\\k\neq j}}Q(x)^2v_kv_j \,   \mathrm{d} x \\
		=&\frac{1}{2} \left\langle L_{+}\mathbf{v},\mathbf{v} \right\rangle_{L^2_x l^2}
		\geq \frac{C_{N}}{2}  \left\|\mathbf{v} \right\|^2_{L^2_x l^2}.
        \endaligned 
	\end{align}
	On the other hand,
	\begin{align}\label{bouqw2}
    \aligned 
		\frac{1}{2} \sum_{j\in\mathbb{Z}_N}\| \nabla v_{j} \|_{L^2_{x}}^{2}
		& =\frac{1}{2} \left\langle L_{+}\mathbf{v},\mathbf{v} \right\rangle_{L^2_x l^2}
		-\frac{1}{2}\sum_{j\in\mathbb{Z}_N}\|v_j\|^2_{L_x^2}+ \frac{N+3}{2} \sum_{j\in\mathbb{Z}_N}\int Q(x)^{2}(v_j)^2 \,   \mathrm{d} x  
		\\
        & \leq C_{N,\|Q\|_{L^{\infty}}}  \left\langle L_{+}\mathbf{v},\mathbf{v} \right\rangle_{L^2_x l^2}.
        \endaligned 
	\end{align}
	Thus, by \eqref{bouqw1} and \eqref{bouqw2},
	\begin{align}\label{bouqw}
		& \frac{1}{2} \sum_{j\in\mathbb{Z}_N}\| \nabla v_{j} \|_{L^2_{x}}^{2}
		+ \frac{1}{2}\sum_{j\in\mathbb{Z}_N}\|v_j\|^2_{L_x^2}- \frac{N+3}{2} \sum_{j\in\mathbb{Z}_N}\int Q(x)^{2} (v_j)^2  \,  \mathrm{d} x
		- 2 \int \sum_{j\in\mathbb{Z}_N}\sum_{\substack{k\in\mathbb{Z}_N\\k\neq j}}Q(x)^2v_kv_j  \, \mathrm{d} x \notag\\
		=&\frac{1}{4}  \left\langle L_{+}\mathbf{v},\mathbf{v} \right\rangle_{L^2_x l^2}
		+\frac{1}{4} \left\langle L_{+}\mathbf{v},\mathbf{v} \right\rangle_{L^2_x l^2}\geq  \frac{C_N}{4} \left\|\mathbf{v} \right\|^2_{L^2_x l^2}
		+\frac{1}{8C_{N,\|Q\|_{L^{\infty}}}} \left\|\nabla\mathbf{v} \right\|^2_{L^2_x l^2} \gtrsim  \left\|\mathbf{v} \right\|^2_{H^{1}_{x} l^2}.
	\end{align}
	Similarly,
	\begin{align}\label{bouqwe1}
    \aligned
		&\hspace{2ex}\frac{1}{2} \sum_{j\in\mathbb{Z}_N}\| \nabla w_{j} \|_{L^2_{x}}^{2}
		+ \frac{1}{2}\sum_{j\in\mathbb{Z}_N}\|w_j\|^2_{L_x^2}- \frac{N+1}{2} \sum_{j\in\mathbb{Z}_N}\int Q(x)^{2}(w_j)^2  \,   \mathrm{d} x \\
		&=  \frac{1}{2} \sum_{j\in\mathbb{Z}_N}\| \nabla w_{j} \|_{L^2_{x}}^{2} + \frac{1}{2}\sum_{j\in\mathbb{Z}_N}\|w_j\|^2_{L_x^2}
		\\
        & \quad - \frac{N+3}{2} \sum_{j\in\mathbb{Z}_N}\int Q(x)^{2}(w_j)^2  \, \mathrm{d} x
		- 2 \int \sum_{j\in\mathbb{Z}_N}\sum_{\substack{k\in\mathbb{Z}_N\\k\neq j}}Q(x)^2w_kw_j \,  \mathrm{d} x  \\
		&\hspace{3ex} +\sum_{j\in\mathbb{Z}_N}\int Q(x)^{2}(w_j)^2  \, \mathrm{ d} x +
        2\int \sum_{j\in\mathbb{Z}_N}\sum_{\substack{k\in\mathbb{Z}_N\\k\neq j}}Q(x)^2w_kw_j  \, \mathrm{d} x  \\
		&\geq  \frac{C_N}{4} \left\|\mathbf{w} \right\|^2_{L^2_x l^2}
		+\frac{1}{8C_{N,\|Q\|_{L_x^{\infty}}}} \left\|\nabla\mathbf{w} \right\|^2_{L^2_x l^2}- (2N-3)\sum_{j\in\mathbb{Z}_N}\int Q(x)^{2}(w_j)^2  \,   \mathrm{d} x .
        \endaligned
	\end{align}	
	Since $ \left\langle w_j,\chi_0 \right\rangle_{L_x^2}=0,\  \forall \, j\in\mathbb{Z}_N$, using Theorem \ref{L01spectral},
	\begin{align}\label{bouqwe2}
		\aligned
		&\hspace{2ex}\frac{1}{2} \sum_{j\in\mathbb{Z}_N} \| \nabla w_{j} \|_{L^2_{x}}^{2}
		+ \frac{1}{2}\sum_{j\in\mathbb{Z}_N} \|w_j\|^2_{L_x^2}- \frac{N+1}{2} \sum_{j\in\mathbb{Z}_N}\int Q(x)^{2} (w_j)^2  \,  \mathrm{d} x \\
		&=  \frac{1}{2} \sum_{j\in\mathbb{Z}_N} \| \nabla w_{j} \|_{L^2_{x}}^{2} + \frac{1}{2}\sum_{j\in\mathbb{Z}_N} \|w_j\|^2_{L_x^2}
		\\
        &\quad  - \frac{6N-3}{2} \sum_{j\in\mathbb{Z}_N}\int Q(x)^{2}(w_j)^2   \, \mathrm{d} x +\frac{5N-4}{2}\sum_{j\in\mathbb{Z}_N} \int Q(x)^{2}(w_j)^2  \,  \mathrm{d} x \\
	&\gtrsim\sum_{j\in\mathbb{Z}_N} \left\langle L_{0,+}w_j, w_j \right\rangle_{L_x^2} +\sum_{j\in\mathbb{Z}_N}\int Q(x)^{2}(w_j)^2 \,\mathrm{d}x \gtrsim \sum_{j\in\mathbb{Z}_N}\int Q(x)^{2}(w_j)^2 \,\mathrm{d}x.
		\endaligned
	\end{align}
	Combining \eqref{bouqwe1} and \eqref{bouqwe2}, we  have
	\begin{align}\label{bouqwe}
		\hspace{2ex}\frac{1}{2} \sum_{j\in\mathbb{Z}_N}
		\| \nabla w_{j} \|_{L^2_{x}}^{2} + \frac{1}{2}\sum_{j\in\mathbb{Z}_N}\|w_j\|^2_{L_x^2}
		- \frac{N+1}{2} \sum_{j\in\mathbb{Z}_N}\int Q(x)^{2}(w_j)^2   \, \mathrm{d} x \gtrsim \left\|\mathbf{w} \right\|^2_{H^1_xl^2} .
	\end{align}
	Finally, by the interpolation and $ \left\| \boldsymbol{\epsilon}  \right\|_{L^2_{x} l^2} \ll 1$, we have 
	\begin{equation}\label{bouqwer1}
		\int \left|\boldsymbol{\epsilon}(t,x) \right|^{4}  \, \mathrm{d} x
		\lesssim \left\| \nabla\boldsymbol{\epsilon} \right\|_{L^2_x l^2}^{2} \left\| \boldsymbol{\epsilon}  \right\|_{L^2_{x} l^2}^{2}
		\ll \left\| \nabla\boldsymbol{\epsilon} \right\|_{L^2_x l^2}^{2},
	\end{equation}
	and
	\begin{equation}\label{bouqwer2}
		\int Q(x) \left|\boldsymbol{\epsilon}(t,x) \right|^{3}  \, \mathrm{d} x
		\lesssim  \left\| \boldsymbol{\epsilon}  \right\|_{L^2_{x} l^2}^{2}  \left\| \nabla\boldsymbol{\epsilon}  \right\|_{L^2_x l^2}
		\ll  \left\| \boldsymbol{\epsilon}  \right\|_{L^2_x l^2}^{2}+ \left\| \nabla\boldsymbol{\epsilon} \right\|_{L^2_x l^2}^{2} .
	\end{equation}
	Collecting \eqref{bouqw}, \eqref{bouqwe}, \eqref{bouqwer1} and \eqref{bouqwer2}, we  complete the proof 
	of Theorem \ref{bouenergy}.
\end{proof}
\begin{remark}\label{remark0}
	From the proof above, we see that if
	$$\boldsymbol{\epsilon}\perp \left\{i\boldsymbol{\chi}_{0,1},\cdots,i\boldsymbol{\chi}_{0,N},\boldsymbol{\chi}_0,i\mathbf{Q}_{x_1},i\mathbf{Q}_{x_2},\mathbf{Q}_{x_1},\mathbf{Q}_{x_2} \right\},
	\quad\boldsymbol{\epsilon}=\mathbf{v}+i\mathbf{w},$$
	then there exists a constant $\lambda_1>0$ such that
\begin{align}\label{key0}
		\aligned
		& 	\frac{1}{2} \sum_{j\in\mathbb{Z}_N} \left\| \nabla v_{j}  \right\|_{L^2_{x}}^{2}
		+ \frac{1}{2}\sum_{j\in\mathbb{Z}_N} \| \nabla w_{j} \|_{L^2_{x}}^{2} + \frac{1}{2}\sum_{j\in\mathbb{Z}_N} \|v_j\|_{L_x^2}^2 +\frac{1}{2}\sum_{j\in\mathbb{Z}_N}\|w_j\|_{L_x^2}^2  \\
		&	- \frac{N+3}{2} \sum_{j\in\mathbb{Z}_N}\int Q(x)^{2}(v_j)^2  \,  \mathrm{ d} x
		- 2 \int \sum_{j\in\mathbb{Z}_N}\sum_{\substack{k\in\mathbb{Z}_N\\k\neq j}}Q(x)^2v_kv_j  \,  \mathrm{d} x
		- \frac{N+1}{2} \sum_{j\in\mathbb{Z}_N}\int Q(x)^{2}(w_j)^2  \,  \mathrm{d} x\\ 
		\geq& \lambda_1 \left\|\boldsymbol{\epsilon} \right\|^2_{H^{1}_{x} l^2}.
		\endaligned
	\end{align}
\end{remark}

\subsection{Long time Strichartz estimate}
In this subsection, we derive a non-priori estimate for the blowup solution $\mathbf{u}$. Specifically, we will prove that on some interval $J$ with $K\sim \int_{J}\frac{1}{\lambda(t)^2} \mathrm{d} t$, the Strichartz norms of  $P_{\geq K^{1/3}}\mathbf{u}$ on $J$ are actually dominated by its linear part (in Duhamel's formula) and, consequently, can be bounded by the initial data on the interval $J$.

\subsubsection{Two atomic spaces.}

Similar to  \cite{D5} and \cite{D2}, since  the endpoint Strichartz estimate fails in dimension two, we need to use the functional spaces $U_\Delta^p$ and $V_\Delta^p$, introduced in \cite{KTa,HHK}. In our context, we define the variant versions of the standard $U_\Delta^p$ and $V_\Delta^p$ spaces, namely $U_\Delta^p(I,L_x^2l^2 \left(\R^2\times\Z_N  \right))$ and $V_\Delta^p(I,L_x^2l^2 \left(\R^2\times\Z_N  \right))$:
\begin{definition}[$U_\Delta^p(I,L_x^2l^2 \left(\R^2\times\Z_N  \right))$ space] Let $1\leqslant p<\infty$. We define $U_\Delta^p(\R, L_x^2l^2 \left(\R^2\times\Z_N  \right) )$ as an atomic space with atoms  $\mathbf{v}^\lambda(t,x)$ given by 
	\begin{align*}
		\mathbf{v}^\lambda(t,x)=\sum_{k=0}^{M}\chi_{[t_k,t_{k+1}]}(t)e^{it\Delta} \mathbf{v}_k^\lambda(x),
\quad\sum_{k=0}^M\left\| \mathbf{v}_k^\lambda(x) \right\|_{L_x^2l^2 \left(\R^2\times\Z_N  \right)}^p=1.
	\end{align*}
	In the summation above, $M$ can be finite or infinite. If $M$ is finite, we further assume $t_0=-\infty$ and $t_{M+1}=+\infty$. The norm of $U_\Delta^p (\R,L_x^2l^2 \left(\R^2\times\Z_N  \right))$ is defined as 
	\begin{align*}
\| \mathbf{v} \|_{U_\Delta^p(\R,L_x^2l^2 \left(\R^2\times\Z_N  \right))}
		=\inf\left\{\sum_{\lambda\in\Z} |c_\lambda|:  \mathbf{v} =\sum_{\lambda\in\Z}c_\lambda  \mathbf{v}^\lambda, \hspace{1ex} \mathbf{v}^\lambda\mbox{ is an atom}\right\},
	\end{align*}
	where the infimum is taken over all atom decompositions. 
	For any  interval $I\subseteq\R$, we can define the local version as
	\begin{align*}
		\| \mathbf{v}\|_{U_\Delta^p \left(I,L_x^2l^2 \left(\R^2\times\Z_N  \right) \right)}=\| \mathbf{v} \chi_I(t)\|_{U_\Delta^p \left(\R,L_x^2l^2 \left(\R^2\times\Z_N  \right) \right)}.
	\end{align*}
\end{definition}
Next, we define the space $V_\Delta^p \left(I,L_x^2l^2 \right)$, which is also useful for our analysis.
\begin{definition}[$V_\Delta^p \left(I,L_x^2l^2 \right)$ space]
	Let $1\leqslant p<\infty$. We define $V_\Delta^p(\R, L_x^2 l^2 )$ as the space of  right-continuous functions $ \mathbf{v} \in L_t^{\infty} L_x^2l^2 \left(\R\times\mathbb{R}^2\times\Z_N \right)$ endowed with the norm
	\begin{align*}
		\| \mathbf{v} \|_{V_\Delta^p \left(\R,L_x^2l^2 \right)}^p
=\| \mathbf{v} \|_{L_t^\infty L_x^2l^2}^p+\sup_{\{t_k\}\nearrow\infty}
\sum_{k} \big\|e^{-it_{k+1}\Delta} \mathbf{v} (t_{k+1})-e^{- it_k\Delta} \mathbf{v}(t_k)\big\|_{L_x^2l^2}^p<\infty.
	\end{align*}
For any  interval $I\subseteq\R$, we can define the local version  as
		\begin{align*}
			\| \mathbf{v} \|_{V_\Delta^p \left(I,L_x^2 l^2 \right)}=\| \mathbf{v} \chi_I(t)\|_{V_\Delta^p \left(\R,L_x^2l^2 \right)}.
		\end{align*}
\end{definition}
Next, we present some useful properties of  the $U_\Delta^p \left(I,L^2_xl^2 \right)$ and $V_\Delta^p \left(I,L^2_xl^2 \right)$ spaces.
\begin{lemma}\label{property}
Suppose $1<p<q<\infty$ and $t_0\leqslant t_1\leqslant t_2$. For $I\subset \R$,  we have
	\begin{align}
		& U_\Delta^p \left(I,L_x^2l^2 \right)\subseteq V_\Delta^p \left(I,L_x^2l^2 \right)\subseteq U_\Delta^q \left(I,L_x^2l^2 \right),\label{embeddingvp}
        \\
		&\| \mathbf{v} \|_{U_\Delta^p \left([t_0,t_1],L_x^2l^2 \right)}\leqslant\| \mathbf{v} \|_{U_\Delta^p \left([t_0,t_2],L_x^2l^2 \right)},  \notag \\
		&\| \mathbf{v} \|_{U_\Delta^2 \left([t_0,t_2],L_x^2l^2 \right)}^2\leqslant\|  \mathbf{v} \|_{U_\Delta^2 \left([t_0,t_1],L_x^2l^2 \right)}^2
+\| \mathbf{v} \|_{U_\Delta^2 \left([t_1,t_2],L_x^2l^2 \right)}^2,  \notag\\
		&\| \mathbf{v} \|_{L_t^pL_x^ql^2 \left(I\times\R^2\times\Z_N  \right)}\lesssim_p \| \mathbf{v} \|_{U_\Delta^p \left(I,L_x^2l^2 \right)}, \text{ where }   p\geq2 \mbox{ and } (p,q) \mbox{ is an admissible pair, }\label{u-pstrichartz}\\
&\| \mathbf{v} \|_{U_\Delta^2 \left(I,L_x^2l^2  \right)}\lesssim_p \min_{\tilde{t}\in I} \left\| \mathbf{v}  \left(\tilde{t} \right) \right\|_{L_x^2l^2}
+\|i\partial_t \mathbf{v} +\Delta  \mathbf{v} \|_{L_t^{p'}L_x^{q'}l^2 \left(I\times\R^2\times\Z_N  \right)},  
\label{dupstrichartz}\\
& \text{ where }   p\geq2 \mbox{ and } (p,q) \mbox{ is an admissible pair. } \notag
	\end{align}
\end{lemma}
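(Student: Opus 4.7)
The plan is to reduce every item to the standard theory of $U^p$--$V^p$ spaces developed in \cite{KTa,HHK}, carried out with the Hilbert space $L_x^2 l^2(\mathbb{R}^2\times \mathbb{Z}_N)$ replacing the scalar $L_x^2(\mathbb{R}^d)$. The key observation that permits this transfer is that the Schr\"odinger propagator $e^{it\Delta}$ acts diagonally on $\mathbf{f} = \{f_j\}_{j\in \mathbb{Z}_N}$, and the Strichartz estimate of Lemma \ref{th2.1v31} is already formulated with $L^2_x l^2$ initial data; hence every inequality from \cite{KTa,HHK} goes through once one checks that only the underlying Hilbert-space structure is used.

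First I would record the embedding $U_\Delta^p \hookrightarrow V_\Delta^p$: given an atom $\mathbf{v}^\lambda$ of $U_\Delta^p$, direct computation of $e^{-it_k\Delta}\mathbf{v}^\lambda(t_k)$ yields telescoping differences with $l^p$ sum equal to the atomic normalization, and the general case follows by the triangle inequality on atomic decompositions. For the more delicate embedding $V_\Delta^p \hookrightarrow U_\Delta^q$, I would reproduce the Koch--Tataru argument: given $\mathbf{v} \in V^p_\Delta$, dyadically decompose $\mathbf{v}$ by thresholding the jumps $\|e^{-it_{k+1}\Delta}\mathbf{v}(t_{k+1}) - e^{-it_k\Delta}\mathbf{v}(t_k)\|_{L^2_x l^2}$ into levels of size $\sim 2^{-n}$, and observe that the contribution at level $n$ is a $U^q_\Delta$ atom (up to a multiplicative constant) whose $U^q_\Delta$ norm is bounded by $2^{-n}$ times the number of jumps at that level raised to $1/q$; summing over $n$ using $q>p$ produces a convergent geometric series bounded by $\|\mathbf{v}\|_{V^p_\Delta}$. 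The two interval-monotonicity properties are immediate from the definition, since cutting off by $\chi_I$ preserves (or reduces) atomic decompositions; likewise the $U^2_\Delta$ subadditivity across $t_1$ follows by concatenating separate atomic decompositions on $[t_0,t_1]$ and $[t_1,t_2]$.

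For \eqref{u-pstrichartz}, I would verify it on atoms first: if $\mathbf{v}^\lambda = \sum_k \chi_{[t_k,t_{k+1}]} e^{it\Delta}\mathbf{v}_k^\lambda$, then
\begin{align*}
\|\mathbf{v}^\lambda\|_{L^p_t L^q_x l^2}^p \le \sum_k \|e^{it\Delta}\mathbf{v}_k^\lambda\|_{L^p_t L^q_x l^2}^p \lesssim \sum_k \|\mathbf{v}_k^\lambda\|_{L^2_x l^2}^p = 1,
\end{align*}
using Lemma \ref{th2.1v31} componentwise; summing arbitrary atomic decompositions then yields the bound. Finally, for the Duhamel estimate \eqref{dupstrichartz}, I would rely on the duality between $U^2_\Delta$ and a space isometric to $V^2_\Delta$ (via Hadac--Herr--Koch's pairing $\langle \mathbf{F}, \mathbf{v}\rangle = \int \langle \mathbf{F}(t), \mathbf{v}(t)\rangle_{L^2_x l^2}\,\mathrm{d}t$), splitting $\mathbf{v}(t) = e^{i(t-\tilde{t})\Delta}\mathbf{v}(\tilde{t}) - i\int_{\tilde{t}}^t e^{i(t-s)\Delta}(i\partial_s + \Delta)\mathbf{v}(s)\,\mathrm{d}s$; then, using Lemma \ref{th2.1v31} and its dual together with the atom-wise control by Strichartz to bound the pairing against any $V^2_\Delta$ test function by the right-hand side of \eqref{dupstrichartz}.

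The main obstacle will be the Koch--Tataru embedding $V^p_\Delta \hookrightarrow U^q_\Delta$; all other items are either routine telescoping computations or direct consequences of Lemma \ref{th2.1v31} combined with atomic summation. Because nothing in the original Hadac--Herr--Koch proof depends on the dimension or scalarity of the target Hilbert space, the argument transfers verbatim once $L^2_x$ is replaced by $L^2_x l^2(\mathbb{R}^2\times \mathbb{Z}_N)$; accordingly, I would give detailed proofs only for \eqref{u-pstrichartz} and \eqref{dupstrichartz}, and refer the reader to \cite{KTa,HHK} for the remaining statements, noting the trivial adaptation to the vector-valued setting.
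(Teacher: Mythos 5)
Your proposal is correct and uses exactly the standard $U^p/V^p$ machinery of \cite{KTa,HHK} that the paper itself implicitly relies on: the paper omits the proof of Lemma~\ref{property}, referring only to \cite{D5}, which in turn rests on the same Koch--Tataru and Hadac--Herr--Koch arguments you reconstruct. Your observation that the Hilbert-space-valued setting introduces nothing new — because $e^{it\Delta}$ acts diagonally on the components and Lemma~\ref{th2.1v31} already gives the Strichartz estimate in $L^2_x l^2$ — is precisely the content of the adaptation, so your route is the same as the one the paper intends.
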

Furthermore, we have the following estimates:
	\begin{lemma}\label{up-dual}
		For any $p>2$ and let $I$ be an interval with $t_0\in I$, we have
		\begin{align*}
			\left\|\int_{t_0}^t e^{i(t-s)\Delta} \mathbf{F} (\tau) \,  \mathrm{d} \tau\right\|_{U_\Delta^2 \left(I,L_x^2l^2 \right)}
\lesssim_p\sup_{\| \mathbf{G} \|_{V_\Delta^p \left(I,L_x^2l^2 \right)}=1}\big\|| \mathbf{G} |\cdot |  \mathbf{F} |\big\|_{L_{t,x}^1}.	
		\end{align*}
	\end{lemma}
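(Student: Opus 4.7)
\textbf{Proof plan for Lemma \ref{up-dual}.} The plan is to prove the estimate via the $U^2_\Delta / V^2_\Delta$ duality in the Hilbert-valued setting, and then trade the dual space $V^2_\Delta$ for the larger $V^p_\Delta$ ($p>2$) using a trivial embedding. Because $L^2_x l^2(\mathbb{R}^2\times\mathbb{Z}_N)$ is a Hilbert space, the atomic/duality framework of Koch--Tataru (Hadac--Herr--Koch in the Schr\"odinger setting) carries over verbatim from the scalar case, the only change being that the pointwise-in-$t$ inner product is taken in $L^2_x l^2$ rather than $L^2_x$. Concretely, the duality $(U^2_\Delta)^* \simeq V^2_\Delta$ produces the characterization
\[
\|\mathbf{u}\|_{U^2_\Delta(I, L^2_xl^2)} \sim \sup_{\|\mathbf{v}\|_{V^2_\Delta(I, L^2_xl^2)}\le 1} \bigl|\langle \mathbf{u}, \mathbf{v}\rangle\bigr|,
\]
where $\langle\cdot,\cdot\rangle$ denotes the Koch--Tataru duality bracket.

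Applied to $\mathbf{u}(t)=\int_{t_0}^t e^{i(t-s)\Delta}\mathbf{F}(s)\,\mathrm{d}s$, the plan is to approximate $\mathbf{v}$ by a finite step function coming from its $V^2$-atomic structure, interchange the order of the $t$- and $s$-integrations via Fubini, and pass to the limit in the partition to identify the bracket as
\[
\langle \mathbf{u}, \mathbf{v}\rangle = \int_I \langle \mathbf{F}(s), \mathbf{v}(s)\rangle_{L^2_x l^2}\,\mathrm{d}s.
\]
Pointwise Cauchy--Schwarz in $L^2_x l^2$ followed by Fubini then yields
\[
|\langle \mathbf{u}, \mathbf{v}\rangle| \le \bigl\||\mathbf{v}|\,|\mathbf{F}|\bigr\|_{L^1_{t,x}(I\times\mathbb{R}^2)}.
\]
Finally, for $p\ge 2$ the elementary $\ell^2\hookrightarrow \ell^p$ inequality, applied to the sequence of jumps $\bigl\{\bigl\|e^{-it_{k+1}\Delta}\mathbf{v}(t_{k+1})-e^{-it_k\Delta}\mathbf{v}(t_k)\bigr\|_{L^2_x l^2}\bigr\}_k$ that defines the $V^p_\Delta$ norm, yields $\|\mathbf{v}\|_{V^p_\Delta}\le \|\mathbf{v}\|_{V^2_\Delta}$. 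Hence the unit ball of $V^2_\Delta$ is contained in the unit ball of $V^p_\Delta$, and taking the supremum in the duality bound over the larger class only increases its value, producing the stated estimate.

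The hard part is the Duhamel representation of the duality bracket in Step two. It is not deep, but demands careful bookkeeping: one reduces $\mathbf{v}$ to a simple $V^2$-atom (or step function), evaluates the bracket through its defining telescoping sum $\sum_k \langle \mathbf{u}(t_{k+1})-\mathbf{u}(t_k), e^{i t_{k+1}\Delta}\mathbf{v}_{k+1}\rangle_{L^2_x l^2}$, substitutes $\mathbf{u}(t_{k+1})-\mathbf{u}(t_k)=\int_{t_k}^{t_{k+1}} e^{i(t_{k+1}-s)\Delta}\mathbf{F}(s)\,\mathrm{d}s$, uses the unitarity of $e^{is\Delta}$ on $L^2_x l^2$, and finally refines the partition to reach the claimed integral. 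The entire argument is parallel to the scalar proof in the Hadac--Herr--Koch theory and does not interact with the coupling structure of \eqref{1.1}.
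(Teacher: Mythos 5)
Your proposal is correct and follows essentially the same route that the paper leaves implicit (it cites Dodson's argument in \cite{D5}, which in turn is the Hadac--Herr--Koch duality machinery): the $U^2_\Delta$ norm of the Duhamel term is characterized by pairing against $V^2_\Delta$, the bracket is identified with $\int_I \langle \mathbf{F}(s),\mathbf{v}(s)\rangle_{L^2_xl^2}\,\mathrm{d}s$ via the telescoping/step-function computation, pointwise Cauchy--Schwarz in $\ell^2(\mathbb{Z}_N)$ gives $\||\mathbf{F}|\,|\mathbf{v}|\|_{L^1_{t,x}}$, and the embedding $\|\cdot\|_{V^p_\Delta}\le\|\cdot\|_{V^2_\Delta}$ for $p>2$ lets one pass from the $V^2$ unit ball to the $V^p$ unit ball (using the positive homogeneity of the supremand). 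Two small wording slips that do not affect the argument: $V^2_\Delta$ does not have an ``atomic structure'' (that is $U^p_\Delta$); what you actually use is the step-function density/right-continuity built into the definition of $V^p_\Delta$. Also, with the paper's definition of the $V^p$ norm (which includes the $L^\infty_t L^2_x l^2$ term), the monotonicity $\|\cdot\|_{V^p}\le\|\cdot\|_{V^2}$ still holds with constant $1$ after a short computation separating the $L^\infty$ and jump contributions, so in fact your constant is independent of $p$, which is slightly sharper than the stated $\lesssim_p$.
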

The proof of  Lemma  \ref{property} and Lemma \ref{up-dual} are similar to that in \cite{D5}, so we omit them.
\subsubsection{Long time Strichartz estimate - Type I}
Having introduced the atomic spaces, we now present the following long time Strichartz estimate:
\begin{theorem}[Long time Strichartz estimate-I] \label{t5.9}
Let $\mathbf{u}$ be the  solution in Theorem \ref{t2.3}. Suppose $0< \eta_1\ll \eta_0\ll1$ are two sufficiently small constants to be determined later,  and $J = [a,b]$ is an interval for which
\begin{align}\label{f5.7}
\aligned
& \lambda(t) \geq \frac{1}{\eta_{1}},\quad \frac{|\xi(t)|}{\lambda(t)} \leq \eta_{0} \quad  \text{for all} \quad t \in J,
\quad \int_{J} \lambda(t)^{-2} \, \mathrm{d} t = T=2^{3k_0}, \\
\text{ and } & 
\sup_{t \in J}
    \left\|\boldsymbol{\epsilon}(t) \right\|_{L^2_x l^2}\leq\eta_{\ast}\leq \eta_0,
     \quad \left(\int_{|\xi|\geq\eta^{-1/2}_1}  \left|\hat{Q}(\xi) \right|^2 \,  \mathrm{d} \xi \right)^{1/2} \leq \eta_0.
\endaligned
\end{align}
Splitting $J$ into $2^{3k_0}$ small subintervals $J_i=[a_i,b_i]$ such that
$$b_i=a_{i+1}, \quad\forall\, 1\leq i\leq 2^{3k_0}-1,\quad \int_{J_i}%\frac{1}
{\lambda(t)^{-2}} \, \mathrm{d} t=1, $$
and define the intervals $G^j_{k_\ast}$ by induction:
\begin{align*}
	G^0_{k_{\ast}}=J_{k_{\ast}}, \quad\forall  \, 1\leq k_{\ast}\leq 2^{3k_0}; \quad
\forall \, 1\leq j\leq 3k_0,\quad	G_{k_{\ast}}^{j+1}=G_{2k_{\ast}-1}^{j}\cup G_{2k_{\ast}}^{j}, \quad \forall \, 1\leq k_{\ast}\leq 2^{3k_0-j-1}.
\end{align*}
Finally, define the  long time Strichartz norm:
\begin{equation*}
 \left\| \mathbf{u} \right\|_{X \left(J \times \mathbb{R}^{2}\times\mathbb{Z}_N \right)}^{2}
 =  \sup_{0 \leq i \leq k_0} \sup_{1\leq k_{\ast}\leq 2^{3k_0-3i}}\left( \left\| P_{\geq i} \mathbf{u}  \right\|_{U_{\Delta}^{2} \left(G_{k_{\ast}}^{3i}, L_x^2l^2 \right)}^{2}  + \eta_{0}^{-\frac1{10}}
 \left\|  \left|P_{\geq i} \mathbf{u} \right|\cdot |P_{\leq i - 3}  \mathbf{u} | \right\|_{L_{t,x}^{2} l^2 \left(G_{k_{\ast}}^{3i} \times \mathbb{R}^{2}\times\mathbb{Z}_N \right)}^{2}\right).
\end{equation*}
Then the long time Strichartz estimate,
\begin{equation}\label{f5.10}
\left\| \mathbf{u} \right\|_{X \left(J \times \mathbb{R}^{2}\times\mathbb{Z}_N\right)} \lesssim 1,
\end{equation}
holds with implicit constant independent of $T$.
\end{theorem}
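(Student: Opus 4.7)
The plan is to mimic Dodson's induction-on-frequency-scales framework from \cite{D2,D1}, with the essential new ingredient being a vector-valued interaction Morawetz bilinear estimate to handle the coupled cubic nonlinearity $\mathbf{F}(\mathbf{u})$. The induction will be run from the coarsest scale $i=k_0$ down to $i=0$, combined with a bootstrap in the norm $\|\mathbf{u}\|_X$. The hypotheses in \eqref{f5.7}, together with the modulation bounds \eqref{f4.45}--\eqref{f4.55} and Theorem \ref{modulation}, mean that at every time $t\in J$ the rescaled profile $\tilde{\mathbf{u}}(s)=\mathbf{Q}+\boldsymbol{\epsilon}(s)$ has $\|\boldsymbol{\epsilon}(s)\|_{L^2_xl^2}\le\eta_\ast$ and $\widehat{\mathbf{Q}}$ is concentrated in $\{|\xi|\le \eta_1^{-1/2}\}$, so in original coordinates $P_{\ge i}\mathbf{u}(t)$ is essentially just the contribution of the remainder and of the tails of $\widehat{\mathbf{Q}}$, both of which are small.

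At each scale $i$ and each interval $G^{3i}_{k_\ast}$ I would pick a reference time $t_\ast\in G^{3i}_{k_\ast}$ realizing approximately the maximum of $\lambda$ on that interval, and apply Duhamel's formula. The homogeneous part $\|e^{i(t-t_\ast)\Delta}P_{\ge i}\mathbf{u}(t_\ast)\|_{U^2_\Delta}\le\|P_{\ge i}\mathbf{u}(t_\ast)\|_{L^2_xl^2}$ is then controlled by $\eta_0+\eta_\ast+O(T^{-10})$ using Schwartz decay of $Q$ and the last condition in \eqref{f5.7}. For the inhomogeneous part, I would use Lemma \ref{up-dual} to bound
\[
\Big\|\int_{t_\ast}^t e^{i(t-s)\Delta}P_{\ge i}\mathbf{F}(\mathbf{u})(s)\,\mathrm{d}s\Big\|_{U^2_\Delta(G^{3i}_{k_\ast},L^2_xl^2)}\lesssim \sup_{\|\mathbf{G}\|_{V^p_\Delta}=1}\big\||\mathbf{G}|\cdot |P_{\ge i}\mathbf{F}(\mathbf{u})|\big\|_{L^1_{t,x}},
\]
and perform a paraproduct decomposition of $F_j(\mathbf{u})=2\sum_k|u_k|^2u_j-|u_j|^2u_j$. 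In every resulting trilinear piece one of the three factors must have frequency $\gtrsim 2^i$; pairing this factor with the next-to-highest frequency and applying H\"older together with \eqref{u-pstrichartz} produces precisely a term of the form $\||P_{\ge i'}\mathbf{u}|\,|P_{\le i'-3}\mathbf{u}|\|_{L^2_{t,x}l^2}$, which is the bilinear quantity already built into the norm $X$, times a Strichartz norm of $\mathbf{G}$ and a low-frequency factor.

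The crux is then to close the bootstrap on the bilinear piece of $X$ itself, and this is where I anticipate the main obstacle. Because of the coupling $2\sum_k|u_k|^2 u_j$, the scalar bilinear Strichartz/Morawetz estimate applied component-by-component loses the interaction between distinct $j,k$. I would therefore establish a vector-valued bilinear estimate of the form
\[
\big\||P_{\ge i}\mathbf{u}|\cdot|P_{\le i-3}\mathbf{u}|\big\|_{L^2_{t,x}l^2(G^{3i}_{k_\ast}\times\mathbb{R}^2\times\mathbb{Z}_N)}\lesssim \eta_0^{\,1/10}\|\mathbf{u}\|_{X}^{\,2}+\eta_0^{\,1/10}
\]
by running an interaction Morawetz identity for the scalar density $\rho=\sum_{j\in\mathbb{Z}_N}|u_j|^2$ of the coupled system. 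The sign of the resulting monotonicity formula is governed by the trace of the coupled pressure tensor, and here the focusing structure of $\mathbf{F}$ together with the defocusing effect of the high/low frequency separation yield the necessary positivity; the Galilean frequency $\xi(t)/\lambda(t)\le\eta_0$ assumption is used to place the Morawetz weight at the correct frequency center.

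Finally, combining the two bounds, the norm $X$ satisfies $\|\mathbf{u}\|_X^2\le C+C\eta_0^{1/20}\|\mathbf{u}\|_X^2$ on each interval $G^{3i}_{k_\ast}$; choosing $\eta_0$ (and hence $\eta_\ast$) sufficiently small allows one to absorb the quadratic term, and a standard continuity/induction-in-$i$ argument propagates the bound uniformly across the full dyadic tree, yielding \eqref{f5.10} with a constant independent of $T$. The main technical difficulty I foresee is ensuring that the paraproduct bookkeeping on the coupled nonlinearity does not produce a logarithmic loss in $k_0$ when summed over the $2^{3k_0-3i}$ intervals at each level, which requires organizing the splitting using only frequency pieces whose dyadic scales match the time-scale $2^{3i}$ of $G^{3i}_{k_\ast}$; this is the structural reason for the factor $\eta_0^{-1/10}$ in the definition of $\|\cdot\|_X$.
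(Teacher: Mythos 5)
Your overall architecture matches the paper: induction down dyadic frequency scales $i$ on the tree of intervals $G^{3i}_{k_\ast}$, Duhamel plus the $U^2_\Delta$--$V^p_\Delta$ duality of Lemma \ref{up-dual}, a paraproduct decomposition of the cubic $\mathbf{F}$, and a vector-valued bilinear Morawetz input to close the bootstrap. You also correctly identify that the coupling $2\sum_k|u_k|^2u_j$ is what prevents a straight component-wise application of the scalar bilinear estimate. However, there are two genuine gaps in the central bilinear step, and your description of the positivity mechanism is not what actually drives the argument.

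First, the paper needs \emph{two} distinct interaction Morawetz estimates, not one. When you bound the Duhamel term through $V^p_\Delta$-duality, the dual atom $\mathbf{G}$ decomposes into pieces of the form $\chi_{[t_k,t_{k+1}]}(t)e^{it\Delta}\mathbf{v}_k$, so the bilinear quantity that arises is $\|e^{it\Delta}\mathbf{v}_0\cdot P_{\le i-3}\mathbf{u}\|_{L^2_{t,x}}$ with $\mathbf{v}_0$ a \emph{free} Schr\"odinger evolution supported at high frequency (this is the paper's \eqref{f5.19}). Separately, the $X$-norm itself carries the quantity $\||P_{\ge i}\mathbf{u}||P_{\le i-3}\mathbf{u}|\|_{L^2_{t,x}l^2}$ (the paper's \eqref{f5.48}), and $P_{\ge i}\mathbf{u}$ is \emph{not} a free solution — it solves a forced equation with source coming from the nonlinearity. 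The Morawetz computation for this second quantity produces the extra commutator terms \eqref{ert5}--\eqref{ert7} of the paper, involving $P_m(F_k)$, that are absent for free waves and require a further paraproduct splitting ($\mathcal{M}_1^{(k)}$, $\mathcal{M}_2^{(k)}$, $\mathcal{M}_3^{(k)}$) and a Coifman--Meyer commutator bound to close. Your proposal treats these as a single estimate and so silently assumes that $P_{\ge i}\mathbf{u}$ behaves like a free wave, which it does not.

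Second, your account of the sign structure is off. You assert that positivity in the Morawetz monotonicity formula comes from ``the focusing structure of $\mathbf{F}$ together with the defocusing effect of the high/low frequency separation.'' In fact the favorable term in the paper's $\frac{\mathrm{d}}{\mathrm{d}t}M_\omega$ is the bilinear gradient quantity $2\sum_{j,k}\int\!\int_{x_\omega=y_\omega}\bigl|\partial_\omega(\overline{v_k}\,P_{\le i-3}u_j)\bigr|^2$, whose positivity is purely kinematic and has nothing to do with the sign of the nonlinearity. The focusing term $-\sum_{j,k}\int\!\int_{x_\omega=y_\omega}|v_k|^2\overline{P_{\le i-3}u_j}F_j(P_{\le i-3}\mathbf{u})$ has the unfavorable sign and is instead estimated as an error using Hardy--Littlewood--Sobolev, the $L^\infty$ smallness \eqref{keya} of $P_{\le i-3}\mathbf{u}$ that follows from the assumption $|\xi(t)|/\lambda(t)\le\eta_0$, and the Strichartz bound \eqref{xnorm}. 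Similarly, the choice of $V^{100/51}_\Delta$ (rather than an unweighted $U^2$-duality) and the base case $0\le i\le 9$ --- where the bilinear piece either vanishes or is trivially bounded by $\eta_0$ --- are both necessary for the bootstrap to close without a logarithmic loss, and your proposal does not address them. These omissions mean the proposal would not, as written, produce a self-improving inequality of the announced form $\|\mathbf{u}\|_X^2\le C+C\eta_0^{1/20}\|\mathbf{u}\|_X^2$.
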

\begin{proof}
Let
\begin{equation*}
\left\| \mathbf{u}  \right\|_{X_j \left(J \times \mathbb{R}^{2}\times\mathbb{Z}_N \right)}^{2}
=  \sup_{0 \leq i \leq j} \sup_{1\leq k_{\ast}\leq 2^{3k_0-3i}}\left( \left\| P_{\geq i} \mathbf{u} \right\|_{U_{\Delta}^{2} \left(G_{k_{\ast}}^{3i},L_x^2l^2 \right)}^{2}
+ \eta_{0}^{-\frac1{10}}
 \left\|  \left|P_{\geq i} \mathbf{u} \right|\cdot |P_{\leq i - 3}  \mathbf{u} |  \right\|_{L_{t,x}^{2}l^2 \left(G_{k}^{3i} \times \mathbb{R}^{2}\times\mathbb{Z}_N \right)}^{2}\right) .
\end{equation*}
Since $\lambda(t)\geq \frac{1}{\eta_{1}}$, arguing as in \eqref{begin1}-\eqref{end1}, for $0\leq i\leq 9$,
if $\eta_0$ is sufficiently small and $\eta_1\ll \eta_0$, then there exists  constant $C_i$ such that
\begin{align*}
	\left\|\mathbf{u} \right\|_{L_{t,x}^4l^2 \left(G^{3i}_{k_{\ast}}\times\mathbb{R}^2\times\mathbb{Z}_N \right)}\leq C_{i}\eta_0,\quad \forall \, 1\leq k_{\ast}\leq 2^{3k_0-3i} .
\end{align*}
Thus, by \eqref{dupstrichartz}, we have
\begin{equation*} 
 \left\| \mathbf{u}  \right\|_{U_{\Delta}^{2} \left(G^{3i}_{k_{\ast}}, L_x^2l^2 \right)} \lesssim_i 1, \quad \forall \,  1\leq k_{\ast}\leq 2^{3k_0-3i}, 
\end{equation*}
and by H\"older's inequality,
\begin{align*}
	\frac{1}{\eta_{0}^{1/10}}   \left\| \left|P_{\geq i} \mathbf{u} \right|\cdot |P_{\leq i - 3}  \mathbf{u} |  \right\|_{L_{t,x}^{2}l^2 \left(G_{k_{\ast}}^{3i} \times \mathbb{R}^{2}\times\mathbb{Z}_N \right)}^{2}
\begin{cases}
		=0,&i=0,1,2;\\
		\lesssim C^2_i\eta^{19/10}_0,
&i=3,4,5,6,7,8,9.
	\end{cases}
\end{align*}
Therefore,
\begin{align}\label{longtime0}
	\left\| \mathbf{u} \right\|_{X_0 \left(J \times \mathbb{R}^{2} \right)}
\leq\cdots\leq \left\| \mathbf{u} \right\|_{X_9 \left(J \times \mathbb{R}^{2} \right)}\lesssim 1.
\end{align}
 This is the base case.

Next, we prove the inductive step.
For any $m\in\mathbb{Z}_N$, we decompose the nonlinear terms as
\begin{align*} 
& P_{\geq i}\mathbf{F}_m \left(\mathbf{u} \right) \\
= &  \sum_{j\in\mathbb{Z}_N} \Big[P_{\geq i} O  \left(P_{\geq i - 3} u_j\overline{P_{\geq i - 3} u_j}P_{\geq i - 3}u_m \right)
+ P_{\geq i} O \left(P_{\geq i - 3} u_j\overline{P_{\geq i - 3} u_j} P_{\leq i - 3} u_m \right) \\
&\qquad\qquad+ P_{\geq i} O \left(P_{\geq i - 3} u_j\overline{P_{\leq i - 3} u_j}P_{\leq i - 3} u_m \right) \Big].
\end{align*}
Then, by Lemma \ref{up-dual} (set $p=100/49$) and  \eqref{f5.7}, for a fixed interval $G_{k_{\ast}}^{3i}$,
\begin{align}\nonumber
	\left\|P_{\geq i}\mathbf{u} \right\|_{U_{\Delta}^2 \left(G^{3i}_{k_{\ast}},L_x^2l^2 \right) }
&\lesssim \min_{t_0\in G_{k_{\ast}}^{3i}} \left\|e^{i(t-t_0)\Delta}P_{\geq i}\mathbf{u}(t_0) \right\|_{U_{\Delta}^2 \left(G^{3i}_{k_{\ast}},L_x^2l^2 \right)}
+ \sup_{ \left\|\mathbf{v} \right\|_{V_{\Delta}^{100/51} \left(G_{k}^{3i},L_x^2l^2 \right)}=1}
\left\| \left|\mathbf{v} \right| \left|\mathbf{\mathbf{F}} \left(\mathbf{u} \right) \right| \right\|_{L_{t,x}^1 \left(G_{k}^{3i}\times\mathbb{R}^2 \right)} \\\label{estkeyy}
	&\lesssim \eta_0+A_1+B_1+C_1,\quad 1\leq k_{\ast}\leq 2^{3k_0-3i},
\end{align}
where
\begin{align*}
	&A_1=\sup_{ \left\|\mathbf{v} \right\|_{V_{\Delta}^{100/51} \left(G_{k}^{3i},L_x^2l^2 \right)}=1}
\left\|  \left|\mathbf{v} \right| \left|P_{\geq i-3}\mathbf{u} \right|^{3} \right\|_{L_{t,x}^{1} \left(G_{k}^{3i}\times\mathbb{R}^2 \right)} +\sup_{ \left\|\mathbf{v} \right\|_{V_{\Delta}^{100/51} \left(G_{k}^{3i},L_x^2l^2 \right)}=1}
\left\|  \left|\mathbf{v} \right| \left|P_{\geq i-3}\mathbf{u} \right|^{2} \left|P_{\leq i-3}\mathbf{u} \right|
\right\|_{L_{t,x}^{1} \left(G_{k}^{3i}\times\mathbb{R}^2 \right)},\notag\\
&B_1=\sup_{ \left\|\mathbf{v} \right\|_{V_{\Delta}^{100/51} \left(G_{k}^{3i},L_x^2l^2 \right)}=1}
\left\|  \left|\mathbf{v} \right| \left|P_{\leq i-3}\mathbf{u} \right|
\left|P_{i-6\leq\cdot \leq i-3}\mathbf{u} \right| \left|P_{\geq i-3}\mathbf{u} \right| \right\|_{L_{t,x}^{1} \left(G_{k}^{3i}\times\mathbb{R}^2 \right)},\notag\\
	& C_1= \sup_{ \left\|\mathbf{v} \right\|_{V_{\Delta}^{100/51} \left(G_{k}^{3i},L_x^2l^2 \right)}=1}
\left\| \left|\mathbf{v} \right| \left|P_{\leq i-3}\mathbf{u} \right| \left|P_{\leq i-6}\mathbf{u} \right|
\left|P_{\geq i-3}\mathbf{u} \right| \right\|_{L_{t,x}^{1} \left(G_{k}^{3i}\times\mathbb{R}^2 \right)}.
\end{align*}
By H\"older's inequality,
\begin{equation}\label{f5.16}
\aligned
A_1+B_1
\lesssim
  \left\| \mathbf{v} \right\|_{L_{t}^{3} L_{x}^{6}l^2}
\left\| P_{\geq i-3}\mathbf{u} \right\|_{L_{t}^{3} L_{x}^{6}l^2}^{2} \left\| P_{\leq i-3}\mathbf{u} \right\|_{L_{t}^{\infty} L_{x}^{2}l^2}.
\endaligned
\end{equation}
Using \eqref{embeddingvp} and \eqref{u-pstrichartz}, we obtain
\begin{equation*}
\left\| \mathbf{v} \right\|_{L_{t}^{\infty} L_{x}^{2}l^2 \left(G_{k_{\ast}}^{3i}\times\mathbb{R}^2 \right)}
+ \left\| \mathbf{v} \right\|_{L_{t}^{3} L_{x}^{6}l^2 \left(G_{k_{\ast}}^{3i}\times\mathbb{R}^2 \right)}
\lesssim   \left\| \mathbf{v} \right\|_{V_{\Delta}^{100/51} \left(G_{k_{\ast}}^{3i},L_x^2l^2 \right)} = 1.
\end{equation*}
Therefore, when $i > 9$, by \eqref{f5.7}, H\"older's inequality  and $L^p$ interpolation,
\begin{align}\label{f5.18}
\aligned
A_1+B_1
& \lesssim \left\| P_{\geq i-6}\mathbf{u} \right\|_{L_{t}^{3} L_{x}^{6} l^2 \left(G_{k_{\ast}}^{3i}\times\mathbb{R}^2\times\mathbb{Z}_N \right)}^{2}
\\
& \lesssim \left\| P_{\geq i-6}\mathbf{u} \right\|_{L_{t}^{5/2} L_{x}^{10}l^2  \left(G_{k_{\ast}}^{3i}\times\mathbb{R}^2\times\mathbb{Z}_N \right)}^{5/3}
\left\| P_{\geq i-6}\mathbf{u} \right\|_{L_{t}^{\infty} L_{x}^{2} l^2  \left(G_{k_{\ast}}^{3i}\times\mathbb{R}^2\times\mathbb{Z}_N \right)}^{1/3}\\
& \lesssim \eta_{0}^{1/3}  \left\| \mathbf{u} \right\|_{X_{i - 3} \left(J \times \mathbb{R}^2\times\mathbb{Z}_N \right)}^{5/3}.
\endaligned
\end{align}
For the term $C_1$, obviously
\begin{align*}
	C_1
&\lesssim \left\| \left|P_{\geq i - 3} \mathbf{u} \right|\cdot \left|P_{\leq i - 6} \mathbf{u} \right| \right\|_{L_{t,x}^{2} \left(G_{k_{\ast}}^{3i}\times\mathbb{R}^2 \right)} \cdot \sup_{ \left\|\mathbf{v} \right\|_{V_{\Delta}^{100/51} \left(G_{k}^{3i},L_x^2l^2 \right)}=1}
\left\|\mathbf{v}\cdot P_{\leq i-3}\mathbf{u} \right\|_{L_{t,x}^2 \left(G_{k}^{3i}\times\mathbb{R}^2 \right)}\notag\\
&\lesssim \eta_{0}^{1/20} \left\| \mathbf{u} \right\|_{X_{i - 3} \left(J \times \mathbb{R}^2\times\mathbb{Z}_N \right)} 
\cdot \sup_{ \left\|\mathbf{v} \right\|_{V_{\Delta}^{100/51} \left(G_{k}^{3i}, L_x^2l^2 \right)}=1}
\left\|\mathbf{v}\cdot P_{\leq i-3}\mathbf{u} \right\|_{L_{t,x}^2 \left(G_{k}^{3i}\times\mathbb{R}^2 \right)}.
\end{align*}
Next, we suppose that it is true that for any $\left\|\mathbf{v}_{0} \right\|_{L^2_{x} l^2}=1$, where $\hat{v}_{0,j}$ is supported on $|\xi| \geq 2^{i}$ for any $j\in\mathbb{Z}_N$,
\begin{equation}\label{f5.19}
\sup_{ \left\|\mathbf{v}_{0} \right\|_{L_x^2l^2}=1}
 \left\| e^{it \Delta} \mathbf{v}_{0}\cdot P_{\leq i-3} \mathbf{u}  \right\|_{L_{t,x}^{2} \left(G_{k}^{3i}\times\mathbb{R}^2 \right)}
  \lesssim 1 + \eta_{0} \left\| \mathbf{u} \right\|_{X_{i - 3} \left(J \times \mathbb{R}^2\times\mathbb{Z}_N \right)}^{3}.
\end{equation}
Now, suppose $\mathbf{v}$ admits an atom decomposition $  \mathbf{v} =\sum_{\lambda\in\Z}c_\lambda  \mathbf{v}^\lambda$, then \eqref{f5.19} implies
\begin{align}\label{f5.20}
\left\|\mathbf{v}\cdot P_{\leq i-3}\mathbf{u} \right\|_{L_{t,x}^2 \left(G_{k}^{3i}\times\mathbb{R}^2 \right)}
&\lesssim \sup_{\lambda\in\mathbb{Z}} \left\|\mathbf{v}^{\lambda}\cdot P_{\leq i-3}\mathbf{u} \right\|_{L_{t,x}^2 \left(G_{k}^{3i}\times\mathbb{R}^2 \right)}\cdot \sum_{\lambda\in\mathbb{Z}}|c_{\lambda}|\notag\\
&\lesssim \sup_{\lambda\in\mathbb{Z}} \left(\sum_{k=0}^{M_{\lambda}}\left\|\mathbf{v}_k^{\lambda}\cdot P_{\leq i-3}\mathbf{u} \right\|^2_{L_{t,x}^2 \left(G_{k}^{3i}\times\mathbb{R}^2 \right)}\right)^{1/2}\cdot \sum_{\lambda\in\mathbb{Z}}|c_{\lambda}|\notag\\
&\lesssim (1 + \eta_{0} \left\| \mathbf{u} \right\|_{X_{i - 3} \left(J \times \mathbb{R}^2\times\mathbb{Z}_N \right)}^{3})\cdot\sum_{\lambda\in\mathbb{Z}}|c_{\lambda}|, 
\end{align}
where $\mathbf{v}^{\lambda}=\sum\limits_{k=0}^{M_{\lambda}}\chi_{[t_k,t_{k+1}]}(t)e^{it\Delta} \mathbf{v}_k^\lambda(x),
\ 
\sum\limits_{k=0}^{M_{\lambda}}\left\| \mathbf{v}_k^\lambda(x) \right\|_{L_x^2l^2 \left(\R^2\times\Z_N  \right)}^2=1.$ Using \eqref{embeddingvp}  and  the definition of $U^2_{\Delta}\left(G_k^{3i}, L_x^2l^2 \right)$, we have 
\begin{equation*}
\sup_{\left\|\mathbf{v} \right\|_{V_{\Delta}^{100/51}\left(G_{k}^{3i}, L_x^2l^2 \right)}=1}
\left\|\mathbf{v}\cdot P_{\leq i-3}\mathbf{u} \right\|_{L_{t,x}^2 \left(G_{k}^{3i}\times\mathbb{R}^2 \right)}
\lesssim 1 + \eta_{0} \left\| \mathbf{u} \right\|_{X_{i - 3} \left(J \times \mathbb{R}^2\times\mathbb{Z}_N \right)}^{3},
\end{equation*}
and therefore,
\begin{equation}\label{f5.21}
C_1\lesssim  \eta_{0}^{1/20} \left\| \mathbf{u} \right\|_{X_{i - 3} \left(J \times \mathbb{R}^2\times\mathbb{Z}_N \right)}
 \left(1 + \eta_{0} \left\| \mathbf{u} \right\|_{X_{i - 3} \left(J \times \mathbb{R}^2\times\mathbb{Z}_N \right)}^{3} \right).
\end{equation}
Inserting \eqref{f5.16} and \eqref{f5.21} into \eqref{estkeyy}, we obtain  
\begin{align}\label{longtime1}
\aligned
	& \left\|P_{\geq i}\mathbf{u} \right\|_{U_{\Delta}^2 \left(G^{3i}_{k_{\ast}}, L_x^2l^2 \right)} \\
 \lesssim &  \eta_0+\eta_{0}^{1/3} \left\| \mathbf{u} \right\|_{X_{i - 3} \left(J \times \mathbb{R}^2\times\mathbb{Z}_N \right)}^{5/3}
\\
& +\eta_{0}^{1/20} \left\| \mathbf{u} \right\|_{X_{i - 3} \left(J \times \mathbb{R}^2\times\mathbb{Z}_N \right)} \left(1 + \eta_{0} \left\| \mathbf{u} \right\|_{X_{i - 3} \left(J \times \mathbb{R}^2\times\mathbb{Z}_N \right)}^{3} \right),
\quad \forall \, 1\leq k_{\ast}\leq 2^{3k_0-3i}.
\endaligned
\end{align}
Next, we prove \eqref{f5.19}. Following the strategy in \cite{D2}, we will prove it via the interaction Morawetz estimate. Let
\begin{equation*} 
\mathbf{v}(t,x) = e^{it \Delta} \mathbf{v}_{0},
\end{equation*}
where $\left\| \mathbf{v}_{0} \right\|_{L^2_{x}l^2} = 1$ and $\forall \, k\in\mathbb{Z}_N, \ \hat{v}_{0,k}$ is supported on $|\xi|\sim 2^m$ for some $m \geq i$. For a fixed direction $\omega\in \mathbb{S}^1$, we define the interaction Morawetz potential
\begin{align*} 
M_{\omega}(t) = & \sum_{j\in\mathbb{Z}_N} \sum_{k\in\mathbb{Z}_N} \int |v_k(t, y)|^{2} \frac{(x - y)_{\omega}}{|(x - y)_{\omega}|}
\Im \left[\overline{P_{\leq i+3}u_{j}} \partial_{\omega} P_{\leq i+3}u_j \right](t,x) \, \mathrm{d} x  \mathrm{d} y
\\
& + \sum_{j\in\mathbb{Z}_N}\sum_{k\in\mathbb{Z}_N} \int |P_{\leq i+3}u_j(t,y) |^{2} \frac{(x - y)_{\omega}}{|(x - y)_{\omega}|} \Im \left[\overline{v_k} \partial_{\omega} v_k \right](t,x) \,  \mathrm{d}x  \mathrm{d} y, \notag
\end{align*}
where $(x-y)_{\omega}=(x-y)\cdot \omega$ and $\partial_{\omega}=\nabla \cdot \omega$.
Noticing that $P_{\leq i-3}\mathbf{u}$ solves the equation:
\begin{equation}\label{f5.25}
i \partial_{t} P_{\leq i - 3}{u}_j + \Delta P_{\leq i - 3}{u}_j +  { {F}}_j \left( P_{\leq i - 3}\mathbf{u} \right)
 =  { {F}}_j \left(P_{\leq i - 3}\mathbf{u} \right)
 - P_{\leq i - 3} { {F}}_j \left(\mathbf{u} \right)
  = -\mathcal{N}^{(j)}_{i - 3},\quad \forall \, j\in\mathbb{Z}_N.
\end{equation}
Making a direct computation,
\begin{equation}\label{f5.26}
\aligned
\frac{ \mathrm{d}}{ \mathrm{d} t} M_{\omega}(t)
 =  & 2 \sum_{j\in\mathbb{Z}_N}\sum_{k\in\mathbb{Z}_N}\int \int_{x_{\omega} = y_{\omega}}
\left|\partial_{\omega} \left(\overline{v_k(t,y)} P_{\leq i-3}u_{j}(t,x) \right) \right|^{2} \, \mathrm{d} x  \mathrm{d} y
\\
& - \sum_{j\in\mathbb{Z}_N}\sum_{k\in\mathbb{Z}_N}\int \int_{x_{\omega} = y_{\omega}} |v_k(t,y)|^{2}
\left( \overline{P_{\leq i-3}u_{j}}
 { {F}}_j \left(P_{\leq i - 3}\mathbf{u}  \right) \right)(t,x)  \, \mathrm{d} x  \mathrm{d} y \\
& + \sum_{j\in\mathbb{Z}_N}\sum_{k\in\mathbb{Z}_N}\int\int |v_k(t,y)|^{2} \frac{(x - y)_{\omega}}{|(x - y)_{\omega}|}
\cdot \Re \left[\overline{P_{\leq i-3}u_{j}} \partial_{\omega} \mathcal {N}^{(j)}_{i - 3} \right](t,x) \, \mathrm{d} x  \mathrm{d} y \\
& -  \sum_{j\in\mathbb{Z}_N}\sum_{k\in\mathbb{Z}_N} \int\int |v_k(t,y)|^{2} \frac{(x - y)_{\omega}}{|(x - y)_{\omega}|}
\cdot \Re \left[\overline{\mathcal {N}^{(j)}_{i - 3}} \partial_{\omega} P_{\leq i-3}u_{j} \right](t,x) \, \mathrm{d} x  \mathrm{d} y \\
& + 2 \sum_{j\in\mathbb{Z}_N}\sum_{k\in\mathbb{Z}_N}
\int\int \Im \left[\overline{P_{\leq i-3}u_{j}} \mathcal {N}^{(j)}_{i - 3} \right](t, y) \frac{(x - y)_{\omega}}{|(x - y)_{\omega}|}
\cdot \Im \left[\overline{v}_k \partial_{\omega} v_k \right](t,x) \, \mathrm{d} x  \mathrm{d} y.
\endaligned
\end{equation}
On the other hand, as calculated in \cite{D2}, for any $f,g\in L_x^2(\mathbb{R}^2)$, and $j_1,j_2\in\mathbb{N}$, we have
\begin{align}\label{f5.27}
	\aligned
	& \int\int  \left|\partial_{\omega} \left(P_{\leq j_1} f ( x_{1}, x_{2}) \overline{P_{\geq j_2} g( x_{1}, x_{2})} \right) \right|^{2} \,
\mathrm{d} x_{1} \mathrm{d} x_{2}
\\
& \lesssim 2^{j_1}  \int\int\int  \left|\partial_{\omega} \left(P_{\leq j_1}f( x_{1}, x_{2}) \overline{P_{\geq j_2}g ( x_{1}, y_{2})} \right) \right|^{2}
\, \mathrm{d} x_{1}  \mathrm{d} x_{2} \mathrm{d} y_{2}.
	\endaligned
\end{align}
Let
\begin{align*} 
	M(t)=\int_{ \mathbb{S}^1} M_{\omega}(t)  \, \mathrm{d} \omega .
\end{align*}
By \eqref{f5.26}, \eqref{f5.27}, Bernstein's inequality and the fact that
\begin{align*}
	\int_{ \mathbb{S}^1} \frac{x_{\omega}}{|x_{\omega}|} \partial_{\omega} \,  \mathrm{d} \omega
=C\frac{x}{|x|}\cdot\nabla, \quad \int_{ \mathbb{S}^1} \delta (\cos(\theta)r) \,  \mathrm{d} \theta
=\frac{C'}{r} \quad\mbox{ for some $C, C' > 0$},
\end{align*}
we obtain 
 \begin{equation}\label{f5.29}
 	\aligned
 	& 2^{2m} \sum_{j\in\mathbb{Z}_N}\sum_{k\in\mathbb{Z}_N}
 \left\| \overline{v}_k P_{\leq i-3}u_{j} \right\|_{L_{t,x}^{2} \left(G^{3i}_{k_{\ast}} \times \mathbb{R}^{2} \right)}^{2}  \\
  \lesssim & 2^{i}\sup_{t\in G^{3i}_{k_{\ast}}}  | M(t) | 
 +2^iC'\sum_{j\in\mathbb{Z}_N}\sum_{k\in\mathbb{Z}_N}\int\int |v_k(t,y)|^{2} \frac{1}{|x-y|}
 \left(
 \overline{P_{\leq i-3}u_{j}} { {F}}_j \left(P_{\leq i - 3}\mathbf{u} \right) \right) (t,x)\, \mathrm{d} x  \mathrm{d} y \\
 & 	- 2^iC\sum_{j\in\mathbb{Z}_N}\sum_{k\in\mathbb{Z}_N}\int_{G^{3i}_{k_{\ast}}}\int\int |v_k(t,y)|^{2} \frac{(x - y)}{|(x - y)|}
 \cdot \Re \left[\overline{P_{\leq i-3}u_{j}}\cdot \nabla \mathcal {N}^{(j)}_{i - 3} \right](t,x)  \, \mathrm{d} x  \mathrm{d} y  \mathrm{d} t \\
 &  + 2^iC\sum_{j\in\mathbb{Z}_N}\sum_{k\in\mathbb{Z}_N}\int_{G^{3i}_{k_{\ast}}}\int\int |v_k(t,y)|^{2} \frac{(x - y)}{|(x - y)|}
  \cdot \Re \left[\overline{\mathcal {N}^{(j)}_{i - 3}} \nabla P_{\leq i-3}u_{j} \right](t,x) \, \mathrm{d}x  \mathrm{d}y  \mathrm{d} t \\
&  	- 2^{i+1}C \sum_{j\in\mathbb{Z}_N}\sum_{k\in\mathbb{Z}_N}\int_{G^{3i}_{k_{\ast}}}\int\int \Im \left[\overline{P_{\leq i-3}u_{j}} \mathcal {N}^{(j)}_{i - 3} \right](t, y)
 \frac{(x - y)}{|x - y |} \cdot \Im \left[\overline{v}_k \nabla v_k \right](t,x) \, \mathrm{d}x  \mathrm{d} y  \mathrm{d} t.
 	\endaligned
 \end{equation}
Also, by Bernstein's inequality,
 \begin{align}\label{estttt1}
 	\sup_{t\in J} |M(t) | \lesssim 2^m.
 \end{align}
Next we estimate the remaining terms in \eqref{f5.29}. We begin by proving some useful estimates.
Noticing that by \eqref{f5.7} and Bernstein's inequality,
\begin{align}\label{keya}
\aligned
	\left\|P_{\leq i-3}\mathbf{u}(t) \right\|_{L_{t,x}^{\infty}l^2 \left(J\times\mathbb{R}^2\times\mathbb{Z}_N \right)}
&\lesssim \left\|P_{\leq \eta^{1/2}_{1}}\mathbf{u}(t) \right\|_{L_{t,x}^{\infty}l^2 \left(J\times\mathbb{R}^2\times\mathbb{Z}_N \right)}
+ \left\|P_{\eta_1^{1/2}\leq\cdot\leq i-3}\mathbf{u}(t) \right\|_{L_{t,x}^{\infty}l^2 \left(J\times\mathbb{R}^2\times\mathbb{Z}_N \right)}\\
	&\lesssim \eta_1^{1/4}+\eta_02^i\lesssim \eta_02^i.
\endaligned
\end{align}
	Similarly, for any $p>2$, we have
	\begin{align}\label{keyz}
	\left\|P_{\leq i-3}\mathbf{u}(t) \right\|_{L_{t}^{\infty}L_x^pl^2 \left(J\times\mathbb{R}^2\times\mathbb{Z}_N \right)}
&\lesssim  \eta_02^{i(1-\frac{2}{p})},
	\end{align}
if $\eta_1$ is sufficiently small (depending on $p$ and $\eta_0$).

Recalling the definition of the $X_{i-3}$ norm, for all $0\leq j \leq i-3$, we have
\begin{align}\nonumber
	\left\| P_{j}\mathbf{u} \right\|^{20}_{L_t^{20} L_x^{20/9}l^2 \left(G^{3i}_{k_{\ast}}\times\mathbb{R}^2\times\mathbb{Z}_N \right)}
	\lesssim
    \sum_{G^{3j}_l\subset G^{3i}_{k_{\ast}}} \left\|P_{j}\mathbf{u} \right\|^{20}_{L_t^{20}L_x^{20/9}l^2 \left(G^{3j}_{l}\times\mathbb{R}^2\times\mathbb{Z}_N \right)}
\lesssim  2^{3i-3m} \left\|\mathbf{u} \right\|^{20}_{X_{i-3} \left(J\times\mathbb{R}^2\times\mathbb{Z}_N \right)}.
\end{align}
Thus, 
\begin{align}\label{xnorm}
 \left\| P_{\leq i-3}\mathbf{u} \right\|_{L_t^{20}L_x^{40/13}l^2 \left(G^{3i}_{k_{\ast}}\times\mathbb{R}^2\times\mathbb{Z}_N \right)}
%\lesssim& \sum_{m=0}^{i-3}2^{m/4} \left\| P_{m}\mathbf{u} \right\|_{L_t^{20}L_x^{20/9}l^2 \left(G^{3i}_{k_{\ast}}\times\mathbb{R}^2\times\mathbb{Z}_N \right)}\notag \\
\lesssim & \sum_{m=0}^{i-3}2^{\frac{3i-3m}{20}}2^{m/4} \left\|\mathbf{u} \right\|_{X_{i-3} \left(J\times\mathbb{R}^2\times\mathbb{Z}_N \right)} 
	\lesssim 2^{i/4} \left\|\mathbf{u} \right\|_{X_{i-3} \left(J\times\mathbb{R}^2\times\mathbb{Z}_N \right)}.
\end{align}
Similarly, we can also prove that
	\begin{align}\label{xnorm1}
		\left\|\nabla P_{\leq i-3}\mathbf{u} \right\|_{L_t^{3}L_x^{6}l^2 \left(G^{3i}_{k_{\ast}}\times\mathbb{R}^2\times\mathbb{Z}_N \right)}
&\lesssim 2^i \left\|\mathbf{u} \right\|_{X_{i-3} \left(J\times\mathbb{R}^2\times\mathbb{Z}_N \right)}.
	\end{align}
The estimates \eqref{keya}-\eqref{xnorm1} are sufficient for us to estimate the remaining terms in \eqref{f5.29}.
First, by \eqref{embeddingvp}, \eqref{u-pstrichartz}, \eqref{keya}, \eqref{xnorm1} and the Hardy-Littlewood-Sobolev inequality,
\begin{align}\nonumber
	& 2^iC\sum_{j\in\mathbb{Z}_N}\sum_{k\in\mathbb{Z}_N} \int\int |v_k(t,y)|^{2} \frac{1}{|x-y|}
\left( \overline{P_{\leq i-3}u_{j} { {F}}_j  \left(P_{\leq i - 3}\mathbf{u} \right)} \right) (t,x) \, \mathrm{d} x  \mathrm{d} y\\\nonumber
	 \lesssim& 2^i \left\| \mathbf{v} \right\|_{L_t^{40/3}L_x^{40/17}l^2 \left(G_{k_{\ast}}^{3i}
     \times
     \mathbb{R}^2
     \times\mathbb{Z}_N \right)}^2
\left\| P_{\leq i-3}\mathbf{u} \right\|_{L_t^{20}L_x^{40/13}l^2 \left(G_{k_{\ast}}^{3i}\times\mathbb{R}^2\times\mathbb{Z}_N \right)}^3
\left\|P_{\leq i-3}\mathbf{u}(t) \right\|_{L_{t}^{\infty}L_x^{8/3}l^2 \left(J\times\mathbb{R}^2\times\mathbb{Z}_N \right)}
\\\label{estttt2}
 \lesssim& \eta_02^{2i} \left\|\mathbf{u} \right\|^{2}_{X_{i-3} \left(J\times\mathbb{R}^2\times\mathbb{Z}_N \right)}.
\end{align}
Now consider the terms
\begin{equation}\label{f5.37}
 { {F}}_j \left(P_{\leq i - 3}\mathbf{u} \right) - P_{\leq i - 3} { {F}}_j \left(\mathbf{u} \right)
= -\mathcal {N}^{(j)}_{i - 3},\quad \forall \, j\in\mathbb{Z}_N.
\end{equation}
where
\begin{align}\nonumber
	\mathcal{N}_{i-3}^{(j)}
=&P_{\leq i-3} {F}_j \left(\mathbf{u} \right)- {F}_j \left(P_{\leq i-3}\mathbf{u} \right)
=P_{\leq i-3} {F}_j \left(\mathbf{u} \right) -P_{\leq i-3} {F}_j \left(P_{\leq i-6}\mathbf{u} \right)
+ {F}_j \left(P_{\leq i-6}\mathbf{u} \right)- {F}_j \left(P_{\leq i-3}\mathbf{u} \right)\\\label{f5.38}
	=&\mathcal{N}_{i-3}^{(j,1)}+\mathcal{N}_{i-3}^{(j,2)},\\\nonumber
\mathcal{N}_{i-3}^{(j,1)}=& \sum_{k\in\mathbb{Z}_N} O \left(P_{\leq i-6}u_k P_{\leq i-6}u_kP_{i-6\leq\cdot\leq i-3}u_j \right)
+ \sum_{k\in\mathbb{Z}_N} P_{\leq i-3}O \left(P_{\leq i-6}u_k P_{\leq i-6}u_kP_{ \geq i-6}u_j \right), \\\nonumber
	\mathcal{N}_{i-3}^{(j,2)}=&\sum_{k\in\mathbb{Z}_N} P_{\leq i-3}O \left(P_{\leq i-6}u_k P_{\geq i-6}u_kP_{\geq i-6}u_j \right)
	+\sum_{k\in\mathbb{Z}_N} P_{\leq i-3}O\left(P_{\geq i-6}u_k P_{\geq i-6}u_kP_{\geq i-6}u_j \right)\\\nonumber
	&   +\sum_{k\in\mathbb{Z}_N}O \left(P_{\leq i-6}u_k P_{i-6 \leq\cdot\leq i-3}u_kP_{i-6 \leq\cdot\leq i-3}u_j \right)\\\nonumber
	&    +\sum_{k\in\mathbb{Z}_N}O \left(P_{i-6 \leq\cdot\leq i-3}u_k P_{i-6 \leq\cdot\leq i-3}u_kP_{i-6 \leq\cdot\leq i-3}u_j \right).
\end{align}
Using \eqref{f5.18}, \eqref{xnorm1} and Bernstein's inequality,
\begin{align}\label{estqq1}
\aligned
 & 2^iC\sum_{j\in\mathbb{Z}_N}\sum_{k\in\mathbb{Z}_N}\int_{G^{3i}_{k_{\ast}}} \int \int |v_k(t,y)|^{2} \frac{ x - y }{|x - y |}
 \cdot \Re \left[\overline{\mathcal N^{(j,2)}_{i - 3}} \nabla P_{\leq i-3}u_{j} \right](t,x)  \,\mathrm{d} x  \mathrm{d} y  \mathrm{d} t \\
  \lesssim& 2^i \left\|P_{\geq i-6}\mathbf{u} \right\|^2_{L_t^{3}L_x^{6}l^2 \left(G_{k_{\ast}}^{3i}\times\mathbb{R}^2\times\mathbb{Z}_N \right)}
 \left\| \mathbf{u} \right\|_{L_{t}^{\infty}L_x^{2}l^2 \left(G_{k_{\ast}}^{3i}\times\mathbb{R}^2\times\mathbb{Z}_N \right)}
 \left\|\nabla P_{\leq i-3}\mathbf{u} \right\|_{L_{t}^{3}L_x^{6}l^2 \left(G_{k_{\ast}}^{3i}\times\mathbb{R}^2\times\mathbb{Z}_N \right)} \\
   \lesssim& 2^{2i}\eta_{0}^{1/3} \left\| \mathbf{u} \right\|_{X_{i - 3} \left(J \times \mathbb{R}^2\times\mathbb{Z}_N \right)}^{8/3}.
 \endaligned
\end{align}
Noticing that by the Coifman-Meyer multiplier theorem,
$$\left\|P_{\leq i-3}f P_{\leq i-2}g-P_{\leq i-3} \left(fP_{\leq i-2}g \right) \right\|_{L_x^1}
\lesssim 2^{-i}\|f\|_{L_x^p} \|\nabla g\|_{L_x^p}, \quad p^{-1}+q^{-1}=1.$$
So once again, using \eqref{f5.18}, \eqref{xnorm1} and Bernstein's inequality, we get
\begin{align}\label{f5.40}
& \left\|  \left|\mathcal{N}_{i-3}^{(j,2)} \right|  |P_{\leq i-3}u_j| \right\|_{L_{t,x}^{1} \left(G_{k_{\ast}}^{3i}\times\mathbb{R}^2 \right)}\\
 \lesssim &  \left\|\mathbf{u} \right\|_{L_t^{\infty}L_x^2l^2 \left(G_{k_{\ast}}^{3i}\times\mathbb{R}^2\times\mathbb{Z}_N \right)}
 \left\|P_{\geq i-3}\mathbf{u} \right\|^3_{L_t^3L_x^6l^2 \left(G_{k_{\ast}}^{3i}\times\mathbb{R}^2\times\mathbb{Z}_N \right)}\notag\\
&+\sum_{k\in\mathbb{Z}_N} \left\|O \left(P_{\leq i-6}u_k P_{i-6 \leq\cdot\leq i-3}u_kP_{i-6 \leq\cdot\leq i-3}u_j \right)
 P_{\leq i-3}u_j \right\|_{L_{t,x}^{1} \left(G_{k_{\ast}}^{3i}\times\mathbb{R}^2 \right)}\notag\\
&+\sum_{k\in\mathbb{Z}_N} \left\|O \left(P_{\leq i-6}u_k P_{\geq i-6}u_kP_{\geq i-6}u_j \right)
P_{\leq i-3}u_j \right\|_{L_{t,x}^{1} \left(G_{k_{\ast}}^{3i}\times\mathbb{R}^2 \right)}\notag\\
&+2^{-i} \left\|P_{\geq i-6}\mathbf{u} \right\|^2_{L_t^{3}L_x^{6}l^2 \left(G_{k_{\ast}}^{3i}\times\mathbb{R}^2\times\mathbb{Z}_N \right)}
\left\| \mathbf{u} \right\|_{L_{t}^{\infty}L_x^{2}l^2 \left(G_{k_{\ast}}^{3i}\times\mathbb{R}^2\times\mathbb{Z}_N \right)}
\left\|\nabla P_{\leq i-3}\mathbf{u} \right\|_{L_{t}^{3}L_x^{6}l^2 \left(G_{k_{\ast}}^{3i}\times\mathbb{R}^2\times\mathbb{Z}_N \right)}\notag\\
%\lesssim &  \eta_{0}^{1/3} \left\| \mathbf{u} \right\|_{X_{i - 3} \left(J \times \mathbb{R}^2\times\mathbb{Z}_N \right)}^{8/3}
%+ \eta_{0}^{1/2} \left\| \mathbf{u} \right\|_{X_{i - 3} \left(J \times \mathbb{R}^2\times\mathbb{Z}_N \right)}^{5/2}
%+ \left\| \left|P_{\geq i-6}\mathbf{u} \right| \left|P_{\leq i-9}\mathbf{u} \right| \right\|^2_{L^2_{t,x} \left(G_{k_{\ast}}^{3i}\times\mathbb{R}^2 \right)}\notag\\
\lesssim & \eta_{0}^{1/3} \left\| \mathbf{u} \right\|_{X_{i - 3} \left(J \times \mathbb{R}^2\times\mathbb{Z}_N \right)}^{8/3}
+ \eta_{0}^{1/2} \left\| \mathbf{u} \right\|_{X_{i - 3} \left(J \times \mathbb{R}^2\times\mathbb{Z}_N \right)}^{5/2}
+ \eta_0^{1/10} \left\| \mathbf{u} \right \|^2_{X_{i - 3} \left(J \times \mathbb{R}^2\times\mathbb{Z}_N \right)}.
\end{align}
Therefore, we have
\begin{align}\label{estqq2}
\aligned
& 2^{i+1} \sum_{j\in\mathbb{Z}_N}\sum_{k\in\mathbb{Z}_N}\int_{G^{3i}_{k_{\ast}}}\int\int \Im \left[ \overline{P_{\leq i-3}u_{j}} \mathcal {N}^{(j,2)}_{i - 3} \right](t, y)
\frac{x - y }{|x - y|}
\cdot \Im \left[\overline{v}_k \nabla v_k \right](t,x)  \, \mathrm{d} x \mathrm{d} y  \mathrm{d} t\\
 \lesssim&  2^{i+m} \left(\eta_{0}^{1/3} \left\| \mathbf{u} \right\|_{X_{i - 3} \left(J \times \mathbb{R}^2\times\mathbb{Z}_N \right)}^{8/3}
+ \eta_{0}^{1/2} \left\| \mathbf{u} \right\|_{X_{i - 3} \left(J \times \mathbb{R}^2\times\mathbb{Z}_N \right)}^{5/2}
+\eta_0^{1/10} \left\| \mathbf{u} \right\|^2_{X_{i - 3} \left(J \times \mathbb{R}^2\times\mathbb{Z}_N \right)} \right),
\endaligned
\end{align}
and
\begin{align}\label{estqq3}
\aligned
	& 2^iC\sum_{j\in\mathbb{Z}_N}\sum_{k\in\mathbb{Z}_N}
\int_{G^{3i}_{k_{\ast}}}\int\int |v_k(t,y)|^{2} \frac{ x - y }{| x - y |}
\cdot \Re \left[\overline{P_{\leq i-3}u_{j}} \nabla \mathcal{N}^{(j,2)}_{i - 3} \right](t,x) \, \mathrm{d} x  \mathrm{d} y  \mathrm{d} t\\
	 \lesssim & 2^iC  \left|\sum_{j\in\mathbb{Z}_N}\sum_{k\in\mathbb{Z}_N}\int_{G^{3i}_{k_{\ast}}} \int\int |v_k(t,y)|^{2} \frac{ x - y }{| x - y |}
\cdot \Re \left[\nabla \left(\overline{P_{\leq i-3}u_{j}}  \mathcal{N}^{(j,2)}_{i - 3} \right) \right](t,x)  \,\mathrm{d} x  \mathrm{d}y \mathrm{d}t \right| \\
& 	 +2^iC \left|\sum_{j\in\mathbb{Z}_N}\sum_{k\in\mathbb{Z}_N}\int_{G^{3i}_{k_{\ast}}}\int\int |v_k(t,y)|^{2} \frac{ x - y }{| x - y |}
\cdot \Re \left[\nabla\overline{P_{\leq i-3}u_{j}}  \mathcal{N}^{(j,2)}_{i - 3} \right](t,x) \, \mathrm{d} x \mathrm{d} y  \mathrm{d} t \right| \\
	 \lesssim& 2^{2i} \left(\eta_{0}^{1/3}  \left\| \mathbf{u}  \right\|_{X_{i - 3} \left(J \times \mathbb{R}^2\times\mathbb{Z}_N \right)}^{8/3}
+ \eta_{0}^{1/2} \left\| \mathbf{u}  \right\|_{X_{i - 3} \left(J \times \mathbb{R}^2\times\mathbb{Z}_N \right)}^{5/2}
+\eta_0^{1/10} \left\| \mathbf{u} \right\|^2_{X_{i - 3} \left(J \times \mathbb{R}^2\times\mathbb{Z}_N \right)} \right).
\endaligned
\end{align}
Following \eqref{f5.40}-\eqref{estqq3}, we also have
\begin{align}\label{estqq41}
\aligned
&  2^iC \left|\sum_{j\in\mathbb{Z}_N}\sum_{k\in\mathbb{Z}_N}\int_{G^{3i}_{k_{\ast}}}\int\int |v_k(t,y)|^{2} \frac{ x - y }{| x - y |}
 \cdot \Re \left[\overline{P_{i-6\leq\cdot\leq i-3}u_{j}} \nabla \mathcal N^{(j,1)}_{i - 3} \right](t,x) \,\mathrm{d}x \mathrm{d} y \mathrm{d}t \right| \\
&   + 2^iC \left|\sum_{j\in\mathbb{Z}_N}\sum_{k\in\mathbb{Z}_N}\int_{G^{3i}_{k_{\ast}}}\int\int |v_k(t,y)|^{2} \frac{ x - y }{| x - y |}
  \cdot \Re \left[\overline{\mathcal N^{(j,1)}_{i - 3}} \nabla P_{i-6\leq\cdot\leq i-3}u_{j} \right](t,x) \, \mathrm{d}x \mathrm{d} y \mathrm{d} t \right| \\
& +2^{i+1}C \left|\sum_{j\in\mathbb{Z}_N}\sum_{k\in\mathbb{Z}_N}\int_{G^{3i}_{k_{\ast}}} 
\int\int \Im \left[\overline{P_{i-6\leq\cdot\leq i-3}u_{j}} \mathcal N^{(j,1)}_{i - 3} \right](t, y)
\frac{ x - y }{| x - y |} \cdot \Im \left[\overline{v}_k \nabla v_k \right](t,x)\,  \mathrm{d}x \mathrm{d}y \mathrm{d} t \right| \\
 \lesssim& 2^{i+m} \left(\eta_{0}^{1/3}  \left\| \mathbf{u} \right\|_{X_{i - 3} \left(J \times \mathbb{R}^2\times\mathbb{Z}_N \right)}^{8/3}
+ \eta_{0}^{1/2} \left\| \mathbf{u} \right\|_{X_{i - 3} \left(J \times \mathbb{R}^2\times\mathbb{Z}_N \right)}^{5/2}
+\eta_0^{1/10} \left\| \mathbf{u} \right\|^2_{X_{i - 3} \left(J \times \mathbb{R}^2\times\mathbb{Z}_N \right)} \right).
\endaligned
\end{align}
Finally, observe that for each $j,k\in\mathbb{Z}_N$, the Fourier support of
$\mathcal N_{i-3}^{(j,1)}P_{\leq i-6}u_k$
lies on frequencies $|\xi| \geq 2^{i - 9}$. Therefore, integrating by parts and using the Hardy-Littlewood-Sobolev inequality, we obtain 
\begin{equation}\label{estqq42}
\aligned
& 2^{i+1}C \left|\sum_{j\in\mathbb{Z}_N}\sum_{k\in\mathbb{Z}_N}\int \int \int \Im \left[\overline{P_{\leq i-6}u_k}  \mathcal N_{i-3}^{(j,1)} \right](t, y)
\frac{ x - y }{| x - y |} \Im \left[\overline{v_k} \partial_{x_{1}} v_k \right](t,x) \, \mathrm{d} x  \mathrm{d} y \mathrm{d}t \right| \\
 =&2^{i+1}C \Bigg|\sum_{l_1=1}^2\sum_{l_2=1}^2\sum_{j\in\mathbb{Z}_N}\sum_{k\in\mathbb{Z}_N} \int \int \int \partial_{l_1}{ (-\Delta_y)}^{-1}
\left(\Im \left[\overline{P_{\leq i-6}u_k}  \mathcal N_{i-3}^{(j,1)} \right] \right)(t,y)
 \\
 & \qquad \cdot \left(\frac{\delta_{l_1,l_2}}{|x-y|}-\frac{(x-y)_{l_1}(x-y)_{l_2}}{|x-y|^2} \right)
 \Im \left[\overline{v_k} \partial_{l_2} v_k \right](t,x)  \, \mathrm{d} x \mathrm{d}y  \mathrm{d} t  \Bigg|\\
 \lesssim&  \left\| \mathbf{v} \right\|_{L_t^{5/2}L_x^{10}l^2 \left(G_{k_{\ast}}^{3i}\times\mathbb{R}^2 \times\mathbb{Z}_N \right)}
\left\| \nabla\mathbf{v} \right\|_{L_t^{5/2}L_x^{10}l^2 \left(G_{k_{\ast}}^{3i}\times\mathbb{R}^2 \times\mathbb{Z}_N \right)}
\left\| P_{\leq i-3}\mathbf{u} \right\|_{L_t^{20}L_x^{40/11}l^2 \left(G_{k_{\ast}}^{3i}\times\mathbb{R}^2\times\mathbb{Z}_N \right)}^4
\\
  \lesssim& \eta_0^{2}2^{i+m} \left\| \mathbf{u}  \right\|^2_{X_{i - 3} \left(J \times \mathbb{R}^2\times\mathbb{Z}_N \right)}.
\endaligned
\end{equation}
By the triangle inequality, \eqref{estqq41} and \eqref{estqq42} immediately imply that
\begin{align}\label{estqq4}
\aligned
& 2^{i+1} \left|\sum_{j\in\mathbb{Z}_N}\sum_{k\in\mathbb{Z}_N}\int \int \int \Im \left[\overline{P_{\leq i-6}u_k}  \mathcal N_{i-3}^{(j,1)} \right](t, y)
\frac{(x - y)_{1}}{|(x - y)_{1}|} \Im \left[\overline{v_k} \partial_{x_{1}} v_k \right](t,x) \,\mathrm{d}x \mathrm{d} y  \mathrm{d} t \right| \\
 \lesssim& 2^{i+m} \left(\eta_{0}^{1/3}  \left\| \mathbf{u}  \right\|_{X_{i - 3} \left(J \times \mathbb{R}^2\times\mathbb{Z}_N \right)}^{8/3}
 + \eta_{0}^{1/2} \left\| \mathbf{u}  \right\|_{X_{i - 3} \left(J \times \mathbb{R}^2\times\mathbb{Z}_N \right)}^{5/2}
 +\eta_0^{1/10} \left\| \mathbf{u} \right\|^2_{X_{i - 3} \left(J \times \mathbb{R}^2\times\mathbb{Z}_N \right)} \right).
 \endaligned
\end{align}
Using almost the same strategy, we can also deduce that
\begin{align}\label{estqq5}
\aligned
& 	2^i \left|\sum_{j\in\mathbb{Z}_N}\sum_{k\in\mathbb{Z}_N}\int_{G^{3i}_{k_{\ast}}}\int\int  |v_k(t,y)|^{2} \frac{ x - y }{| x - y |}
\cdot \Re \left[\overline{P_{\leq i-3}u_{j}} \nabla \mathcal N^{(j,1)}_{i - 3} \right](t,x) \,\mathrm{d}x \mathrm{d}y \mathrm{d} t \right|\footnotemark	 \\
 & + 2^i \left|\sum_{j\in\mathbb{Z}_N}\sum_{k\in\mathbb{Z}_N}\int_{G^{3i}_{k_{\ast}}}\int\int |v_k(t,y)|^{2} \frac{ x - y }{| x - y |}
 \cdot \Re \left[\overline{\mathcal N^{(j,1)}_{i - 3}} \nabla P_{\leq i-3}u_{j} \right](t,x) \, \mathrm{d}x  \mathrm{d} y  \mathrm{d} t \right| \\
 	\lesssim& 2^{i+m} \left(\eta_{0}^{1/3} \left\| \mathbf{u} \right\|_{X_{i - 3} \left(J \times \mathbb{R}^2\times\mathbb{Z}_N \right)}^{8/3}
+ \eta_{0}^{1/2} \left\| \mathbf{u} \right\|_{X_{i - 3} \left(J \times \mathbb{R}^2\times\mathbb{Z}_N \right)}^{5/2}
+\eta_0^{1/10} \left\| \mathbf{u} \right\|^2_{X_{i - 3} \left(J \times \mathbb{R}^2\times\mathbb{Z}_N \right)} \right).
\endaligned
\end{align}
\footnotetext{For this term, we also need to use integration by parts to transfer the derivative in  $\mathcal N_{i-3}^{(j,1)}$ to the other terms. }
Plugging \eqref{estttt1}, \eqref{estttt2}, \eqref{estqq1},  \eqref{estqq2},  \eqref{estqq3},  \eqref{estqq4} and \eqref{estqq5} into \eqref{f5.29}, we get 
\begin{align}\label{f5.47}
& 2^{2m} \left\|  \left|P_{\leq i-3}\mathbf{u} \right|\cdot \left|\mathbf{v} \right| \right\|_{L_{t,x}^{2}}^{2}
 \notag \\
  \lesssim& 2^{i+m} \left(1+\eta_{0}^{1/3}  \left\| \mathbf{u} \right\|_{X_{i - 3} \left(J \times \mathbb{R}^2\times\mathbb{Z}_N \right)}^{8/3}
 + \eta_{0}^{1/2} \left\| \mathbf{u}  \right\|_{X_{i - 3} \left(J \times \mathbb{R}^2\times\mathbb{Z}_N \right)}^{5/2}
 +\eta_0^{1/10} \left\| \mathbf{u} \right\|^2_{X_{i - 3} \left(J \times \mathbb{R}^2\times\mathbb{Z}_N \right)} \right).
\end{align}
Thus, we have proved \eqref{f5.19} for $\mathbf{v}$ with Fourier support on $|\xi|\sim 2^m$.
For $\mathbf{v}$ with Fourier support on $|\xi|\geq 2^m$, one may verify that the implicit constant in \eqref{f5.47} can be chosen linearly with $\left\|\mathbf{v}\right\|^2_{L_x^2l^2}$ and independent of $m$, so \eqref{f5.19} holds.

Finally, we estimate
\begin{equation}\label{f5.48}
\left\|  |P_{\geq i} \mathbf{u}||P_{\leq i - 3} \mathbf{u}| \right\|_{L_{t,x}^{2}} .
\end{equation}
Comparing with \eqref{f5.19}, this is more complicated because $P_{\geq i}\mathbf{u}$ is not a solution to \eqref{1.1}. As we did in the proof of \eqref{f5.19}, for any $\omega\in \mathbb{S}^1$, we define the interaction Morawetz action 
\begin{align*} 
	\tilde{M}_{m,\omega}(t) = & \sum_{j\in\mathbb{Z}_N}\sum_{k\in\mathbb{Z}_N}\int |P_{i}u_k(t, y)|^{2} \frac{(x - y)_{\omega}}{|(x - y)_{\omega}|}
 \Im \left[\overline{P_{\leq i+3}u_{j}} \partial_{\omega} P_{\leq i+3}u_j \right](t, x)  \, \mathrm{ d} x  \mathrm{d} y   \\
&  + \sum_{j\in\mathbb{Z}_N}\sum_{k\in\mathbb{Z}_N}\int |P_{\leq i+3}u_j(t,y)|^{2} \frac{(x - y)_{\omega}}{|(x - y)_{\omega}|}
  \Im \left[\overline{P_{m}v_k} \partial_{\omega} P_mv_k \right] (t,x) \, \mathrm{d} x  \mathrm{d} y
\end{align*}
and let
\begin{align*}
	\tilde{M}_m(t)=\int_{ \mathbb{S}^1} \tilde{M}_{m,\omega}(t) \,  \mathrm{d} \omega.
\end{align*}
After  tedious calculation, we have 
\begin{equation}\label{f5.50}
\aligned
& \frac{ \mathrm{d} }{ \mathrm{d} t} \tilde{M}_{m,\omega}(t) \notag\\
 = & 2 \sum_{j\in\mathbb{Z}_N}\sum_{k\in\mathbb{Z}_N}\int \int_{x_{\omega} = y_{\omega}}
 \left|\partial_{\omega} \left(\overline{P_{m}u_k(t,y)} P_{\leq i-3}u_{j}(t,x) \right) \right|^{2}  \, \mathrm{d} x  \mathrm{d} y
\\
&  - \sum_{j\in\mathbb{Z}_N}\sum_{k\in\mathbb{Z}_N}\int \int_{x_{\omega} = y_{\omega}}
 |P_mu_k(t,y)|^{2} \left[\overline{P_{\leq i-3}u_{j}}  { {F}}_j \left(P_{\leq i - 3}\mathbf{u} \right)\right](t,x) \, \mathrm{d} x  \mathrm{d} y \\
 & + \sum_{j\in\mathbb{Z}_N}\sum_{k\in\mathbb{Z}_N}\int\int |P_mu_k(t,y)|^{2} \frac{(x - y)_{\omega}}{|(x - y)_{\omega}|}
  \cdot \Re \left[\overline{P_{\leq i-3}u_{j}} \partial_{\omega} \mathcal N^{(j)}_{i - 3} \right](t,x) \, \mathrm{d} x  \mathrm{d}  y \\
 & -  \sum_{j\in\mathbb{Z}_N}\sum_{k\in\mathbb{Z}_N}\int\int |P_mu_k(t,y)|^{2} \frac{(x - y)_{\omega}}{|(x - y)_{\omega}|}
  \cdot \Re \left[\overline{\mathcal N^{(j)}_{i - 3}} \partial_{\omega} P_{\leq i-3}u_{j} \right](t,x)  \, \mathrm{d} x  \mathrm{d} y \\
& + 2 \sum_{j\in\mathbb{Z}_N}\sum_{k\in\mathbb{Z}_N}\int\int \Im \left[\overline{P_{\leq i-3}u_{j}} \mathcal N^{(j)}_{i - 3} \right](t, y)
\frac{(x - y)_{\omega}}{|(x - y)_{\omega}|} \cdot \Im \left[\overline{P_mu_k} \partial_{\omega} P_mu_k \right](t,x) \, \mathrm{ d} x  \mathrm{d} y\\
& + \sum_{j\in\mathbb{Z}_N}\sum_{k\in\mathbb{Z}_N}\int \int |P_{\leq i - 3} u_j(t,y)|^{2} \frac{(x - y)_{\omega}}{|(x - y)_{\omega}|} \Re \left[\overline{P_{m} u_k} \partial_{\omega} P_{m} ( {F}_k) \right](t,x) \,  \mathrm{d} x  \mathrm{d} y \\
& - \sum_{j\in\mathbb{Z}_N}\sum_{k\in\mathbb{Z}_N}\int \int |P_{\leq i - 3} u_j(t,y)|^{2} \frac{(x - y)_{\omega}}{|(x - y)_{\omega}|}
\Re \left[\overline{P_{m} {F}_k } \partial_{\omega} P_{m} u_k \right](t,x) \, \mathrm{d}x  \mathrm{d}y \\
& + 2 \sum_{j\in\mathbb{Z}_N}\sum_{k\in\mathbb{Z}_N}\int \int \Im \left[\overline{P_{m} u_k} P_{m}  {F}_k  \right](t,y) \frac{(x - y)_{\omega}}{|(x - y)_{\omega}|}
\cdot\Im \left[\overline{P_{\leq i-3}u_{j}} \partial_{\omega} P_{\leq j-3}u_{j} \right](t,x)  \,\mathrm{d}x  \mathrm{d}y.
\endaligned
\end{equation}
Recalling  \eqref{f5.27}, as in \eqref{f5.29}, we also have
\begin{align}
	& \ 2^{2m} \sum_{j\in\mathbb{Z}_N}\sum_{k\in\mathbb{Z}_N}
\left\| P_m u_{k}P_{\leq i-3}u_{j}  \right\|_{L_{t,x}^{2} \left(G^{3i}_{k_{\ast}} \times \mathbb{R}^{2} \right)}^{2} \notag  \\
  \lesssim &  2^{i}\sup_{t\in G^{3i}_{k_{\ast}}}| \tilde{M}_m(t)|
 +2^iC'\sum_{j\in\mathbb{Z}_N}\sum_{k\in\mathbb{Z}_N}\int_{G^{3i}_{k_{\ast}}}\int\int |P_{m}u_k(t,y)|^{2} \frac{1}{|x-y|} \left[
 \overline{P_{\leq i-3}u_{j}} { {F}}_j
  \left(P_{\leq i - 3}\mathbf{u} \right)\right](t,x) \, \mathrm{ d} x  \mathrm{d} y \mathrm{d} t\label{ert1}\\
& 	- 2^iC\sum_{j\in\mathbb{Z}_N}\sum_{k\in\mathbb{Z}_N}\int_{G^{3i}_{k_{\ast}}}\int\int |P_{m}u_k(t,y)|^{2} \frac{ x - y }{| x - y |}
\cdot \Re \left[\overline{P_{\leq i-3}u_{j}}\cdot \nabla \mathcal N^{(j)}_{i - 3} \right](t,x)  \, \mathrm{d} x  \mathrm{d} y  \mathrm{d} t \label{ert2}\\
 & + 2^iC\sum_{j\in\mathbb{Z}_N}\sum_{k\in\mathbb{Z}_N}\int_{G^{3i}_{k_{\ast}}}\int\int |P_{m}u_k(t,y)|^{2} \frac{ x - y }{| x - y |}
 \cdot \Re \left[\overline{\mathcal N^{(j)}_{i - 3}} \nabla P_{\leq i-3}u_{j} \right](t,x)  \, \mathrm{d} x  \mathrm{d} y  \mathrm{d} t \label{ert3}\\
& 	- 2^{i+1}C \sum_{j\in\mathbb{Z}_N}\sum_{k\in\mathbb{Z}_N}\int_{G^{3i}_{k_{\ast}}}\int\int \Im \left[\overline{P_{\leq i-3}u_{j}} \mathcal N^{(j)}_{i - 3} \right](t, y)
\frac{ x - y }{| x - y |} \cdot \Im \left[\overline{v}_k \nabla P_{m}u_k \right](t,x)  \, \mathrm{d} x  \mathrm{d} y  \mathrm{d} t\label{ert4}\\
& 	- 2^iC\sum_{j\in\mathbb{Z}_N}\sum_{k\in\mathbb{Z}_N}\int_{G^{3i}_{k_{\ast}}}\int \int |P_{\leq i - 3} u_j(t,y)|^{2} \frac{ x - y }{| x - y |}
\cdot \Re \left[\overline{P_{m} u_k} \nabla P_{m}( {F}_k) \right](t,x)  \, \mathrm{d}x  \mathrm{d} y \mathrm{d} t\label{ert5}\\
& 	+ 2^iC\sum_{j\in\mathbb{Z}_N}\sum_{k\in\mathbb{Z}_N}\int_{G^{3i}_{k_{\ast}}}\int \int |P_{\leq i - 3} u_j(t,y)|^{2} \frac{ x - y }{| x - y |}
\cdot \Re \left[\overline{P_{m} ( {F}_k)} \nabla P_{m} u_k \right](t,x) \, \mathrm{d} x  \mathrm{d} y \mathrm{d} t\label{ert6}\\
& 	- 2^{i+1}C \sum_{j\in\mathbb{Z}_N}\sum_{k\in\mathbb{Z}_N}\int_{G^{3i}_{k_{\ast}}}\int \int \Im \left[\overline{P_{m} u_k} P_{m}( {F}_k) \right](t,y)
 \frac{ x - y }{| x - y |} \cdot\Im \left[\overline{P_{\leq i-3}u_{j}} \nabla P_{\leq i-3}u_{j} \right](t,x) \, \mathrm{d}x \mathrm{ d} y \mathrm{d} t.\label{ert7}
\end{align}
Noting that $\left\|P_{\geq i}\mathbf{u}(t) \right\|_{L_t^{\infty}L_x^2l^2 \left(J\times\mathbb{R}^2\times\mathbb{Z}_N \right)}\leq \eta_0$, and 
following \eqref{estttt1}-\eqref{estqq5}, we have\footnote{Here we use
$ \left\| P_{m}\mathbf{u} \right\|_{L_t^{5/2}L_x^{10}l^2 \left(G_{k_{\ast}}^{3i}\times\mathbb{R}^2\times\mathbb{Z}_N \right)}^2
\lesssim  \left\| P_{m}\mathbf{u}  \right\|_{L_t^{25/12}L_x^{50}l^2 \left(G_{k_{\ast}}^{3i}\times\mathbb{R}^2\times\mathbb{Z}_N \right)}^{5/3}
\left\| P_{m}\mathbf{u}  \right\|_{L_t^{\infty}L_x^{2}l^2 \left(G_{k_{\ast}}^{3i}\times\mathbb{R}^2\times\mathbb{Z}_N \right)}^{1/3}
\lesssim \eta_0^{1/3} \left\| \mathbf{u}  \right\|_{X_{i - 3} \left(J \times \mathbb{R}^2\times\mathbb{Z}_N \right)}^{5/3}$. }
\begin{align}\label{estqqq1}
& |\eqref{ert1}|+|\eqref{ert2}|+|\eqref{ert3}|+|\eqref{ert4}| \\\nonumber
& \lesssim  \eta_0^{1/3}2^{i+m} \left(1+ \left\| \mathbf{u}  \right\|_{X_{i - 3} \left(J \times \mathbb{R}^2\times\mathbb{Z}_N \right)}^{11/3}
+\eta_{0}^{1/3}  \left\| \mathbf{u}  \right\|_{X_{i - 3} \left(J \times \mathbb{R}^2\times\mathbb{Z}_N \right)}^{8/3}
+ \eta_{0}^{1/2}  \left\| \mathbf{u}  \right\|_{X_{i - 3} \left(J \times \mathbb{R}^2\times\mathbb{Z}_N \right)}^{5/2}
+\eta_0^{1/10} \left\| \mathbf{u}  \right\|^2_{X_{i - 3} \left(J \times \mathbb{R}^2\times\mathbb{Z}_N \right)} \right).
\end{align}
Thus, we only need to estimate terms \eqref{ert5}-\eqref{ert7}.

For term \eqref{ert7}, observe that $P_{m}\mathbf{u}=P_{m}P_{\geq i-3}\mathbf{u}$ and $P_{m} \left(|P_{\leq i-6}u_j|^2P_{\leq i-6}u_k \right)=0,\ \forall \, j,k\in\mathbb{Z}_N$, then we can follow the proof of \eqref{f5.40} to deduce that
 \begin{align}\nonumber
 	 & \left|- 2^{i+1}C \sum_{j\in\mathbb{Z}_N}\sum_{k\in\mathbb{Z}_N}\int_{G^{3i}_{k_{\ast}}}\int \int \Im \left[\overline{P_{m} u_k} P_{m} ( {F}_k) \right](t,y)
 \frac{ x - y }{| x - y |}
 \cdot\Im \left[\overline{P_{\leq i-3}u_{j}} \nabla P_{\leq i-3}u_{j} \right](t,x) \, \mathrm{d} x  \mathrm{d} y \mathrm{d} t\right| \\\label{estqqq2}
 & 	\lesssim 2^{i+m} \left(\eta_{0}^{1/3}  \left\| \mathbf{u}   \right\|_{X_{i - 3} \left(J \times \mathbb{R}^2\times\mathbb{Z}_N \right)}^{8/3}
 + \eta_{0}^{1/2}  \left\| \mathbf{u}  \right\|_{X_{i - 3}  \left(J \times \mathbb{R}^2\times\mathbb{Z}_N \right)}^{5/2}
 +\eta_0^{1/10} \left\| \mathbf{u}  \right\|^2_{X_{i - 3} \left(J \times \mathbb{R}^2\times\mathbb{Z}_N \right)} \right).
 \end{align}
The contribution of terms \eqref{ert5} and \eqref{ert6} are also quite similar to \eqref{ert2} and \eqref{ert3}. Just decompose $P_m ({F}_k)$ as
\begin{align*}
	P_m ({F}_k)   =  & \sum_{j\in\mathbb{Z}_N}P_m O \left(P_{\geq i-3}u_j P_{\geq i-3}u_jP_{\geq i-3}u_k \right)
+\sum_{j\in\mathbb{Z}_N}P_m O \left(P_{\geq i-3}u_jP_{\geq i-3}u_jP_{\leq i-3}u_k \right)\\
& +\sum_{j\in\mathbb{Z}_N} (\delta_{kj}-2)
\Bigg[ P_m  \left( |P_{\leq i-3}u_j|^2  P_{\geq i-3}u_k \right)
+P_m \left(P_{\leq i-3}u_j  P_{\leq i-3}u_k\overline{P_{\geq i-3}u_j} \right) \\
 & \quad +P_m \left(\overline{P_{\leq i-3}u_j}  P_{\leq i-3}u_k{P_{\geq i-3}u_j} \right)\Bigg] \\
  :=&  \mathcal{M}^{(k)}_1+\mathcal{M}^{(k)}_2.
\end{align*}
Using Bernstein's inequality and H\"older's inequality, we have
\begin{align}\label{estqqq3}
\aligned
& \left|2^iC\sum_{j\in\mathbb{Z}_N}\sum_{k\in\mathbb{Z}_N}\int_{G^{3i}_{k_{\ast}}}\int \int |P_{\leq i - 3} u_j(t,y)|^{2} \frac{ x - y }{| x - y |}
\cdot \Re \left[\overline{P_{m} u_k} \nabla \mathcal{M}^{(k)}_1  \right](t,x) \, \mathrm{d} x  \mathrm{d} y \mathrm{d} t \right|  \\
& +  \left|2^iC\sum_{j\in\mathbb{Z}_N}\sum_{k\in\mathbb{Z}_N}\int_{G^{3i}_{k_{\ast}}}\int \int |P_{\leq i - 3} u_j(t,y)|^{2} \frac{ x - y }{| x - y |}
\cdot \Re \left[\overline{\mathcal{M}^{(k)}_1} \nabla P_{m} u_k \right](t,x) \, \mathrm{d} x  \mathrm{d} y \mathrm{d} t\right| \\
 & 	\lesssim 2^{i+j} \left\|\mathbf{u} \right\|^3_{L_t^{\infty}L_x^2l^2 \left(G_{k_{\ast}}^{3i}\times\mathbb{R}^2 \times\mathbb{Z}_N \right)}
\left\|P_{\geq i-3}\mathbf{u} \right\|^3_{L_t^{3}L_x^6l^2 \left(G_{k_{\ast}}^{3i}\times\mathbb{R}^2 \times\mathbb{Z}_N \right)}
\lesssim \eta_{0}^{1/2}  \left\| \mathbf{u}  \right\|_{X_{i - 3} \left(J \times \mathbb{R}^2\times\mathbb{Z}_N \right)}^{5/2}.
\endaligned
\end{align}
Setting
\begin{align*}
	\mathcal{M}^{(k)}_3=\sum_{j\in\mathbb{Z}_N}\left[ |P_{\leq i-3}u_j|^2  P_{m}u_k
+P_{\leq i-3}u_j  P_{\leq i-3}u_k\overline{P_{m}u_j}+\overline{P_{\leq i-3}u_j}  P_{\leq i-3}u_k{P_{m}u_j}\right],
\end{align*}
by the Coifman-Meyer multiplier theorem, we have
\begin{align*}
	\left\|\mathcal{M}_2^{(k)}-\mathcal{M}_3^{(k)} \right\|_{L_x^{6/5}}
\lesssim  \left\|\mathbf{u} \right\|_{L_x^2l^2} \left\|P_{m}\mathbf{u} \right\|_{L_x^6l^2} \left\|\nabla P_{\leq i-3}\mathbf{u}  \right\|_{L_x^6l^2}.
\end{align*}
Thus, by Bernstein's inequality and \eqref{xnorm1}, we have
\begin{align}\label{estqqq4}
\aligned
	& \left|2^iC\sum_{j\in\mathbb{Z}_N}\sum_{k\in\mathbb{Z}_N}\int_{G^{3i}_{k_{\ast}}}\int \int |P_{\leq i - 3} u_j(t,y)|^{2} \frac{ x - y }{| x - y |}
\cdot \Re \left[\overline{P_{m} u_k} \nabla  \left(\mathcal{M}^{(k)}_2-\mathcal{M}^{(k)}_3 \right)  \right](t,x) \, \mathrm{d} x  \mathrm{d} y \mathrm{d} t \right|  \\
& 	+  \left|2^iC\sum_{j\in\mathbb{Z}_N}\sum_{k\in\mathbb{Z}_N}\int_{G^{3i}_{k_{\ast}}}\int \int |P_{\leq i - 3} u_j(t,y)|^{2} \frac{ x - y }{| x - y |}
\cdot \Re \left[\overline{ \left(\mathcal{M}^{(k)}_2-\mathcal{M}^{(k)}_3 \right)} \nabla P_{m} u_k \right](t,x) \, \mathrm{d} x  \mathrm{d} y \mathrm{d} t\right| \\
 	& 
    \lesssim 2^{i+j} \left\|\mathbf{u} \right\|^3_{L_t^{\infty}L_x^2l^2 \left(G_{k_{\ast}}^{3i}\times\mathbb{R}^2 \times\mathbb{Z}_N \right)}
\left\|P_{\geq i-3}\mathbf{u} \right\|^3_{L_t^{3}L_x^6l^2 \left(G_{k_{\ast}}^{3i}\times\mathbb{R}^2 \times\mathbb{Z}_N \right)}
\lesssim \eta_{0}^{1/2} \left\| \mathbf{u}  \right\|_{X_{i - 3} \left(J \times \mathbb{R}^2\times\mathbb{Z}_N \right)}^{5/2}.
\endaligned
\end{align}
Finally, by the product rule, we get
\begin{align*}
	\left|\sum_{k\in\mathbb{Z}_N}\Re \left[\overline{P_{m} u_k} \nabla \mathcal{M}^{(k)}_3 -\overline{\mathcal{M}^{(k)}_3}\nabla P_{m} u_k \right] \right|
\lesssim  \left|\nabla P_{\leq i-3}\mathbf{u} \right| \cdot  \left|P_{\leq i-3}\mathbf{u} \right| \cdot \left|P_{m}\mathbf{u} \right|.
\end{align*}
Using H\"older's inequality and \eqref{xnorm1}, we have 
\begin{align}\label{estqqq5}
\aligned
	& \left|-2^iC\sum_{j\in\mathbb{Z}_N}\sum_{k\in\mathbb{Z}_N}\int_{G^{3i}_{k_{\ast}}}\int \int |P_{\leq i - 3} u_j(t,y)|^{2} \frac{ x - y }{| x - y |}
\cdot \Re \left[\overline{P_{m} u_k} \nabla \mathcal{M}^{(k)}_3  \right](t,x) \,\mathrm{d} x  \mathrm{d} y \mathrm{d} t\right|
	 \\
& + \left| 2^iC\sum_{j\in\mathbb{Z}_N}\sum_{k\in\mathbb{Z}_N}\int_{G^{3i}_{k_{\ast}}}\int \int |P_{\leq i - 3} u_j(t,y)|^{2} \frac{ x - y }{| x - y |}
\cdot \Re \left[\overline{\mathcal{M}^{(k)}_3} \nabla P_{m} u_k \right](t,x) \, \mathrm{d} x  \mathrm{d} y \mathrm{d} t\right| \\
	 \lesssim& 2^{i+j} \left\|\mathbf{u} \right\|^3_{L_t^{\infty}L_x^2l^2 \left(G_{k_{\ast}}^{3i}\times\mathbb{R}^2 \times\mathbb{Z}_N \right)}
\left\|P_{\geq i-3}\mathbf{u} \right\|^3_{L_t^{3}L_x^6l^2 \left(G_{k_{\ast}}^{3i}\times\mathbb{R}^2 \times\mathbb{Z}_N \right)}
\lesssim \eta_{0}^{1/2}  \left\| \mathbf{u}  \right\|_{X_{i - 3} \left(J \times \mathbb{R}^2\times\mathbb{Z}_N \right)}^{5/2}.
\endaligned
\end{align}
Inserting \eqref{estqqq1}-\eqref{estqqq5} into \eqref{ert1}-\eqref{ert7}, then summing over $m\geq i$, we obtain 
\begin{align}\label{longtime2}
& \sum_{j\in\mathbb{Z}_N}\sum_{k\in\mathbb{Z}_N} \left\| P_m u_{k}P_{\leq i-3}u_{j} \right\|_{L_{t,x}^{2} \left(G^{3i}_{k_{\ast}} \times \mathbb{R}^{2} \right)}^{2} \\\nonumber
& \lesssim\eta_0^{1/3} \left(1+ \left\| \mathbf{u} \right\|_{X_{i - 3} \left(J \times \mathbb{R}^2\times\mathbb{Z}_N \right)}^{11/3}
+\eta_{0}^{1/3} \left\| \mathbf{u} \right\|_{X_{i - 3} \left(J \times \mathbb{R}^2\times\mathbb{Z}_N \right)}^{8/3}
	+ \eta_{0}^{1/2}  \left\| \mathbf{u}  \right\|_{X_{i - 3} \left(J \times \mathbb{R}^2\times\mathbb{Z}_N \right)}^{5/2}
+\eta_0^{1/10} \left\| \mathbf{u}  \right\|^2_{X_{i - 3} \left(J \times \mathbb{R}^2\times\mathbb{Z}_N \right)}\right).
\end{align}
Using a standard bootstrap argument with \eqref{longtime0}, \eqref{longtime1} and \eqref{longtime2}, we conclude that 
\begin{align*} 
	\left\| \mathbf{u} \right\|_{X \left(J \times \mathbb{R}^{2}\times\mathbb{Z}_N \right)} \lesssim 1.
\end{align*}
This completes the proof.\end{proof}
\begin{remark}
Plugging \eqref{f5.10} into \eqref{xnorm1}, we also have
\begin{align}\label{ssx}
\left\|\nabla P_{\leq k_0}\mathbf{u} \right\|_{L_t^{3}L_x^{6}l^2 \left(J\times\mathbb{R}^2\times\mathbb{Z}_N \right)}
&\lesssim 2^{k_0}.
\end{align}
\end{remark}

\subsubsection{Long time Strichartz estimate - Type II}
The first type of long time Strichartz estimate \eqref{f5.10} indicates that the high frequency truncation of the solution $\mathbf{u}$, $P_{\geq {k_0}}\mathbf{u}$,  actually behaves like the free Schr\"odinger system, thus the Strichartz norms (as well as atomic norms) are actually bounded by the $L_x^2l^2$ norm of the initial data thanks to the Strichartz estimates (Lemma \ref{property}). However, since we are working with the solutions that are close to the orbit of the ground state $\mathbf{Q}$, we can further prove that the Strichartz norms (as well as atomic norms)  of $P_{\geq {k_0}}\mathbf{u}$ are actually dominated by the $L_x^2l^2$ norm of the high frequency component of $\mathbf{Q}$ (which  decays rapidly) plus the $L_x^2l^2$ norm of reminder $\boldsymbol{\epsilon}$ in the sense of the integral average.

\begin{theorem}[Long time Strichartz estimate-II]\label{t6.3}
Under the assumption of Theorem \ref{t5.9}, we have
\begin{align}\label{f5.66}
\aligned
& \left\| P_{\geq k_0} \mathbf{u}  \right\|_{U_{\Delta}^{2} \left(J,L_x^2l^2 \right)} +   \left\|  \left|P_{\geq k_0} \mathbf{u} \right|\cdot
\left|P_{\leq k_0 - 3} \mathbf{u} \right|  \right\|_{L_{t,x}^{2}l^2 \left(J \times \mathbb{R}^{2}\times\mathbb{Z}_N \right)}   \\
\lesssim&  \left(\frac{1}{T} \int_{J}   \left\| \boldsymbol{\epsilon}(t)  \right\|_{L^2_{x} l^2}^{2} \lambda(t)^{-2} \, \mathrm{d} t \right)^{1/2} + T^{-10}.
\endaligned
\end{align}
\end{theorem}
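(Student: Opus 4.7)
My plan is to refine the Type-I long time Strichartz estimate of Theorem \ref{t5.9} by combining the modulation decomposition of Section \ref{se3v314} with a Duhamel expansion centered at a carefully chosen pivot time. The heuristic is that since $\mathbf{u}$ is, modulo symmetries, close to $\mathbf{Q}$ and $\mathbf{Q}$ is Schwartz, the high-frequency part of $\mathbf{u}$ is essentially the high-frequency part of the remainder $\boldsymbol{\epsilon}$ up to a rapidly decaying error, so the $U^2_\Delta$ norm of $P_{\geq k_0}\mathbf{u}$ should be driven by the $s$-average of $\|\boldsymbol{\epsilon}\|_{L^2_x l^2}$ on $J$.

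First, by pigeonhole I would pick $t_0 \in J$ such that
$$\|\boldsymbol{\epsilon}(t_0)\|_{L^2_x l^2}^2 \leq \frac{1}{T}\int_J \|\boldsymbol{\epsilon}(t)\|_{L^2_x l^2}^2 \lambda(t)^{-2}\,\mathrm{d}t,$$
using that the right-hand side is exactly the $ds$-average of $\|\boldsymbol{\epsilon}\|_{L^2_x l^2}^2$ since $ds = \lambda^{-2}\,\mathrm{d}t$ and the total $s$-length equals $T$. Next I would establish the pointwise control
$$\|P_{\geq k_0}\mathbf{u}(t_0)\|_{L^2_x l^2} \lesssim \|\boldsymbol{\epsilon}(t_0)\|_{L^2_x l^2} + T^{-100}.$$
This follows by writing out \eqref{decomp}, computing the physical-frame Fourier transform, and noting that the projector $P_{\geq k_0}$ translates, after scaling by $\lambda(t_0)$ and a Galilean boost by $\xi(t_0)$, into the localization $\{|\eta-\xi(t_0)| \geq 2^{k_0}\lambda(t_0)\}$ applied to $\widehat{Q+\epsilon_j}(\eta)$. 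Since $|\xi(t_0)| \leq \eta_0\lambda(t_0) \ll 2^{k_0}\lambda(t_0)$ and $2^{k_0}\lambda(t_0) \geq T^{1/3}\eta_1^{-1}$, the contribution of $\hat Q$ is negligible by the Schwartz property of $Q$, while that of $\boldsymbol{\epsilon}$ is trivially bounded by $\|\boldsymbol{\epsilon}(t_0)\|_{L^2_x l^2}$.

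Second, from the Duhamel expansion
$$P_{\geq k_0}\mathbf{u}(t) = e^{i(t-t_0)\Delta}P_{\geq k_0}\mathbf{u}(t_0) + i\int_{t_0}^t e^{i(t-s)\Delta}P_{\geq k_0}\mathbf{F}(\mathbf{u}(s))\,\mathrm{d}s,$$
the linear piece has $U^2_\Delta(J, L^2_x l^2)$ norm equal to $\|P_{\geq k_0}\mathbf{u}(t_0)\|_{L^2_x l^2}$, which is of the desired size by the first step. For the Duhamel term I would apply Lemma \ref{up-dual} and decompose the nonlinearity by Littlewood-Paley into high-high-high, high-high-low, and high-low-low pieces, exactly as in the proof of Theorem \ref{t5.9}. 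Each piece is then estimated using H\"older, $L^p$-interpolation, and the a priori bound $\|\mathbf{u}\|_{X(J\times\mathbb{R}^2\times\mathbb{Z}_N)} \lesssim 1$ from Theorem \ref{t5.9}, but tracking the $\|P_{\geq k_0}\mathbf{u}\|_{U^2_\Delta}$ and $\||P_{\geq k_0}\mathbf{u}|\cdot|P_{\leq k_0-3}\mathbf{u}|\|_{L^2_{t,x} l^2}$ dependence of the small-factor terms themselves, so as to close on the target bound.

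Third, for the bilinear factor, I would rerun the interaction Morawetz argument of Theorem \ref{t5.9} with $m = k_0$, but this time using the refined bound from Step 1 for the boundary term $\sup_{t\in J}|\tilde M_{k_0}(t)|$. That replaces the previous estimate $\lesssim 2^{k_0}$ by $\lesssim 2^{k_0}\|P_{\geq k_0}\mathbf{u}(t_0)\|_{L^2_x l^2}^2$, which after dividing by $2^{2k_0}$ produces exactly the averaged bound. The principal technical obstacle is the interlocking character of the Duhamel step and the Morawetz step: the nonlinear Duhamel estimate uses the bilinear norm, while the Morawetz derivation brings in the $U^2_\Delta$ norm through the nonlinear corrections $\mathcal N_{k_0-3}^{(j)}$. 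I would therefore organize the argument as a simultaneous bootstrap in an auxiliary norm combining the two quantities on the left-hand side of \eqref{f5.66}, closing it under the smallness assumptions \eqref{f5.7}, and letting the Schwartz decay of $\hat Q$ at the scale $T^{1/3}\eta_1^{-1}$ furnish the residual $T^{-10}$ term.
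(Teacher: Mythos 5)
Your pointwise pivot estimate $\|P_{\geq k_0}\mathbf{u}(t_0)\|_{L^2_x l^2}\lesssim\|\boldsymbol{\epsilon}(t_0)\|_{L^2_x l^2}+T^{-100}$ is essentially the paper's \eqref{bst1}, and your pigeonhole choice of $t_0$ is also what the paper does via the integral mean value theorem. However, the rest of the argument has a genuine gap: a single Duhamel expansion from $t_0$ over the whole of $J$ at fixed frequency threshold $k_0$ cannot be closed by bootstrap. The nonlinear terms, after the decomposition into high--high--high, high--high--low and high--low--low pieces, involve $P_{\geq k_0-3}\mathbf{u}$ and $P_{\geq k_0-6}\mathbf{u}$ on sub-intervals, and these are not controlled by the bootstrap hypothesis at frequency $k_0$; they scale with $\|\mathbf{u}\|_{X_{k_0-3}}\lesssim 1$, so the Duhamel argument only gains a fixed small constant $\eta_0^{1/20}$, not the averaged quantity. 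The paper overcomes this by iterating the recursive inequality \eqref{bootstrap3} roughly $k_0/6$ times, simultaneously descending in frequency by $3$ and shrinking from $J$ to the sub-intervals $G_{k_*}^{3i}$, so that the per-step gain $\eta_0^\delta$ accumulates to $(C\eta_0)^{k_0/6}\ll T^{-10}$; the $\inf$-over-$t_0$ pointwise estimates collected along the way add up (via \eqref{bst1} and Cauchy--Schwarz over the $2^9$ children at each step) to the averaged $\epsilon$-quantity. This frequency-and-scale iteration is the essential mechanism you are missing.

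Your step 3 on the Morawetz boundary term is also incorrect as stated: $\sup_{t\in J}|\tilde M_{k_0}(t)|$ is a supremum over all times in $J$, and there is no a priori reason why it should be bounded by $2^{k_0}\|P_{\geq k_0}\mathbf{u}(t_0)\|^2_{L^2_x l^2}$ with $t_0$ the pivot point. The paper only gets this control \emph{after} the $U^2_\Delta$ estimate \eqref{bos} is in hand, using the embedding $U^2_\Delta(J,L^2_x l^2)\hookrightarrow L^\infty_t L^2_x l^2$ to bound $\|P_m\mathbf{u}\|_{L^\infty_t L^2_x l^2}$ for $m\geq k_0$. Your ``simultaneous bootstrap in an auxiliary norm'' is therefore circular as formulated: you would need the bilinear norm to estimate the $U^2_\Delta$ term, while the Morawetz boundary term that feeds into the bilinear estimate requires $\sup_t\|P_{\geq k_0}\mathbf{u}(t)\|_{L^2_x l^2}$ to already be small, which is only available once the $U^2_\Delta$ estimate is established. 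The correct order of operations, and the iteration, should be restored.
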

\begin{proof}
	Using \eqref{f5.10} and following the proof of Theorem \ref{t5.9}, one can prove that there exists $\delta>0$ so that
	\begin{align}
	& \left\|P_{\geq i+3}\mathbf{u} \right\|_{U^2_{\Delta} \left(G_{k_{\ast}}^{3(i+3)},L_x^2l^2 \right)} \notag \\
\leq& C\inf_{t_0\in G_{k_{\ast}}^{3(i+3)}}  \left\|P_{\geq i+3}\mathbf{u}(t_0) \right\|_{L_x^2l^2}
+C\eta_0^{\delta} \left\|P_{\geq i}\mathbf{u} \right\|_{U^2_{\Delta} \left(G_{k_{\ast}}^{3(i+3)},L_x^2l^2 \right)}\label{bootstrap2}\\
		\leq& C\inf_{t_0\in G_{k_{\ast}}^{3(i+3)}}  \left\|P_{\geq i+3}\mathbf{u}(t_0) \right\|_{L_x^2l^2}
+C\eta_0^{\delta} \left(\sum_{G_l^{3k}\subset G_{k_{\ast}}^{3(i+3)}} \left\|P_{\geq i}\mathbf{u} \right\|^2_{U^2_{\Delta} \left(G_{k_{\ast}}^{3i},L_x^2l^2 \right)} \right)^{1/2}, \ \forall \, G_{k_{\ast}}^{3(i+3)}\subset J.\label{bootstrap3}
	\end{align}
	On the other hand, using the triangle inequality and the integral mean value theorem, for $i\geq 2$,
	\begin{align}\label{bst1}
\aligned
		& \  \inf_{t_0\in G_{k_{\ast}}^{3(i+3)}} \left\|P_{\geq i+3}\mathbf{u}(t_0) \right\|_{L_x^2l^2}\\
&\leq \inf_{t_1\in G_{k_{\ast}}^{3(i+3)}}  \left\|P_{\geq i+3}\boldsymbol{\epsilon}(t_1) \right\|_{L_x^2l^2}
+\sup_{t_2\in G_{k_{\ast}}^{3(i+3)}} \left\|P_{\geq i+3} \left(e^{-ix\cdot\xi(t_2)}\frac{1}{\lambda(t_2)} Q \left(\frac{x-x(t_2)}{\lambda(t_2)} \right) \right) \right\|_{L_x^2l^2} \\
		&\lesssim \left(\frac{2^{3k_0-3(i+3)}}{T} \int_{G_{k_{\ast}}^{3(i+3)}}  \left\| \boldsymbol{\epsilon}(t)  \right\|_{L^2_{x} l^2}^{2} \lambda(t)^{-2}  \, \mathrm{d} t \right)^{1/2}+ 2^{-100i}.
\endaligned
	\end{align}
Let $i=\left[\frac{k_0}{2}\right]$ in \eqref{bst1} and  iterate $\left[\frac{k_0}{6}\right]$ times while using
\eqref{f5.10}, \eqref{bootstrap3},
$\#\left\{G_l^{3i} : G_l^{3i}\subset G_{k_{\ast}}^{3(i+3)} \right\}=2^9$ and the fact that $(C\eta_0)^{\frac{k_0}{6}} \ll T^{-10} \mbox{ if } \eta_0 \mbox{  sufficiently small imply that }$
\begin{align}
\aligned
\left\| P_{\geq k_0} \mathbf{u} \right\|_{U_{\Delta}^{2} \left(J,L_x^2l^2 \right)}
	&\lesssim \left(\frac{1}{T} \int_{J}   \left\| \boldsymbol{\epsilon}(t)  \right\|_{L^2_{x} l^2}^{2} \lambda(t)^{-2} \,  \mathrm{ d}  t \right)^{1/2}
+ T^{-10}+(C\eta_0)^{\frac{k_0}{6}} \\
&\lesssim  \left(\frac{1}{T} \int_{J}   \left\| \boldsymbol{\epsilon}(t)  \right\|_{L^2_{x} l^2}^{2} \lambda(t)^{-2} \,  \mathrm{d} t \right)^{1/2} + {T^{-10}}.\label{bos}
\endaligned
\end{align}
Finally, following the proof of \eqref{f5.48}, \eqref{bos} directly implies
\begin{align*}
	\left\|   \left|P_{\geq k_0} \mathbf{u} \right|\cdot \left|P_{\leq k_0 - 3} \mathbf{u} \right|  \right\|_{L_{t,x}^{2}l^2 \left(J \times \mathbb{R}^{2}\times\mathbb{Z}_N \right)}
\lesssim  \left(\frac{1}{T} \int_{J}   \left\| \boldsymbol{\epsilon}(t)  \right\|_{L^2_{x} l^2}^{2} \lambda(t)^{-2} \, \mathrm{ d} t \right)^{1/2} + {T^{-10}}.
\end{align*}
This together with \eqref{bos} imply \eqref{f5.66}. 
\end{proof}
\begin{remark}
    We can also prove that  $\forall \, k_0\leq m\leq k_0+9$,
\begin{align}\label{ssz}
\left\|  \left|P_{\geq m} \mathbf{u} \right|\cdot \left|P_{\leq m - 3} \mathbf{u} \right|  \right\|_{L_{t,x}^{2}l^2 \left(J \times \mathbb{R}^{2}\times\mathbb{Z}_N \right)}
\lesssim  \left(\frac{1}{T} \int_{J}  \left\| \boldsymbol{\epsilon}(t) \right\|_{L^2_{x} l^2}^{2} \lambda(t)^{-2}  \, \mathrm{d} t \right)^{1/2} + {T^{-10}}.
\end{align}
\end{remark}

\subsection{ The estimate of  truncated energy $E(P_{\leq k_0+9}\mathbf{u})$.} Now we will use the long time Strichartz estimates to derive an increment estimate of $E(P_{\leq k_0+9}\mathbf{u})$.
\begin{theorem}[Increment of $E(P_{\leq k_0+9}\mathbf{u})$]\label{t4.1}
Under the assumptions of Theorem \ref{t5.9}, we have
\begin{equation}\label{f6.1}
\sup_{t \in J} E \left(P_{\leq k _0+ 9} \mathbf{u}(t) \right)
 \lesssim \frac{2^{2k_0}}{T} \int_{J}  \left\| \boldsymbol{\epsilon}(t)  \right\|_{L^2_{x} l^2}^{2} \lambda(t)^{-2}  \, \mathrm{d} t
 + \sup_{t \in J} \frac{|\xi(t)|^{2}}{\lambda(t)^{2}} + 2^{2k_0} T^{-10}.
\end{equation}
\end{theorem}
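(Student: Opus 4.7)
The plan is to bound $E(P_{\leq k_0+9}\mathbf{u})$ at a single well-chosen time $t_0\in J$ via the modulation decomposition, and then propagate this bound to every $t\in J$ by the fundamental theorem of calculus, controlling the energy increment through the long time Strichartz estimate of Theorem \ref{t6.3}. For the base time I pick, by pigeonhole against the measure $\lambda(t)^{-2}\,dt$ of total mass $T$, a $t_0\in J$ with $\|\boldsymbol{\epsilon}(t_0)\|_{L_x^2 l^2}^2 \leq \tfrac{1}{T}\int_J \|\boldsymbol{\epsilon}(t)\|_{L_x^2 l^2}^2 \lambda(t)^{-2}\,dt$. At $t_0$, the modulation formula \eqref{decomp} presents $\mathbf{u}(t_0)$ as a Galilean boost by $\xi(t_0)/\lambda(t_0)$, a spatial translation, an $L_x^2$-scaling by $\lambda(t_0)\geq \eta_1^{-1}$, and phase rotations of $\mathbf{Q}+\boldsymbol{\epsilon}(t_0)$. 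Applying the standard transformation laws of $E$ under scaling and Galilean boosts, I reduce the bound to estimating $E(\tilde P(\mathbf{Q}+\boldsymbol{\epsilon}(t_0)))$, where $\tilde P$ is a Fourier cutoff at scale $\lambda(t_0)2^{k_0+9}\gg \eta_1^{-1/2}$, together with a Galilean contribution of size $|\xi(t_0)|^2/\lambda(t_0)^2 \leq \sup_{t\in J}|\xi(t)|^2/\lambda(t)^2$ and a momentum cross term of smaller order. The tail hypothesis in \eqref{f5.7} and the rapid decay of $\hat Q$ bound the discarded high-frequency portion of $\mathbf{Q}$ by $O((\lambda(t_0)2^{k_0})^{-100})\ll T^{-10}$ in kinetic energy; the orthogonality conditions \eqref{orthod} cancel the linear-in-$\boldsymbol{\epsilon}$ contribution; and the quadratic-in-$\boldsymbol{\epsilon}$ term is controlled by $(\lambda(t_0)2^{k_0+9})^2\lambda(t_0)^{-2}\|\boldsymbol{\epsilon}(t_0)\|^2 \sim 2^{2k_0}\|\boldsymbol{\epsilon}(t_0)\|_{L_x^2 l^2}^2$.

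For arbitrary $t \in J$, set $\mathbf{v}=P_{\leq k_0+9}\mathbf{u}$ and $R_j=F_j(\mathbf{v})-P_{\leq k_0+9}F_j(\mathbf{u})$. The truncated equation $i\partial_s v_j+\Delta v_j = -F_j(\mathbf{v})+R_j$ together with the usual energy computation yields
\begin{equation*}
\frac{d}{ds}E(\mathbf{v}) = \Im \sum_{j\in\mathbb{Z}_N}\int \overline{\bigl(\Delta v_j + P_{\leq k_0+9}F_j(\mathbf{u})\bigr)}\, R_j\, dx,
\end{equation*}
so $E(\mathbf{v}(t)) \leq E(\mathbf{v}(t_0))+\int_J |\tfrac{d}{ds}E(\mathbf{v})|\,ds$. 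I would next paraproduct-decompose $R_j$; every resulting piece contains at least one factor at frequency $\gtrsim 2^{k_0}$, since a purely low-low-low cubic of $P_{\leq k_0+9}\mathbf{u}$ has frequencies below $3\cdot 2^{k_0+9}$ and its discrepancy with $P_{\leq k_0+9}F_j(\mathbf{u})$ comes only from high-frequency inputs of $\mathbf{u}$ or from the outer $P_{\leq k_0+9}$ removing frequencies in $[2^{k_0+9},3\cdot 2^{k_0+9}]$. Pairing such a high-frequency factor with a low-frequency companion and applying H\"older in spacetime produces the bilinear norm $\bigl\||P_{\geq k_0}\mathbf{u}|\,|P_{\leq k_0+9}\mathbf{u}|\bigr\|_{L^2_{t,x}l^2}$ (or a shifted variant), which by \eqref{f5.66} and \eqref{ssz} is bounded by $(\tfrac{1}{T}\int_J\|\boldsymbol{\epsilon}\|^2\lambda^{-2}\,dt)^{1/2}+T^{-10}$. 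The remaining factors are absorbed into Strichartz norms of $P_{\leq k_0+9}\mathbf{u}$ already controlled by Theorem \ref{t5.9}, and the Laplacian $\Delta v_j$ is treated by integrating by parts so that one gradient lands on a low-frequency factor, producing a clean $2^{k_0}$ through Bernstein or \eqref{ssx}. Squaring the bilinear bound reproduces exactly the $\tfrac{2^{2k_0}}{T}\int_J\|\boldsymbol{\epsilon}\|^2\lambda^{-2}\,dt+2^{2k_0}T^{-10}$ appearing in \eqref{f6.1}, which combined with the base estimate at $t_0$ closes the proof.

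The main obstacle is the commutator analysis. Each paraproduct piece of $R_j$ must be paired either with $\Delta v_j$ or with $P_{\leq k_0+9}F_j(\mathbf{u})$ in such a way that exactly one bilinear high/low factor from Theorem \ref{t6.3} appears, and so that no $H_x^1 l^2$-level quantity of $\mathbf{u}$ itself is ever invoked (since $\mathbf{u}$ is only in $L_x^2 l^2$); all remaining factors must be expressible through Strichartz norms controlled by Theorem \ref{t5.9} together with Bernstein. The coupled nonlinearity $F_j(\mathbf{u}) = 2\sum_{k\in\mathbb{Z}_N}|u_k|^2 u_j - |u_j|^2 u_j$ forces these decompositions to be performed simultaneously across all components, so the vector-valued bilinear bound \eqref{f5.66} is essential to absorb the cross terms between distinct components of $\mathbf{u}$. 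Care is also needed inside $\Im\int \overline{\Delta v_j}\,R_j\,dx$: one must integrate by parts in a direction that places the extra derivative on a factor capable of absorbing it without triggering an uncontrolled frequency-gain, so that only the low-frequency Bernstein factor $2^{k_0}$ appears when squaring the final bilinear bound.
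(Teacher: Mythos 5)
Your proposal is correct and follows the paper's own proof in all essentials: pick a base time $t_0\in J$ by the mean-value theorem so that $\|\boldsymbol{\epsilon}(t_0)\|_{L^2_x l^2}^2$ is dominated by the average $\frac1T\int_J\|\boldsymbol{\epsilon}\|^2_{L^2_xl^2}\lambda^{-2}\,\mathrm{d}t$, expand $E(P_{\leq k_0+9}\mathbf{u}(t_0))$ through the modulation decomposition (the paper's \eqref{f6.3}) to obtain the base bound, and then control the energy increment over $J$ by a Littlewood--Paley decomposition of the commutator $P_{\leq k_0+9}\mathbf{F}(\mathbf{u})-\mathbf{F}(P_{\leq k_0+9}\mathbf{u})$ combined with the bilinear estimate of Theorem \ref{t6.3} (\eqref{f5.66}, \eqref{ssz}) and the Strichartz and gradient bounds from Theorem \ref{t5.9} and \eqref{ssx}. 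One small slip: your derivative identity should carry a minus sign, $\frac{\mathrm{d}}{\mathrm{d}t}E(\mathbf{v})=-\Im\sum_j\int\overline{\left(\Delta v_j+P_{\leq k_0+9}F_j(\mathbf{u})\right)}\,R_j\,\mathrm{d}x$, which is immaterial once you take absolute values.
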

\begin{proof}
Recalling that Lemma \ref{l2.2} guarantees that $ \left\| \boldsymbol{\epsilon}(t) \right\|_{L^2_{x} l^2}$ is continuous as a function of $t$, so
the integral mean value theorem implies that under the conditions of Theorem \ref{t5.9}, there exists $t_{0}\in [a, b]=J$ such that
\begin{equation*} 
\left\| \boldsymbol{\epsilon}(t_{0}) \right\|_{L^2_{x} l^2}^{2}
= \frac{1}{T} \int_{a}^{b}  \left\| \boldsymbol{\epsilon}(t)  \right\|_{L^2_{x} l^2}^{2} \lambda(t)^{-2}  \, \mathrm{d} t.
\end{equation*}
Using the fact that $Q$ is real-valued, smooth, and has rapidly decaying derivatives, arguing like \eqref{f4.58}-\eqref{energybou}, we have for any $t\in J$
\begin{align}\label{f6.3}
& E \left(P_{\leq k _0+ 9} \mathbf{u}(t) \right)
\\
\nonumber
 & = \frac{N}{2} \frac{|\xi(t)|^{2}}{\lambda(t)^{2}} \| Q \|_{L^2_{x}}^{2} + \frac{1}{2 \lambda(t)^{2}} \left\| \boldsymbol{\epsilon}(t) \right\|_{L^2_{x} l^2}^{2}
  - \frac{|\xi(t)|^{2}}{2 \lambda(t)^{2}} \left\| \boldsymbol{\epsilon}(t)  \right\|_{L^2_{x} l^2}^{2}
  + O \left(2^{2k_0}  \left\| \boldsymbol{\epsilon}(t) \right\|_{L^2_{x} l^2}^{2}  \right)
   + O \left(2^{2k_0} T^{-10} \right).
\end{align}
Thus \eqref{f6.1} holds at $t_{0}$.

Next, we compute the change of energy. By direct calculation,
\begin{align*} 
\frac{ \mathrm{d} }{ \mathrm{d} t} E \left(P_{\leq k _0+ 9} \mathbf{u}(t) \right)
 & = - \left\langle  \frac{ \mathrm{d} }{ \mathrm{d} t}P_{\leq k _0+ 9}\mathbf{u}(t), \Delta P_{\leq k _0+ 9} \mathbf{u}(t) \right\rangle_{L^2_{x} l^2}
 - \left\langle \frac{ \mathrm{d} }{ \mathrm{d} t}P_{\leq k _0+ 9}\mathbf{u}(t), \mathbf{ {F}} \left(P_{\leq k_0+9}\mathbf{u} \right)(t)  \right\rangle_{L^2_{x} l^2} \\
 & = \left\langle i \Delta P_{\leq k _0+ 9} \mathbf{u}(t) + iP_{\leq k_0+9} \mathbf{ {F}} \left(\mathbf{u} \right)(t),
P_{\leq k_0+9} \mathbf{ {F}} \left(\mathbf{u} \right)(t) - \mathbf{ {F}} \left(P_{\leq k_0+9}\mathbf{u} \right)(t) \right\rangle_{L^2_x l^2}.
\end{align*}
First, we  estimate
\begin{equation}\label{f6.5}
\int_{t_{0}}^{t'}  \left\langle i \Delta P_{\leq k _0+ 9} \mathbf{u}(t),
P_{\leq k_0+9} \mathbf{ {F}} \left(\mathbf{u} \right)(t) - \mathbf{ {F}} \left(P_{\leq k_0+9}\mathbf{u} \right)(t) \right\rangle_{L^2_x l^2} \, \mathrm{d} t
\end{equation}
for any $t' \in J$.
Since $P_{\leq k_0+9} \mathbf{ {F}} \left(P_{\leq k_0+6}\mathbf{u} \right)(t) - \mathbf{ {F}} \left(P_{\leq k_0+9}P_{\leq k_0+6}\mathbf{u} \right)(t)=0$, we can further decompose \eqref{f6.5} as
\begin{align}
&\int_{t_{0}}^{t'}  \left\langle i \Delta P_{\leq k _0+ 9} \mathbf{u}(t),
P_{\leq k_0+9} \mathbf{ {F}} \left(\mathbf{u} \right)(t) - \mathbf{ {F}} \left(P_{\leq k_0+9}\mathbf{u} \right)(t) \right\rangle_{L^2_x l^2}  \mathrm{d} t \notag\\
= &\int_{t_{0}}^{t'} \Big\langle i \Delta P_{\leq k_0+ 9} \mathbf{u}(t),
\sum_{k\in\mathbb{Z}_N}O \left[P_{\leq k_0+9} \left(P_{\geq k_0+6}u_j\overline{P_{\leq k_0}u_k} P_{\leq k_0}u_k \right)
-P_{k_0+6\leq\cdot \leq k_0+9}u_j\overline{P_{\leq k_0}u_k} P_{\leq k_0}u_k \right] \Big\rangle_{L^2_x l^2}  \mathrm{d} t\label{estqqqq1}\\
&+\int_{t_{0}}^{t'}
\Big\langle i \Delta P_{\leq k_0} \mathbf{u}(t),
\sum_{k\in\mathbb{Z}_N}O \left[P_{\leq k_0+9} \left(P_{\geq k_0+6}u_j\overline{P_{\geq k_0}u_k} P_{\leq k_0}u_k \right)
-P_{k_0+6\leq\cdot \leq k_0+9}u_j\overline{P_{\geq k_0}u_k} P_{\leq k_0} u_k \right] \Big\rangle_{L^2_x l^2}  \mathrm{d} t\label{estqqqq2}\\
&+\int_{t_{0}}^{t'} \Big\langle i \Delta P_{k_0\leq \cdot \leq k_0+9} \mathbf{u}(t),
\sum_{k\in\mathbb{Z}_N}O \left[P_{\leq k_0+9} \left(P_{\geq k_0+6}u_j\overline{P_{\geq k_0}u_k} P_{\leq k_0}u_k \right)-P_{k_0+6\leq\cdot\leq k_0+9}u_j\overline{P_{\geq k_0}u_k} P_{\leq k_0}u_k \right] \Big\rangle_{L^2_x l^2}  \mathrm{d} t\label{estqqqq3}\\
&+\int_{t_{0}}^{t'}  \Bigg\langle i \Delta P_{\leq k_0+9} \mathbf{u}(t),
\sum_{k\in\mathbb{Z}_N}O \big[P_{\leq k_0+9} \left(P_{\geq k_0+6}u_j\overline{P_{\geq k_0}u_k} P_{\geq k_0}u_k \right)\notag\\
&\hspace{55ex}-P_{k_0+6\leq\cdot\leq k_0+9}u_j\overline{P_{k_0\leq\cdot\leq k_0+9}u_k} P_{k_0\leq\cdot\leq k_0+9}u_k \big] \Bigg\rangle_{L^2_x l^2}  \mathrm{d} t.\label{estqqqq4}
\end{align}
We will estimate terms \eqref{estqqqq1}-\eqref{estqqqq4} separately. 

\underline{\emph{\textbf {Contribution of  \eqref{estqqqq1}}.} } Observe that the Fourier support of
\begin{align*}
	\sum_{k\in\mathbb{Z}_N}O
\left[ P_{\leq k_0+9} \left(P_{\geq k_0+6}u_j\overline{P_{\leq k_0}u_k} P_{\leq k_0}u_k \right)-P_{k_0+6\leq\cdot\leq k_0+9}u_j\overline{P_{\leq k_0}u_k} P_{\leq k_0}u_k \right]
\end{align*}
is supported in $|\xi|\geq 2^{k_0+3}$, so by \eqref{f5.66}, \eqref{ssz}, \eqref{ssx} and the differential identity $\Delta (fg)=\Delta f g+2\nabla f\cdot\nabla g+f\Delta g$, we have
\begin{align}\label{estqz1}
 \eqref{estqqqq1} & =  \int_{t_{0}}^{t'} \Bigg\langle i \Delta P_{k_0+3\leq\cdot\leq k_0+ 9} \mathbf{u}(t),
 \sum_{k\in\mathbb{Z}_N}O \big[P_{\leq k_0+9} \left(P_{\geq k_0+6}u_j\overline{P_{\leq k_0}u_k} P_{\leq k_0}u_k \right)\notag\\
 &\hspace{55ex}-P_{k_0+6\leq\cdot \leq k_0+9} u_j\overline{P_{\leq k_0}u_k} P_{\leq k_0}u_k \big] \Bigg\rangle_{L_x^2l^2} \,  \mathrm{d} t\notag\\
&\lesssim 2^{2k_0} \left\|  \left|P_{k_0+3\leq\cdot\leq k_0+9} \mathbf{u} \right| \left|P_{\leq k_0} \mathbf{u} \right|  \right\|_{L_{t,x}^{2}}
\left( \left\|  \left|P_{k_0+6\leq \cdot\leq k_0+12} \mathbf{u} \right| \left|P_{\leq k_0}\mathbf{u} \right|  \right\|_{L_{t,x}^{2}}
+ \left\|  \left|P_{k_0+6\leq \cdot\leq k_0+9} \mathbf{u} \right| \left|P_{\leq k_0}\mathbf{u} \right|  \right\|_{L_{t,x}^{2}} \right)\notag \\
&\ \  +2^{k_0}  \left\|\nabla P_{\leq k_0}\mathbf{u} \right\|_{L_t^3L_x^6l^2}
\left\| P_{\geq k_0}\mathbf{u} \right\|^2_{L_t^3L_x^6l^2} \left\|\mathbf{u} \right\|_{L_t^{\infty}L_x^2l^2}
+ \left\|\Delta P_{\leq k_0}\mathbf{u} \right\|_{L_t^3L_x^6l^2} \left\| P_{\geq k_0}\mathbf{u} \right\|^2_{L_t^3 L_x^6l^2}
\left\|\mathbf{u} \right\|_{L_t^{\infty}L_x^2l^2}\notag \\
&\lesssim \frac{2^{2k_0}}{T} \int_{J}  \left\| \boldsymbol{\epsilon}(t)  \right\|_{L^2_{x} l^2}^{2} \lambda(t)^{-2}  \, \mathrm{d} t + 2^{2k_0} T^{-10}.
\end{align}

\underline{\emph{\textbf {Contribution of  \eqref{estqqqq2}}.} }
Using \eqref{ssx}, we have 
\begin{align}\label{estqz2}
\eqref{estqqqq2}
&\lesssim
2^{k_0} \left\|\nabla P_{\leq k_0}\mathbf{u} \right\|_{L_t^3L_x^6l^2} \left\| P_{\geq k_0}\mathbf{u} \right\|^2_{L_t^3L_x^6l^2}
\left\|\mathbf{u} \right\|_{L_t^{\infty}L_x^2l^2} \notag
\\
& \lesssim \frac{2^{2k_0}}{T} \int_{J}  \left\| \boldsymbol{\epsilon}(t)  \right\|_{L^2_{x} l^2}^{2} \lambda(t)^{-2} \, \mathrm{d} t + 2^{2k_0} T^{-10}.
\end{align}

\underline{\emph{\textbf {Contribution of \eqref{estqqqq3} and \eqref{estqqqq4}}.}}
For these terms, we notice that all of them have three high-frequency factors, thus
\begin{align}\label{estqz3}
|\eqref{estqqqq3}|+|\eqref{estqqqq4}|
\lesssim 2^{2k_0}  \left\| P_{\geq k_0}\mathbf{u} \right\|^3_{L_t^3L_x^6l^2} \left\|\mathbf{u} \right\|_{L_t^{\infty}L_x^2l^2}
	\lesssim \frac{2^{2k_0}}{T} \int_{J}  \left\| \boldsymbol{\epsilon}(t)  \right\|_{L^2_{x} l^2}^{2} \lambda(t)^{-2} \, \mathrm{d} t + 2^{2k_0} T^{-10}.
\end{align}
Plugging \eqref{estqz1}-\eqref{estqz3} into \eqref{f6.5}, we obtain 
\begin{align}\label{f6.13}
& \left|	\int_{t_{0}}^{t'} \left\langle i \Delta P_{\leq k _0+ 9} \mathbf{u}(t),
P_{\leq k_0+9} \mathbf{ {F}} \left(\mathbf{u} \right)(t) - \mathbf{ {F}} \left(P_{\leq k_0+9}\mathbf{u} \right)(t) \right\rangle_{L^2_x l^2} \, \mathrm{d} t \right|\notag 	
\\
 \lesssim& \frac{2^{2k_0}}{T} \int_{J}  \left\| \boldsymbol{\epsilon}(t)  \right\|_{L^2_{x} l^2}^{2} \lambda(t)^{-2} \, \mathrm{d} t + 2^{2k_0} T^{-10}.
\end{align}
 The estimate of
\begin{align}\label{f6.14}
\begin{aligned}
	& \int_{t_{0}}^{t'}  \left\langle iP_{\leq k_0+9} \mathbf{ {F}} \left(\mathbf{u} \right)(t) , P_{\leq k_0+9} \mathbf{ {F}} \left(\mathbf{u} \right)(t) - \mathbf{ {F}} \left(P_{\leq k_0+9}\mathbf{u} \right)(t) \right\rangle_{L^2_x l^2} \, \mathrm{d} t
\\
= & \int_{t_{0}}^{t'} \langle iP_{\leq k_0+9} \mathbf{ {F}} \left(\mathbf{u} \right)(t), - \mathbf{ {F}} \left(P_{\leq k_0+9}\mathbf{u} \right)(t)\rangle_{L^2_x l^2} \, \mathrm{d} t
\end{aligned}
\end{align}
is similar to \eqref{f6.5}.
Using Littlewood-Paley decomposition and then analyzing the interaction between these terms, we can similarly prove that
\begin{equation}\label{f6.15}
	\eqref{f6.14}\lesssim \frac{2^{2k_0}}{T} \int_{J}  \left\| \boldsymbol{\epsilon}(t)  \right\|_{L^2_{x} l^2}^{2} \lambda(t)^{-2} \, \mathrm{ d} t + 2^{2k_0} T^{-10}.
\end{equation}
Finally, integrating both sides of \eqref{f6.1} and using \eqref{f6.13} as while as \eqref{f6.15}, we complete the proof of Theorem \ref{t4.1}.
\end{proof}
\subsection{An uniform estimate for $\|\boldsymbol{\epsilon}\|_{L^2_x l^2}$ on the interval $J$}
In this final subsection,  we will prove a  uniform estimate for $\|\boldsymbol{\epsilon}(t)\|_{L^2_x l^2}$ on the entire interval $J$,  similar to the one in the first subsection (Theorem \ref{bouenergy}).
\begin{theorem}\label{t6.12}
Under the assumptions of Theorem \ref{t5.9}, for all $j\in\mathbb{Z}_N$, we have
\begin{align}\label{f6.16}
\begin{aligned}
& \sup_{t \in J}  \left\|\nabla P_{\leq k _0+ 9} \left(e^{-i \gamma_j(t)} e^{-ix \cdot \frac{\xi(t)}{\lambda(t)}} \frac{1}{\lambda(t)} \epsilon_j \left(t, \frac{x - x(t)}{\lambda(t)} \right) \right)  \right\|_{L_x^2}^{2}
\\
 \lesssim& \frac{2^{2k_0}}{T} \int_{J} \left\| \boldsymbol{\epsilon}(t)  \right\|_{L^2_{x} l^2}^{2} \lambda(t)^{-2} \, \mathrm{d} t
+ \sup_{t \in J} \frac{|\xi(t)|^{2}}{\lambda(t)^{2}} + 2^{2k_0} T^{-10}
\end{aligned}
\end{align}
and
\begin{equation}\label{f6.17}
\sup_{t \in J} \left\| \boldsymbol{\epsilon}(t)  \right\|_{L^2_{x} l^2}^{2}
 \lesssim \frac{2^{2k_0} T^{1/50}}{\eta_{1}^{2} T} \int_{J} \left\| \boldsymbol{\epsilon}(t) \right\|_{L^2_{x} l^2}^{2} \lambda(t)^{-2} \, \mathrm{d} t
 + \frac{T^{1/50}}{\eta_{1}^{2}} \sup_{t \in J} \frac{|\xi(t)|^{2}}{\lambda(t)^{2}} + 2^{2k_0} \frac{T^{1/50}}{\eta_{1}^{2}} T^{-10}.
\end{equation}
\end{theorem}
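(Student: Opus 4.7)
The plan is to pass to the modulated frame by introducing
\[
w_j(t,x) := e^{-i\gamma_j(t)} e^{-ix\cdot\xi(t)/\lambda(t)} \lambda(t)^{-1} \epsilon_j\bigl(t,(x-x(t))/\lambda(t)\bigr),
\]
so that $u_j(t,\cdot)$ equals the modulated ground state plus $w_j$. A direct Fourier computation, using $\lambda(t)\ge\eta_1^{-1}$, $|\xi(t)|/\lambda(t)\le\eta_0$ and the last hypothesis in \eqref{f5.7}, shows that the physical-frame cutoff $P_{\leq k_0+9}$ acts as the identity on the modulated $\mathbf{Q}$ modulo an $O(T^{-10})$ tail, since $Q$ is Schwartz. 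Expanding $E(P_{\leq k_0+9}\mathbf{u}(t))$ around $\mathbf{Q}$ exactly as in the derivation of \eqref{f4.58}--\eqref{energybou} then produces the Galilean contribution $\tfrac{N}{2}|\xi(t)|^2\lambda(t)^{-2}\|Q\|_{L_x^2}^{2}$, the spectral quadratic form $\lambda(t)^{-2}\bigl(\langle L_+\Re P_{\leq k_0+9}\mathbf{w},\Re P_{\leq k_0+9}\mathbf{w}\rangle_{L_x^2 l^2}+\langle L_-\Im P_{\leq k_0+9}\mathbf{w},\Im P_{\leq k_0+9}\mathbf{w}\rangle_{L_x^2 l^2}\bigr)$, and cubic/quartic remainders of size $O(2^{2k_0}\|\boldsymbol{\epsilon}\|_{L_x^2 l^2}^{2})+O(2^{2k_0}T^{-10})$ (by Bernstein on the low-frequency factors). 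Applying the coercivity \eqref{key0} of Remark~\ref{remark0} to $P_{\leq k_0+9}\mathbf{w}$ — whose orthogonality conditions \eqref{f4.3} are preserved by the truncation up to $O(T^{-10})$ tails, since the test vectors $\boldsymbol{\chi}_0,\mathbf{Q}_{x_\ell},i\mathbf{Q}_{x_\ell},i\boldsymbol{\chi}_{0,j}$ are Schwartz — yields a lower bound of the form
\[
\lambda(t)^{-2}\|\nabla P_{\leq k_0+9}\mathbf{w}(t)\|_{L_x^2 l^2}^{2}\lesssim E(P_{\leq k_0+9}\mathbf{u}(t))-\tfrac{N}{2}|\xi(t)|^2\lambda(t)^{-2}\|Q\|_{L_x^2}^{2}+O(2^{2k_0}T^{-10}),
\]
into which the energy bound \eqref{f6.1} of Theorem~\ref{t4.1} can be inserted to conclude \eqref{f6.16}.

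For \eqref{f6.17}, the key observation is that the coercivity \eqref{key0} controls the full $H_x^1 l^2$ norm, not only the gradient, so the same argument also produces the $L^2$ companion
\[
\lambda(t)^{-2}\|P_{\leq k_0+9}\mathbf{w}(t)\|_{L_x^2 l^2}^{2}\lesssim E(P_{\leq k_0+9}\mathbf{u}(t))-\tfrac{N}{2}|\xi(t)|^2\lambda(t)^{-2}\|Q\|_{L_x^2}^{2}+O(2^{2k_0}T^{-10}).
\]
Multiplying by $\lambda(t)^2\leq\eta_1^{-2}$ and invoking Theorem~\ref{t4.1} then controls the modulated low-frequency part $\|P_{\leq k_0+9}\mathbf{w}(t)\|_{L_x^2 l^2}^{2}$. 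To recover all of $\boldsymbol{\epsilon}(t)=\mathbf{w}(t)$ (the change of variables being $L_x^2 l^2$-isometric), I decompose $\mathbf{w}$ in the modulated frame through a split at frequency $2^{k_0+9}\lambda(t)$: the low piece corresponds, up to a Galilean shift by $\xi(t)/\lambda(t)$, to $P_{\leq k_0+9}\mathbf{u}$ in the physical frame and is handled by the display above, while the high piece corresponds to $P_{\geq k_0+9}\mathbf{u}$ and is controlled by the long time Strichartz estimate \eqref{f5.66} of Theorem~\ref{t6.3} via the embedding $U_\Delta^2\hookrightarrow L_t^\infty L_x^2 l^2$; squaring that bound reproduces the desired right-hand side up to $T^{-20}$. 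The small polynomial loss $T^{1/50}$ arises from dyadic bookkeeping and from the overlap introduced by the Galilean shift when transplanting the modulated Littlewood--Paley decomposition to the physical frame.

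The main subtlety is that the physical-frame Littlewood--Paley projections do not commute exactly with the modulation symmetries, and every commutator error must be absorbed into either (i) an $O(T^{-10})$ tail — which happens whenever the cutoff hits the Schwartz modulated ground state or one of the Schwartz test vectors used to check the orthogonality relations \eqref{f4.3} — or (ii) the cubic/quartic $O(2^{2k_0}\|\boldsymbol{\epsilon}\|_{L_x^2 l^2}^{2})$ remainders produced by the energy expansion, which are then reabsorbed by Theorem~\ref{t4.1}. In particular, the approximate preservation of \eqref{f4.3} under $P_{\leq k_0+9}$, which is what lets me apply the coercivity \eqref{key0} to the truncated error, has to be verified by showing that $\langle (I-P_{\leq k_0+9})\mathbf{w}(t),\mathbf{f}\rangle_{L_x^2 l^2}=O(T^{-10})$ for every $\mathbf{f}\in\{\boldsymbol{\chi}_0,i\boldsymbol{\chi}_{0,j},\mathbf{Q}_{x_\ell},i\mathbf{Q}_{x_\ell}\}$; this uses the frequency gap $2^{k_0+9}\lambda(t)\ge 2^{k_0+9}\eta_1^{-1}$ together with the Schwartz decay of each $\mathbf{f}$.
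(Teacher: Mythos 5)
Your overall structure matches the paper's: pass to the modulated variable, expand $E(P_{\leq k_0+9}\mathbf{u})$, feed the coercivity \eqref{key0} into Theorem~\ref{t4.1}. But your bookkeeping of the commutator errors is too coarse, and this is where the argument does not close. The commutator between $P_{\leq k_0+9}$ and the Galilean modulation $e^{-ix\cdot\xi(t)/\lambda(t)}$ is \emph{not} an $O(T^{-10})$ tail, nor can it be absorbed into the cubic/quartic remainders: the paper's \eqref{qr1}--\eqref{qr2} show it is an $\eta_0$-sized quadratic term, of size $2^{-k_0}\eta_0\|\epsilon_j\|_{L^2}$ in $L^2$ and $\eta_0\|\epsilon_j\|_{L^2}$ at the level of $\nabla$. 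These produce, in the quadratic form $\mathcal P_2$ of \eqref{f6.25}, cross terms $O(\eta_0^2\|\boldsymbol{\epsilon}\|^2_{L^2l^2})$ and $O(\eta_0\|\boldsymbol{\epsilon}\|_{L^2l^2}\|\nabla\boldsymbol{\check{\epsilon}}\|_{L^2l^2})$ which must be absorbed into the coercive part via Cauchy--Schwarz together with the smallness of $\eta_0$ — a third absorption mechanism that your accounting (only $T^{-10}$ tails or $O(2^{2k_0}\|\boldsymbol{\epsilon}\|^2)$ remainders) does not supply. Relatedly, the crude cubic/quartic remainder $O(2^{2k_0}\|\boldsymbol{\epsilon}(t)\|_{L^2_x l^2}^2)$ you quote from the expansion of $E(P_{\leq k_0+9}\mathbf{u}(t_0))$ at the mean-value time $t_0$ is \emph{not} valid at an arbitrary $t\in J$ — at a general time $\|\boldsymbol{\epsilon}(t)\|^2$ can be as large as $\eta_*^2$, which is not bounded by the right side of \eqref{f6.1}; the paper's bounds \eqref{f6.31}--\eqref{f6.32} are careful to extract the $\eta_0^2$ and $\eta_0$ prefactors precisely so these terms can be fed back into the coercive quadratic form rather than into Theorem~\ref{t4.1}.

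Two further, smaller remarks. First, your route to \eqref{f6.17} through a frequency split at $2^{k_0+9}\lambda(t)$ and the $U^2_\Delta\hookrightarrow L_t^\infty L_x^2 l^2$ embedding is an unnecessary detour: in the expansion \eqref{f6.18} the linear-in-$\epsilon$ term arises from the mass identity $M(\mathbf{u})=M(\mathbf{Q})$ for the \emph{full} solution and therefore produces the full $\tfrac{1}{2\lambda(t)^{2}}\|\boldsymbol{\epsilon}(t)\|^{2}_{L^2_x l^2}$, not merely its low-frequency truncation; combined with the $P_{\leq k_0+9+n(t)}$-coercivity, this already dominates $\tfrac{1}{\lambda(t)^2}\|\boldsymbol{\epsilon}(t)\|^{2}$ and yields \eqref{f6.17} directly, with no Strichartz input. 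Second, the $T^{1/50}$ loss does not come from "dyadic bookkeeping" or "Galilean overlap" (the boost shifts frequencies by $O(\eta_0)\ll 2^{k_0}$ so there is no real overlap): it is simply the price of multiplying the pointwise bound $\lambda(t)^{-2}\|\boldsymbol{\epsilon}(t)\|^{2}\lesssim(\text{RHS of }\eqref{f6.1})$ by $\lambda(t)^2\lesssim T^{1/50}\eta_1^{-2}$, the upper bound on $\lambda$ used downstream in \eqref{f7.1}.
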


\begin{proof}
Let $\tilde{\epsilon}_j=e^{-i \gamma_j(t)} e^{-ix \cdot \frac{\xi(t)}{\lambda(t)}} \frac{1}{\lambda(t)} \epsilon_j \left(t, \frac{x - x(t)}{\lambda(t)} \right)=\tilde{v}_j+\tilde{w}_j, \forall \, j\in\mathbb{Z}_N$. Similar to \eqref{f6.3}, we can expand the energy as:
\begin{align}\label{f6.18}
& E \left(P_{\leq k _0+ 9}  \mathbf{u}(t) \right) \notag \\
  = & \frac{N}{2} \frac{|\xi(t)|^{2}}{\lambda(t)^{2}} \| Q \|_{L^2_{x} l^2}^{2}
 + \frac{1}{2 \lambda(t)^{2}} \left\| \boldsymbol{\epsilon}(t) \right\|_{L^2_{x} l^2}^{2}
 - \frac{|\xi(t)|^{2}}{2 \lambda(t)^{2}} \left\| \boldsymbol{\epsilon}(t)  \right\|_{L^2_{x} l^2}^{2}
  + \mathcal P_{2}(t) + \mathcal P_{3}(t) + \mathcal P_{4}(t) + O \left(T^{-20} \right),
\end{align}
where
\begin{align*}
	\mathcal P_{2}(t)  = & \frac{1}{2} \left\| \nabla \mathbf{\tilde{\epsilon}}  \right\|_{L^2_{x}l^2}^{2}
- \frac{N+3}{2\lambda(t)^{4}}\sum_{j\in\mathbb{Z}_N} \int Q \left(\frac{x - x(t)}{\lambda(t)} \right)^{2}  \left( \left( P_{\leq k _0+ 9} \tilde{v}_j  \right)
\left(t,\frac{x - x(t)}{\lambda(t)} \right) \right)^{2} \, \mathrm{d} x \\
& -\frac{N+1}{2\lambda(t)^{4}}\sum_{j\in\mathbb{Z}_N} \int Q \left(\frac{x - x(t)}{\lambda(t)} \right)^{2}
\left( \left( P_{\leq k _0+ 9} \tilde{w}_j \right) \left(t,\frac{x - x(t)}{\lambda(t)} \right) \right)^{2} \mathrm{d}x\\
	&  - \frac{2}{ \lambda(t)^{4}} \sum_{j\in\mathbb{Z}_N}\sum_{\substack{k\in\mathbb{Z}_N\\k\neq j}} \int Q \left(\frac{x - x(t)}{\lambda(t)} \right)^{2}
\left( P_{\leq k _0+ 9}
 \tilde{v}_k \right)
  \left(t,\frac{x - x(t)}{\lambda(t)} \right) \left( P_{\leq k _0+ 9} \tilde{v}_j \right)   \left(t,\frac{x - x(t)}{\lambda(t)} \right ) \, \mathrm{ d} x,\\
\mathcal{P}_3(t)
= & -\frac{1}{\lambda(t)}\sum_{j\in\mathbb{Z}_N}\int Q \left(\frac{x - x(t)}{\lambda(t)} \right)  \left| \left( P_{\leq k _0+ 9} \tilde{\epsilon}_j \right)
\left(t,\frac{x - x(t)}{\lambda(t)} \right) \right|^2
\left( P_{\leq k _0+ 9} \tilde{v}_j  \right) \left(t,\frac{x - x(t)}{\lambda(t)} \right) \, \mathrm{ d} x \\
& 	-\frac{1}{2\lambda(t)}\sum_{j\in\mathbb{Z}_N}\sum_{\substack{k\in\mathbb{Z}_N\\k\neq j}}\int Q \left(\frac{x - x(t)}{\lambda(t)} \right)
\left| \left( P_{\leq k _0+ 9} \tilde{\epsilon}_j \right)  \left(t,\frac{x - x(t)}{\lambda(t)} \right) \right|^2
\left( P_{\leq k _0+ 9} \tilde{v}_k \right)  \left(t,\frac{x - x(t)}{\lambda(t)} \right) \, \mathrm{d} x \\
& 	-\frac{1}{2\lambda(t)}\sum_{j\in\mathbb{Z}_N}\sum_{\substack{k\in\mathbb{Z}_N\\k\neq j}}\int Q \left(\frac{x - x(t)}{\lambda(t)} \right)
\left| \left( P_{\leq k _0+ 9} \tilde{\epsilon}_k  \right) \left(t,\frac{x - x(t)}{\lambda(t)} \right) \right|^2
\left( P_{\leq k _0+ 9} \tilde{v}_j \right) \left(t,\frac{x - x(t)}{\lambda(t)} \right) \, \mathrm{d} x,\\
\mathcal{P}_4(t)
= & -\frac{1}{4}\sum_{j\in\mathbb{Z}_N}\int   \left| \left( P_{\leq k _0+ 9} \tilde{\epsilon}_j \right)  \left(t,\frac{x - x(t)}{\lambda(t)} \right) \right|^4  \, \mathrm{d} x \\
	& -\frac{1}{4}\sum_{j\in\mathbb{Z}_N}\sum_{\substack{k\in\mathbb{Z}_N\\k\neq j}}\int  \left| \left( P_{\leq k _0+ 9} \tilde{\epsilon}_k \right) \left(t,\frac{x - x(t)}{\lambda(t)} \right) \right|^2
\left| \left( P_{\leq k _0+ 9} \tilde{\epsilon}_j \right)  \left(t,\frac{x - x(t)}{\lambda(t)} \right) \right|^2 \, \mathrm{d} x .
\end{align*}
Split
\begin{align}\label{f6.22}
 	\tilde{\epsilon}_j(t)
 = & P_{\leq k _0+ 9} \Big(e^{-i \gamma_j(t)} e^{-ix \cdot \frac{\xi(t)}{\lambda(t)}}
  \tfrac{1}{\lambda(t)} \epsilon_j \big(t, \tfrac{x - x(t)}{\lambda(t)} \big) \Big)
  \\\nonumber
  =  & e^{-i \gamma_j(t)} e^{-ix \cdot \frac{\xi(t)}{\lambda(t)}} P_{\leq k _0+ 9}
  \Big(\tfrac{1}{\lambda(t)} \epsilon_j \big(t, \tfrac{x - x(t)}{\lambda(t)} \big) \Big) + P_{\leq k _0+ 9} \Big(e^{-i \gamma_j(t)} e^{-ix \cdot \frac{\xi(t)}{\lambda(t)}} \frac{1}{\lambda(t)}
   \epsilon_j \big(t, \tfrac{x - x(t)}{\lambda(t)} \big) \Big) \\\nonumber
  &
   -  e^{-i \gamma_j(t)} e^{-ix \cdot \frac{\xi(t)}{\lambda(t)}} P_{\leq k _0+ 9}
   \Big(\tfrac{1}{\lambda(t)} \epsilon_j \big(t, \tfrac{x - x(t)}{\lambda(t)} \big) \Big).
\end{align}
Noticing that for any $k\in\mathbb{N}, f\in L_x^2$ and $\xi\in\mathbb{R}^2$, we have 
\begin{align*}
\left\|P_{\leq k} \left(e^{-ix\cdot\xi}f \right)-e^{-ix\cdot\xi}P_{\leq k}f \right\|_{L_x^2}
& = \left\|\varphi \left(\frac{x}{2^k} \right)\hat{f}(x-\xi)-\varphi \left(\frac{x-\xi}{2^k} \right)\hat{f}(x-\xi) \right\|_{L_x^2}
\\
& \lesssim 2^{-k}|\xi| \|\hat{f} \|_{L_x^2}\lesssim 2^{-k}|\xi|\|f\|_{L_x^2},
\end{align*}
since $\left|\frac{ \xi(t) }{\lambda(t)} \right| \leq \eta_{0}$, for any $j \in\mathbb{Z}_N$, we obtain 
\begin{align}
\aligned
&\left\|P_{\leq k _0+ 9} \left(e^{-i \gamma_j(t)} e^{-ix \cdot \frac{\xi(t)}{\lambda(t)}} \frac{1}{\lambda(t)} \epsilon_j \left(t, \frac{x - x(t)}{\lambda(t)} \right) \right)-
e^{-i \gamma_j(t)} e^{-ix \cdot \frac{\xi(t)}{\lambda(t)}} P_{\leq k _0+ 9} \left(\frac{1}{\lambda(t)} \epsilon_j \left(t, \frac{x - x(t)}{\lambda(t)} \right) \right) \right\|_{L_x^2} \\
 \lesssim& 2^{-k_0} \frac{|\xi(t)|}{\lambda(t)} \left\| \epsilon_j \right\|_{L^2_{x}}
\lesssim 2^{-k_0} \eta_{0} \| \epsilon_j \|_{L^2_{x}},\label{qr1}
\endaligned
\end{align}
and 
\begin{align}\nonumber
& \left\| \nabla P_{\leq k _0+ 9} \left(e^{-i \gamma_j(t)} e^{-ix \cdot \frac{\xi(t)}{\lambda(t)}} \frac{1}{\lambda(t)} \epsilon_j \left(t, \frac{x - x(t)}{\lambda(t)} \right) \right)
- e^{-i \gamma_j(t)} e^{-ix \cdot \frac{\xi(t)}{\lambda(t)}} \nabla P_{\leq k _0+ 9} \left(\frac{1}{\lambda(t)} \epsilon_j \left(t, \frac{x - x(t)}{\lambda(t)} \right) \right)  \right\|_{L^2_{x}}  \\\nonumber
\lesssim& \frac{|\xi(t)|}{\lambda(t)}
\left\|P_{\leq k _0+ 9} \left(e^{-i \gamma_j(t)} e^{-ix \cdot \frac{\xi(t)}{\lambda(t)}} \frac{1}{\lambda(t)} \epsilon_j \left(t, \frac{x - x(t)}{\lambda(t)} \right) \right) \right\|_{L_x^2}+2^{-k_0}
\frac{|\xi(t)|}{\lambda(t)} \left\| P_{\leq k_0+12} \epsilon_j  \right\|_{L^2_{x}}
 \\\label{qr2}
  \lesssim& \eta_{0} \| \epsilon_j \|_{L^2_{x}}.
\end{align}
Since $Q$ is real-valued, smooth, and has derivatives that are rapid decreasing, let
\begin{align*}
\check{\epsilon}_j(t,x) 
=e^{-i \gamma_j(t)} e^{-ix \cdot \frac{\xi(t)}{\lambda(t)}} \frac{1}{\lambda(t)} \left( P_{\leq k_0+9} \epsilon_j  \right) \left(t, \frac{x - x(t)}{\lambda(t)} \right), \forall \,  j\in\mathbb{Z}_N,
\end{align*}
 by \eqref{qr1} and \eqref{qr2}, $\mathcal{P}_2$ can be further decomposed as
\begin{align}\label{f6.25}
\mathcal P_{2}
= & \frac{1}{2} \left\| \nabla \boldsymbol{\check{\epsilon}} \right\|_{L^2_{x} l^2}^{2}
 - \frac{N+3}{2\lambda(t)^{4}}\sum_{j\in\mathbb{Z}_N} \int Q \left(\frac{x - x(t)}{\lambda(t)} \right)^{2} \left( \left( P_{\leq k_0+9}v_j \right)
  \left(t,\frac{x - x(t)}{\lambda(t)} \right) \right)^{2}  \, \mathrm{d} x \\\nonumber
& -\frac{N+1}{2\lambda(t)^{4}}\sum_{j\in\mathbb{Z}_N} \int Q \left(\frac{x - x(t)}{\lambda(t)} \right)^{2} \left( \left( P_{\leq k_0+9}w_j \right)
 \left(t,\frac{x - x(t)}{\lambda(t)} \right) \right)^{2} \, \mathrm{d} x \\\nonumber
& - \frac{2 }{ \lambda(t)^{4}} \sum_{j\in\mathbb{Z}_N}\sum_{\substack{k\in\mathbb{Z}_N\\k\neq j}} \int Q \left(\frac{x - x(t)}{\lambda(t)} \right)^{2}
 \left( P_{\leq k_0+9}v_k  \right) \left(t,\frac{x - x(t)}{\lambda(t)} \right) \left(  P_{\leq k_0+9}v_j \right)
 \left(t,\frac{x - x(t)}{\lambda(t)} \right) \, \mathrm{d} x  \\\nonumber
  & + O\left(\eta_{0}^{2}  \left\| \boldsymbol{\epsilon} \right\|_{L^2_{x} l^2}^{2} \right)
  + O\left(\eta_{0} \left\| \boldsymbol{\epsilon} \right\|_{L^2_{x} l^2}  \left\| \nabla \mathbf{\check{\boldsymbol{\epsilon}}} \right\|_{L^2_{x} l^2} \right)+ O \left( T^{-20} \right).
\end{align}
By direct calculation,
\begin{align}\label{f6.26}
\aligned
\frac{1}{2} \left\| \nabla \boldsymbol{\check{\epsilon}} \right\|_{L^2_{x} l^2}^{2}
= &  \frac{|\xi(t)|^{2}}{2 \lambda(t)^{4}} \left\| \left( P_{\leq k _0+ 9} \boldsymbol{\epsilon} \right)  \left(t, \frac{x - x(t)}{\lambda(t)} \right) \right\|_{L^2_{x} l^2}^{2}
 + \frac{1}{2\lambda(t)^{2}} \left\|  \left( \nabla P_{\leq k _0+ 9} \boldsymbol{\epsilon}  \right) \left(t, \frac{x - x(t)}{\lambda(t)} \right) \right\|_{L^2_x l^2}^{2}\\
& + \frac{\xi(t)}{\lambda(t)^{3}} \cdot \left\langle \left(  P_{\leq k _0+ 9} \boldsymbol{\epsilon}  \right) \left(t, \frac{x - x(t)}{\lambda(t)} \right),
i \left(  P_{\leq k _0+ 9} \nabla \boldsymbol{\epsilon}  \right) \left(t, \frac{x - x(t)}{\lambda(t)} \right) \right\rangle_{L^2_{x} l^2}.
\endaligned
\end{align}
Rescaling, if $2^{n(t)} = \lambda(t)$\footnote{ $n(t)$ can not  necessarily be an positive integer, but we can similarly define $P_{\leq k_0 + 9+n(t)}u$ as $\mathcal{F}^{-1} \left(\varphi \left(\frac{\xi}{2^{k_0+9+n(t)}} \right)\mathcal{F} u(\xi) \right).$}, then
\begin{align}\nonumber
&  \frac{1}{2 \lambda(t)^{2}} \left\| \left(  \nabla P_{\leq k _0+ 9} \boldsymbol{\epsilon} \right)  \left(t, \frac{x - x(t)}{\lambda(t)} \right) \right\|_{L^2_x l^2}^{2}
- \frac{N+3}{2\lambda(t)^{4}}\sum_{j\in\mathbb{Z}_N} \int Q \left(\frac{x - x(t)}{\lambda(t)} \right)^{2}
\left(  \left( P_{\leq k_0+9}v_j\right) \left(t,\frac{x - x(t)}{\lambda(t)} \right) \right)^{2} \, \mathrm{d} x \\\nonumber
&-\frac{N+1}{2\lambda(t)^{4}}\sum_{j\in\mathbb{Z}_N} \int Q \left(\frac{x - x(t)}{\lambda(t)} \right)^{2}
 \left(  \left( P_{\leq k_0+9}w_j  \right) \left(t,\frac{x - x(t)}{\lambda(t)} \right) \right)^{2} \, \mathrm{d} x \\\nonumber
&- \frac{2 }{  \lambda(t)^{4}} \sum_{j\in\mathbb{Z}_N}\sum_{\substack{k\in\mathbb{Z}_N\\k\neq j}} \int Q \left(\frac{x - x(t)}{\lambda(t)} \right)^{2}
 \left( P_{\leq k_0+9} v_k \right)  \left(t,\frac{x - x(t)}{\lambda(t)} \right) \left(  P_{\leq k_0+9}v_j  \right) \left(t,\frac{x - x(t)}{\lambda(t)} \right) \, \mathrm{d} x \\\label{4.27.0}
 =&  \frac{1}{2 \lambda(t)^{2}} \left\| \nabla P_{\leq k _0+ 9 + n(t)} \boldsymbol{\epsilon}(t, x) \right\|_{L^2_x l^2}^{2}
- \frac{N+3}{2\lambda(t)^{2}}\sum_{j\in\mathbb{Z}_N} \int Q(x)^{2} \left( P_{\leq k_0+9}v_j(t,x) \right)^{2} \, \mathrm{d} x \\\nonumber
&  -\frac{N+1}{2\lambda(t)^{2}}\sum_{j\in\mathbb{Z}_N} \int Q(x)^{2} \left( P_{\leq k_0+9}w_j(t,x) \right)^{2} \, \mathrm{d} x
- \frac{2 }{  \lambda(t)^{2}} \sum_{j\in\mathbb{Z}_N}\sum_{\substack{k\in\mathbb{Z}_N\\k\neq j}} \int Q(x)^{2}  P_{\leq k_0+9}v_k (t,x) P_{\leq k_0+9}v_j (t,x) \, \mathrm{ d} x.
\end{align}
Since $\boldsymbol{\epsilon}(t)$ satisfies the orthogonal conditions \eqref{orthod}, and $Q, \chi_0$ are real-valued, smooth, and have derivatives that are rapid decreasing, we have 
\begin{equation}\label{f6.28}
	p(f):= \left\langle P_{\leq k _0+ 9 + n(t)} \boldsymbol{\epsilon}, f \right\rangle_{L^2_{x} l^2}
 \lesssim T^{-20},
 \quad f \in \left\{i\boldsymbol{\chi}_{0,1},\cdots,i\boldsymbol{\chi}_{0,N}, \boldsymbol{\chi}_0, i\mathbf{Q}_{x_1}, i\mathbf{Q}_{x_2}, \mathbf{Q}_{x_1}, \mathbf{Q}_{x_2} \right\}.
\end{equation}
Let
\begin{align*}
\mathbf{e}(t)
= & P_{\leq k _0+ 9 + n(t)} \boldsymbol{\epsilon}-\sum_{j\in\mathbb{Z}_N}
\frac{p\left(i\boldsymbol{\chi}_{0,j} \right)}{ \left\|i\boldsymbol{\chi}_{0,j} \right\|_{L_x^2l^2}}i\boldsymbol{\chi}_{0,j}
-\frac{p \left(\boldsymbol{\chi}_{0} \right)}{ \left\|\boldsymbol{\chi}_{0} \right\|_{L_x^2l^2}}\boldsymbol{\chi}_{0}
-\frac{p \left(i\mathbf{Q}_{x_1} \right)}{ \left\|i\mathbf{Q}_{x_1} \right\|_{L^2_x l^2}}i\mathbf{Q}_{x_1}
\\
& -\frac{p \left(i\mathbf{Q}_{x_2} \right)}{ \left\|i\mathbf{Q}_{x_2} \right\|_{L^2_x l^2}}i\mathbf{Q}_{x_2} -\frac{p \left(\mathbf{Q}_{x_1} \right)}{ \left\|\mathbf{Q}_{x_1} \right\|_{L^2_x l^2}}\mathbf{Q}_{x_1}
-\frac{p \left(\mathbf{Q}_{x_2} \right)}{ \left\|\mathbf{Q}_{x_2} \right\|_{L^2_x l^2}}\mathbf{Q}_{x_2},
\end{align*}
then $\mathbf{e}(t)$ satisfies the orthogonal conditions \eqref{orthod}. Using  \eqref{key0}, we have 
\begin{align}\label{aux1}
& \frac{1}{2} \left\|\mathbf{e}(t) \right\|_{L^2_x l^2}+ \frac{1}{2} \left\| \nabla \mathbf{e}(t) \right\|_{L^2_{x} l^2}^{2}
- \frac{N+3}{2\lambda(t)^{2}}\sum_{j\in\mathbb{Z}_N} \int Q(x)^2 \left(\Re e_j(t) \right)^2 \, \mathrm{d} x  \\\nonumber
 & -\frac{N+1}{2\lambda(t)^{2}}\sum_{j\in\mathbb{Z}_N} \int Q(x)^2 \left(\Im\varepsilon_j(t) \right)^2 \, \mathrm{d} x
	- \frac{2 }{  \lambda(t)^{2}} \sum_{j\in\mathbb{Z}_N}\sum_{\substack{k\in\mathbb{Z}_N\\k\neq j}} \int Q(x)^2\Re e_k(t)\Re e_j(t) \, \mathrm{d} x
\geq \lambda_1 \left\|\mathbf{e}(t) \right\|^2_{H^{1}_{x} l^2}.
\end{align}
Collecting \eqref{f6.25},  \eqref{4.27.0}, \eqref{f6.28} and  \eqref{aux1}, if $\eta_0$  is sufficiently small,
 then there exists a constant $\lambda_2$ such that
\begin{equation}\label{f6.30}
\frac{1}{2 \lambda(t)^{2}} \left\| \boldsymbol{\epsilon} (t) \right\|_{L^2_{x} l^2}^{2}
+ \mathcal P_{2}(t) \geq \frac{\lambda_{2}}{\lambda(t)^{2}} \left\| \boldsymbol{\epsilon}(t) \right\|_{L^2_{x} l^2}^{2}
+ \lambda_{2} \left\| \nabla\mathbf{\check{\boldsymbol{\epsilon}}}(t) \right\|_{L^2_x l^2}^{2} - O \left(T^{-20} \right).
\end{equation}
Finally, by the interpolation, \eqref{f6.22} and Bernstein's inequality, we have
\begin{equation}\label{f6.31}
|\mathcal{P}_3(t)|
\lesssim \left\|P_{\leq k_0+9}\mathbf{\tilde{\boldsymbol{\epsilon}}}(t) \right\|^2_{L^2_x l^2} \left\|\nabla P_{\leq k_0+9}\mathbf{\tilde{\boldsymbol{\epsilon}}}(t) \right\|^2_{L^2_x l^2}
\lesssim \eta_0^2 \left\|\nabla\mathbf{\check{\boldsymbol{\epsilon}}} \right\|^2_{L^2_x l^2}+2^{-2k_0}\eta_0^4 \left\|\boldsymbol{\epsilon}(t) \right\|^4_{L^2_x l^2},
\end{equation}
and
\begin{align}\label{f6.32}
|\mathcal{P}_4(t)|    \lesssim \frac{1}{\lambda(t)} \left\| P_{\leq k_0+9}\mathbf{\tilde{\boldsymbol{\epsilon}}}(t) \right\|_{L^2_{x} l^2}^{2}
\left\| \nabla P_{\leq k_0+9}\boldsymbol{\tilde{\epsilon}}(t) \right\|^2_{L^2_x l^2}  \lesssim \frac{1}{\lambda(t)} \left\| \boldsymbol{\epsilon} \right\|_{L^2_{x} l^2} \left(\eta_{0} \left\| \mathbf{\check{\boldsymbol{\epsilon}}} \right\|_{\dot{H}^{1}_x l^2}
+ 2^{-k_0} \eta_{0}^2  \left\| \boldsymbol{\epsilon} \right\|_{L^2_{x} l^2}^{2} \right).
\end{align}
If $\eta_{0} \ll 1$ is sufficiently small, then by the Cauchy-Schwarz inequality, \eqref{f6.18}, \eqref{f6.30}, \eqref{f6.31} and \eqref{f6.32}, there exists $\lambda_3>0$ such that
\begin{equation}\label{f6.33}
\aligned
E \left(P_{\leq k _0+ 9} \mathbf{u}(t) \right)
 &\geq \lambda_3 \frac{N}{2}\frac{|\xi(t)|^{2}}{\lambda(t)^{2}} \| Q \|_{L^2_{x}}^{2}
 + \frac{\lambda_{3}}{2 \lambda(t)^{2}} \left\| \boldsymbol{\epsilon} \right\|_{L^2_{x} l^2}^{2}
 + \frac{\lambda_{3}}{2}  \left\|\nabla \boldsymbol{\check{\epsilon}}(t) \right\|_{L^2_x l^2}^{2}  - O \left(T^{-20} \right) \\
&\geq  \frac{\lambda_{3}}{2 \lambda(t)^{2}} \left\| \boldsymbol{\epsilon} \right\|_{L^2_{x} l^2}^{2}
+ \frac{\lambda_{3}}{2} \left\| \tilde{\tilde{\boldsymbol{\epsilon}}} \right\|_{\dot{H}_x^{1}l^2}^{2} - O \left(2^{2k_0} T^{-10} \right).
\endaligned
\end{equation}
Plugging \eqref{f6.33} into \eqref{f6.1} and using \eqref{qr2}, we can finish the proof of Theorem \ref{t6.12}.
\end{proof}

%%%%%%%%%%%%%%%%%%%%%%%%%%%%%%%%%%%%%%%%%%%%%%%%%%%%%%%%%%%%%%%%%%%%%%%%

%%%%%%%%%%%%%%%%%%%%%%%%%%%%%%%%%%%%%%%%%%%%%%%%%%%%%%%%%%%%%%%%%%%%%%%%

\section{A virial inequality}\label{Sec:viria}
In this section, we derive a  virial-type inequality, which plays a key role in the proof of our main Theorem \ref{t2.3}. Inspired by \cite{D2,D1}, we will prove this estimate by making use of the frequency-localized Morawetz estimate and the long time Strichartz estimates 
 derived in the previous section. 
\begin{theorem}\label{t7.13}
Under the assumptions of Theorem \ref{t5.9}, if $\mathbf{u}(t)$ further satisfies 
\begin{equation}\label{f7.1}
\frac{1}{\eta_{1}} \leq \lambda(t) \leq \frac{1}{\eta_{1}} T^{1/100},
\quad \forall \,  t \in J, \qquad \xi(b) = x(a) = 0,\qquad \sup_{t \in J}|x(t)| \leq R = T^{1/25}.
\end{equation}
Let $\boldsymbol{\epsilon} = \mathbf{v} + i \mathbf{w}$, then for $T=T(\eta_1)$ sufficiently large,
\begin{align}\label{f7.2}
& \int_{a}^{b} \left\| \boldsymbol{\epsilon}(t) \right\|_{L^2_{x} l^2}^{2} \lambda(t)^{-2} \, \mathrm{d} t
\notag \\
 \leq& 3 \sum_{j\in\mathbb{Z}_N} \langle w_j(a), Q + x \cdot \nabla Q \rangle_{L^2_{x}} - 3\sum_{j\in\mathbb{Z}_N} \langle w_j(b), Q + x \cdot \nabla Q \rangle_{L^2_{x}}
+ \frac{T^{1/15}}{\eta_{1}^{2}} \sup_{t \in J} \frac{|\xi(t)|^{2}}{\lambda(t)^{2}} + O \left(T^{-8} \right).
\end{align}
\end{theorem}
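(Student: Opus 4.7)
\smallskip

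The plan is to derive this inequality via a frequency-localized Morawetz identity on the truncated solution $P_{\leq k_0+9}\mathbf{u}$, combine the resulting positivity with the coercivity obtained in Section \ref{sec:aprieps}, and then extract the boundary contributions by substituting the modulation decomposition at $t=a,b$. Concretely, I would introduce the Morawetz potential
\begin{equation*}
M(t)=\sum_{j\in\mathbb{Z}_N}\int \phi(|x|)\,\frac{x}{|x|}\cdot \Im\bigl[\,\overline{P_{\leq k_0+9}u_j}\,\nabla P_{\leq k_0+9}u_j\,\bigr](t,x)\,\mathrm{d}x,
\end{equation*}
with the weight $\phi(r)=\int_0^r\psi(\eta_1 s/(2R))\,\mathrm{d}s$ introduced in the overview, and then use the equation \eqref{f5.25} for $P_{\leq k_0+9}\mathbf{u}$ (which carries the commutator error $\mathcal{N}_{k_0+9}$) to compute
\begin{equation*}
\frac{\mathrm{d}}{\mathrm{d}t}M(t)= 2\sum_j\int \partial_i\partial_j\phi\,\Re[\partial_i\overline{P_{\leq k_0+9}u_j}\,\partial_j P_{\leq k_0+9}u_j] -\frac12\sum_j\int \Delta\phi\,F_j(P_{\leq k_0+9}\mathbf{u})\,\overline{P_{\leq k_0+9}u_j}+\mathcal{E}(t),
\end{equation*}
where $\mathcal{E}(t)$ collects the $\mathcal{N}_{k_0+9}$-driven remainders and the $\Delta^2\phi$ mass term.

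In the region $|x|\leq R/\eta_1$ the weight behaves like the pure virial $\phi(r)\sim r$, $\partial_i\partial_j\phi\sim\delta_{ij}$, $\Delta\phi\sim d=2$, so the two principal terms collapse to $4E(P_{\leq k_0+9}\mathbf{u})$, up to a controllable error that comes from the derivatives of $\psi(\eta_1\cdot/(2R))$, which is supported in the far field $|x|\gtrsim R/\eta_1\gg \lambda(t)$. Since $Q$ and $\boldsymbol{\epsilon}$ concentrate in scaled variables on a set of size $\lambda(t)\leq \eta_1^{-1}T^{1/100}$ around $x(t)$ with $|x(t)|\leq R=T^{1/25}$, rapid decay of $Q$ and the high powers of $T$ in the cutoff radius let me push the error from the far-field tail into the $O(T^{-8})$ bucket. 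Combining this with the coercivity estimate \eqref{f6.33} from Theorem \ref{t6.12} gives the pointwise bound $E(P_{\leq k_0+9}\mathbf{u}(t))\gtrsim \lambda(t)^{-2}\|\boldsymbol{\epsilon}(t)\|_{L^2_x l^2}^{2}+\|\nabla\check{\boldsymbol{\epsilon}}\|^2_{L^2_x l^2}-O(2^{2k_0}T^{-10})$, which, after integrating in time, produces the left-hand side $\int_a^b\|\boldsymbol{\epsilon}(t)\|_{L^2_x l^2}^{2}\lambda(t)^{-2}\,\mathrm{d}t$ (with numerical constant chosen so the final coefficient is $3$).

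For the boundary terms I would substitute $u_j(t,y)=\lambda^{-1}e^{-i\gamma_j}e^{-i(y-x(t))\cdot\xi(t)/\lambda}[Q+\epsilon_j](t,(y-x(t))/\lambda)$ into $M(a)$ and $M(b)$, rescale to $z=(y-x(t))/\lambda$, and use $Q$ real to kill $\Im[\overline Q\nabla Q]$. The linear-in-$\boldsymbol{\epsilon}$ contribution reduces to
\begin{equation*}
\sum_j\int z\cdot[Q\nabla w_j-w_j\nabla Q]\,\mathrm{d}z=-2\sum_j\langle w_j,\,Q+x\cdot\nabla Q\rangle_{L^2_x},
\end{equation*}
after integration by parts (using $\nabla\!\cdot\!(zQ)=2Q+z\cdot\nabla Q$ in dimension two). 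Here the hypotheses $x(a)=0$ and $\xi(b)=0$ are used to discard the $\xi(a)$ moment at $t=a$ (which becomes a $\xi(t)/\lambda(t)$ contribution absorbable into $T^{1/15}\eta_1^{-2}\sup|\xi/\lambda|^2$) and the $x(b)$ shift at $t=b$ (absorbed by the weight since $|x(b)|/\lambda(b)$ is controlled). Quadratic-in-$\boldsymbol{\epsilon}$ and $\xi$-cross boundary contributions are estimated by Cauchy--Schwarz together with the universal bound on $\|\boldsymbol{\epsilon}\|_{L^2_x l^2}$ and the $\sup_{t\in J}|\xi(t)|^{2}/\lambda(t)^{2}$ term appearing on the right-hand side.

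The remaining error terms in $\mathcal{E}(t)$ are handled exactly as in the proof of Theorem \ref{t4.1}, by decomposing $\mathcal{N}_{k_0+9}$ via Littlewood--Paley into the three configurations (high--low--low, high--high--low, high--high--high) and applying the long time Strichartz estimates \eqref{f5.66}, \eqref{ssz}, \eqref{ssx} of Theorem \ref{t6.3} to each piece; the weight $|\nabla\phi|+|x||\Delta\phi|\lesssim R/\eta_1=T^{1/25}/\eta_1$ only costs a factor that is harmless after multiplication by the decay of the long time Strichartz bound, yielding an overall $O(T^{-8})$. The main obstacle I anticipate is precisely this last bookkeeping: the weight carries a polynomial loss in $R$, and the energy bound from Theorem \ref{t4.1} already contains the factor $2^{2k_0}=T^{2/3}$, so the coercivity constant and the exponents on $T^{1/25}$, $T^{1/100}$, and $T^{1/50}$ have to be tracked carefully to land inside the stated error $O(T^{-8})$ while simultaneously keeping the coefficient of $\sup|\xi|^{2}/\lambda^{2}$ at $T^{1/15}/\eta_1^{2}$. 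The additional complication peculiar to the coupled system---the cross terms $|u_k|^2 u_j$ with $k\neq j$---is accommodated by the multi-component bilinear estimates already established in the proof of Theorem \ref{t5.9}, so the structure of the proof follows Dodson's scalar argument once the vector-valued Strichartz and coercivity machinery of Sections \ref{se3v314}--\ref{sec:aprieps} is in place.
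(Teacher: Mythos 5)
Your proposal follows the same Morawetz strategy as the paper: the identical weight $\phi(r)=\int_0^r\psi(\eta_1 s/(2R))\,\mathrm{d}s$ applied to $P_{\le k_0+9}\mathbf{u}$, the same two-step structure (boundary contributions $M(b)-M(a)$ yielding $2\sum_j\langle w_j(a)-w_j(b), Q+x\cdot\nabla Q\rangle$, plus a localized coercivity lower bound on $\tfrac{d}{dt}M$), the same use of $x(a)=\xi(b)=0$ to kill the leading boundary moments, and the same long-time Strichartz input from Theorem \ref{t6.3} to control the commutator error $\mathcal{N}_{k_0+9}$. The one slightly loose step is your claim that the positive part of $\tfrac{d}{dt}M$ ``collapses to $4E(P_{\le k_0+9}\mathbf{u})$'' with only far-field error from $\psi$: that is accurate for the $Q$ and $Q\times\boldsymbol{\epsilon}$ pieces (which concentrate and decay rapidly), but the $\boldsymbol{\epsilon}$-quadratic and quartic pieces inside the cutoff must be controlled by a \emph{localized} Gagliardo--Nirenberg/Sobolev argument on $\chi\nabla\check{\boldsymbol{\epsilon}}$ together with the $1/R$ gain as in \eqref{f7.39}--\eqref{f7.44}, since $\boldsymbol{\epsilon}$ is not compactly supported and its far-field mass contributes at the scale $\|\boldsymbol{\epsilon}\|_{L^2_xl^2}^2/R$ rather than $O(T^{-8})$; this is a refinement of your bookkeeping rather than a change of method.
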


\begin{proof}
Let $\chi(r) \in C^{\infty}([0, \infty))$ be a smooth, radial function, satisfying $\chi(r) = 1$ for $0 \leq r \leq 1$,
and supported on $r \leq 2$. Then let
\begin{equation*} 
\phi(r) = \int_{0}^{r} \psi \left(\frac{\eta_1 s}{2R} \right)  \, \mathrm{d} s, \quad\psi(x):=\chi^2(|x|),
\end{equation*}
and we can define the Morawetz action 
\begin{equation}\label{f7.4}
M(t) = \sum_{j\in\mathbb{Z}_N}\int \phi(|x|) \frac{x}{|x|} \cdot \Im \left[\overline{P_{\leq k _0+ 9} u_j} \nabla P_{\leq k _0+ 9} u_j \right](t,x) \,   \mathrm{d} x,
\end{equation}
where $k_0=T^{1/3}$. Following \eqref{f5.25},
\begin{equation*} 
i \partial_{t} P_{\leq k _0+ 9} u_j + \Delta P_{\leq k _0+ 9} u_j +  {F}_j \left(P_{\leq k _0+ 9} \mathbf{u} \right)
 =  {F}_j \left(P_{\leq k _0+ 9} \mathbf{u} \right) - P_{\leq k _0+ 9}  {F}_j \left(\mathbf{u} \right) = -{\mathcal{N}}_j,\quad \forall \, j\in\mathbb{Z}_N.
\end{equation*}
Plugging into \eqref{f7.4} and integrating by parts, we obtain
\begin{align}
\frac{ \mathrm{d} }{ \mathrm{d} t} M(t)
=& \sum_{j\in\mathbb{Z}_N} \Re\int \phi(r) \left[-\Delta \overline{P_{\leq k _0+ 9} u_j} \partial_{r} P_{\leq k _0+ 9} u_j + \overline{P_{\leq k _0+ 9} u_j} \Delta \partial_{r} P_{\leq k _0+ 9} u_j \right] \,\mathrm{d}x  \notag\\
&+ \sum_{j\in\mathbb{Z}_N} \Re\int \phi(r) \left[- {F}_j \left(\overline{P_{\leq k _0+ 9} \mathbf{u}} \right)
\partial_{r} P_{\leq k _0+ 9} u_j + \overline{P_{\leq k _0+ 9} u_j} \partial_{r}  {F}_j \left(P_{\leq k _0+ 9} \mathbf{u} \right) \right] \,\mathrm{d}x  \notag\\
&+\sum_{j\in\mathbb{Z}_N} \Re\int \phi(r) \left[\overline{P_{\leq k _0+ 9} u_j} \partial_{r} \mathcal{N}_j \right](t,x) \,  \mathrm{d} x
- \sum_{j\in\mathbb{Z}_N} \Re\int \phi(r) \left[\bar{\mathcal{ N}_j} \partial_{r} P_{\leq k _0+ 9} u_j \right](t,x) \, \mathrm{ d} x \notag\\
=& 2\sum_{j\in\mathbb{Z}_N} \int \chi^{2} \left(\frac{\eta_1 x}{R} \right) \left|\nabla P_{\leq k _0+ 9} u_j \right|^{2}\, \mathrm{ d} x
 - \sum_{j\in\mathbb{Z}_N}\int \chi^{2} \left(\frac{x}{R} \right)  \left|P_{\leq k _0+ 9} u_j \right|^{4} \, \mathrm{d} x
 \notag \\
 &   -2\sum_{j\in\mathbb{Z}_N}\sum_{\substack{k\in\mathbb{Z}_N\\k\neq j}}\int \chi^{2} \left(\frac{\eta_1 x}{R} \right)  \left|P_{\leq k _0+ 9} u_k \right|^{2}
 \left|P_{\leq k _0+ 9} u_j \right|^{2}  \, \mathrm{d} x\label{estq1}\\
& +  2\sum_{j\in\mathbb{Z}_N}\sum_{m=1}^n\sum_{l=1}^n\Re\int \left[\frac{1}{|x|} \phi(x) - \chi^{2} \left(\frac{\eta_1 x}{R} \right) \right]
\left(\delta_{ml} - \frac{x_{m} x_{l}}{|x|^{2}} \right)
\left(\overline{\partial_{m} P_{\leq k _0+ 9} u_j} \partial_{l} P_{\leq k _0+ 9} u_j \right) \,  \mathrm{d} x \label{estq2}\\
& - \frac{\eta_1^2}{2R^{2}}\sum_{j\in\mathbb{Z}_N} \int \psi'' \left(\frac{\eta_1 x}{R} \right)  \left|P_{\leq k _0+ 9} u_j \right|^{2}  \, \mathrm{d} x
\label{estq3}\\
& + \sum_{j\in\mathbb{Z}_N}\Re\int \phi(r) \left[\overline{P_{\leq k _0+ 9} u_j} \partial_{r} \mathcal{N}_j \right](t,x) \, \mathrm{ d} x
- \sum_{j\in\mathbb{Z}_N}\Re\int \phi(r) \left[\bar{\mathcal{N}_j} \partial_{r} P_{\leq k _0+ 9} u_j \right](t,x) \,  \mathrm{d} x.\label{estq4}
\end{align}

{\bf Step 1: Estimation of $M(b) - M(a)$.}
Since $Q$ is smooth, real-valued, and rapidly decreasing (as are all its derivatives), we get
\begin{align}\nonumber
	&\sum_{j\in\mathbb{Z}_N}\int \phi(|x|) \frac{x}{|x|} 
\Im\bigg[\overline{P_{\leq k _0+ 9} \left(e^{-i \gamma_j(t)} e^{-i(x-x(t)) \cdot \frac{\xi(t)}{\lambda(t)}} \frac{1}{\lambda(t)} Q \left(\frac{x - x(t)}{\lambda(t)} \right) \right)} \\\nonumber
    & \qquad  \cdot \nabla \left(P_{\leq k _0+ 9}  \left(e^{-i \gamma_j(t)} e^{-i(x-x(t)) \cdot \frac{\xi(t)}{\lambda(t)}} \frac{1}{\lambda(t)} Q \left(\frac{x - x(t)}{\lambda(t)} \right) \right) \right) \bigg]  \, \mathrm{d} x\Bigg|_{a}^{b} \\\nonumber
	= &  -\frac{1}{\lambda(t)^{2}} \sum_{j\in\mathbb{Z}_N}\int \phi(|x|)\frac{x}{|x|}\cdot\frac{\xi(t)}{\lambda(t)}
\left|P_{\leq k_0+9} \left(e^{-i(x-x(t))\cdot\frac{\xi(t)}{\lambda(t)}}Q \left(\frac{x - x(t)}{\lambda(t)} \right) \right) \right|^{2}  \, \mathrm{d} x \Bigg|_a^b \\\nonumber
	&+  \frac{1}{\lambda(t)^{3}} \sum_{j\in\mathbb{Z}_N} \int \phi(|x|) \frac{x}{|x|}
 \Im \bigg[\overline{P_{\leq k _0+ 9}  \left(e^{-i \gamma_j(t)} e^{-i(x-x(t)) \cdot \frac{\xi(t)}{\lambda(t)}} \frac{1}{\lambda(t)} Q \left(\frac{x - x(t)}{\lambda(t)} \right) \right)} \\\label{equ:Mabest}
&\qquad \cdot P_{\leq k_0+9} \left[e^{-i\gamma_j(t)}e^{-i(x-x(t))\cdot\frac{\xi(t)}{\lambda(t)}}\nabla Q \left(\frac{x - x(t)}{\lambda(t)} \right) \right](x)   \bigg] \, \mathrm{d} x \Bigg|_{a}^{b}.
\end{align}
Noticing that
\begin{align*}
	\int \phi(|x|) \frac{x}{|x|} \cdot \Im \left[\overline{e^{-i \gamma_j(t)} e^{-i(x-x(t)) \cdot \frac{\xi(t)}{\lambda(t)}} \frac{1}{\lambda(t)} Q \left(\frac{x - x(t)}{\lambda(t)} \right)} e^{-i\gamma_j(t)}e^{-i(x-x(t))\cdot\frac{\xi(t)}{\lambda(t)}}\nabla Q \left(\frac{x - x(t)}{\lambda(t)} \right) \right] \, \mathrm{d} x=0,\ \forall \,  j\in \mathbb{Z}_N,
\end{align*}
we continue:
\begin{align}\label{f7.11}
	\eqref{equ:Mabest}= & \frac{1}{\lambda(t)^{2}} \sum_{j\in\mathbb{Z}_N}\int \phi(|x|)\frac{x}{|x|}
\cdot\frac{\xi(t)}{\lambda(t)}  \left|e^{-ix\cdot\frac{\xi(t)}{\lambda(t)}}Q \left(\frac{x - x(t)}{\lambda(t)} \right) \right|^{2} \, \mathrm{d} x \bigg|_{a}^{b}
\notag \\
& +O \left(\frac{|\xi(a)|}{\lambda(a)}\|Q\|_{L_x^2}
\left\|P_{\geq k_0+9} \left(\frac{1}{\lambda(a)}e^{-ix\cdot\frac{\xi(a)}{\lambda(a)}}Q \left(\frac{x-x(a)}{\lambda(a)} \right) \right) \right\|_{L_x^2} \right)\notag\\
	&+O \left(\frac{|\xi(b)|}{\lambda(b)}\|Q\|_{L_x^2}
\left\|P_{\geq k_0+9} \left(\frac{1}{\lambda(b)}e^{-ix\cdot\frac{\xi(b)}{\lambda(b)}}Q \left(\frac{x-x(b)}{\lambda(b)} \right) \right) \right\|_{L_x^2} \right)
\notag \\
& +O \left(\frac{1}{\lambda(a)}\|\nabla Q\|_{L_x^2}
\left\|P_{\geq k_0+9} \left(\frac{1}{\lambda(a)}e^{-ix\cdot\frac{\xi(a)}{\lambda(a)}}Q \left(\frac{x-x(a)}{\lambda(a)} \right) \right) \right\|^2_{L_x^2} \right)\notag\\
	&+O \left(\frac{1}{\lambda(b)}\|\nabla Q\|_{L_x^2}
\left\|P_{\geq k_0+9} \left(\frac{1}{\lambda(b)}e^{-ix\cdot\frac{\xi(b)}{\lambda(b)}}Q \left(\frac{x-x(b)}{\lambda(b)} \right) \right) \right\|^2_{L_x^2} \right)
\notag \\
& +O \left(\frac{1}{\lambda(a)}\|Q\|_{L_x^2}
\left\|P_{\geq k_0+9} \left(\frac{1}{\lambda(a)}e^{-ix\cdot\frac{\xi(a)}{\lambda(a)}}\nabla Q \left(\frac{x-x(a)}{\lambda(a)} \right) \right) \right\|_{L_x^2} \right)\notag\\
	&+O \left(\frac{1}{\lambda(b)}\|Q\|_{L_x^2}
\left\|P_{\geq k_0+9} \left(\frac{1}{\lambda(b)}e^{-ix\cdot\frac{\xi(b)}{\lambda(b)}}\nabla Q \left(\frac{x-x(b)}{\lambda(b)} \right) \right) \right\|_{L_x^2} \right).
\end{align}
By Bernstein's inequality and \eqref{f7.1}, we have 
\begin{align}
	&\left\|P_{\geq k_0+9} \Big(\tfrac{1}{\lambda(a)}e^{-ix\cdot\frac{\xi(a)}{\lambda(a)}}Q \Big( \tfrac{x-x(a)}{\lambda(a)} \Big) \Big) \right\|_{L_x^2}  +
	\left\|P_{\geq k_0+9} \Big(\tfrac{1}{\lambda(b)}e^{-ix\cdot\frac{\xi(b)}{\lambda(b)}}Q \Big(\tfrac{x-x(b)}{\lambda(b)} \Big) \Big) \right\|_{L_x^2}\lesssim 2^{-100k_0} \label{rfc1}
    \end{align}
    and 
    \begin{align}
	& \left\|P_{\geq k_0+9} \Big(\tfrac{1}{\lambda(a)}e^{-ix\cdot\frac{\xi(a)}{\lambda(a)}}\nabla Q \Big(\tfrac{x-x(a)}{\lambda(a)} \Big) \Big) \right\|_{L_x^2}   + 
	\left\|P_{\geq k_0+9} \Big(\tfrac{1}{\lambda(b)}e^{-ix\cdot\frac{\xi(b)}{\lambda(b)}}\nabla Q \Big(\tfrac{x-x(b)}{\lambda(b)} \Big) \Big) \right\|_{L_x^2}\lesssim 2^{-100 k_0},\label{rfc2}
\end{align}
where the implicit constants in \eqref{rfc1} and \eqref{rfc2} depend only on  $\eta_0$ and $\eta_1$. 
Inserting \eqref{rfc1} and \eqref{rfc2} into \eqref{f7.11}, and using \eqref{f7.1},  we obtain 
\begin{align}\label{f7.14}
	&\sum_{j\in\mathbb{Z}_N}\int \phi(|x|) \frac{x}{|x|}
 \Im \bigg[\overline{P_{\leq k _0+ 9}
\left(e^{-i \gamma_j(t)} e^{-i(x-x(t)) \cdot \frac{\xi(t)}{\lambda(t)}} \frac{1}{\lambda(t)} Q \left(\frac{x - x(t)}{\lambda(t)} \right) \right)}
	\notag \\
& \cdot
\nabla \left(P_{\leq k _0+ 9} \left(e^{-i \gamma_j(t)} e^{-i(x-x(t)) \cdot \frac{\xi(t)}{\lambda(t)}} \frac{1}{\lambda(t)} Q \left(\frac{x - x(t)}{\lambda(t)} \right) \right) \right) \bigg] \, \mathrm{d} x \Bigg|_{a}^{b}\notag\\
	=&\frac{1}{\lambda(t)^{2}} \sum_{j\in\mathbb{Z}_N}\int \phi(|x|)\frac{x}{|x|}\cdot\frac{\xi(t)}{\lambda(t)}
\left|e^{-ix\cdot\frac{\xi(t)}{\lambda(t)}}Q \left(\frac{x - x(t)}{\lambda(t)} \right) \right|^{2} \, \mathrm{ d} x \bigg|_{a}^{b}
+ O \left( T^{-10} \right) \notag\\
	=&-N\frac{1}{\lambda(a)^{2}} \sum_{j\in\mathbb{Z}_N}\int \phi(|x|)\frac{x}{|x|}\cdot\frac{\xi(a)}{\lambda(a)}  \left|Q \left(\frac{x}{\lambda(a)} \right) \right|^{2} \, \mathrm{d} x
+ O \left( T^{-10} \right) = O \left( T^{-10} \right).
\end{align}
Using almost the same argument as above, we can also calculate that
\begin{align}\label{f7.15}
	&\sum_{j\in\mathbb{Z}_N}\int \phi(|x|) \frac{x}{|x|}  \Im \bigg[\overline{P_{\leq k _0+ 9}
\left(e^{-i \gamma_j(t)} e^{-i(x-x(t)) \cdot \frac{\xi(t)}{\lambda(t)}} \frac{1}{\lambda(t)} \epsilon_j \left(\frac{x - x(t)}{\lambda(t)} \right) \right)}
 \notag \\
& \qquad \cdot 
\nabla  \left(P_{\leq k _0+ 9}
 \left(e^{-i \gamma_j(t)} e^{-i(x-x(t)) \cdot \frac{\xi(t)}{\lambda(t)}} \frac{1}{\lambda(t)} Q \left(\frac{x - x(t)}{\lambda(t)} \right) \right) \right) \bigg] \, \mathrm{ d} x \Bigg|_{a}^{b}\notag \\
	=& \sum_{j\in\mathbb{Z}_N}   \int \phi(|x|) \frac{x}{|x|} \cdot\frac{\xi(t)}{\lambda(t)}
\left( \frac{1}{\lambda(t)} v_j \left(t,\frac{x - x(t)}{\lambda(t)} \right)  \frac{1}{\lambda(t)} Q \left(\frac{x - x(t)}{\lambda(t)} \right) \right) \, \mathrm{d}x  \Bigg|_{a}^b\notag\\
	& \quad -\sum_{j\in\mathbb{Z}_N}   \frac{1}{\lambda(t)}\int \phi(|x|) \frac{x}{|x|} \cdot \left[ \frac{1}{\lambda(t)} w_j \left(t,\frac{x - x(t)}{\lambda(t)} \right)  \frac{1}{\lambda(t)} \nabla Q \left(\frac{x - x(t)}{\lambda(t)} \right) \right] \, \mathrm{d}x \Bigg|_{a}^b\notag\\
	& \quad -\sum_{j\in\mathbb{Z}_N}   \frac{1}{\lambda(t)}\int \phi(|x|) \frac{x}{|x|}
\cdot \left[ P_{\geq k_0+9} \left(\frac{1}{\lambda(t)} w_j \left(t,\frac{x - x(t)}{\lambda(t)} \right) \right) P_{\leq k_0+9} \left(\frac{1}{\lambda(t)}
 \nabla Q \left(\frac{x - x(t)}{\lambda(t)} \right) \right) \right] \, \mathrm{d}x \Bigg|_{a}^b \notag \\
 & \quad +O \left(T^{-10} \right).
\end{align}
Noting that $\theta_1(x):=\phi({|x|})\frac{x_1}{|x|}$ and $ \theta_2(x):=\phi({|x|})\frac{x_2}{|x|}$ are smooth functions with bounded partial derivatives of all orders, it is easy to verify that
\begin{align*}
& \left\||\nabla|^{100} \Big(\theta_1(x) P_{\leq k_0+9} \Big(\tfrac{1}{\lambda(t)} Q_{x_1} \Big(\tfrac{x - x(t)}{\lambda(t)} \Big) \Big) \Big) \right\|_{L_x^2}+ \left\||\nabla|^{100} \Big(\theta_2(x) P_{\leq k_0+9} \Big(\tfrac{1}{\lambda(t)} Q_{x_2} \Big(\tfrac{x - x(t)}{\lambda(t)} \Big) \Big) \Big) \right\|_{L_x^2}
\lesssim 1, 
\end{align*}
where the implicit constant depends only on $\eta_1$. Thus
\begin{align}\nonumber
	&\sum_{j\in\mathbb{Z}_N}   \frac{1}{\lambda(t)}\int \phi(|x|) \frac{x}{|x|}
\cdot \left[ P_{\geq k_0+9} \left(\frac{1}{\lambda(t)} w_j
\left(t,\frac{x - x(t)}{\lambda(t)} \right)  \right)
P_{\leq k_0+9} \left(\frac{1}{\lambda(t)} \nabla Q \left(\frac{x - x(t)}{\lambda(t)} \right) \right) \right]  \,\mathrm{d}x \Bigg|_{a}^b   \\\nonumber
	=&\sum_{j\in\mathbb{Z}_N} \sum_{k=1}^2 \frac{1}{\lambda(t)}\Re\int \overline{|\nabla|^{-100}P_{\geq k_0+9}
\left(\frac{1}{\lambda(t)} w_j \left(t,\frac{x - x(t)}{\lambda(t)} \right) \right)} \\\nonumber
& \qquad \cdot |\nabla|^{100}
\left(\theta_k(x) P_{\leq k_0+9} \left(\frac{1}{\lambda(t)} \nabla Q \left(\frac{x - x(t)}{\lambda(t)} \right) \right) \right)  \,\mathrm{d}x \Bigg|_{a}^b \\\label{subqa}
	\lesssim& 2^{-100k_0} \left\|\mathbf{w} \right\|_{L^2_x l^2}\lesssim T^{-10}.
\end{align}
On the other hand, since $\phi(x)=|x|$ for $|x|\leq \eta_1^{-1}R$ and by conditions \eqref{f7.1},
$$ \left\|\chi_{|x|\geq \eta_1^{-1}R}\frac{1}{\lambda(t)}|x|Q \left(\frac{x-x(t)}{\lambda(t)} \right) \right\|_{L_x^2} \lesssim \frac{1}{\eta_1}T^{-100},$$ 
then by H\"older's inequality and $x(a)=\xi(b)=0$,
\begin{align}\label{subqa1}
\aligned
	& \sum_{j\in\mathbb{Z}_N}   \int \phi(|x|) \frac{x}{|x|} \cdot\frac{\xi(t)}{\lambda(t)} \left( \frac{1}{\lambda(t)} v_j \left(t,\frac{x - x(t)}{\lambda(t)} \right)  \frac{1}{\lambda(t)} Q \left(\frac{x - x(t)}{\lambda(t)} \right) \right) \,\mathrm{d}x \Bigg|_{a}^b \\
	=&\sum_{j\in\mathbb{Z}_N}   \int x\cdot\frac{\xi(t)}{\lambda(t)}
\left( \frac{1}{\lambda(t)} v_j \left(t,\frac{x - x(t)}{\lambda(t)} \right)
 \frac{1}{\lambda(t)} Q \left(\frac{x - x(t)}{\lambda(t)} \right) \right) \,\mathrm{d}x  \Bigg|_{a}^b \\
	& +\sum_{j\in\mathbb{Z}_N}   \int_{|x|\geq \eta_1^{-1}R}  \left(\phi(|x|) \frac{x}{|x|}-x \right)\cdot\frac{\xi(t)}{\lambda(t)}
\left( \frac{1}{\lambda(t)} v_j \left(t,\frac{x - x(t)}{\lambda(t)} \right)  \frac{1}{\lambda(t)} Q \left(\frac{x - x(t)}{\lambda(t)} \right) \right) \,\mathrm{d}x  \Bigg|_{a}^b \\
	=&\sum_{j\in\mathbb{Z}_N} \xi(t)\cdot \int v_j(t,x) xQ(x) \, \mathrm{d} x \Big|_a^b+\sum_{j\in\mathbb{Z}_N}  x(t)\cdot\frac{\xi(t)}{\lambda(t)}\int v_j(t,x)Q(x) \, \mathrm{d} x \Big|_a^b+O(T^{-10}) \\
	=&\sum_{j\in\mathbb{Z}_N} \xi(b)\cdot \int v_j(b,x) x Q(x) \, \mathrm{d} x+O \left(T^{-10} \right) .
\endaligned
\end{align}
Recalling that $ \left\langle\boldsymbol{\epsilon},i\mathbf{Q}_{x_1} \right\rangle_{L^2_x l^2}=\left\langle\boldsymbol{\epsilon}, i\mathbf{Q}_{x_2} \right\rangle_{L^2_x l^2} = 0$,
we also have
\begin{align}\label{subqa2}
\aligned
	&\sum_{j\in\mathbb{Z}_N}   \frac{1}{\lambda(t)}\int \phi(|x|) \frac{x}{|x|}
\cdot \left[ \frac{1}{\lambda(t)} w_j \left(t,\frac{x - x(t)}{\lambda(t)} \right)  \frac{1}{\lambda(t)} \nabla Q \left(\frac{x - x(t)}{\lambda(t)} \right) \,\mathrm{d}x \right] \Bigg|_{a}^b \\
	=&\sum_{j\in\mathbb{Z}_N} \int w_j(t,x) x\cdot\nabla Q(x) \, \mathrm{d} x \Big|_a^b+\sum_{j\in\mathbb{Z}_N}  \frac{1}{\lambda(t)}\int w_j(t,x)x(t)\cdot \nabla Q(x) \, \mathrm{d} x \Big|_a^b \\
    & \quad 
+O \left(T^{-10} \right) \\
	=&\sum_{j\in\mathbb{Z}_N} \int w_j(t,x) x\cdot\nabla Q(x) \, \mathrm{d} x \Big|_a^b+O \left(T^{-10} \right).
\endaligned
\end{align}
Substituting \eqref{subqa}, \eqref{subqa1} and \eqref{subqa2} into \eqref{f7.15} and then using H\"older's inequality along with 
 \eqref{f6.17}, we deduce that
\begin{align}\label{f7.19}
\aligned
	&\sum_{j\in\mathbb{Z}_N}\int \phi(|x|) \frac{x}{|x|}    \Im \bigg[\overline{P_{\leq k _0+ 9} \left(e^{-i \gamma_j(t)} e^{-i(x-x(t)) \cdot \frac{\xi(t)}{\lambda(t)}} \frac{1}{\lambda(t)} \epsilon_j
\left(t,\frac{x - x(t)}{\lambda(t)} \right) \right)}  \\
& \qquad \cdot
	 \nabla  \left(P_{\leq k _0+ 9} \left(e^{-i \gamma_j(t)} e^{-i(x-x(t))\cdot \frac{\xi(t)}{\lambda(t)}} \frac{1}{\lambda(t)} Q \left(\frac{x - x(t)}{\lambda(t)} \right) \right) \right) \bigg] \, \mathrm{d} x \bigg|_{a}^{b} \\
	=&\sum_{j\in\mathbb{Z}_N} \xi(b)\cdot \int v_j(b,x) xQ(x)\mathrm{d}x-\sum_{j\in\mathbb{Z}_N} \int w_j(t,x) x\cdot\nabla Q(x) \, \mathrm{d} x \Bigg|_a^b+O \left(T^{-10} \right) \\
	=& -\sum_{j\in\mathbb{Z}_N} \left\langle w_j(t,x),x\cdot\nabla Q  \right\rangle_{L_x^2} \Big|_a^b+O \left( \left\|\boldsymbol{\epsilon} \right\|^2_{L_x^2l^2}
+|\xi(b)|^2 \right)+O(T^{-10}) \\
	=&-\sum_{j\in\mathbb{Z}_N} \left\langle w_j(t,x),x\cdot\nabla Q(x)  \right\rangle_{L_x^2} \Big|_a^b
\\
& \quad +O \left(\frac{2^{2k_0} T^{1/50}}{\eta_{1}^{2} T} \int_{J}  \left\| \boldsymbol{\epsilon}(t)  \right\|_{L^2_{x} l^2}^{2} \lambda(t)^{-2} \, \mathrm{d} t
 + \frac{T^{1/50}}{\eta_{1}^{2}} \sup_{t \in J} \frac{|\xi(t)|^{2}}{\lambda(t)^{2}} + 2^{2k_0} \frac{T^{1/50}}{\eta_{1}^{2}} T^{-10} \right) .
 \endaligned
\end{align}
Similarly, one can verify that
\begin{align}\label{f7.20}
	&\sum_{j\in\mathbb{Z}_N}\int \phi(|x|) \frac{x}{|x|}   \Im \bigg[\overline{P_{\leq k _0+ 9} \left(e^{-i \gamma_j(t)} e^{-i(x-x(t)) \cdot \frac{\xi(t)}{\lambda(t)}} \frac{1}{\lambda(t)}Q \left(\frac{x - x(t)}{\lambda(t)} \right) \right)} \\
& \qquad \cdot
 \nabla  \left(P_{\leq k _0+ 9}
 \left(e^{-i \gamma_j(t)} e^{-i(x-x(t)) \cdot \frac{\xi(t)}{\lambda(t)}} \frac{1}{\lambda(t)} \epsilon_j \left(t,\frac{x - x(t)}{\lambda(t)} \right) \right) \right) \bigg] \, \mathrm{d} x \Bigg|_{a}^{b} \notag \\
	=&-\sum_{j\in\mathbb{Z}_N} \left\langle w_j(t,x),x\cdot\nabla Q(x)+2Q  \right\rangle_{L_x^2} \Big|_a^b \notag \\
    & \quad 
+O \left(\frac{2^{2k_0} T^{1/50}}{\eta_{1}^{2} T} \int_{J}
\left\| \boldsymbol{\epsilon}(t)  \right\|_{L^2_{x} l^2}^{2} \lambda(t)^{-2} \, \mathrm{d} t + \frac{T^{1/50}}{\eta_{1}^{2}} \sup_{t \in J} \frac{|\xi(t)|^{2}}{\lambda(t)^{2}} + 2^{2k_0} \frac{T^{1/50}}{\eta_{1}^{2}} T^{-10} \right). \notag
\end{align}
Finally, by \eqref{f6.16}, \eqref{f6.17} and \eqref{f7.1}, for any $t \in J$,
\begin{align}\label{f7.21}
	&\sum_{j\in\mathbb{Z}_N}\int \phi(|x|) \frac{x}{|x|}  \Im \bigg[\overline{P_{\leq k _0+ 9}  \left(e^{-i \gamma_j(t)} e^{-i(x-x(t))\cdot \frac{\xi(t)}{\lambda(t)}} \frac{1}{\lambda(t)}
 \epsilon_j \left(t,\frac{x - x(t)}{\lambda(t)} \right) \right)}
	\\
& \qquad \cdot \nabla  \left(P_{\leq k _0+ 9}  \left(e^{-i \gamma_j(t)} e^{-i(x-x(t))\cdot \frac{\xi(t)}{\lambda(t)}} \frac{1}{\lambda(t)} \epsilon_j
\left(t,\frac{x - x(t)}{\lambda(t)} \right) \right) \right) \bigg] \, \mathrm{d} x \Bigg|_{a}^{b} \notag \\
	\lesssim& \|\phi\|_{L^{\infty}}\max_{t\in J}
\left\|\boldsymbol{\epsilon} \right\|_{L^2_x l^2} \cdot\max_{t\in J}\left(\sum_{j\in\mathbb{Z}_N}
\left\|\nabla P_{\leq k _0+ 9}
\left( e^{-ix \cdot \frac{\xi(t)}{\lambda(t)}} \frac{1}{\lambda(t)} \epsilon_j \left(t,\frac{x - x(t)}{\lambda(t)} \right) \right) \right\|^2_{L_x^2}\right)^{1/2}\notag\\
	\lesssim& \frac{R}{\eta_1} \left(\frac{2^{2k_0} T^{1/100}}{\eta_{1} T} \int_{J}
\left\| \boldsymbol{\epsilon}(t) \right\|_{L^2_{x} l^2}^{2} \lambda(t)^{-2}  \, \mathrm{d} t + \frac{T^{1/100}}{\eta_{1}} \sup_{t \in J} \frac{|\xi(t)|^{2}}{\lambda(t)^{2}} + 2^{2k_0} \frac{T^{1/100}}{\eta_{1}} T^{-10} \right)\notag\\
	\lesssim& \frac{2^{2k_0} T^{1/20}}{\eta_{1}^{2} T} \int_{J} \left\| \boldsymbol{\epsilon}(t)  \right\|_{L^2_{x} l^2}^{2} \lambda(t)^{-2} \, \mathrm{d} t
+ \frac{T^{1/20}}{\eta_{1}^{2}} \sup_{t \in J} \frac{|\xi(t)|^{2}}{\lambda(t)^{2}} + 2^{2k_0} \frac{T^{1/20}}{\eta_{1}^{2}} T^{-10}. \notag
\end{align}
Since
\begin{align*}
P_{\leq k_0+9}u_j(t)= & P_{\leq k_0+9} \left(e^{-i\gamma_j(t)}e^{-i(x-x(t))\cdot\frac{\xi(t)}{\lambda(t)}}\frac{1}{\lambda(t)}Q \left(\frac{x-x(t)}{\lambda(t)} \right) \right)\\
& +P_{\leq k_0+9} \left(e^{-i\gamma_j(t)}e^{-i(x-x(t))\cdot\frac{\xi(t)}{\lambda(t)}}\frac{1}{\lambda(t)} \epsilon_j \left(t,\frac{x-x(t)}{\lambda(t)} \right) \right), \ \forall \,  j\in\mathbb{Z}_N,
\end{align*}
 then \eqref{f7.14}, \eqref{f7.19}, \eqref{f7.20} and \eqref{f7.21} imply that
\begin{equation}\label{f7.22}
	\aligned
	M(b) - M(a) = &  2 \sum_{j\in\mathbb{Z}_N} \left\langle w_{j}(a), Q + x \cdot \nabla Q \right\rangle_{L^2_{x}}
- 2\sum_{j\in\mathbb{Z}_N} \left\langle w_{j}(b), Q + x \cdot \nabla Q \right\rangle_{L^2_{x}} \\
& + O \left(\frac{2^{2k_0} T^{1/20}}{\eta_{1}^{2} T} \int_{J} \left\| \boldsymbol{\epsilon}(t) \right\|_{L^2_{x} l^2}^{2} \lambda(t)^{-2}  \, \mathrm{d} t
+ \frac{T^{1/20}}{\eta_{1}^{2}} \sup_{t \in J} \frac{|\xi(t)|^{2}}{\lambda(t)^{2}} + 2^{2k_0} \frac{T^{1/20}}{\eta_{1}^{2}} T^{-10} \right).
	\endaligned
\end{equation}

{\bf Step 2: Estimation of  $\frac{ \mathrm{d} }{ \mathrm{d} t}M(t)$.}
 We will estimate the terms \eqref{estq1}-\eqref{estq4} separately. 
 
{\bf Estimate of  \eqref{estq4}.}
Noticing that
\begin{align*}
\mathcal{N}_j
 =&P_{\leq k_0+9} {F}_j \left(\mathbf{u} \right)- {F}_j \left(P_{\leq k_0+9}\mathbf{u} \right)
=P_{\leq k_0+9} {F}_j \left(\mathbf{u} \right)-P_{\leq k_0+9} {F}_j \left(P_{\leq k_0+6}\mathbf{u} \right)
+ {F}_j \left(P_{\leq k_0+6}\mathbf{u} \right)- {F}_j \left(P_{\leq k_0+9}\mathbf{u} \right)\\
=&\mathcal{N}_j^{(1)}+\mathcal{N}_j^{(2)},
\end{align*}
where
\begin{align*}
\mathcal{N}_j^{(1)}
=& \sum_{k\in\mathbb{Z}_N} P_{\leq k _0+ 9}O \left(P_{\leq k_0+6}u_k P_{\leq k_0+6}u_kP_{\geq k_0+6}u_j \right)
 +  \sum_{k\in\mathbb{Z}_N} O \left(P_{\leq k_0+6}u_k P_{\leq k_0+6}u_kP_{ k_0+6\leq\cdot\leq k_0+9}u_j \right), \\
\mathcal{N}_j^{(2)}=&\sum_{k\in\mathbb{Z}_N} P_{\leq k _0+ 9}O \left(P_{\leq k_0+6}u_k P_{\geq k_0+6}u_kP_{\geq k_0+6}u_j \right)
+\sum_{k\in\mathbb{Z}_N} P_{\leq k _0+ 9}O \left(P_{\geq k_0+6}u_k P_{\geq k_0+6}u_kP_{\geq k_0+6}u_j \right)\\
& +\sum_{k\in\mathbb{Z}_N} O \left(P_{\leq k_0+6}u_k P_{k_0+6 \leq\cdot\leq k_0+9}u_kP_{k_0+6 \leq\cdot\leq k_0+9}u_j \right)\\
& +\sum_{k\in\mathbb{Z}_N} O \left(P_{k_0+6 \leq\cdot\leq k_0+9}u_k P_{k_0+6 \leq\cdot\leq k_0+9}u_k P_{k_0+6 \leq\cdot\leq k_0+9}u_j \right).
\end{align*}
Following the calculation in \eqref{f5.40}-\eqref{estqq3} and using Theorem \ref{t6.3}, \eqref{ssz} and \eqref{ssx},
one can prove that there exist $m_1, m_2, m_3\in \{0,\cdots, 9\}$, such that
\begin{align}\nonumber
& 
\sum_{j\in\mathbb{Z}_N}\int_{a}^{b}
 \int \phi(r) \Re \left[\overline{P_{\leq k _0+ 9} u_j} \partial_{r} \mathcal{N}_j^{(2)} \right](t,x) \, \mathrm{d} x  \mathrm{d} t
- \sum_{j\in\mathbb{Z}_N}\int_{a}^{b} \int \phi(r) \Re \left[\overline{\mathcal{N}_j^{(2)}} \partial_{r} P_{\leq k _0+ 9} u_j \right](t,x) \, \mathrm{ d} x  \mathrm{d} t \\\nonumber
& \lesssim  \eta_1^{-1}2^{k_0} R \sum^3_{l=1} \left\|  \left|P_{\geq k_0+m_l+3}\mathbf{u} \right| \left|P_{\leq k_0+m_l} \mathbf{u} \right|  \right\|_{L_{t,x}^{2}}^{2}
+ \eta_1^{-1}R2^{k_0}  \left\|\mathbf{u} \right\|^2_{U_{\Delta}^2(J, L_x^2l^2)} \\ \label{f7.24}
& \lesssim 2^{k_0} R  \left(\frac{1}{T} \int_{a}^{b}  \| \boldsymbol{\epsilon}(t) \|_{L^2_{x} l^2}^{2} \lambda(t)^{-2}  \, \mathrm{d} t + 2^{2k_0} T^{-10} \right).
\end{align}
Next, we further decompose $\mathcal{N}_j^{(1)}$ as  $\mathcal{N}_j^{(1)}=\mathcal{N}_j^{(1,1)}+\mathcal{N}_j^{(1,2)}$, where
\begin{align*}
	\mathcal{N}_j^{(1,1)}
=&  \sum_{k\in\mathbb{Z}_N} P_{\leq k _0+ 9}O\left(P_{k_0+3\leq\cdot \leq k_0+6}u_k P_{k_0+3\leq\cdot \leq k_0+6}u_kP_{\geq k_0+6}u_j \right) \\
& +\sum_{k\in\mathbb{Z}_N} P_{\leq k _0+ 9}O \left(P_{k_0+3\leq\cdot \leq k_0+6}u_k P_{\leq k_0+6}u_kP_{\geq k_0+6}u_j \right)  \\
	&  + \sum_{k\in\mathbb{Z}_N} O \left(P_{k_0+3\leq\cdot \leq k_0+6}u_k P_{k_0+3\leq\cdot \leq k_0+6}u_kP_{ k_0+6\leq\cdot\leq k_0+9}u_j \right)\\
    & +\sum_{k\in\mathbb{Z}_N} O \left(P_{k_0+3\leq\cdot \leq k_0+6}u_k P_{\leq k_0+6}u_kP_{ k_0+6\leq\cdot\leq k_0+9}u_j \right),  \\
	\mathcal{N}_j^{(1,2)}  =  & \sum_{k\in\mathbb{Z}_N} P_{\leq k _0+ 9}O \left(P_{\leq k_0+3}u_k P_{\leq k_0+3}u_kP_{\geq k_0+6}u_j \right)
+\sum_{k\in\mathbb{Z}_N} O \left(P_{\leq k_0+3}u_k P_{\leq k_0+3}u_kP_{ k_0+6\leq\cdot\leq k_0+9}u_j \right). 
\end{align*}
Using similar calculations as in 
 \eqref{f7.24}, we obtain 
\begin{align}\nonumber
&\left|\sum_{j\in\mathbb{Z}_N}\int_{a}^{b} \int \phi(r) \Re \left[\overline{P_{\leq k _0+ 9} u} \partial_{r} \mathcal N^{(1,1)} \right](t,x)  \, \mathrm{d} x  \mathrm{d} t
 - \sum_{j\in\mathbb{Z}_N}\int_{a}^{b} \int \phi(r) \Re \left[\overline{\mathcal N^{(1,1)}} \partial_{r} P_{\leq k _0+ 9} u \right](t,x) \, \mathrm{d}x \mathrm{ d} t \right| \\
 \label{f7.27}
& \lesssim 2^{k_0} R \left(\frac{1}{T} \int_{a}^{b}  \left\| \boldsymbol{\epsilon}(t)  \right\|_{L^2_{x}l^2}^{2} \lambda(t)^{-2}  \, \mathrm{d} t  + 2^{2k_0} T^{-10} \right).
\end{align}
Similarly, 
\begin{align}\nonumber
&\left|\sum_{j\in\mathbb{Z}_N}\int_{a}^{b} \int \phi(r) \Re \left[\overline{P_{k_0+3\leq\cdot\leq k _0+ 9} u} \partial_{r} \mathcal N^{(1,2)} \right](t,x) \, \mathrm{ d} x  \mathrm{d} t
- \sum_{j\in\mathbb{Z}_N}\int_{a}^{b} \int \phi(r) \Re \left[\overline{\mathcal N^{(1,2)}} \partial_{r} P_{k_0+3\leq\cdot\leq k _0+ 9} u \right](t,x) \, \mathrm{ d} x  \mathrm{d} t \right|\\\label{f7.28}
	&\lesssim 2^{k_0} R  \left(\frac{1}{T} \int_{a}^{b}  \left\| \boldsymbol{\epsilon}(t)  \right\|_{L^2_{x}l^2}^{2} \lambda(t)^{-2}  \, \mathrm{d} t  + 2^{2k_0} T^{-10} \right).
\end{align}
Finally, using Bernstein's inequality, the fact that $\phi$ is smooth, rapidly decreasing,
$R = T^{1/25}$, and $ \left\| \mathbf{u}  \right\|_{L_{t,x}^{4}l^2 \left(J \times \mathbb{R}^{2}\times\mathbb{Z}_N \right)} \lesssim T^{1/4}$, we have 
\begin{align}\label{10.10}
 & \left|\sum_{j\in\mathbb{Z}_N}\int_{a}^{b} \int \phi(r) \Re \left[\overline{u_{\leq k_0 + 3}} \partial_{r} \mathcal N^{(1,2)} \right](t,x) \, \mathrm{d} x \mathrm{d} t
 - \sum_{j\in\mathbb{Z}_N}\int_{a}^{b} \int \phi(r) \Re \left[\overline{\mathcal N^{(1,2)}} \partial_{r} u_{\leq k_0 + 3} \right](t,x) \, \mathrm{d} x  \mathrm{d} t\right| \notag\\
 & \lesssim 2^{k_0} \left\| P_{\geq k_0 + 3} \phi(x)  \right\|_{L^{\infty}_x }  \left\|  \left|P_{\geq k_0+6}\mathbf{u} \right| \left|P_{\leq k_0+3}\mathbf{u} \right|  \right\|_{L_{t,x}^{2}l^2}
 \left\| P_{\leq k_0+3}\mathbf{u}  \right\|_{L_{t,x}^{4}l^2}^{2}\notag\\
 & \lesssim  2^{-100k_0} \left\| P_{\geq k_0 + 3} |\nabla|^{99}\phi(x)  \right\|_{L^{\infty}_x }T^{1/2} \left(\eta_0+2^{k_0}T^{-5} \right)
  \lesssim  {T^{-9}}.
\end{align}
Combining \eqref{f7.22}, \eqref{f7.27},\eqref{f7.28}, \eqref{10.10} and using the equality $2^{3k_0}=T$, we obtain 
\begin{align}\label{estq41}
	\int_a^b\eqref{estq4} \, \mathrm{d} t
\lesssim 2^{k_0} R \left(\frac{1}{T} \int_{J}  \left\| \boldsymbol{\epsilon}(t)  \right\|_{L^2_{x} l^2}^{2} \lambda(t)^{-2}  \, \mathrm{d} t+2^{2k_0}T^{-10} \right).
\end{align}

{\bf Estimate of \eqref{estq2}}:
Using the fact that $ \left(\frac{1}{r} \phi(r) - \chi^{2}(r) \right) \left(\delta_{jk} - \frac{x_{j} x_{k}}{|x|^{2}} \right)$ is a positive definite matrix, we have
\begin{align}\label{estq21}
	\eqref{estq2}\geq 0.
\end{align}

{\bf Estimate of \eqref{estq3}}:
noting that $\psi''(r)$ is supported in $[1,2]$ and
$ \left\|\chi_{|x|\geq \eta_1^{-1}R}\frac{1}{\lambda(t)}Q \left(\frac{x-x(t)}{\lambda(t)} \right) \right\|_{L_x^2}\lesssim T^{-100}$, by \eqref{f7.1}, we have 
\begin{align}\label{f7.32}
	& \frac{\eta_1^2}{R^{2}}  \sum_{j\in\mathbb{Z}_N}\int \psi'' \left(\frac{\eta_1 x}{R} \right)
\left|P_{\leq k _0+ 9}  \left(e^{-i \gamma_j(t)} e^{i(x-x(t)) \cdot \frac{\xi(t)}{\lambda(t)}} \frac{1}{\lambda(t)}Q \left(\frac{x - x(t)}{\lambda(t)} \right) \right) \right|^{2} \, \mathrm{d} x \notag \\
 &  \lesssim	\frac{\eta_1^2}{R^{2}}T^{-10}\lesssim \frac{1}{\lambda(t)^2}T^{-10}.
\end{align}
On the other hand, by \eqref{f7.1}, for  sufficiently large $T$, we have
\begin{align}\label{f7.33}
	& \sum_{j\in\mathbb{Z}_N}\frac{\eta_1^2}{R^{2}} \int \psi'' \left(\frac{\eta_1 x}{R} \right)  \left|P_{\leq k _0+ 9} \left(e^{-i \gamma_j(t)} e^{i(x-x(t)) \cdot \frac{\xi(t)}{\lambda(t)}}\frac{1}{\lambda(t)} \epsilon_j \left(t, \frac{x - x(t)}{\lambda(t)} \right) \right)  \right|^{2} \, \mathrm{d} x \notag \\
& \lesssim  \frac{\eta_1^2}{R^2} \left\| \boldsymbol{\epsilon}  \right\|_{L^2_{x} l^2}^{2}
\lesssim \frac{1}{R\lambda(t)^2}  \left\| \boldsymbol{\epsilon}  \right\|_{L^2_{x} l^2}^{2}.
\end{align}
Since
\begin{align*}
P_{\leq k_0+9}u_j(t)= & P_{\leq k_0+9} \left(e^{-i\gamma_j(t)}e^{-i(x-x(t))\cdot\frac{\xi(t)}{\lambda(t)}}\frac{1}{\lambda(t)}Q \left(\frac{x-x(t)}{\lambda(t)} \right) \right)\\
& +P_{\leq k_0+9} \left(e^{-i\gamma_j(t)}e^{-i(x-x(t))\cdot\frac{\xi(t)}{\lambda(t)}}\frac{1}{\lambda(t)} \epsilon_j \left(t,\frac{x-x(t)}{\lambda(t)} \right) \right),\ \forall \,  j\in\mathbb{Z}_N,
\end{align*}
 \eqref{f7.32} and \eqref{f7.33} imply that
\begin{align}\label{estq31}
	\eqref{estq3}\lesssim \frac{1}{\lambda(t)^2}T^{-10}+\frac{1}{R\lambda(t)^2} \left\|\boldsymbol{\epsilon}(t) \right\|^2_{L^2_x l^2} .
\end{align}
Therefore, to complete the proof of Theorem \ref{t7.13}, it remains to obtain a proper lower bound for \eqref{estq41}, i.e.
\begin{align*}
\aligned
& 2\sum_{j\in\mathbb{Z}_N}  \int \chi^{2} \left(\frac{\eta_1 x}{R} \right)  \left|\nabla P_{\leq k _0+ 9} u_j(t,x) \right|^{2} \, \mathrm{d} x 
-\sum_{j\in\mathbb{Z}_N} \int \chi^{2} \left(\frac{\eta_1 x}{R} \right)  \left|P_{\leq k _0+ 9} u_j(t,x) \right|^{4} \, \mathrm{d} x \\
& -2\sum_{j\in\mathbb{Z}_N}\sum_{\substack{k\in\mathbb{Z}_N\\k\neq j}}  \int \chi^{2} \left(\frac{\eta_1 x}{R} \right)  \left|P_{\leq k _0+ 9} u_j(t,x) \right|^{2} \left|P_{\leq k _0+ 9} u_k(t,x)\right|^{2}  \, \mathrm{d} x.
\endaligned
\end{align*}
Since $Q$ is smooth and all its derivatives are rapidly decreasing,  one can easily verify that
\begin{align*}
&\sum_{j\in\mathbb{Z}_N}\frac{1}{2} \int  \left(1 - \chi^{2} \left(\frac{\eta_1 x}{2R} \right) \right)
\left|\nabla P_{\leq k _0+ 9}  \left(e^{-i \gamma_j(t)} e^{-i(x-x(t))
\cdot \frac{\xi(t)}{\lambda(t)}} \frac{1}{\lambda(t)} Q \left(\frac{x - x(t)}{\lambda(t)} \right)  \right) \right|^{2} \, \mathrm{d} x \\
&- \frac{1}{4}\sum_{j\in\mathbb{Z}_N} \int  \left(1 - \chi^{2} \left(\frac{\eta_1 x}{2R} \right) \right)
\left|P_{\leq k _0+ 9}  \left(e^{-i \gamma_j(t)} e^{-i(x-x(t)) \cdot \frac{\xi(t)}{\lambda(t)}} \frac{1}{\lambda(t)} Q \left(\frac{x - x(t)}{\lambda(t)} \right)  \right) \right|^{4} \, \mathrm{d} x \\
&- \frac{1}{2}\sum_{j\in\mathbb{Z}_N}\sum_{\substack{k\in\mathbb{Z}_N\\k\neq j}} \int \left(1 - \chi^{2} \left(\frac{\eta_1 x}{2R} \right) \right)
\left|P_{\leq k _0+ 9} \left(e^{-i \gamma_j(t)} e^{-i(x-x(t)) \cdot \frac{\xi(t)}{\lambda(t)}} \frac{1}{\lambda(t)} Q \left(\frac{x - x(t)}{\lambda(t)} \right) \right) \right|^{2}
  \\
& \qquad
  \cdot \left|P_{\leq k _0+ 9}  \left(e^{-i \gamma_k(t)} e^{-i(x-x(t)) \cdot \frac{\xi(t)}{\lambda(t)}} \frac{1}{\lambda(t)} Q \left(\frac{x - x(t)}{\lambda(t)} \right) \right) \right|^{2} \, \mathrm{d} x \lesssim T^{-10}, \\
%\end{align*}
%\begin{align*}
&\sum_{j\in\mathbb{Z}_N}\Re\int  \left(1 - \chi^{2} \left(\frac{\eta_1 x}{2R} \right) \right)
\overline{\nabla P_{\leq k _0+ 9}  \left(e^{-i \gamma_j(t)} e^{-i(x-x(t)) \cdot \frac{\xi(t)}{\lambda(t)}} \frac{1}{\lambda(t)} Q \left(\frac{x - x(t)}{\lambda(t)} \right) \right)}  \\
& \qquad  \cdot \nabla P_{\leq k _0+ 9}  \left(e^{-i \gamma_j(t)} e^{-i(x-x(t)) \cdot \frac{\xi(t)}{\lambda(t)}} \frac{1}{\lambda(t)} \epsilon_j \left(t, \frac{x - x(t)}{\lambda(t)} \right) \right) \, \mathrm{d} x \lesssim T^{-10}
\end{align*}
and
\begin{align*}
&-\Re\int  \left(1 - \chi^{2} \left(\frac{\eta_1 x}{2R} \right) \right)
\left|P_{\leq k _0+ 9}  \left(e^{-i \gamma_j(t)} e^{-i(x-x(t))
\cdot \frac{\xi(t)}{\lambda(t)}} \frac{1}{\lambda(t)} Q \left(\frac{x - x(t)}{\lambda(t)} \right) \right) \right|^{2} \\
&\hspace{2ex} \cdot   \overline{ P_{\leq k _0+ 9}
\left(e^{-i \gamma_k(t)} e^{-i(x-x(t)) \cdot \frac{\xi(t)}{\lambda(t)}} \frac{1}{\lambda(t)} Q \left(\frac{x - x(t)}{\lambda(t)} \right) \right) }
 P_{\leq k _0+ 9}  \left(e^{-i \gamma_k(t)} e^{-i(x-x(t)) \cdot \frac{\xi(t)}{\lambda(t)}} \frac{1}{\lambda(t)} \epsilon_k \left(t, \frac{x - x(t)}{\lambda(t)} \right) \right) \, \mathrm{d} x \\
&   \lesssim T^{-10}, \quad\forall \,  j,k\in\mathbb{Z}_N.
\end{align*}
Therefore, from \eqref{f6.18}, we have 
\begin{align}\label{f7.39}
&\frac{1}{2} \sum_{j\in\mathbb{Z}_N}\int \chi^{2} \left(\frac{\eta_1 x}{2R} \right)
\left|\nabla P_{\leq k _0+ 9}
\left(e^{-i \gamma_j(t)} e^{-i(x-x(t))\cdot \frac{\xi(t)}{\lambda(t)}} \frac{1}{\lambda(t)} Q \left(\frac{x - x(t)}{\lambda(t)}  \right) \right) \right|^{2} \, \mathrm{ d} x  \\
 &- \frac{1}{4} \sum_{j\in\mathbb{Z}_N}
 \int  \chi^{2} \left(\frac{\eta_1 x}{2R} \right)
 \left|P_{\leq k _0+ 9}  \left(e^{-i \gamma_j(t)} e^{-i(x-x(t))\cdot \frac{\xi(t)}{\lambda(t)}} \frac{1}{\lambda(t)} Q \left(\frac{x - x(t)}{\lambda(t)}  \right)  \right) \right|^{4}  \, \mathrm{d} x \notag\\
&-\frac{1}{2}\sum_{j\in\mathbb{Z}_N}\sum_{\substack{k\in\mathbb{Z}_N\\k\neq j}}\int  \chi^{2} \left(\frac{\eta_1 x}{2R} \right)
 \left|P_{\leq k _0+ 9}  \left(e^{-i \gamma_k(t)} e^{-i(x-x(t))\cdot \frac{\xi(t)}{\lambda(t)}} \frac{1}{\lambda(t)} Q \left(\frac{x - x(t)}{\lambda(t)} \right) \right) \right|^{2} \notag\\
& \quad
 \cdot \left|P_{\leq k _0+ 9} \left(e^{-i \gamma_j(t)} e^{-i(x-x(t))\cdot \frac{\xi(t)}{\lambda(t)}} \frac{1}{\lambda(t)} Q \left(\frac{x - x(t)}{\lambda(t)} \right)  \right)   \right|^{2} \, \mathrm{ d} x\notag\\
&+ \sum_{j\in\mathbb{Z}_N}\Re\int  \chi^{2} \left(\frac{\eta_1 x}{2R} \right)
\overline{\nabla P_{\leq k _0+ 9}
 \left(e^{-i \gamma_j(t)} e^{-i(x-x(t))  \cdot \frac{\xi(t)}{\lambda(t)}}  \frac{1}{\lambda(t)} Q \left(\frac{x - x(t)}{\lambda(t)} \right) \right)}
\notag \\
& \quad \cdot \nabla P_{\leq k _0+ 9} \left(e^{-i \gamma_j(t)} e^{-i(x-x(t))\cdot \frac{\xi(t)}{\lambda(t)}} \frac{1}{\lambda(t)} \epsilon_j \left(t, \frac{x - x(t)}{\lambda(t)} \right) \right)  \, \mathrm{d} x \notag\\
&-  \sum_{j\in\mathbb{Z}_N}\Re\int \chi^{2} \left(\frac{\eta_1 x}{2R} \right)
\left|P_{\leq k _0+ 9} \left(e^{-i \gamma_j(t)} e^{-i(x-x(t))\cdot \frac{\xi(t)}{\lambda(t)}} \frac{1}{\lambda(t)} Q \left(\frac{x - x(t)}{\lambda(t)} \right)  \right) \right|^{2} \notag\\
& \quad \cdot \overline{ P_{\leq k _0+ 9} \left(e^{-i \gamma_j(t)} e^{-i(x-x(t))\cdot \frac{\xi(t)}{\lambda(t)}} \frac{1}{\lambda(t)} Q \left(\frac{x - x(t)}{\lambda(t)} \right)  \right) }
 \cdot P_{\leq k _0+ 9} \left(e^{-i \gamma_j(t)} e^{-i(x-x(t))\cdot \frac{\xi(t)}{\lambda(t)}} \frac{1}{\lambda(t)} \epsilon_j \left(t, \frac{x - x(t)}{\lambda(t)} \right) \right) \, \mathrm{d} x \notag\\
&-  \frac{1}{2}\sum_{j\in\mathbb{Z}_N}\sum_{\substack{k\in\mathbb{Z}_N\\k\neq j}}\Re\int \chi^{2} \left(\frac{\eta_1 x}{2R} \right)
 \left|P_{\leq k _0+ 9}  \left(e^{-i \gamma_k(t)} e^{-i(x-x(t))\cdot \frac{\xi(t)}{\lambda(t)}} \frac{1}{\lambda(t)} Q \left(\frac{x - x(t)}{\lambda(t)} \right)  \right) \right|^{2} \notag\\
  &\quad \cdot
  \overline{ P_{\leq k _0+ 9} \left(e^{-i \gamma_j(t)} e^{-i(x-x(t))\cdot \frac{\xi(t)}{\lambda(t)}} \frac{1}{\lambda(t)} Q \left(\frac{x - x(t)}{\lambda(t)} \right) \right) }
  \cdot  P_{\leq k _0+ 9} \left(e^{-i \gamma_j(t)} e^{-i(x-x(t))\cdot \frac{\xi(t)}{\lambda(t)}} \frac{1}{\lambda(t)} \epsilon_j \left(t, \frac{x - x(t)}{\lambda(t)} \right) \right) \, \mathrm{d} x \notag\\
&-  \frac{1}{2}\sum_{j\in\mathbb{Z}_N}\sum_{\substack{k\in\mathbb{Z}_N\\k\neq j}}
\Re\int \chi^{2} \left(\frac{\eta_1 x}{2R} \right)
\left|P_{\leq k _0+ 9} \left(e^{-i \gamma_j(t)} e^{-i(x-x(t))\cdot \frac{\xi(t)}{\lambda(t)}} \frac{1}{\lambda(t)} Q \left(\frac{x - x(t)}{\lambda(t)} \right)   \right) \right|^{2} \notag\\
&\quad \cdot
\overline{ P_{\leq k _0+ 9} \left(e^{-i \gamma_j(t)} e^{-i(x-x(t))\cdot \frac{\xi(t)}{\lambda(t)}} \frac{1}{\lambda(t)} Q \left(\frac{x - x(t)}{\lambda(t)} \right)  \right) }
\cdot  P_{\leq k _0+ 9} \left(e^{-i \gamma_j(t)} e^{-i(x-x(t))\cdot \frac{\xi(t)}{\lambda(t)}} \frac{1}{\lambda(t)} \epsilon_j \left(t, \frac{x - x(t)}{\lambda(t)} \right) \right) \, \mathrm{ d} x \notag\\
&= \frac{N}{2} \frac{|\xi(t)|^{2}}{\lambda(t)^{2}} \| Q \|_{L^2_{x}}^{2} + \frac{1}{2 \lambda(t)^{2}}
\left\| \boldsymbol{\epsilon}(t) \right\|_{L^2_{x} l^2}^{2} - \frac{|\xi(t)|^{2}}{2 \lambda(t)^{2}}  \left\| \boldsymbol{\epsilon}(t)  \right\|_{L^2_{x} l^2}^{2} + O \left(2^{2k_0} T^{-10} \right). \notag
\end{align}
Next, we consider the terms involving two $\boldsymbol{\epsilon}$'s. On one side, by the product rule, for any $j\in\mathbb{Z}_N$,
\begin{align*}
& \frac{1}{2} \int \chi^{2} \left(\frac{\eta_1 x}{2R} \right)
\left|\nabla P_{\leq k _0+ 9} \left(e^{-i \gamma_j(t)} e^{-i(x-x(t))\cdot \frac{\xi(t)}{\lambda(t)}} \frac{1}{\lambda(t)} \epsilon_j \left(t, \frac{x - x(t)}{\lambda(t)} \right) \right) \right|^{2} \, \mathrm{d} x \\
 =& \frac{1}{2}
\left\| \chi \left(\frac{\eta_1 x}{2R} \right) P_{\leq k _0+ 9} \left(e^{-i \gamma_j(t)} e^{-i(x-x(t))\cdot \frac{\xi(t)}{\lambda(t)}} \frac{1}{\lambda(t)} \epsilon_j
\left(t, \frac{x - x(t)}{\lambda(t)} \right) \right)  \right\|_{\dot{H}_x^{1}}^{2}
 + O \left(\frac{1}{R \lambda(t)^{2}}  \| \epsilon_j \|_{L^2_{x}}^{2} \right).
\end{align*}
On the other side, \eqref{f7.1} and the fact that $Q_{x_{j}}$ and $\chi_{0}$ are rapidly decreasing imply 
\begin{equation*}
\left\langle \chi \left(\frac{\eta_1  \left(\lambda(t)x + x(t) \right)}{2R} \right) \boldsymbol{\epsilon}, \mathbf{f}
\right\rangle_{L^2_{x}l^2} \lesssim T^{-10},\quad \forall\ f \in \left\{i\boldsymbol{\chi}_{0,1},\cdots,i\boldsymbol{\chi}_{0,N}, \boldsymbol{\chi}_0, i\mathbf{Q}_{x_1}, i\mathbf{Q}_{x_2}, \mathbf{Q}_{x_1}, \mathbf{Q}_{x_2} \right\}.
\end{equation*}
Therefore, following the analysis in \eqref{f6.22}-\eqref{f6.30}, we obtain 
\begin{align}\label{f7.42}
&\frac{1}{2} \sum_{j\in\mathbb{Z}_N}\int \chi^2 \left(\frac{\eta_1 x}{2R} \right)
\left|\nabla P_{\leq k _0+ 9}  \left(e^{-i \gamma_j(t)} e^{-i(x-x(t))\cdot \frac{\xi(t)}{\lambda(t)}} \frac{1}{\lambda(t)} \epsilon_j \left(t, \frac{x - x(t)}{\lambda(t)} \right) \right) \right|^{2}  \, \mathrm{d} x  \\
&- \sum_{j\in\mathbb{Z}_N} \int \chi^2 \left(\frac{\eta_1 x}{2R} \right)
  \left|P_{\leq k _0+ 9} \left(e^{-i \gamma_j(t)} e^{-i(x-x(t)) \cdot \frac{\xi(t)}{\lambda(t)}}\frac{1}{\lambda(t)}Q \left(\frac{x - x(t)}{\lambda(t)} \right)  \right) \right|^{2} \notag \\
  & \quad \cdot
 \left|P_{\leq k _0+ 9} \left(e^{-i \gamma_j(t)} e^{-i(x-x(t))\cdot \frac{\xi(t)}{\lambda(t)}} \frac{1}{\lambda(t)} \epsilon_j \left(t,\frac{x - x(t)}{\lambda(t)} \right) \right) \right|^{2}  \, \mathrm{d} x \notag\\
&- \frac{1}{2}\sum_{j\in\mathbb{Z}_N}
\Re \int \chi^2 \left(\frac{\eta_1 x}{2R} \right) \left(\overline{P_{\leq k _0+ 9} \left(e^{-i \gamma_j(t)} e^{-i(x-x(t)) \cdot \frac{\xi(t)}{\lambda(t)}}\frac{1}{\lambda(t)}Q \left(\frac{x - x(t)}{\lambda(t)} \right) \right)} \right)^{2}
 \notag \\
 & \quad  \cdot  \left(P_{\leq k _0+ 9}  \left(e^{-i \gamma_j(t)} e^{-i(x-x(t))\cdot \frac{\xi(t)}{\lambda(t)}} \frac{1}{\lambda(t)} \epsilon_j \left(t,\frac{x - x(t)}{\lambda(t)} \right) \right) \right)^{2}  \, \mathrm{d} x \notag\\
&-\frac{1}{2}\sum_{j\in\mathbb{Z}_N}\sum_{\substack{k\in\mathbb{Z}_N\\k\neq j}} \int \chi^2 \left(\frac{\eta_1 x}{2R} \right)
\left|P_{\leq k _0+ 9}
\left(e^{-i \gamma_k(t)} e^{-i(x-x(t)) \cdot \frac{\xi(t)}{\lambda(t)}}\frac{1}{\lambda(t)}Q \left(\frac{x - x(t)}{\lambda(t)} \right) \right) \right|^{2}
 \notag \\
 & \quad \cdot  \left|P_{\leq k _0+ 9} \left(e^{-i \gamma_j(t)} e^{-i(x-x(t))\cdot \frac{\xi(t)}{\lambda(t)}} \frac{1}{\lambda(t)} \epsilon_j \left(t,\frac{x - x(t)}{\lambda(t)} \right) \right) \right|^{2} \, \mathrm{ d} x\notag\\
&-\frac{1}{2}\sum_{j\in\mathbb{Z}_N}\sum_{\substack{k\in\mathbb{Z}_N\\k\neq j}} \int \chi^2 \left(\frac{\eta_1 x}{2R} \right)
 \left|P_{\leq k _0+ 9} \left(e^{-i \gamma_j(t)} e^{-i(x-x(t)) \cdot \frac{\xi(t)}{\lambda(t)}}\frac{1}{\lambda(t)}Q \left(\frac{x - x(t)}{\lambda(t)} \right)  \right) \right|^{2} \notag \\
 & \quad \cdot
\left|P_{\leq k _0+ 9}  \left(e^{-i \gamma_k(t)} e^{-i(x-x(t))\cdot \frac{\xi(t)}{\lambda(t)}} \frac{1}{\lambda(t)} \epsilon_k \left(t,\frac{x - x(t)}{\lambda(t)} \right) \right) \right|^{2}  \, \mathrm{d} x\notag\\
&-\sum_{j\in\mathbb{Z}_N}\sum_{\substack{k\in\mathbb{Z}_N\\k\neq j}}
\Re\int \chi^2 \left(\frac{\eta_1 x}{2R} \right)
P_{\leq k _0+ 9} \left(e^{-i \gamma_j(t)} e^{-i(x-x(t)) \cdot \frac{\xi(t)}{\lambda(t)}}\frac{1}{\lambda(t)}Q \left(\frac{x - x(t)}{\lambda(t)} \right) \right)
\notag \\
& \quad \cdot \overline{P_{\leq k _0+ 9} \left(e^{-i \gamma_j(t)} e^{-i(x-x(t)) \cdot \frac{\xi(t)}{\lambda(t)}}\frac{1}{\lambda(t)} \epsilon_j \left(t,\frac{x - x(t)}{\lambda(t)} \right) \right)}\notag\\
&  \quad  \cdot P_{\leq k _0+ 9} \left(e^{-i \gamma_k(t)} e^{-i(x-x(t)) \cdot \frac{\xi(t)}{\lambda(t)}}\frac{1}{\lambda(t)}Q \left(\frac{x - x(t)}{\lambda(t)} \right) \right)
\overline{P_{\leq k _0+ 9}
\left(e^{-i \gamma_k(t)} e^{-i(x-x(t)) \cdot \frac{\xi(t)}{\lambda(t)}}\frac{1}{\lambda(t)} \epsilon_k \left(t,\frac{x - x(t)}{\lambda(t)} \right) \right)}
\, \mathrm{d} x\notag\\
&-\sum_{j\in\mathbb{Z}_N}\sum_{\substack{k\in\mathbb{Z}_N\\k\neq j}} \Re\int \chi^2 \left(\frac{\eta_1 x}{2R} \right)
P_{\leq k _0+ 9}
\left(e^{-i \gamma_j(t)} e^{-i(x-x(t)) \cdot \frac{\xi(t)}{\lambda(t)}}\frac{1}{\lambda(t)}Q \left(\frac{x - x(t)}{\lambda(t)} \right) \right)
\notag \\
& \quad \cdot \overline{P_{\leq k _0+ 9} \left(e^{-i \gamma_j(t)} e^{-i(x-x(t)) \cdot \frac{\xi(t)}{\lambda(t)}}\frac{1}{\lambda(t)} \epsilon_j \left(\frac{x - x(t)}{\lambda(t)} \right) \right)}\notag\\
& \quad  \cdot  P_{\leq k _0+ 9} \left(e^{-i \gamma_k(t)} e^{-i(x-x(t)) \cdot \frac{\xi(t)}{\lambda(t)}}\frac{1}{\lambda(t)} \epsilon_k
\left(\frac{x - x(t)}{\lambda(t)} \right) \right)\overline{P_{\leq k _0+ 9}
\left(e^{-i \gamma_k(t)} e^{-i(x-x(t)) \cdot \frac{\xi(t)}{\lambda(t)}}\frac{1}{\lambda(t)}Q \left(\frac{x - x(t)}{\lambda(t)} \right) \right)} \, \mathrm{d} x\notag\\
&\geq
\left(\frac{\lambda_{1}}{\lambda(t)^{2}} -\frac{1}{2\lambda(t)^{2}} \right)
\left\| \boldsymbol{\epsilon}  \right\|_{L^2_{x}l^2}^{2}
+ \lambda_{1} \sum_{j\in\mathbb{Z}_N}\int \chi \left(\frac{\eta_1 x}{2R} \right)^{2}
\left|\nabla P_{\leq k _0+ 9} \left( e^{-i \gamma_j(t)} e^{-i(x-x(t))\cdot \frac{\xi(t)}{\lambda(t)}} \frac{1}{\lambda(t)} \epsilon_j \left(t, \frac{x - x(t)}{\lambda(t)} \right) \right)  \right|^{2}  \, \mathrm{d} x \notag \\
&\quad  - O \left(2^{2k_0} T^{-10} \right) \notag
\end{align}
with a constant $0<\lambda_1\ll \frac{1}{2}$. Next, we observe that for any $g(x), h(x)\in L_x^2(\mathbb{R}^2)$, by Sobolev embedding and H\"older's inequality, we have 
\begin{align*}
& \int \chi^2 \left(\frac{\eta_1 x}{2R} \right)  |g(x_{1}, x_{2})|^{2} |h(x_1,x_2)|^2  \, \mathrm{d} x_{1}  \, \mathrm{d} x_{2} \notag \\
\leq& \int \int \left|\partial_{x_{2}} \left(\chi \left(\frac{\eta_1 x}{2R} \right) |(gh)(x_{1}, x_{2})|^{2} \right) \right|  \, \mathrm{d} x_{1} \mathrm{ d} x_{2}
\cdot \int \int  \left|\partial_{x_{1}} \left(\chi \left(\frac{\eta_1 x}{2R} \right) |(gh)(x_{1}, x_{2})|^{2} \right) \right|  \, \mathrm{d} x_{1} \mathrm{ d} x_{2} \\
\lesssim&  \left\| \chi \left(\frac{\eta_1 x}{2R} \right) \nabla g  \right\|_{L^2_{x}}^{2} \| h \|_{L^2_{x}}^{2}
+ \left\| \chi \left(\frac{\eta_1 x}{2R} \right) \nabla h  \right\|_{L^2_{x}}^{2} \| g \|_{L^2_{x}}^{2}
+ \frac{\eta_1^2}{R^{2}} \| g \|_{L^2_{x}}^{2}\| h \|_{L^2_{x}}^{2}.
\end{align*}
Thus,
\begin{align}\nonumber
& \int \chi^2 \left(\frac{\eta_1 x}{2R} \right)
\left|P_{\leq k _0+ 9} \left(e^{-i \gamma_k(t)} e^{-i(x-x(t))\cdot \frac{\xi(t)}{\lambda(t)}} \frac{1}{\lambda(t)} \epsilon_k \left(t, \frac{x - x(t)}{\lambda(t)} \right) \right) \right|^{2}
\\\nonumber
& \quad \cdot \left|P_{\leq k _0+ 9} \left(e^{-i \gamma_j(t)} e^{-i(x-x(t))\cdot \frac{\xi(t)}{\lambda(t)}} \frac{1}{\lambda(t)} \epsilon_j \left(t, \frac{x - x(t)}{\lambda(t)} \right) \right) \right|^{2} \, \mathrm{d} x \\\nonumber
 \lesssim& \eta_0^{2} \sum_{j\in\mathbb{Z}_N}
\left\| \chi \left(\frac{\eta_1 x}{2R} \right) \nabla P_{\leq k _0+ 9}  \left(e^{-i \gamma_j(t)} e^{-i(x-x(t))\cdot \frac{\xi(t)}{\lambda(t)}} \frac{1}{\lambda(t)} \epsilon_j \left(t, \frac{x - x(t)}{\lambda(t)} \right)  \right)  \right\|_{L^2_{x}}^{2}
+ \frac{\eta_1^2}{R^{2}}\eta_0^2
\left\| \boldsymbol{\epsilon}(t)  \right\|_{L^2_{x} l^2}^{2}\\\nonumber
 \lesssim& \eta_0^{2} \sum_{j\in\mathbb{Z}_N}
\left\| \chi \left(\frac{\eta_1 x}{2R} \right) \nabla P_{\leq k _0+ 9}
\left(e^{-i \gamma_j(t)} e^{-i(x-x(t))\cdot \frac{\xi(t)}{\lambda(t)}} \frac{1}{\lambda(t)} \epsilon_j \left(t, \frac{x - x(t)}{\lambda(t)} \right)   \right) \right\|_{L^2_{x}}^{2}\\\label{f7.43}
& \qquad + \frac{\eta_0^2}{R\lambda(t)^2}  \left\| \boldsymbol{\epsilon}(t)  \right\|_{L^2_{x} l^2}^{2},\  \forall \,  j,k\in\mathbb{Z}_N.
\end{align}
Finally, by interpolation, for any $j,k\in\mathbb{Z}_N$,
\begin{align}\label{f7.44}
&\int \chi^2 \left(\frac{\eta_1 x}{2R} \right)
\left|P_{\leq k _0+ 9} \left(e^{-i \gamma_k(t)} e^{-i(x-x(t))\cdot \frac{\xi(t)}{\lambda(t)}}
\frac{1}{\lambda(t)} \epsilon_k \left(t, \frac{x - x(t)}{\lambda(t)} \right) \right) \right|^{2}
\\
& \quad \cdot \left|P_{\leq k _0+ 9} \left(e^{-i \gamma_j(t)} e^{-i(x-x(t))\cdot \frac{\xi(t)}{\lambda(t)}} \frac{1}{\lambda(t)} \epsilon_j \left(t, \frac{x - x(t)}{\lambda(t)} \right) \right) \right| \notag\\
&\quad \cdot \left|P_{\leq k _0+ 9}  \left(e^{-i \gamma_j(t)} e^{-ix \cdot \frac{\xi(t)}{\lambda(t)}} \frac{1}{\lambda(t)} Q \left(\frac{x - x(t)}{\lambda(t)} \right) \right) \right| \, \mathrm{d} x \notag\\
&\lesssim \frac{1}{\lambda(t)} \left\|\epsilon_j(t) \right\|_{L_x^2}
\left\|\chi \left(\frac{\eta_1 x}{2R} \right)P_{\leq k _0+ 9} \left(e^{-i \gamma_k(t)} e^{-i(x-x(t))\cdot \frac{\xi(t)}{\lambda(t)}} \frac{1}{\lambda(t)} \epsilon_k
\left(t, \frac{x - x(t)}{\lambda(t)} \right) \right)  \right\|^2_{L_x^4} \notag\\
&\lesssim \frac{\eta_0}{\lambda(t)} \left\| \epsilon_k(t) \right\|_{L_x^2}
\left\| \nabla  \left(\chi \left(\frac{\eta_1 x}{2R}  \right) P_{\leq k _0+ 9}
\left(e^{-i \gamma_k(t)} e^{-i(x-x(t))\cdot \frac{\xi(t)}{\lambda(t)}} \frac{1}{\lambda(t)} \epsilon_k \left(t, \frac{x - x(t)}{\lambda(t)} \right) \right) \right)  \right\|_{L^2_{x}} \notag\\
&\lesssim \frac{\eta_0}{\lambda(t)^2} \left\| \epsilon_k(t) \right\|^2_{L_x^2}
+ \frac{\eta_0\eta_1^2}{R^2} \| \epsilon_k\|^2_{L_x^2}
+\eta_0 \left\|\chi \left(\frac{\eta_1 x}{2R} \right) \nabla P_{\leq k _0+ 9}
\left(e^{-i \gamma_k(t)} e^{-i(x-x(t))\cdot \frac{\xi(t)}{\lambda(t)}} \frac{1}{\lambda(t)} \epsilon_k \left(t, \frac{x - x(t)}{\lambda(t)} \right) \right)  \right\|_{L^2_{x}}^{2}\notag\\
&\lesssim \left(\frac{\eta_0}{R\lambda(t)^2}+\frac{\eta_0}{\lambda(t)^2} \right)
 \left\|\boldsymbol{\epsilon} \right\|^2_{L^2_x l^2}
 +\eta_0\sum_{j\in\mathbb{Z}_N} \left\|\chi \left(\frac{\eta_1 x}{2R} \right) \nabla P_{\leq k _0+ 9}
 \left(e^{-i \gamma_j(t)} e^{-i(x-x(t))\cdot \frac{\xi(t)}{\lambda(t)}} \frac{1}{\lambda(t)} \epsilon_j \left(t, \frac{x - x(t)}{\lambda(t)} \right) \right)  \right\|_{L^2_{x}}^{2}.\notag
\end{align}
By \eqref{f7.39}, \eqref{f7.42}, \eqref{f7.43}, \eqref{f7.44}  and \eqref{f7.1}, for sufficiently small $\eta_0 $ and large $T=T(\eta_1)$, there exists a constant $0<\lambda_1\ll 1$ such that
\begin{align}\label{estq11}
	\eqref{estq1}\gtrsim  \frac{\lambda_1}{\lambda(t)^2} \left\| \boldsymbol{\epsilon}(t)  \right\|_{L^2_{x} l^2}^{2}   - O \left(T^{-9} \right).
\end{align}
Collecting \eqref{estq41}, \eqref{estq21}, \eqref{estq31} and \eqref{estq11}, we have proved that for sufficiently small $\eta_0 $ and large $T=T(\eta_1)$,
\begin{align*}
	\frac{ \mathrm{d} }{ \mathrm{d} t} M(t)\gtrsim \frac{1}{\lambda(t)^2}
\left\| \boldsymbol{\epsilon}(t)  \right\|_{L^2_{x} l^2}^{2}   - O \left(T^{-9} \right) .
\end{align*}
Integrating both sides of the above inequality and using \eqref{f7.22}, along with the fact that $|J|\lesssim_{\eta_1} T^{1/50}$, we obtain 
 \begin{align*}
 	 & \int_{a}^{b}  \left\| \boldsymbol{\epsilon}(t)  \right\|_{L^2_{x} l^2}^{2} \lambda(t)^{-2} \, \mathrm{d} t
 \\
  \leq& 3 \sum_{j\in\mathbb{Z}_N}  \left\langle w_j(a), Q + x \cdot \nabla Q \right\rangle_{L^2_{x}} - 3\sum_{j\in\mathbb{Z}_N}
 \left\langle w_j(b), Q + x \cdot \nabla Q \right\rangle_{L^2_{x}} + \frac{T^{1/15}}{\eta_{1}^{2}} \sup_{t \in J} \frac{|\xi(t)|^{2}}{\lambda(t)^{2}} + O \left( T^{-8} \right).
 \end{align*}
Thus, we can conclude the proof.
\end{proof}
Using H\"older's inequality and \eqref{f7.2}, we get
	\begin{equation*}
		\int_{a}^{b} \left\| \boldsymbol{\epsilon}(t) \right\|_{L^2_{x} l^2}^{2} \lambda(t)^{-2} \, \mathrm{d} t
		\lesssim \|\boldsymbol{\epsilon}(a)\|_{L^2_x l^2}+\|\boldsymbol{\epsilon}(b)\|_{L^2_x l^2}
		+ \frac{T^{1/15}}{\eta_{1}^{2}} \sup_{t \in J} \frac{|\xi(t)|^{2}}{\lambda(t)^{2}} + O \left(T^{-8} \right).
	\end{equation*}
Notice that this inequality is scaling-invariant, using the change of variables \eqref{f4.23}  and then using the modulation theory, we have the following theorem:
\begin{theorem}\label{t7.14}
	Let $\mathbf{u}$ be the solution in Theorem \ref{t2.3}. Suppose $J = [a, b]$ is an interval on which
	\begin{equation*}
		\frac{\sup_{t\in J}\lambda(t)}{\inf_{t\in J}\lambda(t)}\leq T^{1/100} \text{ and }  \int_{J} \lambda(t)^{-2}  \, \mathrm{d} t = T.
	\end{equation*}
	Also, suppose $\boldsymbol{\epsilon} = \mathbf{v} + i \mathbf{w}$, and let $[s_1,s_2]=s([a,b])$. Then there exist sufficiently small $\tilde{\eta}$ and large $\tilde{T}$, 
so that for any $\eta<\tilde{\eta}$ and $T\geq\tilde{T}$ satisfying 
$$\sup_{t\in[a,b]} \left\|\boldsymbol{\epsilon}(t) \right\|_{L^2_x l^2}:=\eta\leq \tilde{\eta},\quad \eta T\ll T^{1/100}, \quad \eta^2 T\ll 1, $$
we have
	\begin{equation*}
		\int_{s_1}^{s_2}  \left\| \boldsymbol{\epsilon}(s)  \right\|_{L^2_{x} l^2}^{2}  \, \mathrm{d} s
 \lesssim  \left\| \boldsymbol{\epsilon}(s_1)  \right\|_{L^2_{x} l^2} +  \left\| \boldsymbol{\epsilon}(s_2)  \right\|_{L^2_{x} l^2}
 + \eta^{4} T^{2+1/15}  + O \left(%\frac{1}
 {T^{-8}} \right).
	\end{equation*}
\end{theorem}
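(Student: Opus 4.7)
The plan is to derive Theorem \ref{t7.14} as a consequence of the virial inequality \eqref{f7.2} of Theorem \ref{t7.13}. Three ingredients are needed: (i) $L^2_x l^2$-invariant symmetry reductions to meet the hypotheses of Theorem \ref{t7.13}; (ii) H\"older's inequality to convert the boundary inner products in \eqref{f7.2} into $L^2_x l^2$ norms of $\boldsymbol{\epsilon}$; (iii) the modulation estimate \eqref{f4.54} to absorb the supremum of $|\xi(t)|^2/\lambda(t)^2$.

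First, since the norm $\|\boldsymbol{\epsilon}\|_{L^2_x l^2}$ and the integral $\int \lambda(t)^{-2}\,\mathrm{d}t$ are invariant under the mass-critical symmetries \eqref{sym1}--\eqref{sym5}, I would apply in sequence a scaling so that $\inf_{t\in J}\lambda(t)=\eta_{1}^{-1}$ (which combined with the hypothesis $\sup\lambda/\inf\lambda\leq T^{1/100}$ yields $\eta_{1}^{-1}\leq \lambda(t)\leq \eta_{1}^{-1}T^{1/100}$), a Galilean boost to enforce $\xi(b)=0$, and a spatial translation to enforce $x(a)=0$. The remaining hypothesis $\sup_{t\in J}|x(t)|\leq R=T^{1/25}$ is checked by integrating $\frac{d}{ds}(\xi/\lambda)=\frac{1}{\lambda}\big(\xi_s-(\lambda_s/\lambda)\xi\big)$ across $[s_1,s_2]$ together with \eqref{f4.54} and using $\xi(s_2)=0$ to obtain $\sup_{s}|\xi(s)/\lambda(s)|\lesssim \eta_1\eta^2 T$, and then controlling the center of mass via \eqref{f4.55}, which relates $x_s/\lambda$ to $\xi$ and $\|\boldsymbol{\epsilon}\|$. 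The smallness assumptions $\eta T\ll T^{1/100}$ and $\eta^2 T\ll 1$ are tailored precisely so that the resulting drift of $x(t)$ fits inside $R$.

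Second, with the hypotheses of Theorem \ref{t7.13} in place, I would apply H\"older's inequality to the right-hand side of \eqref{f7.2}: since $Q+x\cdot\nabla Q\in L^2_x$, each boundary term satisfies
\begin{equation*}
\big|\langle w_j(\cdot),Q+x\cdot\nabla Q\rangle_{L^2_x}\big|\leq \|\boldsymbol{\epsilon}(\cdot)\|_{L^2_x l^2}\,\|Q+x\cdot\nabla Q\|_{L^2_x}.
\end{equation*}
This gives the scaling-invariant corollary
\begin{equation*}
\int_a^b \|\boldsymbol{\epsilon}(t)\|_{L^2_x l^2}^{2}\lambda(t)^{-2}\,\mathrm{d}t\lesssim \|\boldsymbol{\epsilon}(a)\|_{L^2_x l^2}+\|\boldsymbol{\epsilon}(b)\|_{L^2_x l^2}+\frac{T^{1/15}}{\eta_{1}^{2}}\sup_{t\in J}\frac{|\xi(t)|^{2}}{\lambda(t)^{2}}+O\big(T^{-8}\big).
\end{equation*}
Passing to the $s$-variable via \eqref{f4.23} converts the left-hand side into $\int_{s_1}^{s_2}\|\boldsymbol{\epsilon}(s)\|_{L^2_x l^2}^{2}\,\mathrm{d}s$. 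Inserting the modulation bound $\sup|\xi/\lambda|^{2}\lesssim \eta_{1}^{2}\eta^{4}T^{2}$ makes the third term at most $\eta^{4}T^{2+1/15}$, which together with the boundary and error terms reproduces the conclusion of Theorem \ref{t7.14}.

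The main obstacle will be the verification that the symmetry reductions in the first step indeed produce a configuration satisfying $\sup_{t\in J}|x(t)|\leq T^{1/25}$: a naive integration of $X'(t)\approx -2\xi/\lambda$ over a $t$-interval of length as large as $T^{1+2/100}/\eta_{1}^{2}$ threatens to overshoot the prescribed radius, and it is precisely the coupling between the smallness conditions $\eta T\ll T^{1/100}$ and $\eta^{2}T\ll 1$ that keeps the drift within $R=T^{1/25}$ and thereby legitimizes the invocation of Theorem \ref{t7.13}.
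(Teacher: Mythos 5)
Your proposal is correct and follows essentially the same route as the paper: rescale $\lambda$ to lie in $[\eta_1^{-1},\,\eta_1^{-1}T^{1/100}]$, apply a Galilean boost and spatial translation to fix $\xi$ and $x$ at the endpoints, use the modulation estimates \eqref{f4.54}--\eqref{f4.55} to verify $\sup|\xi/\lambda|\ll\eta_0$ and $\sup|x(t)|\ll T^{1/25}$, apply H\"older to the boundary terms of \eqref{f7.2}, and change to the $s$-variable. Your identification of the $x(t)$-drift check as the point where the smallness conditions $\eta T\ll T^{1/100}$ and $\eta^2 T\ll 1$ are actually needed is precisely what the paper's computation $\sup|x_1(t)|\lesssim\eta_1^{-2}T^{1/50}\eta_1\eta^2T^2+\eta_1^{-1}T^{1/100}\eta T\ll T^{1/25}$ confirms.
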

\begin{proof}
	Let $\eta_1,\eta_0$ be the same constants  as in Theorem \ref{t5.9} and let $\eta_1^{-1}\min_{t\in [a,b]}\lambda(t):=\tilde{\lambda}$. Notice that $$e^{i\gamma(t)}e^{ix\cdot\xi(t)}{\tilde{\lambda}}^{-1}{\lambda(t)}  \mathbf{u}_{\tilde{\lambda}} \left(t,\tilde{\lambda}^{-1}{\lambda(t)}x+x(t) \right) = \boldsymbol{\epsilon} (t)+Q, $$
$ \mathbf{u}_{\tilde{\lambda}}(t,x):=\tilde{\lambda} \mathbf{u}  \left(t,\tilde{\lambda}x \right)$ as while as $ \mathbf{v}_{\tilde{\lambda}}(t):= \mathbf{u}_{\tilde{\lambda}} \left(\tilde{\lambda}^2 t,x \right)$ is also the solution of \eqref{1.1},
then  there exist parameters
$\gamma_{1,1}(t),\cdots, \gamma_{1,N}, x_1(t), \xi_1(t)$ such that
	\begin{align*}
		&e^{i\gamma_{1,j}(t)}e^{ix\cdot\xi_1(t)}v_{\tilde{\lambda}} \left(t,\tilde{\lambda}^{-1}{\lambda \left(\tilde{\lambda}^2t \right)}x+x_1(t) \right)
 = \tilde{\epsilon}_j(t)+Q,
 \quad \forall \,  t\in \left[\tilde{\lambda}^{-2}a,\tilde{\lambda}^{-2}b \right], \quad\forall \,  j\in\mathbb{Z}_N, \end{align*}
 with $ \xi_1 \left(\tilde{\lambda}^{-2}a \right)=x_1 \left(\tilde{\lambda}^{-2}b \right)=0$, 
 and 
 \begin{align*}
\left\|\mathbf{\tilde{\boldsymbol{\epsilon}}}(t) \right\|_{L^2_x l^2}= \left\|\boldsymbol{\epsilon} \left(\tilde{\lambda}^2t \right) \right\|_{L^2_x l^2},
\quad \forall \,  t\in  \left[\tilde{\lambda}^{-2}a,\tilde{\lambda}^{-2}b \right]  \quad \mbox{and}\quad\mathbf{\tilde{\boldsymbol{\epsilon}}}(t)
 \mbox{ satisfies }\eqref{orthod}.
	\end{align*}
	Denote $\lambda_1(t)=\tilde{\lambda}^{-1}\lambda \left(\tilde{\lambda}^2t \right)$. By \eqref{f4.45}, we have 
	\begin{align*} 
		\frac{1}{\eta_1}\leq\lambda_1(t)\leq \frac{1}{\eta_1}T^{1/100},\quad \forall  \, t\in \left[\tilde{\lambda}^{-2}a,\tilde{\lambda}^{-2}b \right].
	\end{align*}
 Let $\tilde{s}(t):=\int_0^t \frac{\tilde{\lambda}^2}{\lambda \left(\tilde{\lambda}^2\tau \right)^2} \, \mathrm{d} \tau, \
 \left[s_1',s_2' \right]:=\tilde{s} \left( \left[\tilde{\lambda}^{-2}a,\tilde{\lambda}^{-2}b \right] \right)=s([a,b])$. 
 By \eqref{f4.54} and \eqref{f4.55}, we have 
	\begin{equation*}
		\sup_{t \in  \left[\tilde{\lambda}^{-2}a,\tilde{\lambda}^{-2}b \right]}  \left|\frac{\xi_1(t)}{\lambda_1(t)} \right|
=\sup_{\tilde{s} \in \left[\tilde{s}_1, \tilde{s}_2 \right]}
\left|\frac{\xi_1 \left(\tilde{s} \right)}{\lambda_1 \left(\tilde{s} \right)} \right|
\lesssim  \eta_1\eta^2T  \ll \eta_{0},
	\end{equation*}
    and 
	\begin{equation*}
		\sup_{t \in \left[\tilde{\lambda}^{-2}a,\tilde{\lambda}^{-2}b \right]}|x_1(t)|
= \sup_{\tilde{s} \in  \left[\tilde{s}_1, \tilde{s}_2  \right]} \left|x_1 \left(\tilde{s} \right) \right|
\lesssim \frac{1}{\eta_{1}^{2}} T^{\frac{1}{50}} \eta_{1} \eta^2T^2 + \frac{1}{\eta_{1}} T^{\frac{1}{100}} \eta T \ll T^{\frac{1}{25}}.
	\end{equation*}
Therefore, Theorem \ref{t7.13} can be applied  on $ \left[\tilde{\lambda}^{-2}a,\tilde{\lambda}^{-2}b \right]$, yielding 
	\begin{equation}\label{11.12}
		\int_{\tilde{\lambda}^{-2}a}^{\tilde{\lambda}^{-2}b}  \left\| \mathbf{\tilde{\boldsymbol{\epsilon}}}(t)  \right\|_{L^2_{x} l^2}^{2}
        {\lambda_1(t)^{-2}  } \, \mathrm{d} t
\lesssim \left\| \mathbf{\tilde{\boldsymbol{\epsilon}}} \left(\tilde{\lambda}^{-2}a \right)  \right\|_{L^2_{x} l^2}
+  \left\| \mathbf{\tilde{\boldsymbol{\epsilon}}} \left(\tilde{\lambda}^{-2}b \right)  \right\|_{L^2_{x} l^2} +  \eta^{4} T^{2+1/15}+ O \left({T^{-8}} \right).
	\end{equation}
Changing variables, \eqref{11.12} implies 
	\begin{equation*}
		\int_{s_1}^{s_2}  \left\| \boldsymbol{\epsilon}(s)  \right\|_{L^2_{x} l^2}^{2}  \, \mathrm{d} s
\lesssim  \left\| \boldsymbol{\epsilon}(s_1)  \right\|_{L^2_{x} l^2} +  \left\| \boldsymbol{\epsilon}(s_2)  \right\|_{L^2_{x} l^2} + \eta^{4} T^{2+1/15}
 + O \left( {T^{-8}} \right).
	\end{equation*}
This completes the proof of Theorem \ref{t7.14}. 
\end{proof}

\section{The proof of Theorem \ref{t2.3}}\label{Sec:Thm2.3}
In this section, we will complete the proof of Theorem \ref{t2.3}. We observe that the scale parameter $\lambda(t)$ is constant for solitons and monotonically decreasing to zero for pseudo-conformal transformation of solitons. In the first subsection, we will prove that this parameter is \emph{almost stable} around the orbit of the ground state $\mathbf{Q}$, that is, under the assumptions of Theorem \ref{t2.3},  the parameter $\lambda(t)$ is \emph{almost monotonically decreasing}, i.e.  $\lambda(t)\sim \inf_{t'\in[0,t]}\lambda(t')$. In the second subsection, we provide the full proof of Theorem \ref{t2.3}. For the infinite time blowup case, we use the almost monotonicity decreasing property of $\lambda(t)$ and the virial inequality  derived in the previous section to show that there exists a time sequence $t_n$ such that $\|\boldsymbol{\epsilon}(t_n)\|_{L^2_x l^2}$ decays rapidly in some sense. This subtle decay rate sharpens the  uniform bound in \eqref{f6.17}, which leads to the  disappearance of the remainder $\boldsymbol{\epsilon}$. For the finite time blowup case, we apply the  pseudo-conformal transformation to $\mathbf{u}$ to construct an infinite time blowup  solution $\mathbf{v}$ and then prove that such $\mathbf{v}$ must be the soliton.
\subsection{Monotonicity decreasing property of scaling parameter $\lambda(t)$}

First, we use Theorem \ref{t7.13} and  Theorem \ref{t7.14} to prove a regularity property of reminder $\boldsymbol{\epsilon}$. Specifically, we will prove that $ \left\| \boldsymbol{\epsilon}(s) \right\|_{L^2_{x} l^2}$ lies in $L_{s}^{p}$ for any $p > 1$.
\begin{theorem}\label{t8.15}
Let $\mathbf{u}$ be the solution in Theorem \ref{t2.3}, and suppose that
\begin{equation}\label{f8.1}
 \left\| \boldsymbol{\epsilon}(0)  \right\|_{L^2_{x} l^2} = \eta_{\ast},\quad \sup_{t\in I} \left\| \boldsymbol{\epsilon}(t)  \right\|_{L^2_{x} l^2}\leq \eta_{\ast}.
\end{equation}
Then for sufficiently small $\eta_{\ast}$,
\begin{equation}\label{f8.2}
\int_{0}^{\infty}  \left\| \boldsymbol{\epsilon}(s)  \right\|_{L^2_{x} l^2}^{2}  \, \mathrm{d} s \lesssim \eta_{\ast},
\end{equation}
where the implicit constant is independent of $\eta_{\ast}$.

Moreover, for any $j \in \mathbb{N}$, we define
\begin{equation}\label{f8.3}
s_{j} = \inf \left\{ s \in [0, \infty) :  \left\| \boldsymbol{\epsilon}(s)  \right\|_{L^2_{x} l^2} = 2^{-j} \eta_{\ast}  \right\}.
\end{equation}
By definition, $s_{0} = 0$, and by Theorem \ref{claim2.1} and Lemma \ref{l2.2}, $s_{j}$ exists for any $j > 0$. Then for each $j \in \mathbb{N}$, we have
\begin{equation}\label{f8.4}
\int_{s_{j}}^{\infty}  \left\| \boldsymbol{\epsilon}(s)  \right\|_{L^2_{x} l^2}^{2}  \, \mathrm{d} s \lesssim 2^{-j} \eta_{\ast},
\end{equation}
where the  implicit constant is independent of $j$ and $\eta_{\ast}$.
\end{theorem}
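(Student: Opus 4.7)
The strategy is an iterative application of the virial-type bound in Theorem~\ref{t7.14}, organized by the dyadic stratification of times given by~\eqref{f8.3} and closed by a bootstrap on $\|\boldsymbol{\epsilon}(s)\|_{L^2_x l^2}$. I prove \eqref{f8.4} directly for all $j\ge 0$; the case $j=0$ is \eqref{f8.2}.

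\emph{Step 1: Calibration of a single virial block.} At the dyadic level $\eta_j:=2^{-j}\eta_\ast$, I will work on a block $J_j=[s_j,\,s_j+T_j]$ with $T_j:=c_0\eta_j^{-1}$ for a small absolute constant $c_0>0$. For this choice the two scalar hypotheses of Theorem~\ref{t7.14} become $c_0\ll T_j^{1/100}$ and $c_0\eta_j\ll 1$, both of which are automatic once $\eta_\ast$ is small enough. The remaining $\lambda$-ratio hypothesis $\sup_{J_j}\lambda/\inf_{J_j}\lambda\le T_j^{1/100}$ follows from integrating \eqref{qm4} together with the near-constancy \eqref{f4.56}: $|\log\lambda|$ changes by at most $\int_{J_j}|\lambda_s/\lambda|\,ds\lesssim T_j\cdot\eta_j = c_0$, so $\lambda$ is essentially constant on $J_j$. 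Assuming, for the duration of the bootstrap below, that $\sup_{J_j}\|\boldsymbol{\epsilon}\|_{L^2_xl^2}\lesssim\eta_j$, Theorem~\ref{t7.14} delivers
\[
\int_{s_j}^{s_j+T_j}\|\boldsymbol{\epsilon}(s)\|_{L^2_xl^2}^{2}\,ds\;\lesssim\;\eta_j+\eta_j^{4}T_j^{2+1/15}+T_j^{-8}\;\lesssim\;\eta_j,
\]
since the choice $T_j\sim\eta_j^{-1}$ yields $\eta_j^{4}T_j^{2+1/15}\sim c_0^{2+1/15}\eta_j^{2-1/15}\ll\eta_j$ and $T_j^{-8}\sim c_0^{-8}\eta_j^{8}\ll\eta_j$.

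\emph{Step 2: Closing the bootstrap.} To legitimize the standing assumption $\sup_{J_j}\|\boldsymbol{\epsilon}\|\lesssim\eta_j$, I combine the integrated bound from Step~1 with the pointwise control \eqref{f6.17} (which, read in the $s$-variable, estimates $\sup\|\boldsymbol{\epsilon}\|$ in terms of exactly the integral we have just controlled, plus rapidly decaying remainders), the continuity of $s\mapsto\|\boldsymbol{\epsilon}(s)\|_{L^2_xl^2}$ from Lemma~\ref{l2.2}, and the unit-interval sup-inf equivalence \eqref{f4.56}. A standard continuity argument, using $\|\boldsymbol{\epsilon}(s_j)\|=\eta_j$ as the base case, extends the bound across $J_j$; any hypothetical violation of the sup bound in the interior of $J_j$ would force the integrated mass to exceed the output of Step~1, which is a contradiction.

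\emph{Step 3: Dyadic telescoping.} Chaining Steps~1--2 across consecutive levels $\eta_j,\eta_{j+1},\dots$, and noting that a bounded number of blocks of length $T_{j+k}\sim\eta_{j+k}^{-1}$ suffices to cover the excursion $[s_{j+k},s_{j+k+1}]$ (because within one such block the norm cannot stay uniformly above the current level without exceeding the integrated bound just proved), I get
\[
\int_{s_{j+k}}^{s_{j+k+1}}\|\boldsymbol{\epsilon}(s)\|_{L^2_xl^2}^{2}\,ds\;\lesssim\;\eta_{j+k}.
\]
Summing the geometric series,
\[
\int_{s_j}^{\infty}\|\boldsymbol{\epsilon}(s)\|_{L^2_xl^2}^{2}\,ds\;=\;\sum_{k\ge 0}\int_{s_{j+k}}^{s_{j+k+1}}\|\boldsymbol{\epsilon}(s)\|_{L^2_xl^2}^{2}\,ds\;\lesssim\;\sum_{k\ge 0}\eta_{j+k}\;\lesssim\;\eta_j\;=\;2^{-j}\eta_\ast,
\]
which is \eqref{f8.4}; specializing to $j=0$ yields \eqref{f8.2}.

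\emph{Principal obstacle.} The subtlety lies in reconciling the three hypotheses of Theorem~\ref{t7.14} at a scale that produces a useful output. The harmful term $\eta^{4}T^{2+1/15}$ in \eqref{f7.2} forces $T$ to sit strictly below the critical scale $\eta^{-2}$ permitted by $\eta^{2}T\ll 1$; the sweet spot $T\sim\eta^{-1}$ barely renders this error subleading to the linear boundary contribution $\eta$. Because $T_j$ is much smaller than the natural lifespan $\eta_j^{-2}$, many blocks must be pasted together, and the $\lambda$-ratio condition together with the pointwise bound on $\|\boldsymbol{\epsilon}\|$ must be propagated uniformly along the chain. Closing this loop requires the full interplay of the modulation estimates \eqref{qm1}--\eqref{qm5}, the near-constancy \eqref{f4.56}, and the long-time Strichartz input standing behind \eqref{f6.17}.
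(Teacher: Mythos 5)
Your calibration $T_j = c_0\,\eta_j^{-1}$ does make the error term $\eta^4T^{2+1/15}$ in \eqref{f7.2} subleading \emph{on one block}, but the argument does not close and the overall route is different from the paper's in a way that exposes the gap. The paper uses a slowly growing window length $J^nT_\ast$ with $J\sim(\ln\eta_\ast^{-1})^{1/4}$, not $\eta_j^{-1}$; this smallness of $J$ is what lets the nested induction \eqref{f8.17} absorb the error term uniformly while extending the controlled interval geometrically in $n$. With your much larger block $c_0\eta_j^{-1}$ at level $j$, you have to join together an \emph{a priori} unbounded number of blocks to reach $s_{j+1}$, and there is no input preventing that.

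Two concrete breaks in your argument. First, Step~2's bootstrap closure: you invoke \eqref{f6.17} and ``a standard continuity argument,'' but \eqref{f6.17} holds only under the hypotheses of Theorem~\ref{t5.9}, and its right-hand side contains $\eta_1^{-2}$, $T^{1/50}$, and the uncontrolled Galilean term $\sup_t|\xi(t)|^2/\lambda(t)^2$, none of which you have calibrated to produce $\sup_{J_j}\|\boldsymbol{\epsilon}\|\lesssim\eta_j$. Moreover, Step~1 already uses $\sup_{J_j}\|\boldsymbol{\epsilon}\|\lesssim\eta_j$ to evaluate the $\eta^4T^{2+1/15}$ error (and the $\lambda$-ratio hypothesis), so you are presupposing the very bound the bootstrap is meant to deliver, with no base case where both sides are simultaneously verified at the relevant scale. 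Second, Step~3's telescoping: the claimed contradiction fails quantitatively. If $\|\boldsymbol{\epsilon}(s)\|\geq\eta_{j+1}$ on all of $J_j$, the lower bound for the integral is $\eta_{j+1}^2T_j = c_0\eta_j/4$, which is \emph{consistent with}, not in contradiction to, your upper bound $\lesssim\eta_j$; and making $c_0$ larger violates $\eta T\ll T^{1/100}$ in Theorem~\ref{t7.14}. So nothing prevents the excursion $[s_j,s_{j+1}]$ from being arbitrarily long, and the geometric-series step in Step~3 has no justification.

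The statement $s_{j+1}\in[s_j, s_j+2^jJT_\ast]$ is precisely what the paper's multi-scale induction establishes (ending in \eqref{f8.26}), and it is the output of the hard work, not an input. To repair your proof you would essentially have to reproduce the paper's nested iteration with the slowly growing window $J^nT_\ast$ — the short-block-per-dyadic-level scheme cannot substitute for it.
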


\begin{proof}
Set $T_{\ast} = \frac{1}{\eta_{\ast}}$. By \eqref{f4.45} and \eqref{f8.1}, for any $s' \geq 0$,
\begin{equation}\label{f8.5}
 \bigg|\sup_{s \in  \left[s', s' + T_{\ast} \right]} \ln (\lambda(s)) - \inf_{s \in  \left[s', s' + T_{\ast} \right]} \ln(\lambda(s)) \bigg|
  \lesssim \int_{s'}^{s'+T_{\ast}}  \left|\frac{\lambda'(s)}{\lambda(s)} \right| \, \mathrm{d} s \lesssim \eta_{\ast} T_{\ast}\lesssim 1,
\end{equation}
where the implicit constant is independent of $s' \geq 0$. Let $J$ be the largest dyadic integer satisfying 
\begin{equation}\label{f8.6}
J = 2^{j_{\ast}} \leq -(\ln (\eta_{\ast}))^{1/4}.
\end{equation}
By \eqref{f8.5} and the triangle inequality,
\begin{equation}\label{f8.7}
\aligned
\bigg|\sup_{s \in \left[s', s' + J T_{\ast} \right]} \ln(\lambda(s)) - \inf_{s \in  \left[s', s' + J T_{\ast} \right]} \ln(\lambda(s)) \bigg| \lesssim J. 
\endaligned
\end{equation}
Therefore,
\begin{equation}\label{f8.8}
\frac{\sup_{s \in  \left[s', s' + 3 J T_{\ast} \right]} \lambda(s)}{\inf_{s \in  \left[s', s' + 3JT^{\ast} \right]} \lambda(s)}
 \lesssim e^{\ln((T_{\ast})^{1/4})}\lesssim e^{\ln \left(T_{\ast}^{1/1000} \right)}\ll T_{\ast}^{1/100}.
\end{equation}
Since $\eta_{\ast}JT_{\ast}\lesssim J, $ we can choose $T_{\ast}$ sufficiently large   such that Theorem \ref{t7.14} holds. 
By Theorem \ref{t7.14}, for any $1\leq\gamma\leq3$, 
\begin{equation}\label{f8.9}
	\int_{s'}^{s'+\gamma JT_{\ast}}  \left\| \boldsymbol{\epsilon}(s)  \right\|_{L^2_{x} l^2}^{2} \, \mathrm{ d} s
 \lesssim  \left\| \boldsymbol{\epsilon} \left(s' \right)  \right\|_{L^2_{x} l^2} +  \left\| \boldsymbol{\epsilon} \left(s'+\gamma JT_{\ast} \right) \right\|_{L^2_{x} l^2}+\eta_{\ast}^{2-1/25}J^{2+1/25}  + O \left( {J^{-8} T_{\ast}^{- 8}} \right).
\end{equation}
For any $s' > J T_{\ast}$, \eqref{f8.9} implies
\begin{equation*}
\int_{s'}^{s' + J T_{\ast}}  \left\| \boldsymbol{\epsilon}(s)  \right\|_{L^2_{x} l^2}^{2}  \, \mathrm{d} s
\lesssim \inf_{s \in  \left[s' - J T_{\ast}, s' \right]}  \left\| \boldsymbol{\epsilon}(s)  \right\|_{L^2_{x} l^2} + \inf_{s \in  \left[s' + J T_{\ast}, s' + 2J T_{\ast} \right]}
\left\| \boldsymbol{\epsilon}(s)  \right\|_{L^2_{x} l^2} +  \eta_{\ast}^{2-1/25}J^{2+1/25}  + O \left( {J^{-8} T_{\ast}^{-8}} \right).
\end{equation*}
Thus, by the integral mean value theorem, $\forall \, s' \geq 0$,
\begin{align*}
& 	\int_{s'}^{s' + J T_{\ast}}  \left\| \boldsymbol{\epsilon}(s) \right\|_{L^2_{x} l^2}^{2} \,\mathrm{d}s
\\
 \lesssim&  \left\| \boldsymbol{\epsilon}  \left(s' \right) \right \|_{L^2_{x}l^2}
 + \frac{1}{J^{1/2} T_{\ast}^{1/2}} \left(\sup_{a\in\mathbb{N}} \int_{s' + a J T_{\ast}}^{s' + (a + 1) J T_{\ast}} \left\| \boldsymbol{\epsilon}(s) \right\|_{L^2_{x} l^2}^{2}
  \, \mathrm{d} s \right)^{1/2} +  \eta_{\ast}^{2-1/25}J^{2+1/25}  + O \left( {J^{-8} T_{\ast}^{-8}} \right),
\end{align*}
and 
\begin{align*}
& \sup_{a\in\mathbb{N}_+} \int_{s' + a J T_{\ast}}^{s' + (a + 1) J T_{\ast}} \left\| \boldsymbol{\epsilon}(s) \right\|_{L^2_{x} l^2}^{2} \,\mathrm{d}s
 \\
  \lesssim &\frac{1}{J^{1/2} T_{\ast}^{1/2}} \left(\sup_{a \in \mathbb{N}} \int_{s' + a J T_{\ast}}^{s' + (a + 1) J T_{\ast}}
 \left\| \boldsymbol{\epsilon}(s) \right\|_{L^2_{x} l^2}^{2} \, \mathrm{d} s \right)^{1/2} +   \eta_{\ast}^{2-1/25}J^{2+1/25}  + O \left( {J^{-8} T_{\ast}^{-8}} \right).
\end{align*}
Thus,  
\begin{align}\label{qmzy}
	& \sup_{a\in\mathbb{N}} \int_{s' + a J T_{\ast}}^{s' + (a + 1) J T_{\ast}} \left\| \boldsymbol{\epsilon}(s) \right\|_{L^2_{x} l^2}^{2} \,\mathrm{d}s
\\\nonumber
 \lesssim& \left\|\boldsymbol{\epsilon} \left(s' \right) \right\|_{L^2_x l^2}+\frac{1}{J^{1/2} T_{\ast}^{1/2}}
\left(\sup_{a \in \mathbb{N}} \int_{s' + a J T_{\ast}}^{s' + (a + 1) J T_{\ast}}  \left\| \boldsymbol{\epsilon}(s) \right\|_{L^2_{x} l^2}^{2}  \, \mathrm{d} s \right)^{1/2}
+   \eta_{\ast}^{2-1/25}J^{2+1/25}  + O \left( {J^{-8} T_{\ast}^{-8}} \right).
\end{align}
Taking $s' = s_{j_{\ast}}$ and applying the Cauchy-Schwarz inequality, for sufficiently small $\eta_{\ast}$, we have
\begin{equation}\label{f8.14}
\sup_{a \in \mathbb{N}} \int_{s_{j_{\ast}} + a J T_{\ast}}^{s_{j_{\ast}} + (a + 1) J T_{\ast}}  \left\| \boldsymbol{\epsilon}(s) \right\|_{L^2_{x} l^2}^{2} \, \mathrm{d} s
 \lesssim 2^{-j_{\ast}} \eta_{\ast} +  \eta_{\ast}^{2-1/25}J^{2+1/25}  + O \left( {J^{-8} T_{\ast}^{-8}} \right)\lesssim 2^{-j_{\ast}}\eta_{\ast}.
\end{equation}
By the triangle inequality, \eqref{f8.14} implies
\begin{equation*}
\sup_{s' \geq s_{j_{\ast}}} \int_{s'}^{s' + J T_{\ast}}  \left\| \boldsymbol{\epsilon}(s) \right\|_{L^2_{x} l^2}^{2}  \, \mathrm{ d} s \lesssim 2^{-j_{\ast}} \eta_{\ast},
\end{equation*}
and by H{\"o}lder's inequality,
\begin{equation*}
\sup_{s' \geq s_{j_{\ast}}} \int_{s'}^{s' + J T_{\ast}}  \left\| \boldsymbol{\epsilon}(s) \right\|_{L^2_{x} l^2} \, \mathrm{d} s \lesssim 1.
\end{equation*}
Repeating this argument, Theorem $\ref{t8.15}$ can be proved by induction. Indeed, fix a constant $C < \infty$ and suppose that there exists a positive integer $n_{0}$ such that for all integers $0 \leq n \leq n_{0}$,
\begin{equation}\label{f8.17}
\sup_{s' \geq s_{nj_{\ast}}} \int_{s'}^{s' + J^{n} T_{\ast}}
\left\| \boldsymbol{\epsilon}(s) \right\|_{L^2_{x} l^2} \, \mathrm{d} s \leq C, \quad \sup_{s' \geq s_{nj_{\ast}}}
\int_{s'}^{s' + J^{n} T_{\ast}}  \left\| \boldsymbol{\epsilon}(s) \right\|_{L^2_{x} l^2}^{2} \, \mathrm{d} s \leq C J^{-n} \eta_{\ast}.
\end{equation}
Then, by \eqref{f8.5}, for $s' \geq s_{n j_{\ast}}$,
\begin{equation}\label{f8.18}
\frac{\sup_{s \in  \left[s', s' + 3 J^{n + 1} T_{\ast} \right]} \lambda(s)}{\inf_{s \in  \left[s', s' + 3 J^{n + 1} T_{\ast} \right]} \lambda(s)} \ll T_{\ast}^{\frac{1}{100}}.
\end{equation}
\begin{remark}
The $C$ in \eqref{f8.17} will ultimately be given by the implicit constants in Theorems \ref{t7.13} and  Theorem \ref{t7.14}, so for $T_{\ast}$ sufficiently large, \eqref{f8.18} will hold.
\end{remark}
Using \eqref{f8.17} and arguing as in the proof of  Theorem \ref{t7.14}, it is easy to see that for any $1\leq \gamma\leq 3$ and $s'\geq s_{nj_{\ast}}$,
\begin{align}\label{f8.19}
& 	\int_{s'}^{s'+\gamma J^{n+1}T_{\ast}}
\left\| \boldsymbol{\epsilon}(s) \right\|_{L^2_{x} l^2}^{2} \, \mathrm{d} s \\
& \lesssim  \left\| \boldsymbol{\epsilon} \left(s' \right)  \right\|_{L^2_{x} l^2}
+  \left\| \boldsymbol{\epsilon} \left(s'+\gamma J^{n+1}T_{\ast} \right)  \right\|_{L^2_{x} l^2}+J^{n/25}T^{n/25}_{\ast}CJ^{1-n}\eta_{\ast}\int_{s'}^{s'+\gamma J^{n+1}T_{\ast}}  \left\| \boldsymbol{\epsilon}(s)  \right\|_{L^2_{x} l^2}^{2} \, \mathrm{d} s  + O \left({J^{-8(n+1)} T_{\ast}^{-8}} \right). \notag
\end{align}
Replacing \eqref{f8.9} with \eqref{f8.19} and repeating the proof of \eqref{qmzy}, one can prove that
\begin{align*}
	& \sup_{a\in\mathbb{N}} \int_{s' + a J^{n+1} T_{\ast}}^{s' + (a + 1) J^{n+1} T_{\ast}}
 \left\| \boldsymbol{\epsilon}(s) \right\|_{L^2_{x} l^2}^{2}  \,\mathrm{d}s \\
   \lesssim & \left\|\boldsymbol{\epsilon} \left(s' \right) \right\|_{L_x^2l^2}+\frac{1}{J^{\frac{n+1}{2}} T_{\ast}^{1/2}} \left(\sup_{a \in \mathbb{N}} \int_{s' + a J^{n+1} T_{\ast}}^{s' + (a + 1) J^{n+1} T_{\ast}}  \left\| \boldsymbol{\epsilon}(s) \right\|_{L^2_{x} l^2}^{2} \, \mathrm{ d} s \right)^{1/2} \\
&     +   J^{n/25}T^{n/25}_{\ast}CJ^{1-n}\eta_{\ast}\int_{s'}^{s'+\gamma J^{n+1}T_{\ast}}  \left\| \boldsymbol{\epsilon}(s)  \right\|_{L^2_{x} l^2}^{2} \, \mathrm{ d} s + O \left({J^{-8(n+1)} T_{\ast}^{-8}} \right), 
\end{align*}
which have the same  implicit constant with \eqref{f8.9}.
Since $J\gg 1$ and $j_{\ast}\geq 0$, under the conditions required by \eqref{f8.14} ($\eta_{\ast}$ is sufficiently small), we can get
\begin{equation*}
	\sup_{s' \geq s_{(n + 1) j_{\ast}}} \int_{s'}^{s' + J^{n + 1} T_{\ast}} \left\| \boldsymbol{\epsilon}(s) \right\|_{L^2_{x} l^2}^{2} \, \mathrm{d} s \leq C J^{-(n + 1)} T_{\ast}^{-1},
\end{equation*}
and the implicit constant can be considered to be the same as in \eqref{f8.14}.
Then by H{\"o}lder's inequality,
\begin{equation*}
\sup_{s' \geq s_{(n + 1) j_{\ast}}} \int_{s'}^{s' + J^{n + 1} T_{\ast}}  \left\| \boldsymbol{\epsilon}(s) \right\|_{L^2_{x} l^2}  \, \mathrm{d} s \leq C.
\end{equation*}
Therefore, \eqref{f8.17} holds for any integer $n > 0$.

Now, take any $j \in \mathbb{Z}$ and suppose $n j_{\ast} < j \leq (n + 1) j_{\ast}$. Then \begin{equation}\label{f8.23}
	\sup_{s' \geq s_{j}} \int_{s'}^{s' + J^{n+1} T_{\ast}}  \left\| \boldsymbol{\epsilon}(s) \right\|_{L^2_{x} l^2}  \, \mathrm{d} s \leq CJ, \quad \sup_{s' \geq s_{j}}
 \int_{s'}^{s' + J^{n+1} T_{\ast}}  \left\| \boldsymbol{\epsilon}(s) \right\|_{L^2_{x} l^2}^{2}  \, \mathrm{d} s \leq C J^{1-n} \eta_{\ast}.
\end{equation}
Once again, we can prove that \eqref{f8.23} implies a sharper bound:
\begin{equation}\label{f8.24}
\sup_{a \geq 0} \int_{s_{j} + a J^{n + 1} T_{\ast}}^{s_{j} + (a + 1) J^{n + 1} T_{\ast}}
\left\| \boldsymbol{\epsilon}(s) \right\|_{L^2_{x} l^2}^{2} \, \mathrm{d} s \lesssim 2^{-j} \eta_{\ast},
\end{equation}
and by H\"older's inequality,
\begin{equation}\label{f8.25}
\sup_{s' \geq s_{j}} \int_{s'}^{s' + 2^{j} T_{\ast}}  \left\| \boldsymbol{\epsilon}(s) \right\|_{L^2_{x} l^2} \, \mathrm{d} s \lesssim 1,
\end{equation}
with the bound independent of $j$. Finally, \eqref{f8.24} and \eqref{f8.25} also yield a sharper bound:
\begin{equation}\label{f8.26}
\int_{s_{j}}^{s_{j} + 2^{j} J T_{\ast}}  \left\| \boldsymbol{\epsilon}(s) \right\|_{L^2_{x} l^2}^{2}  \, \mathrm{d} s \lesssim 2^{-j} \eta_{\ast},
\end{equation}
and therefore, by the integral mean value theorem,
\begin{equation*}
\inf_{s \in  \left[s_{j}, s_{j} + 2^{j} J T_{\ast} \right]}  \left\| \boldsymbol{\epsilon}(s) \right\|_{L^2_{x} l^2} \lesssim 2^{-j} \eta_{\ast} J^{-1/2},
\end{equation*}
which implies
\begin{equation*} 
s_{j + 1} \in  \left[s_{j}, s_{j} + 2^{j} J T_{\ast} \right].
\end{equation*}
Thus, by \eqref{f8.26} and H{\"o}lder's inequality,
\begin{equation*}
\int_{s_{j}}^{s_{j + 1}}  \left\| \boldsymbol{\epsilon}(s) \right\|_{L^2_{x} l^2}^{2} \, \mathrm{d} s
 \lesssim 2^{-j} \eta_{\ast}, \quad \text{and} \quad \int_{s_{j}}^{s_{j + 1}} \left\| \boldsymbol{\epsilon}(s) \right\|_{L^2_{x} l^2} \, \mathrm{d} s \lesssim 1,
\end{equation*}
with constants independent of $j$. Summing over $j$ yields \eqref{f8.2} and \eqref{f8.4}.
\end{proof}

\begin{corollary}
	Let $\mathbf{u}$ be the solution in Theorem \ref{t2.3}, and suppose that
	\begin{equation*}
	\left\| \boldsymbol{\epsilon}(0)  \right\|_{L^2_{x} l^2} = \eta_{\ast},\quad \sup_{t\in I} \left\| \boldsymbol{\epsilon}(t)  \right\|_{L^2_{x} l^2}\leq \eta_{\ast}.
	\end{equation*}
Then, for any $p>1$,
	\begin{equation*}
		\int_{0}^{\infty}  \left\| \boldsymbol{\epsilon}(s)  \right\|_{L^2_{x} l^2}^{p} \, \mathrm{ d} s \lesssim \eta_{\ast},
	\end{equation*}
where the implicit constant depends only on $p$ when $\eta_{\ast} \ll 1$ is sufficiently small. 
\end{corollary}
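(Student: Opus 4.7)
The plan is to deduce the corollary from Theorem \ref{t8.15} by combining the pointwise hypothesis $\sup_s \|\boldsymbol{\epsilon}(s)\|_{L^2_x l^2} \leq \eta_\ast$ with the global $L^2_s$ bound \eqref{f8.2} and the geometric tail estimates \eqref{f8.4}. The argument splits naturally into the regimes $p \geq 2$ and $1 < p < 2$.

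For $p \geq 2$, the argument is a one-line interpolation: writing $\|\boldsymbol{\epsilon}\|^p = \|\boldsymbol{\epsilon}\|^{p-2} \cdot \|\boldsymbol{\epsilon}\|^2$, dominating the first factor pointwise by $\eta_\ast^{p-2}$, and invoking \eqref{f8.2}, one obtains $\int_0^\infty \|\boldsymbol{\epsilon}\|^p \, ds \leq \eta_\ast^{p-2} \int_0^\infty \|\boldsymbol{\epsilon}\|^2 \, ds \lesssim \eta_\ast^{p-1} \leq \eta_\ast$, where the last step uses $p - 1 \geq 1$ and $\eta_\ast \ll 1$. The implicit constant here is absolute.

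For $1 < p < 2$, I would decompose $[0,\infty)$ by the level sets $E_j := \{s \geq 0 : 2^{-(j+1)}\eta_\ast < \|\boldsymbol{\epsilon}(s)\|_{L^2_x l^2} \leq 2^{-j}\eta_\ast\}$. Continuity of the norm (Lemma \ref{l2.2}), together with the infimum definition of $s_j$ in \eqref{f8.3} and the initial value $\|\boldsymbol{\epsilon}(0)\|_{L^2_x l^2} = \eta_\ast$, forces $\|\boldsymbol{\epsilon}(s)\| > 2^{-j}\eta_\ast$ for all $s \in [0, s_j)$ and hence $E_j \subseteq [s_j, \infty)$. Combining \eqref{f8.4} with the lower bound $\|\boldsymbol{\epsilon}\| > 2^{-(j+1)}\eta_\ast$ on $E_j$ then yields $|E_j| \lesssim 2^j/\eta_\ast$, so that $\int_{E_j} \|\boldsymbol{\epsilon}\|^p \, ds \leq (2^{-j}\eta_\ast)^p |E_j| \lesssim 2^{j(1-p)} \eta_\ast^{p-1}$, and summing the geometric series over $j \geq 0$ (convergent precisely because $p > 1$) gives $\int_0^\infty \|\boldsymbol{\epsilon}\|^p \, ds \lesssim_p \eta_\ast^{p-1}$.

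The main obstacle is the regime $1 < p < 2$: the naive level-set decomposition produces $\eta_\ast^{p-1}$, which differs from the claimed $\eta_\ast$ by the factor $\eta_\ast^{2-p}$ that blows up as $\eta_\ast \to 0$. Closing this gap will require the modulation control \eqref{f4.56}, which constrains $\|\boldsymbol{\epsilon}\|$ to be almost constant on unit $s$-intervals, together with a refined H\"older interpolation on each dyadic window $[s_j, s_{j+1}]$ between the $L^1_s$ bound $\int_{s_j}^{s_{j+1}} \|\boldsymbol{\epsilon}\| \, ds \lesssim 1$ (inherited from the proof of Theorem \ref{t8.15}) and the $L^2_s$ tail estimate \eqref{f8.4}; identifying the correct form of this interpolation to extract the extra power of $\eta_\ast$ is the key technical point of the proof.
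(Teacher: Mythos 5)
Your plan is essentially the paper's own proof, and the ``gap'' you flag for $1<p<2$ is not one you should try to close. Let me be concrete.

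The paper's proof applies H\"older's inequality on each dyadic window $[s_j,s_{j+1}]$ to the two bounds established in the proof of Theorem \ref{t8.15}, namely $\int_{s_j}^{s_{j+1}}\|\boldsymbol{\epsilon}(s)\|_{L_x^2l^2}\,\mathrm{d}s\lesssim 1$ and $\int_{s_j}^{s_{j+1}}\|\boldsymbol{\epsilon}(s)\|_{L_x^2l^2}^2\,\mathrm{d}s\lesssim 2^{-j}\eta_\ast$. Writing $\|\boldsymbol{\epsilon}\|^p=\|\boldsymbol{\epsilon}\|^{2-p}\cdot\|\boldsymbol{\epsilon}\|^{2(p-1)}$ and applying H\"older with exponents $\tfrac1{2-p}$ and $\tfrac1{p-1}$ gives
\[
\int_{s_j}^{s_{j+1}}\|\boldsymbol{\epsilon}(s)\|^p\,\mathrm{d}s\lesssim\Bigl(\int_{s_j}^{s_{j+1}}\|\boldsymbol{\epsilon}\|\,\mathrm{d}s\Bigr)^{2-p}\Bigl(\int_{s_j}^{s_{j+1}}\|\boldsymbol{\epsilon}\|^2\,\mathrm{d}s\Bigr)^{p-1}\lesssim\eta_\ast^{p-1}2^{-j(p-1)},
\]
and summing the geometric series over $j\ge0$ yields $\lesssim_p\eta_\ast^{p-1}$. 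This is exactly the ``refined interpolation'' you describe in your last paragraph, so you have in fact already found the paper's argument. Your level-set decomposition (with $E_j\subseteq[s_j,\infty)$ and $|E_j|\lesssim 2^j/\eta_\ast$ from \eqref{f8.4}) is a correct and essentially equivalent route to the same bound $\eta_\ast^{p-1}$; the $E_j\subseteq[s_j,\infty)$ inclusion you use does follow from the intermediate value theorem, the continuity of $\|\boldsymbol{\epsilon}(\cdot)\|_{L_x^2l^2}$, and the definition of $s_j$ as a first hitting time from $\|\boldsymbol{\epsilon}(0)\|=\eta_\ast$. Your one-line argument for $p\ge2$ is also correct (and slightly cleaner than running the general machinery in that range).

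The point you miss is that the paper's proof does not, and cannot, improve $\eta_\ast^{p-1}$ to $\eta_\ast$ in the regime $1<p<2$. The paper's proof text concludes only with ``$\|\boldsymbol{\epsilon}(s)\|_{L_x^2l^2}$ belongs to $L_s^p$''; the displayed bound $\lesssim\eta_\ast$ in the corollary statement, read literally with a $p$-dependent but $\eta_\ast$-independent constant, is a minor overstatement for $1<p<2$, where the method delivers $\lesssim_p\eta_\ast^{p-1}$. No amount of invoking \eqref{f4.56} will extract the extra factor $\eta_\ast^{2-p}$: the windows $[s_j,s_{j+1}]$ have length up to $O(2^jJ/\eta_\ast)$, so the $L^1$ mass $\int_{s_j}^{s_{j+1}}\|\boldsymbol{\epsilon}\|\,\mathrm{d}s\lesssim1$ is already the sharp input, and $\eta_\ast^{p-1}$ is the natural output of interpolating it against the $L^2$ tail \eqref{f8.4}. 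You should therefore stop at $\eta_\ast^{p-1}$, which is what both your argument and the paper's proof deliver, and treat that as the intended conclusion.
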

\begin{proof}
	By \eqref{f4.45} and \eqref{f8.2}, we have 
	\begin{equation}\label{f8.32}
		\lim_{s \rightarrow \infty} \left\| \boldsymbol{\epsilon}(s) \right\|_{L^2_{x} l^2} = 0.
	\end{equation}
	Next, by the definition of $s_{j}$, \eqref{f8.4} implies
	\begin{equation*}
		\int_{s_{j}}^{s_{j + 1}} \left\| \boldsymbol{\epsilon}(s) \right\|_{L^2_{x} l^2} \, \mathrm{d} s \lesssim 1,
	\end{equation*}
	and for any $1 < p < \infty$,
	\begin{equation*}
		%\left(
        \int_{s_{j}}^{s_{j + 1}}  \left\| \boldsymbol{\epsilon}(s) \right\|_{L^2_{x} l^2}^{p} \, \mathrm{d} s %\right)
        \lesssim \eta_{\ast}^{p - 1} 2^{-j(p - 1)},
	\end{equation*}
	which shows that $ \left \| \boldsymbol{\epsilon}(s) \right\|_{L^2_{x} l^2}$ belongs to $L_{s}^{p}$ for any $p > 1$.
\end{proof}

\begin{remark}\label{remarkz}
Comparing with Theorem \ref{t2.3}, these theorems require an additional condition:
\begin{align*}
\left\|\boldsymbol{\epsilon}(0) \right\|_{L^2_x l^2}=\sup_{s\in[0,\infty)} \left\|\boldsymbol{\epsilon}(s) \right\|_{L^2_x l^2}=\eta_{\ast},
\end{align*}
which, one may check directly (using \eqref{f4.4}), that can be relaxed to
\begin{align}\label{weakerc}
\left\|\boldsymbol{\epsilon}(0) \right\|_{L^2_x l^2}\geq\frac{99}{100}\sup_{s\in[0,\infty)} \left\|\boldsymbol{\epsilon} (s)  \right\|_{L^2_x l^2}>0.
\end{align}
\eqref{weakerc} can be satisfied due to the time translation and rescaling if $\mathbf{u}$ is not a soliton. Similarly, Theorem \ref{t9.16} in the next section also holds under the weaker condition \eqref{weakerc}.
\end{remark}

Now, we show that $\lambda(s)$(or $\lambda(t))$ is an approximately monotone decreasing function.
\begin{theorem}\label{t9.16}
Let $\mathbf{u}$ be the solution in Theorem \ref{t2.3}, and suppose that
\begin{equation*}
	\left\| \boldsymbol{\epsilon}(0)  \right\|_{L^2_{x} l^2} = \eta_{\ast},\qquad \sup_{t\in I} \left\| \boldsymbol{\epsilon}(t)  \right\|_{L^2_{x} l^2}\leq \eta_{\ast}.
\end{equation*}
 For any $s \geq 0$, let
\begin{equation*} 
\tilde{\lambda}(s) = \inf_{\tau \in [0, s]} \lambda(\tau).
\end{equation*}
Then, for any $s \geq 0$,
\begin{equation*} 
1 \leq \frac{\lambda(s)}{\tilde{\lambda}(s)} <e.
\end{equation*}

\end{theorem}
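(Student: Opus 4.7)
The plan is to argue by contradiction. Suppose there exists some $s_0 > 0$ with $\lambda(s_0) \geq e\,\tilde{\lambda}(s_0)$. By continuity of $\lambda$, I can take $s_0$ to be the smallest such time, so that $\lambda(s_0) = e\,\tilde{\lambda}(s_0)$, and choose $s_1 \in [0, s_0]$ with $\lambda(s_1) = \tilde{\lambda}(s_0)$. On the interval $[s_1, s_0]$ one has $\lambda(s) \geq \lambda(s_1)$, while the minimality of $s_0$ forces $\lambda(s) < e\,\lambda(s_1)$, so the scale ratio on $[s_1, s_0]$ is bounded by $e$.

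First I would extract a lower bound on $\int_{s_1}^{s_0}\|\boldsymbol{\epsilon}(s)\|_{L^2_x l^2}\,ds$ from modulation theory. Partitioning $[s_1, s_0]$ into unit intervals and summing \eqref{f4.45} gives
\begin{equation*}
1 \;=\; \log\frac{\lambda(s_0)}{\lambda(s_1)} \;=\; \int_{s_1}^{s_0}\frac{\lambda_s}{\lambda}\,ds \;\leq\; \int_{s_1}^{s_0}\Big|\frac{\lambda_s}{\lambda}\Big|\,ds \;\leq\; C_0 \int_{s_1}^{s_0}\|\boldsymbol{\epsilon}(s)\|_{L^2_x l^2}\,ds,
\end{equation*}
so $\int_{s_1}^{s_0}\|\boldsymbol{\epsilon}(s)\|_{L^2_x l^2}\,ds \geq 1/C_0$.

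Next I would bound the same integral from above in a way that contradicts this. Because the ratio $\sup_{[s_1,s_0]}\lambda/\inf_{[s_1,s_0]}\lambda \leq e \ll T^{1/100}$, the interval $[s_1, s_0]$ fits into the hypotheses of the virial inequality, so Theorem \ref{t7.14} yields
\begin{equation*}
\int_{s_1}^{s_0}\|\boldsymbol{\epsilon}(s)\|^2_{L^2_x l^2}\,ds \;\lesssim\; \|\boldsymbol{\epsilon}(s_1)\|_{L^2_x l^2} + \|\boldsymbol{\epsilon}(s_0)\|_{L^2_x l^2} + \eta^4(s_0-s_1)^{2+1/15} + O((s_0-s_1)^{-8}),
\end{equation*}
with $\eta = \sup_{[s_1, s_0]}\|\boldsymbol{\epsilon}(s)\|_{L^2_x l^2}\leq \eta_*$. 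To upgrade this $L^2_s$ control to an $L^1_s$ control strong enough to contradict Step~1, I would invoke the sharpened $L^p_s$ decay from the Corollary to Theorem \ref{t8.15}, namely $\int_0^\infty\|\boldsymbol{\epsilon}(s)\|^p_{L^2_x l^2}\,ds \lesssim \eta_*$ for every $p>1$, and apply H\"older's inequality with a $p$ close to $1$:
\begin{equation*}
\int_{s_1}^{s_0}\|\boldsymbol{\epsilon}(s)\|_{L^2_x l^2}\,ds \;\leq\; (s_0-s_1)^{1-1/p}\,\|\boldsymbol{\epsilon}\|_{L^p_s([s_1,s_0],\,L^2_x l^2)}.
\end{equation*}
The length $s_0-s_1$ can be controlled in terms of $\eta_*$ by combining the $L^2_s$ mass $\int\|\boldsymbol{\epsilon}\|^2_{L^2_x l^2}\lesssim \eta_*$ with the lower bound produced in Step~1 (which forces $\|\boldsymbol{\epsilon}\|$ to average at least $1/[C_0(s_0-s_1)]$ on the interval), and then by choosing $p$ sufficiently close to $1$ the right-hand side becomes strictly smaller than $1/C_0$, yielding the desired contradiction.

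The main technical obstacle is tracking the interplay between $\eta_*$, the length $s_0 - s_1$, and the implicit constants in the $L^p$-in-time bounds as $p \to 1^+$, since the constant in the $L^p$ integrability degenerates like $1/(p-1)$. Choosing $p$ small enough to beat the factor $(s_0-s_1)^{1-1/p}$ while keeping the overall constant bounded requires a delicate balancing; in particular it is this calibration — together with the fact that the modulation inequality \eqref{f4.45} has implicit constant independent of $\eta_*$ — that pins down the specific constant $e$ (rather than a larger constant) in the statement, through the matching $\log e = 1$ with the threshold produced by summing \eqref{f4.45} on unit intervals.
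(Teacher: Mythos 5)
Your proposal has a genuine gap: the contradiction never closes because you have no \emph{upper} bound on the length $s_0 - s_1$. The quantities you invoke only produce a lower bound. From Step~1 you get $\int_{s_1}^{s_0}\|\boldsymbol{\epsilon}\|\,ds \geq 1/C_0$, and combined with $\int_0^{\infty}\|\boldsymbol{\epsilon}\|^2\,ds \lesssim \eta_*$ from Theorem~\ref{t8.15}, the Cauchy--Schwarz inequality gives $s_0-s_1 \gtrsim 1/(C_0^2\eta_*)$ --- a lower bound, going the wrong way. If $\|\boldsymbol{\epsilon}\|$ decays (as it must, by \eqref{f8.32}), then $|\lambda_s/\lambda|\lesssim \|\boldsymbol{\epsilon}\|$ becomes tiny and $\lambda$ can take an arbitrarily long $s$-time to increase by a factor of $e$; nothing in the hypotheses caps $s_0-s_1$. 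With $s_0 - s_1$ unbounded, the Hölder estimate $(s_0-s_1)^{1-1/p}\,\|\boldsymbol{\epsilon}\|_{L^p_s} $ cannot be driven below $1/C_0$ for a fixed $p>1$, and taking $p\to 1^+$ fails because the constant in the Corollary degenerates like $1/(p-1)$. For the same reason the hypotheses $\eta T\ll T^{1/100}$ and $\eta^2 T\ll 1$ needed for Theorem~\ref{t7.14} are not automatically satisfied on $[s_1, s_0]$, so that invocation is also not justified.

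The paper's argument is structurally different and does not try to compare $\int\|\boldsymbol{\epsilon}\|\,ds$ against $\log e$. Instead, it tracks the second virial moment $\langle\boldsymbol{\epsilon}(s), |x|^2\mathbf{Q}\rangle_{L^2_x l^2}$. Differentiating and using $-L_-(|x|^2\mathbf{Q}) = -4\mathbf{Q} - 4x\cdot\nabla\mathbf{Q}$ together with the modulation controls gives, after integrating over $[s_-,s_+]$ (any pair with $\lambda(s_+)/\lambda(s_-)=e$),
\begin{equation*}
\||x|\mathbf{Q}\|_{L^2_x l^2}^2 + 4\int_{s_-}^{s_+} \langle \mathbf{w}(s), \mathbf{Q} + x\cdot\nabla\mathbf{Q}\rangle_{L^2_x l^2}\,ds = O(\eta_*),
\end{equation*}
since $\int_{s_-}^{s_+}\frac{N\lambda_s}{\lambda}\||x|Q\|_{L_x^2}^2\,ds = N\log e\,\||x|Q\|_{L_x^2}^2 = \||x|\mathbf{Q}\|_{L^2_xl^2}^2$ and the boundary terms are $O(\eta_*)$. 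This forces $\langle\mathbf{w}(s'), \mathbf{Q}+x\cdot\nabla\mathbf{Q}\rangle < 0$ at some $s'\in[s_-,s_+]$, and that negative sign is exactly what lets one drop the leading boundary term in Theorem~\ref{t7.13}. Starting from $s'$, the paper then runs the inductive bootstrap of Theorem~\ref{t8.15} (equations \eqref{qmm}--\eqref{8.45}) to prove $\int_{s'}^{\infty}\|\boldsymbol{\epsilon}(s)\|_{L^2_x l^2}^2\,ds = 0$, hence $\boldsymbol{\epsilon}\equiv 0$ on $[s', \infty)$, contradicting that $\mathbf{u}$ is not a soliton. The essential idea you are missing is this sign extraction via the $|x|^2\mathbf{Q}$ virial quantity, which converts the scale drift $\log(\lambda(s_+)/\lambda(s_-))=1$ into a seed for the bootstrap rather than into a size estimate on $\int\|\boldsymbol{\epsilon}\|\,ds$. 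Incidentally, your closing remark about the constant $e$ being pinned down by $\log e = 1$ matching the threshold in \eqref{f4.45} is not what happens in the paper; any ratio $>1$ produces a positive multiple of $\||x|\mathbf{Q}\|^2$ and the same contradiction, so $e$ is just a convenient normalization.
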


\begin{proof}
Suppose there exist $0 \leq s_{-} \leq s_{+} < \infty$ satisfying
\begin{equation*} 
\frac{\lambda(s_{+})}{\lambda(s_{-})} = e.
\end{equation*}
Then we will show that $\mathbf{u}(t)$ is a soliton solution to \eqref{1.1}, which is a contradiction since $\lambda(s)$ is constant in that case.
As  computed in \cite{D2},
\begin{equation*}
\left\langle\nabla Q, |x|^2 Q \right\rangle_{L^2_{x}} =  \left\langle iQ, |x|^2 Q \right\rangle_{L^2_{x}} =  \left\langle i \nabla Q, |x|^2 Q \right\rangle_{L_x^2} = 0, \quad  \left\langle Q + x \cdot \nabla Q, |x|^{2} Q \right\rangle_{L^2_{x}} = -\| |x|Q \|_{L^2_{x}}^{2},
\end{equation*}
\begin{equation*}
 -L_{-} \left(|x|^2 \mathbf{Q} \right) = -4 \mathbf{Q} - 4 x \cdot \nabla \mathbf{Q}.
\end{equation*}
Let $\boldsymbol{\epsilon}=\mathbf{v}+i\mathbf{w}$,   and using \eqref{f4.26},  we compute
\begin{align*}
	& \frac{ \mathrm{d} }{ \mathrm{d} s}  \left\langle \boldsymbol{\epsilon}(s), |x|^{2} \mathbf{Q} \right\rangle_{L^2_x l^2} + \frac{N\lambda_{s}}{\lambda}  \| |x| Q \|_{L^2_{x}}^{2}
+ 4 \left\langle \mathbf{Q} + x \cdot \nabla \mathbf{Q}, \mathbf{w}(s) \right\rangle_{L^2_{x} l^2} \\
 = &\sum_{j\in\mathbb{Z}_N} O \left( \left|\gamma_{j,s} + 1 - \frac{x_{s}}{\lambda} \cdot \xi(s) - |\xi(s)|^{2}  \right| \| \epsilon_j \|_{L^2_{x}} \right)
+ O \left( \left|\xi_{s} - \frac{\lambda_{s}}{\lambda} \xi(s) \right|  \left\| \boldsymbol{\epsilon}  \right\|_{L^2_{x} l^2} \right)
+ O \left( \left|\frac{\lambda_{s}}{\lambda} \right|  \left\| \boldsymbol{\epsilon}  \right\|_{L^2_{x} l^2} \right) \\
& \quad
+ O \left( \left|\frac{x_{s}}{\lambda} + 2 \xi \right|  \left\| \boldsymbol{\epsilon}  \right\|_{L^2_{x} l^2} \right)	+ O \left( \left\| \boldsymbol{\epsilon}  \right\|^{2}_{L^2_{x} l^2} \right)
+ O \left( \left\| \boldsymbol{\epsilon}  \right\|_{L^2_{x} l^2}  \left\| \boldsymbol{\tilde{\epsilon}}  \right\|^{2}_{L^{4}_x l^2} \right).
\end{align*}
By Theorem \ref{t8.15}, the fundamental theorem of calculus, and \eqref{f4.45} as well as \eqref{f4.52}-\eqref{f4.55},
\begin{equation*} 
 \left\langle \boldsymbol{\epsilon}(s_+), |x|^2Q \right\rangle_{L^2_x l^2}
 - \left\langle \boldsymbol{\epsilon}(s_-), |x|^2Q \right\rangle_{L^2_x l^2} + \left\| |x| \mathbf{Q}  \right\|_{L^2_{x}}^{2}
 + 4 \int_{s_{-}}^{s_{+}}  \left\langle\mathbf{w}(s), \mathbf{Q} + x\cdot \nabla \mathbf{Q} \right\rangle_{L^2_{x}} \,\mathrm{d}s  = O(\eta_{\ast}).
\end{equation*}
Obviously,
\begin{equation*}
	\left\langle \boldsymbol{\epsilon}(s_+), |x|^2Q \right\rangle_{L^2_x l^2} ,  \left\langle \boldsymbol{\epsilon}(s_-), |x|^2Q \right\rangle_{L^2_x l^2}
\lesssim \max_{s\in[s_-,s_+]} \left\|\boldsymbol{\epsilon}(s) \right\|_{L^2_x l^2} \left\| |x|^2Q \right\|_{L_x^2}=O(\eta_{\ast}).
\end{equation*}
Thus, 
\begin{equation}\label{f9.8}
	\left\| |x| \mathbf{Q}  \right\|_{L^2_{x}}^{2} + 4 \int_{s_{-}}^{s_{+}}  \left\langle\mathbf{w}(s), \mathbf{Q} + x\cdot\nabla \mathbf{Q} \right\rangle_{L^2_{x}} \,\mathrm{d}s
= O (\eta_{\ast}).
\end{equation}
Therefore, there exists $s' \in [s_{-}, s_{+}]$ such that
\begin{equation}\label{f9.9}
\left\langle \mathbf{w}(s'),  \mathbf{Q} + x \cdot \nabla \mathbf{Q} \right\rangle_{L^2_{x} l^2} < 0.
\end{equation}
Let $T_{\ast}=\frac{1}{\eta_{\ast}}$, and let $s_j$ have the same meaning as in Theorem \ref{t8.15}. 
Since $s' \geq 0$, there exists a unique $j \geq 0$ such that $s_{j} \leq s'< s_{j + 1}$. Observing that $|s_{j+1}-s_j|\leq C 2^{j}T_{\ast}$, we have  $s_{j+1}\in \left[s',s'+C2^jT_{\ast} \right]$.
By \eqref{f8.4} and the integral mean value theorem, there exists $\tilde{s}_1\in  \left[0,2^jT_{\ast} \right]$ satisfying
\begin{equation}\label{f9.10}
	\boldsymbol{\epsilon} \left(s'+C2^jT_{\ast}+\tilde{s}_1 \right)\lesssim 2^{-j}\eta_{\ast},
\end{equation}
\begin{equation}\label{f9.11}
\int_{s'}^{s'+ C2^{j}T_{\ast}+\tilde{s}_1}  \left\|\boldsymbol{\epsilon}(s) \right\|^2_{L^2_x l^2} \, \mathrm{d} s
\lesssim 2^{-j}\eta_{\ast},\quad\int_{s'}^{s'+ C2^{j}T_{\ast}+\tilde{s}_1}  \left\|\boldsymbol{\epsilon}(s) \right\|_{L^2_x l^2}  \, \mathrm{d} s \lesssim 1,
\end{equation}
with implicit constants in \eqref{f9.10} and \eqref{f9.11}  independent of $\eta_{\ast}$. 
Similarly, for any $k\in \mathbb{N}_+$, there exists $s_k\in  \left[0,2^{j+k}T_{\ast} \right]$ such that
\begin{equation*}
s_{j+k+1}\in \Big[s'+C\sum_{i=j}^{j+k-1}2^iT_{\ast}+\sum_{m=1}^{k-1}\tilde{s}_m,\  s'+C\sum_{i=j}^{j+k-1}2^iT_{\ast}+\sum_{m=1}^{k-1}\tilde{s}_m+C2^{k+j}T_{\ast}+\tilde{s}_k\Big],
\end{equation*}
 \begin{equation}\label{f9.13}
	\bigg\|\boldsymbol{\epsilon} \Big(s'+C\sum_{i=j}^{j+k-1}2^iT_{\ast}+\sum_{m=1}^{k-1}\tilde{s}_m+C2^{k+j}T_{\ast}+\tilde{s}_k \Big)\bigg\|_{L_x^2l^2}
\lesssim 2^{-j-k}\eta_{\ast},
\end{equation}
\begin{equation}\label{f9.14}
	\int_{s'+C\sum\limits_{i=j}^{j+k-1}2^iT_{\ast}+\sum\limits_{m=1}^{k-1}\tilde{s}_m}^{s'+C\sum\limits_{i=j}^{j+k}2^iT_{\ast}+\sum\limits_{m=1}^{k}\tilde{s}_m}
\left\|\boldsymbol{\epsilon}(s) \right\|^2_{L^2_x l^2}\, \mathrm{ d} s
\lesssim 2^{-j-k}\eta_{\ast},\quad\int_{s'+C\sum\limits_{i=j}^{j+k-1}2^iT_{\ast}+\sum\limits_{m=1}^{k-1}\tilde{s}_m}^{s'+C\sum\limits_{i=j}^{j+k}2^iT_{\ast}+\sum\limits_{m=1}^{k}\tilde{s}_m} \left\|\boldsymbol{\epsilon}(s) \right\|_{L^2_x l^2} \, \mathrm{d} s \lesssim 1,
\end{equation}
with implicit constants in \eqref{f9.13} and \eqref{f9.14} independent of $\eta_{\ast}$ and $k$. 
Let $T_k=C\sum_{i=j}^{j+k}2^iT_{\ast}+\sum_{m=1}^{k}\tilde{s}_m$, and  suppose that for any $1\leq k \leq n$, there exists constant $C_0>0$ so that
\begin{equation}\label{qmm}
	\int_{s'}^{s'+T_k}  \left\|\boldsymbol{\epsilon}(s) \right\|^2_{L^2_x l^2} \, \mathrm{d} s
 \leq C_02^{-j-k}\eta_{\ast},\quad\int_{s'}^{s'+T_k}  \left\|\boldsymbol{\epsilon}(s) \right\|_{L^2_x l^2} \, \mathrm{d} s \leq C_0,
\end{equation}
then we claim that for $C_{0}$ sufficiently large,
\begin{equation}\label{induction10B}
	\int_{s'}^{s'+T_{k+1}}  \left\|\boldsymbol{\epsilon}(s) \right\|^2_{L^2_x l^2} \, \mathrm{d} s \leq C_02^{-j-k-1}\eta_{\ast}.
\end{equation}
In fact, by  \eqref{f8.4}, \eqref{f9.14} and \eqref{qmm}, we have
\begin{equation*} 
\int_{s'}^{s'+T_{k+1}}
\left\|\boldsymbol{\epsilon}(s) \right\|^2_{L^2_x l^2} \, \mathrm{d} s
\leq C_02^{-j-k}\eta_{\ast}+C_12^{-j-k}\eta_{\ast},\quad\int_{s'}^{s'+T_{k+1}}  \left\|\boldsymbol{\epsilon}(s) \right\|_{L^2_x l^2} \, \mathrm{d} s \leq C_0+C_1
\end{equation*}
for some constant $C_1$, which is also independent of $\eta$ and $k$. As in the proof of Theorem \ref{t7.14}, we can set
\begin{equation*}
	\frac{1}{\eta_1}\leq \lambda(s)\leq \frac{1}{\eta_1}T_{k+1}^{1/100},
\quad \forall \,  s\in \left[s',s'+T_{k+1} \right],\quad \xi \left(s' \right)=x \left(s'+T_{k+1} \right)=0,
\end{equation*}
and by \eqref{f4.54} and \eqref{f4.55},
\begin{equation*}
	\sup_{s\in \left[s',s'+T_{k+1} \right]}\frac{|\xi(s)|}{\lambda(s)}
\lesssim \eta_1 \left(\int_{s'}^{s'+T_{k+1}}  \left\| \boldsymbol{\epsilon}(s) \right\|_{L^2_{x} l^2}^{2} \, \mathrm{ d} s \right)^2\ll \eta_0,\quad \sup_{s\in \left[s',s'+T_{k+1} \right]}|x(s)|\ll T_{k+1}^{1/25}.
\end{equation*}
Noticing that $C2^{j+k+1}\leq T_{k+1}\leq 4C2^{j+k+1}T_{\ast}$, by Theorems \ref{t7.13} and \eqref{f9.9},
\begin{align}\label{8.45}
\aligned
& \int_{s'}^{s'+T_{k+1}}  \left\| \boldsymbol{\epsilon}(s) \right\|_{L^2_{x} l^2}^{2} \, \mathrm{d} s\\
\lesssim&   
\left\langle \mathbf{w}\left(s' \right),  \mathbf{Q} + x \cdot \nabla \mathbf{Q} \right\rangle_{L^2_{x} l^2} 
+ \left\|\boldsymbol{\epsilon} \left(s'+T_{k+1} \right) \right\|_{L^2_x l^2}
+T_{k+1}^{1/25} \left(\int_{s'}^{s'+T_{k+1}}  \left\| \boldsymbol{\epsilon}(s) \right\|_{L^2_{x} l^2}^{2}  \, \mathrm{d} s \right)^2+O \left( 
{T_{k+1}^{-8}} \right) \\
\lesssim& 2^{-j-k-1}\eta_{\ast}+T_{k+1}^{1/25} \left(\int_{s'}^{s'+T_{k+1}}  \left\| \boldsymbol{\epsilon}(s) \right\|_{L^2_{x} l^2}^{2}  \, \mathrm{d} s \right)^2+O \left( { \left|s_{j+1+J}-s' \right|^{-8}} \right) \\
\lesssim& 2^{-j-k-1}\eta_{\ast}+ \left(4C2^{j+k+1}T_{\ast} \right)^{1/25}\eta_{\ast}^2(C_0+C_1)^22^{-2j-2k}+\frac{1}{C^82^{8j+8k+8}T^8_{\ast}}.
\endaligned
\end{align}
 Moreover, the implicit constant in \eqref{8.45} only relies on that in  \eqref{f9.13} and Theorem \ref{t7.14}, which are both independent of $\eta_{\ast}$ and $k$. 
 Thus, by choosing $\eta_{\ast}$ sufficiently small and $C_0$ sufficiently large, \eqref{induction10B} holds.

Now we let $k \nearrow \infty$, 
 then \eqref{induction10B} implies 
\begin{equation*} 
\int_{s'}^{\infty}  \left\| \boldsymbol{\epsilon}(s) \right\|_{L^2_{x} l^2}^{2} \, \mathrm{d} s = 0,
\end{equation*}
which directly yields that $ \boldsymbol{\epsilon} (s) = 0$ for all $s \geq s'$. This is a contradiction since $\mathbf{u}$ is not a soliton.
\end{proof}

\subsection{Proof of Theorem \ref{t2.3} }
Finally, we give the proof of Theorem \ref{t2.3}. We begin with the infinite time blowup case:
\subsubsection{Part A: infinite time blowup}
\begin{theorem}\label{t9.17}
If $\mathbf{u}$ is the solution in Theorem \ref{t2.3} and
\begin{equation*} 
\sup(I) = \infty,
\end{equation*}
then $\mathbf{u}$ is a soliton solution of the form \eqref{soliton}.
\end{theorem}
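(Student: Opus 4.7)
I will argue by contradiction: assume $\mathbf{u}$ is not a soliton, so $\boldsymbol{\epsilon}(t) \not\equiv 0$. Using time translation, spatial translation, a Galilean boost \eqref{sym5}, and rescaling via \eqref{sym1}, together with Remark \ref{remarkz}, I may normalize so that $\xi(0)=0$, $x(0)=0$, and
\[
0 < \eta_\ast := \|\boldsymbol{\epsilon}(0)\|_{L^2_x l^2} \geq \tfrac{99}{100}\sup_{s\geq 0}\|\boldsymbol{\epsilon}(s)\|_{L^2_x l^2}.
\]
With this normalization Theorems \ref{t8.15} and \ref{t9.16} both apply. In particular, the sequence $s_j$ defined in \eqref{f8.3} satisfies $\|\boldsymbol{\epsilon}(s_j)\|_{L^2_x l^2}=2^{-j}\eta_\ast\to 0$, the tail estimate $\int_{s_j}^\infty \|\boldsymbol{\epsilon}(s)\|_{L^2_x l^2}^2\,ds \lesssim 2^{-j}\eta_\ast$ holds, and $\lambda(s)$ is almost monotonically decreasing.

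\textbf{Main estimate on expanding windows.} For each large $j$, I will consider the physical-time interval $J_j := [0, t(s_j)]$, which by \eqref{f4.23} has $T_j:=\int_{J_j}\lambda(t)^{-2}\,dt = s_j\to\infty$. After a further scaling to enforce $\lambda(t)\geq 1/\eta_1$ on $J_j$ (compatible with Theorem \ref{t9.16}), the parameter control \eqref{f4.52}--\eqref{f4.55} and the $L^1_s$ integrability of $\|\boldsymbol{\epsilon}\|_{L^2_x l^2}$ from Theorem \ref{t8.15} yield the structural hypotheses $|\xi(t)|/\lambda(t)\leq \eta_0$ and $|x(t)|\leq T_j^{1/25}$ required in Theorems \ref{t5.9}, \ref{t6.12}, and \ref{t7.13}. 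Applying the long time Strichartz estimate of Theorem \ref{t6.3} together with the uniform control \eqref{f6.17} on $J_j$ produces, via the change of variables $t\mapsto s$,
\[
\sup_{t\in J_j}\|\boldsymbol{\epsilon}(t)\|_{L^2_x l^2}^2 \lesssim \frac{T_j^{-1/3+1/50}}{\eta_1^2}\int_0^{s_j}\|\boldsymbol{\epsilon}(s)\|_{L^2_x l^2}^2\,ds + \frac{T_j^{1/50}}{\eta_1^2}\sup_{t\in J_j}\frac{|\xi(t)|^2}{\lambda(t)^2} + O\!\left(T_j^{-10+1/50}\right),
\]
where $2^{3k_0}\sim T_j$.

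\textbf{Closing the argument.} The integral on the right is bounded uniformly in $j$ by $\int_0^\infty\|\boldsymbol{\epsilon}(s)\|_{L^2_x l^2}^2\,ds = O(\eta_\ast)$ (Theorem \ref{t8.15}), so the first term decays like $T_j^{-1/3+1/50}$. For the Galilean term, applying the virial inequality \eqref{f7.2} on $J_j$ (with $\xi(0)=0$ and an appropriate Galilean gauge at $s_j$) and combining with \eqref{f4.54} and the smallness of $\|\boldsymbol{\epsilon}(0)\|+\|\boldsymbol{\epsilon}(s_j)\|\leq 2\eta_\ast$, a bootstrap argument modeled on the proof of Theorem \ref{t9.16} gives $\sup_{J_j}|\xi|^2/\lambda^2 \lesssim T_j^{-100}$ for $j$ large. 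Hence the full right-hand side tends to zero as $j\to\infty$. Evaluating at $t=0\in J_j$ yields $\|\boldsymbol{\epsilon}(0)\|_{L^2_x l^2}=0$, contradicting $\|\boldsymbol{\epsilon}(0)\|_{L^2_x l^2}=\eta_\ast>0$. Therefore $\boldsymbol{\epsilon}\equiv 0$ on $[0,\infty)$, and the decomposition \eqref{decomp} exhibits $\mathbf{u}$ as a soliton of the form \eqref{soliton}.

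\textbf{Main obstacle.} The principal difficulty is the middle step: realizing the normalizations $\lambda\geq 1/\eta_1$, $|\xi|/\lambda\leq\eta_0$, and $|x|\leq T_j^{1/25}$ simultaneously on the expanding windows $J_j$ while preserving the endpoint smallness $\|\boldsymbol{\epsilon}(s_j)\|_{L^2_x l^2}=2^{-j}\eta_\ast$ that drives the decay. This requires a delicate interplay between Theorem \ref{t9.16}, the scale-invariant parameter estimates \eqref{f4.52}--\eqref{f4.55}, and the pointwise control of $\xi/\lambda$ extracted from \eqref{f7.2} by bootstrap; crucially, the implicit constants in Theorem \ref{t6.12} must remain independent of $j$ for the argument to close uniformly as $T_j\to\infty$.
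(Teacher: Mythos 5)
Your proposal captures the broad shape of the argument --- normalize so that $\|\boldsymbol{\epsilon}(0)\|_{L^2_xl^2}=\eta_\ast$ dominates, apply the long time Strichartz estimate over expanding windows, then squeeze $\|\boldsymbol{\epsilon}(0)\|_{L^2_xl^2}$ to zero --- but there is a genuine gap at the step you yourself flag as the ``main obstacle,'' and it is more serious than a technical nuisance.

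\textbf{The Galilean term does not decay the way you claim.} You assert that a bootstrap ``modeled on the proof of Theorem~\ref{t9.16}'' yields $\sup_{J_j}|\xi|^2/\lambda^2\lesssim T_j^{-100}$. This is not achievable by the parameter estimates available. After fixing the Galilean gauge at $s_j$ so that $\xi(s_j)=0$, the best one gets from \eqref{f4.54} is
\[
\frac{|\xi(s)|}{\lambda(s)}\;\lesssim\;\eta_1\int_s^{s_j}\|\boldsymbol{\epsilon}(\tau)\|_{L^2_xl^2}^2\,\mathrm{d}\tau\;\lesssim\;\eta_1\eta_\ast,
\]
a fixed constant, not a quantity decaying in $T_j$. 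Theorem~\ref{t9.16}'s bootstrap is structurally different: it iterates the virial inequality \eqref{f7.2} to force $\int\|\boldsymbol{\epsilon}\|^2\,\mathrm{d}s\to 0$ and conclude $\boldsymbol{\epsilon}\equiv 0$ on a ray, which is exactly the conclusion you are trying to establish; it does not give pointwise decay of $\xi/\lambda$ at rate $T^{-100}$. Plugging the honest bound $\eta_1^2\eta_\ast^2$ into your display produces a term $T_j^{1/50}\eta_\ast^2/\eta_1^2$ that \emph{diverges}, so the argument as stated does not close.

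\textbf{How the paper actually avoids this.} The paper does not invoke \eqref{f6.17} (and its $\sup|\xi|^2/\lambda^2$ penalty) as a black box. Instead it \emph{re-runs} the energy-increment argument of Theorem~\ref{t4.1}/Theorem~\ref{t6.12} starting from a specially chosen time $s_n$ where two things hold simultaneously: $\|\boldsymbol{\epsilon}(s_n)\|_{L^2_xl^2}$ is very small (this is the content of \eqref{f10.5}, found via the virial identity \eqref{f9.8} and the mean value theorem, not via the simple sequence $s_j$ from \eqref{f8.3}), \emph{and} the Galilean gauge makes $\xi$ vanish there. Because $\xi(s_n)=0$, the energy evaluated at $s_n$ via \eqref{f6.3} contains no $|\xi|^2/\lambda^2$ contribution at all, and the change-of-energy estimates \eqref{f6.13}, \eqref{f6.15} never introduce one. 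Hence $\sup_t E$ is controlled by $2^{2k_0}\|\boldsymbol{\epsilon}(s_n)\|^2 + \tfrac{2^{2k_0}}{T}\int\|\boldsymbol{\epsilon}\|^2\lambda^{-2} + 2^{2k_0}T^{-10}$ \emph{without} a standalone $\sup|\xi|^2/\lambda^2$ term. The smallness of $|\xi|/\lambda$ throughout $J$ is then a \emph{consequence} (via the coercivity \eqref{f6.33}), not a hypothesis. This is why the paper quotes the improved long time Strichartz bound \eqref{f10.18} --- which bootstraps down to the endpoint value $\|\boldsymbol{\epsilon}(s_n)\|$ --- rather than the coarser \eqref{f5.66}.

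\textbf{Two further omissions.} First, the paper splits into two cases according to whether $\tilde\lambda(s)\searrow 0$ or is bounded below; in the first case one cannot simply take the window $J_j=[0,t(s_j)]$ with a single global rescaling, because $\sup_{J_j}\lambda/\inf_{J_j}\lambda$ may exceed $T_j^{1/100}$, violating the hypotheses of Theorem~\ref{t7.13} that you invoke. The intervals $I(k_n)$ and the carefully chosen $s_n\in I(k_n)$ are precisely what keeps $\lambda$ in a fixed dyadic range. Second, the rate $T_j^{-100}$ has no source in the paper's estimates (the error terms decay like $T^{-10}$ or $T^{-8}$); this makes the step read as a placeholder rather than a proof. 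Your instinct that the normalizations must interact delicately is correct, but the resolution is to sidestep the $\sup|\xi|^2/\lambda^2$ term entirely by gauging at the endpoint, rather than to estimate it.
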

\begin{proof}
If $\mathbf{u}$ is not a soliton of the form \eqref{soliton}, then $\sup\limits_{s\in[0,\infty)} \left\|\boldsymbol{\epsilon}(s) \right\|_{L^2_x l^2}>0$. Recalling Remark \ref{remarkz}, without loss of generality, we may assume 
\begin{align}\label{contra}
\left\|\boldsymbol{\epsilon}(0) \right\|_{L^2_x l^2}=\sup_{s\in[0,\infty)} \left\|\boldsymbol{\epsilon}(s) \right\|_{L^2_x l^2}=\eta_{\ast}.
\end{align}
Additionally, by rescaling, we can set $\tilde{\lambda}(s)\leq  1$ for all $s\geq0$.

\textbf{Case 1.} $\tilde{\lambda}(s)\searrow 0$. In this case, for any integer $k \geq 0$, let
\begin{equation}\label{f10.3}
I(k) =  \left\{ s \geq 0 : 2^{-k + 2} \leq \tilde{\lambda}(s) \leq 2^{-k + 3}  \right\}.
\end{equation}
Since $\sup(I)=\infty$, there exists a sequence $k_{n} \nearrow \infty$ such that
\begin{equation*} 
|I(k_{n})| 2^{-2k_{n}} \geq \frac{1}{k_{n}^{2}},
\end{equation*}
and that $|I(k_{n})| \geq |I(k)|$ for all $k \leq k_{n}$.

Next, we claim that for any sufficiently large $n$, there exists $s_{n} \in I(k_{n})$ such that
\begin{equation}\label{f10.5}
\left\| \boldsymbol{\epsilon}(s_{n})  \right\|_{L^2_{x} l^2} \lesssim \frac{1}{|I(k_n)|}.
\end{equation}
Indeed, by Theorem \ref{t8.15},
\begin{equation*}
\int_{I(k_{n})}  \left\| \boldsymbol{\epsilon}(s)  \right\|_{L^2_{x} l^2}^{2}  \, \mathrm{d} s \lesssim \eta_{\ast},
\end{equation*}
then by the virial identity in \eqref{f9.8}, we have
\begin{equation}\label{f10.7}
\int_{a_{n}}^{\frac{3 a_{n} + b_{n}}{4}}  \left\langle\mathbf{w}, \mathbf{Q} + x \cdot \nabla \mathbf{Q} \right\rangle_{L^2_{x} l^2} \, \mathrm{d} s
= O \left(\eta_{\ast} \right) + O(1).
\end{equation}
Therefore, by the integral mean value theorem, there exists $s_{n}^{-} \in  \left[a_{n}, \frac{3 a_{n} + b_{n}}{4} \right]$ such that
\begin{equation}\label{f10.8}
 \left| \left\langle\mathbf{w} \left(s_{n}^{-} \right), \mathbf{Q} + x \cdot \nabla \mathbf{Q} \right\rangle_{L^2_{x} l^2} \right| \lesssim \frac{1}{|I(k_{n})|}.
\end{equation}
Similarly, there exists $s_{n}^{+} \in  \left[\frac{a_{n} + 3 b_{n}}{4}, b_{n} \right]$ such that
\begin{equation}\label{f10.9}
 \left| \left\langle\mathbf{w} \left(s_{n}^{+} \right),\mathbf{Q} + x \cdot \nabla \mathbf{Q} \right\rangle_{L^2_{x} l^2} \right|
  \lesssim \frac{1}{|I(k_{n})|}.
\end{equation}
Splitting the interval $ \left[s_n^-,s_n^+ \right]$ into $z_n$ small intervals of length $T_{\ast}=\frac{1}{\eta_{\ast}}$, we proceed as follows. 
		
\textbf{Scenario I.}
If 
 $z_n \leq J=2^{j_{\ast}}$, then, as in \eqref{f8.5}-\eqref{f8.8}, we can similarly prove that
\begin{equation*}
\frac{\sup\limits_{s \in  \left[s_n^-, s_n^+ \right]} \lambda(s)}{\inf\limits_{s \in  \left[s_n^-, s_n^+ \right]} \lambda(s)}
\lesssim e^{\ln (T_{\ast})^{1/4}} \lesssim e^{\ln \left(T_{\ast}^{1/1000} \right)} \ll T_{\ast}^{1/100}.
\end{equation*}
Therefore, by Theorem \ref{t7.14}, \eqref{f10.8} and \eqref{f10.9} imply
\begin{equation*} 
\int_{s_{n}^{-}}^{s_{n}^{+}}  \left\| \boldsymbol{\epsilon}(s)  \right\|_{L^2_{x} l^2}^{2} \, \mathrm{d} s \lesssim \frac{1}{|I(k_{n})|}.
\end{equation*}

\textbf{Scenario II.}
If  $z_n > J=2^{j_{\ast}}$, then there exists $a_n\in\mathbb{N}$ such that $J^{a_n}<z_n\leq J^{a_{n}+1}$. As in \eqref{f8.17}, we can similarly argue by induction to deduce that
\begin{align*}
	\int_{s_{n}^{-}}^{s_{n}^{+}} \left\| \boldsymbol{\epsilon}(s)  \right\|_{L^2_{x} l^2} \, \mathrm{d} s\lesssim 1
\end{align*}
and
\begin{align*}
	\int_{s_{n}^{-}}^{s_{n}^{+}} \left\| \boldsymbol{\epsilon}(s) \right\|_{L^2_{x} l^2}^{2} \, \mathrm{d} s\lesssim J^{1-a_n}\eta_{\ast} .
\end{align*}
Following the proof of Theorem \ref{t7.14}, we see that Theorem \ref{t7.13} can be applied on the interval $ \left[s_n^-,s_n^+ \right]$.
Thus \eqref{f10.7} and  \eqref{f10.8} imply  
\begin{equation*}
\int_{s_{n}^{-}}^{s_{n}^{+}} \left\| \boldsymbol{\epsilon}(s)  \right\|_{L^2_{x} l^2}^{2}  \, \mathrm{d} s
\lesssim \frac{1}{|I(k_{n})|} + J^{a_n/25} T_{\ast}^{1/25}J^{1-a_n}\eta_{\ast}
 \left(\int_{s_{n}^{-}}^{s_{n}^{+}} \left\| \boldsymbol{\epsilon}(s)  \right\|_{L^2_{x} l^2}^{2}  \, \mathrm{d} s \right),
\end{equation*}
which further implies
\begin{align*}
\int_{s_{n}^{-}}^{s_{n}^{+}}  \left\| \boldsymbol{\epsilon}(s)  \right\|_{L^2_{x} l^2}^{2} \, \mathrm{d} s	\lesssim \frac{1}{|I(k_{n})|}, 
\end{align*}
provided $k_n$ is sufficient large. Thus \eqref{f10.5} holds.

Now, let $m$ be the smallest integer such that
 \begin{equation}\label{f10.12}
 	\frac{2^{2k_{n}}}{k_{n}^{2}} 2^{m} \geq |I(k_{n})|.
 \end{equation}
 Since $|I(k)| \leq |I(k_{n})|$ for all $0 \leq k \leq k_{n}$, \eqref{f10.12} implies that
 \begin{equation}\label{f10.13}
 	|s_{n}| \leq 2^{2k_{n} + m + 1}.
 \end{equation}
 Let $r_{n}$ be the smallest integer satisfying 
 \begin{equation*} 
 	2^{\frac{2k_{n} + m + 1}{3}} 2^{k_{n}}  \leq 2^{r_{n}}.
 \end{equation*}
Let $t_n=s^{-1}(s_n), \lambda_n=\frac{\eta_1}{4}2^{-k_n}$, and set $\mathbf{v}_n(t,x)=\lambda_n\mathbf{u} \left(\lambda_n^2t,\lambda_nx \right)$. 
Then, there exist real-valued parameters $\tilde{\xi}(t), \tilde{x}(t)$, and $\tilde{\gamma}_1(t),\cdots, \tilde{\gamma}_N(t)$ such that
	\begin{align} \label{yyz}
	&e^{i\tilde{\gamma}_{j}(t)}e^{ix\cdot\tilde{\xi}(t)}v_n \left(t,\tilde{\lambda}(t)x+\tilde{x}(t) \right)
=\tilde{r}_j(t)+ Q, \ 
	\tilde{\lambda}(t)=\lambda_n^{-1}{\lambda \left(\lambda_n^2t \right)},\  \forall \, t\in \left[0,\lambda_n^{-2}t_n \right], \  \forall \, j\in\mathbb{Z}_N,  \\\notag
	&\tilde{x}(0)=\tilde{\xi} \left(\lambda_n^{-2}t_n \right)=0,\  \frac{ \left|\tilde{\xi}(t) \right|}{\tilde{\lambda}(t)}
 \lesssim \eta_1\eta_{\ast},\    \left\|\boldsymbol{\tilde{\epsilon}}(t) \right\|_{L^2_x l^2}
 = \left\|\boldsymbol{\epsilon} \left(\lambda_n^2t \right) \right\|_{L^2_x l^2}, \  \frac{1}{\eta_1}\leq \tilde{\lambda}(t)\leq \frac{16}{\eta_1} 2^{k_n},
 \  \forall \,  t\in  \left[0,\lambda_n^{-2}t_n \right],\\
	&\int_{0}^{\lambda_n^{-2}t_n} \frac{1}{\tilde{\lambda}(t)^2} \,\mathrm{d}t =s_n  \ \mbox{and}\quad\boldsymbol{\tilde{\epsilon}}(t) \mbox{ satisfies }\eqref{orthod}. \notag
\end{align}
Similar to \eqref{bootstrap2}, we can prove that $\forall \, i\geq [s_n] $,
\begin{align*}
	\left\|P_{\geq i+3}\mathbf{v}_n \right\|_{U^2_{\Delta} \left([0,\lambda_{n}^{-2}t_n], L_x^2l^2 \right)}
&\lesssim   \left\|\boldsymbol{\tilde{\epsilon}} \left(\lambda_n^{-2}t_n \right) \right\|_{L_x^2 l^2}
+\eta_0^{\delta} \left\|P_{\geq i}\mathbf{v}_n \right\|_{U^2_{\Delta} \left([0,\lambda_{n}^{-2}t_n], L_x^2l^2 \right)}
+O \left(s_n^{-10} \right).
\end{align*}
Using \eqref{f10.5}, \eqref{f10.12}, \eqref{f10.13} and then making an induction on $i$, we obtain 
	\begin{equation}\label{f10.18}
	\left\| P_{\geq r_{n} + \frac{k_{n}}{8} + \frac{m}{4}} \mathbf{v}_n  \right\|_{U^2_{\Delta} \left( \left[0,\lambda_{n}^{-2}t_n \right],L_x^2l^2 \right)}
 \lesssim k_{n}^{2} 2^{-2k_{n}} 2^{-m},\quad \mbox{ if } k_n \mbox{ is  sufficiently large. }
\end{equation}
Furthermore, following the proof of Theorem \ref{t4.1}, \eqref{f10.5}, \eqref{yyz} and  \eqref{f10.18} imply
	\begin{equation}\label{f10.19}
			\sup_{t\in \left[0,\lambda_{n}^{-2}t_n \right]}E \left(P_{\leq r_{n} + \frac{k_{n}}{4} + \frac{m}{8}} \mathbf{v}_n(t) \right)
 \lesssim \left(k_{n}^{2} 2^{-2k_{n}} 2^{-m} 2^{r_{n} + \frac{k_{n}}{4} + \frac{m}{8}} \right)^{2} \ll 2^{-\frac{21}{10}k_n }.
	\end{equation}
Finally, inserting \eqref{yyz} and \eqref{f10.19} into \eqref{f6.33}, we deduce that
		\begin{equation}\label{f10.20}
			\left\|\boldsymbol{\epsilon}(0) \right\|^2_{L^2_x l^2}= \left\|\boldsymbol{\tilde{\epsilon}}_n(0) \right\|^2_{L^2_x l^2}
\leq \sup_{t\in \left[0,\lambda_{n}^{-2}t_n \right]}  \left\|\boldsymbol{\tilde{\epsilon}}_n(t) \right\|^2_{L^2_x l^2}\lesssim_{\eta_1} 2^{-\frac{k_n}{10}}.
		\end{equation}
Letting $k_n\nearrow \infty$, we find that 
$ \left\|\boldsymbol{\epsilon}(0) \right\|_{L^2_x l^2}=0$,  which contradicts \eqref{contra}.

\textbf{Case 2.} There exists $c>0$ such that $\tilde{\lambda}(s)\geq c$.
In this case, Theorem \ref{t9.16} implies that $\lambda(s)>\tilde{c}$ for some constant $\tilde{c}>0$.
By rescaling, we can set
\begin{align}\label{qdz}
\frac{1}{\eta_1}\leq \lambda(s)\leq \frac{1}{\tilde{c}\eta_1},\quad \forall\,  s>0.
\end{align}
Let $s'_n=\eta_{\ast}^{-1}2^{n}$, and let $\{s_j\}$ be the sequence defined in \eqref{f8.3}.
Since $|s_{j+1}-s_j|\leq C 2^{j}T_{\ast}$,
there exist $0<\theta<1$ and a sequence $ \left\{\tilde{s}_n \right\}$ so that
$$ \left\|\boldsymbol{\epsilon} \left(\tilde{s}_n \right) \right\|_{L^2_x l^2}\lesssim \eta_{\ast} 2^{-n},\quad\tilde{s}_n\in  \left[\theta s'_n,s'_n \right]. $$
Setting $\tilde{t}_n=s^{-1} \left(\tilde{s}_n \right)$, we can apply a Galilean transform so that
$\xi \left(\tilde{t}_n \right)=0$. Arguing as in \eqref{f10.18} and \eqref{f10.19}, we have
	\begin{equation*}
	\left\| P_{\geq \tilde{r}_{n} + \frac{n}{8}} \mathbf{u}(t)  \right\|_{U^2_{\Delta} \left( \left[0,\tilde{t}_n \right],L_x^2l^2 \right)}
 \lesssim_{\eta_{\ast}} 2^{-n},\quad \mbox{ if } n \mbox{ sufficiently large. }
\end{equation*}
\begin{equation}\label{f10.23}
	\sup_{t\in \left[0,\tilde{t}_n \right]}E \left(P_{\leq \tilde{r}_{n} + \frac{n}{4}}  \mathbf{u}(t) \right) \lesssim_{\eta_{\ast}} 2^{-\frac{5}{12}n},
\end{equation}
where $\tilde{r}_n= \left[\tilde{s}_n^{1/3} \right]$.
Then as in \eqref{f10.20}, inequalities \eqref{qdz} and \eqref{f10.23} imply  
\begin{equation*} 
	 \left\|\boldsymbol{\epsilon}(0) \right\|^2_{L^2_x l^2}
\leq \sup_{t\in \left[0,\lambda_{n}^{-2}t_n \right]} \left\|\boldsymbol{\epsilon}(t) \right\|^2_{L^2_x l^2}
\lesssim_{\eta_1,\tilde{c}} 2^{-\frac{5}{12}n}.
\end{equation*}
Letting $n\nearrow \infty$, we find that  $ \left\|\boldsymbol{\epsilon}(0) \right\|_{L^2_x l^2}=0$, which contradicts \eqref{contra}.
Thus, $\mathbf{u}$ must be a soliton of the form \eqref{soliton}. \end{proof}
\subsubsection{Part B: finite time blowup}
Next, we address the finite time blowup case. 
Heuristically, we can use the pseudo-conformal transform \eqref{pseudotransform} to reduce the problem to the infinite time blowup case.

	\begin{theorem}\label{t9.18}
		If $\mathbf{u}$ is the solution in Theorem \ref{t2.3} and
		\begin{equation*}
			\sup(I) <\infty,
		\end{equation*}
		then $\mathbf{u}$ is a pseudo-conformal transformation of the soliton solution (of the form \eqref{pseudosoliton}).
	\end{theorem}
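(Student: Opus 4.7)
The strategy is to reduce the finite-time blowup case to the infinite-time case of Theorem \ref{t9.17} via the pseudo-conformal symmetry \eqref{pseudotransform}. By the time-translation symmetry \eqref{sym3}, I may assume $\sup(I) = 0$, so $\mathbf{u}$ is defined on $(t_0, 0)$ for some $t_0 \in [-\infty, 0)$ and blows up forward as $t \to 0^-$. I then define, for $s > \max(0, -1/t_0)$,
$$v_j(s, y) := \frac{1}{s}\,\overline{u_j\bigl(-\tfrac{1}{s},\tfrac{y}{s}\bigr)}\,e^{i|y|^2/(4s)},\quad j \in \mathbb{Z}_N.$$
By the invariance \eqref{pseudotransform} (combined with a time reversal), $\mathbf{v}$ solves \eqref{1.1} with $M(\mathbf{v}) = M(\mathbf{Q})$, and its maximal interval of existence contains a half-line on which $\mathbf{v}$ blows up as $s\to\infty$, so that the infinite-time scenario of Theorem \ref{t9.17} applies.

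The key step is to verify that $\mathbf{v}$ satisfies the closeness hypothesis \eqref{f2.32}. Substituting the modulation decomposition \eqref{decomp} for $\mathbf{u}$ into the definition of $\mathbf{v}$, and setting $\lambda_{\mathbf{v}}(s) := s\lambda(-1/s)$, $x_{\mathbf{v}}(s) := s\, x(-1/s)$, $\xi_{\mathbf{v}}(s) := -\xi(-1/s)/s$, and $\gamma_{j,\mathbf{v}}(s) := -\gamma_j(-1/s)$, one obtains a modulation-type decomposition of $\mathbf{v}(s)$ with remainder of the same $L_x^2 l^2$ size as $\boldsymbol{\epsilon}(-1/s)$, except for an extra quadratic chirp $e^{i|y|^2/(4s)}$. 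In the rescaled variable $z = y/\lambda_{\mathbf{v}}(s)$, this chirp becomes $e^{i s\lambda(-1/s)^2 |z|^2/4}$, which is negligible in $L_z^2(Q^2\,dz)$ provided $s\,\lambda(-1/s)^2 \to 0$ as $s \to \infty$, equivalently $\lambda(t) = o(|t|^{1/2})$ as $t \to 0^-$. This pseudo-conformal-type decay rate will be extracted from the almost monotonicity of $\lambda$ (Theorem \ref{t9.16}) together with the finite-blowup constraint $\int_{t}^{0}\lambda(\tau)^{-2}\,\mathrm{d}\tau = \infty$ and the integrability $\int_{0}^{\infty}\|\boldsymbol{\epsilon}(s)\|_{L_x^2 l^2}^{2}\,\mathrm{d}s < \infty$ (Theorem \ref{t8.15}); a rescaling argument modeled on the proof of Theorem \ref{t9.16} rules out slower rates and yields $\lambda(t) \lesssim |t|$, which is amply sufficient.

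Once the closeness condition is verified for $\mathbf{v}$ on a suitable half-line $[s_1,\infty)$, Theorem \ref{t9.17} (combined with the upper semicontinuity and time-translation reduction of Section \ref{reduction}) forces $\mathbf{v}$ to be a soliton of the form \eqref{soliton}. Inverting the pseudo-conformal transformation---which is involutive modulo complex conjugation and time reversal---then recovers $\mathbf{u}$ in the explicit pseudo-conformal form \eqref{pseudosoliton}, completing the proof.

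The main obstacle is the decay estimate $s\lambda(-1/s)^2 \to 0$. Without this rate, the chirp $e^{i|y|^2/(4s)}$ would obstruct the closeness of $\mathbf{v}$ to the soliton orbit under the base symmetries \eqref{sym1}--\eqref{sym5}, and Theorem \ref{t9.17} could not be applied. The estimate itself is not a direct consequence of the modulation analysis of Sections \ref{se3v314}--\ref{Sec:viria}, but follows from a contradiction argument: if the blowup rate were slower, the almost monotonicity of $\lambda$ and the virial inequality \eqref{qn} would force $\int_{a}^{0}\|\boldsymbol{\epsilon}(t)\|_{L_x^2 l^2}^{2}\lambda(t)^{-2}\,\mathrm{d}t$ to diverge in a manner incompatible with the $L_s^p$ integrability of $\|\boldsymbol{\epsilon}(s)\|_{L_x^2 l^2}$ obtained after Theorem \ref{t8.15}.
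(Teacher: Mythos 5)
Your proposal follows essentially the same route as the paper: apply the pseudo-conformal transformation to convert the finite-time blowup solution $\mathbf{u}$ into an infinite-time blowup solution $\mathbf{v}$, control the extra quadratic chirp via a decay rate of $\lambda(t)$ near the blowup time, verify the closeness hypothesis for $\mathbf{v}$, and invoke Theorem~\ref{t9.17}. The key technical condition you identify --- $\lambda(t) = o(|t|^{1/2})$, equivalently $s\,\lambda(-1/s)^2 \to 0$ --- is exactly what the paper uses, where it is obtained by combining \eqref{f4.45} with $\|\boldsymbol{\epsilon}(s)\|_{L_x^2 l^2}\to 0$ (from Theorem~\ref{t8.15}) to show that the dyadic intervals $I(k)=\{s:\lambda(s)\sim 2^{-k}\}$ have $s$-length $|I(k)|\geq c_k\nearrow\infty$, then translating through $s(t)=\int_0^t\lambda(\tau)^{-2}\,\mathrm{d}\tau$ to get $|t|\gtrsim c_k\lambda(t)^2$. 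Your claim $\lambda(t)\lesssim|t|$ is stronger than what the paper establishes and does not follow from the outlined rescaling argument; but as you note, the weaker rate suffices, so this overreach is harmless. The only substantive difference is that the paper does not actually prove the pointwise rate but only the $o(|t|^{1/2})$ bound, and it verifies the closeness of $\mathbf{v}$ to the soliton orbit by direct computation on the $Q$-part plus the $L_x^2$-isometry of the pseudo-conformal transform applied to $\boldsymbol{\epsilon}$, which is a slightly cleaner bookkeeping than re-deriving a modulation decomposition for $\mathbf{v}$, but mathematically equivalent to your substitution.
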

    \begin{remark}
    In \cite{Su}, Y. Su gave the classification of the minimal-mass finite time blowup solutions for the nonlinear coupled Schr\"odinger system in $H^{1}_{x} l^2$ by using the arguments of \cite{HK,M}. His proof heavily relies on the assumption that $\mathbf{u}\in H_x^1l^2$ and therefore cannot be used to prove Thereom \ref{t9.18}.

\end{remark}
	\begin{proof}
	Without loss of generality, assume $-1\in I$, $\sup(I) = 0$, and
		\begin{align*}
			\sup_{\tilde{t}\in I} \left\|\mathbf{u} \left(\tilde{t} \right) \right\|_{L^2_x l^2}\leq \eta_{\ast}.
		\end{align*}
		Applying the pseudo-conformal transformation to $\mathbf{u}(t)$, we have 
        $$\mathbf{v}(t, x )=\frac{1}{t} \overline{\mathbf{u} \left(\frac{1}{t}, \frac{x}{t} \right)} e^{i |x|^{2}/4t}$$
		is an infinite time backward blowup solution of \eqref{1.1} on the interval $(-\infty,-1]$. By \eqref{decomp}, it admits the  decomposition:
\begin{equation*} 
v_j(t,x)
=\frac{1}{t} \frac{e^{i \gamma_j \left(1/t \right)} e^{i \frac{x}{t}
\cdot \frac{\xi \left(\frac{1}{t} \right)}{\lambda \left(\frac{1}{t} \right)}}}{\lambda \left(1/t \right)} Q \left(\frac{x - t x \left(\frac{1}{t} \right)}{t \lambda \left(1/t \right)} \right)
 e^{i |x|^{2}/4t}  + \frac{1}{t} \frac{e^{i \gamma_j \left(1/t \right)} e^{i \frac{x}{t} \cdot \frac{\xi \left(\frac{1}{t} \right)}{\lambda \left(\frac{1}{t} \right)}}}{\lambda \left(1/t \right)}
 \overline{ \epsilon_j \left(\frac{1}{t}, \frac{x - t x \left(\frac{1}{t} \right)}{t \lambda \left(1/t \right)} \right)} e^{i |x|^{2}/4t},\ 
 \forall \,  j\in\mathbb{Z}_N.
\end{equation*}
Our goal is to show that $\mathbf{v}(t)$ satisfies the conditions in Theorem \ref{t9.17} on some interval $(-\infty, t_0]$.
		Since the $L_x^{2}$ norm is preserved by the pseudo-conformal transformation, using \eqref{f8.32}, we have 
\begin{align*}
\lim_{t \searrow -\infty} \bigg\| \frac{1}{t} \frac{e^{i \gamma \left(1/t \right)} e^{i \frac{x}{t}
\cdot \frac{\xi \left(\frac{1}{t} \right)}{\lambda \left(\frac{1}{t} \right)}}}{\lambda \left(1/t \right)} \overline{ \epsilon_j \left(\frac{1}{t}, \frac{x - t x \left(\frac{1}{t} \right)}{t \lambda \left(1/t \right)} \right)} e^{i |x|^{2}/4t} \bigg\|_{L_x^{2}} = 0, 
\intertext{and} 
 \sup_{-\infty < t < -1}
\bigg\| \frac{1}{t} \frac{e^{i \gamma \left(1/t \right)} e^{i \frac{x}{t} \cdot \frac{\xi \left(\frac{1}{t} \right)}{\lambda \left(\frac{1}{t} \right)}}}{\lambda \left(1/t \right)} \overline{ \epsilon_j \left(\frac{1}{t}, \frac{x - t x \left(\frac{1}{t} \right)}{t \lambda \left(1/t \right)} \right)} e^{i|x|^{2}/4t}  \bigg\|_{L_x^{2}} \leq \eta_{\ast}.
\end{align*}
%As computed in \cite{D2,D1},
%		\begin{equation*}
%			\frac{|x|^{2}}{4t} = \frac{ \left|x - t x \left(\frac{1}{t} \right) \right|^{2}}{4t}
%+ \frac{x \cdot x \left(\frac{1}{t} \right)}{2} - \frac{t}{4}  \left|x \left(\frac{1}{t} \right) \right|^{2},
%		\end{equation*}
%		then for
For any $j \in\mathbb{Z}_N$,
		\begin{equation*}
			\frac{1}{t} \frac{e^{i \gamma_j \left(1/t \right)} e^{i \frac{x}{t}
\cdot \frac{\xi \left(\frac{1}{t} \right)}{\lambda \left(\frac{1}{t} \right)}} e^{i x \cdot \frac{x \left(\frac{1}{t} \right)}{2}}
 e^{i \frac{t}{4}  \left|x \left(\frac{1}{t} \right) \right|^{2}}}{\lambda \left(1/t \right)}
 Q \left(\frac{x - t x \left(\frac{1}{t} \right)}{t \lambda \left( 1/t \right)} \right)
		\end{equation*}
		is of the form 
        $\frac{e^{i \tilde{\gamma}_j(t)} e^{ix \cdot \frac{\tilde{\xi}(t)}{\tilde{\lambda}(t)}}}{\tilde{\lambda}(t)}
 Q \left(\frac{x - \tilde{x}(t)}{\tilde{\lambda}(t)} \right)$. It remains to estimate
		\begin{align*}
		& 	\left(\sum_{j\in\mathbb{Z}_N} \left\| \frac{1}{t} \frac{e^{i \gamma_j \left(1/t \right)} e^{i\frac{x}{t}
\cdot \frac{\xi(t)}{\lambda(t)}}}{\lambda \left(1/t \right)} Q \left(\frac{x - t x \left(\frac{1}{t} \right)}{t \lambda \left(1/t \right)} \right)
 \left(e^{i  \left|x - t x \left(\frac{1}{t} \right) \right|^{2}/4t} - 1 \right)  \right\|^2_{L_x^{2}}\right)^{1/2}
\\
  =&N \left\| \frac{1}{t\lambda \left(1/t \right)} Q \left(\frac{x - t x \left(\frac{1}{t} \right)}{t \lambda \left(1/t \right)} \right)
 \left(e^{i  \left|x - t x \left( \frac{1}{t} \right) \right|^{2}/4t} - 1 \right) \right\|_{L_x^{2}} .
\end{align*}
Rescaling so that $\lambda(s)\leq 1, \forall \, s\geq0$, and similarly defining  $I(k)$ as in \eqref{f10.3}, we see that  $\lambda(s) \sim 2^{-k}$ for all $s \in I(k)$. Furthermore, by  \eqref{f4.45},
$ \left\| \boldsymbol{\epsilon}(t)  \right\|_{L_x^{2}l^2} \rightarrow 0$ as $t \nearrow 0$ implies that there exists a sequence $c_{k} \nearrow \infty$ such that
		\begin{equation*}
			|I(k)| \geq c_{k}, \qquad \text{for all} \qquad k \geq 0.
		\end{equation*}
By \eqref{f4.23}, there exists $r(t) \searrow 0$ as $t \nearrow 0$ such that
		\begin{equation*}
			\lambda(t) \leq t^{1/2} r(t),  \mbox{ and thus }  \lambda \left( \frac1t \right) \leq t^{-1/2} r \left( \frac1t \right).
		\end{equation*}
		Therefore, since $Q$ is rapidly decreasing,
		\begin{equation*}
			\lim_{t \searrow -\infty}  \left\| \frac{1}{t \lambda \left( \frac1t \right)} 
Q \left(\frac{x - t x \left(\frac{1}{t} \right)}{t \lambda \left( \frac1t \right)} \right) 
\frac{ \left|x - t x \left(\frac{1}{t} \right) \right|^{2}}{4t}  \right\|_{L_x^{2}} = 0,
		\end{equation*}
which further implies
		\begin{equation*}
			\lim_{t \searrow -\infty} \left\| \frac{1}{t\lambda \left( \frac1t \right)}
 Q \left(\frac{x - t x \left(\frac{1}{t} \right)}{t \lambda \left( \frac1t \right)} \right)
 \left(e^{ \frac{i  \left|x - t x \left(\frac{1}{t} \right) \right|^{2}}{4t}} - 1 \right) \right\|_{L_x^{2}} = 0.
		\end{equation*}
		Thus, $\mathbf{v}$ is a solution that blows up backward  in infinite time, and $\mathbf{v}$ satisfies the conditions of Theorem \ref{t2.3} on $(-\infty, t_{0}]$ for some $t_{0}\ll -1$. 
        By time reversal symmetry and Theorem \ref{t9.17}, $\mathbf{v}$ must be a soliton, which implies that $\mathbf{u}$ is the pseudo-conformal transformation of a soliton (of the form \eqref{pseudosoliton}).
	\end{proof}
Now, Theorems \ref{t9.17} and \ref{t9.18} imply Theorem \ref{t2.3}, and by the reduction in Section \ref{reduction},   Theorem \ref{main1} holds.

%%%%%%%%%%%%%%%%%%%%%%%%%%%%%%%%%%%%%%%%%%%%%%%%%%%%%%%%%%%%%%%%%%%

              %%%%%%%%%%%%%%%%%%%%%%%%%%%%%%%%%%%%%%%%%%%%%%%%%%

%%%%%%%%%%%%%%%%%%%%%%%%%%%%%%%%%%%%%%%%%%%%%%%%%%%%%%%%%%%%%%%%%%%

\section{Proof of Theorem \ref{th1.2}}\label{Sec:Thm1.2}
In this section, we prove Theorem \ref{th1.2} by contradiction. Using the same reduction as in \cite{CGYZ,CGZ,CGHY}, suppose that Theorem \ref{th1.2} fails, then we derive the existence of an almost periodic solution as follows:
\begin{theorem}[Existence of an almost periodic solution]\label{znb}
Suppose Theorem \ref{th1.2} fails. Then there exists a solution
 $\mathbf{v}\in C_t^0 L_x^2 h^1(I \times \mathbb{R}^2 \times \mathbb{Z}) \cap L_{t,x}^4 l^2 (I \times \mathbb{R}^2 \times \mathbb{Z})$
 to \eqref{1.1} with 
 $$ \| \mathbf{v}\|^2_{L_x^2 l^2}<\frac{M}{2M-1}\|Q_0\|^2_{L_x^2} \text{ and } 
 \| \mathbf{v}\|^2_{L_x^2 h^1}\leq C(M)\cdot\frac{2M-1}{2M}\| \mathbf{v}\|^2_{L_x^2 l^2},$$ 
 which is almost periodic in the sense that there exist $(x(t), \xi(t), N(t)) \in \mathbb{R}^2 \times \mathbb{R}^2  \times \mathbb{R}^+$ such that for any $\eta > 0$, there exists $C(\eta) > 0$ satisfying 
\begin{align}\label{eq2.10v20}
\int_{|x-x(t)|\ge \frac{C(\eta)}{N(t)}} \left\|\mathbf{v}(t,x)\right\|_{l^2}^2 \,\mathrm{d}x + \int_{|\xi- \xi(t)|\ge C(\eta) N(t)} \left\|\hat{\mathbf{v}}(t,\xi)\right\|_{l^2}^2 \,\mathrm{d}\xi < \eta
 \end{align}
for any $t\in I$. 
Here $I$ is the maximal lifespan interval. Additionally, we can take $N(0) = 1$, $x(0) = \xi(0) = 0$, $N(t) \le 1$ on $I$, and
\begin{align*}
|N'(t) | +  |\xi'(t)| \lesssim N(t)^3.
\end{align*}
\end{theorem}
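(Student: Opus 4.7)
The plan is to run the standard concentration-compactness/rigidity argument of Kenig--Merle, adapted to the infinitely coupled system as in \cite{CGHY}, with the key modification that we must simultaneously track the \emph{two-parameter} constraint
$$
\|\mathbf{u}_0\|^2_{L_x^2 l^2} < \tfrac{M}{2M-1}\|Q_0\|^2_{L_x^2}, \qquad \|\mathbf{u}_0\|^2_{L_x^2 \dot h^1} \leq C(M)\cdot\tfrac{2M-1}{2M}\|\mathbf{u}_0\|^2_{L_x^2 l^2}
$$
all the way through the argument. I would first define the critical threshold
$$
L(\mu,\nu) := \sup\left\{ \|\mathbf{u}\|_{L_{t,x}^4 l^2(\mathbb{R}\times\mathbb{R}^2\times\mathbb{Z})} : \mathbf{u} \text{ solves } \eqref{1.1},\ \|\mathbf{u}_0\|^2_{L_x^2 l^2}\leq \mu,\ \|\mathbf{u}_0\|^2_{L_x^2 \dot h^1}\leq \nu\right\},
$$
over the admissible region. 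By the small data theory (Lemma \ref{th2.1xcv33}-type), $L$ is finite near the origin. Assuming Theorem \ref{th1.2} fails, there is a critical point $(\mu_c,\nu_c)$ on the boundary of the set $\{L<\infty\}$ inside the admissible region, and a sequence $\mathbf{u}_n$ of solutions with initial data $\mathbf{u}_{0,n}$ approaching $(\mu_c,\nu_c)$ and $\|\mathbf{u}_n\|_{L_{t,x}^4 l^2} \to \infty$.

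Next, I would apply the linear profile decomposition in $L_x^2 h^1(\mathbb{R}^2\times\mathbb{Z})$ developed in \cite{CGHY,CGYZ} to $\{\mathbf{u}_{0,n}\}$. Because the discrete frequency variable $j$ is untouched by all the symmetries of \eqref{1.1} (scaling, spatial translation, Galilean boost and phase act only on $x$), both $\|\cdot\|_{L_x^2 l^2}^2$ and $\|\cdot\|_{L_x^2\dot h^1}^2$ enjoy Pythagorean-type asymptotic decoupling across profiles. Combined with the nonlinear stability/perturbation lemma for \eqref{1.1} and the inductive hypothesis that $L$ is finite strictly below the critical surface, only a single profile can survive in the decomposition; all other profiles must have vanishing scattering size, and the remainder goes to zero in $L_{t,x}^4 l^2$. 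This produces a minimal blowup solution $\mathbf{v}$ on its maximal lifespan $I$. A second application of the profile decomposition to any sequence $\mathbf{v}(t_n)$ (arguing by contradiction with minimality, exactly as in \cite{CGHY}) shows that $\{\mathbf{v}(t)\}_{t\in I}$ is pre-compact in $L_x^2 h^1$ modulo the symmetry group, and Arzela--Ascoli delivers parameters $(x(t),\xi(t),N(t))$ satisfying \eqref{eq2.10v20}.

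Finally, I would normalize by rescaling, translating, and boosting so that $N(0)=1$, $x(0)=\xi(0)=0$; the requirement $N(t)\le 1$ can be arranged by choosing the representative of $N(t)$ to be $\sup_{\tau\in(\inf I,t]} N(\tau)$, after adjusting the compactness modulus $C(\eta)$ accordingly, and then normalizing at $t=0$. The derivative bounds $|N'(t)|+|\xi'(t)|\lesssim N(t)^3$ follow from a standard local-constancy argument: if either parameter varied faster than this scale on an interval of length $\sim N(t)^{-2}$, then Duhamel's formula with Lemma \ref{th2.1v31} applied to $\mathbf{v}$ and its symmetry-translates would force a macroscopic nonlinear contribution that, combined with \eqref{eq2.10v20}, contradicts the almost periodicity at time $t$.

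The main obstacle is the two-parameter critical threshold: the standard Kenig--Merle machinery is built around a one-dimensional minimization, whereas here the boundary $\partial\{L<\infty\}$ is a curve in the admissible region of the $(\mu,\nu)$-plane. The resolution is to notice that both constraints decouple additively along profiles (using that the symmetries act trivially on the $j$-variable), so that if two profiles were to survive, the corresponding parameters $(\mu_j,\nu_j)$ would each satisfy $\mu_j<\mu_c$ \emph{and} $\nu_j<\nu_c$ strictly, putting each profile in the region where $L$ is finite by induction, which contradicts $\|\mathbf{u}_n\|_{L_{t,x}^4 l^2}\to\infty$ via stability. Verifying that the inductive hypothesis is correctly formulated with respect to this partial order, and that the critical pair $(\mu_c,\nu_c)$ lies in the open admissible region (so that the profile decomposition and stability lemma apply), are the places where genuine care is needed compared with \cite{CGHY}.
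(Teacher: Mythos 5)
The paper does not actually supply a proof of Theorem~\ref{znb}: it is stated as a one-line reduction by citation to \cite{CGYZ,CGZ,CGHY}. So your proposal is attempting to construct the missing argument from scratch, and the overall strategy---Kenig--Merle concentration-compactness, the $L_x^2 h^1$ profile decomposition, asymptotic decoupling, and nonlinear stability---is indeed the right framework, the same circle of ideas the cited papers use. However, the crucial step where you exclude multi-profile splittings has a gap, and it sits at exactly the point you flag as ``where genuine care is needed'' but do not actually resolve.

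The Pythagorean decoupling of $\|\cdot\|^2_{L_x^2 l^2}$ and $\|\cdot\|^2_{L_x^2 \dot h^1}$ across profiles gives $\sum_j \mu_j \le \mu_c$ and $\sum_j \nu_j \le \nu_c$. If two profiles carry positive $l^2$-mass, each has $\mu_j < \mu_c$ \emph{strictly}, but nothing forces $\nu_j < \nu_c$ strictly: the entire $\dot h^1$-content can sit in a single profile ($\nu_{j_0}=\nu_c$) while the others live entirely in the $k=0$ component ($\nu_j=0$). For that profile the componentwise-strict inequality on which your induction rests fails, and downward-closedness of $\{L<\infty\}$ together with $(\mu_c,\nu_c)\in\partial\{L<\infty\}$ no longer places $(\mu_{j_0},\nu_c)$ in the finite region---the boundary could be vertical there. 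More substantively, a profile can have a far larger ratio $\nu_j/\mu_j$ than the data it was extracted from, since weak convergence can shed $l^2$-mass to a second bubble while retaining all the $\dot h^1$-mass; so the profiles in general leave the admissible cone, and a scattering hypothesis phrased on the cone (or on any $(\mu,\nu)$-region you actually control) does not apply to them. A complete argument needs some extra input here. For example: since $\mu_c < \tfrac{M}{2M-1}\|Q_0\|_{L_x^2}^2 < \|Q_0\|^2_{L_x^2}$, at most one profile can have mass $\ge \tfrac12\|Q_0\|^2_{L_x^2}$, so all the ``small'' profiles scatter unconditionally by the threshold result of \cite{CGHY}; and the single surviving ``large'' profile automatically inherits the ratio constraint precisely because it captures \emph{all} the $l^2$-mass in the limit, whence $\nu_1 \le \nu_c \le C(M)\tfrac{2M-1}{2M}\mu_c = C(M)\tfrac{2M-1}{2M}\mu_1$. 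But that last equality requires first ruling out the large profile carrying mass strictly below $\mu_c$, which is exactly where the induction breaks. As written, the partial-order argument does not close, and supplying the missing step is the real content of the reduction.
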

To establish Theorem \ref{th1.2}, it suffices to exclude the existence of $\mathbf{v}$ in Theorem \ref{znb}. For this purpose, we will first prove a variational lemma: 
\begin{lemma} \label{GNweak}
For any positive integer $M$, we have
\begin{align*}
    \sup_{ \substack{\mathbf{u}\in H_x^1l^2 \cap L_x^2h^1, \\
    \|\mathbf{u}\|^2_{L_x^2\dot{h}^1}\leq C(M) \|\mathbf{u}\|^2_{L_x^2l^2}
    }} J(\mathbf{u})=J_M= \frac{2(2M-1)}{M}\|Q_0\|^{-2}_{L_x^2}.
\end{align*}

\end{lemma}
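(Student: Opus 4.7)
The proof will proceed in two matching halves: exhibiting a test function attaining $J_M$, and ruling out that any admissible $\mathbf{u}$ exceeds it. The piecewise definition of $C(M)$ is designed precisely so that $C(M)$ equals the minimum of $\sum_{j\in I}j^{2}$ taken over all $M$-element blocks $I$ of consecutive integers in $\mathbb{Z}$ (in both odd and even parity). For the lower bound, I would pick such an extremal block $I_M$ and set $u_j^{\star} = \sqrt{\tfrac{1}{2M-1}}\,Q_0$ for $j\in I_M$ and $u_j^{\star}=0$ otherwise. A direct computation gives
\begin{equation*}
\frac{\|\mathbf{u}^{\star}\|^{2}_{L_x^2\dot h^1}}{\|\mathbf{u}^{\star}\|^{2}_{L_x^2 l^2}} \;=\; \frac{\sum_{j\in I_M}j^{2}}{|I_M|} \;=\; \frac{C(M)}{M} \;\le\; C(M),
\end{equation*}
so admissibility holds. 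Since the resonance set $\mathcal{R}(j)$ is translation-invariant in the $j$-index, $J(\mathbf{u}^{\star})$ coincides with the Weinstein functional of the finite $N=M$ system at its ground state, which by Lemma \ref{le1.1v19} equals exactly $J_M$.

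For the matching upper bound, I plan to use a variational compactness argument. The compact embedding $H_{rad}^{1}l^2\cap L_x^2 h^1 \hookrightarrow L_x^4 l^2$ recorded in the sketch of the proof of Theorem \ref{th1.2}, combined with Schwarz symmetrization in $x$ (which preserves both sides of the constraint and cannot decrease $J$) and a standard scaling normalization of $\|\mathbf{u}\|_{L_x^2 l^2}$ and $\|\nabla\mathbf{u}\|_{L_x^2 l^2}$, shows that any maximizing sequence for $J$ over the admissible set admits a subsequence converging to a maximizer $\mathbf{Q}^{\star}$. Such a maximizer satisfies the Euler--Lagrange system
\begin{equation*}
-\Delta Q^{\star}_j + \alpha\, Q^{\star}_j + \beta\, j^{2}\, Q^{\star}_j \;=\; \kappa\, F_j(\mathbf{Q}^{\star}), \qquad j\in\mathbb{Z},
\end{equation*}
with Lagrange multipliers $\alpha>0$, $\beta\ge 0$, $\kappa>0$. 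The decisive point is to show that the $j$-support of $\mathbf{Q}^{\star}$ is contained in some block of at most $M$ consecutive integers; once this support reduction is in hand, translation invariance of $\mathcal{R}$ identifies the problem with the finite $N=M$ system, and Lemma \ref{le1.1v19} gives $J(\mathbf{Q}^{\star}) \le J_M$.

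The hardest step will be this structural support reduction. My plan is to combine three ingredients: a rearrangement step using the strict convexity of $j\mapsto j^2$ together with the nonnegativity of the nonlinear couplings (forbidding a disconnected or badly clustered support at the optimum); an exchange lemma exploiting the arithmetic extremality of $C(M)$ to rule out any mass placed outside an $M$-block, since moving such mass into the block strictly improves the Weinstein numerator while strictly decreasing the $\dot h^{1}$-cost; and an examination of the Lagrange multiplier $\beta$, since if $\beta=0$ the maximizer would also maximize the unconstrained Weinstein functional on $L_x^2 l^2\bigl(\mathbb{R}^2\times\mathbb{Z}\bigr)$, whose sharp value from \eqref{GN-infity} is $4\|Q_0\|_{L_x^2}^{-2} > J_M$, forcing a contradiction unless the support is already an $M$-block. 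Together these will force $\mathbf{Q}^{\star}$ onto $M$ consecutive integers, reducing the sharp constant to $J_M$ via Lemma \ref{le1.1v19}.
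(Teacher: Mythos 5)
Your overall template matches the paper's: a block test vector for the lower bound, Schwarz rearrangement plus the compact embedding $H^1_{rad}l^2\cap L_x^2h^1\hookrightarrow L_x^4l^2$ to extract a radial maximizer, the Euler--Lagrange system, Wei--Yao's rigidity to force all non-zero components to be identical, and then a support reduction to an $M$-block. Unfortunately the two arguments also share the same gap: the constraint $\|\mathbf{u}\|^2_{L_x^2\dot h^1}\leq C(M)\|\mathbf{u}\|^2_{L_x^2l^2}$ caps only the normalized second moment of the mass distribution in $j$, and this does \emph{not} confine the support to $M$ consecutive integers. Concretely, take $M=2$, so $C(2)=1$, and set $u_j=Q_0$ for $j\in\{-1,0,1\}$, $u_j=0$ otherwise. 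Then $\|\mathbf{u}\|^2_{L_x^2\dot h^1}=2\|Q_0\|^2_{L_x^2}\leq 3\|Q_0\|^2_{L_x^2}=\|\mathbf{u}\|^2_{L_x^2l^2}$, so $\mathbf{u}$ is admissible; yet, using $\|Q_0\|^4_{L_x^4}=2\|Q_0\|^2_{L_x^2}=2\|\nabla Q_0\|^2_{L_x^2}$, one computes $J(\mathbf{u})=\tfrac{2\cdot5}{3}\|Q_0\|^{-2}_{L_x^2}=\tfrac{10}{3}\|Q_0\|^{-2}_{L_x^2}>3\|Q_0\|^{-2}_{L_x^2}=J_2$. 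More generally a centered $k$-block with identical components is admissible whenever $C(k)\leq kC(M)$, which permits $k>M$ (e.g. $k=3$ for $M=2$, $k=5$ for $M=3$), and $J_k=\tfrac{2(2k-1)}{k}\|Q_0\|^{-2}_{L_x^2}$ is strictly increasing in $k$, so the supremum on the left strictly exceeds $J_M$.

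This also shows the exchange step you propose is reversed. Writing $u_j=c_jQ_0$, the Weinstein numerator is proportional to $2\|\mathbf{c}\|^4_{l^2}-\|\mathbf{c}\|^4_{l^4}$; moving mass from a far index into the block lowers the $\dot h^1$-cost, but it concentrates $\mathbf{c}$ on fewer indices, which \emph{increases} $\|\mathbf{c}\|^4_{l^4}$ at fixed $\|\mathbf{c}\|^2_{l^2}$ and therefore \emph{decreases} the numerator, contrary to your claim that it ``strictly improves'' it. The optimizer wants to spread over as many components as the second-moment constraint allows, not fewer. (The paper's own proof simply asserts ``Given $\|\mathbf{v}\|_{L_x^2l^2}=1$ and $\|\mathbf{v}\|_{L_x^2\dot h^1}\leq C(M)$, $\mathbf{v}$ has at most $M$ non-zero components,'' which the example above contradicts.) Your Lagrange-multiplier discussion is more careful than the paper's on the $\beta=0$ alternative, but it does not repair this: to obtain the stated identity one would need a genuinely stronger hypothesis on the admissible set (for instance a strict bound on the normalized second moment below $C(M+1)/(M+1)$, so the centered $(M+1)$-block is excluded), or else the right-hand side has to be replaced by $J_{k^\ast(M)}$ with $k^\ast(M)=\max\{k: C(k)\leq kC(M)\}$.
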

\begin{proof}
Let $\{\mathbf{u}_n\}$ be a sequence in $H_x^1l^2 \cap L_x^2h^1$ so that $\|\mathbf{u}_n\|^2_{L_x^2h^1}\leq C(M) \|\mathbf{u}_n\|^2_{L_x^2l^2}$ and  $J(\mathbf{u}_n)\nearrow J_M$ as $n \to \infty. $
Using Schwartz rearrangement, we can also assume that each component $u_{n,j}$ is non-negative, radial and decreasing. Recalling that $J(\mathbf{u})=J( \mathbf{u}^{\lambda,\mu})$, where $\mathbf{u}^{\lambda,\mu}(x)=\mu \mathbf{u}(\lambda x)$ for any $\mu, \lambda>0$, so we we can also set  
\begin{align*}
\|\mathbf{u}_n\|^2_{L_x^2l^2}=\|\nabla \mathbf{u}_n\|^2_{L_x^2l^2}=1, \ \|\mathbf{u}_n\|^2_{L_x^2h^1}\leq C(M) , \ \sum_{j\in\Z_N}\int_{\R^2}  F_j(\mathbf{u}_n)\cdot \bar{u}_{n,j} \, \mathrm{d} x =J(\mathbf{u}_{n})\nearrow J_M\ \mbox{ as } n\to\infty.
\end{align*}
Since the embedding 
$ H_{rad}^1l^2 \cap L_x^2h^1 \hookrightarrow L_x^4l^2 $ is compact, we see that there exists a radial vector-valued function $\mathbf{v}\in H_x^1l^2 \cap L_x^2h^1$ such that
\begin{align*}
\mathbf{u}_n  \rightharpoonup 
\mathbf{v} \mbox{ in } H_x^1l^2 \mbox{ and } L_x^2h^1, \quad\mathbf{u}_n\to \mathbf{v}  \mbox{  in } L_x^4l^2, \text{ as } n \to \infty .
\end{align*}
Using the basic property of weak convergence, we have 
\begin{align*}
\|\mathbf{v}\|^2_{L_x^2l^2}\leq 1,\quad\|\nabla \mathbf{v}\|^2_{L_x^2l^2}\leq 1, \quad \|\mathbf{v}\|^2_{L_x^2\dot{h}^1}\leq C(M) ,\quad \sum_{j\in\Z_N}\int_{\R^2}  F_j(\mathbf{v})\cdot \bar{v}_j \,  \mathrm{d} x=J_M.
\end{align*}
Therefore,
\begin{align*}
J_M\geq J(\mathbf{v})=\frac{\sum\limits_{j\in\Z}\int_{\R^2}  F_j(\mathbf{v})\cdot \bar{v}_j \, \mathrm{d} x}{\|\mathbf{v}\|^2_{L^2_x l^2}\|\nabla\mathbf{v}\|^2_{L^2_x l^2}}\geq\frac{J_M}{\|\mathbf{v}\|^2_{L^2_x l^2}\|\nabla\mathbf{v}\|^2_{L^2_x l^2}}\geq J_M,
\end{align*}
which immediately implies $J(\mathbf{v})=J_M$, $\|\mathbf{v}\|^2_{L_x^2l^2}=\|\nabla \mathbf{v}\|^2_{L_x^2l^2}=1$ and therefore,  
\begin{align*}
  \mathbf{u}_n\to \mathbf{v}  \mbox{ in  } H_x^1l^2, \text{ as } n \to \infty.
\end{align*}
Now that $\mathbf{v}$ is the maximizer of $J$, we have
\begin{align}\label{mini2}
		\frac{ \mathrm{d} }{ \mathrm{d}  {\varepsilon}} \bigg|_{\epsilon=0}J \left(\mathbf{v}+\varepsilon \boldsymbol{\phi} \right)= 0, \forall \, \boldsymbol{\phi}\in H_x^1l^2\cap L_x^2h^1.
	\end{align}
Therefore, for any $j\in\Z$, $v_j$ weakly solves  
\begin{align*}
  \Delta v_j-v_j=-F_j(\mathbf{v}).
\end{align*}
Using the standard argument in elliptic PDEs, we see that $v_j$ belongs to $C^1(\R^2)$. Finally, we can directly use the argument in \cite{WY} to prove that all non-zero components of $\mathbf{v}$ are identical. Given  $\|\mathbf{v}\|_{L_x^2l^2}=1$ and $\|\mathbf{v}\|_{L_x^2\dot{h}^1}\leq C(M)$, $\mathbf{v}$ has at most $M$ non-zero components. Thus, 
$$J_M=J(\mathbf{v})\leq \frac{2(2M-1)}{M}\|Q_0\|^{-2}_{L_x^2}.$$
Equality is achieved by 
$$\mathbf{h}=\bigg(\cdots,0, \sqrt{\frac{1}{2M-1}}Q_0,\cdots, \underset{[\frac{M}{2}]\text{-th}}{\sqrt{\frac{1}{2M-1}}Q_0},0,\cdots \bigg),$$
verifying $J_M=\frac{2(2M-1)}{M}\|Q_0\|^{-2}_{L_x^2}$.

\end{proof}
Since the $L_x^2h^1$ and $L_x^2l^2$ norms of $\mathbf{v}(t)$ are conserved, according to the discussion in the introduction, we have 

\begin{lemma}\label{GNZX}
  Let $\mathbf{v}$ be the solution in Theorem \ref{znb}. Then for all $ \chi\in C^{\infty}_0(\R^2)$ with $ |\chi(x)|\leq 1$, 
  \begin{align*}  E\left(\chi(x)e^{ix\cdot\xi}P_{\leq k}\mathbf{v}(t,x) \right)\gtrsim 
  \left\|\nabla \left(\chi(x)e^{ix\cdot\xi}P_{\leq k}\mathbf{v}(t,x) \right) \right\|_{L_x^2l^2}, 
\end{align*}
with the implicit constant independent  of $\chi$, $k$ and $\xi$.
\end{lemma}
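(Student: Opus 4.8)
The plan is to establish the coercivity estimate \eqref{mmn} for $\mathbf{w}:=\chi(x)\,e^{ix\cdot\xi}\,P_{\le k}\mathbf{v}(t,x)$; the lemma is then immediate since its right-hand side is controlled by that of \eqref{mmn}. Writing $E(\mathbf{w})=\tfrac12\|\nabla\mathbf{w}\|_{L_x^2 l^2}^2-\tfrac14\sum_{j\in\mathbb{Z}}\int_{\mathbb{R}^2}F_j(\mathbf{w})\,\bar w_j\,\mathrm{d}x$, it suffices to dominate the nonlinear term by $(2-\delta)\|\nabla\mathbf{w}\|_{L_x^2 l^2}^2$ for some $\delta>0$ depending only on the (strictly positive, conserved) mass gap of $\mathbf{v}$, uniformly in $\chi$, $k$, $\xi$, $t$. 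First I would record the two conserved constraints carried by $\mathbf{v}$: $\|\mathbf{v}(t)\|_{L_x^2 l^2}^2<\tfrac{M}{2M-1}\|Q_0\|_{L_x^2}^2$ and $\|\mathbf{v}(t)\|_{L_x^2\dot h^1}^2\le C(M)\cdot\tfrac{2M-1}{2M}\|\mathbf{v}(t)\|_{L_x^2 l^2}^2$. Next I would observe two elementary monotonicity facts for the truncation--modulation--cutoff operator: because $P_{\le k}$ is a Fourier multiplier in $x$ and $|\chi|\le 1$, $|e^{ix\cdot\xi}|=1$ act componentwise in $j$, one has $\mathbf{w}\in H_x^1 l^2\cap L_x^2 h^1$ for fixed $k$, $\|\mathbf{w}\|_{L_x^2 l^2}\le\|\mathbf{v}(t)\|_{L_x^2 l^2}$, and, most importantly, $\|\mathbf{w}\|_{L_x^2\dot h^1}^2=\sum_j j^2\|\chi e^{ix\cdot\xi}P_{\le k}v_j(t)\|_{L_x^2}^2\le\sum_j j^2\|v_j(t)\|_{L_x^2}^2=\|\mathbf{v}(t)\|_{L_x^2\dot h^1}^2$, with constants independent of $\chi,k,\xi$.

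The core of the proof is a dichotomy on the size of $\|\mathbf{w}\|_{L_x^2 l^2}^2$ relative to the threshold $\tfrac{2M-1}{2M}\|\mathbf{v}(t)\|_{L_x^2 l^2}^2$. In the first case $\|\mathbf{w}\|_{L_x^2 l^2}^2<\tfrac{2M-1}{2M}\|\mathbf{v}(t)\|_{L_x^2 l^2}^2$; then the mass bound gives $\|\mathbf{w}\|_{L_x^2 l^2}^2<\tfrac12\|Q_0\|_{L_x^2}^2$ with a fixed margin, so the sharp Gagliardo--Nirenberg inequality \eqref{GN-infity} yields $\sum_j\int F_j(\mathbf{w})\bar w_j\le\tfrac{4}{\|Q_0\|_{L_x^2}^2}\|\mathbf{w}\|_{L_x^2 l^2}^2\|\nabla\mathbf{w}\|_{L_x^2 l^2}^2\le(2-\delta)\|\nabla\mathbf{w}\|_{L_x^2 l^2}^2$. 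In the second case $\|\mathbf{w}\|_{L_x^2 l^2}^2\ge\tfrac{2M-1}{2M}\|\mathbf{v}(t)\|_{L_x^2 l^2}^2$; then the second monotonicity fact gives $\|\mathbf{w}\|_{L_x^2\dot h^1}^2\le C(M)\cdot\tfrac{2M-1}{2M}\|\mathbf{v}(t)\|_{L_x^2 l^2}^2\le C(M)\|\mathbf{w}\|_{L_x^2 l^2}^2$, so $\mathbf{w}$ lies in the admissible class of Lemma \ref{GNweak}, and the truncated inequality \eqref{qqr} applies; since $\|\mathbf{w}\|_{L_x^2 l^2}^2\le\|\mathbf{v}(t)\|_{L_x^2 l^2}^2<\tfrac{M}{2M-1}\|Q_0\|_{L_x^2}^2$ with the same margin, \eqref{qqr} likewise gives $\sum_j\int F_j(\mathbf{w})\bar w_j\le(2-\delta)\|\nabla\mathbf{w}\|_{L_x^2 l^2}^2$. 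Either way $E(\mathbf{w})\ge\tfrac{\delta}{4}\|\nabla\mathbf{w}\|_{L_x^2 l^2}^2$, which is \eqref{mmn}, with $\delta$ depending only on the gap $\tfrac{M}{2M-1}\|Q_0\|_{L_x^2}^2-\|\mathbf{v}\|_{L_x^2 l^2}^2$ and otherwise uniform.

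The one genuinely delicate point --- and the reason the argument must branch at $\tfrac{2M-1}{2M}\|\mathbf{v}(t)\|_{L_x^2 l^2}^2$ rather than apply a single inequality --- is that one cannot in general deduce $\|\mathbf{w}\|_{L_x^2\dot h^1}^2\le C(M)\|\mathbf{w}\|_{L_x^2 l^2}^2$ from $\|\mathbf{v}\|_{L_x^2\dot h^1}^2\le C(M)\cdot\tfrac{2M-1}{2M}\|\mathbf{v}\|_{L_x^2 l^2}^2$, because the cutoff and Littlewood--Paley truncation can shrink $\|\mathbf{w}\|_{L_x^2 l^2}$ without shrinking $\|\mathbf{w}\|_{L_x^2\dot h^1}$ proportionally. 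The split is chosen so that when the $L_x^2 l^2$ norm has dropped below the threshold one falls back on the classical constant of \eqref{GN-infity}, while in the complementary regime the improved constant of Lemma \ref{GNweak}/\eqref{qqr} is legitimately available; everything else is a direct computation, using that the sharp constants are attained at $Q_0$ and that $\mathbf{v}$ is strictly mass-subcritical. With Lemma \ref{GNZX} in hand, the exclusion of $\mathbf{v}$, and hence Theorem \ref{th1.2}, follows by transplanting the proof of \cite[Theorem 5.3]{CGHY} to the present setting, exactly as indicated in the introduction.
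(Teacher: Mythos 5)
Your proof is correct and is exactly the argument the paper intends: the paper's ``proof'' of Lemma~\ref{GNZX} simply defers to the discussion in the introduction surrounding \eqref{mmn}, and that discussion is precisely the dichotomy you set up, branching at $\|\mathbf{w}\|^2_{L^2_x l^2}$ versus $\tfrac{2M-1}{2M}\|\mathbf{v}\|^2_{L^2_x l^2}$, falling back on the sharp constant from \eqref{GN-infity} on one side and on the refined constant from \eqref{qqr} (Lemma~\ref{GNweak}) on the other, in each case exploiting that the cutoff, Littlewood--Paley truncation, and Galilean modulation are contractions on both $L^2_x l^2$ and $L^2_x \dot h^1$. You also correctly identify the delicate point the paper emphasizes, namely that the $\dot h^1$ constraint does not transfer directly to the truncated function, which is why one cannot simply invoke \eqref{qqr} unconditionally. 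One small notational remark: what you actually prove (and what the subsequent Morawetz argument uses) is the homogeneous coercivity $E(\mathbf{w})\gtrsim\|\nabla\mathbf{w}\|^2_{L^2_x l^2}$, with the right-hand side squared; the statement of Lemma~\ref{GNZX} and of \eqref{mmn} both write an unsquared right-hand side, which appears to be a typographical slip in the paper rather than a genuine difference between your result and theirs.
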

Now we are in the position to exclude the existence of $\mathbf{v}$ and finish the proof of Theorem \ref{th1.2}:
\begin{proof}[Sketch of the proof of Theorem \ref{th1.2}]

We exclude the almost periodic solution $\mathbf{v}$ in two cases:

\textbf{Case 1.} $\int_0^{\infty}N(t)^3 \, \mathrm{d}t <\infty$. In this case, the argument in the proof of Theorem 5.4 in \cite{CGHY} can be directly applied to prove that $E(e^{ix\cdot\xi_{\infty}}\mathbf{v}_0)=0$ for some $\xi_{\infty}\in\R^2$. By  Lemma \ref{GNweak}, we derive that $\mathbf{v}=0$, which is  a contradiction.

\textbf{Case 2.} $\int_0^{\infty}N(t)^3 \, \mathrm{d}t =\infty$. In this case, we can direct follow the argument in the proof of Theorem 5.3 in \cite{CGHY}. For reader's convenience, we will use the notations in \cite{CGHY}. 

First, we can replace the frequency scale function $N(t)$ by a slowly varying frequency scale function of the almost periodic solution $\mathbf{v}$. Following the argument in \cite{D4}, we can use a smoothing algorithm and replace $N(t)$ with a slowly varying $\tilde{N}(t)$ and
 $\tilde{N}(t) \le N(t)$. Furthermore, by the construction, we can make sure
\begin{align*}
\frac{\big|\tilde{N}'(t)\big|}{ \tilde{N}(t)^3} \lesssim 1, \quad  t > 0 ,
\end{align*}
 and if $ \tilde{N}'(t)\ne 0$, then $ \tilde{N}(t) = N(t)$.

By applying the argument in \cite{D4}, we get
\begin{lemma}\label{le6.3v20}
For any $\delta > 0$, we can take a smoother $\tilde{N}(t)$ such that
\begin{align*}
\liminf\limits_{T\to \infty} \frac{ \int_0^T  \big| \tilde{N} '(t) \big| \,\mathrm{d}t}
{ \int_0^T  \tilde{N} (t)\, 
\int_{\mathbb{R}^2} \sum\limits_{j \in \mathbb{Z}} \big( \overline{P_{\le CK} {u}_j }
F_j (P_{\le CK} \mathbf{u})\big)(t,x) \,\mathrm{d}x
\,\mathrm{d}t } \le \delta .
\end{align*}
\end{lemma}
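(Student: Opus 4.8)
The plan is to deduce Lemma~\ref{le6.3v20} from the smoothing algorithm of \cite{D4} — the very construction invoked just above to pass from $N(t)$ to the slowly varying $\tilde N(t)$ — by tracking the free parameter in that algorithm. Recall that the algorithm depends on an accumulation threshold, call it $\rho\gg1$, and that, independently of $\rho$, it produces a decomposition of $[0,\infty)$ into alternating \emph{constant} intervals $I_k$ (on which $\tilde N\equiv N_k$ and $\tilde N'\equiv0$) and \emph{transition} intervals $J_k$ (on which $\tilde N=N$ is monotone, changes by at most one octave, and $|\tilde N'|\lesssim\tilde N^3$), with the property that every complete constant interval satisfies $\int_{I_k}\tilde N(t)^3\,\mathrm dt\gtrsim\rho$. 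Granting this structure, I will show the ratio in the statement is $\lesssim \rho^{-1}+o_T(1)$, so choosing $\rho=\rho(\delta)$ large forces the $\liminf$ below $\delta$. The needed facts about $\tilde N$ transfer verbatim from the scalar case \cite{D4} since they involve only $N(t)$, not the equation.

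\textbf{Lower bound for the denominator.} First I would record that, by the almost periodicity of $\mathbf v$ in \eqref{eq2.10v20} together with its nontriviality, the frequency-truncated potential energy is comparable to $N(t)^2$: fixing $\eta$ small and $K=K(\eta)$, the mass at frequencies $\gg N(t)$ or $\ll N(t)$ is at most $\eta$, and $\mathbf v(t)$ concentrates at frequency $\sim N(t)$ on spatial scale $N(t)^{-1}$, so Bernstein and H\"older give
\[
\int_{\mathbb R^2}\sum_{j\in\mathbb Z}\big(\overline{P_{\le CK}u_j}\,F_j(P_{\le CK}\mathbf u)\big)(t,x)\,\mathrm dx\;\gtrsim\; N(t)^2\;\ge\;\tilde N(t)^2 ,
\]
using $\tilde N\le N$; the variational input (Lemma~\ref{GNweak} and Lemma~\ref{GNZX}) is exactly what prevents a fixed fraction of the mass from leaking to frequency zero, so that this lower bound survives the truncation uniformly in $t$. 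Hence $\tilde N(t)$ times the truncated potential energy is $\gtrsim\tilde N(t)^3$, and summing over the complete constant intervals inside $[0,T]$,
\[
\int_0^T\tilde N(t)\!\int_{\mathbb R^2}\!\sum_{j}\big(\overline{P_{\le CK}u_j}F_j(P_{\le CK}\mathbf u)\big)\,\mathrm dx\,\mathrm dt\;\gtrsim\;\int_0^T\tilde N(t)^3\,\mathrm dt\;\gtrsim\;\rho\cdot\#\{k:I_k\subset[0,T]\}.
\]

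\textbf{Upper bound for the numerator.} On a transition interval $J_k$ the function $\tilde N=N$ is monotone and moves by at most one octave, so $\int_{J_k}|\tilde N'(t)|\,\mathrm dt=\big|\tilde N(\sup J_k)-\tilde N(\inf J_k)\big|\lesssim\sup_{J_k}\tilde N\le1$, where the last step crucially uses $N(t)\le1$ on $I$ from Theorem~\ref{znb}. Summing, $\int_0^T|\tilde N'(t)|\,\mathrm dt\lesssim\#\{k:J_k\cap[0,T]\neq\emptyset\}\le\#\{k:I_k\subset[0,T]\}+O(1)$ since the $I_k$ and $J_k$ alternate. Dividing the two estimates,
\[
\frac{\int_0^T|\tilde N'|\,\mathrm dt}{\int_0^T\tilde N\!\int\sum_j\overline{P_{\le CK}u_j}F_j\,\mathrm dx\,\mathrm dt}\;\lesssim\;\frac1\rho+\frac{O(1)}{\rho\cdot\#\{k:I_k\subset[0,T]\}} .
\]
In the case $\int_0^\infty N(t)^3\,\mathrm dt=\infty$ one has $\#\{k:I_k\subset[0,T]\}\to\infty$ as $T\to\infty$ (each complete $I_k$ adds $\gtrsim\rho$ to $\int\tilde N^3$ while $\int_0^T\tilde N^3\to\infty$, the latter itself requiring the algorithmic fact that $\tilde N$ does not decay too fast). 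Thus the $\liminf$ over $T$ of the ratio is $\lesssim\rho^{-1}$, and taking $\rho\ge C/\delta$ finishes the proof.

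\textbf{Main obstacle.} The counting above is routine once the two inputs are in place, and those inputs are where the real work lies. The harder one is revisiting the \cite{D4} smoothing construction and checking that for our $N(t)$ it still yields the claimed interval structure — in particular that, because $\tilde N$ must stay $\le N$ and therefore ratchets downward while only catching up slowly, the constant intervals are not so fragmented that infinitely many of them fail to accumulate $\rho$, and that indeed $\int_0^T\tilde N^3\to\infty$. The second input is the uniform-in-$t$ lower bound $\int\overline{P_{\le CK}u}\,F(P_{\le CK}\mathbf u)\gtrsim N(t)^2$: this rests on the compactness from \eqref{eq2.10v20}, the nondegeneracy of $\mathbf v$, and Lemma~\ref{GNweak}, and while standard for almost periodic solutions it must be arranged so the implicit constant does not degenerate as $t$ varies. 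I expect the first point to absorb most of the effort, essentially amounting to re-deriving the relevant lemmas of \cite{D4} in the present setting.
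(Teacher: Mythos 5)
Your overall strategy — invoke Dodson's smoothing algorithm to control the total variation of $\tilde N$ and combine it with an almost-periodicity lower bound on the truncated potential energy — is the right one, and since the paper offers nothing beyond a citation to \cite{D4} for this lemma, there is no alternate proof in the paper to compare against. The inequality $\int \sum_j \overline{P_{\le CK}v_j}\,F_j(P_{\le CK}\mathbf v)\,\mathrm dx \gtrsim N(t)^2$ is correct, but the mechanism you name is off: it is \emph{not} the variational Lemmas~\ref{GNweak} or~\ref{GNZX}, which feed into the coercivity step of the Morawetz argument elsewhere. Rather, the spatial concentration in \eqref{eq2.10v20} together with the conserved, nonzero value of $\|\mathbf v(t)\|_{L_x^2 l^2}$ gives, by Cauchy--Schwarz on $\{|x-x(t)|\le C(\eta)/N(t)\}$, the bound $\int \|\mathbf v(t,\cdot)\|_{l^2}^4\,\mathrm dx \gtrsim N(t)^2$ uniformly in $t$, and the truncation error is absorbed by taking $K$ large. (Bernstein goes the wrong way for a lower bound; H\"older/Cauchy--Schwarz with spatial concentration is what does the work.)

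The genuine gaps are the two you flag at the end, and they are not cosmetic in the threshold-based version you set up. First, $\int_{J_k}|\tilde N'|\lesssim 1$ requires monotonicity of $N$ on each transition interval; this is a structural claim about the algorithm that you would have to establish, since on a transition interval $\tilde N=N$ and $N$ is a priori allowed to oscillate. Second — and more seriously — your ratio estimate is vacuous unless $\#\{k:I_k\subset[0,T]\}\to\infty$, i.e.\ unless $\int_0^\infty\tilde N^3\,\mathrm dt=\infty$. Because $\tilde N\le N$ can be much smaller than $N$, this does \emph{not} follow from $\int_0^\infty N^3\,\mathrm dt=\infty$ without an additional property of the smoothing. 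Both issues disappear if you use the $n$-iterated form of Dodson's algorithm that this paper already records in the appendix (proof of Theorem~\ref{th6v26}) for $\tilde\lambda$: property (3), $\tilde N\ge 2^{-n}N$, gives $\int_0^\infty\tilde N^3\,\mathrm dt=\infty$ for free, and property (4),
\[
\int_0^T |\tilde N'(t)|\,\mathrm dt \;\le\; \frac1n \int_0^T |N'(t)|\,\frac{\tilde N(t)}{N(t)}\,\mathrm dt,
\]
combined with $|N'|\lesssim N^3$ from Theorem~\ref{znb} yields $\int_0^T|\tilde N'|\lesssim \tfrac1n\int_0^T N(t)^2\tilde N(t)\,\mathrm dt$, which by the potential-energy lower bound is $\lesssim \tfrac1n$ times the denominator. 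Choosing $n\gtrsim 1/\delta$ finishes with no interval bookkeeping needed, and in fact gives a $\limsup$ rather than merely a $\liminf$.
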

In the following, we still take $N(t)$ as $\tilde{N}(t)$. 
Let $\varphi$ be a $C_0^\infty$ radial function with
\begin{align*}
\varphi(x) =
\begin{cases}
1,\ |x|\le R- \sqrt{R},\\
0, \ |x|\ge R.
\end{cases}
\end{align*}
Let
\begin{align*}
\phi(x) = \frac1{2\pi R^2} \int_{\mathbb{R}^2} \varphi( |x- s|) \varphi(|s|) \,\mathrm{d}s,
\end{align*}
and let
\begin{align*}
\psi_{ R N(t)^{- 1 } } (r) = \frac1r \int_0^r \phi \left(\tfrac{ N(t)  s}R\right) \,\mathrm{d}s,
\end{align*}
then we define the frequency localized interaction Morawetz action
\begin{align*}
M(t) = \sum\limits_{j,j' \in \mathbb{Z}} \iint_{\mathbb{R}^2 \times \mathbb{R}^2}  \psi_{ R N(t)^{- 1 }} \left(|x-y|\right) N(t) (x-y)  \cdot \Im \big( \overline{ I  v_j(t,x)} \nabla_x I v_j(t,x) \big) \left| I v_{j'} (t,y)\right|^2 \,\mathrm{d}x \mathrm{d}y,
\end{align*}
where $I= P_{\le\log_2{[CK]}}$.

Following the calculation in \cite{CGHY}, we get 
\begin{align}
 \int_0^T M'(t)  \,\mathrm{d}t & \ge 4 \int_0^T \sum\limits_{j,j' \in \mathbb{Z}} \iint \frac1{ 2\pi  R^2} \int \varphi \left( \left|\tfrac{x N(t)}R - s \right| \right)
  \varphi\left( \left|\tfrac{y N(t)}R - s \right| \right) N(t) \notag \\
&\qquad\quad\quad \qquad\quad\quad 
\cdot \big| \nabla \big(e^{-i x \cdot \xi(s)} Iv_j(t,x) \big) \big|^2 \left|Iv_{j'}(t,y) \right|^2 \,\mathrm{d}x \mathrm{d}y \mathrm{d}t \mathrm{d}s
   \notag \\
&\quad -  \int_0^T \sum\limits_{j,j' \in \mathbb{Z}} \iint \Delta \left( \psi_{R N(t)^{-  1 }} \left( |x- y| \right) + \phi\left( \tfrac{ |x - y|N(t)}R \right) \right) N(t) \notag\\
& \qquad\quad\quad\qquad\quad\quad  
\cdot \left|Iv_j(t,x) \right|^2  \left|Iv_{j'}(t,y) \right|^2 \,\mathrm{d}x \mathrm{d}y  \mathrm{d}t
\notag  \\
&\quad - 2 \int_0^T
 \sum\limits_{j, j' \in \mathbb{Z}} \iint \psi_{RN(t)^{- 1 }} (|x- y|) N(t)\notag\\
 &\qquad\quad\quad\qquad\quad
 \quad  
 \cdot \sum\limits_{ (j_1,j_2,j_3) \in \mathcal{R}(j)} \left( \overline{Iv_j} Iv_{j_1}
 \overline{Iv_{j_2}} Iv_{j_3} \right)(t,x)  \left|Iv_{j'} (t,y) \right|^2 \,\mathrm{d}x \mathrm{d}y \mathrm{d}t
 \notag \\
&\quad - \int_0^T
 \sum\limits_{j,j' \in \mathbb{Z} } \iint \psi_{RN(t)^{-  1 }}' (|x- y|) N(t) |x- y| \notag\\
 &\qquad\quad
 \quad\qquad\quad\quad  
 \cdot \sum\limits_{(j_1,j_2,j_3) \in \mathcal{R}(j)} \left( \overline{Iv_j} Iv_{j_1} \overline{Iv_{j_2}} Iv_{j_3} \right)(t,x)  \left|Iv_{j'}(t,y) \right|^2 \,\mathrm{d}x \mathrm{d}y \mathrm{d}t
 \notag  \\
& \quad +  \int_0^T \sum\limits_{j,j' \in \mathbb{Z}} \iint \frac{d}{dt} \left( \psi_{RN(t)^{-  1 }} (|x- y|) N(t)(x- y) \right)\notag\\
&\qquad\quad\quad\qquad\quad\quad  
\cdot \Im \big( \overline{Iv_j(t,x) } \nabla_x Iv_j(t,x) \big)  \left|Iv_{j'}(t,y) \right|^2
\,\mathrm{d}x \mathrm{d}y \mathrm{d}t \label{eq5.17v89}\\
& \quad + \int_0^T \mathfrak{E}(t) \,\mathrm{d}t ,  \notag
\end{align}
 where  ${\xi}(s) \in \mathbb{R}^2$ is chosen such that
\begin{align*}
\sum_{j,j^{'}\in\Z_N}\int \chi^2  \left( \frac{ \tilde{\lambda}(t) (x- s)}{ R \lambda(t)}  \right) \Im  \left( \overline{\frac{e^{ix \cdot{\xi}(s)} }{ \lambda(t)} I v_j  \left(t, \frac{x}{\lambda(t)}  \right)} \cdot \nabla  \left(\frac{e^{ix \cdot{\xi}(s)} }{ \lambda(t)} I v_{j'} \left(t, \frac{x}{\lambda(t)}  \right) \right)  \right) \,\mathrm{d}x = 0
\end{align*}
and $\mathfrak{E}(t)$ is the contribution coming from the error terms $I \mathbf{F}(\mathbf{v}) -\mathbf{F}(I \mathbf{v})$ in the interaction Morawetz action that satisfies 
\begin{align*}
\int_0^T \mathfrak{E}(t) \,\mathrm{d}t \lesssim R^2 o(K).
\end{align*}
As in \cite{CGHY}, we have 
\begin{align*}
 &\int_0^T M'(t)\, \mathrm{d}t \\
  \ge&  4  \sum\limits_{j,j' \in \mathbb{Z}} \int_0^T \iint \frac1{ 2 \pi R^2} \int \varphi \left( \left |\tfrac{x N(t) }R - s \right| \right) \varphi \left( \left |\tfrac{y N(t) }R - s \right| \right)  N(t) \\
  & \qquad\qquad \qquad \qquad   
  \cdot \left|\nabla \left(e^{-i x\cdot \xi(s)} Iv_j(t,x) \right) \right|^2 \left|Iv_{j'}(t,y) \right|^2 \, \mathrm{d}s\mathrm{d}x \mathrm{d}y \mathrm{d}t \\
 & - 2 \sum\limits_{j,j' \in \mathbb{Z}}  \int_0^T \iint \phi \left( \tfrac{N(t) |x- y|}R \right) N(t)
 \sum\limits_{ (j_1,j_2,j_3) \in \mathcal{R}(j)}
 \left( \overline{Iv_j} Iv_{j_1} \overline{Iv_{j_2}} Iv_{j_3} \right)(t,x) \left|Iv_{j'}(t,y) \right|^2 \,\mathrm{d}x \mathrm{d}y  \mathrm{d}t \\
 \ge &\ \frac2{ \pi  R^2} \sum\limits_{j,j' \in \mathbb{Z}} \int_0^T \iiint \left|\nabla \left( \chi
 \left( \left|\tfrac{x N(t) }R - s \right| \right) e^{-i x\cdot \xi(s)} Iv_j(t,x) \right) \right|^2 \,\mathrm{d}x\\
 &\qquad\qquad\qquad\qquad 
 \cdot
\varphi \left( \left|\tfrac{y N(t) }R- s \right| \right) \left|Iv_{j'}(t,y) \right|^2 N(t) \,\mathrm{d}y \mathrm{d}s \mathrm{d}t  \\
 &\ -   \frac{1}{\pi R^2}  \sum\limits_{j,j' \in \mathbb{Z}}  
\int_0^T \iiint  \left|\chi \left( \left|\tfrac{x N(t) }R - s \right| \right) \right|^4   \varphi \left( \left|\tfrac{y N(t) }R - s \right| \right) N(t)  \,\mathrm{d}s \\
& \qquad\qquad\qquad\qquad
\cdot
 \sum\limits_{(j_1,j_2,j_3) \in \mathcal{R}(j)}  \left( \overline{Iv_j} Iv_{j_1} \overline{Iv_{j_2}} Iv_{j_3}  \right)(t,x) \left|Iv_{j'}(t,y) \right|^2  \,\mathrm{d}x \mathrm{d}y \mathrm{d}t \\
&\ - C \frac{K}{   R^4}   \left\|\mathbf{v} \right\|_{L_x^2 \ell^2}^4 - o_R(1) K .
\end{align*}
Then by Lemma \ref{GNZX}, one can find $\eta > 0$ such that
\begin{align*}
& \sum\limits_{j \in \mathbb{Z}} \int  \left|\nabla  \left(
 \chi \left(  \left|\tfrac{x N(t) }R - s \right| \right) e^{-i x\cdot \xi(s)} Iv_j(t,x)
\right)
\right|^2 \,\mathrm{d}x \\
& \qquad 
- \frac12 \sum\limits_{j \in \mathbb{Z}} \int  \left|\chi \left(  \left|\tfrac{x N(t) }R - s \right| \right)
\right|^4 \sum\limits_{(j_1,j_2,j_3) \in \mathcal{R}(j)}
\left( \overline{Iv_j} Iv_{j_1} \overline{Iv_{j_2}} Iv_{j_3}  \right)(t,x) \,\mathrm{d}x \\
\ge &\ \frac12  \left[ \frac{ \sqrt{\frac{M}{2M-1}} \|Q_0\|_{L_x^2} }{ \Big( \sum\limits_{j \in \mathbb{Z}}
 \left\|\chi  \left( \left|\frac{x N(t) }R - s \right|  \right) e^{-i x\cdot \xi(s)} Iv_j(t,x) \right\|_{L_x^2}^2 \Big)^\frac12 } \right]^2\\
&\qquad\qquad\qquad\qquad\times \sum\limits_{j \in \mathbb{Z}} \int  \left|\chi \left( \left|\tfrac{x N(t) }R - s \right| \right) \right|^4
\sum\limits_{(j_1,j_2,j_3) \in \mathcal{R}(j)} \left( \overline{Iv_j} Iv_{j_1} \overline{Iv_{j_2}} Iv_{j_3}  \right)(t,x) \,\mathrm{d}x \\
&\ - \frac12 \sum\limits_{j \in \mathbb{Z}} \int  \left|\chi \left( \left|\tfrac{x N(t) }R - s \right| \right) \right|^4
\sum\limits_{(j_1,j_2,j_3) \in \mathcal{R}(j)} \left( \overline{Iv_j} Iv_{j_1} \overline{Iv_{j_2}} Iv_{j_3}  \right)(t,x) \,\mathrm{d}x \\
\ge &\ \eta \sum\limits_{j \in \mathbb{Z}} \int  \left|\chi \left( \left|\tfrac{x N(t) }R - s \right| \right) \right|^4
\sum\limits_{(j_1,j_2,j_3) \in \mathcal{R}(j)} \left( \overline{Iv_j} Iv_{j_1} \overline{Iv_{j_2}} Iv_{j_3}  \right)(t,x) \,\mathrm{d}x.
\end{align*}
The remaining argument follows that in \cite{CGHY}. Then we can get 
\begin{align*}
\int_0^T M'(t) \,\mathrm{d}t \ge \frac{c  \eta}2 K- R^2 o(K) - o_R(1)K.
\end{align*}
On the other hand, by \eqref{eq2.10v20}, we have
\begin{align*}
\sup\limits_{t \in [0, T ]} |M(t)| \lesssim R^2 o(K).
\end{align*}
Choosing $R(\eta)$ sufficiently large, by  \eqref{eq2.10v20}, since we can take $T$ large enough and make $K$ be arbitrarily large, we conclude that $\mathbf{v} = 0$.
This is a contradiction. Therefore, this completes the proof of Theorem \ref{th1.2}. \end{proof}

\bigskip
\noindent \textbf{Acknowledgments.} 
We would like to thank Haewon Yoon and Zehua Zhao for their helpful comments. X. Cheng has been partially supported by the NSF of Jiangsu Province (Grant No.~BK20221497). 
J. Zheng was supported by National key R \&D program of China: 2021YFA1002500 and NSF grant of China (No. 12271051).

\appendix

\section{Proof of Theorem \ref{claim2.1}}

\renewcommand{\thesection}{\Alph{section}}

In this appendix, we provide the proof of Theorem \ref{claim2.1} for self-contained. By the scattering result in \cite{CGHY}, any non-scattering solution $\mathbf{u}$ to \eqref{1.1} with $\| \mathbf{u}_0 \|_{L_x^2 l^2} = \|\mathbf{Q} \|_{L_x^2 l^2}$ must be a minimal-mass blowup solution. This allows us to reduce Theorem \ref{claim2.1} to the analysis of the almost periodic solution.

Let $t_n \nearrow \sup I$ as $n \to \infty$ be a sequence of times. Applying Proposition \ref{pro3.9v23}, 
we obtain after passing to a subsequence, for all $J\geq 1$,
\begin{align}\label{eq2.1v26}
\mathbf{u} (t_n ) = \sum\limits_{j = 1}^J g_n^j  \left( e^{i t_n^j \Delta}
\boldsymbol{\phi}^j \right) +  \mathbf{w}_n^J,
\end{align}
where $g_n^j$ is the group action such  that for any $k\in\mathbb{Z}_N$,
\begin{align}\label{eq2.2v26}
\left(g_n^j \boldsymbol{\phi}^j \right)_k (x) = \lambda_{n}^j e^{i x \xi_{n}^j} {\phi}_k^j 
\left( \lambda_{n}^j x + x_{n}^j \right),
\end{align}
and
\begin{align*}
\lim\limits_{J \to \infty} \limsup\limits_{n \to \infty} \left\|e^{i t \Delta} \mathbf{w}_n^J  \right\|_{L_{t,x}^4 l^2} = 0.
\end{align*}
Since $\mathbf{u}$ is a minimal-mass blowup solution,
$\mathbf{\phi}^j  = 0$ for $j \ge 2$, and $\|\mathbf{\phi}^1 \|_{L_x^2 l^2} = \|\mathbf{Q} \|_{L_x^2 l^2}$ with 
$\| \mathbf{w}_n^1 \|_{L_x^2 l^2 } \to 0 $ as $n \to \infty$.
Thus, it is convenient to drop the $j$ notation and simply write
\begin{align*}
\mathbf{u} (t_n ) = g_n\boldsymbol{\phi} + \mathbf{w}_n.
\end{align*}
Let $\mathbf{v}$ be the solution to \eqref{1.1} with initial data $\mathbf{\phi}$, and let $\tilde{I}$ be its maximal interval of existence. Since
\begin{align*}
\lim\limits_{n \to \infty } \|\mathbf{u} \|_{L_{t,x}^4 l^2 ( (\inf I, t_n) \times \mathbb{R}^2\times\Z_N)} = \infty \mbox{ and } \| \mathbf{u} \|_{L_{t,x}^4 l^2 ((t_n, \sup I ) \times  \mathbb{R}^2\times\Z_N)} = \infty, \ \forall\, n\in \mathbb{N},
\end{align*}
we have
\begin{align}
\| \mathbf{v} \|_{L_{t,x}^4 l^2 \left( \left[0, \sup \tilde{I} \right) \times \mathbb{R}^2\times\Z_N \right)} = \|\mathbf{v} \|_{L_{t,x}^4 l^2 \left( \left( \inf \tilde{I}, 0 \right] \times \mathbb{R}^2\times\Z_N \right)} = \infty.
\label{eq2.6v26}
\end{align}
Now, since $\mathbf{v}(t)$ blows up in both time directions, \eqref{eq2.6v26} holds, and $\|\mathbf{v} \|_{L_x^2 l^2 } = \|\mathbf{Q} \|_{L_x^2 l^2 }$, we can use the result of \cite{CGHY}
 to prove that $\mathbf{v}$ is almost periodic. Specifically, for each $t \in \tilde{I}$, there exist $\lambda(t) > 0$, $\xi(t) \in \mathbb{R}^2$ and $x(t) \in \mathbb{R}^2$ such that
\begin{align}\label{eq2.14v26}
\frac1{ \lambda(t)} e^{i x\cdot \xi(t)}  \mathbf{v} \left( t, \frac{x - x(t)}{\lambda(t)}  \right) \in  \mathcal{K},
\end{align}
where $ \mathcal{K}$ is a precompact subset of $L_x^2l^2 $. Additionally, we have \begin{align*}\label{ppp}
\left| \lambda'(t) \right|+ \left| \xi'(t) \right|  \lesssim \lambda(t)^3.
\end{align*} 

\noindent To prove Theorem \ref{claim2.1}, we further reduce it to the following theorem:
\begin{theorem}\label{th5v26}
There exists a sequence $s_m \nearrow \sup I$ as $m\to\infty$ and group actions $g(s_m)$ of the form \eqref{eq2.2v26} such that
\begin{align*}
\| g(s_m) \mathbf{v}(s_m) - \mathbf{Q} \|_{L_x^2l^2} \to 0 ,\quad \mbox{as } m\to\infty.
\end{align*}
\end{theorem}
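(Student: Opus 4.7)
The plan is to adapt Dodson's strategy \cite{D4,D3} for the scalar mass-critical NLS to the multi-component NLSS, leveraging the concentration-compactness framework already developed for this system in \cite{CGHY}.

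First, I would use the almost periodicity \eqref{eq2.14v26} as follows: for any sequence $s_m \nearrow \sup I$, the precompactness of $\mathcal{K}$ permits the extraction of a subsequence along which the modulated profiles
\begin{align*}
\mathbf{v}_m(x) := \tfrac{1}{\lambda(s_m)} e^{ix\cdot \xi(s_m)} \mathbf{v}\Big(s_m, \tfrac{x - x(s_m)}{\lambda(s_m)}\Big)
\end{align*}
converge strongly in $L_x^2 l^2$ to some limit $\tilde{\mathbf{v}}_0$. Mass conservation and the symmetry invariance of the $L^2 l^2$ norm give $\|\tilde{\mathbf{v}}_0\|_{L_x^2 l^2} = \|\mathbf{Q}\|_{L_x^2 l^2}$, and the stability theory for \eqref{1.1} shows that the NLSS flow $\tilde{\mathbf{V}}$ starting from $\tilde{\mathbf{v}}_0$ is again a minimal-mass blowup solution (forward and, by symmetry, backward).

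The identification of $\tilde{\mathbf{v}}_0$ with $\mathbf{Q}$ modulo \eqref{sym1}--\eqref{sym5} then proceeds through the sharp vector-valued Gagliardo-Nirenberg inequality of Lemma \ref{le1.1v19}. The idea is to select $\{s_m\}$ so that after modulation the limit lies in $H^1_x l^2$ and saturates the sharp constant $C_{GN} = \frac{2(2N-1)}{N}\|Q_0\|_{L_x^2}^{-2}$. Concretely, I would choose $s_m$ so that $\lambda(s_m)\to 0$ in the case $\sup I = \infty$ (exploiting soliton-like concentration at infinity), or apply the pseudo-conformal transformation \eqref{pseudotransform} first when $\sup I<\infty$ to translate blowup at a finite time $T$ into concentration at infinity. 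In either picture the energy conservation, combined with the rescaling, forces the Weinstein functional $J(\tilde{\mathbf{v}}_0)$ to hit $C_{GN}$; the uniqueness of the optimizer up to symmetries then pins $\tilde{\mathbf{v}}_0$ down as a scaled and translated $\mathbf{Q}$, with every component sharing a common radial profile modulo individual phases that are absorbed by \eqref{sym4}. The multi-component rigidity here is crucial, since it rules out asymmetric standing waves of \eqref{multisw} as possible limit profiles.

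The hard part will be that the minimal-mass assumption lives in the critical space $L_x^2 l^2$, where no a priori $H^1_x l^2$ control is available, so the extra regularity of $\tilde{\mathbf{v}}_0$ must be extracted from the almost periodic orbit itself rather than from a uniform energy bound. To do this I would combine the frequency-localized virial/Morawetz machinery for the coupled system developed in \cite{CGHY} with the vector-valued bilinear estimate underlying the long time Strichartz estimate of Theorem \ref{t5.9}; together these upgrade $L_x^2 l^2$ compactness to $H^1_x l^2$ compactness along a suitable subsequence. Handling the inter-component interaction terms $\sum_{k\ne j}|u_k|^2 u_j$ in these estimates -- which have no analog in the scalar case and create multiple phase rotations that do not decouple cleanly -- is the delicate technical point, and it is where I would rely most heavily on the explicit computations of \cite{CGHY}.
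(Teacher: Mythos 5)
Your outline captures the broad contours — almost periodicity, the sharp Gagliardo–Nirenberg inequality, and the frequency-localized Morawetz machinery from \cite{CGHY} — but there are concrete gaps in the mechanism that would cause the argument to stall.

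First, the paper does \emph{not} select the times $s_m$ by asking that $\lambda(s_m)\to 0$, and it does not use the pseudo-conformal transformation at this stage: the almost periodic solution $\mathbf{v}$ from Proposition \ref{pro3.9v23} already blows up in both time directions, and the entire argument is run on its own lifespan $\tilde{I}$. Before any Morawetz computation, the paper first proves Theorem \ref{th6v26}, which rules out the ``rapid cascade'' regime $\frac{1}{\sup_{[0,T_n]}\lambda}\int_0^{T_n}\lambda^3\,\mathrm{d}t \lesssim 1$ by showing (via the $\dot H^s$ bootstrap of \cite{CGHY}) that such a scenario would force $\mathbf{v}$ to be a soliton, contradicting the scale decay it itself implies. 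This exclusion is essential: without it the error terms in the Morawetz identity (the $\tilde\lambda'(t)$ contribution, the error $\mathcal{E}$ from the frequency commutator, and the boundary term $\sup_t |M(t)|$) cannot be made negligible relative to the main coercive term. Your proposal omits this step entirely.

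Second, the claim that ``energy conservation, combined with the rescaling, forces $J(\tilde{\mathbf{v}}_0)$ to hit $C_{GN}$'' is not an argument: $\mathbf{v}$ only lies in $L_x^2 l^2$, so $E(\mathbf{v})$ is not a priori defined, and there is no conserved quantity to appeal to. What actually happens is that the frequency-localized interaction Morawetz identity \eqref{eq5.19v26} (with the smoothing algorithm for $\tilde\lambda$) shows that the \emph{time-average} of the energy of a low-frequency truncation $I\mathbf{v}$, rescaled, Galilean-shifted by the self-selected $\xi_*(s)$, and spatially cut off at scale $R_n\lambda/\tilde\lambda$, is $o(1)$ of the mass. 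The integral mean value theorem then produces the times $t_n$, and \eqref{eq5.38v26}--\eqref{eq5.40v26} give a modulated profile $\mathbf{h}_n$ with $E(\mathbf{h}_n)\to 0$, $\|\mathbf{h}_n\|_{L^2_xl^2}\to\|\mathbf{Q}\|_{L^2_xl^2}$, and $\liminf\|\nabla\mathbf{h}_n\|_{L^2_xl^2}>0$. This is weaker and more precise than ``upgrading to $H^1_xl^2$ compactness of $\mathbf{v}$''; compactness comes only for the renormalized sequence $\mathbf{g}_n$ through the variational characterization of the Weinstein functional, not for the solution itself.

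Third, the long time Strichartz estimate of Theorem \ref{t5.9} belongs to the proof of Theorem \ref{t2.3} (where the solution is already modulated near $\mathbf{Q}$); it plays no role in this appendix, whose input is only the $L^2_xl^2$ almost periodicity of $\mathbf{v}$. Conflating the two tools here points to a misreading of what is available at this stage of the argument.
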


\begin{proof}[Proof of Theorem \ref{th5v26} implies Theorem \ref{claim2.1}]

Suppose $g(s_m ) \mathbf{v} (s_m ) \to \mathbf{Q}$ in $L^2_x l^2$.
For each $m$, let $s_m \in \tilde{I}$ be such that
\begin{equation}\label{qzaaa}
\| g(s_m ) \mathbf{v} (s_m ) - \mathbf{Q} \|_{L_x^2 l^2} \le 2^{-m }.
\end{equation}
Next, observe that \eqref{eq2.1v26} implies
\begin{align*}
\lambda_ne^{i x \cdot \xi_n}   \mathbf{u}( t_n, \lambda_n x + x_n ) \to \boldsymbol{\phi}(x) \text{ in } L_x^2l^2
\end{align*}
and by \eqref{sym5} and perturbation theory, for a fixed $m$, for $n$ sufficiently large,
\begin{align}\label{eq2.10v26}
\aligned
& \left\| e^{-i |\xi_n|^2 s_m } e^{i \xi_n \cdot x}  \lambda_n \mathbf{u} \left(t_n + \lambda_n^2 s_m, \lambda_n x + x_n - 2 \xi_n \lambda_n s_m \right) - \mathbf{v}(s_m) \right\|^2_{L^2_xl^2}\\
 \le& C(s_m)  \left\| e^{i \xi_n \cdot x} e^{i \gamma_{n,j} } \lambda_n \mathbf{u} \left(t_n , \lambda_n x + x_n  \right) -\boldsymbol{\phi}(x) \right\|^2_{L^2_xl^2} .
\endaligned
\end{align}
Define $\tilde{\mathbf{u}}(x) 
=\lambda_n e^{-i |\xi_n|^2 s_m} e^{i \xi_n\cdot x} \mathbf{u}  \left( t_n + \lambda_n^2 s_m, \lambda_n x + x_n - 2 \xi_n \lambda_n s_m  \right)$ for all $ j\in\Z_N$. 
By \eqref{qzaaa}, \eqref{eq2.10v26}, and the triangle inequality, we obtain 
\begin{align}\label{eq2.11v26}
\aligned
& \left\| \tilde{\boldsymbol{u}}(x)- \mathbf{Q}(x)  \right\|_{L_x^2 l^2}
\\
 \le& C(s_m)  \left\|e^{i \xi_n\cdot x} e^{i \gamma_n } \lambda_n \mathbf{u}( t_n, \lambda_n x + x_n ) -\boldsymbol{\phi} (x)  \right\|_{L_x^2 l^2} + 2^{-m }.
\endaligned
\end{align}
Since $g(s_m)$ has form \eqref{eq2.2v26}, there exists  $g_{n,m}$ of the form \eqref{eq2.2v26} such that
\begin{align*}
g(s_m )  \tilde{\boldsymbol{u}}(t_n,x)
= g_{n,m} \mathbf{u}  \left(t_n +  \lambda_n^2 s_m, x \right).
\end{align*}
From \eqref{eq2.11v26}, we have 
\begin{align*}
\lim\limits_{m,n \to \infty}  \left\|g_{n,m} \mathbf{u} \left(t_n + \lambda_n^2 s_m, x  \right) - \mathbf{Q} (x) \right\|_{L_x^2 l^2 } = 0.
\end{align*}
Since $t_n \nearrow \sup {I}$ as $n \to \infty$ and $s_m \ge 0$, $t_n + \lambda_n^2 s_m \nearrow \sup \tilde{I}$ as $n \to \infty$, which implies Theorem \ref{claim2.1}. 
\end{proof}

\begin{theorem}\label{th6v26}
For any sequence $T_n \in \tilde{I}$ with $T_n \nearrow \sup \tilde{I}$ as $n\to \infty$,
\begin{align*}
\lim\limits_{T_n \nearrow \sup \tilde{I}} \frac1{\sup\limits_{t \in [0, T_n]} \lambda(t)} \int_0^{T_n} \lambda(t)^3 \,\mathrm{d}t = \infty.
\end{align*}

\end{theorem}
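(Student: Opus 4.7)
The plan is to argue by contradiction. Suppose the assertion fails; then there exist a constant $C > 0$ and a subsequence (still denoted $T_n$) with $T_n \nearrow \sup \tilde{I}$ such that
\begin{equation*}
\int_0^{T_n} \lambda(t)^3 \, \mathrm{d}t \le C\, \Lambda_n, \qquad \Lambda_n := \sup_{t \in [0,T_n]} \lambda(t).
\end{equation*}
I would rescale by $\Lambda_n$: set $\tilde{\mathbf{v}}_n(s, y) = \Lambda_n^{-1}\mathbf{v}(\Lambda_n^{-2} s,\, \Lambda_n^{-1} y)$, which is again a solution of \eqref{1.1} with conserved mass $\|\mathbf{Q}\|_{L_x^2 l^2}$ and modulation parameter $\tilde{\lambda}_n(s) = \Lambda_n^{-1}\lambda(\Lambda_n^{-2} s)$. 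On the interval $J_n := [0, \Lambda_n^2 T_n]$ we have $\tilde{\lambda}_n \le 1$, $\sup_{J_n} \tilde{\lambda}_n = 1$, and the contradiction hypothesis becomes the scale-invariant bound
\begin{equation*}
\int_{J_n} \tilde{\lambda}_n(s)^3 \, \mathrm{d}s \le C.
\end{equation*}

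\textbf{Extracting a limit.} Choose $s_n^\ast \in J_n$ realizing $\tilde{\lambda}_n(s_n^\ast) \sim 1$; after a time translation (possible since almost periodicity is preserved under symmetries) I may assume $s_n^\ast = 0$ and, by applying the group action from \eqref{eq2.2v26}, that $\tilde{x}_n(0) = 0$ and $\tilde{\xi}_n(0) = 0$. The almost periodicity of $\mathbf{v}$ expressed in \eqref{eq2.14v26} then guarantees that $\{\tilde{\mathbf{v}}_n(0)\}$ is precompact in $L^2_x l^2$; passing to a subsequence, $\tilde{\mathbf{v}}_n(0) \to \mathbf{v}_\infty(0)$ in $L^2_x l^2$ with $\|\mathbf{v}_\infty(0)\|_{L^2_x l^2} = \|\mathbf{Q}\|_{L^2_x l^2}$. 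The stability theory for \eqref{1.1} then yields that on a maximal interval containing $0$, $\tilde{\mathbf{v}}_n \to \mathbf{v}_\infty$ in the relevant space-time norms, and the limit $\mathbf{v}_\infty$ is itself an almost periodic solution with modulation $\lambda_\infty(s)$ satisfying $\lambda_\infty \le 1$, $\lambda_\infty(0) \sim 1$, and
\begin{equation*}
\int_{\mathbb{R}} \lambda_\infty(s)^3 \, \mathrm{d}s \le C
\end{equation*}
by Fatou's lemma applied to the integral bound above (using that $J_n \to \mathbb{R}$ once $\Lambda_n^2 T_n \to \infty$ in both directions, which follows from $T_n \nearrow \sup \tilde{I}$ together with the blowup in both time directions \eqref{eq2.6v26} after a further symmetry adjustment that pushes $0$ into the interior of the lifespan of $\mathbf{v}_\infty$).

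\textbf{Closing the contradiction.} The limiting object $\mathbf{v}_\infty$ now lives in the "finite $\int N(t)^3 \, \mathrm{d}t$" regime. I would then invoke the argument used in Case 1 of the proof of Theorem \ref{th1.2} in Section \ref{Sec:Thm1.2} (itself an adaptation of \cite[Thm.~5.4]{CGHY}): the finiteness of $\int \lambda_\infty^3 \, \mathrm{d}s$ forces $\mathbf{v}_\infty \in H^1_x l^2$ and, after a suitable Galilean boost $\xi_\infty \in \mathbb{R}^2$, $E(e^{ix\cdot \xi_\infty}\mathbf{v}_\infty) = 0$. But $\|\mathbf{v}_\infty\|_{L^2_x l^2}^2 = \|\mathbf{Q}\|_{L^2_x l^2}^2 = \frac{N}{2N-1}\|Q_0\|_{L_x^2}^2$, which is the threshold mass; combined with the sharp Gagliardo–Nirenberg inequality \eqref{SGN} and the rigidity characterization of equality in Lemma \ref{le1.1v19}, the vanishing of the energy forces $\mathbf{v}_\infty$ to be (up to symmetries) the ground state $\mathbf{Q}$. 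This contradicts $\mathbf{v}_\infty$ blowing up in both time directions, completing the proof.

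\textbf{Main obstacle.} The crux lies in the limit-extraction step: one must check that the rescaled parameters $\tilde{x}_n(s)$ do not run off to infinity relative to the support of $\tilde{\mathbf{v}}_n$ on long time intervals, so that the limit $\mathbf{v}_\infty$ is genuinely a global-in-both-directions almost periodic solution with the asserted integral bound. This requires careful use of \eqref{ppp} along with the inequality $|\lambda'(t)| + |\xi'(t)| \lesssim \lambda(t)^3$ to show that the cumulative drift in $\tilde{x}_n, \tilde{\xi}_n$ over $J_n$ is controlled by $\int \tilde{\lambda}_n^3 \, \mathrm{d}s \le C$, hence uniformly bounded; this is what upgrades pointwise convergence of $\tilde{\mathbf{v}}_n(0)$ to full convergence of the trajectories in the appropriate compactness sense.
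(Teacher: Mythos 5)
Your overall strategy—contradiction, rescale at the maximizing time, extract an almost periodic limit $\mathbf{v}_\infty$ with $\int \lambda_\infty(s)^3\,\mathrm{d}s < \infty$, push to $H^1$ regularity and zero renormalized energy, and invoke the variational rigidity of $\mathbf{Q}$—is essentially the paper's argument, modulo presentation (the paper extracts the profile limit $\mathbf{w}_0$ at the single maximizing time $t_n$ and evolves it; you rescale the whole trajectory, which amounts to the same thing once the drift of $\tilde x_n$ is controlled, and the paper also splits explicitly into the cases $\Lambda_n \to \infty$ versus $\Lambda_n$ bounded).

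However, your closing sentence contains a genuine error: after concluding that $\mathbf{v}_\infty$ is the ground state soliton $e^{it}\mathbf{Q}$ up to symmetries, you claim this ``contradicts $\mathbf{v}_\infty$ blowing up in both time directions.'' This is false—the soliton $e^{it}\mathbf{Q}$ is precisely a minimal-mass solution that blows up in both time directions (it never scatters), so there is no contradiction there. The actual contradiction, as in the paper, must come from the scaling parameter: the soliton has $\lambda_\infty(s)$ constant, which is inconsistent with the derived bound $\int_{\mathbb{R}} \lambda_\infty(s)^3\,\mathrm{d}s < \infty$ (equivalently, with the property $\lambda_\infty(s)\to 0$ as $s\to -\infty$ established en route via $|\lambda_\infty'|\lesssim \lambda_\infty^3$ and the $H^s$ bounds). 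You should replace the final sentence with this scale-parameter contradiction. Additionally, the reference to ``Case 1 of the proof of Theorem \ref{th1.2}'' is slightly off target, since that theorem addresses the infinitely coupled system with $L_x^2 h^1$ structure; the regularity/energy-vanishing argument you actually need is the one from \cite{CGHY} for the finitely coupled system (the paper cites this directly in the appendix), though the mechanism is analogous.
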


\begin{proof}

Assume, for contradiction, that the statement fails. 
That is, there exists a constant $c_0 $ and a sequence $T_n \nearrow \sup \tilde{I}$ as $n \to \infty$ such that for all $n \in \mathbb{N}$,
\begin{align*}
\frac1{\sup\limits_{t \in [0, T_n]} \lambda(t)} \int_0^{T_n} \lambda(t)^3 \,\mathrm{d}t \le c_0.
\end{align*}
This corresponds to the rapid cascade scenario in \cite{CGHY}, where $\lambda(t)$ (analogous to $N(t)$ in those papers) can be chosen continuous.  If $ \lim\limits_{n\to\infty}\sup\limits_{t \in [0, T_n]} \lambda(t)=\infty$, then for each $T_n$, we can choose $t_n \in [0, T_n]$ such that
\begin{align*}
\lambda(t_n) = \sup\limits_{t \in [0, T_n]} \lambda(t)\ \mbox{ and }\ t_n \nearrow \sup\tilde{I}\ \mbox{ as } \ n\to\infty.
\end{align*}
Since $\tilde{I}$ is the maximal interval of existence of $\mathbf{v}$,
\begin{align*}
\lim\limits_{n \to \infty} \| \mathbf{v}(t,x) \|_{L_{t,x}^4 l^2  ([0 , t_n]\times \mathbb{R}^2\times\Z_N )} = \lim\limits_{n \to \infty} \| \mathbf{v}(t,x) \|_{L_{t,x}^4 l^2  ([t_n , \sup\tilde{I})\times \mathbb{R}^2\times\Z_N )} =\infty.
\end{align*}
Using the almost periodicity of $\mathbf{v}$ and \eqref{eq2.14v26}, there exist $x(t_n)\in\R^2$ and $\xi(t_n)\in\R^2$ such that if
\begin{align}\label{eq5.6v26}
\mathbf{v}_n(x):=  e^{i x\cdot \xi(t_n)} \frac{1}{\lambda(t_n)}\mathbf{v} \left(t_n,  \frac{x  - x(t_n)}{\lambda(t_n)} \right),
\end{align}
then $ \mathbf{v}_n(x)$ converges to some $\mathbf{w}_0$ in $L_x^2l^2$, where $ \mathbf{w}_0(x)$ is the initial data of a solution $ \mathbf{w}(t,x)$ to \eqref{1.1} that blows up in both time directions, $\tilde{\lambda}(t) \le 1$ for all $t \le 0$, and
\begin{align}\label{eq5.7v26}
\int_{-\infty}^0 \tilde{\lambda}(t)^3 \,\mathrm{d}t \le c_0.
\end{align}
Following the proof in \cite{CGHY}, we have 
\begin{align}\label{eq5.8v26}
\| \mathbf{w}(t,x) \|_{L_t^\infty \dot{H}_x^s l^2 ((- \infty, 0] \times \mathbb{R}^2 \times\Z_N)} \lesssim_s c_0^s,
\end{align}
for any $0 \le s < \infty $.

Combining \eqref{eq5.8v26} with \eqref{eq5.7v26} and $\left|\tilde{\lambda}'(t) \right| \lesssim \tilde{\lambda}(t)^3$, we obtain  
\begin{align}\label{eq5.9v26}
\lim\limits_{t \to  - \infty} \tilde{\lambda}(t) = 0.
\end{align}
From 
\begin{align*}
\left| \tilde{\xi}'(t) \right|  \lesssim \tilde{\lambda}(t)^3,
\end{align*}
and \eqref{eq5.7v26}, we deduce that $\tilde{\xi}(t)$ converges to some $\tilde{\xi}_-\in \mathbb{R}^2$ as $t \to - \infty$. Applying a Galilean transformation to set $\tilde{\xi}_- =0$. Then, by interpolation, \eqref{eq5.8v26} and \eqref{eq5.9v26}, we get 
\begin{align*}
\lim\limits_{t \to - \infty} E( \mathbf{w} (t)) = 0.
\end{align*}
Therefore, by conservation of energy and the convergence in $L_x^2l^2$ of \eqref{eq5.6v26}, we obtain 
\begin{align*}
E( \mathbf{w}_0) = 0 \text{ and } \| \mathbf{w}_0 \|_{L_x^2l^2} = \| \mathbf{Q}\|_{L_x^2l^2}.
\end{align*}
Then using the sharp Gagliardo-Nirenberg inequality, we get 
\begin{align*}
\forall\ j\in\mathbb{Z}_N, w_{0,j} (x) = \lambda  e^{i\gamma_j}Q ( \lambda(x - x_0) ), \ \mbox{for some } \gamma_j \in[0,2\pi], \lambda >0, x_0 \in \mathbb{R}^2.
\end{align*}
Assuming without loss of generality that $x_0 = 0$, $\lambda = 1$,  and $\gamma_j=0,\ \forall j\in\mathbb{Z}_N$,  the solution to \eqref{1.1} is given by
\begin{align*}
\mathbf{w} (t,x) = e^{it} \mathbf{Q}(x), \ t\in \mathbb{R}.
\end{align*}
However, such a solution does not satisfy \eqref{eq5.9v26}, which leads to a contradiction. 

If $ \lim\limits_{n\to\infty}\sup\limits_{t \in [0, T_n]} \lambda(t)<\infty$, then $T_n=\infty$ and we can rescale $\mathbf{v}$ so that $\lambda(t)\leq 1,\ \forall\ t>0$. Then, we can also rigorously prove   that up to symmetries of \eqref{1.1},
\begin{align*}
\mathbf{u} (t,x) = e^{it} \mathbf{Q}(x), \ t\in \mathbb{R},
\end{align*}
 which also leads to a contradiction.
\end{proof}
\noindent We now prove Theorem \ref{th5v26} under the assumption 
\begin{align*}
\lim\limits_{n \to \infty} \frac1{\sup\limits_{t \in [0, T_n]} \lambda(t) } \int_0^{T_n} \lambda(t)^3 \,\mathrm{d}t = \infty.
\end{align*}
After passing to a subsequence,  we suppose
\begin{align*}
\frac1{\sup\limits_{t \in [0, T_n]} \lambda(t) } \int_0^{T_n} \lambda(t)^3 \,\mathrm{d}t = 2^{2n}.
\end{align*}
Define \begin{align*}
M(t) = \sum\limits_{j, j' \in  \mathbb{Z}_N} \iint \left|Iv_{j'} (t,y)\right|^2 \Im  \left( \overline{Iu_j  } \nabla Iu_j  \right)(t, x) \tilde{\lambda}(t)(x- y) \psi \left( \tilde{\lambda}(t) (x- y) \right) \,\mathrm{d}x \mathrm{d}y,
\end{align*}
where $I$ is the Fourier truncation operator $ P_{\le\log_2{[ 2^{2n} \sup\limits_{t \in [0, T]}\lambda(t)}]} $, $T= 2^k$ for some $k \in \mathbb{N} $, $\tilde{\lambda}(t)$ is given by the smoothing algorithm, and $\psi( |x- y |)$ is a radial function defined to be
\begin{align}\label{eq3.2v26}
\psi(x) = \frac1{|x- y |} \int_0^{ |x- y |} \phi(s) \,\mathrm{d}s,
\end{align}
with $\phi ( |x|)$ being a radial function given by
\begin{align*}
\phi (|x- y |)= \frac1{R^2} \int \chi^2 \left( \frac{x- s}R \right) \chi^2  \left( \frac{y - s}R \right) \,\mathrm{d}s,
\end{align*}
where $\chi$ is a radial, smooth, compactly supported function with $\chi(x) = 1$ for $|x| \le 1$ and $\chi(x) $ is supported on $|x|\le 2$.
In addition, $\chi(|x|)$ is decreasing as a function of the radius. $R$ is a large, fixed constant that will be allowed to go to infinity as $T \nearrow \infty$. Then, 
\begin{align}\label{eq5.19v26}
\aligned
M'(t) = &  - 2\tilde{\lambda}(t)  \sum\limits_{j, j' \in \mathbb{Z}_N}
\iint \Im \left( \overline{Iv_j } \partial_k Iv_j  \right)(t,y) \Im \left( \overline{Iv_{j'} }\partial_l Iv_{j'}  \right)(t,x)
\\
& \qquad \cdot \left( \delta_{lk} \psi \left( \tilde{\lambda}(t) ( x- y)  \right) + \frac{ (x- y)_l  (x - y)_k}{|x- y |} \psi' \left( \tilde{\lambda}(t) ( x- y) \right) \right) \,\mathrm{d}x \mathrm{d}y
\\
& + \frac12 \tilde{\lambda}(t)^3 \sum\limits_{j, j' \in \mathbb{Z}_N}
\iint |Iv_{j'} (t,y)|^2 |Iv_j (t,x)|^2 \left( \Delta \phi \left( \tilde{\lambda}(t) (x - y ) \right) +   \Delta \psi  \left( \tilde{\lambda}(t) (x - y) \right)  \right) \,\mathrm{d}x \mathrm{d}y
 \\
& + 2 \tilde{\lambda}(t)   \sum\limits_{j, j' \in \mathbb{Z}_N}  \iint |Iv_{j'} (t,y)|^2 \Re \left( \overline{ \partial_k Iv_j } \partial_l Iv_j  \right)(t,x) \\
& \qquad \cdot \left( \delta_{l k}  \psi \left( \tilde{\lambda}(t) ( x- y)  \right) + \frac{ (x - y)_l  (x-y)_k}{|x - y |} \psi' \left( \tilde{\lambda}(t) ( x - y ) \right) \right) \,\mathrm{d}x \mathrm{d}y
  \\
& - \frac1{2} \tilde{\lambda}(t) \sum\limits_{j, j' \in \mathbb{Z}_N}  \iint \left|Iv_{j'} (t,y)\right|^2 \sum\limits_{j_1, j_2, j_3 \in \mathcal{R}_N(j)} ( \overline{I v}_j I v_{j_1} \overline{I v}_{j_2} I  v_{j_3} ) (t,x)  \\
 & \qquad \cdot \left( 2 \psi  \left( \tilde{\lambda}(t)(x- y) \right) + \psi'  \left( \tilde{\lambda}(t)(x - y)  \right) |x-y|  \right) \,\mathrm{d}x \mathrm{d}y
  \\
& + \tilde{\lambda}'(t) \sum\limits_{j, j' \in \mathbb{Z}_N} \iint |Iv_{j'} (t,y)|^2 \Im  \left( \overline{Iv_j } \nabla Iv_j   \right)(t,x) \phi  \left( \tilde{\lambda}(t) ( x- y)  \right) (x - y) \,\mathrm{d}x \mathrm{d}y\\
& + \mathcal{E},
\endaligned
\end{align}
where $\mathcal{E}$ are the error terms arising from $\mathcal{N}$,
\begin{align*}
i \partial_t I \mathbf{v} + \Delta I \mathbf{v}  +  \mathbf{F} ( I \mathbf{v}  ) =  \mathbf{F}(I \mathbf{v}  ) - I  \mathbf{F}( \mathbf{v}  ) = \mathcal{ N} .
\end{align*}
It is known from \cite{CGHY} that
\begin{align*}
\int_0^T \mathcal{ N}  \,\mathrm{d}t \lesssim R o(T).
\end{align*}
Therefore, choosing $R \to \infty$ sufficiently slowly,
\begin{align*}
\lim\limits_{T \to \infty } \frac{R o(T)}T= 0.
\end{align*}
By direct computation,
\begin{align*}
\phi (x) = \frac1{R^2} \int \chi^2  \left( \frac{x- s}R \right) \chi^2  \left( \frac{s}R \right) \,\mathrm{d}s \sim 1,
\end{align*}
for $|x| \le R$, $\phi(x)$ is supported on the set $|x| \le 4 R$, and $\phi(x)$ is a radially symmetric function that is decreasing as $|x| \to \infty$.
Therefore, \eqref{eq3.2v26} implies that
\begin{align}\label{eq3.10v26}
| \psi(x) |  \lesssim \frac{R}{ |x|}, \text{ for all } x \in \mathbb{R}^2.
\end{align}
Also, by direct computation,
\begin{align}\label{eq3.11v26}
\left|\Delta \phi(x) \right|= \left|\frac1{R^2} \int \Delta \chi^2 \left( \frac{x - s}R \right) \chi^2  \left( \frac{s}R \right) \,\mathrm{d}s\right| \lesssim \frac1{R^2}.
\end{align}
Next, by the same calculations that give \eqref{eq3.10v26}, we get
\begin{align*}
| \Delta \psi(x) | \lesssim \frac{R}{ |x|^3},
\end{align*}
So $|\Delta \psi(x)| \lesssim \frac1{R^2}$ for $|x| \ge R$. By the fundamental theorem of calculus, since $\phi'(0) = 0$, \eqref{eq3.2v26} implies
\begin{align*}
\psi (r) = \phi(0) + \frac1r \int_0^r \int_0^s (s- t) \phi''(t ) \,\mathrm{d}t \mathrm{d}s,
\end{align*}
so by \eqref{eq3.11v26},
\begin{align*}
| \Delta \psi(x ) | \lesssim \frac1{R^2} \text{ for } |x| \le R.
\end{align*}
Equation \eqref{eq3.10v26} implies
\begin{align*}
\sup\limits_{t \in [0, T_n ]} |M(t)| \lesssim R o \left(2^{2n} \right) \cdot \sup\limits_{t \in [0, T]} \lambda(t).
\end{align*}
Next, since the smoothing algorithm guarantees that $\tilde{\lambda}(t) \le \lambda(t)$, we have
\begin{align*}
& \int_0^{T_n} \frac12 \tilde{\lambda}(t)^3  \sum\limits_{j, j' \in  \mathbb{Z}_N} \iint |Iv_{j'} (t,y)|^2  |Iv_j (t,x)|^2
\left( \Delta \phi  \left( \tilde{\lambda}(t) (x - y)  \right) +   \Delta \psi  \left( \tilde{\lambda}(t) ( x- y)  \right)  \right) \,\mathrm{d}x \mathrm{d}y \mathrm{d}t
\\
& \lesssim \frac1{R^2} \| \mathbf{v} \|_{L^2_x l^2 }^4 \int_0^{T_n} \tilde{\lambda}(t) \lambda(t)^2 \,\mathrm{d}t
\lesssim \frac{2^{2n}}{R^2} \sup\limits_{ t \in [0, T]} \lambda(t). \notag
\end{align*}
Since $\tilde{\lambda}(t) \le \lambda(t)$, following the analysis in \cite{CGHY} and \cite{D3}, we have 
\begin{align}\label{eq5.23v26}
& 2 \int_0^{T_n} \tilde{\lambda}(t) \sum\limits_{j, j' \in \mathbb{Z}_N}
 \iint \Im  \left( \overline{Iv_{j'} } \partial_k Iv_{j'}   \right) (t,y) \Im  \left( \overline{Iv_j } \partial_l Iv_{j}  \right)(t,x)
 \notag \\
 & \qquad  \cdot \left( \delta_{l k} \psi \left( \tilde{\lambda}(t) ( x - y) \right) + \frac{ (x-y )_l (x - y)_k}{ |x - y |} \psi'  \left( \tilde{\lambda}(t) (x - y) \right) \right) \,\mathrm{d}x \mathrm{d}y \mathrm{d}t
\\
& + 2 \int_0^{T_n} \tilde{\lambda}(t) \sum\limits_{j, j' \in \mathbb{Z}_N} \iint  |Iv_{j'} (t,y)|^2 \Re  \left( \overline{\partial_l  Iv_j } \partial_k Iv_j   \right)(t,x) \notag \\
& \qquad \cdot \left( \delta_{lk} \psi  \left( \tilde{\lambda}(t) ( x- y) \right) + \frac{ (x -y )_l  ( x-y)_k }{|x- y|} \psi'  \left( \tilde{\lambda}(t) ( x-y ) \right) \right) \,\mathrm{d}x \mathrm{d}y \mathrm{d}t
\notag \\
& - \frac1{2} \int_0^{T_n} \tilde{\lambda}(t) \sum\limits_{j, j' \in \mathbb{Z}_N} \iint |Iv_{j'} (t,y)|^2 \sum\limits_{(j_1,j_2,j_3) \in \mathcal{R}_N(j)}
\left( Iv_{j_1} \overline{Iv_{j_2}} Iv_{j_3} \overline{Iv_j}  \right) (t,x) \notag \\
& \qquad  \cdot \left( 2 \psi \left( \tilde{\lambda}(t) ( x- y) \right) + \psi' \left( \tilde{\lambda}(t) (x-y ) \right) |x- y | \right) \,\mathrm{d}x \mathrm{d}y \mathrm{d}t
\notag\\
 = &  4 \int_0^{T_n} \frac{ \tilde{\lambda}(t) \lambda(t)^2 }R  \sum\limits_{  j' \in \mathbb{Z}_N}
\int  \left( \int \chi^2  \left( \frac{y - s}R  \right)  \left|\frac{e^{iy\cdot  {\xi}_{\ast}(s)} }{ \lambda(t)} Iv_{j'}  \left(t, \frac{y}{\lambda(t)}  \right) \right|^2 \,\mathrm{d}y  \right) \notag \\
&  \qquad \cdot \chi^2 \left( \frac{x_* - s}R \right) E \left( \chi^2  \left( \frac{x - x(t)}R \right) \frac{e^{ix \cdot{\xi}_{\ast}(s)} }{ \lambda(t)} I \mathbf{v}  \left(t, \frac{x}{\lambda(t)}  \right)  \right) \,\mathrm{d}s \mathrm{d}t \notag \\
&   + R^2 o \left(2^{2n}  \right) \cdot \sup\limits_{t \in [0, T]} \lambda(t)
+ O \left( \eta^4 \|\mathbf{v} \|_{L_t^\infty L_x^2l^2}^2 \int_0^{T_n} \tilde{\lambda}(t) \| \mathbf{v}(t) \|_{L_x^{4} l^2 }^{4} \,\mathrm{d}t  \right)
\notag \\
&  + O \left( \frac{C(\eta)}R \| \mathbf{v} \|_{L_t^\infty L_x^2 l^2 }^2 \int_0^{T_n} \tilde{\lambda} (t)  \| \mathbf{v} (t) \|_{L_x^{4} l^2 }^{4} \,\mathrm{d}t  \right),
\notag
\end{align}
where  ${\xi}_\ast(s) \in \mathbb{R}^2$ is chosen such that
\begin{align*}
\sum_{j,j^{'}\in\Z_N}\int \chi^2  \left( \frac{ \tilde{\lambda}(t) (x- s)}{ R \lambda(t)}  \right) \Im  \left( \overline{\frac{e^{ix \cdot{\xi}_{\ast}(s)} }{ \lambda(t)} I v_j  \left(t, \frac{x}{\lambda(t)}  \right)} \cdot \nabla  \left(\frac{e^{ix \cdot{\xi}_{\ast}(s)} }{ \lambda(t)} I v_{j'} \left(t, \frac{x}{\lambda(t)}  \right) \right)  \right) \,\mathrm{d}x = 0
\end{align*}
and $x_{\ast}$ is chosen such that
\begin{align*}
  \chi\left(\frac{x_{\ast}-s}{R}\right)=\inf_{|x_{\ast}-x(t)|\leq 4C(\eta)}\chi\left(\frac{x_{\ast}-s}{R}\right).
\end{align*}
Therefore, it only remains to consider the contribution of the term in \eqref{eq5.19v26} involving $\tilde{\lambda}'(t)$. By direct computation,
\begin{align*}
& \tilde{\lambda}'(t)  \sum\limits_{j, j' \in \mathbb{Z}_N} \iint |Iv_{j'} (t,y)|^2 \Im \left( \overline{Iv_j } \nabla Iv_j  \right)(t,x) \phi \left( \tilde{\lambda}(t) (x- y) \right) (x - y) \,\mathrm{d}x \mathrm{d}y \\
& = \frac{\tilde{\lambda}'(t) }{R^2 \tilde{\lambda}(t)}  \sum\limits_{j, j' \in \mathbb{Z}_N}
\int  \left(  \int \chi^2  \left( \frac{ \tilde{\lambda}(t) y - s}R  \right) |Iv_{j'} (t,y)|^2 \,\mathrm{d}y  \right)
\notag \\
& \qquad \cdot  \left( \int \chi^2  \left( \frac{ \tilde{\lambda}(t) x -s }R  \right) \Im  \left( \overline{Iv_j  } \nabla Iv_j  \right)(t,x) \cdot  \left( x \tilde{\lambda}(t) - s \right) \,\mathrm{d}x  \right) \,\mathrm{d}s
\notag \\
&  \quad - \frac{ \tilde{\lambda}'(t)}{R^2  \tilde{\lambda}(t) } \sum\limits_{j, j' \in \mathbb{Z}_N}
\int  \left( \int \chi^2   \left( \frac{ \tilde{\lambda}(t) y - s}R \right)
\left( y \tilde{\lambda}(t) - s \right) |Iv_{j'} (t,y)|^2 \,\mathrm{d}y  \right)  \notag \\
& \qquad
\cdot  \left( \int \chi^2  \left( \frac{\tilde{\lambda}(t) x - s }R  \right) \Im \left( \overline{Iv_j  }\nabla Iv_j   \right)(t,x) \,\mathrm{d}x  \right) \,\mathrm{d}s.
\notag
\end{align*}
Now, rescaling gives 
\begin{align*}
= &  \frac{ \tilde{\lambda}'(t)}{R^2 \tilde{\lambda}(t)} \lambda(t)\sum\limits_{j, j' \in \mathbb{Z}_N}  \int \left( \int \chi^2  \left( \frac{ \tilde{\lambda}(t) y - \lambda(t) s }{ R \lambda(t)}  \right)
\left| \frac1{\lambda(t)  } Iv_{j'}  \left(t, \frac{y}{ \lambda(t)}  \right) \right|^2 \,\mathrm{d}y \right)
\\
& \quad \cdot  \int \chi^2  \left( \frac{ \tilde{\lambda}(t) x - \lambda(t) s}{R \lambda(t)}  \right) \Im  \left( \frac1{\lambda(t)  } \overline{Iv_j
\left(t, \frac{x}{\lambda(t)}  \right)}
\nabla  \left(  \frac1{ \lambda(t) } Iv_j  \left(t, \frac{x}{\lambda(t)}  \right)  \right) \cdot \left( \frac{ x \tilde{\lambda}(t) - s \lambda(t) }{ \lambda(t)}  \right) \,\mathrm{d}x  \right) \,\mathrm{d}s
 \\
& - \frac{\tilde{\lambda}'(t)}{R^2 \tilde{\lambda}(t)} \lambda(t) \sum\limits_{j, j' \in \mathbb{Z}_N} \int \left( \int \chi^2  \left( \frac{ \tilde{\lambda}(t) y - \lambda(t) s }{R \lambda(t)}  \right)
\left( \frac{y \tilde{\lambda}(t)  - s \lambda(t)}{ \lambda(t)}  \right)  \left|\frac1{\lambda(t)  } Iv_{j'}   \left(t , \frac{y}{\lambda(t)}  \right)  \right|^2 \,\mathrm{d}y   \right)  \\
& \quad \cdot   \left( \int \chi^2  \left( \frac{ \tilde{\lambda}(t) x - s \lambda(t)}{ R \lambda(t)}  \right) \Im  \left( \frac1{\lambda(t)  } \overline{Iv_j  \left(t, \frac{x}{\lambda(t)}  \right) } \nabla  \left( \frac1{\lambda(t)  } Iv_j   \left(t, \frac{x}{\lambda(t)}  \right) \right) \right) \,\mathrm{d}x  \right) \,\mathrm{d}s .
\end{align*}
For any $\xi \in \mathbb{R}^2$,
\begin{align*}
= &  \frac{\tilde{\lambda}'(t) }{R^2 \tilde{\lambda}(t)} \lambda(t) \sum\limits_{j, j' \in \mathbb{Z}_N}
\int  \left( \int \chi^2  \left( \frac{\tilde{\lambda}(t) y - \lambda(t) s }{R \lambda(t)}  \right)
\left| \frac{e^{i x\cdot \xi} }{\lambda(t) } Iv_{j'}  \left(t, \frac{y }{\lambda(t)}  \right)  \right|^2 \,\mathrm{d} y  \right)
\\
& \quad \cdot    \int \chi^2  \left( \frac{ \tilde{\lambda}(t) x - \lambda(t) s}{R \lambda(t)}  \right)
\Im \left( \frac{e^{-i x\cdot \xi }}{ \lambda(t) } \overline{Iv_j  \left(t, \frac{x}{\lambda(t)}  \right) } \nabla \left(  \frac{e^{i x\cdot \xi}}{ \lambda(t)  } Iv_j  \left(t, \frac{x}{ \lambda(t)}  \right) \right)  \left( \frac{ x \tilde{\lambda}(t) - s \lambda(t)}{ \lambda(t)}  \right) \,\mathrm{d}x  \right) \,\mathrm{d}s  \\
& - \frac{\tilde{\lambda}'(t)}{ R^2 \tilde{\lambda} (t)} \lambda(t) \sum\limits_{j, j' \in \mathbb{Z}_N}
 \int  \left( \int \chi^2 \left( \frac{ \tilde{\lambda}(t) y - \lambda(t) s }{R \lambda(t)}  \right) \left( \frac{y \tilde{\lambda}(t) - s \lambda(t)}{\lambda(t)}  \right)
\left| \frac{e^{i x\cdot \xi}}{ \lambda(t)  } Iv_{j'}  \left(t, \frac{y}{ \lambda(t)}  \right) \right|^2 \,\mathrm{d}y  \right)  \\
& \quad \cdot  \left( \int \chi^2  \left( \frac{ \tilde{\lambda}(t) x - s \lambda(t)}{R \lambda(t)}  \right) \Im  \left( \frac{e^{-i x\cdot \xi} }{ \lambda(t) } \overline{Iv_j  \left(t, \frac{x}{\lambda(t)}  \right) }
\nabla  \left( \frac{e^{i x\cdot \xi}}{ \lambda(t)  } Iv_j  \left(t, \frac{x}{ \lambda(t)}  \right)  \right)  \right) \,\mathrm{d}x  \right) \,\mathrm{d}s.
\end{align*}
In particular, if we choose $\xi = {\xi}_{\ast}(s)$,
\begin{align*}
= & \frac{ \tilde{\lambda}'(t)}{ R^2 \tilde{ \lambda}(t)} \lambda(t)\sum\limits_{j, j' \in \mathbb{Z}_N}
 \int  \left( \int \chi^2  \left( \frac{ \tilde{\lambda}(t) y - \lambda(t) s}{ R \lambda(t)}  \right) \left|\frac{e^{iy \cdot{\xi}_{\ast}(s)} }{ \lambda(t)} Iv_{j'}
  \left(t, \frac{y}{\lambda(t)}  \right) \right|^2 \,\mathrm{d}y  \right) \\
& \quad \cdot  \left( \int \chi^2  \left( \frac{\tilde{\lambda} (t) x - \lambda(t) s}{R \lambda(t)}  \right) \Im  \left( \overline{\frac{e^{ix \cdot{\xi}_{\ast}(s)} }{ \lambda(t)} Iv_j  \left(t, \frac{x}{\lambda(t)}  \right) }     \nabla \left( \frac{e^{ix \cdot{\xi}_{\ast}(s)} }{ \lambda(t)} Iv_j  \left(t, \frac{x}{\lambda(t)}  \right)\right) \right)  \left( \frac{ x \tilde{\lambda}(t) - s \lambda(t) }{ \lambda(t)}  \right) \,\mathrm{d}x  \right) \,\mathrm{d}s  \\
= & \frac{ \tilde{\lambda}'(t)}{R}  \sum\limits_{j, j' \in \mathbb{Z}_N} \int  \left( \int \chi^2  \left( \frac{ \tilde{\lambda}(t) (y - s)}{R \lambda(t)}  \right)  \left|\frac{e^{i y \cdot{\xi}_{\ast}(s)} }{ \lambda(t)} Iv_{j'}  \left(t, \frac{ y }{\lambda(t)}  \right) \right|^2 \,\mathrm{d}y  \right)    \\
 & \cdot \left( \int \chi^2  \left( \frac{\tilde{\lambda}(t) (x - s) }{R \lambda(t)}  \right) \Im  \left( \overline{\frac{e^{ix \cdot{\xi}_{\ast}(s)} }{ \lambda(t)} Iv_j  \left(t, \frac{x}{\lambda(t)}  \right)}     \nabla \left(\frac{e^{ix \cdot {\xi}_{\ast}(s)} }{ \lambda(t)} Iv_j  \left(t, \frac{x}{\lambda(t)}  \right) \right) \right) \cdot  \left( \frac{\tilde{\lambda}(t) (x - s)}{ \lambda(t)}  \right) \,\mathrm{d}x  \right) \,\mathrm{d}s.
\end{align*}
Then, by the Cauchy-Schwarz inequality,
\begin{align}\label{eq5.30v26}
\lesssim &
 \frac{\eta^4}{R^2 } \lambda(t) \tilde{\lambda}(t)^2 \sum\limits_{j, j' \in \mathbb{Z}_N}
 \int  \left( \int \chi^2  \left( \frac{ \tilde{\lambda}(t)(y - s)}{R \lambda(t)}  \right)  \left|\frac{e^{iy  \cdot{\xi}_{\ast}(s)} }{ \lambda(t)} Iv_{j'}  \left(t, \frac{ y }{\lambda(t)}  \right) \right|^2 \,\mathrm{d}y  \right) \\
  & \quad \cdot \left( \int \chi^2  \left( \frac{ \tilde{\lambda}(t) ( x- s)}{ R\lambda(t)}  \right)  \left|\nabla_x  \left( \frac{e^{ix \cdot {\xi}_{\ast}(s)} }{ \lambda(t)} Iv_j \left(t, \frac{x}{\lambda(t)}  \right) \right) \right|^2 \,\mathrm{d}x  \right) \,\mathrm{d}s \notag
\\
& + \frac1{\eta^4}  \frac{  \left|\tilde{\lambda}'(t)  \right|^2 }{ \lambda(t) \tilde{\lambda}(t)^2 } \frac{ \tilde{\lambda} (t)}{ R^2 \lambda(t)}
\sum\limits_{j, j' \in \mathbb{Z}_N} \int  \left( \int \chi^2  \left( \frac{ \tilde{\lambda}(t) ( y - s)}{R \lambda(t)}  \right)  \left|\frac{e^{i y  \cdot{\xi}_{\ast}(s)} }{ \lambda(t)} Iv_{j'}  \left(t, \frac{y }{\lambda(t)}  \right) \right|^2 \,\mathrm{d}y  \right) \notag \\
 & \quad \cdot  \int \chi^2
 \left( \frac{ \tilde{\lambda} (t) ( x - s)}{R \lambda(t)}  \right)  \left|\frac{e^{ix \cdot{\xi}_{\ast}(s)} }{ \lambda(t)} Iv_j  \left(t, \frac{x}{\lambda(t)}  \right) \right|^2 \left| \frac{\tilde{\lambda}(t) ( x- s) }{ \lambda(t)}  \right|^2 \,\mathrm{d}x  \,\mathrm{d}s.
\notag
\end{align}
The first term in \eqref{eq5.30v26} can be absorbed into \eqref{eq5.23v26}. The second term in \eqref{eq5.30v26} is bounded by
\begin{align*}
\frac1{\eta^4} \frac{  \left|\tilde{\lambda}'(t)  \right|^2 } { \lambda(t) \tilde{\lambda}(t)^2 } R^2 \| \mathbf{v} \|_{L_t^\infty L_x^2 l^2 }^4.
\end{align*}
The smoothing algorithm introduced in  \cite{D6} is employed to control this term. Recall that after applying $n$ iterations of the smoothing algorithm over the interval $[0, T]$, the function $\tilde{\lambda}(t)$ exhibits the following properties:
\begin{enumerate}
\item $\tilde{\lambda}(t) \le \lambda(t)$,
\item If $\tilde{\lambda}'(t) \ne 0$, then $\lambda(t) = \tilde{\lambda}(t)$,
\item $\tilde{\lambda}(t) \ge 2^{-n } \lambda(t)$,
\item $\int_0^T  \left|\tilde{\lambda}'(t)  \right| \,\mathrm{d}t \le \frac1n\int_0^T  \left|\lambda'(t)  \right| \frac{ \tilde{\lambda}(t) }{ \lambda(t)} \,\mathrm{d}t$, where the implicit constant is independent of $n$ and $T$.
\end{enumerate}
Therefore, we have 
\begin{align*}
&  \int_0^{T_n } \frac1{\eta^4} \frac{  \left|\tilde{\lambda}'(t) \right|^2 }{ \lambda(t) \tilde{\lambda}(t)^2 } R^2  \left\| \mathbf{v}  \right\|_{L_t^\infty L_x^2 l^2 }^4 \,\mathrm{d}t 
\\
& \le \frac1{\eta^4} \| \mathbf{v}  \|_{L_t^\infty L_x^2 l^2 }^4 \int_0^{T_n} \frac{  \left|\lambda'(t) \right|}{ \lambda(t)^3 } R^2  \left|\tilde{\lambda}'(t)  \right| \,\mathrm{d}t  \lesssim \frac1n \frac{R^2}{\eta^4} \| \mathbf{v}  \|_{L_t^\infty L_x^2 l^2 }^4 \int_0^{T_n } \tilde{\lambda}(t) \lambda(t)^2 \,\mathrm{d}t.
\end{align*}
On the other hand, it is easy to derive that
\begin{align*}
\sup\limits_{t \in [0, T_n]} \lambda(t)  \le 2^{-2n} \int_0^{T_n} \lambda(t)^3 \,\mathrm{d}t,
\end{align*}
\begin{align*}
R_n \sup\limits_{t \in [0, T_n]} |M(t)| \lesssim R_n o(2^{2n}) \cdot \sup\limits_{t \in [0, T]} \lambda(t).
\end{align*}
Consequently, we can take a sequence $\eta_n \searrow 0$ and $R_n \nearrow \infty$ (probably very slowly) such that the following estimates hold:
\begin{align*}
\frac1{n} \frac{R_n^2}{\eta^4} \| \mathbf{v } \|_{L_t^\infty L_x^2 l^2 }^4 \int_0^{T_n}  \left|\tilde{\lambda} (t)  \right| \lambda(t)^2 \,\mathrm{d}t
= o_n(1) \int_0^{T_n} \tilde{\lambda}(t) \lambda(t)^2 \,\mathrm{d}t, \\
R_n \sup\limits_{t \in [0, T_n]} |M(t) | \lesssim o \left(2^{2n} \right) \cdot \sup\limits_{t \in [0, T]} \lambda(t),\\
O \left( \eta_n^4 \| \mathbf{v}  \|_{L_t^\infty L_x^2 l^2 }^2 \int_0^T \tilde{\lambda}(t) \| \mathbf{v} (t) \|_{L^{4}_x l^2 }^{4} \,\mathrm{d}t  \right) \lesssim o_n(1) \int_0^{T_n} \tilde{\lambda}(t) \lambda(t)^2 \,\mathrm{d}t,
\intertext{ and }
O \left( \frac{C(\eta_n) }{R_n} \| \mathbf{v}  \|_{L_t^\infty L_x^2 l^2 }^2 \int_0^{T_n} \tilde{\lambda}(t) \| \mathbf{v} (t) \|_{L_x^{4} l^2 }^{4} \,\mathrm{d}t  \right) \lesssim o_n(1) \int_0^{T_n} \tilde{\lambda}(t) \lambda(t)^2 \,\mathrm{d}t .
\end{align*}
These terms can thus be treated as error terms. Therefore, there exist a sequence of times $t_n \nearrow \sup \tilde{I}$ as $n \to \infty$ and a sequence $s_n$ such that
\begin{align}\label{eq5.38v26}
E \left( \chi \left( \frac{ (x - x(t_n)) \tilde{\lambda}(t_n) }{ R_n \lambda(t_n)}  \right) \frac{e^{ix \cdot{\xi}_{\ast}(s_n)} }{ \lambda(t_n )} I \mathbf{v}  \left(t_n , \frac{x}{\lambda(t_n )}  \right)  \right) \to 0, \\
\left\| \left( 1 - \chi \left( \frac{ (x - x(t_n)) \tilde{\lambda}(t_n) }{R_n \lambda(t_n)}  \right)  \right) \frac{e^{ix \cdot{\xi}_{\ast}(s_n)} }{ \lambda(t_n )} I \mathbf{v}  \left(t_n , \frac{x}{\lambda(t_n )}  \right)  \right\|_{L^2_x l^2 } \to 0, \label{eq5.39v26}\\
\left\| \frac{e^{ix \cdot{\xi}_{\ast}(s_n)} }{ \lambda(t_n )} I \mathbf{v} \left(t_n , \frac{x}{\lambda(t_n )}  \right)  \right\|_{L^2_x l^2 } \to \| \mathbf{Q}  \|_{L^2_x l^2 }, \label{eq5.40v26}
\end{align}
Let \begin{equation}
\mathbf{h}_n=\chi \left( \frac{ (x - x(t_n)) \tilde{\lambda}(t_n)}{ R_n \lambda(t_n)}  \right) \frac{e^{ix \cdot{\xi}_{\ast}(s_n)} }{ \lambda(t_n )} I \mathbf{v}  \left(t_n , \frac{x}{\lambda(t_n )}  \right)  ,\label{eq5.41v260}
\end{equation}
then \eqref{eq5.39v26}, \eqref{eq5.40v26} and the almost periodicity of $\mathbf{v}\neq 0$ guarantee that $\liminf\limits_{n\to\infty}\|\nabla \mathbf{h}_n\|_{L_x^2l^2}=c_0>0$. After passing to a subsequence, we may assume  $\|\nabla \mathbf{h}_n\|_{L_x^2l^2}>\frac{c_0}{2}$.

Now,  \eqref{eq5.38v26}-\eqref{eq5.41v260} are sufficient for us  to prove Theorem \ref{th5v26}:

\begin{proof}[Proof of Theorem \ref{th5v26}]
Now we define 
\begin{align*}
\tilde{\lambda}_n=\frac{\|\nabla\mathbf{Q}\|_{L_x^2l^2}}{\|\nabla \mathbf{h}_n\|_{L_x^2l^2}},\quad \tilde{\mu}_n=\frac{\|\mathbf{h}_n\|_{L_x^2l^2}}{\|\mathbf{Q}\|_{L_x^2l^2}}\cdot{\tilde{\lambda}_n}, \quad\mathbf{g}_n=\tilde{\lambda}_n\mathbf{h}(\tilde{\mu}_n x). 
 \end{align*}
 Then \eqref{eq5.38v26} and \eqref{eq5.40v26} yield
\begin{equation*}\label{profileq}
\|\mathbf{g}_n\|_{L_x^2l^2}=\|\mathbf{Q}\|_{L_x^2l^2},\quad \|\nabla \mathbf{g}_n\|_{L_x^2l^2}=\|\nabla\mathbf{Q}\|_{L_x^2l^2},\quad E(\mathbf{g}_n)\to0\ \mbox{ as } n\to\infty.
\end{equation*}
Using the variational characterization of the ground state $\mathbf{Q}$, there exist parameters $\tilde{x}_n\in\R^2$ and $\left(\tilde{\gamma}_{n,1},\cdots,\tilde{\gamma}_{n,N} \right)\in[0,2\pi]^N$ such that
\begin{align}\label{xpara}
\quad\mbox{ for any } j\in\mathbb{Z}_N,\quad\|g_{n,j}-e^{i\gamma_{n,j}}Q(x+\tilde{x}_n)\|_{H_x^1}\to 0 \quad\mbox{ as } n\to\infty.
\end{align}
Therefore, Theorem \ref{th5v26} follows immediately from \eqref{eq5.38v26}, \eqref{eq5.40v26},  \eqref{xpara}, and the definition of the operator $I$.
\end{proof}

\end{document}